\documentclass[a4paper,10pt]{article}
\usepackage[DIV=13]{typearea}

\usepackage{fontspec}
\usepackage{microtype}
\usepackage{csquotes}

\usepackage{tabularx}
\usepackage{booktabs}

\usepackage{amsmath,amsfonts,amssymb,mathtools}
\usepackage{bbm}
\usepackage[mathcal]{euscript}

\usepackage{quiver}
\usepackage{tikz-cd}
\usepackage{color}

\usepackage[url=false,backend=biber,style=ieee-alphabetic,dashed=true,giveninits=false,maxbibnames=999,maxalphanames=4]{biblatex}
\renewbibmacro{in:}{}

\DeclareFieldFormat[article,inbook,incollection,inproceedings,patent,thesis,unpublished]{title}{\emph{#1}\isdot}
\DeclareFieldFormat{journaltitle}{#1\nopunct}
\DeclareFieldFormat*[incollection,book]{booktitle}{#1\isdot}
\DeclareFieldFormat*[incollection,book]{series}{#1\isdot}
\DeclareFieldFormat[article,inbook,incollection,inproceedings,patent,thesis,unpublished]{volume}{vol.\ #1\isdot}

\usepackage{enumitem}
\setlist[enumerate]{label=\textup{(\arabic*)},nosep}

\usepackage{amsthm}
\usepackage[colorlinks=true,linkcolor=red!70!black,citecolor=cyan!25!blue,urlcolor=red!70!black,unicode,bookmarksnumbered]{hyperref}
\usepackage{xurl}
\hypersetup{breaklinks=true}
\usepackage[capitalize,compress]{cleveref}

\newtheorem{theorem}{Theorem}[section]
\newtheorem*{theorem*}{Theorem}
\newtheorem*{proposition*}{Proposition}
\newtheorem*{corollary*}{Corollary}
\newtheorem{maintheorem}{Theorem}

\usepackage{aliascnt}
\newaliascnt{lemma}{theorem}
\newtheorem{lemma}[lemma]{Lemma}
\aliascntresetthe{lemma}
\newaliascnt{corollary}{theorem}
\newtheorem{corollary}[corollary]{Corollary}
\aliascntresetthe{corollary}
\newaliascnt{proposition}{theorem}
\newtheorem{proposition}[proposition]{Proposition}
\aliascntresetthe{proposition}
\newaliascnt{question}{theorem}
\newtheorem{question}[question]{Question}
\aliascntresetthe{question}

\theoremstyle{definition}
\newtheorem*{definition*}{Definition}
\newaliascnt{definition}{theorem}
\newtheorem{definition}[definition]{Definition}
\aliascntresetthe{definition}
\newaliascnt{remark}{theorem}
\newtheorem{remark}[remark]{Remark}
\aliascntresetthe{remark}
\newaliascnt{example}{theorem}
\newtheorem{example}[example]{Example}
\aliascntresetthe{example}
\newaliascnt{construction}{theorem}
\newtheorem{construction}[construction]{Construction}
\aliascntresetthe{construction}
\newaliascnt{assumption}{theorem}
\newtheorem{assumption}[assumption]{Assumption}
\aliascntresetthe{assumption}

\newcommand{\coev}{\operatorname{coev}}
\newcommand{\gas}{\operatorname{gas}}
\newcommand{\VNuc}{\operatorname{VNuc}}
\newcommand{\solid}{\operatorname{solid}}
\newcommand{\an}{\operatorname{an}}
\newcommand{\KU}{\operatorname{KU}}
\newcommand{\cof}{\operatorname{cof}}
\newcommand{\const}{\operatorname{const}}

\newcommand{\A}{\mathcal{A}}

\newcommand{\C}{\mathcal{C}}
\newcommand{\D}{\mathcal{D}}
\newcommand{\E}{\mathcal{E}}
\newcommand{\F}{\mathcal{F}}

\renewcommand{\H}{\mathcal{H}}
\newcommand{\I}{\mathcal{I}}

\newcommand{\K}{\mathcal{K}}

\newcommand{\M}{\mathcal{M}}
\newcommand{\N}{\mathcal{N}}
\renewcommand{\O}{\mathcal{O}}
\renewcommand{\P}{\mathcal{P}}

\let\Section\S
\renewcommand{\S}{\mathcal{S}}

\newcommand{\U}{\mathcal{U}}

\newcommand{\bbC}{\mathbb{C}}

\newcommand{\bbE}{\mathbb{E}}
\newcommand{\bbF}{\mathbb{F}}

\newcommand{\bbN}{\mathbb{N}}

\newcommand{\bbQ}{\mathbb{Q}}

\newcommand{\bbS}{\mathbb{S}}

\newcommand{\bbZ}{\mathbb{Z}}

\newcommand{\fib}{\operatorname{fib}}

\newcommand{\id}{\operatorname{id}}
\newcommand{\colim}{\operatornamewithlimits{colim}}
\newcommand{\Hom}{\operatorname{Hom}}
\newcommand{\iHom}{\underline{\operatorname{Hom}}}
\newcommand{\End}{\operatorname{End}}
\newcommand{\iEnd}{\underline{\operatorname{End}}}

\newcommand{\Fun}{\operatorname{Fun}}
\newcommand{\FunL}{\operatorname{Fun}^\mathrm{L}}

\newcommand{\Gr}{\operatorname{Gr}}
\newcommand{\ev}{\operatorname{ev}}

\newcommand{\Nm}{\operatorname{Nm}}
\newcommand{\Tot}{\operatorname{Tot}}

\newcommand{\Spf}{\operatorname{Spf}}

\newcommand{\THH}{\operatorname{THH}}

\newcommand{\Frob}{\operatorname{Frob}}

\newcommand{\HH}{\operatorname{HH}}

\newcommand{\Cat}{\mathrm{Cat}}
\newcommand{\Mod}{\mathrm{Mod}}
\newcommand{\Grp}{\mathrm{Grp}}

\newcommand{\Shv}{\mathrm{Shv}}
\newcommand{\An}{\mathrm{An}}

\newcommand{\Alg}{\mathrm{Alg}}
\newcommand{\CAlg}{\mathrm{CAlg}}
\newcommand{\Sp}{\mathrm{Sp}}

\renewcommand{\Pr}{\mathrm{Pr}}
\newcommand{\PrL}{\mathrm{Pr}^\mathrm{L}}
\newcommand{\Ind}{\mathrm{Ind}}

\newcommand{\QCoh}{\mathrm{QCoh}}
\newcommand{\Perf}{\mathrm{Perf}}

\newcommand{\coMod}{\mathrm{coMod}}

\newcommand{\Nuc}{\mathrm{Nuc}}
\newcommand{\Calk}{\mathrm{Calk}}

\newcommand{\Free}{\mathrm{Free}}

\newcommand{\op}{\mathrm{op}}

\newcommand{\st}{\mathrm{st}}
\newcommand{\ad}{\mathrm{ad}}

\newcommand{\ca}{\mathrm{ca}}
\newcommand{\cg}{\mathrm{cg}}
\newcommand{\cont}{\mathrm{cont}}
\newcommand{\rig}{\mathrm{rig}}
\newcommand{\tors}{\mathrm{tors}}
\newcommand{\cplt}{\mathrm{cplt}}
\newcommand{\loc}{\mathrm{loc}}
\newcommand{\at}{\mathrm{at}}
\newcommand{\dbl}{\mathrm{dbl}}
\newcommand{\Loc}{\operatorname{Loc}}

\newcommand{\iL}{\mathrm{iL}}

\usepackage{relsize}

\title{Chromatic Purity of Dualizable Categories}
\author{Yifan Jin\thanks{School of Mathematical Sciences, Beijing Normal University, Beijing 100875, China; email: \texttt{\href{mailto:yfjin@mail.bnu.edu.cn}{yfjin@mail.bnu.edu.cn}}.}}
\date{\today}

\begin{document}

\maketitle

\begin{abstract}
    We develop the chromatic theory of dualizable stable categories, with chromatic purity for continuous $K$-theory at its center. We generalize the chromatic purity theorem for algebraic $K$-theory to continuous $K$-theory of dualizable stable categories and reformulate it as the purity of $H$-unital rings. Using categorical completion theory, we construct chromatic fracture squares and obtain a refinement of chromatic purity: for every dualizable stable category $\C$, the $T(n)\oplus T(n-1)$-completion map induces an equivalence
	\[
		K_{T(n)}^{\cont}(\C)\xrightarrow{\simeq}K_{T(n)}^{\cont}(\Nuc_{T(n)\oplus T(n-1)}(\C)).
	\]
	We also establish chromatic descent for continuous $K$-theory of dualizable homotopy fixed points and prove a categorical nuclear refinement of descent.
	
	This framework allows us to lift redshift bounds, Tate vanishing, and blueshift from spectra to dualizable stable categories. We show that the $T(n)$-completion of a rigid symmetric monoidal stable category is equivalent to the dualizable limit of module categories over a tower of type $n$ generalized Moore spectra. As applications, we prove that both nuclear solid and nuclear gaseous module categories satisfy chromatic redshift.
\end{abstract}

\begin{figure}[htbp]
	\centering
    \includegraphics[scale=0.045]{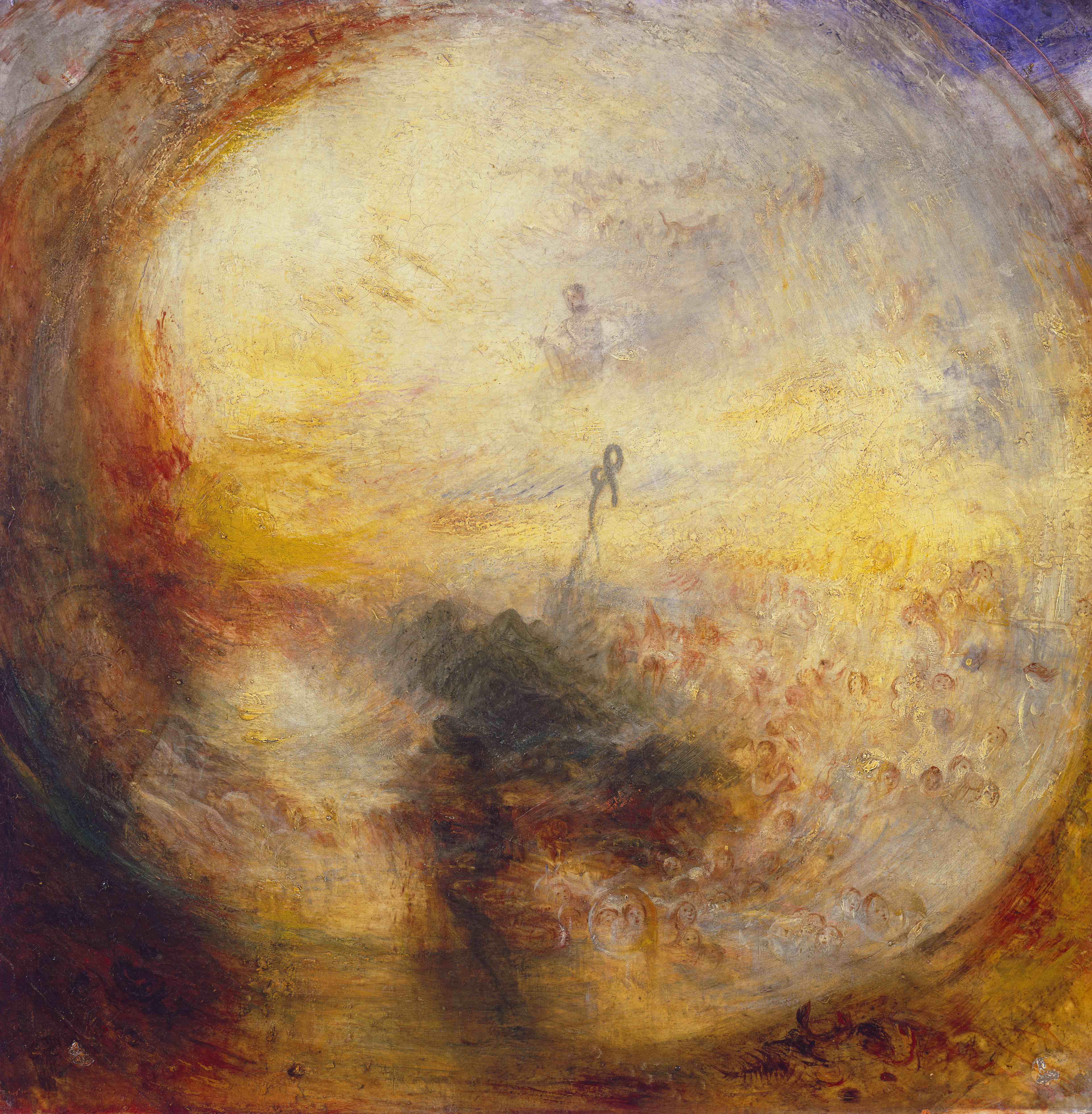}
    \caption{Light and Colour (Goethe's Theory), J.M.W. Turner}
\end{figure}

\newpage
\tableofcontents

\section*{Introduction}
\addcontentsline{toc}{section}{Introduction}

Chromatic purity is a beautiful theorem concerning the chromatic behavior of algebraic $K$-theory of small stable categories, or equivalently, compactly generated stable categories. The study of this behavior is guided by the redshift philosophy proposed by Ausoni--Rognes \cite{AR08}, which predicts that algebraic $K$-theory raises chromatic height by $1$. Chromatic purity asks which telescopic localization of an $\bbE_1$-ring suffices to determine its $T(n)$-local algebraic $K$-theory. The cases of heights $0$ and $1$ were considered by Waldhausen \cite{Wal78} and Bhatt--Clausen--Mathew \cite{BCM20}, respectively. A complete answer at arbitrary height was finally given, in categorical form, by Land--Mathew--Meier--Tamme \cite{LMMT24} and Clausen--Mathew--Naumann--Noel \cite{CMNN24}: the $T(n)$-local $K$-theory of a compactly generated stable category depends only on its $L_n^f$-localization and vanishes on its $L_{n-2}^f$-localization. More recently, Ben-Moshe gave a new proof of the vanishing statement \cite{BM26redshift}.

\begin{theorem*}[Purity, {\cite[Theorem A]{LMMT24}, \cite[Theorem C]{CMNN24}}]
	Let $n\geq1$, and let $\C$ be a compactly generated stable category. Then the canonical maps induce equivalences
	\[
		K_{T(n)}(\C)\xrightarrow{\simeq} K_{T(n)}(L_n^f\C),\quad K_{T(n)}(L_{n-2}^f\C)\simeq0.
	\]
\end{theorem*}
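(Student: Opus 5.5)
The plan is to reduce both statements to universal cases, using two facts. First, $K$ is a localizing invariant. Second, for a compactly generated stable $\C$, the spectrum $K(\C)$ is a module over $K(\bbS)$. More generally, if $\C$ is linear over a presentably symmetric monoidal $\E$, then $K(\C)$ is a module over the $\bbE_\infty$-ring $K(\E^\omega)$.

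\emph{Vanishing statement.} Since $L_{n-2}^f\C$ is linear over $\Mod_{L_{n-2}^f\bbS}$, the spectrum $K_{T(n)}(L_{n-2}^f\C)$ is a module over the $\bbE_\infty$-ring $L_{T(n)}K(L_{n-2}^f\bbS)$. It therefore suffices to show that this ring vanishes, that is, that its unit is nilpotent.

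\begin{enumerate}
	\item I would first split $L_{n-2}^f\bbS$ along the finite localizations $L_{m}^f$ for $m\le n-2$. Using localization sequences and induction on $m$, this reduces the claim to monochromatic layers at heights $m\le n-2$, which are linear over $L_{T(m)}\bbS$-type rings.
	\item Next I would pass to Lubin--Tate theory $E_m$. The extension $L_{K(m)}\bbS\to E_m$ is a pro-Galois extension. I would use descent for $T(n)$-local $K$-theory along finite Galois extensions, with the profinite case reached through a colimit argument. This reduces the claim to $L_{T(n)}K(E_m)=0$ for $m\le n-2$.
	\item Finally, I would show that $L_{T(n)}K(E_m)=0$ using the nilpotence theorem. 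It suffices to show that $K(n)_*K(E_m)=0$. Via the cyclotomic trace and the Dundas--Goodwillie--McCarthy theorem, I would try to compare $K(E_m)$ with $\mathrm{TC}$. The $T(n)$-local $\mathrm{TC}$ of an $E_m$-algebra is built from Tate constructions. Those vanish because the $E_m$-module $\THH(E_m)$ has height at most $m$, whereas Tate constructions raise height by at most one.
\end{enumerate}

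\emph{Purity statement.} Consider the Verdier sequence
\[
\C_{\mathrm{acyc}}\to \C\to L_n^f\C,
\]
where $\C_{\mathrm{acyc}}$ is the kernel of the localization $\C\to L_n^f\C$. Since $K$ is localizing, it suffices to show that $K_{T(n)}(\C_{\mathrm{acyc}})=0$.

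The category $\C_{\mathrm{acyc}}$ is generated by objects $F\otimes c$ with $F$ a finite spectrum of type $n+1$. Hence it is a module over the non-unital category of $L_n^f$-acyclic finite spectra. Writing $\C_{\mathrm{acyc}}$ as a filtered colimit, I reduce to categories linear over $\End(F)$ for a type $n+1$ complex $F$ equipped with a $v_{n+1}$-self map. Then $K_{T(n)}(\C_{\mathrm{acyc}})$ becomes a module over $L_{T(n)}K(\End(F))$.

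I would then show that $L_{T(n)}K(\End(F))=0$. The approach is to use that $K$-theory after $T(n)$-localization is insensitive to the difference between $\End(F)$ and its $L_{n+1}^f$-localization, via a truncating-invariant argument. After that, I would apply the height-shift vanishing established in the first part, with height $n+1$ now playing the role that height $n-2$ played there. The point is that $T(n)$-local $K$-theory is concentrated in the heights adjacent to $n$, and a module over a ring of height at least $n+1$ sits outside that range.

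I expect the main obstacle to be step (3) of the vanishing argument: controlling $K(n)_*K(E_m)$ through the trace, in particular the Tate-vanishing input and the passage from $K$ to $\mathrm{TC}$ at the $T(n)$-local level. Some care is also needed so that the Galois descent in step (2) commutes with $T(n)$-localization at the profinite level.
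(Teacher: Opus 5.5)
The paper does not prove this statement. It quotes it from \cite{LMMT24} (the first equivalence) and \cite{CMNN24} (the vanishing) and uses it as a black box, e.g.\ in \cref{thm:purity_dualizable}. Judged on its own, your proposal has genuine gaps, and the decisive one is the last step of the purity half.

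Your reduction to $K_{T(n)}(\C_{\mathrm{acyc}})\simeq0$ is correct, but it is not a simplification. Every $L_n^f$-acyclic category is its own $\C_{\mathrm{acyc}}$, so the first equivalence is \emph{equivalent} to the vanishing of $T(n)$-local $K$-theory on all $L_n^f$-acyclic categories. Establishing that vanishing is precisely the content of \cite{LMMT24}.

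Your first part cannot supply it. It only kills $T(n)$-local $K$-theory of low-height input, and putting $n+1$ in the role of $n-2$ gives a statement about $T(n+3)$-local $K$-theory. The principle you invoke instead is false. Take $R=L_{T(n+1)}\bbS$, which has height $n+1$. The dimension map $K(R)\to R$ of \cref{def:dim} is an $\bbE_\infty$-map, and $L_{T(n)}R\neq0$ by \cref{thm:Hahn}, so $L_{T(n)}K(R)\neq0$. What matters for $\End(F)$ is that it is $L_n^f$-acyclic, not its height, and your sketch contains no argument that uses this.

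The intermediate step is also unavailable. $L_{T(n)}K$ is not a truncating invariant (e.g.\ $L_{T(2)}K(\bbS)\neq0=L_{T(2)}K(\bbZ)$), and $\End(F)$ is not connective. Moreover, the identification $K_{T(n)}(\End(F))\simeq K_{T(n)}(L^f_{n+1}\End(F))$ is itself a consequence of the statement you are proving.

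The vanishing half has two further gaps.

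\emph{Step (2).} It passes from the $T(m)$-local rings produced in step (1) to $L_{K(m)}\bbS$ and $E_m$. This silently assumes $L_{T(m)}\simeq L_{K(m)}$, i.e.\ the telescope conjecture, which is false for $m\geq2$ \cite{BHLS23telescope}. Vanishing of $L_{T(n)}K(E_m)$ says nothing about the source of the $\bbE_\infty$-map $K(L_{T(m)}\bbS)\to K(E_m)$ without a descent statement. No $E_m$-based descent sees the $K(m)$-acyclic part of $L_{T(m)}\Sp$. Even $K(m)$-locally, you would need descent beyond finite $p$-groups, and a way to handle the fact that $E_m$ is $L_{K(m)}\colim_UE_m^{hU}$ rather than $\colim_UE_m^{hU}$.

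\emph{Step (3).} It fails at $E_m$ itself. Dundas--Goodwillie--McCarthy requires connective rings, and for periodic $E_m$ the trace is not even a $T(m+1)$-local equivalence. Indeed, $\THH(E_m)$ is an $E_m$-module, hence $L_m^f$-local. Since $L_m^f$-local spectra are closed under limits and colimits, its homotopy orbits, fixed points and Tate constructions, and hence $\mathrm{TC}(E_m)$, are all $L_m^f$-local. So $\mathrm{TC}(E_m)$ is $T(m+1)$-acyclic, whereas $L_{T(m+1)}K(E_m)\neq0$ by redshift for $\bbE_\infty$-rings. Nothing in your sketch explains why the comparison should hold at heights $\geq m+2$, which is the actual content. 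Passing to $\tau_{\geq0}E_m$ only moves the difficulty into bounding the height of $\mathrm{TC}(\tau_{\geq0}E_m)$, and that is essentially the redshift upper bound you are trying to prove.
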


Equivalently, the $T(n)$-local algebraic $K$-theory of $\C$ agrees with that of its $T(n)\oplus T(n-1)$-localization $L_{T(n)\oplus T(n-1)}\C\simeq L_{T(n)\oplus T(n-1)}\Sp\otimes\C$. For an $\bbE_1$-ring, the chromatic fracture square therefore gives the following purity theorem for $\bbE_1$-rings.

\begin{theorem*}[Purity of $\bbE_1$-rings, {\cite[Corollary 4.11]{CMNN24}, \cite[Purity Theorem]{LMMT24}}]
	Let $n\geq1$, and let $R$ be an $\bbE_1$-ring. Then the canonical map $R\to L_{T(n-1)\oplus T(n)}R$ induces an equivalence
	\[
		K_{T(n)}(R)\xrightarrow{\simeq} K_{T(n)}(L_{T(n)\oplus T(n-1)}R).
	\]
\end{theorem*}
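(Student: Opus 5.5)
We deduce the statement from the categorical purity theorem stated just above, applied to $\C=\Mod_R$, together with a chromatic fracture square for $\bbE_1$-rings and excision for $K_{T(n)}$. Throughout write $L:=L_{T(n)\oplus T(n-1)}$. By the categorical purity theorem, $K_{T(n)}(R)\simeq K_{T(n)}(L_n^fR)$, since $L_n^f\Mod_R\simeq\Mod_{L_n^fR}$ because $L_n^f$ is smashing. So we may replace $R$ by $L_n^fR$. Since $L_n^f$ is smashing and $L(L_n^fR)\simeq LR$, it suffices to prove the statement for $L_n^f$-local $R$.

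\textbf{Step 1: a pullback of rings.} For $L_n^f$-local $R$ there is a pullback square of $\bbE_1$-rings
\[
\begin{tikzcd}
	R \arrow[r]\arrow[d] & LR \arrow[d]\\
	L_{n-2}^fR \arrow[r] & L_{n-2}^f LR .
\end{tikzcd}
\]
To check this, first note that the fiber of $R\to L_{n-2}^fR$ is $C_{n-2}^fR$. Its $L_n^f$-localization is $L_n^fC_{n-2}^f\mathbb S\otimes R$, and this spectrum is built from the $T(n-1)$ and $T(n)$ monochromatic layers, so it is $L$-local. Concretely, $L_n^fC_{n-2}^f\mathbb S$ is $T(n)\oplus T(n-1)$-local, because its $L_{n-2}^f$-localization vanishes and $L_n^f$ is recovered from $L$ and $L_{n-2}^f$ by the telescopic fracture square. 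Hence $C_{n-2}^fR\simeq C_{n-2}^f LR$, and the square is cartesian. It is a square of $\bbE_1$-rings because all maps are localizations of $\bbE_1$-algebras, and smashing localizations preserve algebra structures.

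\textbf{Step 2: excision.} Apply Land--Tamme excision. For a pullback square of $\bbE_1$-rings $A=B\times_DC$, there is a pullback square of $K$-theory
\[
K(A)\simeq K(B)\times_{K(B\odot_A^D C)}K(C),
\]
where $B\odot_A^DC$ is the $\odot$-ring. Here $C=L_{n-2}^fR$, and the $\odot$-ring $LR\odot^{L_{n-2}^fLR}_R L_{n-2}^fR$ receives a ring map from $L_{n-2}^fR$. It is therefore a module over $L_{n-2}^f R$, hence $L_{n-2}^f$-local, and we should check that it is an $\bbE_1$-ring which is $L_{n-2}^f$-local as a spectrum. Any $\bbE_1$-ring that is $L_{n-2}^f$-local has $\Mod$ equal to its own $L_{n-2}^f$-localization. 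The vanishing half of the purity theorem then gives
\[
K_{T(n)}(L_{n-2}^fR)\simeq0,\qquad K_{T(n)}(B\odot C)\simeq0.
\]
After $T(n)$-localization, which is exact, the excision square collapses to $K_{T(n)}(R)\simeq K_{T(n)}(LR)$, induced by the canonical map.

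\textbf{Main obstacle.} The main obstacle is Step 1: the precise verification that $L_n^f C^f_{n-2}\mathbb S$ is $T(n)\oplus T(n-1)$-local. This is the standard statement that the fiber of $L_n^f\to L_{n-2}^f$ is $L_{T(n-1)\oplus T(n)}$-local, and we would deduce it by induction from the monochromatic fracture squares $L_k^f=L_{T(k)}\times_{L_{k-1}^fL_{T(k)}}L_{k-1}^f$. We also need to confirm that the $\odot$-ring is $L^f_{n-2}$-local. Note that $\odot$ is formed as a tensor over $R$ involving $L_{n-2}^fR$, and this uses smashness once more.
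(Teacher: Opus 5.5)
Your overall architecture is sound and matches what the paper has in mind. You reduce to $L_n^f$-local $R$ by categorical purity, use the chromatic fracture square of $\bbE_1$-rings, and kill the $L_{n-2}^f$-local corners with the vanishing half of purity. However, the justification you give for Step 1 is false for $n\geq2$, and so is the lemma you single out as the main obstacle. The spectrum $M:=L_n^fC_{n-2}^f\bbS=M_n^{2,f}$ is \emph{not} $L_{T(n)\oplus T(n-1)}$-local. Indeed, the cofiber $L_{n-2}^f\bbS$ of $M\to L_n^f\bbS$ is $L$-acyclic, so $LM\simeq L\bbS$, and if $M$ were $L$-local it would equal $L\bbS$. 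But $L_{n-2}^fM\simeq0$, whereas $L_{n-2}^fL\bbS\neq0$: its rationalization is $L\bbS\otimes\bbQ$, which is nonzero because $L\bbS$ admits a ring map to $E_{n-1}$. What your fracture-square reasoning actually gives is $M\simeq\fib(LM\to L_{n-2}^fLM)=C_{n-2}^fL\bbS$, the torsion incarnation of $L\bbS$. The subcategories $M_n^{2,f}\Sp$ and $L_{T(n)\oplus T(n-1)}\Sp$ of $\Sp$ are equivalent but not equal; this is exactly the content of \cref{prop:M_n^mf}. Likewise $C_{n-2}^fR\simeq M_n^{2,f}\otimes R$ is generally not $L$-local (take $R=L_n^f\bbS$). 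So the inference ``both $C_{n-2}^fR$ and $C_{n-2}^fLR$ are $L$-local, hence the $L$-equivalence between them is an equivalence'' has no footing, and the induction you propose would be aimed at a false statement.

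The conclusion of Step 1 is nevertheless true, by a different argument. The spectrum $F:=\fib(R\to LR)$ is $L_n^f$-local and $T(n-1)\oplus T(n)$-acyclic. Hence $C_{n-2}^fF\simeq C_{n-2}^f\bbS\otimes F$ is $L_n^f$-local and $T(i)$-acyclic for every $i\leq n$: for $i\leq n-2$ because of the factor $C_{n-2}^f\bbS$, and for $i=n-1,n$ because of $F$. So it is zero, giving $C_{n-2}^fR\xrightarrow{\simeq}C_{n-2}^fLR$, and the square is cartesian. This is \cref{prop:fracture_square_functors}(1) with $E=T(0)\oplus\cdots\oplus T(n-2)$, which is smashing. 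Note also that $L$ is not smashing; what makes the square one of $\bbE_1$-rings is just that Bousfield localizations of $\Sp$ are compatible with $\otimes$.

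With Step 1 repaired, Step 2 is correct, including your argument that the $\odot$-ring is $L_{n-2}^f$-local. You can in fact avoid the general $\odot$-construction. Since $R\to L_{n-2}^fR$ is a homological epimorphism, the $\odot$-ring has underlying spectrum $L_{n-2}^fR\otimes_RLR\simeq L_{n-2}^fLR$. Moreover, \cref{prop:Tam_pullback_homoepi} directly identifies the fibers of the two base-change functors, $\Mod(R,I)\simeq\Mod(LR,I)$, so the two localization sequences yield the $K$-theory pullback. This is the route the paper's framework takes (cf.\ \cref{exm:fracture_square_L_E_L_F}).
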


For a category, however, the corresponding equivalence is not induced directly by $\C\to L_{T(n)\oplus T(n-1)}\C$, since this localization map need not be an internal left adjoint. This naturally raises the question of whether one can construct a categorical chromatic fracture square and obtain a refined version of purity.

Moreover, Ausoni--Rognes \cite{AR08} proposed $T(n)$-local Galois descent as part of the redshift philosophy. The height $0$ case was proved by Thomason--Trobaugh \cite{Tho85,TT90}, and Clausen--Mathew--Naumann--Noel \cite{CMNN24} established the theorem at arbitrary height for finite $p$-groups.

\begin{theorem*}[Descent, {\cite[Theorem C, Proposition 4.1]{CMNN24}}]
	Let $n\geq0$, and let $\C$ be an $L_n^f$-local compactly generated stable category acted on by a finite $p$-group $G$. Then the canonical maps induce equivalences
	\[
		K_{T(n+1)}(\C^{hG,\cg})\xrightarrow{\simeq}K_{T(n+1)}(\C)^{hG},\quad K_{T(n+1)}(\C)_{hG}\xrightarrow{\simeq}K_{T(n+1)}(\C_{hG}).
	\]
\end{theorem*}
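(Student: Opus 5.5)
This is the descent theorem of Clausen--Mathew--Naumann--Noel, quoted in the introduction as background. I would reconstruct its proof along the lines of CMNN, in three steps.

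\emph{Step 1: reduce to $G=C_p$.} A finite $p$-group admits a normal series with cyclic quotients of order $p$. Homotopy fixed points compose along such a series: $\C^{hG}\simeq(\C^{hN})^{h(G/N)}$, and the compactly generated variant satisfies the same identity, since the compact objects of $\C^{hG}$ are exactly those whose underlying object is compact. The $L_n^f$-locality of $\C$ passes to $\C^{hN}$, because $L_n^f$-local categories are closed under limits in $\mathrm{Mod}_{L_n^f\bbS}$. Orbits can be treated dually. So by induction it suffices to treat $G=C_p$.

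\emph{Step 2: the key vanishing input.} Purity (the first theorem quoted above) gives $K_{T(n+1)}(L_{n-1}^f\mathcal{D})=0$ for every compactly generated $\mathcal{D}$. I would combine this with the fact that, for an $L_n^f$-local category with $G$-action, the Tate construction $\C^{tG}$ is $L_{n-1}^f$-local. This is the categorical form of Kuhn's blueshift/Tate vanishing for $T(n)$-local spectra, extended to $L_n^f$: the Tate construction of an $L_n^f$-local $\bbE_\infty$-ring is $L_{n-1}^f$-local. Applying it to the unit $L_n^f\bbS$ with trivial action, and using that $\C^{tG}$ is linear over $(L_n^f\bbS)^{tG}$, shows that the relevant Tate/Verdier quotients have vanishing $K_{T(n+1)}$.

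\emph{Step 3: the comparison.} There is a Verdier sequence relating $\C_{hG}$, $\C^{hG}$ and the Tate category, coming from the norm map $\Nm\colon\C_{hG}\to\C^{hG}$. By Step 2, $K_{T(n+1)}$ sends $\Nm$ to an equivalence. On the $K$-theory side, $K(\C)^{tG}$ becomes $T(n+1)$-locally trivial. This follows from the general statement that $T(n+1)$-local spectra with a $p$-group action have vanishing $T(n+1)$-local Tate construction (Kuhn), so $K_{T(n+1)}(\C)_{hG}\simeq K_{T(n+1)}(\C)^{hG}$. We then have a commutative square whose vertical maps are the assembly map $K(\C)_{hG}\to K(\C_{hG})$ and the coassembly map $K(\C^{hG})\to K(\C)^{hG}$, and whose horizontal maps are the two norm maps. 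Both horizontal maps become $T(n+1)$-local equivalences, so the two claims are equivalent to each other, and it suffices to prove one of them.

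To prove one claim I would follow CMNN's approach via the induced $G$-category. For the free category $\mathrm{Ind}_e^G\mathcal{D}$ both claims are tautological. A thick-subcategory argument then reduces the general case to the free case modulo objects in the thick tensor ideal generated by the Tate part, which Step 2 kills. Concretely, $\C^{hG}$ is generated, up to a $T(n+1)$-acyclic quotient, by the image of the induction $\C\to\C^{hG}$. This holds because the cofiber of $\C_{hG}\to\C^{hG}$ on compacts is $L_{n-1}^f$-local. Using this, $K_{T(n+1)}(\C^{hG})$ is computed by the cobar/Amitsur complex of $\C\to\C^{hG}$, which is nilpotent descendable $T(n+1)$-locally and therefore yields $K(\C)^{hG}$.

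I expect the main obstacle to be this last step: establishing that the relevant $G$-Galois-type extension is $T(n+1)$-locally descendable at the categorical level, so that $K$-theory commutes with the totalization. This uses the full strength of purity, and in particular the vanishing $K_{T(n+1)}(L_{n-1}^f\mathcal{D})=0$, together with nilpotence of the augmentation ideal of $(L_n^f\bbS)^{hG}$ modulo $L_{n-1}^f$.
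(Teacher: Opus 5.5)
The paper does not prove this statement. It quotes it from CMNN and uses it as a black box (e.g.\ in \cref{thm:descent_dualizable}), so I assess your reconstruction on its own terms.

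Steps 1 and 2, and the norm/assembly/coassembly square, are sound. The Verdier quotient of $(\C^\omega)^{hG}$ by induced objects is linear over $(L_n^f\bbS)^{tG}$, hence $L_{n-1}^f$-local, so purity kills its $K_{T(n+1)}$. The norm $K_{T(n+1)}(\C_{hG})\to K_{T(n+1)}(\C^{hG,\cg})$ is therefore an equivalence, and together with Kuhn the two claims become equivalent to each other. This is essentially what the paper itself proves, for \emph{all} finite groups, in \cref{thm:tate_vanishing,thm:nuc_descent}. Tellingly, the paper then obtains the comparison with $K_{T(n+1)}(\C)^{hG}$ only for $p$-groups, and only by citing CMNN, not by deriving it from the norm equivalence.

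\textbf{The gap is in your last paragraph.} Vanishing on the Tate quotient only identifies $K_{T(n+1)}(\C_{hG})$ with $K_{T(n+1)}(\C^{hG,\cg})$, which is a comparison between two categories. It says nothing about the assembly map $K(\C)_{hG}\to K(\C_{hG})$, because $K$ does not commute with the non-filtered colimit over $BG$.
\begin{itemize}
\item ``$(\C^\omega)^{hG}$ is generated by induced objects'' is not the same as ``$\C$ is built from free $G$-categories $\mathrm{Ind}_e^G\mathcal{D}$''. So there is no reduction to the free case.
\item ``The cobar complex of $\C\to\C^{hG}$ is $T(n+1)$-locally descendable'' is asserted, not derived. $K$ does not commute with totalizations of categories, so any descendability must be formulated for $K$-theory spectra as modules over a suitable ring. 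That is precisely the content of the theorem.
\end{itemize}

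Nothing $p$-specific enters your argument after Step 1, and the inputs you use cannot suffice. Take $q\neq p$, the trivial $C_q$-action on $\Mod(\bbC)$, and $n=0$. The Tate quotient is zero since $\bbC^{tC_q}=0$, and $T(1)$-local Tate vanishing holds. Yet
\[
K_{T(1)}(\Mod(\bbC)^{hC_q,\cg})\simeq(\KU^{\wedge}_p)^{\times q},\qquad K_{T(1)}(\Mod(\bbC))^{hC_q}\simeq\KU^{\wedge}_p .
\]

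What is missing is a genuinely $p$-group-specific completion statement. For the trivial $C_p$-action on $\Mod(\bbC)$ it is the $p$-adic Atiyah--Segal isomorphism $R(C_p)\otimes\bbZ_p\cong\KU^0(BC_p)^{\wedge}_p$. My understanding is that CMNN supply this input via an equivariant nilpotence argument for the $K$-theory Mackey functor $H\mapsto K(\C^{hH})$, in the sense of Mathew--Naumann--Noel. They run it by induction on height, simultaneously with the vanishing statement, and it yields both equivalences at once. Your closing paragraph correctly names this as the obstacle but does not overcome it, so the proposal is incomplete at exactly the step that carries the theorem.
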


Combining this result with purity, they proved that $T(n+1)$-local $K$-theory satisfies descent for $T(n)$-local Galois extensions. Ben-Moshe--Carmeli--Schlank--Yanovski \cite{BMCSY25redshift} generalized this result to categorical descent and Galois descent for higher $p$-groups. Burklund--Clausen \cite{Cla24perfection} announced a proof for arbitrary finite groups, thereby resolving the Ausoni--Rognes conjecture on Galois descent.

As a significant application of chromatic algebraic $K$-theory, Burklund--Hahn--Levy--Schlank \cite{BHLS23telescope} disproved the Telescope Conjecture by constructing an algebraic $K$-theoretic counterexample. Their result also provides natural examples of stable categories that are not compactly generated: for $n\geq2$, the fiber category $\fib(L_n:\Sp\to L_n\Sp)$ cannot be compactly generated, since otherwise it would agree with $\fib(L_n^f:\Sp\to L_n^f\Sp)$, forcing $L_n\simeq L_n^f$. Similar categories arise naturally in analytic geometry. For example, the nuclear module categories of Clausen--Scholze are not necessarily compactly generated, but can be realized as fibers of internal localizations between compactly generated categories. Consequently, ordinary algebraic $K$-theory does not directly apply to these categories.

Efimov \cite{Efi25localizing} introduced the theory of \emph{continuous $K$-theory} for a certain class of well-behaved large categories. It extends algebraic $K$-theory from compactly generated stable categories to \emph{dualizable stable categories}, i.e. dualizable objects in $\PrL_{\st}$. We denote by $\Pr^{\dbl}_{\st}$ the category of dualizable stable categories with internal left adjoints. Every dualizable category admits a presentation $\C\simeq\fib(\D\to\E)$ for an internal localization between compactly generated stable categories. Its continuous $K$-theory is then given by
\[
	K^{\cont}(\C):=\fib(K(\D)\to K(\E)).
\]
Every $\C$ admits a canonical presentation of this form through the continuous Calkin category construction.

Our goal in this paper is to develop the chromatic theory of dualizable stable categories. The guiding principle is the passage
\begin{center}
	\emph{from $T(n)$-localization of spectra to $T(n)$-completion of dualizable stable categories.}
\end{center}
For this purpose, we use Zhou's categorical completion theory \cite{Zhob} to define chromatic completions of dualizable stable categories. Together with localizations obtained by base change, these completions give rise to torsion--complete equivalences. We extend chromatic purity and descent to dualizable stable categories and refine purity through categorical completion and categorical chromatic fracture squares. We also lift redshift bounds, Tate vanishing, and blueshift from spectra to categories, prove continuity with respect to suitable towers of $\bbE_1$-algebras, and apply these results to nuclear solid and nuclear gaseous module categories.

An important role is played by the notion of a \emph{rigid symmetric monoidal stable category}, introduced by Gaitsgory--Rozenblyum \cite{GR17}. A presentably symmetric monoidal stable category $\C$ is called rigid if $\mathbbm{1}\in\C^{\omega}$ and the multiplication $\C\otimes\C\to\C$ admits a colimit preserving right adjoint satisfying the projection formula. In the compactly generated case, this is equivalent to the compact and dualizable objects coinciding. Thus, the $\Ind$-completion of a small symmetric monoidal stable category is rigid precisely when every object of the small category is dualizable, recovering the classical notion of rigidity. Since categorical completions preserve rigidity, the nuclear refinement of purity is particularly well suited to studying the continuous $K$-theory and chromatic behavior of rigid symmetric monoidal stable categories.

\subsection*{Main results}
\addcontentsline{toc}{subsection}{Main results}

To develop chromatic theory at a general height, we first define the relevant chromatic layers.

\begin{definition*}[{\cref{def:chromatic_layer}}]
	Let $n\geq0$, and let $1\leq m\leq n+1$. We define the \emph{$n$-th length-$m$ chromatic layer spectrum} as
	\[
		M_n^{m,f}:=\fib(L_n^f\bbS\to L_{n-m}^f\bbS),
	\]
	where $L_{-1}^f\bbS:=0$. In particular, $M_n^{1,f}$ is the $n$-th monochromatic layer spectrum $M_n^f:=\fib(L_n^f\bbS\to L_{n-1}^f\bbS)$.
\end{definition*}

Since the continuous $K$-theory of a dualizable stable category is determined by the algebraic $K$-theory of compactly generated stable categories, the purity theorem extends naturally as follows.

\begin{maintheorem}[Purity, {\cref{thm:purity_dualizable}}]
	Let $n\geq1$, and let $\C$ be a dualizable stable category. Then the canonical maps induce equivalences
	\[
		K_{T(n)}^{\cont}(\C)\xrightarrow{\simeq}K_{T(n)}^{\cont}(L_n^f\C)\xleftarrow{\simeq}K_{T(n)}^{\cont}(M_n^{2,f}\C).
	\]
	In particular, we have an equivalence
	\[
		K_{T(n)}^{\cont}(\C)\simeq K_{T(n)}^{\cont}(L_{T(n)\oplus T(n-1)}\C).
	\]
\end{maintheorem}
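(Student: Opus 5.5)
We reduce to the compactly generated purity theorem of Land--Mathew--Meier--Tamme and Clausen--Mathew--Naumann--Noel by choosing a presentation. Write $\C\simeq\fib(\D\to\E)$ with $\D\to\E$ an internal localization between compactly generated stable categories; for instance take $\D=\Ind(\C^{\omega_1})$ and $\E$ the continuous Calkin category. For a spectrum $A$ that is an idempotent algebra or, more generally, a dualizable object of $\PrL_{\st}$ ($L_n^f\Sp$, $L_{n-2}^f\Sp$, $\Mod$ over $M_n^{2,f}$ via the fiber description), the functor $A\otimes-$ on $\PrL_{\st}$ preserves fiber sequences of dualizable categories, i.e.\ short exact sequences in $\Pr^{\dbl}_{\st}$. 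Here we interpret $L_n^f\C$ as $L_n^f\Sp\otimes\C$ and $M_n^{2,f}\C$ as the kernel of $L_n^f\C\to L_{n-2}^f\C$.

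The key point is that $L_n^f\Sp$ and $L_{n-2}^f\Sp$ are compactly generated smashing localizations (finite localizations). Therefore $L^f_k\D$ and $L^f_k\E$ are again compactly generated, and $L^f_k\D\to L^f_k\E$ is still an internal localization. Tensoring is exact on $\Pr^{\dbl}_{\st}$, which gives
\[
K^{\cont}(L_k^f\C)\simeq\fib\bigl(K(L_k^f\D)\to K(L_k^f\E)\bigr).
\]
Applying $L_{T(n)}$ (exact) and the compactly generated purity theorem to $\D$ and $\E$ gives $K^{\cont}_{T(n)}(\C)\simeq K^{\cont}_{T(n)}(L_n^f\C)$ and $K^{\cont}_{T(n)}(L_{n-2}^f\C)\simeq0$.

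Next use the localization sequence $M_n^{2,f}\C\to L_n^f\C\to L_{n-2}^f\C$. This is a short exact sequence in $\Pr^{\dbl}_{\st}$, because $L_{n-2}^f\C$ is a smashing localization of $L_n^f\C$ with dualizable kernel. Continuous $K$-theory is localizing, so the vanishing of the third term yields the second equivalence.

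For the final statement, use the equivalence $L_{T(n)\oplus T(n-1)}\Sp\simeq L_{T(n)\oplus T(n-1)}\Sp\otimes M_n^{2,f}$-type identification. More precisely, the $T(n)\oplus T(n-1)$-localization of $M_n^{2,f}\C$ and of $\C$ relate through the chromatic fracture square. Apply the previous step to the category $L_{T(n)\oplus T(n-1)}\C$, which is dualizable as a tensor with a dualizable category. Its $L_n^f$ and $M_n^{2,f}$-parts compare with those of $\C$, since $M_n^{2,f}\otimes L_{T(n)\oplus T(n-1)}\Sp$ receives a $T(n)\oplus T(n-1)$-equivalence from $M_n^{2,f}$. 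Alternatively, argue as in the $\bbE_1$-ring case: purity for $\D,\E$ in the form $K_{T(n)}(\D)\simeq K_{T(n)}(L_{T(n)\oplus T(n-1)}\D)$ holds for compactly generated categories. The latter is compactly generated because $L_{T(n)\oplus T(n-1)}\Sp$ is dualizable (indeed compactly generated by the $L$-localizations of finite type-$\ge n-1$ complexes), so the fiber argument applies.

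The main obstacle is this last identification. One must check that $L_{T(n)\oplus T(n-1)}\D\to L_{T(n)\oplus T(n-1)}\E$ is still an internal localization of compactly generated categories with fiber $L_{T(n)\oplus T(n-1)}\C$, and that the compactly generated equivalence is natural in $\D\to\E$. I would first try to deduce both from dualizability of $L_{T(n)\oplus T(n-1)}\Sp$ in $\PrL_{\st}$ and the general fact that tensoring with a dualizable category preserves compact generation and internal localizations.
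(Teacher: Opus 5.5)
\textbf{The two displayed equivalences.} Your argument here is correct and is essentially the paper's. The paper notes that $K^{\cont}_{T(n)}$, $K^{\cont}_{T(n)}(L_n^f(-))$ and $K^{\cont}_{T(n)}(M_n^{2,f}(-))$ are localizing invariants, because $L_n^f\Sp\otimes-$ and $M_n^{2,f}\Sp\otimes-$ preserve short exact sequences in $\Pr^{\dbl}_{\st}$. It then invokes Efimov's equivalence $\Fun^{\loc}(\Pr^{\dbl}_{\st},L_{T(n)}\Sp)\simeq\Fun^{\loc}(\Pr^{\cg}_{\st},L_{T(n)}\Sp)$. Your explicit presentation $\C\hookrightarrow\Ind(\C^{\omega_1})\to\Ind(\Calk^{\cont}_{\omega_1}(\C))$ is just the unwinding of that equivalence. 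Deducing the second equivalence from the sequence $M_n^{2,f}\C\to L_n^f\C\to L_{n-2}^f\C$ together with the vanishing of $K^{\cont}_{T(n)}(L_{n-2}^f\C)$ is a harmless variant.

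\textbf{The gap: the final ``in particular''.} The missing idea is a categorical identification, not a spectrum-level one. Inside $L_n^f\Sp$, the smashing ideal $M_n^{2,f}\Sp$ is equivalent to $L_{T(n)\oplus T(n-1)}\Sp$, via $L_{T(n)\oplus T(n-1)}$ with inverse $M_n^{2,f}\otimes-$; this is the torsion--complete equivalence of \cref{prop:M_n^mf}. It is an equivalence in $\PrL_{\st}$, so tensoring with $\C$ gives $M_n^{2,f}\C\simeq L_{T(n)\oplus T(n-1)}\C$ in $\Pr^{\dbl}_{\st}$. The last statement then follows at once from the second equivalence.

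Your sketch never produces this equivalence:
\begin{itemize}
\item The fracture-square remark, and the fact that $M_n^{2,f}\to L_{T(n)\oplus T(n-1)}\bbS$ is a $T(n)\oplus T(n-1)$-equivalence, are statements about spectra. They give no functor between the categories $M_n^{2,f}\C$ and $L_{T(n)\oplus T(n-1)}\C$.
\item Applying the first part to $\C'=L_{T(n)\oplus T(n-1)}\C$ is tautological, since $M_n^{2,f}\C'\simeq L_n^f\C'\simeq\C'$.
\end{itemize}

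Your alternative route does not close as planned either:
\begin{itemize}
\item The ``general fact'' that tensoring with a dualizable category preserves compact generation is false: $\Sp\otimes\M\simeq\M$ for any dualizable $\M$. Here $L_{T(n)\oplus T(n-1)}\Sp$ does happen to be compactly generated, but the reason is precisely its equivalence with $M_n^{2,f}\Sp$, which is generated by the compact object $L_n^fF(n-1)$.
\item More seriously, the naturality you need cannot come from dualizability. The localization $\D\to L_{T(n)\oplus T(n-1)}\D$ is in general not an internal left adjoint, so it induces no map in $\Pr^{\cg}_{\st}$. The compactly generated ``equivalence'' $K_{T(n)}(\D)\simeq K_{T(n)}(L_{T(n)\oplus T(n-1)}\D)$ is itself the zigzag through $L_n^f\D$ and $M_n^{2,f}\D$, composed with the identification $M_n^{2,f}\D\simeq L_{T(n)\oplus T(n-1)}\D$.
\end{itemize}
So that route is circular without the missing equivalence, and superfluous once you have it.
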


This theorem is the foundation for the $K$-theory results of the paper. Its last equivalence identifies the relevant chromatic localization at the level of $K$-theory spectra, but does not provide a map of dualizable stable categories inducing this equivalence. Zhou's categorical completion theory \cite{Zhob} supplies a canonical refinement.

Let $n\geq1$ and let $\C$ be a dualizable stable category. We define the \emph{$T(n)\oplus T(n-1)$-completion} of $\C$ as
\[
	\Nuc_{T(n)\oplus T(n-1)}(\C):=\iHom^{\dbl}_{L_n^f\Sp}(L_{T(n)\oplus T(n-1)}\Sp,L_n^f\C),
\]
where $\iHom^{\dbl}_{L_n^f\Sp}$ denotes the internal Hom in $\Pr^{\dbl}_{L_n^f\Sp}$. The notation $\Nuc$ comes from the fact that $\Nuc_{T(n)\oplus T(n-1)}(\C)$ is the category of nuclear objects in $L_{T(n)\oplus T(n-1)}\C$ whenever $\C$ is rigid symmetric monoidal. Zhou's completion theory gives an equivalence between $T(n)\oplus T(n-1)$-local and $T(n)\oplus T(n-1)$-complete categories, given by nuclear completion $\Nuc_{T(n)\oplus T(n-1)}$ with inverse $L_{T(n)\oplus T(n-1)}$. Here a dualizable stable category $\C$ is called \emph{$T(n)\oplus T(n-1)$-complete} if the canonical completion map $\C\to L_n^f\C\to\Nuc_{T(n)\oplus T(n-1)}(\C)$ is an equivalence.

\begin{maintheorem}[Nuclear purity, {\cref{thm:Nuc_T(n),thm:nuc_purity}}]
	Let $n\geq1$, and let $\C$ be a dualizable stable category. Then we have a pullback diagram of dualizable stable categories
	\[\begin{tikzcd}
		{L_n^f\C} & {L_{n-2}^f\C} \\
		{\Nuc_{T(n)\oplus T(n-1)}(\C)} & {L_{n-2}^f\Nuc_{T(n)\oplus T(n-1)}(\C)}
		\arrow[from=1-1, to=1-2]
		\arrow[from=1-1, to=2-1]
		\arrow[from=1-2, to=2-2]
		\arrow[""{name=0, anchor=center, inner sep=0}, from=2-1, to=2-2]
		\arrow["\lrcorner"{anchor=center, pos=0.125}, draw=none, from=1-1, to=0]
	\end{tikzcd}\]
	and the canonical map $\C\to\Nuc_{T(n)\oplus T(n-1)}(\C)$ induces an equivalence
	\[
		K_{T(n)}^{\cont}(\C)\xrightarrow{\simeq}K_{T(n)}^{\cont}(\Nuc_{T(n)\oplus T(n-1)}(\C)).
	\]
\end{maintheorem}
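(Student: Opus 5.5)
We prove the pullback square first, by a dualizable analogue of the chromatic fracture square, and then deduce the $K$-theory statement from Theorem~1 (Purity). Write $E:=L_{T(n)\oplus T(n-1)}\Sp$, an idempotent algebra in $L_n^f\Sp$. Consider the fracture square of idempotent algebras in $L_n^f\Sp$,
\[
	L_n^f\bbS\simeq E\times_{L_{n-2}^fE}L_{n-2}^f\bbS .
\]
Since $L_{n-2}^f\Sp$ is a smashing localization of $L_n^f\Sp$ with compactly generated (hence dualizable) kernel, this square becomes a pullback in $\Pr^{\dbl}_{L_n^f\Sp}$ after passing to module categories. Concretely, we get an excision square $\Mod_{M_n^{2,f}}\to L_n^f\Sp\to L_{n-2}^f\Sp$ together with its base change along $L_n^f\Sp\to E$, and $M_n^{2,f}\otimes E\simeq M_n^{2,f}$.

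We then apply the functor $\iHom^{\dbl}_{L_n^f\Sp}(-,L_n^f\C)$. It sends colimits, in particular pushouts and localization sequences, in the first variable to limits, so it carries the square of module categories to a pullback square. Next we identify the four corners. For the unit we have $\iHom^{\dbl}(L_n^f\Sp,L_n^f\C)\simeq L_n^f\C$. The term at $E$ is $\Nuc_{T(n)\oplus T(n-1)}(\C)$ by definition. For a smashing localization $L_{n-2}^f$, which is rigid and self-dual, we use $\iHom^{\dbl}(L_{n-2}^f\Sp,\D)\simeq L_{n-2}^f\D$. The same identification applied to $\D=\Nuc_{T(n)\oplus T(n-1)}(\C)$, together with the formula $\iHom(A\otimes B,-)\simeq\iHom(A,\iHom(B,-))$, identifies the bottom-right corner. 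This produces the stated pullback diagram.

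For the $K$-theory statement, we use that continuous $K$-theory is localizing on $\Pr^{\dbl}_{\st}$. A pullback square of dualizable categories whose horizontal maps are localizations with equivalent fibers therefore gives a pullback square of $K^{\cont}$. To get this, we check that the horizontal fibers are both $M_n^{2,f}\otimes(-)$ of the corresponding categories. We also check that the completion map induces an equivalence $M_n^{2,f}L_n^f\C\simeq M_n^{2,f}\Nuc_{T(n)\oplus T(n-1)}(\C)$, which holds because $M_n^{2,f}$-modules are already $E$-local, i.e.\ torsion–complete equivalence. Applying $L_{T(n)}$, Theorem~1 gives $K^{\cont}_{T(n)}(L_{n-2}^f\D)\simeq0$, so
\[
	K_{T(n)}^{\cont}(L_n^f\C)\xrightarrow{\simeq}K_{T(n)}^{\cont}(\Nuc_{T(n)\oplus T(n-1)}(\C)).
\]
Composing with $K_{T(n)}^{\cont}(\C)\simeq K_{T(n)}^{\cont}(L_n^f\C)$, also from Theorem~1, finishes the proof. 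Alternatively, the route through $M_n^{2,f}$ works directly: both sides agree with $K^{\cont}_{T(n)}(M_n^{2,f}\C)$ by Theorem~1.

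I expect the main obstacle to be the identification of the corners. Specifically, we need that $\iHom^{\dbl}$ out of an idempotent algebra of a smashing localization is computed by base change, and that $\iHom^{\dbl}$ converts the fracture pushout of module categories into a pullback of dualizable categories with the right functors. These are the places where Zhou's completion theory (torsion–complete equivalence) must be invoked carefully, since $E$ itself is not a smashing localization of $\Sp$ compatible with dualizability in the naive way.
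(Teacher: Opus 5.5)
Your $K$-theory paragraph is sound. It does not actually depend on the categorical pullback: a commutative square in $\Pr^{\dbl}_{\st}$ whose horizontal maps are localizations and which is an equivalence on horizontal fibers already gives a pullback on $K^{\cont}$. You supply that fiber equivalence, $M_n^{2,f}L_n^f\C\simeq M_n^{2,f}\Nuc_{T(n)\oplus T(n-1)}(\C)$, from the torsion--complete equivalence. Your alternative route through $K^{\cont}_{T(n)}(M_n^{2,f}\C)$ is exactly the paper's main proof.

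The gap is your derivation of the categorical pullback by applying $\iHom^{\dbl}_{L_n^f\Sp}(-,L_n^f\C)$. The obstacle you flagged is not a matter of care; the step is false as stated, for two reasons.

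\emph{The input square is not a square in $\Pr^{\dbl}_{L_n^f}$.} The map $L_n^f\bbS\to L_{T(n)\oplus T(n-1)}\bbS$ is not idempotent, since this localization is not smashing. Also, the localization $L_n^f\Sp\to E$ is not an internal left adjoint, so ``base change along $L_n^f\Sp\to E$'' does not exist there. The only internal map is the smashing-ideal inclusion $E\simeq M_n^{2,f}\Sp\hookrightarrow L_n^f\Sp$. Moreover $E\otimes_{L_n^f\Sp}L_{n-2}^f\Sp\simeq0$. Finally, $\iHom^{\dbl}(-,L_n^f\C)$ turns pushouts, not pullbacks, into pullbacks.

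\emph{The corner identification $\iHom^{\dbl}_{L_n^f\Sp}(L_{n-2}^f\Sp,\D)\simeq L_{n-2}^f\D$ is false.} Self-duality of $L_{n-2}^f\Sp$ in $\PrL_{L_n^f\Sp}$ does not make it dualizable in $\Pr^{\dbl}_{L_n^f}$. In fact $\iHom^{\dbl}(L_{n-2}^f\Sp,-)$ is the colocalization onto $\Pr^{\dbl}_{L_{n-2}^f}\subseteq\Pr^{\dbl}_{L_n^f}$, i.e.\ the right adjoint of the inclusion, not $L_{n-2}^f\Sp\otimes-$. Your own adjunction formula exposes this. It gives $\iHom^{\dbl}(L_{n-2}^f\Sp,\Nuc_{T(n)\oplus T(n-1)}(\C))\simeq\iHom^{\dbl}(L_{n-2}^f\Sp\otimes E,L_n^f\C)\simeq0$. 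But for $\C=\Sp$ and $n\geq2$, the category $L_{n-2}^f\Nuc_{T(n)\oplus T(n-1)}(\Sp)$ has unit endomorphism ring $L_{n-2}^fL_{T(n)\oplus T(n-1)}\bbS\neq0$. So your method would put $0$ in the bottom-right corner.

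The missing idea is to avoid internal Homs out of $L_{n-2}^f\Sp$ entirely and argue with short exact sequences instead.
\begin{enumerate}
\item Tensor the sequence $M_n^{2,f}\Sp\hookrightarrow L_n^f\Sp\to L_{n-2}^f\Sp$ in $\Pr^{\dbl}_{L_n^f}$ with $L_n^f\C$ and with $\Nuc_{T(n)\oplus T(n-1)}(\C)$. The completion map gives a map between the two resulting short exact sequences.
\item This map is an equivalence on fibers by \cref{thm:unst_completion}. You already have this.
\item Its right square is horizontally and vertically right adjointable, because $L_n^f\Sp\to L_{n-2}^f\Sp$ is an internal localization and the completion map is an internal left adjoint.
\item For such a square, the comparison functor to the pullback in $\PrL_{\st}$ is fully faithful with conservative right adjoint (\cref{lem:ses_pullback}). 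The pullback persists in $\Pr^{\dbl}_{\st}$ by \cref{lem:pullback_dbl}.
\end{enumerate}
This is \cref{prop:pullback_diagram_of_completion}, which is how the paper obtains the square. The only thing you need to add is the adjointability argument.
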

The $K$-theoretic statements of purity and nuclear purity are equivalent, but the latter realizes purity through a canonical map in $\Pr^{\dbl}_{\st}$ that fits into a categorical fracture square. More precisely, it lifts the chromatic fracture square of localization functors to the level of dualizable stable categories.

More generally, we establish a construction of fracture squares using rigid envelopes, of which the square above is an instance. A \emph{rigid envelope} of a symmetric monoidal stable category $\I$ is a rigid symmetric monoidal stable category $\C$ which contains $\I$ as a \emph{smashing ideal}, i.e. $\I\subseteq\C$ is a $\C$-submodule whose inclusion is a $\C$-internal left adjoint. We prove that every dualizable $\C$-module gives rise to a fracture square relating localization and completion. Applied to the smashing ideal $L_{T(n)\oplus T(n-1)}\Sp\simeq M_n^{2,f}\Sp\subseteq L_n^f\Sp$, the construction gives the square above and yields a second proof of nuclear purity. Thus, rigid envelopes provide the connection between categorical completion, fracture squares and purity used throughout the paper.

There is also a ring-theoretic description of the framework. An important class of dualizable stable categories consists of fiber categories associated with homological epimorphisms. Let $R\to S$ be a \emph{homological epimorphism}, i.e. $S$ is idempotent over $R$, and write $I:=\fib(R\to S)$. Then the fiber category $\Mod(R,I):=\fib(S\otimes_R-:\Mod(R)\to\Mod(S))$ is dualizable. The special case $S=\bbS$ leads to the following notion. A nonunital $\bbE_1$-ring $I$ is called \emph{$H$-unital} if the map $I^+\to\bbS$ is a homological epimorphism. Its \emph{category of $H$-modules} is defined as $\Mod_H(I):=\Mod(I^+,I)$. Moreover, every dualizable stable category is equivalent to $\Mod_H(I)$ for some $H$-unital ring $I$. This viewpoint follows Tamme's reformulation \cite{Tam18} of the excision theory of Suslin--Wodzicki \cite{SW92,Sus95}; see also \cite{KNP24}.

Since $L_{T(n)\oplus T(n-1)}$ need not preserve $H$-unitality, we instead use $M_n^{2,f}$ to make chromatic purity compatible with these presentations. We prove that coidempotent $\bbS$-coalgebras are $H$-unital and that their tensor products with $H$-unital rings remain $H$-unital. In particular, this applies to chromatic layers.

\begin{proposition*}[{\cref{prop:multichromatic_Hunital}}]
	Let $n\geq0$ and $1\leq m\leq n+1$. Then $M_n^{m,f}$ is an $H$-unital $\bbE_\infty$-ring.
\end{proposition*}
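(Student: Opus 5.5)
The plan is to deduce the proposition from the two general facts announced just before it: a coidempotent $\bbS$-coalgebra is $H$-unital, and the tensor product of an $H$-unital ring with such a coalgebra (or with another $H$-unital ring) is again $H$-unital. I read $M_n^{m,f}$ as a nonunital $\bbE_\infty$-ring with unitalization $(M_n^{m,f})^+$ mapping to $\bbS$. Then $H$-unitality means that $(M_n^{m,f})^+\to\bbS$ is a homological epimorphism, i.e.\ $\bbS\otimes_{(M_n^{m,f})^+}\bbS\simeq\bbS$.

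\emph{Step 1: the nonunital $\bbE_\infty$-structure.} Since $L_n^f$ and $L_{n-m}^f$ are smashing, $L_n^f\bbS$ and $L_{n-m}^f\bbS$ are idempotent $\bbE_\infty$-rings. The map $L_n^f\bbS\to L_{n-m}^f\bbS$ is a map of idempotent algebras, and $L_{n-m}^f\bbS\otimes L_n^f\bbS\simeq L_{n-m}^f\bbS$. Hence
\[
M_n^{m,f}\simeq L_n^f\bbS\otimes\fib(\bbS\to L_{n-m}^f\bbS)\simeq L_n^f\bbS\otimes C_{n-m}^f\bbS,
\]
where $C_{n-m}^f\bbS:=\fib(\bbS\to L_{n-m}^f\bbS)$ is the acyclization. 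The spectrum $C_{n-m}^f\bbS$ is a coidempotent coalgebra: $C\otimes C\simeq C$ via the counit $C\to\bbS$. A nonunital $\bbE_\infty$-structure on it comes from its identification with the unit of the smashing ideal $\ker(L_{n-m}^f)\subseteq\Sp$, and the same reasoning makes the tensor product a nonunital $\bbE_\infty$-ring. In the case $m=n+1$ we have $L_{-1}^f\bbS=0$, so $M_n^{n+1,f}=L_n^f\bbS$, which is unital and hence trivially $H$-unital. (Here $(L_n^f\bbS)^+\simeq\bbS\times L_n^f\bbS$, which has the idempotent factor $\bbS$.)

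\emph{Step 2: $H$-unitality.} I would apply the earlier result that coidempotent $\bbS$-coalgebras are $H$-unital to $C_{n-m}^f\bbS$. Then I would apply the stability result, tensoring with $L_n^f\bbS$. For this I would regard $L_n^f\bbS$ either as a unital, hence $H$-unital, ring, or I would use the dual formulation in which a coidempotent coalgebra tensored with an $H$-unital ring stays $H$-unital. This gives that $M_n^{m,f}=L_n^f\bbS\otimes C_{n-m}^f\bbS$ is $H$-unital.

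The main obstacle is Step 1: checking that the nonunital $\bbE_\infty$-structure produced by the coalgebra/smashing-ideal description agrees with the one the statement intends, namely $M_n^{m,f}$ as the fiber of a map of $\bbE_\infty$-rings. I would handle this by working in $L_n^f\Sp$. There $M_n^{m,f}$ is the unit of the smashing ideal $\ker(L_{n-m}^f)\cap L_n^f\Sp$, which is a presentably symmetric monoidal category whose inclusion is a nonunital symmetric monoidal functor. The fiber structure and the ideal-unit structure then coincide by the universal property of fibers of idempotent algebras.
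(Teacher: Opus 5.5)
Your argument is correct, but it takes a detour compared with the paper.

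The paper applies \cref{prop:pullback_homoepi_idem} directly. The map $L_n^f\bbS\to L_{n-m}^f\bbS$ is an $\bbE_\infty$-map between idempotent $\bbS$-algebras (this includes $L_{-1}^f\bbS=0$). Hence its fiber, with the nonunital structure it carries as a fiber, is $H$-unital. That is one line, needs no separate case for $m=n+1$, and involves no comparison of structures.

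You instead factor $M_n^{m,f}\simeq L_n^f\bbS\otimes C_{n-m}^f\bbS$ and apply \cref{prop:tensor-H-unital} to the unital (hence $H$-unital) ring $L_n^f\bbS$ and $I'=\fib(\bbS\to L_{n-m}^f\bbS)$. This is valid, and it matches the picture the introduction advertises: a coidempotent coalgebra tensored with a ring. But the factorization is exactly what creates the ``main obstacle'' you flag, and your smashing-ideal and universal-property resolution is vaguer than it needs to be.

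The clean justification is as follows. Limits and tensor products of nonunital $\bbE_\infty$-rings are computed on underlying spectra, and $L_n^f\bbS\otimes-$ is exact. So
\[
L_n^f\bbS\otimes\fib(\bbS\to L_{n-m}^f\bbS)\simeq\fib(L_n^f\bbS\to L_n^f\bbS\otimes L_{n-m}^f\bbS)
\]
as nonunital rings. Moreover $L_n^f\bbS\otimes L_{n-m}^f\bbS\simeq L_{n-m}^f\bbS$, and the map is the unique map of idempotent algebras.

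Finally, your separate appeal to $H$-unitality of $C_{n-m}^f\bbS$ is redundant. \cref{prop:tensor-H-unital} only requires $I'$ to be the fiber of a map between idempotent algebras.
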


Consequently, $M^{2,f}_n\otimes-$ preserves $H$-unitality, and for every $H$-unital ring $I$ we have $M_n^{2,f}\Mod_H(I)\simeq\Mod_H(M_n^{2,f}\otimes I)$. Writing $K^{\cont}(I):=K^{\cont}(\Mod_H(I))$, we obtain the following form of purity.

\begin{corollary*}[Purity of $H$-unital rings, {\cref{cor:purity_Hunital}}]
	Let $n\geq1$, and let $I$ be an $H$-unital ring. Then we have an equivalence
	\[
		K_{T(n)}^{\cont}(I)\simeq K_{T(n)}^{\cont}(M_n^{2,f}\otimes I).
	\]
\end{corollary*}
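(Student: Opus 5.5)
The corollary should follow by combining the purity theorem for dualizable stable categories with the identification of $M_n^{2,f}\otimes-$ on $H$-unital rings. Set $\C:=\Mod_H(I)$, which is dualizable by the discussion of homological epimorphisms, since $I^+\to\bbS$ is one. By definition $K^{\cont}(I)=K^{\cont}(\C)$. The purity theorem (\cref{thm:purity_dualizable}) gives canonical equivalences
\[
K_{T(n)}^{\cont}(\C)\xrightarrow{\simeq}K_{T(n)}^{\cont}(L_n^f\C)\xleftarrow{\simeq}K_{T(n)}^{\cont}(M_n^{2,f}\C),
\]
where $M_n^{2,f}\C$ denotes the base change $\Mod(M_n^{2,f})\otimes\C$, or equivalently the smashing ideal of $L_n^f\C$ cut out by $M_n^{2,f}\Sp\subseteq L_n^f\Sp$. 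It therefore remains to identify $M_n^{2,f}\C$ with $\Mod_H(M_n^{2,f}\otimes I)$.

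For this I would use \cref{prop:multichromatic_Hunital}: $M_n^{2,f}$ is an $H$-unital $\bbE_\infty$-ring, since it is a coidempotent $\bbS$-coalgebra. By the stated stability result, the tensor product $M_n^{2,f}\otimes I$ of $H$-unital rings is again $H$-unital, so $\Mod_H(M_n^{2,f}\otimes I)$ is defined and dualizable. To compare, write $\Mod_H(I)=\fib(\Mod(I^+)\to\Mod(\bbS))$ and tensor with the dualizable category $M_n^{2,f}\Sp\simeq\Mod_H(M_n^{2,f})$. Tensoring in $\Pr^{\dbl}_{\st}$ preserves fiber sequences of internal localizations, since $-\otimes\D$ is exact for dualizable $\D$. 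We also have $\Mod_H(J)\otimes\Mod_H(J')\simeq\Mod_H(J\otimes J')$ for $H$-unital $J,J'$, because $(J\otimes J')^+\to\bbS$ is the homological epimorphism obtained from $J^+\otimes J'^+\to\bbS\otimes\bbS$ by killing the appropriate cross terms. Together these give $M_n^{2,f}\Mod_H(I)\simeq\Mod_H(M_n^{2,f}\otimes I)$, as asserted in the text preceding the corollary.

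The main obstacle is this last identification of the tensor product of $H$-module categories. Concretely, one must check that the kernel of $\Mod(J^+)\otimes\Mod(J'^+)\simeq\Mod(J^+\otimes J'^+)\to\Mod(\bbS)$, intersected with the two kernel conditions, is generated by the single object $J\otimes J'$ viewed as an $(J\otimes J')^+$-module. I would first try to reduce this to Tamme's description of $\Mod_H(J)$ as the subcategory of $\Mod(J^+)$ generated under colimits by $J$. Then $\Mod_H(J)\otimes\Mod_H(J')$ is generated by $J\otimes J'$, and the endomorphism computation uses $H$-unitality, namely $J\otimes_{J^+}J\simeq J$. With the identification in hand, the chain of equivalences above yields $K_{T(n)}^{\cont}(I)\simeq K_{T(n)}^{\cont}(M_n^{2,f}\otimes I)$.
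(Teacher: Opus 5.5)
Your skeleton is the paper's: apply \cref{thm:purity_dualizable} to $\C=\Mod_H(I)$, then identify $M_n^{2,f}\C\simeq\Mod_H(M_n^{2,f})\otimes\Mod_H(I)\simeq\Mod_H(M_n^{2,f}\otimes I)$. The problem is how you justify the last equivalence.

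You assert $\Mod_H(J)\otimes\Mod_H(J')\simeq\Mod_H(J\otimes J')$ for \emph{arbitrary} $H$-unital $J,J'$, on the grounds that $(J\otimes J')^+\to\bbS$ is a homological epimorphism. That would mean tensor products of $H$-unital rings are always $H$-unital, which the paper explicitly leaves open (the Question after \cref{prop:tensor-H-unital}). ``Killing cross terms'' is not an argument. The separate identities $J\otimes_{J^+}J\simeq J$ and $J'\otimes_{J'^+}J'\simeq J'$ only show that $J^+\otimes J'^+\to J'^+\times_{\bbS}J^+$ is a homological epimorphism with fiber $J\otimes J'$. Equivalently, $J\otimes J'$ is idempotent over $J^+\otimes J'^+$. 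Passing to the pullback $(J\otimes J')^+\to\bbS$ is not automatic (cf.\ the example after \cref{prop:fib_conn_Hunital}).

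Your generator sketch does not get around this. $\Mod_H(J)\otimes\Mod_H(J')\subseteq\Mod(J^+\otimes J'^+)$ and $\Mod_H(J\otimes J')\subseteq\Mod((J\otimes J')^+)$ are each generated under colimits by an object called $J\otimes J'$. However, these generators are not compact and live over different rings, so an endomorphism computation does not identify the two categories. What you need is that base change along $(J\otimes J')^+\to J^+\otimes J'^+$ restricts to an equivalence. That is Tamme's excision (\cref{prop:Tam_pullback_homoepi,cor:fib_Hunital}), and it requires $J\otimes J'$ itself to be $H$-unital.

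In your situation this input is supplied by \cref{prop:tensor-H-unital}, applied to the map $L_n^f\bbS\to L_{n-2}^f\bbS$ of idempotent $\bbS$-algebras with fiber $M_n^{2,f}$. Its proof uses $L_{n-2}^f\bbS\otimes M_n^{2,f}\simeq0$ to show that $M_n^{2,f}\otimes I$ is $H$-unital. Its second clause is exactly $\Mod_H(M_n^{2,f}\otimes I)\simeq\Mod_H(I)\otimes\Mod_H(M_n^{2,f})$. Then \cref{prop:multichromatic_Hunital,prop:M_n^mf} give $\Mod_H(M_n^{2,f})\otimes\Mod_H(I)\simeq L_{T(n)\oplus T(n-1)}\Sp\otimes\Mod_H(I)\simeq M_n^{2,f}\Mod_H(I)$. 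Together with purity, this is the paper's entire two-line proof. So cite that proposition directly and drop the general claim.

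A minor point: $\Mod(M_n^{2,f})\otimes\C$ should read $\Mod_H(M_n^{2,f})\otimes\C$, since $M_n^{2,f}$ is nonunital.
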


We next turn to descent. Let $G\in\Grp(\An)$ be a group anima acting on a dualizable stable category $\C$. The homotopy orbits $\C_{hG}:=\colim^{\dbl}_{BG}\C$ agree with the colimit in $\PrL_{\st}$, whereas the homotopy fixed points $\C^{hG,\dbl}:=\lim^{\dbl}_{BG}\C$ generally differ from the limit in $\PrL_{\st}$. For a finite group $G$, both constructions preserve short exact sequences \cite{Efi25rigidity}, thereby allowing us to extend chromatic descent.

\begin{maintheorem}[Descent, {\cref{thm:descent_dualizable}}]
	Let $n\geq0$, and let $\C$ be an $L_n^f$-local dualizable stable category acted on by a finite $p$-group $G$. Then the canonical maps induce equivalences
	\[
		K_{T(n+1)}^{\cont}(\C^{hG,\dbl})\xrightarrow{\simeq} K_{T(n+1)}^{\cont}(\C)^{hG},\quad K_{T(n+1)}^{\cont}(\C)_{hG}\xrightarrow{\simeq}K_{T(n+1)}^{\cont}(\C_{hG})
	\]
\end{maintheorem}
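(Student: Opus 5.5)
The plan is to reduce to the compactly generated descent theorem of Clausen--Mathew--Naumann--Noel by resolving $\C$ through a short exact sequence of compactly generated categories, and to keep that resolution $G$-equivariant. Using the continuous Calkin construction, $\C$ sits in a canonical short exact sequence
\[
	\C\hookrightarrow \Ind(\C^{\omega_1})\to \Calk^{\cont}_{\omega_1}(\C)
\]
in $\Pr^{\dbl}_{\st}$ whose middle and right terms are compactly generated. The construction is functorial in internal left adjoints, so the $G$-action on $\C$ induces compatible $G$-actions on all three terms. Since $\C$ is $L_n^f$-local and $L_n^f\Sp\otimes-$ is a smashing localization that preserves short exact sequences of dualizable categories, we may apply $L_n^f$ to the whole sequence. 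This gives a $G$-equivariant short exact sequence
\[
	\C\to\D\to\E,\qquad \D:=L_n^f\Ind(\C^{\omega_1}),\quad \E:=L_n^f\Calk^{\cont}_{\omega_1}(\C).
\]
Here $\D$ and $\E$ are $L_n^f$-local and compactly generated, because $L_n^f$ is a finite localization and so preserves compact generation. Moreover $L_n^f\C\simeq\C$.

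Next we pass to homotopy fixed points and orbits. By Efimov's result quoted above, for finite $G$ both $(-)^{hG,\dbl}$ and $(-)_{hG}$ preserve short exact sequences. This gives short exact sequences
\[
	\C^{hG,\dbl}\to\D^{hG,\dbl}\to\E^{hG,\dbl},\qquad \C_{hG}\to\D_{hG}\to\E_{hG}.
\]
For compactly generated $\D$ with finite $G$, the dualizable fixed points should agree with $\Ind((\D^\omega)^{hG})=\D^{hG,\cg}$. To see this, note that the limit in $\PrL_{\st}$ of the diagram $BG\to\PrL$ is computed by the right-adjoint limit. For finite $G$ the restriction functor is also a left adjoint, and the compact objects of the limit are generated by the induced objects. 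This is exactly the identification used in the definition of $\C^{hG,\cg}$ in the compactly generated theorem, so I would cite it (or check it via the universal property of $\lim^{\dbl}$ restricted to compactly generated targets). Orbits of compactly generated categories are compactly generated, and there is no ambiguity because colimits in $\Pr^{\dbl}$ and $\PrL$ agree.

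Applying $K^{\cont}$ and $T(n+1)$-localization turns these into fiber sequences. The canonical comparison maps $K^{\cont}(\C^{hG,\dbl})\to K^{\cont}(\C)^{hG}$, together with the corresponding orbit maps, are natural in the equivariant sequence. Since $(-)^{hG}$ and $(-)_{hG}$ preserve fiber sequences of spectra, and $L_{T(n+1)}$ commutes with them for finite $G$, we obtain maps of fiber sequences. On the $\D$ and $\E$ terms these maps are equivalences by the compactly generated descent theorem. The five lemma then gives the result for $\C$.

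The main obstacle is the equivariance of the Calkin resolution together with the identification $\D^{hG,\dbl}\simeq\D^{hG,\cg}$. For equivariance, the $G$-action must be a coherent functor $BG\to\Pr^{\dbl}_{\st}$, and the Calkin sequence must be functorial as a diagram of short exact sequences in $\Pr^{\dbl}_{\st}$; I would invoke Efimov's functoriality of $\C\mapsto\Ind(\C^{\omega_1})$ in strongly continuous functors. For the identification, if it fails to be literally the fixed points used in the compactly generated theorem, I would instead compare through the inclusion $\Pr^{\cg}\subseteq\Pr^{\dbl}$, using that the compact objects of $\D^{hG,\dbl}$ are $(\D^\omega)^{hG}$ when $|G|$ is finite.
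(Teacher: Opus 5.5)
Your strategy is the paper's: resolve $\C$ equivariantly by the Calkin sequence, use that $(-)^{hG,\dbl}$ and $(-)_{hG}$ preserve short exact sequences for finite $G$, and apply the compactly generated descent theorem termwise. The orbit half is fine.

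\textbf{The gap.} It is the step you flag: identifying $\D^{hG,\dbl}$ with $\D^{hG,\cg}=\Ind((\D^\omega)^{hG})$. Your argument computes the wrong category. The limit in $\PrL_{\st}$ of $BG\to\PrL_{\st}$ is the category of $G$-equivariant objects, and for finite $G$ it is compactly generated by induced objects. But that category is $\D_{hG}$. Its compact objects are the thick closure of induced objects, which in general is strictly smaller than $(\D^\omega)^{hG}$. For example, for $\Sp$ with trivial action, $\bbS\in(\Sp^\omega)^{BG}$ is not compact in $\Sp^{BG}$, because $(-)^{hG}$ does not commute with filtered colimits. So $\D_{hG}\hookrightarrow\D^{hG,\cg}$ is the norm, not an equivalence, and in any case $\lim^{\dbl}$ differs from $\lim^{\PrL}$.

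Your fallback is correct but insufficient. The statement $(\D^{hG,\dbl})^\omega\simeq(\D^\omega)^{hG}$ holds, via the coreflection $\Pr^{\cg}_{\st}\subseteq\Pr^{\dbl}_{\st}$. However, it only yields a fully faithful internal left adjoint $\D^{hG,\cg}\hookrightarrow\D^{hG,\dbl}$. Whether $\D^{hG,\dbl}$ is compactly generated is the entire content of the step. Dualizable limits of compactly generated categories need not be compactly generated, so this cannot be asserted for an arbitrary compactly generated $\D$.

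\textbf{The fix.} You only need the identification for the two specific terms of the Calkin sequence, and there it can be proved; this is the paper's lemma on $\lim^{\dbl}$.
\begin{enumerate}
\item For $\Ind(\C^{\omega_1})$: the functor $\Ind((-)^{\omega_1})$ is right adjoint to the forgetful functor $\Pr^{\dbl}_{\st}\to\PrL_{\st,\omega_1}$. Hence $\Ind(\C^{\omega_1})^{hG,\dbl}$ is $\Ind((-)^{\omega_1})$ of a limit in $\PrL_{\st,\omega_1}$. It is therefore compactly generated and agrees with $\Ind(\C^{\omega_1})^{hG,\cg}$.
\item For $\Ind(\Calk^{\cont}_{\omega_1}(\C))$ (note this, not $\Calk^{\cont}_{\omega_1}(\C)$, is the third term): exactness of $(-)^{hG,\dbl}$ makes $\Ind(\C^{\omega_1})^{hG,\dbl}\to\Ind(\Calk^{\cont}_{\omega_1}(\C))^{hG,\dbl}$ a localization. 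Its source is compactly generated, so it factors through the fully faithful $\Ind(\Calk^{\cont}_{\omega_1}(\C))^{hG,\cg}\hookrightarrow\Ind(\Calk^{\cont}_{\omega_1}(\C))^{hG,\dbl}$. That inclusion is therefore essentially surjective.
\end{enumerate}
This gives a short exact sequence $\C^{hG,\dbl}\hookrightarrow\Ind(\C^{\omega_1})^{hG,\cg}\to\Ind(\Calk^{\cont}_{\omega_1}(\C))^{hG,\cg}$ to which the compactly generated theorem applies.

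Applying $L_n^f$ to the sequence is harmless but unnecessary. $\Ind(\C^{\omega_1})$ is already $L_n^f$-local, since $F(n+1)\otimes-$ already vanishes on $\C^{\omega_1}$. So your $\D$ is literally $\Ind(\C^{\omega_1})$ and the adjunction argument applies to it.
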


Consider the categorical norm map defined by Zhou \cite{Zhoa}, following Efimov's construction \cite{Efi25limit}. Tate vanishing in $L_{T(n+1)}\Sp$, together with the descent theorem above, implies that the norm map induces an equivalence on $T(n+1)$-local continuous $K$-theory. In fact, we prove that the norm map itself already becomes an equivalence after $T(n)$-localization, or equivalently, via the torsion--complete equivalence, after $T(n)$-completion. To prove this, we identify the norm map with the completion map with respect to the smashing ideal $\Sp_{hG}\subseteq\Sp^{hG,\dbl}$. This yields the following refinement of chromatic descent, which holds for arbitrary finite groups.

\begin{maintheorem}[Nuclear descent, {\cref{thm:nuc_descent}}]
	Let $n\geq1$, and let $\C$ be an $L_n^f$-local dualizable stable category acted on by a finite group $G$. Then the norm map induces equivalences
	\[
		\Nuc_{T(n)}(\C_{hG})\xrightarrow{\simeq}\Nuc_{T(n)}(\C)^{hG,\dbl},
	\]
	and
	\[
		K^{\cont}_{T(n+1)}(\C_{hG})\xrightarrow{\simeq}K^{\cont}_{T(n+1)}(\C^{hG,\dbl}).
	\]
\end{maintheorem}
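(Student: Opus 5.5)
The plan is to identify the norm map with a completion map and then apply the torsion--complete equivalence. First, recall that $\Sp^{hG,\dbl}$ is rigid symmetric monoidal, and that $\Sp_{hG}\subseteq\Sp^{hG,\dbl}$ is a smashing ideal, with the inclusion given by the categorical norm of Zhou. So $\Sp^{hG,\dbl}$ is a rigid envelope of $\Sp_{hG}$.

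For a $G$-category $\C$, the category $\C^{hG,\dbl}$ is a dualizable $\Sp^{hG,\dbl}$-module. There are identifications
\[
\C_{hG}\simeq\Sp_{hG}\otimes_{\Sp^{hG,\dbl}}\C^{hG,\dbl},\qquad \C^{hG,\dbl}\simeq\iHom^{\dbl}_{\Sp^{hG,\dbl}}(\Sp_{hG},\C^{hG,\dbl})\ \text{(on complete objects)}.
\]
The key step is to show that, under these identifications, the norm $\C_{hG}\to\C^{hG,\dbl}$ is the completion map of the rigid-envelope fracture square. Its cofiber is then $\C^{hG,\dbl}$ tensored with the complementary closed part, a Tate-type category $\Sp^{tG,\dbl}:=\Sp^{hG,\dbl}/\Sp_{hG}$.

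Second, I work $T(n)$-locally. Base change along $\Sp\to L_{T(n)}\Sp$, with $\C$ assumed $L_n^f$-local. Tate vanishing in $L_{T(n)}\Sp$ holds for every finite group $G$ (Kuhn). From it I want $L_{T(n)}\Sp^{tG,\dbl}\simeq0$; equivalently, $L_{T(n)}\Sp_{hG}\to L_{T(n)}\Sp^{hG,\dbl}$ is an equivalence. I would check this on the unit: the endomorphism ring of the unit in the quotient is $\bbS^{tG}$, whose $T(n)$-localization vanishes. Since the quotient is rigid with compact unit, killing the unit kills the category.

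Granting this, the norm becomes an equivalence after $T(n)$-localization, since $L_{T(n)}$ commutes with $(-)_{hG}$ and with finite-group $(-)^{hG,\dbl}$. I then transport this across the torsion--complete equivalence $\Nuc_{T(n)}\dashv L_{T(n)}$, which gives the first equivalence. This requires that $\Nuc_{T(n)}$ commutes with $(-)^{hG,\dbl}$. That holds because $\Nuc_{T(n)}$ is an internal Hom $\iHom^{\dbl}_{L_n^f\Sp}(L_{T(n)}\Sp,-)$, which preserves dualizable limits.

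For the $K$-theory statement, $L_{n}^f$-locality and nuclear purity at height $n+1$ reduce $K_{T(n+1)}^{\cont}$ of both sides to that of their $L_{T(n+1)\oplus T(n)}$-localizations, hence of their $M^{2,f}_{n+1}$-parts. The $T(n+1)$ part is handled by the same Tate vanishing argument at height $n+1$. The $T(n)$ part is handled by the first equivalence.

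Alternatively, and perhaps more cleanly, localizing invariants send the fracture sequence $\C_{hG}\to\C^{hG,\dbl}\to\C^{hG,\dbl}\otimes\Sp^{tG,\dbl}$ to a fiber sequence. So it suffices that $K^{\cont}_{T(n+1)}$ of the Tate piece vanishes. That piece is $T(n)$-acyclic, hence $L_{n-1}^f$-local once combined with $L_n^f$-locality (an induction using $T(i)$-Tate vanishing for all $i\le n$). Purity's vanishing statement $K_{T(n+1)}^{\cont}(L_{n-1}^f-)\simeq0$ then finishes the argument.

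The main obstacle is the first step: identifying the norm with the completion map for the smashing ideal $\Sp_{hG}\subseteq\Sp^{hG,\dbl}$, compatibly with modules, and proving the quotient vanishes after $T(i)$-localization for all $i\le n$. That is where Zhou's and Efimov's descriptions of the norm and of $\Sp^{hG,\dbl}$ must be used concretely.
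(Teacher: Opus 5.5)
Your architecture matches the paper's: identify the norm with the $\Sp_{hG}$-completion map (this is \cref{thm:hGdbl=Nuc}, which the paper derives formally from the corigidity of $\Sp^{BG}$ and \cref{prop:corig}). Then prove $L_{T(n)}\C^{tG,\dbl}\simeq0$, transport through $\Nuc_{T(n)}$ (a right adjoint, hence commuting with $(-)^{hG,\dbl}$), and conclude with \cref{cor:purity_T(n)}. But the vanishing step, which carries all the chromatic content, is argued incorrectly.

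You claim the unit of $\Sp^{tG,\dbl}$ has endomorphism ring $\bbS^{tG}$ with vanishing $T(n)$-localization. Kuhn's theorem gives nothing of the kind: it concerns $T(n)$-local (or $L_n^f$-local) inputs, and $L_{T(n)}$ does not commute with $(-)^{tG}$. For $G=C_p$ the Segal conjecture (Lin, Gunawardena) gives $\bbS^{tC_p}\simeq\bbS^{\wedge}_p$, so $L_{T(n)}(\bbS^{tC_p})\simeq L_{T(n)}\bbS\neq0$. Since $\Sp^{hC_p,\dbl}$ is rigid (\cref{cor:C_hG_locrig}) and $\Sp^{hC_p,\dbl}\to\Sp^{tC_p,\dbl}$ is an internal left adjoint, the unit of $\Sp^{tC_p,\dbl}$ is compact. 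Then \cref{lem:End(1_L),prop:End(1_tG)} give $\End(\mathbbm{1}_{L_{T(n)}\Sp^{tC_p,\dbl}})\simeq L_{T(n)}\bbS\neq0$. So $L_{T(n)}\Sp^{tC_p,\dbl}\not\simeq0$, and $L_{T(n)}\Sp_{hC_p}\to L_{T(n)}\Sp^{hC_p,\dbl}$ is not an equivalence.

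A telltale sign is that your vanishing argument never uses that $\C$ is $L_n^f$-local. It would prove that the norm $\Sp_{hC_p}\to\Sp^{hC_p,\dbl}$ is a $T(n)$-local equivalence, which we just saw is false. Your alternative $K$-theory paragraph rests on the same unproved claim (``that piece is $T(n)$-acyclic'').

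Relatedly, your assertion that $L_{T(n)}$ commutes with $(-)^{hG,\dbl}$ fails even for $T(n)$-local $\C$. The category $(L_{T(n)}\Sp)^{hC_p,\dbl}$ is not $T(n)$-local: its quotient by $(L_{T(n)}\Sp)_{hC_p}$ is $(L_{T(n)}\Sp)^{tC_p,\dbl}$. That quotient is nonzero, since its unit has endomorphisms $(L_{T(n)}\bbS)^{tC_p}$, of height $n-1$. It is also $L_{n-1}^f$-local by \cref{thm:tate_vanishing}.

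The missing idea is that $L_n^f$-locality must be imposed \emph{before} taking fixed points. The functor $L_n^f\Sp\otimes-$ does not commute with $(-)^{hG,\dbl}$. The unit endomorphisms of $L_n^f\Sp\otimes\Sp^{hG,\dbl}$ and of $(L_n^f\Sp)^{hG,\dbl}$ are $L_n^f(\bbS^{hG})$ and $(L_n^f\bbS)^{hG}$, and these differ by the discrepancy between $L_n^f(\bbS^{tG})$ and $(L_n^f\bbS)^{tG}$.

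The paper uses your first step to identify $(-)^{tG,\dbl}$ with the $\Sp_{hG}$-generic fiber, hence as lax symmetric monoidal (\cref{prop:tG}). So for $\C\in(\Pr^{\dbl}_{L_n^f})^{BG}$, the category $\C^{tG,\dbl}$ is a module over $(L_n^f\Sp)^{tG,\dbl}$. In your notation this is $\Sp^{tG,\dbl}\otimes_{\Sp^{hG,\dbl}}(L_n^f\Sp)^{hG,\dbl}$, not $L_n^f\Sp\otimes\Sp^{tG,\dbl}$.

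The genuinely non-formal input is then $\End(\mathbbm{1}_{(L_n^f\Sp)^{tG,\dbl}})\simeq(L_n^f\bbS)^{tG}$ (\cref{prop:End(1_tG)}). Because the unit of $(L_n^f\Sp)_{hG}$ is not compact, the paper proves this by comparing $\Hom(\mathbbm{1},j_!(-))$ with $(\bbS[G]^{hG}\otimes p(-))_{hG}$ on the compact generators $p^L(M)$, using the Nikolaus--Scholze norm. Kuhn's theorem makes $(L_n^f\bbS)^{tG}$ $L_{n-1}^f$-local. As $L_{n-1}^f$ is smashing, $(L_n^f\Sp)^{tG,\dbl}$ and hence $\C^{tG,\dbl}$ are $L_{n-1}^f$-local, so $L_{T(n)}\C^{tG,\dbl}\simeq0$.

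Two smaller points:
\begin{itemize}
\item The goal in your last sentence, vanishing after $T(i)$-localization for all $i\le n$, is unnecessary. For $L_n^f$-local $\D$, the sequence $L_{T(n)}\D\hookrightarrow\D\to L_{n-1}^f\D$ already turns $T(n)$-acyclicity into $L_{n-1}^f$-locality. The goal is also false in general at $i=n-1$ by blueshift (\cref{cor:blueshift}).
\item In the $K$-theory step there is no ``$T(n+1)$ part'' to handle, since $L_n^f$-local categories are $T(n+1)$-acyclic, so \cref{cor:purity_T(n)} applies directly.
\end{itemize}
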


The categorical assertion is proved using the Tate vanishing theorem stated below, while the $K$-theoretic assertion follows from the first and nuclear purity.

For algebraic $K$-theory of $\bbE_\infty$-rings, the redshift philosophy has a precise form. By a theorem of Hahn \cite{Hah22Hinfty}, for a nonzero $\bbE_\infty$-ring $R$, the set $\{n:L_{T(n)}R\not\simeq0\}$ is either $\varnothing$, all of $\bbN$, or an interval $\{0,1,\cdots,n\}$. We define the \emph{chromatic height} $\operatorname{ht}(R)$ to be the maximal such $n$, with value $-1$ if $L_{T(0)}R\simeq0$ and value $\infty$ if $L_{T(n)}R\not\simeq0$ for all $n\geq0$. The redshift theorem for $\bbE_\infty$-rings states that algebraic $K$-theory raises chromatic height exactly by $1$: for a nonzero $\bbE_\infty$-ring $R$ with $\operatorname{ht}(R_{(p)})\geq0$,
\[
	\operatorname{ht}(K(R))=\operatorname{ht}(R)+1.
\]
The upper bound follows from the categorical purity results of \cite{CMNN24,LMMT24}, whereas the lower bound uses ring-theoretic results of \cite{Yua21redshift,BSY22nullstellensatz}. This distinction motivates the corresponding question for symmetric monoidal stable categories. We define the chromatic height of such a category through the endomorphism $\bbE_\infty$-ring of its unit.

\begin{definition*}[{\cref{def:ht(C)}}]
	Let $\C$ be a presentably symmetric monoidal stable category. We define the \emph{chromatic height} of $\C$ as
	\[
		\operatorname{ht}(\C):=\operatorname{ht}(\End(\mathbbm{1}_{\C})).
	\]
\end{definition*}

Our purity theorem immediately gives the redshift upper bound. For a rigid symmetric monoidal stable category, Ramzi's dimension map and noshift theorem \cite{Ram26noshift} give the lower bound. Thus, continuous $K$-theory can only preserve the chromatic height of a rigid symmetric monoidal stable category or raise it by $1$.

\begin{maintheorem}[Redshift bound; Redshift or noshift, {\cref{thm:redshift}}]
	Let $\C$ be a nonzero dualizably symmetric monoidal stable category such that $K^{\cont}(\C)$ is nonzero. Then
	\[
		\operatorname{ht}(K^{\cont}(\C))\leq\operatorname{ht}(\C)+1.
	\]

	Moreover, if $\C$ is rigid, then we have
	\[
		\operatorname{ht}(\C)\leq\operatorname{ht}(K^{\cont}(\C))\leq\operatorname{ht}(\C)+1.
	\]
\end{maintheorem}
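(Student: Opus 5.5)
The plan is to use that $K^{\cont}(\C)$ is a module over $K^{\cont}(\C)$ or over $K(\End(\mathbbm{1}_\C))$, and to compare chromatic heights through ring maps. For the upper bound, let $h=\operatorname{ht}(\C)$. If $h=\infty$ there is nothing to prove; if $h=-1$ we work one prime at a time. Since $\C$ is dualizably symmetric monoidal, continuous $K$-theory is lax symmetric monoidal on $\Pr^{\dbl}_{\st}$, so $K^{\cont}(\C)$ is an $\bbE_\infty$-ring. The unit gives a symmetric monoidal functor $\Sp\to\C$ factoring through $\Mod(\End(\mathbbm{1}_\C))$, and hence $\C$ is an $\End(\mathbbm{1}_\C)$-linear dualizable category. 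Put $R:=\End(\mathbbm{1}_\C)$. If $L_{T(h)}R\simeq0$ at some level, we use Hahn's theorem: $L_{T(m)}R\simeq0$ for all $m>h$, and hence $L_{m}^{f}$-local vanishing up to height. By the usual argument (Hahn plus nilpotence), for $n\geq h+2$ the ring $L_{n-2}^fR\to R$ is an equivalence, i.e.\ $R$ is $L_{n-2}^f$-local, and then $\C\simeq L_{n-2}^f\C$. Strictly speaking, what is available is only that $T(m)$-localizations vanish for $m\geq n-1$, which gives $L_{n-2}^f$-locality of $R$ once we also know $R$ is $L_{n}^f$-local or use the Purity theorem in the form $K_{T(n)}^{\cont}(\C)\simeq K^{\cont}_{T(n)}(M_n^{2,f}\C)$. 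Indeed $M_n^{2,f}\C$ is linear over $M_n^{2,f}\otimes R$, which is a module over $L_{T(n)\oplus T(n-1)}R$ after completion and vanishes when $L_{T(n)}R\simeq L_{T(n-1)}R\simeq0$. So $M_n^{2,f}\C\simeq0$, giving $K_{T(n)}^{\cont}(\C)\simeq0$ for all $n\geq h+2$. By Hahn's theorem applied to the nonzero $\bbE_\infty$-ring $K^{\cont}(\C)$, this yields $\operatorname{ht}(K^{\cont}(\C))\leq h+1$.

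For the lower bound in the rigid case, I would invoke Ramzi's dimension map from \cite{Ram26noshift}. For rigid $\C$ there is a map of $\bbE_\infty$-rings, or at least of ring spectra up to the needed structure, $K^{\cont}(\C)\to \mathrm{THH}$-type target, concretely a trace $K^{\cont}(\C)\to \End(\mathbbm{1}_\C)$ or to an $\bbE_\infty$-ring of the same height. Ramzi's noshift theorem gives that $L_{T(m)}R\neq0$ implies $L_{T(m)}K^{\cont}(\C)\neq0$. Given a ring map $K^{\cont}(\C)\to A$ with $\operatorname{ht}(A)\geq h$, we get that $L_{T(m)}A\neq0$ forces $L_{T(m)}K^{\cont}(\C)\neq0$, since $L_{T(m)}A$ is a module over $L_{T(m)}K^{\cont}(\C)$. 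That gives $\operatorname{ht}(K^{\cont}(\C))\geq h$.

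The main obstacle is the upper bound step showing $M_n^{2,f}\C\simeq0$ from vanishing of $L_{T(n)}R$ and $L_{T(n-1)}R$. My first attempt would be to note that $M_n^{2,f}\otimes R$ is an $R$-module. Its vanishing is detected after $L_{T(n)\oplus T(n-1)}$, via the smashing identification $M_n^{2,f}\Sp\simeq L_{T(n)\oplus T(n-1)}\Sp$ inside $L_n^f\Sp$. The endomorphisms of the unit of $M_n^{2,f}\C$ receive a ring map from $M_n^{2,f}\otimes R$, which is zero, so the unit is zero and the category vanishes. Some care with $p$-localization and the $h=-1$ edge cases is needed.
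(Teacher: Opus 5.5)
Your proposal is correct and follows essentially the paper's route. For $n\geq h+2$, the category $M_n^{2,f}\C\simeq L_{T(n)\oplus T(n-1)}\C$ is an algebra over $L_{T(n)\oplus T(n-1)}\Mod(\End(\mathbbm{1}_{\C}))\simeq0$, so purity gives $K^{\cont}_{T(n)}(\C)\simeq0$. In the rigid case, the lower bound comes from the $\bbE_\infty$-ring map $\dim\colon K^{\cont}(\C)\to\End(\mathbbm{1}_{\C})$. Three things in your write-up should be dropped. The retracted claim that $R$ becomes $L^f_{n-2}$-local is false, already for $R=\bbZ$. The prime-by-prime treatment of $h=-1$ is unnecessary, since the same argument works uniformly at the fixed prime. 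And the lower bound comes from the dimension map itself, not from Ramzi's noshift theorem, which the paper uses only to exhibit examples where the lower bound is attained.
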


In contrast to redshift, Tate fixed points exhibit blueshift. Kuhn's Tate vanishing theorem \cite{Kuhn04Tate} implies that the Tate fixed points of an $L_n^f$-local spectrum with a finite group action are $L_{n-1}^f$-local; in particular, the $T(n)$-local Tate construction vanishes on $T(n)$-local spectra. We use Zhou's Tate construction \cite{Zhoa} for dualizable stable categories, which is defined as the cofiber of the categorical norm map. Our identification of the norm map with a completion map identifies the Tate construction with a generic fiber, giving it a lax symmetric monoidal structure. The calculation $\End(\mathbbm{1}_{(L_n^f\Sp)^{tG,\dbl}})\simeq(L_n^f\bbS)^{tG}$ of the endomorphism $\bbE_\infty$-ring of the unit then lifts Tate vanishing to the categorical level.

For the lower bound, Hahn \cite{Hah22Hinfty} proves that if an $\bbE_\infty$-ring $R$ with trivial $C_p$-action has nonzero Tate fixed points, then $\operatorname{ht}(R^{tC_p})\geq\operatorname{ht}(R)-1$. Together, the two bounds yield categorical blueshift.

\begin{maintheorem}[Tate vanishing; Blueshift, {\cref{thm:tate_vanishing,cor:blueshift}}]
	Let $n\geq1$, and let $\C$ be an $L_n^f$-local dualizable stable category acted on by a finite group $G$. Then $\C^{tG,\dbl}$ is $L_{n-1}^f$-local and we have
	\[
		L_{T(n)}\C^{tG,\dbl}\simeq0.
	\]
	In particular, we have $L_{T(n)}\C^{tG,\dbl}\simeq0$ whenever $\C$ is $T(n)$-local.

	Moreover, if $\C$ is a dualizably symmetric monoidal stable category with $\C^{tC_p,\dbl}$ nonzero, then we have
	\[
		\operatorname{ht}(\C^{tC_p,\dbl})\geq\operatorname{ht}(\C)-1.
	\]
	Therefore, if $\C$ is $T(n)$-complete for $n\geq1$, we have
	\[
		\operatorname{ht}(\C^{tC_p,\dbl})=\operatorname{ht}(\C)-1.
	\]
\end{maintheorem}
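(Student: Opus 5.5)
We prove the three assertions in turn: first Tate vanishing, then the blueshift lower bound, and finally the combination for $T(n)$-complete categories. Throughout we use the identification, announced in the introduction, of the categorical norm map $\C_{hG}\to\C^{hG,\dbl}$ with the completion map for the smashing ideal $\Sp_{hG}\subseteq\Sp^{hG,\dbl}$. Under this identification $\C^{tG,\dbl}$, the cofiber of the norm, becomes the generic fiber of that completion.

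\textbf{Tate vanishing.} The generic fiber is obtained by base change along a lax symmetric monoidal localization. Hence $\C^{tG,\dbl}\simeq (L_n^f\Sp)^{tG,\dbl}\otimes_{(L_n^f\Sp)^{hG,\dbl}}\C^{hG,\dbl}$, and this is a module over the presentably symmetric monoidal category $(L_n^f\Sp)^{tG,\dbl}$. For this step we use that $\C$ is $L_n^f$-local, i.e. an $L_n^f\Sp$-module.

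1. Compute $\End(\mathbbm{1}_{(L_n^f\Sp)^{tG,\dbl}})\simeq(L_n^f\bbS)^{tG}$. Mapping spectra out of the unit in the generic fiber are the cofiber of the norm on $\End(\mathbbm{1})$ of $\Sp^{hG,\dbl}$ versus $\Sp_{hG}$.
2. Kuhn's theorem says $(L_n^f\bbS)^{tG}$ is an $L_{n-1}^f$-local $\bbE_\infty$-ring.
3. A stable category whose action factors through $\Mod(R)$, with $R$ an $L_{n-1}^f$-local $\bbE_\infty$-ring, is $L_{n-1}^f$-local. Indeed, $L_{n-1}^f$ is smashing, so locality is detected on the unit endomorphisms: the maps $L_{n-1}^f\bbS\otimes x\to x$ are equivalences because $R\otimes L^f_{n-1}\bbS\simeq R$.

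This shows $\C^{tG,\dbl}$ is $L_{n-1}^f$-local. Applying $L_{T(n)}\Sp\otimes-$ then gives $L_{T(n)}\C^{tG,\dbl}\simeq0$, since $L_{T(n)}L_{n-1}^f\simeq0$. A $T(n)$-local category is in particular $L_n^f$-local, which gives the ``in particular'' clause.

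\textbf{Blueshift.} Let $R=\End(\mathbbm{1}_\C)$ with trivial $C_p$-action on $\C$; this is the case in which $\C^{tC_p,\dbl}$ is symmetric monoidal. Extending the computation in step 1 from $L_n^f\Sp$ to general $\C$ gives $\End(\mathbbm{1}_{\C^{tC_p,\dbl}})\simeq R^{tC_p}$. This extension uses that for dualizable $\C$ the unit of $\C^{hG,\dbl}$ has endomorphisms $R^{hG}$ and the norm restricts to the classical norm $R_{hG}\to R^{hG}$. The hypothesis $\C^{tC_p,\dbl}\neq0$ means its unit is nonzero, so $R^{tC_p}\neq0$. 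Hahn's theorem then gives $\operatorname{ht}(R^{tC_p})\ge\operatorname{ht}(R)-1$.

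\textbf{Equality for $T(n)$-complete $\C$.} Suppose $\C\simeq\Nuc_{T(n)}(\C)$. Then $\C$ is $L_n^f$-local, so by Tate vanishing $\C^{tC_p,\dbl}$ is $L_{n-1}^f$-local, giving $\operatorname{ht}(\C^{tC_p,\dbl})\le n-1$. For the lower bound we need $\operatorname{ht}(\C)=n$. This uses that the unit endomorphism ring of a $T(n)$-complete rigid-type category is $T(n)$-local whenever nonzero, so its height is exactly $n$ by Hahn's interval result. Combining with the blueshift inequality gives $\operatorname{ht}(\C^{tC_p,\dbl})=n-1$.

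\textbf{Main obstacle.} The hardest step is the unit computation $\End(\mathbbm{1}_{\C^{tG,\dbl}})\simeq R^{tG}$. The homotopy fixed points $\C^{hG,\dbl}$ differ from the $\PrL$ limit, so we must check that the dualizable limit does not alter endomorphisms of the unit. I would first try to show that the unit of $\C^{hG,\dbl}$ lies in the image of the comparison from the $\PrL$ limit, with compact-morphism endomorphisms equal to $R^{hG}$.
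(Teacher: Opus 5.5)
Your outline follows the paper's proof: the module structure over $(L_n^f\Sp)^{tG,\dbl}$, Kuhn's theorem, smashing-ness of $L_{n-1}^f$, and Hahn's theorem are all used the same way. Your base-change formula $\C^{tG,\dbl}\simeq(L_n^f\Sp)^{tG,\dbl}\otimes_{(L_n^f\Sp)^{hG,\dbl}}\C^{hG,\dbl}$ is a correct way to express the module structure. For the complete case, the input $\End(\mathbbm{1}_{\Nuc_{T(n)}(\C)})\simeq L_{T(n)}\End(\mathbbm{1}_{\C})$ holds for every $\C\in\CAlg(\Pr^{\dbl}_{\st})$, not only rigid ones. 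Also, trivial action is needed only to invoke Hahn; $\C^{tC_p,\dbl}$ is symmetric monoidal for any symmetric monoidal action.

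The genuine gap is the step you flag yourself, $\End(\mathbbm{1}_{\C^{tG,\dbl}})\simeq\End(\mathbbm{1}_{\C})^{tG}$. Both Tate vanishing and blueshift rest on it, and your sketch targets the wrong half. The fixed-point side is free. For rigid $\C$ (e.g. $\Mod(R)$) one has $\C^{hG,\dbl}\simeq(\C_{hG})^{\rig}$, and the right adjoint of a rigidification sends the unit to the unit. This gives $\End(\mathbbm{1}_{\C^{hG,\dbl}})\simeq\End(\mathbbm{1}_{\C_{hG}})\simeq R^{hG}$ with no analysis of compact morphisms. Your step~1 justification also conflates things: $\End$ of the unit of $\Sp_{hG}$ is $\bbS^{hG}$, not $\bbS_{hG}$, so there is no norm on unit endomorphisms to take a cofiber of. Let $j_!=\Nm$ be the fully faithful left adjoint of $j^*:\C^{hG,\dbl}\to\C_{hG}$. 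What you actually need is
\[
\End(\mathbbm{1}_{\C^{tG,\dbl}})\simeq\cof\bigl(\Hom_{\C^{hG,\dbl}}(\mathbbm{1},j_!\mathbbm{1}_{\C_{hG}})\to\End(\mathbbm{1}_{\C^{hG,\dbl}})\bigr),
\]
together with $\Hom_{\C^{hG,\dbl}}(\mathbbm{1},j_!\mathbbm{1}_{\C_{hG}})\simeq R_{hG}$, compatibly with the classical norm. Adjunction computes maps out of $j_!$, not into it, so this needs real information about $\C^{hG,\dbl}$.

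The paper supplies it in three steps. First, reduce to $\C=\Mod(R)$ using the fully faithful internal left adjoint algebra map $\Mod(R)\hookrightarrow\C$; the unit is compact because $\C\in\CAlg(\Pr^{\dbl}_{\st})$, and the map stays fully faithful after applying $(-)^{hG,\dbl}$ and $(-)^{tG,\dbl}$. Second, let $p:\Mod(R)_{hG}\to\Mod(R)$ be the projection. The functor $M\mapsto\Hom(\mathbbm{1},j_!M)$ preserves colimits. On compact $M$ it equals $\Hom_{\Mod(R)_{hG}}(\mathbbm{1},M)\simeq p(M)^{hG}$, because $(\Mod(R)^{hG,\dbl})^{\omega}\simeq(\Mod(R)_{hG})^{\dbl}$ (from $\Ind((-)^{\dbl})\hookrightarrow(-)^{\rig}$) forces $j_!M\simeq j_*M$ there. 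Third, the Nikolaus--Scholze norm $(\bbS[G]^{hG}\otimes p(M))_{hG}\to p(M)^{hG}$ is an equivalence on the induced compact generators $p^L(N)$. Two colimit-preserving functors mapping to $p(-)^{hG}$ by maps that are equivalences on compacts therefore agree, giving $\Hom(\mathbbm{1},j_!M)\simeq(\bbS[G]^{hG}\otimes p(M))_{hG}$ over $p(M)^{hG}$ for all $M$. Taking the cofiber at $M=\mathbbm{1}$ yields $R^{tG}$. Without this computation or a substitute, your outline does not yet prove either assertion.
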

The first part proves that the categorical norm map is a $T(n)$-local equivalence, and hence completes the proof of nuclear descent.

One reason for the term continuous $K$-theory comes from formal geometry. For an affine Noetherian formal scheme $\Spf(R^{\wedge}_I)$, its continuous $K$-theory is defined as $\lim_nK(R/I^n)$, and one seeks a category whose (continuous) $K$-theory realizes this limit. The nuclear solid $R^{\wedge}_I$-module category $\Nuc(R^{\wedge,\solid}_I)$ of Clausen--Scholze \cite{Sch26analytic} is well suited to this purpose. Efimov \cite{Efi25limit} proved that it has the desired continuous $K$-theory, and that the same is true for the larger nuclear module category $\Nuc(R^{\wedge}_I)$ obtained by rigidifying $\Mod(R)_{I\text{-}\cplt}\simeq\lim_n\Mod(R/I^n)$. Thus
\[
	K^{\cont}(\Nuc(R^{\wedge,\solid}_I))\simeq K^{\cont}(\Nuc(R^{\wedge}_I))\simeq\lim_nK(R/I^n).
\]

At higher chromatic heights, we replace the $I$-adic tower by a suitable tower $(V_r)_{r\geq0}$ of type $n$ generalized Moore spectra with compatible $\bbE_1$-algebra structures, constructed using Burklund's results \cite{Bur22}. We prove a limit topology theorem generalizing that of \cite{LZ25}, together with a continuity theorem for continuous $K$-theory.

\begin{maintheorem}[Nuclear limit topology, {\cref{thm:nuc_lim_topology}}]
	Let $n\geq1$, and let $\C$ be a presentably symmetric monoidal stable category. Then the base change functors induce symmetric monoidal equivalences
	\[
		L_{T(n)}\C\xrightarrow{\simeq}\lim_rL_{T(n)}\Mod_{V_r}(\C)\xleftarrow{\simeq}\lim_r\Mod_{V_r}(L_{T(n)}\C).
	\]
	If $\C$ is rigid, then the base change functors induce symmetric monoidal equivalences
	\[
		\Nuc_{T(n)}\C\xrightarrow{\simeq}\lim_r^{\dbl}L_{T(n)}\Mod_{V_r}(\C)\xleftarrow{\simeq}\lim_r^{\dbl}\Mod_{V_r}(L_{T(n)}\C).
	\]
	
	Moreover, if $\C$ is rigid and $L_n^f$-local, then the base change functors induce an equivalence
	\[
		K_{T(n+1)}^{\cont}(\C)\xrightarrow{\simeq}L_{T(n+1)}\left(\lim_rK^{\cont}(\Mod_{L_{T(n)}V_r}(\C))\right).
	\]
\end{maintheorem}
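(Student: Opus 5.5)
We prove the three assertions of the nuclear limit topology theorem in order, using the tower $(V_r)_{r\geq0}$ of type $n$ generalized Moore spectra with compatible $\bbE_1$-algebra structures.

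\textbf{First equivalence.} The key input is the classical fact that for a type $n$ tower with $\lim_r$ of the pro-system $V_r$ pro-equivalent to the $T(n)$-local unit, $T(n)$-localization on $L_n^f$-local objects is $X\mapsto\lim_r L_{T(n)}(V_r\otimes X)$. Since $L_{T(n)}\C\simeq L_{T(n)}\Sp\otimes\C$ and $L_{T(n)}\Mod_{V_r}(\C)\simeq\Mod_{L_{T(n)}V_r}(L_{T(n)}\C)$, it suffices to treat the universal case $\C=\Sp$ and then tensor. Tensoring with $\C$ is only harmless here because the relevant limit is compatible with $-\otimes\C$, which we arrange by writing everything as modules over the pro-system. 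For the universal case, I would show that
\[
L_{T(n)}\Sp\to\lim_r\Mod_{L_{T(n)}V_r}(L_{T(n)}\Sp)
\]
is fully faithful, using $X\simeq\lim_r L_{T(n)}(V_r\otimes X)$ (Burklund's tower makes this pro-constant). Essential surjectivity then follows by descent: a compatible family $(M_r)$ is glued as $\lim_r M_r$, and one checks $L_{T(n)}V_r\otimes\lim_s M_s\simeq M_r$ using that $V_r$ is dualizable and the transition maps are nilpotent-kernel in the pro sense. The identification of the two limits, $L_{T(n)}\Mod_{V_r}(\C)\simeq\Mod_{V_r}(L_{T(n)}\C)$, is termwise. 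The symmetric monoidal structure comes from the base change functors being symmetric monoidal. This needs the $V_r$ to be $\bbE_1$ with a compatible central structure, or else one simply works with module categories as $\C$-linear categories.

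\textbf{Dualizable statement.} For rigid $\C$, each $\Mod_{V_r}(\C)$ and its $T(n)$-localization is rigid, and the transition functors are internal left adjoints. The dualizable limit $\lim^{\dbl}$ is the nuclear/rigidification of the $\PrL$-limit. By the first part this is the rigidification of $L_{T(n)}\C$, and the paper identifies that rigidification with $\Nuc_{T(n)}\C$ (completion equals nuclear objects in the localization for rigid $\C$). I would check the universal property directly: maps from a dualizable category into $\Nuc_{T(n)}\C$ agree with compatible internal-left-adjoint maps into each term.

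\textbf{$K$-theory statement.} Continuous $K$-theory commutes with $\lim^{\dbl}$ of such towers only up to $\lim^1$ and phantom issues, so I would not use that. Instead:
\begin{enumerate}
\item By nuclear purity (at height $n+1$, applied to the $L_n^f$-local $\C$), $K_{T(n+1)}^{\cont}(\C)\simeq K_{T(n+1)}^{\cont}(\Nuc_{T(n+1)\oplus T(n)}\C)$, and the $T(n+1)$-part contributes nothing since $\C$ is $L_n^f$-local. So only the $T(n)$-completion matters.
\item Use the nuclear fracture square together with the $L_{n-1}^f$-part. The $L_{n-1}^f$-part is killed at $T(n+1)$ by purity, which gives $K_{T(n+1)}^{\cont}(\C)\simeq K_{T(n+1)}^{\cont}(\Nuc_{T(n)}\C)$.
\item Show that $K^{\cont}$ converts the dualizable limit of the tower of $\Mod_{L_{T(n)}V_r}(\C)$, whose transition maps are surjections of nilpotent-extension type after $T(n+1)$-localization, into the limit. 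I would prove this via the Calkin presentation, reducing to Efimov's continuity for inverse limits of categories along homological-epimorphism-type towers, as in the formal-scheme case $\lim_n K(R/I^n)$.
\end{enumerate}

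\textbf{Main obstacle.} The last step is the hardest. The approach is to use that $K^{\cont}$ of $\lim^{\dbl}$ maps to $\lim K^{\cont}$ with fiber controlled by the continuous $K$-theory of a "$\lim^1$-type" category, and to show that this term is $T(n+1)$-acyclic, e.g. because it is a module over an $L_{n-1}^f$-type object after applying purity.
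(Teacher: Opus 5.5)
The two categorical equivalences follow the paper's route in outline: a unit/counit argument with pro-constancy, then $\Nuc_{T(n)}\C\simeq(L_{T(n)}\C)^{\mathrm{rig}}$, then the dualizable limit as the rigidification of the limit. The genuine gap is in the $K$-theoretic part.

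You set aside the direct route on the grounds that $K^{\cont}$ commutes with $\lim^{\dbl}$ of such towers only up to $\lim^1$/phantom defects. For this tower there is no defect. The tower $\C\to(\Mod_{V_r}(\C))_{r\geq0}$ satisfies Efimov's \emph{strongly Mittag-Leffler condition}, and for such towers Efimov's continuity theorem gives $K^{\cont}(\lim^{\dbl}_r\C_r)\xrightarrow{\simeq}\lim_rK^{\cont}(\C_r)$ integrally.

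Verifying this condition is exactly where the factorization hypothesis \cref{asp:V_r}(1) is used (see \cref{prop:ML}). Write $f_r=V_r\otimes-:\C\to\Mod_{V_r}(\C)$. Then $f_rf_r^R\simeq(V_r\otimes V_r)\otimes_{V_r}-$ and $f_{mr}f_{mr}^R\simeq V_r\otimes_{V_m}-$. The bimodule factorization of $V_{m+1}\otimes V_{m+1}\to V_m\otimes V_m$ through $V_{m+1}$ makes $(V_r\otimes_{V_m}V_r)_{m\geq r}$ pro-constant with value $V_r\otimes V_r$. Dualizability of $V_r$ makes $f_r^R$ an internal left adjoint and gives $f_r$ a left adjoint.

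Combined with $\Nuc_{T(n)}\C\simeq\lim^{\dbl}_r\Mod_{L_{T(n)}V_r}(\C)$, this gives $K^{\cont}(\Nuc_{T(n)}\C)\simeq\lim_rK^{\cont}(\Mod_{L_{T(n)}V_r}(\C))$ before any localization. Then \cref{cor:purity_T(n)} finishes; your steps 1--2 are fine and amount to that corollary. Your step 3 does invoke Efimov, but for ``homological-epimorphism-type towers''. The maps $L_{T(n)}V_{r+1}\to L_{T(n)}V_r$ are not homological epimorphisms, and the hypothesis that actually matters is never checked. So step 3 both contradicts your opening remark and lacks its key input.

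The fallback in your ``main obstacle'' paragraph cannot replace this. You never identify the fiber of $K^{\cont}(\lim^{\dbl})\to\lim K^{\cont}$ with the $K$-theory of any specific category. The proposed reason for its $T(n+1)$-acyclicity also has nothing to act on. Every term $\Mod_{L_{T(n)}V_r}(\C)$ is annihilated by $L_{n-1}^f$, since $V_r$ has type $n$. The one natural $L_{n-1}^f$-local category here, the generic fiber $\Nuc_{T(n)}(\C)_{\eta}$, measures the difference between $L_{T(n)}\C$ and $\Nuc_{T(n)}\C$, not between $K^{\cont}$ of the limit and the limit of $K^{\cont}$. Without some continuity theorem, nothing relates $\lim_rK^{\cont}(\Mod_{L_{T(n)}V_r}(\C))$ to the $K$-theory of any category.

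Two smaller points on the categorical parts:
\begin{enumerate}
\item Reducing to $\C=\Sp$ and then applying $-\otimes\C$ is unjustified for non-dualizable $\C$, since $-\otimes\C$ need not commute with limits in $\PrL$. It is also unnecessary: your unit/counit argument runs verbatim inside $\C$, because each $V_r$ is dualizable there and $\lim_r(V_r\otimes-)$ is $L_{T(n)}$ on $L_n^f\C$.
\item Since $V_r$ is only an $\bbE_{r+1}$-ring, $\Mod_{V_r}(\C)$ is only $\bbE_r$-monoidal. So the symmetric monoidal structure on the limit, and the identification of $\lim^{\dbl}_r$ with the rigidification of $\lim_r$, must go through the increasingly commutative tower and $\bbE_1$-rigidification as in \cref{prop:E_1-rig}, not termwise symmetric monoidal rigidity.
\end{enumerate}
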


More generally, motivated by \cite{MW25refinedTC}, we prove analogous limit topology and continuity theorems for suitable towers of $\bbE_1$-algebras whose terms and transition maps admit increasingly commutative structures. This framework also recovers the $I$-adic case of $\Nuc(R^{\wedge}_I)$. We also compare the $\bbE_1$- and $\bbE_2$-variants of our results with Ben-Moshe's recent work \cite{BM26quotient}, which treats limit topology under more general Mittag-Leffler conditions and proves continuity for quotients of even $\bbE_2$-ring spectra.

As applications, we prove chromatic redshift for two classes of nuclear module categories of height $0$. The first consists of nuclear solid module categories under suitable finiteness assumptions. Recall that a discrete commutative $\bbF_p$-algebra is \emph{$F$-finite} if its absolute Frobenius map $\Frob:R\to R$ is finite. In the $p$-adic case, it is enough to assume that the input is a connective $\bbE_\infty$-$\bbZ$-algebra.

\begin{corollary*}[Redshift for nuclear solid modules, {\cref{cor:redshift_nuc_solid}}]
	\begin{enumerate}
		\item Let $R$ be a Noetherian discrete commutative ring and $I\subseteq R$ an ideal. Suppose that $\pi_0(R^{\wedge}_I/p)$ is $F$-finite and $R^{\wedge}_I[1/p]$ is nonzero. Then both $\Nuc(R^{\wedge}_I)$ and $\Nuc(R^{\wedge,\solid}_I)$ satisfy chromatic redshift.
		\item Let $R$ be a connective $\bbE_\infty$-$\bbZ$-algebra. Suppose that $R^{\wedge}_p[1/p]$ is nonzero. Then both $\Nuc(R^{\wedge}_p)$ and $\Nuc(R^{\wedge,\solid}_p)$ satisfy chromatic redshift.
	\end{enumerate}
\end{corollary*}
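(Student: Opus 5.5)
Both parts reduce to the redshift theorem (Theorem ``Redshift bound; Redshift or noshift'') combined with the continuity results. The first step is to fix what ``chromatic redshift'' should mean here. For a rigid symmetric monoidal $\C$ of height $0$, i.e.\ with $\End(\mathbbm{1}_\C)$ rational-nontrivial but $T(1)$-acyclic, it should mean $\operatorname{ht}(K^{\cont}(\C))=1$. The rigid case of the redshift theorem already gives $0\le\operatorname{ht}(K^{\cont}(\C))\le 1$, once we know $\operatorname{ht}(\C)=0$ and $K^{\cont}(\C)\neq0$. So the real content is twofold: compute $\operatorname{ht}(\C)=0$, and prove $L_{T(1)}K^{\cont}(\C)\not\simeq0$.

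For the height computation, Efimov's identification $\End(\mathbbm{1})$ of the nuclear categories is $R^{\wedge}_I$ (resp.\ $R^{\wedge}_p$). This is an $H\bbZ$-algebra, so it is $T(n)$-acyclic for $n\ge1$. It is $T(0)$-nontrivial exactly because $R^{\wedge}_I[1/p]\neq0$ (after $p$-localizing). Hence $\operatorname{ht}=0$ in both the solid and the non-solid case. Nonvanishing of $K^{\cont}$ will follow from the next step.

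For the lower bound, Efimov's computation $K^{\cont}(\Nuc(R^{\wedge,\solid}_I))\simeq K^{\cont}(\Nuc(R^\wedge_I))\simeq\lim_m K(R/I^m)$ reduces the claim to showing that $L_{T(1)}\lim_mK(R/I^m)\neq0$. I would use the ring map $\lim_m K(R/I^m)\to K(R/I)$, or better a continuity statement. In case (1) the $F$-finiteness hypothesis is what makes $p$-adic continuity of $K$-theory work: by Clausen--Mathew--Morrow continuity, $K(R^\wedge_I)^\wedge_p\simeq\lim_m K(R/I^m)^\wedge_p$ for $I$-adically complete Noetherian rings with $F$-finite reduction mod $p$. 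After $p$-completion (which $T(1)$-localization factors through), we therefore get $L_{T(1)}K^{\cont}\simeq L_{T(1)}K(R^\wedge_I)$. Redshift for the ordinary $\bbE_\infty$-ring $R^\wedge_I$ (Yuan, Burklund--Schlank--Yuan) then gives $\operatorname{ht}(K(R^\wedge_I))=\operatorname{ht}(R^\wedge_I)+1=1$, since $R^\wedge_I[1/p]\neq0$ ensures $\operatorname{ht}(R^\wedge_I)=0$. In case (2), $p$-adic continuity of $K$-theory for connective rings (Clausen--Mathew--Morrow again, valid for $\bbE_\infty$-$\bbZ$-algebras henselian along $p$) gives $K(R^\wedge_p)^\wedge_p\simeq\lim_mK(R/p^m)^\wedge_p$. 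The same argument then applies with $R^\wedge_p$, which is connective $\bbE_\infty$ with $R^\wedge_p[1/p]\ne0$.

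The main obstacle is this continuity step. One must check that the hypotheses of the $p$-adic continuity theorem are exactly those stated ($F$-finiteness of $\pi_0(R^\wedge_I/p)$ in the Noetherian case, connectivity in the $p$-adic case). One must also check that $T(1)$-localization commutes appropriately with the limit after $p$-completion; this holds because $L_{T(1)}$ of a $p$-complete spectrum only depends on it through $L_{K(1)}$, and limits of $p$-complete spectra are $p$-complete. Alternatively, the continuity part of the nuclear limit topology theorem could replace the external input at height $0$ with $V_r=\bbS/p^r$. Once nonvanishing is obtained, the redshift theorem's upper bound yields $\operatorname{ht}(K^{\cont})=1=\operatorname{ht}(\C)+1$ for both nuclear categories.
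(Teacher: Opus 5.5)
\textbf{Gap in case (1).} Most of your plan follows the paper: $\End(\mathbbm{1})\simeq R^{\wedge}_I$ gives height $0$, purity gives the upper bound, and Efimov's limit formula together with Clausen--Mathew--Morrow continuity identifies $L_{T(1)}K^{\cont}(\Nuc(R^{\wedge}_I))$ with $L_{T(1)}K(R^{\wedge}_I)$. The gap is in the final lower bound for case (1).

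The $\bbE_\infty$ redshift theorem (Yuan, Burklund--Schlank--Yuan, as quoted in the paper) needs $\operatorname{ht}(R_{(p)})\geq0$, i.e.\ $R\otimes\bbQ\not\simeq0$. In this paper $T(0)=\bbS[1/p]$ is \emph{not} $p$-local. So the hypothesis $R^{\wedge}_I[1/p]\neq0$ gives $\operatorname{ht}(R^{\wedge}_I)=0$, but it does not give rational nontriviality. Your parenthetical ``(after $p$-localizing)'' is exactly where this slips: $p$-localizing $R^{\wedge}_I[1/p]$ gives $R^{\wedge}_I\otimes\bbQ$, which can vanish.

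Concretely, take $\ell\neq p$, $R=\bbF_\ell[x]$ and $I=(x)$. All hypotheses hold: $R^{\wedge}_I/p\simeq0$ is $F$-finite and $R^{\wedge}_I[1/p]=\bbF_\ell[\![x]\!]\neq0$. Yet $(R^{\wedge}_I)_{(p)}\simeq0$, so the redshift theorem says nothing, even though $L_{T(1)}K(\bbF_\ell[\![x]\!])\neq0$ is true.

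The missing idea is the paper's: choose a ring map $R^{\wedge}_I[1/p]\to k$ to an algebraically closed field. Then $\operatorname{char}(k)\neq p$, and Suslin's rigidity gives $K(k)^{\wedge}_p\simeq ku^{\wedge}_p$, so $L_{T(1)}K(k)\neq0$. The $\bbE_\infty$-map $K(R^{\wedge}_I)\to K(k)$ then forces $L_{T(1)}K(R^{\wedge}_I)\neq0$. This works whether or not $R^{\wedge}_I$ is rationally nonzero. In case (2) your argument is fine as written: $R^{\wedge}_p$ is $p$-complete, hence $p$-local, so $R^{\wedge}_p[1/p]\simeq R^{\wedge}_p\otimes\bbQ\neq0$ and the $\bbE_\infty$ redshift theorem applies.

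\textbf{Smaller points.}
\begin{enumerate}
\item The CMM continuity theorems are stated for $\tau_{\geq0}K$. You should note that bounded-above spectra are $T(1)$-acyclic, so that you can pass between $K$, $\tau_{\geq0}K$ and their limits. In the Noetherian case you also need the derived and classical $I$-adic towers to be pro-equivalent (Efimov's Lemma 5.26).
\item Your suggested alternative via the nuclear limit topology theorem does not work. That theorem is for $n\geq1$ and concerns $K_{T(n+1)}$ of $L_n^f$-local categories. More to the point, no continuity result in the paper replaces CMM: the paper's continuity theorem only reproduces the identification of $K^{\cont}(\Nuc(R^{\wedge}_p))$ with $\lim_rK(R/p^{k_r})$. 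CMM is the rigidity input comparing that limit with $K(R^{\wedge}_I)$ itself.
\item For the solid categories, it is cleaner to use \cref{lem:redshift} than an equivalence of continuous $K$-theories. The fully faithful inclusion into $\Nuc(R^{\wedge}_I)$ preserves the unit endomorphism ring and induces an $\bbE_\infty$-map on $K^{\cont}$. The paper only records the $K^{\cont}$ comparison in the Noetherian setting, so this matters for case (2).
\end{enumerate}
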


The second class consists of nuclear gaseous module categories from Clausen--Scholze \cite{CS26complex,Cla24deligne}, which arise in complex analytic geometry.

\begin{corollary*}[Redshift for nuclear gaseous modules, {\cref{cor:redshift_nuc_gas}}]
	Let $M$ be a nonempty compact complex manifold. Then $\Nuc(M^{\gas})$ satisfies chromatic redshift.
\end{corollary*}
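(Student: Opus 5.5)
Chromatic redshift here should mean: for the nonzero rigid category $\Nuc(M^{\gas})$ of height $0$, we have $\operatorname{ht}(K^{\cont}(\Nuc(M^{\gas})))=1$, or equivalently $L_{T(1)}K^{\cont}\not\simeq 0$ and $L_{T(m)}K^{\cont}\simeq 0$ for $m\geq 2$. The plan has three parts: establish the hypotheses of the Redshift bound theorem, use it for the upper bound, and prove the lower bound by exhibiting a map of $\bbE_\infty$-rings to a spectrum already known to have height $1$.

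\textbf{Hypotheses and upper bound.} First I check the hypotheses of the theorem. By Clausen--Scholze, $\Nuc(M^{\gas})$ is rigid symmetric monoidal, being nuclear modules over the gaseous base $\bbC$ or $\bbR$. Its unit endomorphisms are $\Gamma(M,\O_M)$-type objects, which are rational: the gaseous base ring is a $\bbQ$-algebra, so $\End(\mathbbm{1})$ is a nonzero $\bbE_\infty$-$\bbQ$-algebra. Hence $\operatorname{ht}(\Nuc(M^{\gas}))=0$. The Redshift bound theorem then gives $0\leq\operatorname{ht}(K^{\cont})\leq 1$, provided $K^{\cont}$ is nonzero; nonvanishing will follow from the lower bound step.

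\textbf{Lower bound: reduce to a point.} It remains to show $L_{T(1)}K^{\cont}(\Nuc(M^{\gas}))\neq0$. Since $K^{\cont}$ is lax symmetric monoidal, $K^{\cont}(\Nuc(M^{\gas}))$ is an $\bbE_\infty$-ring, and any $\bbE_\infty$-map into a ring $R$ with $L_{T(1)}R\neq0$ forces nonvanishing, because the $T(1)$-localization of an $\bbE_\infty$-ring is zero iff $1=0$. I choose a point $x\in M$, which exists since $M$ is nonempty. Pulling back along $x$ gives a symmetric monoidal internal left adjoint $\Nuc(M^{\gas})\to\Nuc(\mathrm{pt}^{\gas})$. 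Taking $K^{\cont}$ yields an $\bbE_\infty$-map $K^{\cont}(\Nuc(M^{\gas}))\to K^{\cont}(\Nuc(\bbC^{\gas}))$, where I use compactness of $M$ only to know the categories are as described and the pullback is internal left adjoint.

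\textbf{Main obstacle: height $1$ at a point.} The hard step is showing $L_{T(1)}K^{\cont}(\Nuc(\bbC^{\gas}))\neq0$. My first attempt is the identification, from Clausen's work, of $K^{\cont}$ of gaseous $\bbC$ (or $\bbR$) with topological $K$-theory $\KU$ (or $\mathrm{KO}$), possibly up to a shift or connective cover. Granting this, $L_{T(1)}\KU\neq0$ finishes. If that identification is not available, I would instead find an $\bbE_\infty$ map to $K^{\cont}$ of a category with known height-$1$ $K$-theory, for instance via a comparison with solid or liquid $\bbC$. An alternative is to build a map $K(\bbC^{\delta})\to K^{\cont}$ and exploit Suslin rigidity: $L_{K(1)}K(\bbC^\delta)\simeq L_{K(1)}\KU$ by Suslin, and the map of $\bbE_\infty$-rings would then need to detect a unit, which remains to be checked.
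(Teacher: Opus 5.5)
Your height computation, the upper bound via \cref{thm:redshift}, and the reduction to a point along $x^*\colon\Nuc(M^{\gas})\to\Nuc(\bbC^{\gas})$ (a map of rigid $\Sp$-algebras, so it induces an $\bbE_\infty$-map on $K^{\cont}$) are all fine. The gap is the step you flag yourself. You never produce an $\bbE_\infty$-map \emph{out of} $K^{\cont}(\Nuc(\bbC^{\gas}))$ into a ring with nonzero $T(1)$-localization, and none of your three options supplies one.

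\begin{itemize}
\item The identification $K^{\cont}(\Nuc(\bbC^{\gas}))\simeq\KU$ is not available. Clausen's theorem only identifies the homotopification $K^{\cont}(\Nuc(-^{\gas}))^h\simeq\mathrm{R}\Gamma(-;\KU)$, and the value at a point should be a Deligne-type $K$-theory, not $\KU$. Indeed, write $\Nuc(\bbC^{\gas})\simeq\Mod_H(J)$ with $J$ the quasi-exponential-decay ideal. The Fredholm determinant on $1+J$ maps $\pi_1K^{\cont}(\Nuc(\bbC^{\gas}))$ onto $\bbC^{\times}$, whereas $\pi_1\KU=0$, so no shift or connective cover of $\KU$ can match.
\item The Suslin fallback runs the wrong way. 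An $\bbE_\infty$-map $K(\bbC^{\delta})\to K^{\cont}(\Nuc(\bbC^{\gas}))$ only bounds the height of the target from above. ``Detecting a unit'' is exactly the nonvanishing you are trying to prove.
\item The solid/liquid comparison is never specified.
\end{itemize}

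The missing input is a concrete ring map to $\KU$. The paper gets it from Clausen's theorem: the canonical map to the homotopification is an $\bbE_\infty$-map $K^{\cont}(\Nuc(M^{\gas}))\to\mathrm{R}\Gamma(M;\KU)$. Evaluation at a point then gives $\mathrm{R}\Gamma(M;\KU)\to\KU$, so no computation at a point is needed.

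Your point-reduction can also be completed with results already in the paper. The map $\Nuc(\bbC^{\gas})\simeq\Mod_H(J)\hookrightarrow\Mod_H(C(\H))$ is a fully faithful $\Sp$-algebra map, and $K^{\cont}(C(\H))\simeq\KU$ as $\bbE_\infty$-rings. Hence the composite $K^{\cont}(\Nuc(M^{\gas}))\to K^{\cont}(\Nuc(\bbC^{\gas}))\to\KU$ gives $\operatorname{ht}(K^{\cont}(\Nuc(M^{\gas})))\geq1$. This is the same input the paper uses, via \cref{lem:redshift}, to show that $\Nuc(\bbC^{\gas})$ itself satisfies redshift.
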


\subsection*{Relation to other work}
\addcontentsline{toc}{subsection}{Relation to other work}

This work builds on the theory of dualizable stable categories and continuous $K$-theory developed by Efimov \cite{Efi25localizing,Efi25limit,Efi25rigidity}, together with Zhou's categorical completion theory \cite{Zhob}. We apply the latter to chromatic layers to construct chromatic completions of dualizable stable categories. The theories of rigid symmetric monoidal stable categories, rigidifications, and nuclear categories in \cite{KNP24,Ram26locallyrigid,Ram26freerigid,Aok25} provide the corresponding symmetric monoidal setting. Within this setting, we introduce the notion of maps of rigid envelopes and prove that they give rise to symmetric monoidal fracture squares. This generalizes the symmetric monoidal fracture square of Naumann--Pol--Ramzi \cite{NPR24} and is used both for chromatic purity and for comparisons between nuclear module categories. In the adic setting, the resulting fracture squares recover the $K$-theoretic comparison in \cite{And23thesis} and the corresponding categorical comparison in \cite{Cor23}.

The purity and descent results for continuous $K$-theory build on the compactly generated versions of the theorems of Land--Mathew--Meier--Tamme \cite{LMMT24} and Clausen--Mathew--Naumann--Noel \cite{CMNN24}, and place them in the setting of dualizable categories with dualizable limits. The $H$-unital reformulation builds on Tamme's reformulation of excision for $\bbE_1$-rings \cite{Tam18}; see also Krause--Nikolaus--P\"utzst\"uck \cite{KNP24} for an account in the setting of dualizable stable categories. For descent, Zhou's norm map and Tate construction \cite{Zhoa} are essential inputs. We identify the norm map with a completion map and the Tate construction with a generic fiber. This gives the Tate construction its lax symmetric monoidal structure and yields the categorical Tate vanishing and nuclear descent.

We study the chromatic height of symmetric monoidal stable categories, defined through the endomorphism $\bbE_\infty$-ring of the unit using Hahn's description of chromatic support \cite{Hah22Hinfty}. Our redshift bounds follow from the purity result and Ramzi's work on chromatic noshift \cite{Ram26noshift}. For dualizably symmetric monoidal stable categories, purity immediately gives the upper bound on the chromatic height of continuous $K$-theory. For rigid symmetric monoidal stable categories, Ramzi's dimension map construction \cite{Ram26noshift} gives the lower bound showing that $K$-theory does not decrease chromatic height, while his noshift theorem provides examples for which the height is preserved. Combining these bounds, we obtain that redshift and noshift are the only possible behaviors for rigid symmetric monoidal stable categories. We prove redshift, under suitable hypotheses, for two classes of rigid symmetric monoidal stable categories: nuclear solid module categories, as introduced in \cite{Sch26analytic} and reformulated in \cite{Efi25limit,LLS26}, and nuclear gaseous module categories from \cite{Cla24deligne}. The proofs use the continuity results of \cite{CMM21} and the computations in \cite{Cla24deligne}, respectively. For blueshift, the spectral inputs are Kuhn's Tate vanishing theorem \cite{Kuhn04Tate} and Hahn's lower bound \cite{Hah22Hinfty}; the categorical input is the lax symmetric monoidal structure on the Tate construction established here.

The limit topology results combine four inputs. Li--Zhang's  approach \cite{LZ25} motivates our methods and the limit description. Burklund supplies $\bbE_1$-quotient constructions \cite{Bur22}. Meyer--Wagner provide the factorization condition for towers and the construction of towers of generalized Moore spectra \cite{MW25refinedTC}. Finally, Efimov's continuity theorem identifies the continuous $K$-theory of suitable dualizable limits with the corresponding limits of $K$-theory spectra \cite{Efi25limit}. We formulate these inputs in a common categorical setting and identify the resulting dualizable limits with nuclear completions. This gives both adic and chromatic examples, and applying nuclear purity in the latter case yields a continuity theorem for chromatically localized $K$-theory.

After an earlier version of this paper had been written, Ben-Moshe's work on algebraic $K$-theory of quotient ring spectra \cite{BM26quotient} appeared. His limit topology theorem is formulated under the strongly Mittag-Leffler condition, and his work establishes continuity for quotients of even $\bbE_2$-ring spectra using the constructions of \cite{HW18}. Our factorization condition implies his strongly Mittag-Leffler condition, giving another proof of our limit topology statement. The $\bbE_2$-quotient towers considered there satisfy our condition after passing to a suitable cofinal subtower, and the $\bbE_1$- and $\bbE_2$-variants of our results recover the corresponding continuity statements. We give the precise comparisons in \cref{sec:3.4}. In the symmetric monoidal setting, our formalism of maps of rigid envelopes also provides an independent proof of the relevant comparisons between completed module categories and nuclear module categories.

\subsection*{Outline}
\addcontentsline{toc}{subsection}{Outline}

\cref{sec:1} contains preliminaries on $H$-unital rings and rigid categories.
	In \cref{sec:1.1}, we recall $H$-unital rings and dualizable categories.
	In \cref{sec:1.2}, we establish a new method for constructing $H$-unital rings via coidempotent $\bbS$-coalgebras. 
	In \cref{sec:1.3}, we introduce the notion of maps of rigid envelopes, building on the theory of rigid categories from \cite{Ram26locallyrigid,Ram26freerigid} and the categorical completion theory of \cite{Zhob}.

\cref{sec:2} develops the categorical framework relating chromatic layers, categorical completion, and fracture squares.
	In \cref{sec:2.1}, we develop a relative theory of Bousfield localizations and formulate fracture squares of localization functors.
	In \cref{sec:2.2}, we apply categorical completion theory to construct fracture squares for dualizable categories, including symmetric monoidal fracture squares arising from maps of rigid envelopes, which generalize the construction of \cite{NPR24}.
	In \cref{sec:2.3}, we apply these formalisms to chromatic homotopy theory, study nuclear module categories, and construct fracture squares associated to chromatic layers.

\cref{sec:3} develops the chromatic theory of dualizable categories and proves the main results of the paper.
	In \cref{sec:3.1}, we generalize the chromatic purity theorem for algebraic $K$-theory to continuous $K$-theory of dualizable stable categories, reformulate it in terms of $H$-unital rings, and obtain a nuclear refinement via categorical completion theory.
	In \cref{sec:3.2}, we establish chromatic descent for dualizable homotopy fixed points and nuclear module categories, study norm maps for dualizable categories, and give applications to $T(n)$-local Galois descent.
	In \cref{sec:3.3}, we study the chromatic height of symmetric monoidal stable categories and prove redshift bounds, Tate vanishing, and blueshift theorems. In particular, continuous $K$-theory raises chromatic height by at most $1$, while $C_p$-Tate fixed points for the trivial action lower it by at most $1$.

	In \cref{sec:3.4}, we establish a limit topology theorem for towers of $\bbE_1$-algebras under suitable conditions, generalizing \cite{LZ25}, and apply it to nuclear module categories to prove the continuity of continuous $K$-theory. Following the construction of \cite{MW25refinedTC}, we apply these results in the $T(n)$-local and $T(n)$-nuclear settings, and then compare them with the continuity theorem of \cite{BM26quotient}.
	In \cref{sec:3.5}, we apply these results to nuclear solid and nuclear gaseous module categories and prove that both satisfy chromatic redshift.

Readers primarily interested in purity may begin with \cref{sec:3.1} and refer to \cref{sec:1.3,sec:2.2,sec:2.3} for the categorical completion theory and fracture square constructions. The limit topology results in \cref{sec:3.4} can be read independently of the descent and Tate vanishing results; purity is used only in the $K$-theoretic application.

\subsection*{Notation and conventions}
\addcontentsline{toc}{subsection}{Notation and conventions}

We will work with the theory of $\infty$-categories as developed by Lurie in \cite{HTT,HA,SAG}.
\begin{enumerate}
	\item We simply write categories and $2$-categories for $(\infty,1)$-categories and $(\infty,2)$-categories, respectively. 
	\item We denote by $\An$ the category of anima (i.e. spaces or groupoids), $\widehat{\Cat}$ the (very large) category of categories, and $\widehat{\Cat}_{\st}$ the (very large) category of stable categories with exact functors.
	\item We denote by $\PrL$ the category of presentable categories with left adjoint functors, and $\PrL_{\st}$ the corresponding category of presentable stable categories. For a regular cardinal $\kappa$, we denote by $\Pr^{\kappa}$ the category of $\kappa$-compactly generated categories with left adjoint functors whose right adjoint preserves $\kappa$-filtered colimits, and $\Pr^{\kappa}_{\st}$ the corresponding category of $\kappa$-compactly generated stable categories. For $\kappa=\omega$, we simply write $\Pr^{\cg}:=\Pr^{\omega}$ and $\Pr^{\cg}_{\st}:=\Pr^{\omega}_{\st}$. For a presentably symmetric monoidal category $\C\in\CAlg(\PrL)$, we denote by $\PrL_{\C}:=\Mod_{\C}(\PrL)$ the category of $\C$-modules, and $\Pr^{\kappa}_{\C}:=\Mod_{\C}(\Pr^{\kappa})$ the category of $\kappa$-compactly generated $\C$-modules.
	\item We denote by $\Pr^{\iL}_{\st}$ the category of presentable stable categories with $\Sp$-internal left adjoint functors (i.e. left adjoint functors whose right adjoint is an $\Sp$-linear left adjoint), and $\Pr^{\dbl}_{\st}\subseteq\Pr^{\iL}_{\st}$ the full subcategory spanned by dualizable stable categories (i.e. dualizable objects in $\PrL_{\st}$). For a presentably symmetric monoidal category $\C\in\CAlg(\PrL)$, we denote by $\Pr^{\iL}_{\C}$ the category of $\C$-modules with $\C$-internal left adjoints, and $\Pr^{\dbl}_{\C}\subseteq\Pr_{\C}^{\iL}$ the full subcategory spanned by dualizable $\C$-modules. We denote by $\Pr^{\ca}$ the category of compactly assembled categories (i.e. retracts of compactly generated categories) with compactly assembled functors (i.e. left adjoint functors whose right adjoint preserves filtered colimits). The categories appearing above are introduced and studied in \cite{Efi25localizing,KNP24,Ram24dualizable,Ram26locallyrigid,Ram26noshift,Ram26freerigid,Sch26gestalten}.
	\item We fix a prime $p$. For $n\geq0$, we denote by $L_n^f$ the finite localization $L_{T(0)\oplus\cdots\oplus T(n)}$, where $T(0):=\bbS[1/p]$ and, for $n\geq1$, $T(n)$ denotes the telescope of a $v_n$-self map of a type $n$ finite $p$-local spectrum $F(n)$. We denote by $L_n$ the localization $L_{T(0)\oplus K(1)\oplus\cdots\oplus K(n)}$, where, for $n\geq1$, $K(n)$ denotes a Morava $K$-theory spectrum of height $n$. In the $p$-local setting, these agree with the usual localizations $(L_n^f)_{(p)}\simeq L_{\bbQ\oplus T(1)\oplus\cdots\oplus T(n)}$ and $(L_n)_{(p)}\simeq L_{K(0)\oplus\cdots\oplus K(n)}\simeq L_{E_n}$, where $E_0=K(0):=\bbQ$ and, for $n\geq1$, $E_n$ denotes an even periodic Morava $E$-theory $\bbE_\infty$-ring of height $n$.
	\item We denote by $K:\Pr^{\cg}_{\st}\to\Sp$ the nonconnective algebraic $K$-theory, and $K^{\cont}:\Pr^{\dbl}_{\st}\to\Sp$ the continuous $K$-theory introduced and developed in \cite{Efi25localizing,Efi25limit,Efi25rigidity}. We denote by $K_{T(n)}$ and $K_{T(n)}^{\cont}$ the $T(n)$-localized algebraic $K$-theory and continuous $K$-theory, respectively.
	\item For an $\bbE_1$-ring $R\in\Alg(\Sp)$, we denote by $\Mod(R)$ the category of left $R$-modules. For a presentably symmetric monoidal category $\C\in\CAlg(\PrL)$, we simply write $\C$-algebras and $\C$-algebra maps for objects and maps in $\CAlg(\PrL_{\C})$. For an $\bbE_1$-algebra $R\in\Alg(\C)$, we denote by $\Mod_R(\C)$ the category of left $R$-modules in $\C$. For an $\bbE_2$-ring $R\in\Alg_{\bbE_2}(\Sp)$, we simply write
	\[
		\PrL_R:=\PrL_{\Mod(R)},\quad\Pr^{\dbl}_R:=\Pr^{\dbl}_{\Mod(R)},
	\]
	where the tensor product is denoted by $\otimes_R$. For $n\geq0$, we simply write 
	\[
		\PrL_{L_n^f}:=\PrL_{L_n^f\Sp},\quad\Pr^{\dbl}_{L_n^f}:=\Pr^{\dbl}_{L_n^f\Sp}
	\]
	and
	\[
		\PrL_{T(n)}:=\PrL_{L_{T(n)}\Sp},\quad\Pr^{\dbl}_{T(n)}:=\Pr^{\dbl}_{L_{T(n)}\Sp}.
	\]
	\item For a presentably symmetric monoidal category $\C\in\CAlg(\PrL)$ and a $\C$-module $\M\in\PrL_{\C}$, we denote by $\iHom_{\C}$ the internal Homs in $\M$ relative to $\C$.
\end{enumerate}

\subsection*{Acknowledgements}
\addcontentsline{toc}{subsection}{Acknowledgements}

This paper grew out of my Bachelor's thesis at Beijing Normal University and subsequent work on the project. I would like to express my deep gratitude to my advisors, Guchuan Li and Yin Tian, for their guidance and support. I am grateful to Daming Zhou for explaining his work to me; much of this paper is built upon the theory he developed. I also thank Zhenpeng Li, Maxime Ramzi, Georg Tamme, Longke Tang, Xiangdong Wu, Yuchen Wu, and Shuai Zong for helpful conversations related to this work. I am especially grateful to Jiacheng Liang and Vladimir Sosnilo for discussing this work with me in detail. I thank the organizers of IWoAT, especially Hana Jia Kong, for creating a stimulating environment. Some helpful discussions related to this work took place during IWoAT 2025.

\section{\texorpdfstring{$H$}{H}-unital rings}\label{sec:1}

In this section, we review the theory of $H$-unital rings and rigid categories, establish a new method for constructing $H$-unital rings, and develop the theory of maps of rigid envelopes.

\subsection{\texorpdfstring{$H$}{H}-unital rings and dualizable categories}\label{sec:1.1}

We begin by reviewing the definition and basic properties of $H$-unital rings and collecting background on dualizable categories.
\begin{definition}[{\cite[Lemma 22]{Tam18}, \cite[Lemma 2.9.9]{KNP24}}]
	Let $R\to S$ be an $\bbE_1$-ring map with fiber $I:=\fib(R\to S)$. We say $R\to S$ is \emph{idempotent} or $S$ is an \emph{idempotent} $R$-algebra if it satisfies one of the following equivalent conditions:
	\begin{enumerate}
		\item The map $R\to S$ is a \emph{homological epimorphism}, i.e.  the base change functor $S\otimes_R-:\Mod(R)\to\Mod(S)$ is a localization.
		\item The map $S\otimes_RS\to S$ induced by multiplication is an equivalence.
		\item The map $S\to S\otimes_RS$ induced from $R\to S$ by $S\otimes_R-$ is an equivalence.
		\item We have $S\otimes_RI\simeq0$.
		\item The map $I\otimes_RI\to I$ induced by the multiplication is an equivalence.
	\end{enumerate}
	We denote $\Mod(R,I):=\fib(S\otimes_R-:\Mod(R)\to\Mod(S))$.
\end{definition}

A \emph{short exact sequence} $\C\to\D\to\E$ in $\PrL_{\st}$ is a fiber-cofiber sequence, i.e. $\C\to\D$ is fully faithful and $\D/\C\simeq\E$, or equivalently, $\D\to\E$ is a Bousfield localization and $\C\simeq\fib(\D\to\E)$. A \emph{short exact sequence} $\C\to\D\to\E$ in $\Pr^{\iL}_{\st}$ is a fiber-cofiber sequence, or equivalently, a short exact sequence in $\PrL_{\st}$ such that $\C\to\D$ and $\D\to\E$ are $\Sp$-internal left adjoints.

Let $R\to S$ be a homological epimorphism. Then we have a short exact sequence in $\Pr^{\iL}_{\st}$:
\begin{equation}\label{eq:Mod(R,I)}
\begin{tikzcd}[column sep=4em]
	{\Mod(R,I)} & {\Mod(R)} & {\Mod(S)}
	\arrow[shift left=3, hook, from=1-1, to=1-2]
	\arrow["{\Hom_R(I,-)}"', shift right=3, from=1-1, to=1-2]
	\arrow["{I\otimes_R-}"{description}, from=1-2, to=1-1]
	\arrow["{S\otimes_R-}", shift left=3, from=1-2, to=1-3]
	\arrow["{\Hom_R(S,-)}"', shift right=3, from=1-2, to=1-3]
	\arrow[hook', from=1-3, to=1-2]
\end{tikzcd}
\end{equation}
with equivalences $\Mod(R,I)\simeq\coMod_{I}(\Mod(R))$ and $\Mod(S)\simeq\Mod_S(\Mod(R))$.
\begin{example}
	Let $R\in\Alg(\Sp)$ be an $\bbE_1$-ring. In \cite[\Section 7.2.3]{HA} (or see \cite[\Section A]{Nik17}), Lurie establishes the theory of localizations of $\bbE_1$-rings with respect to a set of elements. Let $S\subseteq\pi_*(R)$ be a set of homogeneous elements, and let $M\in\Mod(R)$.
	\begin{enumerate}
		\item We say $M$ is \emph{$S$-torsion} if it lies in the localizing stable subcategory $\Mod(R)^{S\text{-}\tors}:=\Loc(\{R/Rs\}_{s\in S})$ $\subseteq\Mod(R)$ generated by $\{R/Rs\}_{s\in S}$.
		\item We say $M$ is \emph{$S$-local} if it lies in the right orthogonal complement $\Mod(R)^{S^{-1}}:=(\Mod(R)^{S\text{-}\tors})^{\perp}\subseteq\Mod(R)$, or equivalently, every $s:M[d]\to M$ is an equivalence. 
		\item We say $M$ is \emph{$S$-complete} if it lies in the right orthogonal complement $\Mod(R)^{S\text{-}\cplt}:=(\Mod(R)^{S^{-1}})^{\perp}$ $\subseteq\Mod(R)$.
	\end{enumerate}

	Then we have a short exact sequence in $\Pr^{\iL}_{\st}$:
	\[\begin{tikzcd}[column sep=4em]
		{\Mod(R)^{S\text{-}\tors}} & {\Mod(R)} & {\Mod(R)^{S^{-1}}}
		\arrow[shift left=3, hook, from=1-1, to=1-2]
		\arrow["{(-)^{\wedge}_S}"', shift right=3, hook, from=1-1, to=1-2]
		\arrow["{C_{S^{-1}}}"{description}, from=1-2, to=1-1]
		\arrow["{S^{-1}}", shift left=3, from=1-2, to=1-3]
		\arrow[shift right=3, from=1-2, to=1-3]
		\arrow[hook', from=1-3, to=1-2]
	\end{tikzcd}\]
	yielding inverse equivalences
	\[
		(-)^{\wedge}_S:\Mod(R)^{S\text{-}\tors}\simeq\Mod(R)^{S\text{-}\cplt}:C_{S^{-1}}.
	\]

	Moreover, we have equivalences $\Mod(R)^{S^{-1}}\simeq\Mod(S^{-1}R)$ and $S^{-1}\simeq S^{-1}R\otimes_R-$, so this short exact sequence is equivalent to \eqref{eq:Mod(R,I)} for the homological epimorphism $R\to S^{-1}R$.
		
	We will provide another perspective on this example in \cref{exm:Mod(R)^S-tors/cplt}.
\end{example}

\begin{definition}[{\cite[Definition 2.10.2]{SW92,Tam18,KNP24}}]
	Let $I$ be a nonunital $\bbE_1$-ring. We say $I$ is \emph{$H$-unital} if the augmentation $\bbE_1$-ring map
	\[
		I^+\to\bbS
	\]
	is a homological epimorphism, where $I^+$ denotes the unitalization of $I$. We define the \emph{category of $H$-unital $I$-modules} as 
	\[
		\Mod_H(I):=\Mod(I^+,I).
	\]
\end{definition}
\begin{example}
	Let $A$ be an $\bbE_1$-ring. Then it is $H$-unital as a nonunital $\bbE_1$-ring. Indeed, $A^+\simeq A\times\bbS$ as an $\bbE_1$-ring. Then $\Mod(A^+)\simeq\Mod(A)\times\Sp$ and $A^+\to\bbS$ induces the projection to $\Sp$, which is a localization. So the category of $H$-unital $A$-modules is the same as the category of $A$-modules $\Mod_H(A)\simeq\Mod(A)$.
\end{example}
\begin{proposition}[{\cite[Lemma 23]{Tam18}, \cite[Proposition 2.10.5]{KNP24}}]\label{prop:Tam_pullback_homoepi}
	Consider a pullback diagram of $\bbE_1$-rings
	\[\begin{tikzcd}
		R & S \\
		{R'} & {S'}
		\arrow[from=1-1, to=1-2]
		\arrow[from=1-1, to=2-1]
		\arrow["\lrcorner"{anchor=center, pos=0.125}, draw=none, from=1-1, to=2-2]
		\arrow[from=1-2, to=2-2]
		\arrow[from=2-1, to=2-2]
	\end{tikzcd}\]
	and suppose that $R\to S$ is a homological epimorphism. Then $S'\simeq S\otimes_RR'$, $R'\to S'$ is a homological epimorphism, and we have a pullback square of module categories
	\[\begin{tikzcd}
		{\Mod(R)} & {\Mod(S)} \\
		{\Mod(R')} & {\Mod(S')}
		\arrow[from=1-1, to=1-2]
		\arrow[from=1-1, to=2-1]
		\arrow["\lrcorner"{anchor=center, pos=0.125}, draw=none, from=1-1, to=2-2]
		\arrow[from=1-2, to=2-2]
		\arrow[from=2-1, to=2-2]
	\end{tikzcd}\]
	and the induced functor on fibers
	\[
		\Mod(R,I)\to\Mod(R',I)
	\]
	is an equivalence, where $I:=\fib(R\to S)\simeq\fib(R'\to S')$.
\end{proposition}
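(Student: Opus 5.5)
We work with the fiber $I$ and use the equivalent characterizations of idempotence from the definition. First, the pullback hypothesis gives $\fib(R\to S)\simeq\fib(R'\to S')=:I$ as $R$-bimodules, and in fact as nonunital rings. The map $R'\to S'$ is an $R$-bimodule map out of the pullback, so we get a map of cofiber sequences of left $R$-modules:
\[
I\to R\to S
\qquad\text{and}\qquad
I\to R'\to S'.
\]
Apply $S\otimes_R-$ to the second sequence. Idempotence (condition (4)) gives $S\otimes_RI\simeq0$. Hence $S\otimes_RR'\xrightarrow{\simeq}S\otimes_RS'$.

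Next we identify $S\otimes_RS'$ with $S'$. The fiber of $S'\to S\otimes_RS'$ is $I\otimes_RS'$, so it suffices to show $I\otimes_RS'\simeq0$. For this, apply $I\otimes_R-$ to the first sequence. Condition (5) gives $I\otimes_RI\simeq I$, which yields $I\otimes_RS\simeq0$. Similarly, applying $I\otimes_R-$ to the second sequence gives
\[
I\otimes_RI\to I\otimes_RR'\to I\otimes_RS'.
\]
The first map should be an equivalence: it is the composite $I\otimes_RI\to I\otimes_RR'\to I$, with the first factor the multiplication. This is the one step needing care. I would argue via the right $R'$-module structure on $I$: the composite $I\otimes_RI\to I\otimes_RR'\to I\otimes_{R'}R'=I$ is an equivalence by (5). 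The map $I\otimes_RR'\to I$ is then split surjective. To conclude, compare with $I\otimes_RR'\to I\otimes_RS'$ and use that $I\otimes_RR'$ is an extension of $I\otimes_RI$ and $I\otimes_R S'$. The cleanest way to close this is to prove everything at the categorical level, as follows.

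Consider the functor $\Mod(R)\to\Mod(R')$, $M\mapsto R'\otimes_RM$, together with its right adjoint. The essential point is that $\Mod(R)$ is the pullback of $\Mod(R')$ and $\Mod(S)$ over $\Mod(S')$. This is Milnor-type patching, which holds because $R\to S$ is a localization, a statement in Tamme's excision framework. Concretely, construct the comparison functor
\[
\Mod(R)\to\Mod(R')\times_{\Mod(S')}\Mod(S).
\]
Its right adjoint sends a triple $(M',N,\alpha)$ to the pullback $M'\times_{S'\otimes M'}N$. The unit is an equivalence on $R$ by the ring pullback, and hence on all of $\Mod(R)$ by colimit closure, using that both sides preserve colimits. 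The counit is checked on the two localizing pieces, $\Mod(S)$ and $\Mod(R,I)$, using $S\otimes_RI\simeq 0$.

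Given the pullback square of categories, the remaining claims follow. Fibers of parallel functors in a pullback agree, so $\Mod(R,I)\simeq\fib(\Mod(R')\to\Mod(S'))$. Base change of a localization along a pullback is a localization, so $R'\to S'$ is a homological epimorphism. Finally, $S'\simeq S\otimes_RR'$ follows by evaluating on the unit.

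The main obstacle is verifying that the counit is an equivalence, i.e. establishing the excision square for module categories. This is where idempotence is genuinely used. I would first try to reduce to checking generators: $\Mod(R')$ is generated by $R'$, and the triple $(R',S,\text{can})$ pulls back to $R$.
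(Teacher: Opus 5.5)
There is a genuine gap, at exactly the step you flag as the main obstacle. You never show that the comparison functor $F\colon\Mod(R)\to\Mod(R')\times_{\Mod(S')}\Mod(S)$ is essentially surjective. Your unit argument is fine, but neither suggestion for the counit is an argument. ``Reduce to generators'' is circular: that $\Mod(R')$ is generated by $R'$ says nothing about whether the pullback category is generated by $F(R)=(R',S,\mathrm{can})$, and that generation is equivalent to what is being proved. For instance, objects $(M',0,\alpha)$ with $S'\otimes_{R'}M'\simeq0$ are not visibly reached. Checking ``on the two pieces'' presupposes that every object of the pullback is an extension of objects from $F(\Mod(S))$ and from $\fib(\Mod(R')\to\Mod(S'))$, which you have not shown.

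The missing step is the one in the paper's framework: see the remark opening \cref{sec:1.2}, which follows the same pattern as the proof of \cref{prop:pullback_homoepi_idem}. Since the unit is an equivalence, it suffices that the exact functor $G$ detects zero. Suppose $M'\times_{S'\otimes_{R'}M'}N\simeq0$. Then $N[-1]\simeq\fib(M'\to S'\otimes_{R'}M')\simeq I\otimes_{R'}M'$ as left $R$-modules. Since $N$ is an $S$-module and $R\to S$ is a homological epimorphism, $N[-1]\simeq S\otimes_R(N[-1])\simeq(S\otimes_RI)\otimes_{R'}M'\simeq0$. Hence $S'\otimes_{R'}M'\simeq S'\otimes_SN\simeq0$, and the pullback forces $M'\simeq0$.

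Your final deductions also do not go through as stated. In the square, $\Mod(R)\to\Mod(S)$ is the base change of $\Mod(R')\to\Mod(S')$, not conversely. So the top row being a localization says nothing formal about the bottom row. Evaluating the commutative square at the unit only returns the tautology $S'\simeq S'$. Extracting $S\otimes_RR'\simeq S'$ from the square would need right adjointability, which itself presupposes the bottom row is a localization (cf. \cref{lem:pullback_dbl}).

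Your abandoned first paragraph was one line from done. Since $S'$ is a left $S$-module, $I\otimes_RS'\simeq(I\otimes_RS)\otimes_SS'\simeq0$, using the vanishing $I\otimes_RS\simeq0$ you had already derived. Hence $S'\xrightarrow{\simeq}S\otimes_RS'\xleftarrow{\simeq}S\otimes_RR'$, and the resulting equivalence $S\otimes_RR'\simeq S'$ is the canonical map. Then $S'\otimes_{R'}I\simeq S\otimes_RR'\otimes_{R'}I\simeq S\otimes_RI\simeq0$, so $R'\to S'$ is a homological epimorphism by condition (4). With these ring-level facts and the conservativity argument above, your conclusion about the fibers is fine.
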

\begin{corollary}[{\cite[Corollary 2.10.6]{KNP24}}]\label{cor:fib_Hunital}
	Let $R\to S$ be an $\bbE_1$-ring map whose fiber $I$ is $H$-unital. Then $R\to S$ is a homological epimorphism and $\Mod(R,I)\simeq\Mod_H(I)$.
\end{corollary}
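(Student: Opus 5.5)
The plan is to reduce to \cref{prop:Tam_pullback_homoepi} by building a pullback square of $\bbE_1$-rings in which the top row is the augmentation $I^+\to\bbS$ and the bottom row is the given map $R\to S$.

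\textbf{The square.} Since $I=\fib(R\to S)$ is a nonunital $\bbE_1$-ring, the inclusion $I\to R$ is a map of nonunital $\bbE_1$-rings. It extends by the universal property of unitalization to a unital map $I^+\to R$. The composite $I^+\to R\to S$ kills $I$, so it factors through the augmentation $I^+\to\bbS$ followed by the unit $\bbS\to S$. This gives a commutative square
\[\begin{tikzcd}
	{I^+} & {\bbS} \\
	{R} & {S.}
	\arrow[from=1-1, to=1-2]
	\arrow[from=1-1, to=2-1]
	\arrow[from=1-2, to=2-2]
	\arrow[from=2-1, to=2-2]
\end{tikzcd}\]
To make this rigorous I would use the equivalence between nonunital $\bbE_1$-rings and augmented $\bbE_1$-rings. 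Concretely, $I^+\simeq R\times_S\bbS$: the pullback $R\times_S\bbS$ is augmented over $\bbS$ with augmentation ideal $\fib(R\to S)=I$, so it is the unitalization of $I$.

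\textbf{Pullback property.} The square is a pullback in spectra, because both horizontal fibers are $I$ and the comparison map induces the identity on them. Pullbacks of $\bbE_1$-rings are computed on underlying spectra, so it is a pullback of $\bbE_1$-rings.

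\textbf{Applying the proposition.} $H$-unitality of $I$ says precisely that the top row $I^+\to\bbS$ is a homological epimorphism. \cref{prop:Tam_pullback_homoepi} then says that the bottom row $R\to S$ is a homological epimorphism, with $S\simeq\bbS\otimes_{I^+}R$. It also says the induced functor on fibers $\Mod_H(I)=\Mod(I^+,I)\to\Mod(R,I)$ is an equivalence, which is the second claim.

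\textbf{Main obstacle.} The only nonformal point is the identification $I^+\simeq R\times_S\bbS$ as $\bbE_1$-rings, compatibly with the maps. I would handle it by citing the standard equivalence $\Alg^{\mathrm{nu}}(\Sp)\simeq\Alg(\Sp)_{/\bbS}$, given by unitalization with inverse taking the augmentation fiber (\cite[Proposition 5.4.4.10]{HA}). Under this equivalence, $R\times_S\bbS\to\bbS$ corresponds to the nonunital ring $\fib(R\times_S\bbS\to\bbS)\simeq I$, with the nonunital structure inherited from $R$.
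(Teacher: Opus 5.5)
Your proposal is correct and is essentially the intended argument. The paper records this as an immediate consequence of \cref{prop:Tam_pullback_homoepi}, applied to exactly your pullback square with $I^+\to\bbS$ on top and $R\to S$ on the bottom; there $I^+\simeq R\times_S\bbS$ is essentially how the nonunital structure on $I=\fib(R\to S)$ is defined via augmented $\bbE_1$-rings, and your orientation (hypothesis on the top row, conclusions for the bottom row and for the fiber functor $\Mod(I^+,I)\to\Mod(R,I)$) matches the proposition.
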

In particular, given a pullback diagram of $\bbE_1$-rings
	\[\begin{tikzcd}
		R & S \\
		{R'} & {S'}
		\arrow[from=1-1, to=1-2]
		\arrow[from=1-1, to=2-1]
		\arrow["\lrcorner"{anchor=center, pos=0.125}, draw=none, from=1-1, to=2-2]
		\arrow[from=1-2, to=2-2]
		\arrow[from=2-1, to=2-2]
	\end{tikzcd}\]
such that the fiber $I:=\fib(R\to S)\simeq\fib(R'\to S')$ is $H$-unital, $R\to S$ and $R'\to S'$ are homological epimorphisms and the pullback diagram induces a pullback square of module categories.

There is a partial converse of this corollary.
\begin{proposition}[{\cite[Remark 27]{Tam18}, \cite[Proposition 2.10.7]{KNP24}}]\label{prop:fib_conn_Hunital}
	Consider a pullback diagram of connective $\bbE_1$-rings
	\[\begin{tikzcd}
		R & S \\
		{R'} & {S'}
		\arrow[from=1-1, to=1-2]
		\arrow[from=1-1, to=2-1]
		\arrow["\lrcorner"{anchor=center, pos=0.125}, draw=none, from=1-1, to=2-2]
		\arrow[from=1-2, to=2-2]
		\arrow[from=2-1, to=2-2]
	\end{tikzcd}\]
	and suppose that the horizontal fibers are connective. If $R' \to S'$ is a homological epimorphism, then so is $R\to S$. In particular, if $R\to S$ is a homological epimorphism of connective $\bbE_1$-rings with $\pi_0(R)\to\pi_0(S)$ surjective, then the fiber $I:=\fib(R\to S)$ is $H$-unital.
\end{proposition}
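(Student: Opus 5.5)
The plan is to prove the first assertion by a connectivity bootstrap on the criterion ``$S\otimes_R I\simeq 0$'' for a homological epimorphism. Then I deduce the ``in particular'' by applying the first assertion to a suitable square whose top row is the augmentation $I^+\to\bbS$.

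\textbf{Setup.} Let $I:=\fib(R\to S)\simeq\fib(R'\to S')$; it is connective by hypothesis. Since the square is a pullback, we also have
\[
J:=\fib(R\to R')\simeq\fib(S\to S'),
\]
and there is a cofiber sequence of $R$-bimodules $R\to R'\oplus S\to S'$. Tensoring it with $I$ over $R$ gives a cofiber sequence
\[
I\to (R'\otimes_R I)\oplus(S\otimes_R I)\to S'\otimes_R I.
\]
The hypothesis gives $S'\otimes_{R'}I\simeq 0$. We want to show $M:=S\otimes_R I\simeq 0$. The module $M$ is connective, since $S$ and $I$ are connective and $R$ is connective.

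\textbf{Bootstrap.} Argue by contradiction. Suppose $M\neq0$, and let $k$ be minimal with $\pi_kM\neq0$. I would compare $S\otimes_R I$ with $S'\otimes_{R'}(R'\otimes_R I)\simeq S'\otimes_R I$, and $R'\otimes_R I$ with $I$, using the cofiber sequence above and the sequence $J\otimes_R I\to I\to R'\otimes_R I$. In the bottom degree, $\pi_k$ of a tensor product of connective objects is computed by the $\pi_0$-level tensor product of the lowest homotopy groups. The aim is to show that $\pi_kM$ injects into (a summand of) $\pi_k$ of something built from $S'\otimes_{R'}I\simeq0$, modulo terms that are $k$-connective only through $J$. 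Connectivity of the horizontal fibers is what keeps these error terms in degrees $\geq k+1$. This would force $\pi_kM=0$, a contradiction, so $M$ is $\infty$-connective, hence zero.

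\textbf{Main obstacle.} The delicate step is exactly this degree-$k$ comparison. One must check that the map $\pi_k(S\otimes_R I)\to\pi_k(S'\otimes_R I)$ is injective in the lowest degree, and that $S'\otimes_R I\simeq S'\otimes_{R'}(R'\otimes_RI)$ vanishes up through degree $k$. The second point requires knowing that $R'\otimes_R I\to I$ is an equivalence in low degrees, which is itself proved inductively alongside. So I would run a simultaneous induction on $k$ for the two statements:
\[
\text{(a) } S\otimes_RI \text{ is } k\text{-connective},\qquad \text{(b) } \cof(R'\otimes_RI\to I)\text{, i.e. } J\otimes_R I[1]\text{, is } k\text{-connective.}
\]
If this proves awkward, I would instead use criterion (5), $I\otimes_RI\xrightarrow{\simeq}I$. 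I would compare it with $I\otimes_{R'}I\xrightarrow{\simeq}I$ via the bar constructions for $I$ over $R$ and over $R'$, whose difference is controlled by $J$, and run the same connectivity induction there.

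\textbf{The ``in particular''.} Let $R\to S$ be a homological epimorphism of connective $\bbE_1$-rings with $\pi_0R\to\pi_0S$ surjective. Then $I=\fib(R\to S)$ is connective, so $I^+$ is connective. The square with top row $I^+\to\bbS$ and bottom row $R\to S$, with vertical maps induced by $I\to R$ and the unit of $S$, is a pullback, because both horizontal fibers are $I$. All four rings and both horizontal fibers are connective, and the bottom row is a homological epimorphism. The first part therefore shows that $I^+\to\bbS$ is a homological epimorphism, i.e. $I$ is $H$-unital.
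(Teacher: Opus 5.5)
The paper gives no proof of this statement; it only cites \cite{Tam18,KNP24}, so I assess your argument on its own. Your deduction of the ``in particular'' from the first assertion is correct. The first assertion itself is not proved: you announce a bootstrap and then defer exactly its decisive step, and the mechanism you sketch does not close.

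\textbf{Why the sketch does not close.} The ``error terms'' are governed by the vertical fiber $J$, which is only $(-1)$-connective. In your own application $J\simeq\fib(\bbS\to S)$, with $\pi_{-1}J\cong\operatorname{coker}(\bbZ\to\pi_0S)$. So connectivity of the horizontal fibers alone does not push $J\otimes_RI$ into high degrees.

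Moreover, set $M:=S\otimes_RI$ and let $K:=\fib(R'\otimes_RI\to I)$ be the fiber of the action map. This map is split by the unit, so $K\simeq J\otimes_RI[1]$; it is the fiber, not the cofiber, in your (b). Your cofiber sequence then gives $S'\otimes_RI\simeq K\oplus M$ as spectra. Hence injectivity of $\pi_kM\to\pi_k(S'\otimes_RI)$ is automatic. On the other hand, ``$S'\otimes_RI$ vanishes through degree $k$'' is equivalent to the inductive step you are trying to prove. From (a) and (b) at stage $k$ you only get $\pi_k(S'\otimes_RI)\cong\pi_k(S'\otimes_{R'}K)\cong\pi_0S'\otimes_{\pi_0R'}\pi_kK$, which has no reason to vanish. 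The bar-construction fallback has the same gap.

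\textbf{What is missing.} You need an argument that the bottom homotopy group is killed, and a way to feed connectivity back through $J$.
\begin{enumerate}
\item Since $S'\otimes_{R'}I\simeq0$, the map $S'\otimes_{R'}K\to S'\otimes_RI$ is an equivalence. Under it, the $K$-summand above is the unit $K\to S'\otimes_{R'}K$. So this unit is split injective, $I\otimes_{R'}K\to K$ is null, and $M\simeq I\otimes_{R'}K[1]$.
\item Suppose $K$ is $m$-connective. The split injection $\pi_mK\hookrightarrow\pi_0S'\otimes_{\pi_0R'}\pi_mK$ shows that $\ker(\pi_0R'\to\pi_0S')$ annihilates $\pi_mK$. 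Therefore $\pi_m(I\otimes_{R'}K)\cong(\pi_0I\otimes_{\pi_0R'}\pi_0S')\otimes_{\pi_0S'}\pi_mK=0$, because $\pi_0I\otimes_{\pi_0R'}\pi_0S'\cong\pi_0(I\otimes_{R'}S')=0$. Hence $M$ is $(m+2)$-connective. This is where the hypothesis and the connectivity of $I$ (through $\pi_0$-surjectivity) actually enter.
\item To return to $K$, note that $J$, as a right $R$-module, is restricted from the right $S$-module $\fib(S\to S')$. So $K\simeq J\otimes_RI[1]\simeq J\otimes_SM[1]$ is $(m+2)$-connective, even though $J$ is only $(-1)$-connective.
\item $K$ is connective, being a summand of $R'\otimes_RI$. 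Induction gives $K\simeq0$, and then $M\simeq0$.
\end{enumerate}

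\textbf{Alternative route.} By \cref{cor:fib_Hunital}, it suffices to treat the square with top row $I^+\to\bbS$ over $R'\to S'$. There $X:=\bbS\otimes_{I^+}I$ satisfies $X\simeq X\otimes_{R'}I$ and $\pi_0X\cong\pi_0I/(\pi_0I)^2=0$. Once $\pi_0X=0$, right multiplication by any $a\in\pi_0I$ on $X$ factors through a null map $\bbS\to X$. This gives the same bottom-degree vanishing.
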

This proposition fails without connectivity assumptions. For example, by \cite[Example 2.10.8]{KNP24}, $\bbQ[x]\to\bbQ[x^{\pm1}]$ is a homological epimorphism, whereas the induced pullback $\bbQ\to\bbQ[x^{-1}]$ is not.

One reason $H$-unital rings are important is that every dualizable stable category is equivalent to the $H$-module category of some $H$-unital ring.
\begin{definition}[{\cite[Proposition D.7.3.1]{SAG}, \cite[Theorem 2.10.16]{KNP24}}]
	Let $\C\in\PrL_{\st}$. We say $\C$ is \emph{dualizable stable} if one of the following equivalent conditions holds:
	\begin{enumerate}
		\item $\C$ is dualizable as an object in $\PrL_{\st}$.
		\item $\C$ is a retract of a compactly generated stable category.
		\item $\C$ can be internally left adjoint fully faithful embedded into some $\D\in\Pr^{\cg}_{\st}$, or equivalently, $\C$ is the fiber of a Bousfield localization
		\[
			\D\to\E
		\]
		where $\D\to\E$ lies in $\Pr^{\cg}_{\st}$.
		\item $\C$ is the fiber of a base change functor
		\[
			S\otimes_R-:\Mod(R)\to\Mod(S)
		\]
		where $R\to S$ is a homological epimorphism of $\bbE_1$-rings, i.e. $\C\simeq\Mod(R,I)$ with $I:=\fib(R\to S)$.
		\item $\C$ is the fiber of a base change functor 
		\[
			\bbS\otimes_{I^+}-:\Mod(I^+)\to\Sp
		\]
		where $I$ is an $H$-unital ring, i.e. $\C\simeq\Mod_H(I)$.
	\end{enumerate}
\end{definition}
The following relative version of dualizability in condition (1) will be useful later. We refer to \cite{Ram24dualizable} for the general theory. 
\begin{definition}
	Let $\C\in\CAlg(\PrL)$ and $\M\in\PrL_{\C}$.
	\begin{enumerate}
		\item We say $\M$ is a \emph{dualizable $\C$-module} if it is dualizable as an object in $\PrL_{\C}$, and let $\Pr^{\dbl}_{\C}\subseteq\Pr^{\iL}_{\C}$ denote the full subcategory spanned by dualizable $\C$-modules.
		\item We say an object $X\in\M$ is \emph{$\C$-atomic} if the $\C$-linear functor $-\otimes X:\C\to\M$ is a $\C$-internal left adjoint, and let $\M^{\C\text{-}\at}$ denote the full subcategory spanned by $\C$-atomic objects.
		\item We say $\M$ is an \emph{atomically generated $\C$-module} if $\M$ is generated by $\M^{\C\text{-}\at}$ as a localizing $\C$-submodule. Equivalently, $\M\simeq\P_{\C}(\M_0)$ for some small $\C$-category $\M_0$. Let $\Pr^{\at}_{\C}\subseteq\Pr^{\iL}_{\C}$ denote the full subcategory spanned by atomically generated $\C$-modules.
	\end{enumerate}
\end{definition}
\begin{remark}
	Let $\C\in\CAlg(\PrL)$ and $\M\in\PrL_{\C}$. By \cite[Theorem 1.49]{Ram24dualizable}, $\M$ is a dualizable $\C$-module if and only if it is a retract of an atomically generated $\C$-module, if and only if it can be $\C$-internally left adjoint fully faithful embedded into a $\C$-atomically generated module. If $\C\in\CAlg(\Pr^{\kappa})$ with $\kappa>\omega$, and $\M\in\Pr^{\lambda}_{\C}$ with $\lambda\geq\kappa$, then $\M$ is dualizable if and only if the canonical functor $k:\P_{\C}(\M^{\lambda})\to\M$ admits a left adjoint, namely, $\widehat{j}:\M\hookrightarrow\P_{\C}(\M^{\lambda})$. In particular, by \cite[Theorem 3.1]{Ram24dualizable}, dualizable $\C$-modules are $\kappa$-compactly generated $\C$-modules.
\end{remark}
By \cite[Corollary 1.32, Proposition 1.62]{Ram24dualizable}, the forgetful functor $\Pr^{\star}_{\C}\to\PrL_{\C}$ creates colimits for $\star\in\{\iL,\at,\dbl\}$. So we only need to compute colimits in $\PrL_{\C}$. On the other hand, we denote by $\lim^{\dbl}$ the limits computed in $\Pr^{\dbl}_{\C}$, which generally do not coincide with those in $\PrL_{\C}$.
\begin{remark}\label{rmk:compass}
	In the unstable case, condition (2) leads to the concept of compactly assembled categories introduced and developed in \cite{SAG,KNP24,Ram24dualizable,Efi25localizing}. Let $\C\in\PrL$. We say $\C$ is \emph{compactly assembled} if it is a retract of a compactly generated category in $\PrL$, and let $\Pr^{\ca}$ denote the category of compactly assembled categories with left adjoint functors whose right adjoints preserve filtered colimits. Then $\C$ is compactly assembled if and only if it is generated by \emph{compactly exhaustible} objects (i.e. objects of the form $\colim_{\bbN}X_n$ where all maps $X_n\to X_{n+1}$ are compact) under colimits by \cite[Theorem 2.2.15]{KNP24} or \cite[Theorem 2.39]{Ram24dualizable}.
	
	If $\C\in\Pr^{\kappa}$ with $\kappa>\omega$, then $\C$ is compactly assembled if and only if the colimit functor $k:\Ind(\C^{\kappa})\to\C$ admits a left adjoint, namely, $\widehat{j}:\C\hookrightarrow\Ind(\C^{\kappa})$ by \cite[Theorem 21.1.2.10]{SAG} or \cite[Theorem 2.2.15]{KNP24}. Since compactly assembled categories are $\omega_1$-compactly generated, we may take $\kappa=\omega_1$. In particular, the notion of \emph{$\kappa$-compactly assembled} is redundant: a retract of a $\kappa$-compactly generated category remains $\kappa$-compactly generated by \cite[Observation 1.69, Proposition 2.27]{Ram24dualizable}.
\end{remark}

Our focus here is on condition (5); we refer the reader to \cite[Theorem 2.10.16]{KNP24} for its proof. $H$-unital rings also play a role in the general setting (4). Let $\C$ be the fiber of a base change functor $S\otimes_R-:\Mod(R)\to\Mod(S)$ induced by an $\bbE_1$-ring map $R\to S$ with fiber $I:=\fib(R\to S)$. If $I$ is an $H$-unital ring, then \cref{cor:fib_Hunital} implies that $\C$ is equivalent to the $H$-module category $\Mod_H(I)$. In particular, by \cref{prop:fib_conn_Hunital}, $I$ is $H$-unital whenever $R\to S$ is a \emph{$\pi_0$-surjective homological epimorphism} of connective $\bbE_1$-rings, i.e. a homological epimorphism with $\pi_0(R)\to\pi_0(S)$ surjective.

We briefly record the additive analogue of the preceding discussion. A \emph{dualizable additive} category is a dualizable object in the category of presentable additive categories $\PrL_{\ad}\simeq\PrL_{\Sp_{\geq0}}$, or equivalently, a dualizable $\Sp_{\geq0}$-module. In \cite{LLS26}, Levy--Liang--Sosnilo provide several equivalent characterizations of dualizable additive categories. The one relevant here is the following:
\begin{theorem}[{\cite[Theorem A]{LLS26}}]\label{thm:LLS}
	Let $\C\in\PrL_{\ad}$. Then $\C$ is dualizable additive if and only if it is the fiber of a base change functor 
	\[
		S\otimes_R-:\Mod(R)_{\geq0}\to\Mod(S)_{\geq0}
	\]
	where $R\to S$ is a $\pi_0$-surjective homological epimorphism of connective $\bbE_1$-rings.
\end{theorem}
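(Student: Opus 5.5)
We prove the two implications separately; the substance lies in the forward direction, which mirrors the stable characterization (5) via $H$-unital rings.

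\emph{Sufficiency.} Suppose $\C\simeq\fib(S\otimes_R-:\Mod(R)_{\geq0}\to\Mod(S)_{\geq0})$ with $R\to S$ a $\pi_0$-surjective homological epimorphism of connective $\bbE_1$-rings. By \cref{prop:fib_conn_Hunital}, the fiber $I:=\fib(R\to S)$ is a connective $H$-unital ring. Since $S\otimes_RS\simeq S$, base change $\Mod(R)_{\geq0}\to\Mod(S)_{\geq0}$ is a Bousfield localization, with fully faithful right adjoint given by restriction (connective $S$-modules are connective $R$-modules). Its fiber is then $\coMod_I(\Mod(R)_{\geq0})$, equivalently the image of the colocalization $I\otimes_R-$. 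We will show this fiber is a retract in $\PrL_{\ad}$ of the compactly generated additive category $\Mod(R)_{\geq0}$. The candidate retraction is $I\otimes_R-$, left adjoint to the inclusion. The key point is that it is \emph{internal} in the additive sense: its right adjoint $\Hom_R(I,-)$, taken connectively, preserves colimits. This holds because $I\otimes_RI\simeq I$ makes $I\otimes_R-$ idempotent, so $I$ is a colimit of compact connective modules. Alternatively, $I$ can be written as a filtered colimit $I\simeq\colim(I^{\otimes k}\to\cdots)$ of compact maps, exhibiting objects of the fiber as compactly exhaustible in the sense of \cref{rmk:compass}; we then apply the additive analogue of that characterization.

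\emph{Necessity.} For $\C$ dualizable additive, first write $\C$ as an $\Sp_{\geq0}$-internal left adjoint fully faithful subcategory of a compactly generated $\D=\Mod(A)_{\geq0}$, via the retract $\C\hookrightarrow\Ind(\C^{\omega_1})$ of \cref{rmk:compass} and a choice of compact generator. The complement should be a localization $\D\to\E$ with $\E\simeq\Mod(S)_{\geq0}$ for $S=\End(\text{image of }A)$. Here we use that the localization is cogenerated by the image of $A$, and that $\pi_0$-surjectivity of $A\to S$ follows because the localization functor is essentially surjective on $\pi_0$-endomorphisms of the unit.

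The main obstacle is the necessity step. Unlike the stable case, a localization of connective modules need not be presented by an idempotent ring map. I would first try passing to stabilization $\Sp(\C)$, which is dualizable stable, and use a connective $H$-unital $I$ with $\Sp(\C)\simeq\Mod_H(I)$ compatible with the $t$-structure. Taking $R=I^+$ and $S=\bbS$ gives $\pi_0$-surjectivity for free. One then has to check that $\C$ recovers the connective part, and this requires the retraction to be $t$-exact.
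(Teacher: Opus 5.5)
The paper gives no proof of this statement; it is quoted from \cite{LLS26}, so your argument has to stand on its own.

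\textbf{Sufficiency.} This direction is right in substance, but two things need correcting.
\begin{enumerate}
\item $I\otimes_R-$ is the \emph{right} adjoint of the inclusion $\C\hookrightarrow\Mod(R)_{\geq0}$, not the left adjoint.
\item No internality is needed. $\pi_0$-surjectivity makes $I$ connective, so $I\otimes_R-$ is a colimit-preserving functor $\Mod(R)_{\geq0}\to\C$. It lands in $\C$ because $S\otimes_RI\simeq0$, and it is the identity on $\C$ because $I\otimes_RM\to M$ has cofiber $S\otimes_RM$. Hence $\C$ is a retract in $\PrL_{\ad}$ of the compactly generated $\Mod(R)_{\geq0}$, so it is dualizable.
\end{enumerate}
Drop the justifications you give instead. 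The claim that $I\otimes_RI\simeq I$ makes $\Hom_R(I,-)$ preserve colimits does not follow. The presentation of $I$ as a colimit of compact maps $I^{\otimes k}\to\cdots$ makes no sense, since those transition maps are equivalences.

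\textbf{Necessity: the gap.} The necessity direction has a genuine gap, and it sits at the step you treat as routine. You need a fully faithful $\C\hookrightarrow\Mod(A)_{\geq0}$ whose right adjoint preserves colimits. The embedding $\widehat{j}:\C\hookrightarrow\Ind(\C^{\omega_1})$ lands in a compactly generated category. In the additive setting, however, a compact generator does not identify that category with $\Mod(A)_{\geq0}$; this needs a compact \emph{projective} generator, and the objects $j(c)$ are not projective in general. This is the whole difficulty, not a technicality.

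In fact, as literally formulated (all of $\PrL_{\ad}$), the forward implication fails. $\mathrm{Ab}$ is compactly generated, hence dualizable additive. But $\Sigma X=0\sqcup_X0\simeq0$ in $\mathrm{Ab}$, while $M[1]\simeq0$ forces $M\simeq0$ in $\Mod(A)_{\geq0}$. So every colimit-preserving functor $\mathrm{Ab}\to\Mod(A)_{\geq0}$ vanishes. Some prestability hypothesis is therefore needed (presumably part of the setting of \cite{LLS26}), and your argument never uses one.

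Even for compactly generated prestable $\C$, the evident embeddings need not be internal. For $p$-power torsion connective spectra inside $\Sp_{\geq0}$, the right adjoint $X\mapsto\tau_{\geq0}\fib(X\to X[1/p])$ does not preserve the cofiber sequence $\bbS\xrightarrow{p}\bbS\to\bbS/p$.

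Your stabilization route is circular. The stable theory gives some $H$-unital $I$ with $\Sp(\C)\simeq\Mod_H(I)$, but nothing makes it connective or compatible with the $t$-structure, and finding one that is amounts to the theorem itself. For non-prestable $\C$, stabilization loses $\C$ entirely, since $\Sp(\mathrm{Ab})\simeq0$.

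\textbf{What does work once you have the embedding.} Given such an embedding, the second half of your sketch goes through. Your stated reason for $\pi_0$-surjectivity is wrong, though: localizations need not be surjective on $\pi_0$ of endomorphisms of the unit, as $\Mod(\bbZ)_{\geq0}\to\Mod(\bbQ)_{\geq0}$ shows. The correct argument runs as follows. Let $r$ be the colimit-preserving right adjoint of $\C\hookrightarrow\Mod(A)_{\geq0}$.
\begin{enumerate}
\item $LM:=\cof(rM\to M)$ is left adjoint to the inclusion of $\E:=\{M: rM\simeq0\}$, and $\E$ is closed under colimits.
\item Hence $LA$ is a compact projective generator of $\E$, so $\E\simeq\Mod(S)_{\geq0}$ with $S:=\End_{\E}(LA)^{\op}$.
\item $L\simeq S\otimes_A-$ has fully faithful right adjoint, so $A\to S$ is a homological epimorphism, and $\C=\fib(L)$.
\item $\pi_0A\to\pi_0S\cong\pi_0(LA)$ is surjective because $rA$ is connective.
\end{enumerate}
So the entire content of the theorem is the existence of an internal embedding into a projectively generated category, which your proposal does not supply.
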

For completeness, we reformulate this characterization in terms of connective $H$-unital rings.
\begin{definition}
	Let $I$ be an $H$-unital ring. We say $I$ is a \emph{connective $H$-unital ring} if $I$ is connective, and we define the \emph{connective $H$-module category} as $\Mod_H(I)_{\geq0}:=\fib(\Mod(I^+)_{\geq0}\to\Sp_{\geq0})$.
\end{definition}
By \cref{prop:fib_conn_Hunital}, a nonunital $\bbE_1$-ring is connective $H$-unital if and only if it is equivalent, as a nonunital $\bbE_1$-ring, to the fiber $I\simeq\fib(R\to S)$ of a $\pi_0$-surjective homological epimorphism of connective $\bbE_1$-rings $R\to S$.
\begin{corollary}
	Every dualizable additive category is equivalent to the connective $H$-module category $\Mod_H(I)_{\geq0}$ for some connective $H$-unital ring $I$.
\end{corollary}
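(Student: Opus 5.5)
We deduce the corollary directly from \cref{thm:LLS} combined with \cref{prop:fib_conn_Hunital}. Let $\C$ be a dualizable additive category. By \cref{thm:LLS}, there is a $\pi_0$-surjective homological epimorphism $R\to S$ of connective $\bbE_1$-rings with
\[
	\C\simeq\fib(S\otimes_R-:\Mod(R)_{\geq0}\to\Mod(S)_{\geq0}).
\]
Set $I:=\fib(R\to S)$. Since $\pi_0(R)\to\pi_0(S)$ is surjective and $R$, $S$ are connective, the long exact sequence shows that $I$ is connective. By the last assertion of \cref{prop:fib_conn_Hunital}, $I$ is $H$-unital, so $I$ is a connective $H$-unital ring.

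It remains to identify the fiber over $R\to S$ with the fiber over $I^+\to\bbS$. We form the pullback square of $\bbE_1$-rings with top row $I^+\to\bbS$ and bottom row $R\to S$. The vertical map $I^+\to R$ is the unitalization of the nonunital inclusion $I\to R$, and the map $\bbS\to S$ is the unit. This square is a pullback because the horizontal fibers agree: both are $I$. All rings in the square are connective, and its horizontal fibers are connective.

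We then want the connective analogue of \cref{prop:Tam_pullback_homoepi}: the induced functor
\[
	\fib(\Mod(I^+)_{\geq0}\to\Sp_{\geq0})\to\fib(\Mod(R)_{\geq0}\to\Mod(S)_{\geq0})
\]
should be an equivalence. This is the main obstacle, since \cref{prop:Tam_pullback_homoepi} is stated only for full module categories. I would deduce it from the stable statement as follows. The stable fibers $\Mod_H(I)$ and $\Mod(R,I)$ are equivalent via $R\otimes_{I^+}-$. Both fibers are identified with $I$-comodules, that is, modules of the form $I\otimes-$, and this identification is compatible with the equivalence. Under it, an object of the stable fiber is connective in $\Mod(I^+)$ if and only if its image is connective in $\Mod(R)$. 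Indeed, the underlying spectrum of an object in the fiber is preserved by the comparison: for $M$ in the fiber, $R\otimes_{I^+}M$ has underlying spectrum $M$, because $R\otimes_{I^+}I\simeq I$ by excision, so $I\otimes_{I^+}M\simeq M$ is carried to $I\otimes_R(R\otimes_{I^+}M)$.

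Finally, the connective fiber consists exactly of the connective objects lying in the stable fiber. This holds because $\bbS\otimes_{I^+}-$ and $S\otimes_R-$ preserve connectivity and detect vanishing compatibly. Hence the equivalence restricts to the connective parts, and
\[
	\C\simeq\Mod_H(I)_{\geq0}.
\]
Alternatively, and more simply, one can apply \cref{thm:LLS}'s proof input to $I^+\to\bbS$ directly: it is itself a $\pi_0$-surjective homological epimorphism of connective rings with fiber $I$. One only needs that the connective fiber depends only on $I$, which is the same connectivity check as above.
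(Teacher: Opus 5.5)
Your proof is correct and follows the paper's argument. You apply \cref{thm:LLS}, note via \cref{prop:fib_conn_Hunital} that $I:=\fib(R\to S)$ is connective $H$-unital, and identify $\fib(\Mod(R)_{\geq0}\to\Mod(S)_{\geq0})$ with $\fib(\Mod(I^+)_{\geq0}\to\Sp_{\geq0})$; the paper asserts this last identification without comment, and your justification supplies it correctly, since the stable equivalence $\Mod_H(I)\simeq\Mod(R,I)$ of \cref{prop:Tam_pullback_homoepi} preserves underlying spectra (most transparently because its inverse is restriction of scalars along $I^+\to R$) and each connective fiber is the full subcategory of connective objects in the corresponding stable fiber.
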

\begin{proof}
	By \cref{thm:LLS}, every dualizable additive category $\C$ is equivalent to the fiber of a base change functor $\Mod(R)_{\geq0}\to\Mod(S)_{\geq0}$ where $R\to S$ is a $\pi_0$-surjective homological epimorphism of connective $\bbE_1$-rings. Then $I:=\fib(R\to S)$ is connective $H$-unital and $\C\simeq\fib(\Mod(R)_{\geq0}\to\Mod(S)_{\geq0})\simeq\fib(\Mod(I^+)_{\geq0}\to\Sp_{\geq0})\simeq\Mod_H(I)_{\geq0}$.
\end{proof}

\subsection{Constructions of \texorpdfstring{$H$}{H}-unital rings}\label{sec:1.2}
Using an argument similar to the proof of \cref{prop:Tam_pullback_homoepi},
we show that coidempotent $\bbS$-coalgebras are $H$-unital and their tensor products with $H$-unital rings remain $H$-unital, thereby providing a new method for constructing $H$-unital rings.
\begin{remark}
	We will use the following argument. Given a pullback diagram of $\bbE_1$-rings
	\[\begin{tikzcd}
		R & S \\
		{R'} & {S'}
		\arrow[from=1-1, to=1-2]
		\arrow[from=1-1, to=2-1]
		\arrow["\lrcorner"{anchor=center, pos=0.125}, draw=none, from=1-1, to=2-2]
		\arrow[from=1-2, to=2-2]
		\arrow[from=2-1, to=2-2]
	\end{tikzcd}\]
	we have a pair of adjoint functors
	\[
		F:\Mod(R)\rightleftarrows\Mod(R')\times_{\Mod(S')}\Mod(S):G,
	\]
	where $F$ sends an $R$-module $M$ to $(R'\otimes_RM,S\otimes_RM,S'\otimes_{R'}(R'\otimes_RM)\simeq S'\otimes_S(S\otimes_RM))$, and $G$ sends $(M,N,S'\otimes_{R'}M\simeq S'\otimes_SN)$ to the pullback $R$-module $M\times_{S'\otimes_SN}N$. The functor $F$ is always fully faithful since the unit $\id_{\Mod(R)}\simeq GF$ is an equivalence.
	
	To show that $F$ and $G$ form a pair of inverse equivalences, it suffices to show that $G$ is conservative. Then we only need to check that $G$ detects $0$, i.e. for any object $(M,N,S'\otimes_{R'}M\simeq S'\otimes_SN)$ of the pullback $\Mod(R')\times_{\Mod(S')}\Mod(S)$ such that 
	\begin{equation}\label{eq:pullback}\begin{tikzcd}
		0 & N \\
		M & {S'\otimes_{R'}M\simeq S'\otimes_SN}
		\arrow[from=1-1, to=1-2]
		\arrow[from=1-1, to=2-1]
		\arrow["\lrcorner"{anchor=center, pos=0.125}, draw=none, from=1-1, to=2-2]
		\arrow[from=1-2, to=2-2]
		\arrow[from=2-1, to=2-2]
	\end{tikzcd}\end{equation}
	we have $M\simeq0$ and $N\simeq0$.

	If $F$ and $G$ form a pair of inverse equivalences, then we obtain a pullback diagram of module categories
	\[\begin{tikzcd}
		{\Mod(R)} & {\Mod(S)} \\
		{\Mod(R')} & {\Mod(S')}
		\arrow[from=1-1, to=1-2]
		\arrow[from=1-1, to=2-1]
		\arrow["\lrcorner"{anchor=center, pos=0.125}, draw=none, from=1-1, to=2-2]
		\arrow[from=1-2, to=2-2]
		\arrow[from=2-1, to=2-2]
	\end{tikzcd}\]
	In this case, if $R'\to S'$ is a homological epimorphism, then the induced functor $\Mod(R')\to\Mod(S')$ is a localization. Consequently, $\Mod(R)\to\Mod(S)$ is also a localization, which implies that $R\to S$ is a homological epimorphism.
\end{remark}
\begin{proposition}\label{prop:pullback_homoepi_idem}
	Consider a pullback diagram of $\bbE_1$-rings
	\[\begin{tikzcd}
		R & S \\
		{R'} & {S'}
		\arrow[from=1-1, to=1-2]
		\arrow[from=1-1, to=2-1]
		\arrow["\lrcorner"{anchor=center, pos=0.125}, draw=none, from=1-1, to=2-2]
		\arrow[from=1-2, to=2-2]
		\arrow[from=2-1, to=2-2]
	\end{tikzcd}\]
	and suppose that $R'\to S'$ is a homological epimorphism with fiber $I:=\fib(R'\to S')$. If $S'\otimes I\simeq 0$, then $R\to S$ is a homological epimorphism and we have a pullback diagram of module categories
	\[\begin{tikzcd}
		{\Mod(R)} & {\Mod(S)} \\
		{\Mod(R')} & {\Mod(S')}
		\arrow[from=1-1, to=1-2]
		\arrow[from=1-1, to=2-1]
		\arrow["\lrcorner"{anchor=center, pos=0.125}, draw=none, from=1-1, to=2-2]
		\arrow[from=1-2, to=2-2]
		\arrow[from=2-1, to=2-2]
	\end{tikzcd}\]
	In particular, the fiber of a ring map between idempotent $\bbS$-algebras is $H$-unital.
\end{proposition}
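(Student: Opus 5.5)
Following the preceding remark, the plan is to show that the right adjoint $G$ of $F:\Mod(R)\to\Mod(R')\times_{\Mod(S')}\Mod(S)$ detects zero. Since $F$ is always fully faithful, this makes $F$ an equivalence. Once the square of module categories is a pullback, the localization $\Mod(R')\to\Mod(S')$ pulls back to a localization $\Mod(R)\to\Mod(S)$, so $R\to S$ is a homological epimorphism, as the remark explains.

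To check that $G$ detects zero, take $(M,N,\alpha)$ with $M\in\Mod(R')$, $N\in\Mod(S)$, $\alpha:S'\otimes_{R'}M\simeq S'\otimes_SN$, and with $M\times_{S'\otimes_SN}N\simeq0$ as in \eqref{eq:pullback}. The square is then bicartesian, so it gives a cofiber sequence $M\to S'\otimes_{R'}M\to N[1]$ of underlying spectra, equivalently $N\simeq\fib(M\to S'\otimes_{R'}M)[1]$. The fiber of the unit $M\to S'\otimes_{R'}M$ is $I\otimes_{R'}M$, so $N\simeq (I\otimes_{R'}M)[1]$ as spectra. I will use the hypothesis $S'\otimes I\simeq0$ (tensor over $\bbS$) in the following way.

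\begin{enumerate}
\item Since $N$ is an $S$-module and $S\to S'$ is a ring map, $S'\otimes_SN$ is a retract of $S'\otimes N$ in spectra. Indeed, $S'\otimes_SN$ is a module over $S'$, and any left $S'$-module $X$ is a retract of $S'\otimes X$ via the unit and the action. So it suffices to show $S'\otimes N\simeq0$.
\item We have $S'\otimes N\simeq S'\otimes I\otimes_{R'}M[1]$. This is $(S'\otimes I)\otimes_{R'}M[1]$, where $S'\otimes I$ is a right $R'$-module through $I$ and vanishes by hypothesis. Hence $S'\otimes_SN\simeq0$.
\item Now $S'\otimes_{R'}M\simeq0$ via $\alpha$, so $M\simeq I\otimes_{R'}M$. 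Then $N\simeq M[1]$ up to the map $M\to0\to N[1]$, which forces $N\simeq$ cofiber of $M\to 0$ shifted. More directly, $N\simeq \fib(M\to 0)[1]\simeq M[1]$ as spectra. But in the pullback square with corner $0$ and zero bottom-right term, the square being a pullback gives $0\simeq M\times N$, hence $M\simeq0\simeq N$.
\end{enumerate}

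For the final claim, let $A\to B$ be a map of idempotent $\bbS$-algebras with fiber $J$. Form the pullback $J^+=\bbS\times_B A\to\bbS$ (the fiber is $J$) mapping to $A\to B$. Apply the proposition with $R'=A$, $S'=B$, $R=J^+$, $S=\bbS$. For this we need $B\otimes\fib(A\to B)\simeq0$. Since $A$ and $B$ are idempotent over $\bbS$ and $B\otimes_A$-structure gives $B\otimes A\simeq B\simeq B\otimes B$ via the map (both idempotent, and $B$ is then an $A$-algebra with $B\otimes A\to B\otimes B$ an equivalence, as both are identified with $B$ by multiplication), we get $B\otimes J\simeq0$. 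Also $A\to B$ is a homological epimorphism, since $B\otimes_AB\simeq B\otimes_{A}(A\otimes B)$-type arguments reduce it to $B\otimes B\simeq B$. Then $J^+\to\bbS$ is a homological epimorphism, i.e. $J$ is $H$-unital.

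I expect the main obstacle to be the bookkeeping in step (1)–(2): the tensor $S'\otimes I$ over $\bbS$ must be correctly related to the relative tensor $S'\otimes_SN$ through the retract argument. A second delicate point is verifying that $A\to B$ is a homological epimorphism in the last claim.
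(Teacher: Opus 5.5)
Your strategy is the paper's. You show that $G$ detects zero by deducing $S'\otimes N\simeq0$ from $S'\otimes I\simeq0$, passing to $S'\otimes_SN\simeq0$, and then reading off $M\oplus N\simeq0$ from the pullback square. The $H$-unital statement is also obtained as in the paper, via the pullback $J^+\simeq\bbS\times_BA$. The paper tensors the whole square with $S'$ and uses $S'\otimes R'\simeq S'\otimes S'$; your identification $N\simeq(I\otimes_{R'}M)[1]$ is an equivalent way to run the same computation.

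\textbf{Step (1).} The justification here is wrong. A left $S'$-module $X$ is a retract of $S'\otimes X$, so $S'\otimes_SN$ is a retract of $S'\otimes(S'\otimes_SN)$, not of $S'\otimes N$. The latter claim fails in general: for $S=\bbZ[x]\to S'=\bbZ$ and $N=\bbZ$ one has $\pi_1(S'\otimes_SN)\cong\bbZ$ but $\pi_1(\bbZ\otimes\bbZ)=0$. The implication you need is still true, for a different reason. $S'\otimes_SN$ is the geometric realization of the bar construction with terms $S'\otimes S^{\otimes k}\otimes N\simeq S^{\otimes k}\otimes(S'\otimes N)$, and all of these vanish once $S'\otimes N\simeq0$.

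\textbf{Your second ``delicate point''.} The same bar-construction argument handles it. From $B\otimes J\simeq0$ you get $B\otimes_AJ\simeq0$, since the terms are $A^{\otimes k}\otimes(B\otimes J)\simeq 0$. That is condition (4) of the definition of a homological epimorphism for $A\to B$, so no separate computation of $B\otimes_AB$ is needed.

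\textbf{A slip.} Since the square with corner $0$ is also a pushout, $S'\otimes_{R'}M\simeq M\oplus N$. So the cofiber sequence is $M\to S'\otimes_{R'}M\to N$, not $\to N[1]$. Your next formula, $N\simeq\fib(M\to S'\otimes_{R'}M)[1]$, is the correct one, and with it steps (2) and (3) go through.
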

\begin{proof}
	We show that the commutative diagram of module categories is a pullback diagram. Applying $S'\otimes-$ to the pullback diagram \eqref{eq:pullback} in $\Sp$, we obtain a pullback diagram
	\[\begin{tikzcd}
		0 & {S'\otimes N} \\
		{S'\otimes M} & {S'\otimes S'\otimes_{R'}M}
		\arrow[from=1-1, to=1-2]
		\arrow[from=1-1, to=2-1]
		\arrow["\lrcorner"{anchor=center, pos=0.125}, draw=none, from=1-1, to=2-2]
		\arrow[from=1-2, to=2-2]
		\arrow[from=2-1, to=2-2]
	\end{tikzcd}\]
	Since $S'\otimes I\simeq0$, we have $S'\otimes R'\simeq S'\otimes S'$ and then the bottom map $S'\otimes M\simeq S'\otimes R'\otimes_{R'}M\to S'\otimes S'\otimes_{R'}M$ is an equivalence. Therefore, $S'\otimes N\simeq0$ and also $S'\otimes_SN\simeq0$. It follows from the pullback diagram \eqref{eq:pullback} that both $M\simeq0$ and $N\simeq0$.
	
	Let $R\to S$ be a ring map between idempotent $\bbS$-algebras with fiber $I=\fib(R\to S)$. Then $S\otimes I\simeq\fib(R\otimes S\to S\otimes S)\simeq0$ and $R\to S$ is in particular a homological epimorphism. Consider the following pullback diagram of $\bbE_1$-rings 
	\[\begin{tikzcd}
		{I^+} & \bbS \\
		R & S
		\arrow[from=1-1, to=1-2]
		\arrow[from=1-1, to=2-1]
		\arrow["\lrcorner"{anchor=center, pos=0.125}, draw=none, from=1-1, to=2-2]
		\arrow[from=1-2, to=2-2]
		\arrow[from=2-1, to=2-2]
	\end{tikzcd}\]
	It follows that $I^+\to\bbS$ is a homological epimorphism, and hence $I$ is an $H$-unital ring.
\end{proof}
\begin{corollary}
	Let $A$ be an $\bbE_1$-ring. Then the fiber $I:=\fib(\bbS\to A)$ is $H$-unital if and only if it is coidempotent.
\end{corollary}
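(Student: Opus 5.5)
The plan is to identify the unitalization $I^+$ with a pullback of $\bbE_1$-rings, and then read off both implications from \cref{prop:pullback_homoepi_idem} and \cref{prop:Tam_pullback_homoepi}.

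\textbf{Unpacking coidempotence.} Tensoring the fiber sequence $I\to\bbS\to A$ with $I$ gives a fiber sequence
\[
	I\otimes I\to I\to A\otimes I.
\]
So the map $I\otimes I\to I$ (equivalently, the comultiplication $I\to I\otimes I$, which exhibits $I$ as a nonunital $\bbS$-coalgebra) is an equivalence if and only if $A\otimes I\simeq0$. By condition (4) in the definition of idempotence, this holds if and only if $\bbS\to A$ is a homological epimorphism, i.e. $A$ is an idempotent $\bbS$-algebra. I will take this as the working meaning of ``$I$ is coidempotent''. The one point to check here is that the map $I\otimes I\to I$ appearing in the fiber sequence agrees with the structure map used in the definition of coidempotence, which I expect to be routine.

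\textbf{The pullback square.} Next I identify $I^+$ with the pullback $\bbS\times_A\bbS$ of $\bbE_1$-rings, taken along the unit map $\bbS\to A$ twice. The projection to either factor has fiber $\fib(\bbS\to A)=I$, and the pullback is the unitalization of this nonunital fiber. This gives a pullback diagram of $\bbE_1$-rings
\[\begin{tikzcd}
	{I^+} & \bbS \\
	\bbS & A
	\arrow[from=1-1, to=1-2]
	\arrow[from=1-1, to=2-1]
	\arrow["\lrcorner"{anchor=center, pos=0.125}, draw=none, from=1-1, to=2-2]
	\arrow[from=1-2, to=2-2]
	\arrow[from=2-1, to=2-2]
\end{tikzcd}\]
in which the top arrow is the augmentation. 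This is exactly the square already used at the end of the proof of \cref{prop:pullback_homoepi_idem}.

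\textbf{($\Leftarrow$)} If $I$ is coidempotent, then $A$ is an idempotent $\bbS$-algebra. The map $\bbS\to A$ is then a ring map between idempotent $\bbS$-algebras. The last assertion of \cref{prop:pullback_homoepi_idem} says that its fiber $I$ is $H$-unital.

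\textbf{($\Rightarrow$)} If $I$ is $H$-unital, the top map $I^+\to\bbS$ of the square is a homological epimorphism. Applying \cref{prop:Tam_pullback_homoepi} with $R=I^+$, $S=\bbS$, $R'=\bbS$, $S'=A$ shows that the bottom map $\bbS\to A$ is a homological epimorphism. Hence $A\otimes I\simeq0$, which by the first step means $I$ is coidempotent.

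The main obstacle is only bookkeeping: justifying the identification $I^+\simeq\bbS\times_A\bbS$ as $\bbE_1$-rings. This is already implicit in the paper's use of the same square, so I would simply cite that.
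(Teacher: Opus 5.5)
Your proposal is correct and is essentially the argument the paper intends; the paper gives no separate proof, and your reduction of coidempotence to $A\otimes I\simeq0$ (idempotence of $\bbS\to A$) makes the backward direction the last assertion of \cref{prop:pullback_homoepi_idem}. Your forward direction, applying \cref{prop:Tam_pullback_homoepi} to the square $I^+\simeq\bbS\times_A\bbS$, is exactly how \cref{cor:fib_Hunital} shows that $\bbS\to A$ is a homological epimorphism.
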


\begin{remark}
We now turn to the tensor product of $H$-unital rings. We refer to \cite[Section 2]{ACB22} for the preliminaries on total fibers. Let $R\to S$ and $R'\to S'$ be homological epimorphisms with fibers $I,I'$, respectively. Consider the commutative diagram
\[\begin{tikzcd}
	{I\otimes I'} & {I\otimes R'} & {I\otimes S'} \\
	{R\otimes I'} & {R\otimes R'} & {R\otimes S'} \\
	{S\otimes I'} & {S\otimes R'} & {S\otimes S'}
	\arrow[from=1-1, to=1-2]
	\arrow[from=1-1, to=2-1]
	\arrow[from=1-2, to=1-3]
	\arrow[from=1-2, to=2-2]
	\arrow[from=1-3, to=2-3]
	\arrow[from=2-1, to=2-2]
	\arrow[from=2-1, to=3-1]
	\arrow[from=2-2, to=2-3]
	\arrow[from=2-2, to=3-2]
	\arrow[from=2-3, to=3-3]
	\arrow[from=3-1, to=3-2]
	\arrow[from=3-2, to=3-3]
\end{tikzcd}\]
where each row and column is a fiber sequence. Then $I\otimes I'$ is the fiber of the $\bbE_1$-ring map $R\otimes R'\to(S\otimes R')\times_{S\otimes S'}R\otimes S'$. Moreover, $R\otimes R'\to(S\otimes R')\times_{S\otimes S'}R\otimes S'$ is a homological epimorphism since
\begin{align*}
	((S\otimes R')\times_{S\otimes S'}&(R\otimes S'))\otimes_{R\otimes R'}(I\otimes I')\\
	&\simeq(S\otimes R')\otimes_{R\otimes R'}(I\otimes I')\times_{(S\otimes S')\otimes_{R\otimes R'}(I\otimes I')}(R\otimes S')\otimes_{R\otimes R'}(I\otimes I')\\
	&\simeq(S\otimes_RI\otimes I')\times_{S\otimes_RI\otimes S'\otimes_{R'}I'}I\otimes S'\otimes_{R'}I'\simeq0.
\end{align*}
This is also the dual version of \cite[Theorem 3.8(2)]{Aok26sheavesspectrum}.

Then consider the pullback diagram of $\bbE_1$-rings
\[\begin{tikzcd}
	{(I\otimes I')^+} & \bbS \\
	{R\otimes R'} & {(S\otimes R')\times_{S\otimes S'}(R\otimes S')}
	\arrow[from=1-1, to=1-2]
	\arrow[from=1-1, to=2-1]
	\arrow["\lrcorner"{anchor=center, pos=0.125}, draw=none, from=1-1, to=2-2]
	\arrow[from=1-2, to=2-2]
	\arrow[from=2-1, to=2-2]
\end{tikzcd}\]
To show that $I\otimes I'$ is $H$-unital, it suffices to show that the commutative diagram of module categories is a pullback diagram. If so, then $(I\otimes I')^+\to\bbS$ is a homological epimorphism and we have 
\[
	\Mod_H(I\otimes I')\simeq\fib(\Mod(R\otimes R')\to\Mod((S\otimes R')\times_{S\otimes S'}(R\otimes S'))).
\]

Suppose that $I,I'$ are $H$-unital rings such that $I\otimes I'$ is also $H$-unital. Consider the commutative diagram of module categories
\[\begin{tikzcd}
	{\Mod_H(I)\otimes\Mod_H(I')} & {\Mod_H(I)\otimes\Mod(R')} & {\Mod_H(I)\otimes\Mod(S')} \\
	{\Mod(R)\otimes\Mod_H(I')} & {\Mod(R)\otimes\Mod(R')} & {\Mod(R)\otimes\Mod(S')} \\
	{\Mod(S)\otimes\Mod_H(I')} & {\Mod(S)\otimes\Mod(R')} & {\Mod(S)\otimes\Mod(S')}
	\arrow[from=1-1, to=1-2]
	\arrow[from=1-1, to=2-1]
	\arrow[from=1-2, to=1-3]
	\arrow[from=1-2, to=2-2]
	\arrow[from=1-3, to=2-3]
	\arrow[from=2-1, to=2-2]
	\arrow[from=2-1, to=3-1]
	\arrow[from=2-2, to=2-3]
	\arrow[from=2-2, to=3-2]
	\arrow[from=2-3, to=3-3]
	\arrow[from=3-1, to=3-2]
	\arrow[from=3-2, to=3-3]
\end{tikzcd}\]
where each row and column is a short exact sequence. Then 
\[
	\Mod_H(I)\otimes\Mod_H(I')\simeq\fib(\Mod(R\otimes R')\to\Mod(S\otimes R')\times_{\Mod(S\otimes S')}\Mod(R\otimes S')).
\]
Since $\Mod((S\otimes R')\times_{S\otimes S'}(R\otimes S'))\to\Mod(S\otimes R')\times_{\Mod(S\otimes S')}\Mod(R\otimes S')$ is fully faithful, we have
\begin{align*}
	\Mod_H(I)\otimes\Mod_H(I')&\simeq\fib(\Mod(R\otimes R')\to\Mod(S\otimes R')\times_{\Mod(S\otimes S')}\Mod(R\otimes S'))\\
	&\simeq\fib(\Mod(R\otimes R')\to\Mod((S\otimes R')\times_{S\otimes S'}(R\otimes S')))\\
	&\simeq\Mod_H(I\otimes I').
\end{align*}
\end{remark}
\begin{corollary}
	Let $I,I'$ be connective $H$-unital rings. Then $I\otimes I'$ is also a connective $H$-unital ring. Moreover, we have an equivalence $\Mod_H(I\otimes I')\simeq\Mod_H(I)\otimes\Mod_H(I')$.
\end{corollary}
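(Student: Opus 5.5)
The plan is to reduce to the pullback-detection argument of the earlier remark, using connectivity. Write $I\simeq\fib(R\to S)$ and $I'\simeq\fib(R'\to S')$. Here $R\to S$ and $R'\to S'$ are $\pi_0$-surjective homological epimorphisms of connective $\bbE_1$-rings. We can take $R=I^+$, $S=\bbS$ and $R'=I'^+$, $S'=\bbS$; these are connective and $\pi_0$-surjective. The preceding remark reduces $H$-unitality of $I\otimes I'$ to one claim: the square of module categories attached to
\[
	R\otimes R'\to T:=(S\otimes R')\times_{S\otimes S'}(R\otimes S')
\]
and $(I\otimes I')^+\to\bbS$ is a pullback. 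Once this holds, the final computation of that remark gives $\Mod_H(I)\otimes\Mod_H(I')\simeq\Mod_H(I\otimes I')$.

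\textbf{Step 1.} I would avoid the conservativity check and apply \cref{prop:fib_conn_Hunital} directly. The remark shows that $R\otimes R'\to T$ is a homological epimorphism, since $T\otimes_{R\otimes R'}(I\otimes I')\simeq0$. The square
\[
	(I\otimes I')^+\to\bbS,\qquad R\otimes R'\to T
\]
is a pullback of $\bbE_1$-rings, and its horizontal fibers are both $I\otimes I'$, which is connective because $I$ and $I'$ are. It remains to see that all four rings are connective. This holds for $(I\otimes I')^+$, $\bbS$ and $R\otimes R'$.

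\textbf{Step 2.} The main point is connectivity of $T$. Since $S\otimes R'\to S\otimes S'$ has fiber $S\otimes I'$, which is connective, this map is $\pi_0$-surjective (the same holds for $\pi_0(R')\to\pi_0(S')$ by assumption). In a pullback of connective spectra, the fiber $\pi_{-1}$-contribution vanishes if one of the maps to $S\otimes S'$ is surjective on $\pi_0$. Hence $T$ is connective by the long exact sequence. Then \cref{prop:fib_conn_Hunital}, applied to the homological epimorphism $R'\to S'$ of its statement (here $R\otimes R'\to T$), shows that $(I\otimes I')^+\to\bbS$ is a homological epimorphism. So $I\otimes I'$ is connective $H$-unital.

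\textbf{Step 3.} For the equivalence of categories, I would invoke the last part of the remark, which assumed exactly that $I$, $I'$ and $I\otimes I'$ are $H$-unital. One point to check is that \cref{prop:Tam_pullback_homoepi} applied to the ring pullback identifies $\Mod_H(I\otimes I')$ with $\fib(\Mod(R\otimes R')\to\Mod(T))$; \cref{cor:fib_Hunital} gives this. The other is full faithfulness of $\Mod(T)\to\Mod(S\otimes R')\times_{\Mod(S\otimes S')}\Mod(R\otimes S')$, which holds as in the remark (unit $GF\simeq\id$). Taking fibers then gives the equivalence.

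I expect the connectivity of $T$ to be the only step needing care. It is where the hypothesis ``connective'' is used, and it ensures the vertical comparison meets the hypotheses of \cref{prop:fib_conn_Hunital}.
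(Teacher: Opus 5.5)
Your proof is correct and is essentially the paper's argument. The paper notes that $I^+\otimes(I')^+\to(I')^+\times_{\bbS}I^+$ is a $\pi_0$-surjective homological epimorphism of connective rings, invokes the ``in particular'' clause of \cref{prop:fib_conn_Hunital} (which unpacks to exactly your application of the main clause to the unitalization square), and then cites the preceding remark for the equivalence; your explicit check that $T$ is connective supplies a detail the paper leaves implicit.
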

\begin{proof}
	Consider homological epimorphisms $I^+\to\bbS$ and $(I')^+\to\bbS$ with fibers $I$ and $I'$, respectively. Then $I^+\otimes(I')^+\to(\bbS\otimes(I')^+)\times_{\bbS\otimes\bbS}(I^+\otimes\bbS)\simeq(I')^+\times_{\bbS}I^+$ is a $\pi_0$-surjective homological epimorphism. So by \cref{prop:fib_conn_Hunital}, $I\otimes I'$ is connective $H$-unital. Hence $\Mod_H(I\otimes I')\simeq\Mod_H(I)\otimes\Mod_H(I')$.
\end{proof}

\begin{proposition}\label{prop:tensor-H-unital}
	Let $I$ be an $H$-unital ring, and let $R'\to S'$ be a ring map between idempotent $\bbS$-algebras with fiber $I'=\fib(R'\to S')$. Then $I\otimes I'$ is also an $H$-unital ring. Moreover, we have $\Mod_H(I\otimes I')\simeq\Mod_H(I)\otimes\Mod_H(I')$.
\end{proposition}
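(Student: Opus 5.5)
We follow the strategy of the remark preceding the statement. Put $R:=I^+$ and $S:=\bbS$, so that $R\to S$ is a homological epimorphism with fiber $I$, and let $R'\to S'$ be the given map of idempotent $\bbS$-algebras with fiber $I'$. By the remark, $I\otimes I'$ is the fiber of the homological epimorphism $R\otimes R'\to T:=(S\otimes R')\times_{S\otimes S'}(R\otimes S')$. Consider the pullback square of $\bbE_1$-rings with top row $(I\otimes I')^+\to\bbS$ and bottom row $R\otimes R'\to T$. It suffices to show that the induced square of module categories is a pullback. Then $(I\otimes I')^+\to\bbS$ is a homological epimorphism, so $I\otimes I'$ is $H$-unital. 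The identification $\Mod_H(I\otimes I')\simeq\Mod_H(I)\otimes\Mod_H(I')$ then follows verbatim from the second half of that remark: use the $3\times3$ diagram of short exact sequences and the full faithfulness of $\Mod(T)\to\Mod(S\otimes R')\times_{\Mod(S\otimes S')}\Mod(R\otimes S')$.

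We would like to apply \cref{prop:pullback_homoepi_idem} to this square. Its hypothesis asks that $T\otimes(I\otimes I')\simeq 0$ as a spectrum, and this fails in general: $R=I^+$ is not idempotent over $\bbS$. So we do not invoke the proposition as a black box. Instead we rerun its conservativity argument, smashing with the idempotent algebra $S'$ in place of $T$.

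Take an object $(M,N,\ T\otimes_{R\otimes R'}M\simeq T\otimes N)$ of $\Mod(R\otimes R')\times_{\Mod(T)}\Sp$ whose pullback $M\times_{T\otimes_{R\otimes R'}M}N$ vanishes, as in \eqref{eq:pullback}. The idempotent algebra $S'$ acts through the $R'$-factor, and we smash the square with $S'$. Since $S'\otimes I'\simeq 0$, we get $S'\otimes R'\simeq S'\otimes S'$, and so $S'\otimes(R\otimes R')\to S'\otimes T$ is an equivalence. Concretely, $S'\otimes T\simeq (S\otimes S'\otimes R')\times_{S\otimes S'\otimes S'}(R\otimes S'\otimes S')$, which identifies with $R\otimes S'\otimes S'\simeq S'\otimes R\otimes R'$. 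Hence the bottom map $S'\otimes M\to S'\otimes T\otimes_{R\otimes R'}M$ is an equivalence. This forces $S'\otimes N\simeq0$.

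We still need $N\simeq0$, not just $S'\otimes N\simeq 0$. For this, use the other leg: base change along $T\to S\otimes R'\simeq R'$ projects away the $S'$-part. On $\fib(R'\to S')=I'$-torsion objects the map $R\otimes R'\to T$ restricts to $I^+\to\bbS$ tensored with $R'$. That map is a homological epimorphism because $I$ is $H$-unital. Tensoring with $I'$ instead of $S'$ therefore reduces the conservativity question to the known pullback square for $I^+\to\bbS$ (base-changed to $\Mod(R')$) together with $S'$-local vanishing. We expect this step to be the main obstacle. The cleanest route we would try first is to decompose every spectrum $X$ via the fiber sequence $I'\otimes X\to R'\otimes X\to S'\otimes X$. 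On the $S'$-part the square is trivially a pullback by the previous paragraph. On the $I'$-part it is the base change of the pullback square for $I^+\to\bbS$ along $-\otimes I'$, which is a pullback by $H$-unitality of $I$ and \cref{prop:Tam_pullback_homoepi}. Gluing the two parts gives $M\simeq0$ and $N\simeq0$.
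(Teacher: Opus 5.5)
Your reduction to conservativity of $G$ is correct, as is your remark that \cref{prop:pullback_homoepi_idem} cannot be applied directly. Your $S'$-step is also correct: $S'\otimes(I^+\otimes R')\to S'\otimes T$ is an equivalence, so smashing the object square with $S'$ gives $S'\otimes N\simeq0$. \textbf{The gap is in your last paragraph, which is where the proof actually happens.} There is no ``pullback square for $I^+\to\bbS$'' whose base change along $-\otimes I'$ controls the pair $(M,N)$. After $I'\otimes-$ the ring square degenerates, since its vertical maps become equivalences, and ``gluing the two parts'' is never made precise. Note also that $N$ is a bare spectrum, so it does not a priori split along $I'\otimes N\to R'\otimes N\to S'\otimes N$. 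Finally, $H$-unitality of $I$ is not the relevant input at this step. It is only needed, through the remark, to know that $I^+\otimes R'\to T$ is a homological epimorphism.

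\textbf{The missing idea is to smash the object square with $I'$, using $I'\otimes S'\simeq0$.} This gives $I'\otimes T\simeq I'\otimes R'\simeq I'$. Hence the unit map $I'\otimes N\to I'\otimes T\otimes N$ is an equivalence, and the pullback forces $I'\otimes M\simeq0$, i.e.\ $M\simeq R'\otimes M\simeq S'\otimes M$.

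The paper does exactly this. It then shows that for such $M$ the map $M\to T\otimes_{I^+\otimes R'}M$ is an equivalence. To see this, write $T\otimes_{I^+\otimes R'}M$ as the pullback of $R'\otimes_{I^+\otimes R'}M\to S'\otimes_{I^+\otimes R'}M\leftarrow(I^+\otimes S')\otimes_{I^+\otimes R'}M$. Both the bottom map and $M\to(I^+\otimes S')\otimes_{I^+\otimes R'}M$ are equivalences. So the bottom edge of the object square is an equivalence, which gives $N\simeq0$ and then $M\simeq0$.

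Alternatively, your own $S'$-step finishes the argument once you have $I'\otimes M\simeq0$. Using $S'\otimes T\simeq S'\otimes I^+$ and the gluing equivalence $T\otimes_{I^+\otimes R'}M\simeq T\otimes N$, you get $M\simeq S'\otimes M\simeq S'\otimes(T\otimes_{I^+\otimes R'}M)\simeq S'\otimes T\otimes N\simeq I^+\otimes(S'\otimes N)\simeq0$. Then $T\otimes N\simeq0$, and the square gives $N\simeq0$.

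Either way, what your proposal lacks is the $I'$-step that kills $I'\otimes M$, followed by this cross-use of the gluing datum.
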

\begin{proof}
	Consider the homological epimorphisms $I^+\to\bbS$ and $R'\to S'$ with fibers $I$ and $I'$, respectively. Then it suffices to show that the following pullback diagram of $\bbE_1$-rings induces a pullback diagram of module categories:
	\[\begin{tikzcd}
		{(I\otimes I')^+} & \bbS \\
		{I^+\otimes R'} & {R'\times_{S'}(I^+\otimes S')}
		\arrow[from=1-1, to=1-2]
		\arrow[from=1-1, to=2-1]
		\arrow["\lrcorner"{anchor=center, pos=0.125}, draw=none, from=1-1, to=2-2]
		\arrow[from=1-2, to=2-2]
		\arrow[from=2-1, to=2-2]
	\end{tikzcd}\]
	Applying $I'\otimes-$ to the corresponding pullback diagram of \eqref{eq:pullback} in $\Sp$, we obtain a pullback diagram
	\[\begin{tikzcd}
		0 & {I'\otimes N} \\
		{I'\otimes M} & {I'\otimes(R'\times_{S'}(I^+\otimes S'))\otimes N}
		\arrow[from=1-1, to=1-2]
		\arrow[from=1-1, to=2-1]
		\arrow["\lrcorner"{anchor=center, pos=0.125}, draw=none, from=1-1, to=2-2]
		\arrow[from=1-2, to=2-2]
		\arrow[from=2-1, to=2-2]
	\end{tikzcd}\]
	Since $I'\otimes N\to I'\otimes(R'\times_{S'}(S'\otimes I^+))\otimes N\simeq I'\otimes R'\otimes N$ is an equivalence, we have $I'\otimes M\simeq0$ and hence $S'\otimes M\simeq M$. Observe that in the commutative diagram
	\[\begin{tikzcd}
		M & {(I^+\otimes S')\otimes_{I^+\otimes R'}M} \\
		{R'\otimes_{I^+\otimes R'}M} & {S'\otimes_{I^+\otimes R'}M}
		\arrow[from=1-1, to=1-2]
		\arrow[from=1-1, to=2-1]
		\arrow[from=1-2, to=2-2]
		\arrow[from=2-1, to=2-2]
	\end{tikzcd}\]
	we have $M\simeq(I^+\otimes S')\otimes_{I^+\otimes R'}M$ and $R'\otimes_{I^+\otimes R'}M \simeq S'\otimes_{I^+\otimes R'}M$. Then this is a pullback diagram and $M\simeq(R'\times_{S'}(I^+\otimes S'))\otimes_{I^+\otimes R'}M$, and hence $N\simeq0$ from the pullback diagram \eqref{eq:pullback}.
\end{proof}
One can instead first prove directly that coidempotent $\bbS$-coalgebras are $H$-unital and that their tensor products with $H$-unital rings remain $H$-unital, and then recover \cref{prop:pullback_homoepi_idem,prop:tensor-H-unital}. The Bar construction gives another proof, which we record below. This proof does not establish that the tensor product of two arbitrary $H$-unital rings remains $H$-unital, since $\Delta^{\op}_{\mathrm{inj}}$ is not sifted.
\begin{proof}[An alternative proof of {\cref{prop:tensor-H-unital}}]
	By definition, a nonunital $\bbE_1$-ring $I'$ is $H$-unital if and only if $I'\otimes_{(I')^+}I'\simeq I'$. It suffices to show that the augmented semi-simplicial diagram
	\[\begin{tikzcd}
		{I'} & {I'\otimes I'} & {I'\otimes I'\otimes I'} & \cdots
		\arrow[from=1-2, to=1-1]
		\arrow[shift left, from=1-3, to=1-2]
		\arrow[shift right, from=1-3, to=1-2]
		\arrow[from=1-4, to=1-3]
		\arrow[shift left=2, from=1-4, to=1-3]
		\arrow[shift right=2, from=1-4, to=1-3]
	\end{tikzcd}\]
	is a colimit diagram, as the left Kan extension of this diagram to $\Delta^{\op}$ is the Bar construction computing $I'\otimes_{(I')^+}I'$. Since all its face maps and its augmentation map are induced by the multiplication $I'\otimes I'\to I'$, which is an equivalence, this diagram is equivalent to the constant diagram and hence is a colimit diagram. Similarly, the nonunital $\bbE_1$-ring $I\otimes I'$ is $H$-unital if and only if the augmented semi-simplicial diagram
	\[\begin{tikzcd}
		{I\otimes I'} & {(I\otimes I')\otimes(I\otimes I')} & {(I\otimes I')\otimes(I\otimes I')\otimes(I\otimes I')} & \cdots
		\arrow[from=1-2, to=1-1]
		\arrow[shift left, from=1-3, to=1-2]
		\arrow[shift right, from=1-3, to=1-2]
		\arrow[from=1-4, to=1-3]
		\arrow[shift left=2, from=1-4, to=1-3]
		\arrow[shift right=2, from=1-4, to=1-3]
	\end{tikzcd}\]
	is a colimit diagram. Note that this diagram is equivalent to the augmented semi-simplicial diagram
	\[\begin{tikzcd}
		{I\otimes I'} & {(I\otimes I)\otimes I'} & {(I\otimes I\otimes I)\otimes I'} & \cdots
		\arrow[from=1-2, to=1-1]
		\arrow[shift left, from=1-3, to=1-2]
		\arrow[shift right, from=1-3, to=1-2]
		\arrow[from=1-4, to=1-3]
		\arrow[shift left=2, from=1-4, to=1-3]
		\arrow[shift right=2, from=1-4, to=1-3]
	\end{tikzcd}\]
	which is obtained by tensoring the augmented semi-simplicial diagram for $I$ with $I'$. Since $I$ is $H$-unital and $-\otimes I'$ preserves colimits, it is a colimit diagram. Therefore, $I\otimes I'$ is also $H$-unital.
\end{proof}
\begin{question}
	Does the tensor product of two $H$-unital rings remain $H$-unital?
\end{question}
In \cref{sec:2}, we will show that many common nonunital $\bbE_1$-rings arising in chromatic homotopy theory are $H$-unital.

\subsection{Rigid envelopes and completion theory}\label{sec:1.3}

We review the theory of rigid categories from \cite{Ram26locallyrigid,Ram26freerigid} together with the completion theory of \cite{Zhob}. We then introduce the notion of a map of rigid envelopes and establish its basic properties.
\begin{definition}[{\cite[Definition 9.1.2]{GR17}, \cite[Definition C.1.1]{AGK+22}}]
	Let $\C\to\D\in\CAlg(\PrL)$. 
	\begin{enumerate}
		\item We say $\C\to\D$ is \emph{rigid} if $\C\to\D$ is a $\C$-internal left adjoint and the multiplication $\D\otimes_{\C}\D\to\D$ is a $\D\otimes_{\C}\D$-internal left adjoint.
		\item We say $\C\to\D$ is \emph{locally rigid} if $\D$ is dualizable over $\C$ and the multiplication $\D\otimes_{\C}\D\to\D$ is a $\D\otimes_{\C}\D$-internal left adjoint.
	\end{enumerate}
	Let $\CAlg^{\rig}(\PrL_{\C})\subseteq\CAlg(\PrL_{\C})$ denote the full subcategory spanned by rigid $\C$-algebras.
\end{definition}
See also \cite[Definition 4.2.5]{KNP24} or \cite[Definition 1.3]{Neu26} for the general notion of rigid commutative algebras in a symmetric monoidal $2$-category and its monoidal version.

The rigidity condition precisely ensures that the adjunction $\D\otimes_{\C}-:\PrL_{\C}\rightleftarrows\PrL_{\D}$ restricts to $\D\otimes_{\C}-:\Pr^{\iL}_{\C}\rightleftarrows\Pr^{\iL}_{\D}$. Moreover, if $\D\in\CAlg^{\rig}(\PrL_{\C})$, then $\D\in\Pr^{\dbl}_{\C}$ by \cite[Lemma 4.57]{Ram26locallyrigid}, and hence
\[
	\Pr^{\dbl}_{\D}\simeq\Mod_{\D}(\Pr^{\dbl}_{\C})
\]
by \cite[Corollary 4.49]{Ram26locallyrigid}. In particular, a symmetric monoidal functor $\C\to\D$ is rigid if and only if it is locally rigid and the unit $\mathbbm{1}_{\D}$ is $\C$-atomic.
\begin{example}\label{exm:compact_dbl}
	Let $\C\to\D\in\CAlg(\PrL)$. Then $\mathbbm{1}_{\D}$ is $\C$-atomic if and only if $\D^{\dbl}\subseteq\D^{\C\text{-}\at}$. If $\D\in\Pr^{\at}_{\C}$, then $\D$ is locally rigid if and only if $\D^{\C\text{-}\at}\subseteq\D^{\dbl}$ by \cite[Example 4.6]{Ram26locallyrigid}.

	For $\C=\Sp$ and $\M\in\PrL_{\st}$, we have $\M^{\Sp\text{-}\at}=\M^\omega$ and $\Pr^{\at}_{\Sp}=\Pr^{\cg}_{\st}$. Let $\D\in\CAlg(\PrL_{\st})$. Then $\mathbbm{1}\in\D^{\omega}$ if and only if $\D^{\dbl}\subseteq\D^{\omega}$. If $\D\in\Pr^{\cg}_{\st}$, then $\D$ is locally rigid over $\Sp$ if and only if $\D^\omega\subseteq\D^{\dbl}$. In particular, $\Ind(\D^{\dbl})$ is always rigid over $\Sp$.
\end{example}
\begin{theorem}[{\cite[Corollary 4.73]{Ram26locallyrigid}}]
	Let $\C\in\CAlg(\PrL)$. Then the inclusion of rigid $\C$-algebras admits a right adjoint, namely, the \emph{$\C$-rigidification}
	\[\begin{tikzcd}
		{\CAlg^{\rig}(\PrL_{\C})	} & {\CAlg(\PrL_{\C}) }
		\arrow[shift left, hook, from=1-1, to=1-2]
		\arrow["{(-)^{\rig}_{\C}}", shift left, from=1-2, to=1-1]
	\end{tikzcd}\]
\end{theorem}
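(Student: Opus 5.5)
The plan is to apply the adjoint functor theorem. This means showing two things: that $\CAlg^{\rig}(\PrL_{\C})$ is presentable (at least, accessible with all small colimits), and that the inclusion $\CAlg^{\rig}(\PrL_{\C})\hookrightarrow\CAlg(\PrL_{\C})$ preserves small colimits. The right adjoint $(-)^{\rig}_{\C}$ is then formal. Since $\CAlg(\PrL_{\C})$ is not locally small in the naive sense, I would run the argument inside a fixed $\kappa$-compactly generated setting. Concretely, I would first reduce to $\C\in\CAlg(\Pr^{\kappa})$ for some regular $\kappa>\omega$, and work in $\CAlg(\Pr^{\kappa}_{\C})$, which is presentable.

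\textbf{Step 1 (colimit closure).} Colimits in $\CAlg(\PrL_{\C})$ are generated by relative tensor products $\D_1\otimes_{\D_0}\D_2$ and filtered colimits, the latter computed underlying in $\PrL_{\C}$.
\begin{enumerate}
	\item For relative tensor products, I would use that rigidity is stable under base change and composition. If $\D_0\to\D_1$ is rigid, then $\D_2\to\D_1\otimes_{\D_0}\D_2$ is rigid, because the defining internal-left-adjoint conditions are preserved by $\D_2\otimes_{\D_0}-$. Composing with $\C\to\D_2$ then gives rigidity over $\C$. This uses that a map between rigid $\C$-algebras is automatically rigid, which follows from the 2-out-of-3 behaviour of internal left adjoints together with \cref{exm:compact_dbl}-type characterizations.
	\item For filtered colimits, the relevant internal left adjoints are $\C\to\D$ and $\D\otimes_{\C}\D\to\D$. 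Their right adjoints are compatible along the internal left adjoint transition maps. Since $\Pr^{\iL}_{\C}\to\PrL_{\C}$ creates colimits (the remark after the definition of dualizable modules), both conditions pass to the colimit, and the projection formula passes with them.
\end{enumerate}

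\textbf{Step 2 (size bound; this is the main obstacle).} I would show that every rigid $\C$-algebra $\D$ lies in $\Pr^{\kappa}_{\C}$ for a single fixed $\kappa$. Rigid algebras are dualizable $\C$-modules, hence $\kappa$-compactly generated by the Remark citing \cite[Theorem 3.1]{Ram24dualizable}. Moreover, the structure maps and multiplication are internal left adjoints, so their right adjoints preserve $\kappa$-filtered colimits.

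I would then show that the full subcategory of $\CAlg(\Pr^{\kappa}_{\C})$ cut out by the rigidity conditions is accessible and closed under $\kappa'$-filtered colimits for some $\kappa'\ge\kappa$. The rigidity conditions are: the unit is $\C$-atomic, and the multiplication's right adjoint is colimit-preserving and $\D\otimes_{\C}\D$-linear. The natural tool is to express these as a pullback of accessible categories along accessible functors, for instance via the characterization that the canonical map $\P_{\C}(\D^{\lambda})\to\D$ admits a $\C$-linear left adjoint. One then uses that accessible categories are closed under such limits. I expect this to be the delicate point: one must check that "admits a left adjoint satisfying the projection formula" is an accessible condition, and that the inclusion into $\CAlg(\PrL_{\C})$ (not just into $\CAlg(\Pr^{\kappa}_{\C})$) still preserves colimits.

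\textbf{Step 3 (conclusion).} Given Steps 1 and 2, the presentable category $\CAlg^{\rig}(\PrL_{\C})$ includes colimit-preservingly into $\CAlg(\PrL_{\C})$. For each $\D$, the presheaf $\Hom(-,\D)$ restricted to rigid algebras is then representable by the adjoint functor theorem, applied after enlarging universes if needed: it sends colimits to limits, and the domain is presentable. The representing object is $\D^{\rig}_{\C}$, which gives the asserted right adjoint.
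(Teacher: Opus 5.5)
Your route, proving that $\CAlg^{\rig}(\PrL_{\C})$ is presentable and then applying the adjoint functor theorem, differs from the one the paper relies on. As written it has a genuine gap in Step 3, beyond the accessibility question you already flag in Step 2.

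Mapping spaces in $\CAlg(\PrL_{\C})$ are a priori large; already $\Hom_{\PrL_{\C}}(\C,\D)\simeq\D^{\simeq}$. The criterion ``a limit-preserving presheaf on a presentable category is representable'' requires the presheaf to take values in small spaces. Enlarging the universe does not help, because in the larger universe $\CAlg^{\rig}(\PrL_{\C})$ is no longer presentable. For example, on $\An$ the functor $X\mapsto\Hom(X,S)$, with $S$ a large space, preserves limits but is not representable.

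Put differently: granting Step 2, the presentable adjoint functor theorem only yields a right adjoint to $\CAlg^{\rig}(\PrL_{\C})\hookrightarrow\CAlg(\Pr^{\lambda}_{\C})$. That right adjoint classifies algebra maps $\E\to\D$ whose right adjoints preserve $\lambda$-filtered colimits. When the unit of $\D$ is not $\C$-atomic, nothing you have said forces an arbitrary algebra map from a rigid $\E$ to be of this kind.

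The missing idea is the size control the paper takes from \cite[Lemma 4.71]{Ram26locallyrigid} (see the Remark after the statement). For $\D\in\CAlg(\Pr^{\kappa}_{\C})$, the colimit functor $k:\P_{\C}(\D^{\kappa})\to\D$ induces $\P_{\C}(\D^{\kappa})^{\rig}_{\C}\simeq\D^{\rig}_{\C}$. So maps from rigid algebras into $\D$ lift uniquely to $\P_{\C}(\D^{\kappa})$, whose unit is $\C$-atomic. Once the unit of the target $\D'$ is $\C$-atomic, every algebra map $f:\E\to\D'$ from a rigid $\E$ is a $\C$-internal left adjoint. To see this, factor $f$ as $\id_{\E}\otimes u_{\D'}:\E\to\E\otimes_{\C}\D'$ followed by the $\E$-action on $\D'$. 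The action map is obtained from the multiplication $\E\otimes_{\C}\E\to\E$ by the base change $-\otimes_{\E\otimes_{\C}\E}(\E\otimes_{\C}\D')$, so it is an internal left adjoint. Hence such maps are governed by $\kappa$-compact objects, and the relevant mapping spaces are small.

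With this reduction in hand, the accessibility argument of Step 2 becomes unnecessary, and this is how the paper proceeds. When the unit is $\C$-atomic, the rigidification is the colimit of all rigid full sub-$\C$-algebras (\cite[Lemma 4.72, Corollary 4.75]{Ram26locallyrigid}). This family is essentially small: each member is $\kappa$-compactly generated, and its inclusion, being an internal left adjoint, preserves $\kappa$-compact objects. Its colimit is rigid by exactly the colimit-closure you prove in Step 1, and the universal property is obtained directly rather than through an adjoint functor theorem.

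So Step 1 is the reusable part of your proposal. Steps 2 and 3 should be replaced by the reduction to $\P_{\C}(\D^{\kappa})$ and this explicit construction. A minor point: in Step 1(1), \cref{exm:compact_dbl} is not the right input for the rigidity of maps between rigid algebras, since it concerns atomically generated modules. The $\C$-internal left adjointness comes from the factorization above.
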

\begin{remark}
	Indeed, let $\kappa>\omega$ be such that $\C\to\D\in\CAlg(\Pr^{\kappa})$. The colimit functor $k:\P_{\C}(\D^{\kappa})\to\D$ induces an equivalence $\P_{\C}(\D^{\kappa})^{\rig}_{\C}\simeq\D^{\rig}_{\C}$ by \cite[Lemma 4.71]{Ram26locallyrigid}. The rigidification $\D^{\rig}_{\C}\simeq\P_{\C}(\D^{\kappa})^{\rig}_{\C}\hookrightarrow\P_{\C}(\D^{\kappa})$ is given by the colimit of all rigid full sub-$\C$-algebras of $\P_{\C}(\D^{\kappa})$ by \cite[Lemma 4.72]{Ram26locallyrigid}. In particular, for $\kappa\geq\omega$, $\C=\Sp$ and $\D\in\CAlg(\Pr^{\kappa}_{\st})$, we have $\Ind(\D^{\kappa})^{\rig}_{\st}\simeq\D^{\rig}_{\st}$.
	
	More generally, if $\mathbbm{1}_{\D}$ is $\C$-atomic, then $\D^{\rig}\hookrightarrow\D$ is a fully faithful $\C$-internal left adjoint, which is given by the colimit of rigid full sub-$\C$-algebras of $\D$ by \cite[Corollary 4.75]{Ram26locallyrigid}.
\end{remark}

Let $\C\to\overline{\C}\in\CAlg(\PrL)$ be rigid. Then a $\overline{\C}$-algebra is $\overline{\C}$-rigid if and only if it is $\C$-rigid, and hence $\C$-rigidifications and $\overline{\C}$-rigidifications of $\overline{\C}$-algebras coincide by \cite[Observation 4.66]{Ram26locallyrigid}. 
\begin{definition}
	Let $\C\in\CAlg(\PrL)$ and $\I\subseteq\C$ be a localizing subcategory. We say $\I\subseteq\C$ is a \emph{smashing ideal} if it is an ideal and is a $\C$-internal left adjoint with respect to the $\C$-module structure induced by itself. Equivalently, the right adjoint $\C\to\I$ is a symmetric monoidal localization and $\I\subseteq\C$ is $\C$-linear with respect to the $\C$-module structure induced by the right adjoint.
\end{definition}
\begin{remark}
	Let $\C\in\CAlg(\PrL)$ and $\I\subseteq\C$ be a localizing ideal. If $\C$ is locally rigid over $\overline{\C}$, then $\I\subseteq\C$ is smashing if and only if it is a $\overline{\C}$-internal left adjoint by \cite[Proposition 4.18]{Ram26locallyrigid}. 
	
	Let $\I\subseteq\C$ be a smashing ideal. Then $\I$ canonically admits a coidempotent $\C$-coalgebra structure. So the algebra map $\C\to\I$ is idempotent and hence is locally rigid; see also \cite[Proposition 3.24]{Lia26telescope}.

	More generally, let $\I$ be a locally rigid $\C$-algebra. Then the $\C$-algebra map $\I^{\rig}_{\C}\to\I$ admits a fully faithful left adjoint such that $\I$ is a smashing ideal of $\I^{\rig}_{\C}$ by \cite[Theorem 4.67]{Ram26locallyrigid} or \cite[Corollary 5.2]{Ram26freerigid}.
\end{remark}
Let $\I\subseteq\C$ be a smashing ideal. Then we have $\I\simeq\coMod_I(\C)$ where $I$ is a coidempotent coalgebra in $\C$. Denote $j_!:\I\hookrightarrow\C$, $j^*:=j_!^R$ and $j_*:=(j^*)^R$. We have $\Gamma_I:=j_!j^*\simeq I\otimes-$ and $(-)^{\wedge}_I:=j_*j^*\simeq\iHom_{\C}(I,-)$. This induces an equivalence 
\[
	(-)^{\wedge}_I:\C^{I\text{-}\tors}:=\{X\in\C:I\otimes X\xrightarrow{\simeq}X\}\simeq\C^{I\text{-}\cplt}:=\{X\in\C:X\xrightarrow{\simeq}\iHom_{\C}(I,X)\}:\Gamma_I.
\]
In \cite{Zhob}, Zhou establishes a completion theory for dualizable $\C$-modules, which is also developed in \cite{Efi25rigidity,Ram26freerigid}. Let $\iHom^{\dbl}_{\C}:=\iHom_{\Pr^{\dbl}_{\C}}$ denote the internal Hom.
\begin{definition}[{\cite{Zhob}}]
	Let $\C\in\CAlg(\PrL)$, and let $\I\subseteq\C$ be a smashing ideal. Let $\M\in\Pr^{\dbl}_{\C}$.
	\begin{enumerate}
		\item We say $\M$ is \emph{$\I$-torsion} if $\I\otimes_{\C}\M\to\M$ is an equivalence, and denote $\Pr^{\dbl}_{\C,\I\text{-}\tors}\subseteq\Pr^{\dbl}_{\C}$.
		\item We say $\M$ is \emph{$\I$-complete} if $\M\to\iHom^{\dbl}_{\C}(\I,\M)$ is an equivalence, and denote $\Pr^{\dbl}_{\C,\I\text{-}\cplt}\subseteq\Pr^{\dbl}_{\C}$.
	\end{enumerate}
	We define 
	\[
		\M^{\wedge\I}:=\iHom^{\dbl}_{\C}(\I,\M)
	\]
	as the \emph{$\I$-completion of $\M$}, or the \emph{completion of $\M$ with respect to $\I$}.
\end{definition}
The following theorem identifies torsion modules, complete modules, and dualizable $\I$-modules.
\begin{theorem}[{\cite[Proposition 5.7]{Efi25rigidity}, \cite[Corollary 5.4, Remark 5.6]{Ram26freerigid} or \cite{Zhob}}]\label{thm:unst_completion}
	Let $\C\in\CAlg(\PrL)$, and let $\I\subseteq\C$ be a smashing ideal. Then the forgetful functor $\Pr^{\dbl}_{\I}\to\Pr^{\dbl}_{\C}$ is fully faithful and admits a right adjoint
	\[\begin{tikzcd}[column sep=4em]
		{\Pr^{\dbl}_{\I}} & {\Pr^{\dbl}_{\C}}
		\arrow[shift left=3, hook, from=1-1, to=1-2]
		\arrow["{\iHom^{\dbl}_{\C}(\I,-)}"', shift right=3, hook, from=1-1, to=1-2]
		\arrow["{\I\otimes_{\C}-}"{description}, from=1-2, to=1-1]
	\end{tikzcd}\]
	identifying dualizable $\I$-modules, $\I$-comodules and $\I$-torsion $\C$-modules
	\[
		\Pr^{\dbl}_{\I}\simeq\coMod_{\I}(\Pr^{\dbl}_{\C})\simeq\Pr^{\dbl}_{\C,\I\text{-}\tors},
	\]
	and providing a pair of inverse equivalences
	\[
		\iHom^{\dbl}_{\C}(\I,-):\Pr^{\dbl}_{\C,\I\text{-}\tors}\simeq\Pr^{\dbl}_{\C,\I\text{-}\cplt}:\I\otimes_{\C}-.
	\]

	Moreover, if $\D\in\CAlg^{\rig}(\PrL_{\C})$, then $\I\otimes_{\C}\D$ is locally rigid over $\C$ with $\C$-rigidification
	\[
		(\I\otimes_{\C}\D)^{\rig}_{\C}\simeq\iHom^{\dbl}_{\C}(\I,\D)\simeq\iHom^{\dbl}_{\C}(\I,\I\otimes_{\C}\D).
	\]
	In particular, we have $\I^{\rig}_{\C}\simeq\iHom^{\dbl}_{\C}(\I,\C)\simeq\iHom^{\dbl}_{\C}(\I,\I)$.
\end{theorem}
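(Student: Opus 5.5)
The plan is to prove the three assertions of \cref{thm:unst_completion} in turn, using only the idempotent coalgebra structure of $\I$ and formal properties of $\Pr^{\dbl}_{\C}$.

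\emph{Step 1: the forgetful functor $\Pr^{\dbl}_{\I}\to\Pr^{\dbl}_{\C}$.}
Since $\I\subseteq\C$ is a smashing ideal, the right adjoint $\C\to\I$ is an idempotent algebra map. Equivalently, $\I$ is a coidempotent $\C$-coalgebra: the comultiplication $\I\to\I\otimes_{\C}\I$ is an equivalence, because $\I\otimes_{\C}\I\simeq\I$ via multiplication.

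First I would check that $\I$ is dualizable as a $\C$-module, being a retract (via $j_!$, $j^*$) of $\C$ by $\C$-internal left adjoints. Hence $\I\otimes_{\C}-$ preserves dualizable modules and internal left adjoints.

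Next I would show the forgetful functor is fully faithful. For $\M,\N\in\Pr^{\dbl}_{\I}$, a $\C$-linear internal left adjoint $\M\to\N$ is automatically $\I$-linear, because $\I\otimes_{\C}\I\simeq\I$ makes $\I$-linearity a property. Internal-left-adjointness also agrees over $\C$ and over $\I$, since $\C\to\I$ is locally rigid (by the remark above).

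The right adjoint is then $\iHom^{\dbl}_{\C}(\I,-)$, by the tensor–hom adjunction in $\Pr^{\dbl}_{\C}$ together with $\Hom_{\I}(\M,\iHom^{\dbl}_{\C}(\I,\N))\simeq\Hom_{\C}(\M\otimes_{\I}\I,\N)$.

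\emph{Step 2: the identifications and the torsion--complete equivalence.}
The comonad $\I\otimes_{\C}-$ is idempotent by coidempotence. Therefore $\coMod_{\I}(\Pr^{\dbl}_{\C})$ is the full subcategory of $\M$ with $\I\otimes_{\C}\M\xrightarrow{\simeq}\M$, which is $\Pr^{\dbl}_{\C,\I\text{-}\tors}$. This subcategory is exactly the essential image of $\Pr^{\dbl}_{\I}$, since any $\I$-module $\M$ satisfies $\I\otimes_{\C}\M\simeq\I\otimes_{\C}\I\otimes_{\I}\M\simeq\M$.

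We then have a colocalization $\I\otimes_{\C}-$ and a localization $\iHom^{\dbl}_{\C}(\I,-)$, with the latter right adjoint to the former; they are the two adjoints attached to the idempotent object $\I$. The standard argument for an idempotent (co)monad then shows that the two functors restrict to inverse equivalences between torsion and complete objects. The required equivalences are $\iHom^{\dbl}_{\C}(\I,\I\otimes_{\C}\M)\simeq\iHom^{\dbl}_{\C}(\I,\M)$ and $\I\otimes_{\C}\iHom^{\dbl}_{\C}(\I,\M)\simeq\I\otimes_{\C}\M$. Both follow from the fact that maps out of $\I$ factor through $\I\otimes_{\C}\I\simeq\I$.

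\emph{Step 3: the rigid statement.}
For $\D$ rigid over $\C$, the module $\I\otimes_{\C}\D$ is a smashing ideal of $\D$ (it is the image of $\I$ under $\C\to\D$). It is therefore locally rigid over $\D$, and hence over $\C$ by composing with rigid $\C\to\D$.

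To identify the rigidification, I would show that $\iHom^{\dbl}_{\C}(\I,\D)$ represents the same functor on $\CAlg^{\rig}(\PrL_{\C})$ as $\I\otimes_{\C}\D$. Let $\E$ be rigid over $\C$. Maps of $\C$-algebras $\E\to\I\otimes_{\C}\D$ are internal left adjoints, hence maps in $\Pr^{\dbl}_{\C}$. Because $\E$ is rigid, such a map $\E\to\I\otimes_{\C}\D$ in $\Pr^{\dbl}_{\C}$ corresponds, via Step 2, to a map $\I\otimes_{\C}\E\to\I\otimes_{\C}\D$ and hence to $\E\to\iHom^{\dbl}_{\C}(\I,\I\otimes_{\C}\D)$.

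I expect the main obstacle to be upgrading this correspondence to commutative algebras, i.e. making $\iHom^{\dbl}_{\C}(\I,-)$ lax symmetric monoidal on $\Pr^{\dbl}_{\C}$ and checking that it lands in rigid algebras. I would first try to derive this from the fact that it is right adjoint to the symmetric monoidal functor $\I\otimes_{\C}-$.

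The equality $\iHom^{\dbl}_{\C}(\I,\D)\simeq\iHom^{\dbl}_{\C}(\I,\I\otimes_{\C}\D)$ is then a special case of Step 2. The final claim is the case $\D=\C$.
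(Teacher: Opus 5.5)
The main problem is that you carry over to $\Pr^{\dbl}_{\C}$ the picture from $\PrL_{\C}$, in which $\I$ is an idempotent algebra with unit $j^*\colon\C\to\I$. In $\Pr^{\dbl}_{\C}$ this unit is in general not a morphism, because its right adjoint $j_*$ does not preserve colimits. For $\I=L_{T(n)}\Sp\subseteq\C=L_n^f\Sp$ with $n\geq1$, this is the fact that $L_{T(n)}\bbS$ is not compact. In $\Pr^{\dbl}_{\C}$, $\I$ is only a coidempotent coalgebra with counit $j_!$, and this reversal is exactly what the statement records by identifying $\Pr^{\dbl}_{\I}$ with comodules.

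Two consequences for your Step 1:
\begin{enumerate}
\item Your claim that $\I$ is a retract of $\C$ via $\C$-internal left adjoints is false. This is harmless, since a retract in $\PrL_{\C}$ already gives dualizability.
\item The coinduction formula is wrong. $\iHom^{\dbl}_{\C}(\I,\N)$ is $\I$-complete rather than $\I$-torsion, so it is not an object of $\Pr^{\dbl}_{\I}$ at all. Indeed, if $\iHom^{\dbl}_{\C}(\I,\C)$ (which the statement identifies with $\I^{\rig}_{\C}$) were torsion, your Step 2 would give $\iHom^{\dbl}_{\C}(\I,\C)\simeq\I\otimes_{\C}\iHom^{\dbl}_{\C}(\I,\C)\simeq\I$, so $\I$ would already be rigid.
\end{enumerate}
The right adjoint of the forgetful functor is $\I\otimes_{\C}-$, which is the coreflection your own Step 2 extracts from the idempotent comonad. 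In turn, $\iHom^{\dbl}_{\C}(\I,-)$ is right adjoint to $\I\otimes_{\C}-$. This triple adjunction is what the displayed diagram asserts, so as written your Step 1 contradicts your Step 2. Step 2 itself is fine.

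Step 3 fails for the same reason, in two places.
\begin{enumerate}
\item The claim that every $\C$-algebra map $\E\to\I\otimes_{\C}\D$ out of a rigid $\E$ is an internal left adjoint already fails for $\E=\D=\C$, where the map is $j^*$.
\item Passing from maps $\E\to\I\otimes_{\C}\D$ in $\Pr^{\dbl}_{\C}$ to maps $\I\otimes_{\C}\E\to\I\otimes_{\C}\D$ treats $\I\otimes_{\C}-$ as a reflection onto torsion objects. That holds in $\PrL_{\C}$, but in $\Pr^{\dbl}_{\C}$ it is a coreflection. With $\E=\D=\C$, it would make $\id_{\I}$ the restriction along $j_!$ of an internal left adjoint $\C\to\I$, forcing $\mathbbm{1}_{\I}$ to be $\C$-atomic.
\end{enumerate}

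The argument has to be reordered:
\begin{enumerate}
\item Use idempotence of $\I$ in $\PrL_{\C}$ to identify $\C$-algebra maps $\E\to\I\otimes_{\C}\D$ with $\I$-algebra maps $\I\otimes_{\C}\E\to\I\otimes_{\C}\D$.
\item These go between rigid $\I$-algebras, since rigidity is stable under base change. Hence they are $\I$-internal left adjoints, i.e.\ maps in $\CAlg(\Pr^{\dbl}_{\I})$.
\item Only now apply the adjunction $\I\otimes_{\C}-\dashv\iHom^{\dbl}_{\C}(\I,-)$, lifted to commutative algebras. This gives maps $\E\to\iHom^{\dbl}_{\C}(\I,\I\otimes_{\C}\D)\simeq\iHom^{\dbl}_{\C}(\I,\D)$ in $\CAlg(\Pr^{\dbl}_{\C})$.
\end{enumerate}

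Even then, concluding that this object is $(\I\otimes_{\C}\D)^{\rig}_{\C}$ requires $\iHom^{\dbl}_{\C}(\I,\D)$ to be rigid, and you leave this open. Lax monoidality only makes it an object of $\CAlg(\Pr^{\dbl}_{\C})$ with $\C$-atomic unit. Such algebras need not be rigid; nonnegatively graded spectra under Day convolution are an example. That rigidity is the non-formal input, and it is missing from your proposal.
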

\begin{corollary}\label{cor:completion}
	Let $\C\in\CAlg(\PrL)$ and $\I\subseteq\C$ be a smashing ideal. Let $\M\in\Pr^{\dbl}_{\C}$. Then
	\begin{enumerate}
		\item The $\I$-completion functor $\iHom^{\dbl}_{\C}(\I,-):\Pr^{\dbl}_{\C}\to\Pr^{\dbl}_{\C}$ is lax symmetric monoidal, and preserves rigid $\C$-algebras.
		\item The inclusion $\I\subseteq\C$ induces a commutative diagram that is both horizontally and vertically right adjointable
		\[\begin{tikzcd}
			{\I\otimes_{\C}\M} & \M \\
			{\I\otimes_{\C}\M^{\wedge\I}} & {\M^{\wedge\I}}
			\arrow[hook, from=1-1, to=1-2]
			\arrow["\simeq"', from=1-1, to=2-1]
			\arrow[from=1-2, to=2-2]
			\arrow[hook, from=2-1, to=2-2]
		\end{tikzcd}\]
	\end{enumerate}
\end{corollary}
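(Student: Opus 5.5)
We have to prove \cref{cor:completion}: (1) the $\I$-completion functor is lax symmetric monoidal and preserves rigid $\C$-algebras; (2) the square is horizontally and vertically right adjointable, with left vertical map an equivalence. Everything will be read off from \cref{thm:unst_completion}, which presents completion as the composite of the colocalization $\I\otimes_{\C}-:\Pr^{\dbl}_{\C}\to\Pr^{\dbl}_{\I}$ with the fully faithful right adjoint $\iHom^{\dbl}_{\C}(\I,-)$.

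For (1), I use that $\I\otimes_{\C}-:\Pr^{\dbl}_{\C}\to\Pr^{\dbl}_{\I}$ is symmetric monoidal. This holds because $\I$ is an idempotent $\C$-algebra, so $\I\otimes_{\C}\M\otimes_{\I}\I\otimes_{\C}\N\simeq\I\otimes_{\C}(\M\otimes_{\C}\N)$. Its right adjoint $\iHom^{\dbl}_{\C}(\I,-)$ is therefore lax symmetric monoidal. The completion functor is the composite of these two, and a composite of a symmetric monoidal functor with a lax symmetric monoidal one is lax symmetric monoidal.

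Consequently completion sends commutative algebras in $\Pr^{\dbl}_{\C}$ to commutative algebras. For rigidity, take $\D$ rigid over $\C$. The last part of \cref{thm:unst_completion} identifies $\D^{\wedge\I}\simeq\iHom^{\dbl}_{\C}(\I,\D)$ with $(\I\otimes_{\C}\D)^{\rig}_{\C}$, which is rigid by construction. The one thing to check is that this identification is compatible with the algebra structure produced by the lax structure. Both structures arise as right adjoints of the same symmetric monoidal functor, namely base change to $\I$ followed by the forgetful inclusion $\CAlg^{\rig}$. This compatibility is the only mildly delicate point, and I would settle it by a universal property comparison.

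For (2), start from the counit $\I\otimes_{\C}\M\hookrightarrow\M$, a fully faithful $\C$-internal left adjoint obtained by tensoring $\I\subseteq\C$ with $\M$. Applying $\I\otimes_{\C}-$ to the unit $\M\to\M^{\wedge\I}$ gives an equivalence, because $\I\otimes_{\C}\iHom^{\dbl}_{\C}(\I,\M)\simeq\I\otimes_{\C}\M$ by the torsion--complete equivalence of \cref{thm:unst_completion}. This yields the square and its left vertical equivalence. The vertical right adjointability is then automatic, since the left vertical map is an equivalence and the right vertical map is a $\C$-internal left adjoint.

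Horizontal adjointability asks for the Beck--Chevalley map between the right adjoints $\M\to\I\otimes_{\C}\M$ and $\M^{\wedge\I}\to\I\otimes_{\C}\M^{\wedge\I}$ to be an equivalence. Both right adjoints are obtained by tensoring the $\C$-linear right adjoint $\C\to\I$ (the colocalization $\Gamma$) with the respective module. Because the right vertical functor is $\C$-linear, the Beck--Chevalley transformation is the identity tensored with $\M\to\M^{\wedge\I}$ after base change to $\I$, and that base change is the equivalence above. I expect this identification of right adjoints via $\C$-linearity of $j^*$ to be the main step, though it is formal from $\I\subseteq\C$ being smashing.
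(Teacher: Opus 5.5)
Your proposal is correct and matches the paper's route: the corollary is read off from \cref{thm:unst_completion}. For (1), completion is the symmetric monoidal functor $\I\otimes_{\C}-$ followed by its right adjoint, which is lax symmetric monoidal as the right adjoint of a symmetric monoidal functor, and rigidity is preserved via $\iHom^{\dbl}_{\C}(\I,\D)\simeq(\I\otimes_{\C}\D)^{\rig}_{\C}$. For (2), the square is $j_!\otimes_{\C}u$ with $u\colon\M\to\M^{\wedge\I}$, a tensor product of two $\C$-internal left adjoints.

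One correction: vertical right adjointability does not follow from the left vertical map being an equivalence, which alone would not suffice. It holds because $u^R$ is $\C$-linear, so $\I\otimes_{\C}u^R$ is right adjoint to $\I\otimes_{\C}u$, and naturality of $j_!\otimes_{\C}(-)$ at $u^R$ inverts the mate. This is the same formal argument as in your horizontal case, which likewise does not need $\I\otimes_{\C}u$ to be invertible.
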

We further study the functoriality of completion theory. Adapting the terminology of \cite[Definition 4.3.14]{KNP24}, we call the following objects rigid envelopes and define maps between them.
\begin{definition}
	Let $\C\in\CAlg(\PrL)$ and $\I\in\CAlg(\PrL_{\C})$. We say a $\C$-algebra map $\overline{\C}\to\I$ is a \emph{$\C$-rigid envelope} of $\I$ if $\overline{\C}$ is rigid over $\C$ and $\overline{\C}\to\I$ admits a fully faithful $\overline{\C}$-linear left adjoint. Equivalently, a $\C$-rigid envelope of $\I$ is a rigid $\C$-algebra $\overline{\C}$ with a fully faithful functor $\I\subseteq\overline{\C}$ that is a smashing ideal.

	Let $\overline{\C},\widetilde{\C}$ be $\C$-rigid envelopes of $\I$. We say a $\C$-algebra map $\overline{\C}\to\widetilde{\C}$ over $\I$ is a \emph{map of $\C$-rigid envelopes} of $\I$ if $\I\to\I\otimes_{\overline{\C}}\widetilde{\C}$ is an equivalence.
\end{definition}
The notion of a $\C$-rigid envelope also appears in \cite[Definition 3.26]{Lia26telescope} under the name \emph{locally rigid localization}.
\begin{remark}
	Since smashing ideals are locally rigid over the ambient category, a $\C$-algebra admits a rigid envelope if and only if it is locally rigid over $\C$. Given a $\C$-rigid envelope $\overline{\C}\to\I$, by \cref{thm:unst_completion} the $\C$-rigidification of $\I$ can be computed by 
	\[
		\I^{\rig}_{\C}\simeq\I^{\rig}_{\overline{\C}}\simeq\iHom^{\dbl}_{\overline{\C}}(\I,\overline{\C}).
	\]
	Then the $\I$-completion functor $\overline{\C}\to\I^{\rig}_{\C}$ is a map of $\C$-rigid envelopes of $\I$, and $\I\hookrightarrow\overline{\C}\to\I^{\rig}_{\C}$ is identified with the fully faithful left adjoint $\I\hookrightarrow\I^{\rig}_{\C}$ by \cref{cor:completion}.
\end{remark}
\begin{proposition}\label{prop:map_of_rigid_envelopes}
	Let $\C\in\CAlg(\PrL)$ and $\I\in\CAlg(\PrL_{\C})$ be locally rigid.
	\begin{enumerate}
		\item Let $j^*:\overline{\C}\to\I$ be a $\C$-rigid envelope of $\I$ and $f:\overline{\C}\to\widetilde{\C}$ be a $\C$-algebra map over $\I$ with $\widetilde{\C}$ rigid. If $\I\to\I\otimes_{\overline{\C}}\widetilde{\C}$ is an equivalence, then $\overline{\C}\to\widetilde{\C}$ is a map of $\C$-rigid envelopes of $\I$. In particular, this holds whenever $\overline{\C}\to\widetilde{\C}$ is a localization or $\widetilde{\C}\to\I^{\rig}_{\C}$ is fully faithful.
		\item Let $\overline{\C}\to\widetilde{\C}$ be a map of $\C$-rigid envelopes of $\I$ and let $\M\in\Pr^{\dbl}_{\widetilde{\C}}$. Then
		\[
			\iHom^{\dbl}_{\overline{\C}}(\I,\M)\simeq\iHom^{\dbl}_{\widetilde{\C}}(\I,\M).
		\]
	\end{enumerate}
\end{proposition}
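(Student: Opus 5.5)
For part (1), I would first verify that $\widetilde{\C}\to\I$ is itself a $\C$-rigid envelope. Since $\widetilde{\C}$ is rigid over $\C$, \cite[Proposition 4.18]{Ram26locallyrigid} should make $\I\otimes_{\overline{\C}}\widetilde{\C}$ a smashing ideal of $\widetilde{\C}$: it is the base change along $f$ of the coidempotent $\overline{\C}$-coalgebra $\I$, and coidempotence is preserved by symmetric monoidal base change. The equivalence $\I\xrightarrow{\simeq}\I\otimes_{\overline{\C}}\widetilde{\C}$ then exhibits $\I$ as a smashing ideal of $\widetilde{\C}$, compatibly with the given map $\widetilde{\C}\to\I$, because $f$ lies over $\I$. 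Given this, $f$ is by definition a map of $\C$-rigid envelopes. The two special cases then reduce to checking the hypothesis $\I\simeq\I\otimes_{\overline{\C}}\widetilde{\C}$.

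\emph{Localization case.} If $f$ is a localization over $\I$ and the map $\widetilde{\C}\to\I$ is also a symmetric monoidal localization, I would argue as follows. Since $\widetilde{\C}$ is a rigid idempotent-type quotient, $\I\otimes_{\overline{\C}}\widetilde{\C}$ is a localization of $\I$. It receives the map from $\I$ and maps back to $\I$ via $\widetilde{\C}\to\I$, so the composite $\I\to\I\otimes_{\overline{\C}}\widetilde{\C}\to\I$ is the identity. A localization of $\I$ that admits such a section is an equivalence.

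\emph{Fully faithful case.} If $\widetilde{\C}\to\I^{\rig}_{\C}$ is fully faithful, I would use the map of rigid envelopes $\overline{\C}\to\I^{\rig}_{\C}$ from the preceding Remark. The smashing ideal $\I\subseteq\I^{\rig}_{\C}$ then restricts along the fully faithful symmetric monoidal $\widetilde{\C}\hookrightarrow\I^{\rig}_{\C}$, and its image lands in $\widetilde{\C}$ because $\I\to\overline{\C}\to\widetilde{\C}$ factors the inclusion. Computing $\I\otimes_{\overline{\C}}\widetilde{\C}$ inside $\I^{\rig}_{\C}$ then gives $\I$.

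For part (2), I would use \cref{thm:unst_completion} twice. Since $\widetilde{\C}$ is rigid over $\C$, $\Pr^{\dbl}_{\widetilde{\C}}\simeq\Mod_{\widetilde{\C}}(\Pr^{\dbl}_{\overline{\C}})$, so dualizable $\widetilde{\C}$-modules are dualizable $\overline{\C}$-modules. For $\M\in\Pr^{\dbl}_{\widetilde{\C}}$ there is then an extension-of-scalars adjunction
\[
	\iHom^{\dbl}_{\overline{\C}}(\I,\M)\simeq\iHom^{\dbl}_{\widetilde{\C}}(\widetilde{\C}\otimes_{\overline{\C}}\I,\M).
\]
The hypothesis identifies $\widetilde{\C}\otimes_{\overline{\C}}\I\simeq\I$ as $\widetilde{\C}$-modules, which gives the claim. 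To make this rigorous, I would apply the internal Hom adjunction in $\Pr^{\dbl}$ to test objects $\N\otimes_{\overline{\C}}-$, using that $\widetilde{\C}\otimes_{\overline{\C}}-$ is left adjoint to restriction on dualizable categories, which holds by rigidity.

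I expect the main obstacle to be the fully faithful case in (1). There one has to make sure that the $\overline{\C}$-linear and $\widetilde{\C}$-linear structures on $\I$ agree, and that the relative tensor product is correctly computed after the fully faithful embedding. I would first try to derive it from the cofiber sequence $\I\to\widetilde{\C}\to\widetilde{\C}/\I$ and the vanishing of $\I\otimes_{\overline{\C}}(\overline{\C}/\I)$.
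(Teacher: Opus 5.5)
Your route is the paper's. In (1) the left adjoint of $g\colon\widetilde{\C}\to\I$ is $\iota\circ a$, where $a\colon\I\to\I\otimes_{\overline{\C}}\widetilde{\C}$ is the given map and $\iota\colon\I\otimes_{\overline{\C}}\widetilde{\C}\hookrightarrow\widetilde{\C}$ is the base change of $j_!$. The special cases come from comparing with $\I\otimes_{\overline{\C}}\I\simeq\I$ resp.\ $\I\otimes_{\overline{\C}}\I^{\rig}_{\C}\simeq\I$. Part (2) is exactly $\iHom^{\dbl}_{\overline{\C}}(\I,\M)\simeq\iHom^{\dbl}_{\widetilde{\C}}(\I\otimes_{\overline{\C}}\widetilde{\C},\M)\simeq\iHom^{\dbl}_{\widetilde{\C}}(\I,\M)$.

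The step you settle with ``because $f$ lies over $\I$'' is, however, the real content of (1). Lying over $\I$ only shows that $a^{-1}\iota^R$ and $g$ agree after precomposing with $f$, and $f$ is not essentially surjective; it is the hypothesis on $a$ that pins $g$ down. The paper argues as follows. The composite $\I\xrightarrow{a}\I\otimes_{\overline{\C}}\widetilde{\C}\xrightarrow{\I\otimes g}\I\otimes_{\overline{\C}}\I\simeq\I$ is the identity, so $a^{-1}\simeq\I\otimes_{\overline{\C}}g$. Horizontal right adjointability of the square obtained by base-changing $j_!\dashv j^*$ along $g$ then gives $(\I\otimes_{\overline{\C}}g)\circ\iota^R\simeq g$ under $\I\otimes_{\overline{\C}}\I\simeq\I$, hence $(\iota a)^R\simeq g$. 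Equivalently: since $\I$ is idempotent over $\overline{\C}$, $\overline{\C}$-algebra maps $\widetilde{\C}\to\I$ correspond to $\I$-algebra maps $\I\otimes_{\overline{\C}}\widetilde{\C}\to\I$, and there is exactly one once $a$ is an equivalence.

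In the fully faithful case, ``computing inside $\I^{\rig}_{\C}$'' likewise needs two explicit inputs, which together are the paper's argument.
\begin{enumerate}
\item The composite $\I\to\I\otimes_{\overline{\C}}\widetilde{\C}\to\I\otimes_{\overline{\C}}\I^{\rig}_{\C}$ is $\I\otimes_{\overline{\C}}(\overline{\C}\to\I^{\rig}_{\C})$, because $\overline{\C}\to\widetilde{\C}\to\I^{\rig}_{\C}$ is the canonical map by the universal property of rigidification. It is therefore an equivalence by the Remark preceding the proposition.
\item The map $\I\otimes_{\overline{\C}}\widetilde{\C}\to\I\otimes_{\overline{\C}}\I^{\rig}_{\C}$ is still fully faithful. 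Indeed, $\widetilde{\C}\to\I^{\rig}_{\C}$ is a map of rigid $\C$-algebras, hence a $\widetilde{\C}$-internal left adjoint, so the adjunction and its invertible unit survive $\I\otimes_{\overline{\C}}-$.
\end{enumerate}
A fully faithful functor through which an equivalence factors is itself an equivalence, so $a$ is one.

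Your fallback through $\I\to\widetilde{\C}\to\widetilde{\C}/\I$ relies on stability, whereas the proposition is stated for $\C\in\CAlg(\PrL)$, so it does not apply in this generality. In the localization case, your extra assumption that $\widetilde{\C}\to\I$ is a localization holds automatically, since $f$ and $gf\simeq j^*$ are localizations, and you never use it.
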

\begin{proof}
	For the first part, consider the horizontally right adjointable diagrams
	\[\begin{tikzcd}
		\I & {\overline{\C}} \\
		{\I\otimes_{\overline{\C}}\widetilde{\C}} & {\widetilde{\C}} \\
		{\I\otimes_{\overline{\C}}\I} & \I
		\arrow[shift left, hook, from=1-1, to=1-2]
		\arrow[from=1-1, to=2-1]
		\arrow[shift left, from=1-2, to=1-1]
		\arrow[from=1-2, to=2-2]
		\arrow[shift left, hook, from=2-1, to=2-2]
		\arrow[from=2-1, to=3-1]
		\arrow[shift left, from=2-2, to=2-1]
		\arrow[from=2-2, to=3-2]
		\arrow[shift left, hook, from=3-1, to=3-2]
		\arrow["\simeq", shift left, from=3-2, to=3-1]
	\end{tikzcd}\]
	Since the composition $\I\to\I\otimes_{\overline{\C}}\widetilde{\C}\to\I\otimes_{\overline{\C}}\I\to\I$ is the identity, the right adjoint of $\I\to\I\otimes_{\overline{\C}}\widetilde{\C}$ coincides with the map $\I\otimes_{\overline{\C}}\widetilde{\C}\to\I\otimes_{\overline{\C}}\I\to\I$. Therefore, $\I\to\I\otimes_{\overline{\C}}\widetilde{\C}\hookrightarrow\widetilde{\C}$ is the left adjoint of $\widetilde{\C}\to\I$. In particular, since the composition $\I\to\I\otimes_{\overline{\C}}\widetilde{\C}\to\I\otimes_{\overline{\C}}\I^{\rig}_{\C}$ is an equivalence, $\I\to\I\otimes_{\overline{\C}}\widetilde{\C}$ is an equivalence whenever $\overline{\C}\to\widetilde{\C}$ is a localization or $\widetilde{\C}\to\I^{\rig}_{\C}$ is fully faithful.
	
	The second part follows from $\iHom_{\overline{\C}}^{\dbl}(\I,\M)\simeq\iHom_{\widetilde{\C}}^{\dbl}(\I\otimes_{\overline{\C}}\widetilde{\C},\M)\simeq\iHom_{\widetilde{\C}}^{\dbl}(\I,\M)$.
\end{proof}
Let $\C\in\CAlg(\PrL)$ and $\I\in\CAlg(\PrL_{\C})$ be locally rigid. For $\M\in\Pr^{\dbl}_{\I}$, we simply denote
\[
	\M^{\wedge\I}:=\iHom_{\I^{\rig}_{\C}}^{\dbl}(\I,\M).
\]
This notation is independent of the choice of a $\C$-rigid envelope. Indeed, for any $\C$-rigid envelope $\overline{\C}\to\I$, \cref{prop:map_of_rigid_envelopes} implies that $\iHom_{\I^{\rig}_{\C}}^{\dbl}(\I,\M)\simeq\iHom_{\overline{\C}}^{\dbl}(\I,\M)$.
\begin{proposition}\label{prop:corig}
	Let $\C\in\CAlg(\PrL)$ and $\I\in\CAlg(\PrL_{\C})$.
	\begin{enumerate}
		\item Suppose that $\I$ is corigid\footnote{Rigid commutative algebras in $\Pr^{\mathrm{L},\mathrm{co}}_{\C}$, which is the $2$-opposite $2$-category of $\PrL_{\C}$. The term ``corigid" was suggested by Jiacheng Liang.}, i.e. the unit $\C\to\I$ admits a $\C$-linear left adjoint, and the multiplication $\I\otimes_{\C}\I\to\I$ admits an $\I\otimes_{\C}\I$-linear left adjoint. Then the left adjoint of $\I\otimes_{\C}-:\Pr^{\dbl}_{\C}\to\Pr^{\dbl}_{\I}$ is the forgetful functor.
		\item Suppose that $\I$ is locally rigid. Then the right adjoint of $\I\otimes_{\C}-:\Pr^{\dbl}_{\C}\to\Pr^{\dbl}_{\I}$ is the $\I$-completion functor $(-)^{\wedge\I}$.
	\end{enumerate}
	In particular, if $\I$ is corigid and locally rigid, then we have adjunctions
	\[\begin{tikzcd}[column sep=4em]
		{\Pr^{\dbl}_{\I}} & {\Pr^{\dbl}_{\C}}
		\arrow[shift left=3, from=1-1, to=1-2]
		\arrow["{(-)^{\wedge\I}}"', shift right=3, from=1-1, to=1-2]
		\arrow["{\I\otimes_{\C}-}"{description}, from=1-2, to=1-1]
	\end{tikzcd}\]
\end{proposition}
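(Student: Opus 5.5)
For part (1), I will show that for $\M\in\Pr^{\dbl}_{\I}$ and $\N\in\Pr^{\dbl}_{\C}$ there is a natural equivalence
\[
\Hom_{\Pr^{\dbl}_{\C}}(\M,\N)\simeq\Hom_{\Pr^{\dbl}_{\I}}(\M,\I\otimes_{\C}\N),
\]
where $\M$ on the left denotes the underlying $\C$-module. Corigidity of $\I$ gives a $\C$-linear counit-type map $\I\to\C$, namely the left adjoint $\epsilon$ of the unit $\eta:\C\to\I$. It also gives a $\I\otimes_{\C}\I$-linear left adjoint $\delta:\I\to\I\otimes_{\C}\I$ of the multiplication. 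The first step is to check that these make the forgetful functor $\Pr^{\dbl}_{\I}\to\Pr^{\dbl}_{\C}$ well defined. A dualizable $\I$-module is a dualizable $\C$-module, since the dual is obtained by composing with $\epsilon$. An $\I$-internal left adjoint is a $\C$-internal left adjoint, since an $\I$-linear right adjoint is in particular $\C$-linear.

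The second step builds the unit and counit. The unit $\M\to\I\otimes_{\C}\M$ is the $\I$-linear map induced by $\delta$ via $\M\simeq\I\otimes_{\I}\M\to(\I\otimes_{\C}\I)\otimes_{\I}\M\simeq\I\otimes_{\C}\M$. The counit $\I\otimes_{\C}\N\to\N$ is $\epsilon\otimes\N$. Both are internal left adjoints, because their right adjoints are induced by $\mu$ and $\eta$, which are linear. The triangle identities then reduce to the two Frobenius-type relations $(\epsilon\otimes\I)\circ\delta\simeq\id$ and the analogous relation for the other leg. These follow by passing to right adjoints, where they become the unit axioms $\mu\circ(\eta\otimes\I)\simeq\id$. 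This is the formal dual of the standard argument that for a rigid algebra the forgetful functor is right adjoint to base change.

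For part (2), I will choose a $\C$-rigid envelope $\overline{\C}=\I^{\rig}_{\C}$ with the smashing ideal $\I\subseteq\overline{\C}$, which exists by the remark preceding \cref{prop:map_of_rigid_envelopes}. Then $\Pr^{\dbl}_{\overline{\C}}\simeq\Mod_{\overline{\C}}(\Pr^{\dbl}_{\C})$, so $\overline{\C}\otimes_{\C}-$ is left adjoint to the forgetful functor $\Pr^{\dbl}_{\overline{\C}}\to\Pr^{\dbl}_{\C}$. The functor $\I\otimes_{\C}-$ factors as $\I\otimes_{\overline{\C}}(\overline{\C}\otimes_{\C}-)$. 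By \cref{thm:unst_completion}, $\I\otimes_{\overline{\C}}-:\Pr^{\dbl}_{\overline{\C}}\to\Pr^{\dbl}_{\I}$ has right adjoint $\iHom^{\dbl}_{\overline{\C}}(\I,-)$ followed by the inclusion. Composing the two adjunctions shows that the right adjoint of $\I\otimes_{\C}-$ sends $\M\in\Pr^{\dbl}_{\I}$ to $\iHom^{\dbl}_{\overline{\C}}(\I,\M)$, regarded as a $\C$-module. This is $\M^{\wedge\I}$ by definition, and by \cref{prop:map_of_rigid_envelopes} the answer does not depend on the choice of envelope.

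The final claim follows by combining (1) and (2). I expect the main obstacle to be the coherence in (1), namely verifying the triangle identities in the $(\infty,2)$-categorical setting. To handle it, I will phrase the argument as "$\I$ is a rigid algebra in $\Pr^{\mathrm{L},\mathrm{co}}_{\C}$". I will then invoke the general statement, as in \cite[Definition 4.2.5]{KNP24}, that base change along a rigid algebra has the forgetful functor as its adjoint, applied in the $2$-opposite. This passes from right to left adjoint, because taking the $2$-opposite swaps left and right adjoints of $1$-morphisms.
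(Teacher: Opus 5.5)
Your proposal is correct and follows essentially the paper's proof. For (1), the paper likewise obtains forgetful $\dashv\I\otimes_{\C}-$ by passing to left adjoints in the base-change adjunction; your unit and counit built from $\delta,\epsilon$ and your $2$-opposite rigidity argument are exactly this, and the paper restricts to $\Pr^{\dbl}$ via $\I$ being dualizable over $\C$ (dual of \cite[Lemma 4.57]{Ram26locallyrigid}), where you instead descend dualizability directly, a step that uses $\eta$ and $\delta$ for the coevaluation and not only $\epsilon$ for the evaluation. For (2), the paper factors $\I\otimes_{\C}-\simeq\I\otimes_{\I^{\rig}_{\C}}(\I^{\rig}_{\C}\otimes_{\C}-)$ and composes with \cref{thm:unst_completion} just as you do.
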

\begin{proof}
	\begin{enumerate}
		\item Passing to left adjoints, the adjunction between $\I\otimes_{\C}-:\PrL_{\C}\to\PrL_{\I}$ and the forgetful functor induces an adjunction between the forgetful functor and $\I\otimes_{\C}-:\Pr^{\iL}_{\C}\to\Pr^{\iL}_{\I}$. By the dual version of \cite[Lemma 4.57]{Ram26locallyrigid}, $\I$ is dualizable over $\C$. Thus, this adjunction restricts to the adjunction between the forgetful functor and $\I\otimes_{\C}-:\Pr^{\dbl}_{\C}\to\Pr^{\dbl}_{\I}$.
		\item Since the right adjoint of $\I^{\rig}_{\C}\otimes_{\C}-:\Pr^{\dbl}_{\C}\to\Pr^{\dbl}_{\I^{\rig}_{\C}}$ is the forgetful functor, by \cref{thm:unst_completion} the right adjoint of $\I\otimes_{\C}-\simeq\I\otimes_{\I^{\rig}_{\C}}\I^{\rig}_{\C}\otimes_{\C}-$ is the $\I$-completion functor $(-)^{\wedge\I}$. \qedhere
	\end{enumerate}
\end{proof}
We generalize \cite[Lemma 5.10]{Efi25rigidity} to obtain a criterion for determining when a rigid envelope embeds into the rigidification.
\begin{lemma}\label{lem:1_rig}
	Let $\C\in\CAlg(\PrL)$ and $\I\in\CAlg(\PrL_{\C})$. The right adjoint of $\I^{\rig}_{\C}\to\I$ sends $\mathbbm{1}_{\I}$ to $\mathbbm{1}_{\I^{\rig}_{\C}}$, and induces an equivalence
	\[
		\iEnd_{\C}(\mathbbm{1}_{\I^{\rig}_{\C}})\simeq\iEnd_{\C}(\mathbbm{1}_{\I}).
	\]

	Suppose that $\I$ is locally rigid. Let $j^*:\overline{\C}\to\I$ be a $\C$-rigid envelope with $j_!:=(j^*)^L$, $j_*:=(j^*)^R$. Then
	\begin{enumerate}
		\item The canonical map $f:\overline{\C}\to\I^{\rig}_{\C}$ is fully faithful if and only if $\mathbbm{1}_{\overline{\C}}\to j_*\mathbbm{1}_{\I}$ is an equivalence. Consequently, we always have maps of rigid envelopes $\overline{\C}\to\Mod_{j_*\mathbbm{1}_{\I}}(\overline{\C})\hookrightarrow\I^{\rig}_{\C}$.
		\item If $\overline{\C}\to\I^{\rig}_{\C}$ is fully faithful, let $\widetilde{\C}$ be a $\C$-rigid envelope of $\I$ and $\widetilde{\C}\to\overline{\C}$ be a $\C$-algebra map over $\I^{\rig}_{\C}$. Then $\widetilde{\C}\to\overline{\C}$ is a map of $\C$-rigid envelopes of $\I$.
	\end{enumerate}
\end{lemma}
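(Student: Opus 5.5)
The plan is to handle the unit statement first, using only the universal property of $(-)^{\rig}_{\C}$, and then deduce (1) and (2) from it.

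Write $\varepsilon:\I^{\rig}_{\C}\to\I$ for the counit and $r:=\varepsilon^R$. Since $\varepsilon$ is symmetric monoidal, $r$ is lax symmetric monoidal. Hence $r\mathbbm{1}_{\I}$ is a commutative algebra in $\I^{\rig}_{\C}$ and comes with a unit map $\mathbbm{1}\to r\mathbbm{1}_{\I}$.

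To show this map is an equivalence, I would use $\D:=\Mod_{r\mathbbm{1}_{\I}}(\I^{\rig}_{\C})$. This is rigid over $\I^{\rig}_{\C}$, hence over $\C$. The counit $\varepsilon$ factors as $\I^{\rig}_{\C}\to\D\to\I$, where the second functor is $M\mapsto \mathbbm{1}_{\I}\otimes_{\varepsilon r\mathbbm{1}}\varepsilon M$, using the counit algebra map $\varepsilon r\mathbbm{1}_{\I}\to\mathbbm{1}_{\I}$. By the universal property, $\D\to\I$ factors through a map $\D\to\I^{\rig}_{\C}$. The two composites $\I^{\rig}_{\C}\to\D\to\I^{\rig}_{\C}$ and $\D\to\I^{\rig}_{\C}\to\D$ both lie over $\I$. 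Rigid $\C$-algebra maps into $\I^{\rig}_{\C}$ are determined by their composites to $\I$, so both composites are identities. Hence $\I^{\rig}_{\C}\simeq\D$ compatibly with the base change functor, which forces $\mathbbm{1}\simeq r\mathbbm{1}_{\I}$.

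The equivalence $\iEnd_{\C}(\mathbbm{1}_{\I^{\rig}_{\C}})\simeq\iEnd_{\C}(\mathbbm{1}_{\I})$ then follows by adjunction: $\iHom_{\C}(\mathbbm{1},\mathbbm{1})\simeq\iHom_{\C}(\mathbbm{1},r\mathbbm{1}_{\I})\simeq\iHom_{\C}(\varepsilon\mathbbm{1},\mathbbm{1}_{\I})$. I expect the main obstacle to be making the factorization through $\D$ precise as a map in $\CAlg(\PrL_{\C})$, together with the uniqueness claims. If that becomes awkward, I would fall back on a Yoneda argument over $\CAlg(\C)$. It uses that $\Mod_A(\C)$ is rigid over $\C$ for every $A\in\CAlg(\C)$, and that $\C$-algebra maps $\Mod_A(\C)\to\E$ correspond to $\bbE_\infty$-maps $A\to\iEnd_{\C}(\mathbbm{1}_{\E})$.

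For (1), let $f:\overline{\C}\to\I^{\rig}_{\C}$ be the canonical map, so $j^*=\varepsilon f$. Then $j_*\simeq f^Rr$, and hence $j_*\mathbbm{1}_{\I}\simeq f^R\mathbbm{1}\simeq f^Rf\mathbbm{1}_{\overline{\C}}$ by the first part. Since $f$ is a symmetric monoidal functor between rigid $\C$-algebras, its right adjoint is $\overline{\C}$-linear and satisfies the projection formula. Thus $f^Rf(X)\simeq X\otimes f^Rf\mathbbm{1}$, and the unit $\id\to f^Rf$ is an equivalence iff it is one at $\mathbbm{1}$. This is exactly the claimed criterion.

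For the consequence in (1), put $A:=j_*\mathbbm{1}_{\I}$, a commutative algebra in $\overline{\C}$. Then $\Mod_A(\overline{\C})$ is rigid over $\C$, $\I$ still embeds in it as a smashing ideal, and its unit satisfies $A\xrightarrow{\simeq}j'_*\mathbbm{1}_{\I}$. So the criterion gives $\Mod_A(\overline{\C})\hookrightarrow\I^{\rig}_{\C}$. The map $\overline{\C}\to\Mod_A(\overline{\C})$ is then a map of rigid envelopes by \cref{prop:map_of_rigid_envelopes}(1), since the target embeds fully faithfully into $\I^{\rig}_{\C}$.

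For (2), the map $\widetilde{\C}\to\overline{\C}$ lies over $\I^{\rig}_{\C}$, hence over $\I$. Its target is rigid with $\overline{\C}\to\I^{\rig}_{\C}$ fully faithful, so \cref{prop:map_of_rigid_envelopes}(1), applied with $\widetilde{\C}$ as the envelope, gives the claim directly.
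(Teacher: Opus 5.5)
Your arguments for (1), its consequence, and (2) coincide with the paper's: the projection formula for $f^R$, the identification $j_*\mathbbm{1}_{\I}\simeq f^R\mathbbm{1}$, and \cref{prop:map_of_rigid_envelopes}(1). The gap is in the first part, on which (1) depends.

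The universal property of $(-)^{\rig}_{\C}$ only controls maps from rigid $\C$-algebras \emph{into} $\I^{\rig}_{\C}$. Write $g:\I^{\rig}_{\C}\to\D$ for base change and $h:\D\to\I^{\rig}_{\C}$ for the induced lift. The universal property gives $hg\simeq\id$. But $gh$ is a map into $\D$, and uniqueness of maps into $\D$ over $\I$ would already mean that $\D\to\I$ is a rigidification of $\I$, which is what you are trying to prove. Moreover, $hg\simeq\id$ by itself is too weak. It only says that the unit $u:\mathbbm{1}\to r\mathbbm{1}_{\I}$ admits an algebra retraction $\sigma$, namely the algebra map classifying $h$ as an $\I^{\rig}_{\C}$-algebra map. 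Compare $\bbS\to\bbS\times\bbS$: here $\Sp\to\Mod(\bbS\times\bbS)\simeq\Sp\times\Sp$ has a retraction without being an equivalence.

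The missing input is that $h$ lies over $\I$ via your \emph{specific} functor $\D\to\I$. Since $\varepsilon h$ is that functor, $\varepsilon(\sigma)$ is the counit $\varepsilon r\mathbbm{1}_{\I}\to\mathbbm{1}_{\I}$. Take adjuncts under the lifted adjunction $\varepsilon\dashv r$ on commutative algebras. By naturality, the adjunct of $\varepsilon(\sigma)$ is $u\circ\sigma$, while the adjunct of the counit is $\id$. Hence $u\sigma\simeq\id$, and $\sigma u\simeq\id$ because $\mathbbm{1}$ is initial.

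Equivalently, run the Yoneda argument in $\CAlg(\I^{\rig}_{\C})$. For every $B$ there, $\Mod_B(\I^{\rig}_{\C})$ is rigid over $\C$, and the universal property passes to maps under $\I^{\rig}_{\C}$. This gives $\operatorname{Map}(B,\mathbbm{1})\simeq\operatorname{Map}(B,r\mathbbm{1}_{\I})$ via $u\circ-$, so $u$ is an equivalence. Your fallback over $\CAlg(\C)$, with test objects $\Mod_A(\C)$, only yields the $\iEnd_{\C}$-equivalence. It does not give $r\mathbbm{1}_{\I}\simeq\mathbbm{1}$, which is exactly what (1) uses.

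Once repaired, your route is a purely formal alternative to the paper's. The paper instead reads the unit statement off the explicit model $\I^{\rig}_{\C}\hookrightarrow\P_{\C}(\I^{\kappa})\xrightarrow{k}\I$. There, the right adjoint of $k$ sends $\mathbbm{1}_{\I}$ to its Yoneda image, the Day convolution unit, which lies in the rigid full subalgebra.
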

\begin{proof}
	Let $\kappa>\omega$ be such that $\C\in\CAlg(\Pr^{\kappa})$ and $\I\in\CAlg(\Pr^{\kappa}_{\C})$. Then $\I^{\rig}_{\C}\to\I$ is given by the composite $\I^{\rig}_{\C}\hookrightarrow\P_{\C}(\I^{\kappa})\xrightarrow{k}\I$. It follows that the right adjoint sends $\mathbbm{1}_{\I}$ to its Yoneda image $j(\mathbbm{1}_{\I})$, and then to $\mathbbm{1}_{\I^{\rig}_{\C}}$.

	Suppose that $\I$ is locally rigid. Since $f^Rf\simeq f^R\mathbbm{1}_{\I^{\rig}}\otimes-$, $f$ is fully faithful if and only if $\mathbbm{1}_{\overline{\C}}\to f^R\mathbbm{1}_{\I^{\rig}}$ is an equivalence. Then the first assertion follows from $f^R\mathbbm{1}_{\I^{\rig}}\simeq j_*\mathbbm{1}_{\I}$ since $j_*$ is the composition of the right adjoint of $\I^{\rig}\to\I$ and $f^R$. Since $\overline{\C}\to\I$ factors through $\overline{\C}\to\Mod_{j_*\mathbbm{1}_{\I}}(\overline{\C})\to\I$ and $\Mod_{j_*\mathbbm{1}_{\I}}(\overline{\C})\to\I$ admits a fully faithful left adjoint, $\Mod_{j_*\mathbbm{1}_{\I}}(\overline{\C})$ is a rigid envelope of $\I$. Therefore, the first criterion applied to $\Mod_{j_*\mathbbm{1}_{\I}}(\overline{\C})$ shows that $\Mod_{j_*\mathbbm{1}_{\I}}(\overline{\C})\hookrightarrow\I^{\rig}_{\C}$ is fully faithful and hence $\overline{\C}\to\Mod_{j_*\mathbbm{1}_{\I}}(\overline{\C})$ is a map of rigid envelopes by \cref{prop:map_of_rigid_envelopes}.
	
	The last assertion follows from \cref{prop:map_of_rigid_envelopes}.
\end{proof}

In the stable case, we can not only define a categorical analogue of completion, but also that of the generic fiber. Let $\A:=\C/\I\simeq\Mod_A(\C)$ where $A$ is an idempotent algebra in $\C$. Denote $i^*:\C\to\A$, $i_*:=(i^*)^R$ and $i^!:=i_*^R$. We have $L_I:=i_*i^*\simeq A\otimes-$ and $\Delta_I:=\iHom_{\C}(A,-)$. For $\C\in\CAlg(\PrL_{\st})$, we simply say that $\C$ is locally rigid if it is locally rigid over $\Sp$, and $\C$ is rigid if it is rigid over $\Sp$.
\begin{definition}[{\cite{Zhob}}]
	Let $\C\in\CAlg(\PrL_{\st})$ and $\I\subseteq\C$ be a smashing ideal with $\A:=\C/\I$. Let $\M\in\Pr^{\dbl}_{\C}$. We define 
	\[\M_{\eta}:=\A\otimes_{\C}\M^{\wedge\I}\]
	as the \emph{$\I$-generic fiber of $\M$} or the \emph{generic fiber of $\M$ with respect to $\I$}.
\end{definition}
\begin{corollary}\label{cor:generic_fiber}
	Let $\C\in\CAlg(\PrL_{\st})$ and $\I\subseteq\C$ be a smashing ideal. Let $\M\in\Pr^{\dbl}_{\C}$. Then
	\begin{enumerate}
		\item The $\I$-generic fiber functor $(-)_{\eta}:\Pr^{\dbl}_{\C}\to\Pr^{\dbl}_{\C}$ is lax symmetric monoidal, and preserves rigid $\C$-algebras.
		\item The short exact sequence $\I\hookrightarrow\C\to\A$ induces a commutative diagram of short exact sequences in $\Pr^{\dbl}_{\C}$ 
		\[\begin{tikzcd}
			{\I\otimes_{\C}\M} & \M & {\A\otimes_{\C}\M} \\
			{\I\otimes_{\C}\M^{\wedge\I}} & {\M^{\wedge\I}} & {\M_{\eta}}
			\arrow[hook, from=1-1, to=1-2]
			\arrow["\simeq"', from=1-1, to=2-1]
			\arrow[from=1-2, to=1-3]
			\arrow[from=1-2, to=2-2]
			\arrow[from=1-3, to=2-3]
			\arrow[hook, from=2-1, to=2-2]
			\arrow[from=2-2, to=2-3]
		\end{tikzcd}\]
	\end{enumerate}
\end{corollary}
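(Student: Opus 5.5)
The plan is to derive both assertions from \cref{cor:completion} together with the fact that $\A\otimes_{\C}-$ is a symmetric monoidal functor on $\Pr^{\dbl}_{\C}$.

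For (1), recall that $\A\simeq\Mod_A(\C)$ with $A$ an idempotent algebra, so $\A$ is an idempotent $\C$-algebra in $\Pr^{\dbl}_{\C}$. The functor $\A\otimes_{\C}-:\Pr^{\dbl}_{\C}\to\Pr^{\dbl}_{\C}$ is therefore symmetric monoidal as a functor into $\Pr^{\dbl}_{\A}\simeq\Mod_{\A}(\Pr^{\dbl}_{\C})$. Composing with the forgetful functor $\Pr^{\dbl}_{\A}\to\Pr^{\dbl}_{\C}$, which is lax symmetric monoidal (it is right adjoint to a symmetric monoidal functor), makes $\A\otimes_{\C}-$ lax symmetric monoidal as an endofunctor. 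By \cref{cor:completion}(1), the completion functor $(-)^{\wedge\I}$ is lax symmetric monoidal, and a composite of lax symmetric monoidal functors is lax symmetric monoidal, so $(-)_{\eta}$ is as well. For rigidity: $(-)^{\wedge\I}$ preserves rigid $\C$-algebras by \cref{cor:completion}(1). Base change along the localization $\C\to\A$ also preserves rigidity, since $\A\otimes_{\C}\D\simeq\Mod_{A\otimes\mathbbm{1}_{\D}}(\D)$ is a localization of a rigid $\C$-algebra $\D$ at an idempotent algebra, and such localizations are rigid. Alternatively, one can argue that $\C\to\A$ is rigid, compose, and use the observation that rigidity over $\A$ and over $\C$ agree.

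For (2), apply the exact functor $-\otimes_{\C}\M$ and $-\otimes_{\C}\M^{\wedge\I}$ to the short exact sequence $\I\hookrightarrow\C\to\A$ of dualizable $\C$-modules. Tensoring in $\Pr^{\dbl}_{\C}$ preserves short exact sequences, because the maps involved are $\C$-internal left adjoints and the tensor product preserves cofiber sequences and fully faithful internal left adjoints for dualizable modules. This gives the two rows. The vertical maps come from naturality of the sequence in the variable $\M\to\M^{\wedge\I}$. The left vertical map is an equivalence by \cref{cor:completion}(2). The right-hand terms are the cofibers $\A\otimes_{\C}\M$ and, by definition, $\M_{\eta}$. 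Finally, the diagram lies in $\Pr^{\dbl}_{\C}$ because all its functors are $\C$-linear internal left adjoints.

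The main subtle step is checking that the bottom row is short exact in $\Pr^{\dbl}_{\C}$, with its inclusion identified with the one in \cref{cor:completion}(2). This reduces to the fact that $\I\otimes_{\C}-$ and $\A\otimes_{\C}-$ form a split sequence of endofunctors. That in turn follows because $\I\to\C$ is a fully faithful $\C$-internal left adjoint whose cofiber is $\A$.
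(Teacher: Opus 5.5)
Your proposal is correct and is essentially the argument the paper intends; the paper states the corollary without a separate proof. For part (1) you compose the lax symmetric monoidal, rigidity-preserving completion functor of \cref{cor:completion}(1) with base change along the rigid smashing localization $\C\to\A$, and for part (2) you tensor the split short exact sequence $\I\hookrightarrow\C\to\A$ with $\M\to\M^{\wedge\I}$, taking the left vertical equivalence from \cref{cor:completion}(2). One minor point: $-\otimes_{\C}\M$ preserves the short exact sequence simply because all the adjoints in $\I\hookrightarrow\C\to\A$ are $\C$-linear (by 2-functoriality), so dualizability of $\M$ is not needed for that step.
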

In fact, by \cref{lem:ses_pullback} the right square in the commutative diagram in \cref{cor:generic_fiber}(2) is a pullback diagram. This observation will play an important role in \cref{sec:2}.

Let $\C\in\CAlg(\PrL_{\st})$ and $\I\in\CAlg(\PrL_{\C})$ be locally rigid. For $\M\in\Pr^{\dbl}_{\I}$, we simply denote
\[
	\M_{\eta}:=(\I^{\rig}_{\C}/\I)\otimes_{\I^{\rig}_{\C}}\iHom_{\I^{\rig}_{\C}}^{\dbl}(\I,\M).
\]
This notation is also independent of the choice of a $\C$-rigid envelope. Indeed, for any $\C$-rigid envelope $\overline{\C}\to\I$, we have $\iHom_{\I^{\rig}_{\C}}^{\dbl}(\I,\M)\simeq\iHom_{\overline{\C}}^{\dbl}(\I,\M)$ and $\I\otimes_{\I^{\rig}_{\C}}\M\simeq\I\otimes_{\overline{\C}}\I^{\rig}_{\C}\otimes_{\I^{\rig}_{\C}}\M\simeq\I\otimes_{\overline{\C}}\M$. Therefore, $(\I^{\rig}_{\C}/\I)\otimes_{\I^{\rig}_{\C}}\iHom_{\I^{\rig}_{\C}}^{\dbl}(\I,\M)\simeq(\overline{\C}/\I)\otimes_{\overline{\C}}\iHom^{\dbl}_{\overline{\C}}(\I,\M)$.

We conclude this section with the $\bbE_\infty$-version of $H$-unital rings, which provide a basic class of locally rigid categories equipped with a canonical rigid envelope.
\begin{definition}
	Let $I$ be a nonunital $\bbE_\infty$-ring. We say $I$ is an \emph{$H$-unital $\bbE_\infty$-ring} if it is $H$-unital as a nonunital $\bbE_1$-ring.
\end{definition}
\begin{example}
	Let $I$ be an $H$-unital $\bbE_\infty$-ring. Since $\Mod_H(I)\subseteq\Mod(I^+)$ is a smashing ideal and $\Mod(I^+)$ is rigid, $\Mod_H(I)$ is locally rigid with rigid envelope $\Mod(I^+)$, and its rigidification is given by $\Mod_H(I)^{\rig}\simeq\iHom^{\dbl}_{I^+}(\Mod_H(I),\Mod(I^+))$. By \cref{lem:1_rig}, we have a map of rigid envelopes $\Mod(\iHom_{I^+}(I,I^+))\hookrightarrow\Mod_H(I)^{\rig}$. The completion theory with respect to $\Mod_H(I)$ is given as follows: for $\M\in\Pr^{\dbl}_{I^+}$, its $I$-completion and $I$-generic fiber are
	\[
		\M^{\wedge I}:=\iHom_{I^+}^{\dbl}(\Mod_H(I),\M),\quad\M_{\eta}:=\Sp\otimes_{I^+}\iHom_{I^+}^{\dbl}(\Mod_H(I),\M).
	\]
\end{example}
\begin{example}
	More generally, let $R\to S$ be a homological epimorphism of $\bbE_\infty$-rings with fiber $I:=\fib(R\to S)$. The fiber category $\Mod(R,I)$ is locally rigid, with rigid envelope $\Mod(R)$, and its rigidification is given by $\Mod(R,I)^{\rig}\simeq\iHom^{\dbl}_R(\Mod(R,I),\Mod(R))$. By \cref{lem:1_rig}, we also have a map of rigid envelopes $\Mod(\iHom_R(I,R))\hookrightarrow\Mod(R,I)^{\rig}$. The completion theory with respect to $\Mod(R,I)$ is given as follows: for $\M\in\Pr^{\dbl}_R$, the $I$-completion and $I$-generic fiber are
	\[
		\M^{\wedge I}:=\iHom^{\dbl}_R(\Mod(R,I),\M),\quad\M_{\eta}:=\Mod(S)\otimes_R\iHom^{\dbl}_R(\Mod(R,I),\M).
	\]

	Suppose now that $I$ is $H$-unital. We show that the completion theory with respect to $\Mod(R,I)$ is recovered by that with respect to $\Mod_H(I)$, providing a universal property of the completion theory with respect to an $H$-unital ring. By \cref{prop:Tam_pullback_homoepi}, we have a symmetric monoidal functor $\Mod(I^+)\to\Mod(R)$, and an equivalence $\Mod(S)\simeq\Sp\otimes_{I^+}\Mod(R)$. Then $\Mod(I^+)\to\Mod(R)$ is a map of rigid envelopes of $\Mod_H(I)$, since $\Mod_H(I)\simeq\Mod(R,I)\simeq\fib(\Mod(R)\to\Mod(S))\simeq\Mod_H(I)\otimes_{I^+}\Mod(R)$. Hence, for $\M\in\Pr^{\dbl}_R$, by \cref{prop:map_of_rigid_envelopes} we have $\iHom^{\dbl}_{I^+}(\Mod_H(I),\M)\simeq\iHom^{\dbl}_R(\Mod(R,I),\M)$, and $\Sp\otimes_{I^+}\iHom^{\dbl}_{I^+}(\Mod_H(I),\M)\simeq\Mod(S)\otimes_R\iHom^{\dbl}_R(\Mod(R,I),\M)$. Thus, the two notions of $I$-completion and $I$-generic fiber coincide.
\end{example}
\begin{remark}
	Let $A$ be an idempotent $\bbS$-algebra. Then $I:=\fib(\bbS\to A)$ is a coidempotent $\bbS$-coalgebra and hence an $H$-unital $\bbE_\infty$-ring. Therefore, $\Mod_H(I)\simeq\coMod(I)$ is locally rigid with rigid envelope $\Sp$. 
	
	More generally, for $\C\in\CAlg(\PrL_{\st})$, one can also define the notion of \emph{$H$-unital rings relative to $\C$} for nonunital $\bbE_1$-rings in $\C$. Let $I$ be an $H$-unital $\bbE_\infty$-ring relative to $\C$. The \emph{category of $H$-unital $I$-modules} $\Mod_{H,I}(\C)$ is locally rigid over $\C$. Let $\overline{\C}\in\CAlg(\PrL_{\C})$. Then every smashing ideal $\I\subseteq\overline{\C}$ is equivalent to the comodule category of a coidempotent coalgebra $I$, where $I$ is therefore an $H$-unital $\bbE_\infty$-ring relative to $\overline{\C}$. Thus, every locally rigid $\C$-algebra $\I$ is equivalent to the category of $H$-unital modules over an $H$-unital $\bbE_\infty$-ring $I$ relative to a rigid envelope $\overline{\C}$ (e.g. $\I^{\rig}_{\C}$), i.e.
	\[
		\I\simeq\coMod_I(\overline{\C})\simeq\Mod_{H,I}(\overline{\C}).
	\]

	In particular, let $R\to S$ be a homological epimorphism of $\bbE_\infty$-rings with fiber $I:=\fib(R\to S)$. Then $\Mod(R,I)$ is equivalent to the category of $H$-unital modules over an $H$-unital $\bbE_\infty$-ring $I$ relative to a rigid envelope $\Mod(R)$, i.e. $\Mod(R,I)\simeq\Mod_{H,I}(\Mod(R))$.
\end{remark}

\section{Chromatic layers}\label{sec:2}

In this section, we review the theory of chromatic layers and apply the completion theory developed in \cref{sec:1.3} to construct fracture squares for dualizable categories and their chromatic counterparts.

\subsection{Localizations and fracture squares}\label{sec:2.1}

We introduce the theory of $E$-localizations relative to a base symmetric monoidal category, extending the classical theory of Bousfield localizations (see, e.g. \cite{Man24}). This framework unifies several existing constructions, including complete modules in \cite{SAG} and stable height in \cite{CSY21height}. After reviewing local duality from \cite{BHV18,NPR24}, we establish their basic properties and formulate the theory of fracture squares of localization functors, relating them to short exact sequences of categories.

\begin{definition}
	Let $\C\in\CAlg(\PrL_{\st})$ and $\M\in\PrL_{\C}$, and let $E\in\C$.
	\begin{enumerate}
		\item We define $C_E\M:=\fib(E\otimes-)\subseteq\M$ as the full subcategory spanned by the objects $X$ such that $E\otimes X\simeq0$, and call its objects \emph{$E$-acyclic}.
		\item We define $L_E\M:=(C_E\M)^{\perp}\subseteq\M$ (also denoted by $\M_E$) as the full subcategory spanned by the objects right orthogonal to $C_E\M$, and call its objects \emph{$E$-local}.
	\end{enumerate}
\end{definition}

Since $E\otimes-:\M\to\M$ is a $\C$-linear left adjoint with right adjoint $\iHom_{\M}(E,-):\M\to\M$, $C_E\M\subseteq\M$ is also a $\C$-linear left adjoint with right adjoint $C_E:\M\to C_E\M$, and $L_E\M\subseteq\M$ admits a $\C$-linear left adjoint $L_E:\M\to L_E\M$ which is equivalent to $\M\to\M[\{X\to Y\in\M:E\otimes X\xrightarrow{\simeq} E\otimes Y\}^{-1}]$ (e.g. by the theory of Bousfield localization in \cite[\Section 5.5.4]{HTT} or \cite[\Section 2.2]{MNN17}). So we have a short exact sequence in $\PrL_{\C}$:
\[\begin{tikzcd}
	{C_E\M} & \M & {L_E\M}
	\arrow[shift left, hook, from=1-1, to=1-2]
	\arrow["{C_E}", shift left, from=1-2, to=1-1]
	\arrow["{L_E}", shift left, from=1-2, to=1-3]
	\arrow[shift left, hook', from=1-3, to=1-2]
\end{tikzcd}\]
\begin{remark}
	Let $f:\C\to\D\in\CAlg(\PrL)$ and $E\in\C$. Then we have $L_{f(E)}\D\simeq L_E\D$. Indeed, $L_{f(E)}\D=\{X\in\D:\Hom_{\D}(Y,X)\simeq0\text{ for all }Y\in C_{f(E)}\D\}=\{X\in\D:\Hom_{\D}(Y,X)\simeq0\text{ for all }Y\in C_E\D\}=L_E\D$. More generally, let $\M\in\PrL_{\D}$. Then we have $L_E\M\simeq L_{f(E)}\M$ when viewing $\M$ as a $\C$-module.
\end{remark}
For a set of objects $\E\subseteq\C$, we denote $C_{\E}\M:=\bigcap_{E\in\E}\fib(E\otimes-)$ and $L_{\E}\M:=(C_{\E}\M)^{\perp}\simeq\M[\{X\to Y\in\M:E\otimes X\xrightarrow{\simeq}E\otimes Y\text{ for all }E\in\E\}^{-1}]$.
\begin{definition}
	Let $\C\in\CAlg(\PrL)$, and let $L:\C\to L\C$ be a symmetric monoidal localization. We say $L:\C\to L\C$ is a \emph{smashing localization} if it is a $\C$-internal left adjoint. Equivalently, a smashing localization $L:\C\to L\C$ is equivalent to $L\mathbbm{1}\otimes-:\C\to\Mod_{L\mathbbm{1}}(\C)$ with $L\mathbbm{1}$ an idempotent algebra in $\C$.
\end{definition}
\begin{example}
	Let $\C\in\CAlg(\PrL_{\st})$. Then a localizing ideal $\I\subseteq\C$ is a smashing ideal if and only if $\C\to\C/\I$ is a smashing localization.
	
	Let $\C\in\CAlg(\PrL_{\st})$ and $E\in\C$. Then $C_E\C\subseteq\C$ is a localizing ideal and $L_E:\C\to L_E\C$ is a symmetric monoidal localization. We say $E\in\C$ is \emph{smashing} if $\C\to L_E\C$ is smashing, or equivalently, $C_E\C\subseteq\C$ is smashing. If $E$ is smashing, then $C_E\C\hookrightarrow\C\to L_E\C$ is equivalent to the short exact sequence in $\Pr^{\dbl}_{\C}$:
	\[\begin{tikzcd}[column sep=4em]
		{\coMod_{C_E\mathbbm{1}}(\C)} & \C & {\Mod_{L_E\mathbbm{1}}(\C)}
		\arrow[shift left=3, hook, from=1-1, to=1-2]
		\arrow["{\iHom_{\C}(C_E\mathbbm{1},-)}"', shift right=3, hook, from=1-1, to=1-2]
		\arrow["{C_E\mathbbm{1}\otimes-}"{description}, from=1-2, to=1-1]
		\arrow["{L_E\mathbbm{1}\otimes-}", shift left=3, from=1-2, to=1-3]
		\arrow["{\iHom_{\C}(L_E\mathbbm{1},-)}"', shift right=3, from=1-2, to=1-3]
		\arrow[hook', from=1-3, to=1-2]
	\end{tikzcd}\]
	which is also identified with $C_{L_E\mathbbm{1}}\C\hookrightarrow\C\to L_{L_E{\mathbbm{1}}}\C$.
\end{example}

Similarly, $\fib(\iHom_{\M}(E,-))\subseteq\M$ (also denoted by $\M^E$) admits a $\C$-linear left adjoint and then we can establish the theory of $\iHom_{\M}(E,-)$-colocalization $^{\perp}(\M^E)$. More generally, let $\K\subseteq\M$ be a set of objects. We recall the formalism of local duality.

\begin{definition}
	Let $\C\in\CAlg(\PrL_{\st})$ and $\M\in\PrL_{\C}$, and let $\K\subseteq\M$.
	\begin{enumerate}
		\item We define $\M^{\K\text{-}\tors}:=\Loc^\otimes(\K)\subseteq\M$ as the localizing $\C$-submodule generated by $\K$, and call its objects \emph{$\K$-torsion}.
		\item We define $\M^{\K\text{-}\loc}:=(\M^{\K\text{-}\tors})^{\perp}\subseteq\M$ as the full subcategory spanned by the objects right orthogonal to $\M^{\K\text{-}\tors}$, and call its objects \emph{$\K$-local}.
		\item We define $\M^{\K\text{-}\cplt}:=(\M^{\K\text{-}\loc})^{\perp}\subseteq\M$ as the full subcategory spanned by the objects right orthogonal to $\M^{\K\text{-}\loc}$, and call its objects \emph{$\K$-complete}.
	\end{enumerate}
\end{definition}

For any object $K\in\K$, $K\otimes-:\C\to\M$ is a $\C$-linear left adjoint with right adjoint $\iHom_{\M}(K,-):\M\to\C$. Then $i_{\tors}:\M^{\K\text{-}\tors}\hookrightarrow\M$ is also a $\C$-linear left adjoint with right adjoint $\Gamma^{\K}:\M\to\M^{\K\text{-}\tors}$, and $i_{\loc}:\M^{\K\text{-}\loc}\hookrightarrow\M$ admits a $\C$-linear left adjoint $L^{\K}:\M\to\M^{\K\text{-}\loc}$ which is equivalent to $\M\to\M[\{X\otimes K\to 0:K\in\K,X\in\C\}^{-1}]$. So we have a short exact sequence in $\PrL_{\C}$:
\[\begin{tikzcd}
	{\M^{\K\text{-}\tors}} & \M & {\M^{\K\text{-}\loc}}
	\arrow["{i_{\tors}}", shift left, hook, from=1-1, to=1-2]
	\arrow["{\Gamma^{\K}}", shift left, from=1-2, to=1-1]
	\arrow["{L^{\K}}", shift left, from=1-2, to=1-3]
	\arrow["{i_{\loc}}", shift left, hook', from=1-3, to=1-2]
\end{tikzcd}\]
By \cite[Lemma 4.2]{NPR24}, $\M^{\K\text{-}\loc}\simeq\bigcap_{K\in\K}\fib(\iHom_{\M}(K,-))$. Let $\C^0\subseteq\C$ be a set of generators. Then $\M^{\K\text{-}\tors}\simeq\Loc(\K\otimes\C^0)$ is the localizing stable subcategory generated by $\K\otimes\C^0$. Let $E\in\C$ be an object and let $\M^0\subseteq\M$ be a set of generators. Then $\M^{E\otimes\M^0\text{-}\loc}\simeq\fib(\iHom_{\M}(E,-))$ and this recovers the theory of $\iHom_{\M}(E,-)$-colocalization.

Moreover, if every object $K\in\K$ is $\C$-atomic, i.e. $K\otimes-:\C\to\M$ is a $\C$-internal left adjoint, then $i_{\loc}:\M^{\K\text{-}\loc}\hookrightarrow\M$ admits a right adjoint $\Delta^{\K}:\M\to\M^{\K\text{-}\loc}$ and $i_{\cplt}:\M^{\K\text{-}\cplt}\hookrightarrow\M$ admits a left adjoint $\Lambda^{\K}:\M\to\M^{\K\text{-}\cplt}$. This leads to the local duality.

\begin{proposition}[Local duality, {\cite[Proposition 4.3, Lemma 4.6]{NPR24}}]\label{prop:local_duality}
	Let $\C\in\CAlg(\PrL_{\st})$ and $\M\in\PrL_{\C}$, and let $\K\subseteq\M^{\C\text{-}\at}$. Then we have a short exact sequence in $\Pr^{\iL}_{\C}$:
	\[\begin{tikzcd}[column sep=4em]
		{\M^{\K\text{-}\tors}} & \M & {\M^{\K\text{-}\loc}}
		\arrow["{i_{\tors}}", shift left=3, hook, from=1-1, to=1-2]
		\arrow["{\Lambda^{\K}}"', shift right=3, hook, from=1-1, to=1-2]
		\arrow["{\Gamma^{\K}}"{description}, from=1-2, to=1-1]
		\arrow["{\Delta^{\K}}"', shift right=3, from=1-2, to=1-3]
		\arrow["{L^{\K}}", shift left=3, from=1-2, to=1-3]
		\arrow["{i_{\loc}}"{description}, hook', from=1-3, to=1-2]
	\end{tikzcd}\]
	yielding inverse equivalences: 
	\[
		\Lambda^{\K}i_{\tors}:\M^{\K\text{-}\tors}\simeq\M^{\K\text{-}\cplt}:\Gamma^{\K}i_{\cplt}.
	\]
\end{proposition}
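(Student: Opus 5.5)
Because every $K\in\K$ is $\C$-atomic, all four functors $i_{\tors},\Gamma^{\K},L^{\K},i_{\loc}$ should themselves be $\C$-internal left adjoints. Once that is known, the sequence lies in $\Pr^{\iL}_{\C}$, and the extra adjoints $\Lambda^{\K}$, $\Delta^{\K}$ follow formally. My plan is to prove the internal left adjointness first and then deduce the torsion--complete equivalence by adjunction calculus.

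\emph{Step 1.} I will show that $\Gamma^{\K}$ preserves colimits and is $\C$-linear. For each $K\in\K$, the functor $\iHom_{\M}(K,-)\colon\M\to\C$ is a $\C$-linear left adjoint by atomicity. Hence $\M^{\K\text{-}\loc}=\bigcap_{K}\fib(\iHom_{\M}(K,-))$ (by \cite[Lemma 4.2]{NPR24}) is closed under colimits and under the $\C$-action in $\M$. The localization $L^{\K}=\mathrm{cofib}(i_{\tors}\Gamma^{\K}\to\id)$ lands there, so $i_{\loc}$ preserves colimits. Therefore $i_{\tors}\Gamma^{\K}=\fib(\id\to i_{\loc}L^{\K})$ preserves colimits, and so does $\Gamma^{\K}$, since $i_{\tors}$ is fully faithful and colimit-preserving. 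For $\C$-linearity, I will compare $X\otimes\Gamma^{\K}M\to\Gamma^{\K}(X\otimes M)$ by checking that $X\otimes\Gamma^{\K}M$ is torsion and that $X\otimes L^{\K}M$ is local. The first holds because torsion objects form a $\C$-submodule. The second holds because the local objects are a $\C$-submodule, being the kernel of $\C$-linear functors. Thus $i_{\tors}$ and $L^{\K}$ are $\C$-internal left adjoints, and so is $i_{\loc}$, whose right adjoint exists by the adjoint functor theorem and is $\C$-linear by the same closure argument. This gives a short exact sequence in $\Pr^{\iL}_{\C}$.

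\emph{Step 2.} I will set $\Lambda^{\K}:=(\Gamma^{\K})^R$ restricted appropriately and $\Delta^{\K}:=(i_{\loc})^R$. The right adjoint of $\Gamma^{\K}$ is fully faithful because $\Gamma^{\K}i_{\tors}\simeq\id$. Its essential image is the right orthogonal of $\ker\Gamma^{\K}=\M^{\K\text{-}\loc}$, which is precisely $\M^{\K\text{-}\cplt}$. So the composite $\M\to\M^{\K\text{-}\cplt}$, $M\mapsto(\Gamma^{\K})^R\Gamma^{\K}M$, is left adjoint to $i_{\cplt}$; this is $\Lambda^{\K}$. It sits in the fiber sequence $\Delta^{\K}\to\id\to\Lambda^{\K}$, dual to the torsion/local sequence. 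Hence $(\Gamma^{\K})^R\colon\M^{\K\text{-}\tors}\xrightarrow{\simeq}\M^{\K\text{-}\cplt}$, with inverse $\Gamma^{\K}i_{\cplt}$. On torsion objects, $(\Gamma^{\K})^R$ agrees with $\Lambda^{\K}i_{\tors}$, because $\Gamma^{\K}i_{\tors}\simeq\id$ identifies $(\Gamma^{\K})^R$ with $\Lambda^{\K}$ restricted to the torsion part.

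The main obstacle is Step 1, specifically verifying that the right adjoints are genuinely $\C$-linear rather than merely lax. The key input is the atomicity of each $K$, which makes $\iHom_{\M}(K,-)$ strictly $\C$-linear and colimit-preserving; without atomicity the local subcategory need not be closed under colimits. The remaining steps are formal manipulations of recollements.
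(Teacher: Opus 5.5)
Your overall route matches the paper's. The paper sketches the argument just before the statement and defers to NPR24: atomicity makes each $\iHom_{\M}(K,-)$ a $\C$-linear colimit-preserving functor, so $\M^{\K\text{-}\loc}$ is a colimit-closed $\C$-submodule. Hence $\Gamma^{\K}$ and $i_{\loc}$ are $\C$-linear and colimit-preserving, and the adjoints $\Delta^{\K}$, $\Lambda^{\K}$ and the torsion--complete equivalence follow formally. Your Step 2 carries out this formal part correctly; in particular $\Lambda^{\K}i_{\tors}=(\Gamma^{\K})^R\Gamma^{\K}i_{\tors}\simeq(\Gamma^{\K})^R$.

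Two of your assertions are false, however, and should be deleted: your opening claim that all four functors are $\C$-internal left adjoints, and the claim at the end of Step 1 that $i_{\loc}$ is one. In general neither $(\Gamma^{\K})^R$ nor $\Delta^{\K}$ preserves colimits. Take $\C=\M=\Sp$ and $\K=\{\bbS/p\}$. Then $(\Gamma^{\K})^R$ is $p$-completion on $p$-power torsion spectra, and $i_{\loc}\Delta^{\K}\simeq\fib(\id\to(-)^{\wedge}_p)$. Both fail on $\bbS/p^{\infty}=\colim_n\bbS/p^n$: each $\bbS/p^n$ is already $p$-complete, while $(\bbS/p^{\infty})^{\wedge}_p\simeq\Sigma\bbS^{\wedge}_p$. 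Your ``same closure argument'' would require $\M^{\K\text{-}\cplt}$ to be closed under colimits and the $\C$-action, and it is not.

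Fortunately these claims are never used. Membership of the sequence in $\Pr^{\iL}_{\C}$ only requires $i_{\tors}$ and $L^{\K}$ to be $\C$-internal left adjoints, which you did establish. Step 2 only needs the existence of $\Delta^{\K}$ and $(\Gamma^{\K})^R$, which the adjoint functor theorem supplies because $i_{\loc}$ and $\Gamma^{\K}$ preserve colimits.
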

\begin{example}
	There is another form of local duality arising from coidempotent coalgebras, which does not require atomicity and underlies the completion theory developed in \cref{sec:1.3}.

	Let $\C\in\CAlg(\PrL)$ and $I$ be a coidempotent coalgebra in $\C$. We denote $\C^{I\text{-}\tors}:=\{X\in\C:I\otimes X\simeq X\}$ and $\C^{I\text{-}\cplt}:=\{X\in\C:X\simeq\iHom_{\C}(I,X)\}$. Then $\C^{I\text{-}\tors}\simeq\coMod_I(\C)\hookrightarrow\C$ is a smashing ideal, and it induces inverse equivalences $\iHom_{\C}(I,-):\C^{I\text{-}\tors}\simeq\C^{I\text{-}\cplt}:I\otimes-$.

	Suppose moreover that $\C\in\CAlg(\PrL_{\st})$ and let $A:=\mathbbm{1}/I$ be the associated idempotent algebra. We denote $\C^{I\text{-}\loc}:=\C/\C^{I\text{-}\tors}\simeq\Mod_A(\C)$. Then $A\otimes-:\C\to\C^{I\text{-}\loc}$ is a smashing localization. Moreover, when $I$ is viewed simply as an object of $\C$, these notions agree with the preceding notions, since $\C^{I\text{-}\tors}\simeq\Loc^{\otimes}(I)$. Thus we have identifications $C_I\C=\C^{I\text{-}\loc}\subseteq\C$ and $L_I\C=\C^{I\text{-}\cplt}\subseteq\C$.
\end{example}
\begin{example}\label{exm:local_duality}
	Let $\C\in\CAlg(\PrL_{\st})$ and $\K\subseteq\C$. Then $i_{\tors}:\C^{\K\text{-}\tors}\hookrightarrow\C$ is a localizing ideal and $L^{\K}:\C\to\C^{\K\text{-}\loc}$ is a symmetric monoidal localization. If $\K\subseteq\C^{\dbl}=\C^{\C\text{-}\at}$ is a set of dualizable objects, then $\C^{\K\text{-}\tors}\hookrightarrow\C\to\C^{\K\text{-}\loc}$ is a short exact sequence in $\Pr^{\iL}_{\C}$. So $i_{\tors}:\C^{\K\text{-}\tors}\hookrightarrow\C$ is a smashing ideal and $L^{\K}:\C\to\C^{\K\text{-}\loc}$ is a smashing localization. Passing to right adjoints, $i_{\loc}:\C^{\K\text{-}\loc}\hookrightarrow\C$ is a localizing ideal and $\Lambda^{\K}:\C\to\C^{\K\text{-}\cplt}$ is a symmetric monoidal localization. Then $L^{\K}\simeq L^{\K}\mathbbm{1}\otimes-\simeq L_{L^{\K}\mathbbm{1}}$ and $\Gamma^{\K}\simeq\Gamma^{\K}\mathbbm{1}\otimes-\simeq C_{L^{\K}\mathbbm{1}}$. Note that $\C^{\K\text{-}\loc}\simeq C_{\Gamma^{\K}\mathbbm{1}}\C$. Since $\Gamma^{\K}\mathbbm{1}\otimes\K=\K$ and $\Gamma^{\K}\mathbbm{1}\in\C^{\K\text{-}\tors}$, $C_{\Gamma^{\K}\mathbbm{1}}\C\simeq C_{\K}\C$. Thus we have identifications $C_{\K}\C=\C^{\K\text{-}\loc}\subseteq\C$ and $L_{\K}\C=\C^{\K\text{-}\cplt}\subseteq\C$ with corresponding functors $C_{\K}\simeq L^{\K}$ and $L_{\K}\simeq\Lambda^{\K}$.
\end{example}
\begin{remark}
	Let $\C\in\CAlg(\PrL_{\st})$ and $\K\subseteq\C^{\dbl}$. Then $\C^{\K\text{-}\tors}\hookrightarrow\C$ is an \emph{atomic smashing ideal} in the sense of \cite[Definition 1.5]{Lia26telescope}, i.e. it lies in $\Pr^{\at}_{\C}$. This generalizes the notion of \emph{finite smashing localization} in the case of $\C=\Sp$, i.e. smashing localizations with compactly generated ideals. In particular, if $\C$ is locally rigid, $\C^{\K\text{-}\tors}$ is atomic for any set $\K\subseteq\C^{\omega}$. Suppose that $\C$ is locally rigid compactly generated (i.e. $\C$ is compactly generated by a set of dualizable objects as in \cite{BHV18,NPR24}). Then, for $\K\subseteq\C^{\dbl}$, $\C^{\K\text{-}\tors}=\Loc(\K\otimes\C^\omega)$ is compactly generated since $\C^{\dbl}\otimes\C^{\omega}\subseteq\C^{\omega}$, and hence we may choose $\K'\subseteq\C^{\omega}$ such that $\C^{\K'\text{-}\tors}=\C^{\K\text{-}\tors}$. So a smashing ideal is atomic if and only if it is compactly generated \cite[Example 1.7(1)]{Lia26telescope}. Suppose moreover that $\C$ is rigid compactly generated. Then atomic smashing ideals correspond to \emph{finite smashing ideals}, i.e. thick ideals of $\C^\omega$ (cf. \cite[Definition 2.47]{ABC+25}).
\end{remark}
\begin{corollary}\label{cor:local_duality_fracture_square_functors}
	Let $\C\in\CAlg(\PrL_{\st})$ and $\K\subseteq\C^{\dbl}$. Then we have a pullback square
	\[\begin{tikzcd}
		\id & {L_{\K}} \\
		{L^{\K}} & {L^{\K}L_{\K}}
		\arrow[from=1-1, to=1-2]
		\arrow[from=1-1, to=2-1]
		\arrow["\lrcorner"{anchor=center, pos=0.125}, draw=none, from=1-1, to=2-2]
		\arrow[from=1-2, to=2-2]
		\arrow[from=2-1, to=2-2]
	\end{tikzcd}\]
\end{corollary}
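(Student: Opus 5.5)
We use the identifications of \cref{exm:local_duality}: $L^{\K}\simeq L^{\K}\mathbbm{1}\otimes-$ is the smashing localization with idempotent algebra $A:=L^{\K}\mathbbm{1}$, $\Gamma^{\K}\simeq\Gamma^{\K}\mathbbm{1}\otimes-$ is tensoring with the coidempotent coalgebra $I:=\Gamma^{\K}\mathbbm{1}$, and $L_{\K}\simeq\Lambda^{\K}\simeq\iHom_{\C}(I,-)$ is the completion functor. The natural transformations in the square are the units $\id\to L_{\K}$ and $\id\to L^{\K}$, together with $L^{\K}$ applied to the former and the unit of $L^{\K}$ at $L_{\K}$; these agree on the composite $\id\to L^{\K}L_{\K}$ by naturality.

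To show the square is a pullback, we compare horizontal fibers. A square of exact functors is a pullback exactly when the induced map on fibers of the horizontal (or vertical) arrows is an equivalence. The fiber of $\id\to L^{\K}$ is $\Gamma^{\K}\simeq I\otimes-$, from the cofiber sequence $I\otimes X\to X\to A\otimes X$ coming from $I\to\mathbbm{1}\to A$. Likewise, the fiber of $L_{\K}\to L^{\K}L_{\K}$ is $\Gamma^{\K}L_{\K}\simeq I\otimes\iHom_{\C}(I,-)$. It therefore suffices to show that
\[
	I\otimes X\to I\otimes\iHom_{\C}(I,X)
\]
is an equivalence for every $X\in\C$.

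This follows from the torsion–complete equivalence of \cref{prop:local_duality}. The functor $\Gamma^{\K}$ inverts the unit $X\to\Lambda^{\K}X$, because the fiber of that unit lies in $\C^{\K\text{-}\loc}$ and $\Gamma^{\K}$ kills $\C^{\K\text{-}\loc}$. Equivalently, $\iHom_{\C}(I,F)\simeq0$ implies $F\in\C^{\K\text{-}\loc}$, hence $I\otimes F\simeq0$. This is exactly the statement $\Gamma^{\K}\Lambda^{\K}\simeq\Gamma^{\K}$ in local duality, since $\Lambda^{\K}i_{\tors}$ and $\Gamma^{\K}i_{\cplt}$ are inverse.

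The only delicate step is checking that the map on fibers is indeed the one induced by the unit $X\to L_{\K}X$. This holds because $\Gamma^{\K}$ applied to the natural transformation $\id\to L_{\K}$ gives precisely that map on fibers. It also uses the identification of $\C^{\K\text{-}\loc}$ as the kernel of $I\otimes-$, i.e. $C_{\K}\C=\C^{\K\text{-}\loc}$, which is already recorded in \cref{exm:local_duality}. No further obstacles are expected.
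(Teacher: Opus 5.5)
Your proof is correct and is essentially the paper's argument. The paper checks the square after applying the jointly conservative pair $\Gamma^{\K}$, $L^{\K}$, where the $L^{\K}$-check is automatic because $L^{\K}$ is idempotent. Your comparison of the fibers of $\id\to L^{\K}$ and $L_{\K}\to L^{\K}L_{\K}$ is exactly the $\Gamma^{\K}$-half, and both rest on $\Gamma^{\K}\Lambda^{\K}\simeq\Gamma^{\K}$. One minor slip: the fibers you compute are of the vertical arrows of the square as drawn, not the horizontal ones.
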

\begin{proof}
	By \cite[Lemma 4.11]{NPR24}, we have $\Lambda^{\K}\Gamma^{\K}\simeq\Lambda^{\K}$ and $\Gamma^{\K}\Lambda^{\K}\simeq\Gamma^{\K}$. Since we have a fiber sequence $\Gamma^{\K}\to\id\to L^{\K}$, $\Gamma^{\K}$ and $L^{\K}$ are jointly conservative. Replacing $L_{\K}$ by $\Lambda^{\K}$, we may check this square is a pullback square by applying $\Gamma^{\K}$ and $L^{\K}$.
\end{proof}
\begin{example}\label{exm:Mod(R)^S-tors/cplt}
	Let $\C=\Sp$ and $\M\in\PrL_{\st}$. Then $\M^{\C\text{-}\at}=\M^{\omega}$. Let $R\in\Alg(\Sp)$ and $\M=\Mod(R)$. For a set of homogeneous elements $S\subseteq\pi_*(R)$, we have $\{R/Rs\}_{s\in S}\subseteq\Mod(R)^\omega$. Then the definitions of $S$-torsion, $S$-local and $S$-complete coincide with those of $\{R/Rs\}_{s\in S}$-torsion, $\{R/Rs\}_{s\in S}$-local and $\{R/Rs\}_{s\in S}$-complete, respectively.
\end{example}

Now we recall some basic properties of localizations and generalize \cref{prop:local_duality} and \cref{cor:local_duality_fracture_square_functors}.
\begin{definition}[{\cite[Definition 5.2.9]{CSY21height}}]
	A \emph{mode} is an idempotent algebra in $\PrL$. Let $\C\in\CAlg(\PrL)$. A \emph{$\C$-mode} is an idempotent algebra in $\PrL_{\C}$.
\end{definition}
\begin{lemma}[cf. {\cite[Proposition 5.2.10]{CSY21height}}]
	Let $\C\in\CAlg(\PrL)$, and let $L:\C\to L\C$ be a symmetric monoidal localization. Then $L\C$ is a $\C$-mode\footnote{We thank Yuchen Wu for reminding us of this proposition.}. Let $\M\in\PrL_{\C}$. Then $L\C\otimes_{\C}\M\subseteq\M$ is the full subcategory spanned by the objects $X\in\M$ such that $\iHom_{\C}(Y,X)\in L\C$ for all $Y\in\M$.
\end{lemma}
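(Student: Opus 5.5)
We need two things: that $L\C$ is idempotent in $\PrL_{\C}$, i.e. the multiplication $L\C\otimes_{\C}L\C\to L\C$ is an equivalence, and the description of $L\C\otimes_{\C}\M$ inside $\M$. We will first describe $L\C\otimes_{\C}\M$ for an arbitrary $\M\in\PrL_{\C}$ and then deduce idempotence by taking $\M=L\C$.

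\emph{Step 1: $L\C\otimes_{\C}\M$ as a localization of $\M$.} Write $L\C\simeq\C[W^{-1}]$, where $W$ is the class of $L$-equivalences. Since $L$ is symmetric monoidal, $W$ is closed under $Y\otimes-$ for $Y\in\C$. Let $W_0\subseteq W$ be a small generating set of maps between $\kappa$-compact objects. Then $L\C$ is the pushout in $\PrL_{\C}$ of $\C$ along the free $\C$-module map that inverts $W_0$. Because $-\otimes_{\C}\M$ preserves colimits, $L\C\otimes_{\C}\M$ is obtained from $\M$ by inverting the $\C$-linear closure of the maps $w\otimes X$ for $w\in W_0$ and $X\in\M$. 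Thus $L\C\otimes_{\C}\M\simeq\M[\{w\otimes X\}^{-1}]$, realized as a full subcategory of $\M$ via the right adjoint. Its objects are the $Z\in\M$ such that $\Hom_{\M}(w\otimes X,Z)$ is an equivalence for all $w\in W_0$ and $X\in\M$. By adjunction this is
\[
\Hom_{\C}(w,\iHom_{\C}(X,Z))\ \text{an equivalence for all } w\in W_0,\ X\in\M,
\]
that is, $\iHom_{\C}(X,Z)\in L\C$ for all $X\in\M$. This gives the second claim.

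\emph{Step 2: idempotence.} Take $\M=L\C$. By Step 1, $L\C\otimes_{\C}L\C$ is the full subcategory of $L\C$ spanned by those $Z$ with $\iHom_{\C}(X,Z)\in L\C$ for all $X\in L\C$. For $Z\in L\C$ this always holds: $\iHom_{\C}(X,Z)$ is local because for $w\in W$ we have
\[
\Hom(w,\iHom(X,Z))\simeq\Hom(w\otimes X,Z),
\]
and $w\otimes X\in W$ since $W$ is closed under tensoring. Hence the subcategory is all of $L\C$, and the multiplication map, which corresponds to this inclusion and localization, is an equivalence. Therefore $L\C$ is a $\C$-mode.

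\emph{Main obstacle.} The delicate point is Step 1: justifying that $L\C\otimes_{\C}\M$ is computed by inverting the maps $w\otimes X$, with no extra generated maps, and that the right adjoint embeds it fully faithfully. To handle this, we present $L\C$ as a colimit (a pushout of free $\C$-modules) in $\PrL_{\C}$ and use that the Lurie tensor product commutes with such pushouts. Alternatively, one can use the universal property of $\M[S^{-1}]$ for a strongly saturated, $\C$-stable class $S$.
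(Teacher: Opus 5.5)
Your argument is correct. The paper gives no proof of this lemma and only cites \cite[Proposition 5.2.10]{CSY21height}. Your route is the standard one:
\begin{enumerate}
\item present $L\C$ as a pushout in $\PrL_{\C}$ that localizes $\C$ at the $\C$-linear closure of a small generating set $W_0$ of $L$-equivalences;
\item apply $-\otimes_{\C}\M$ to this pushout and recognize the result as the localization of $\M$ at $\{w\otimes X\}$;
\item translate locality through $\Hom_{\M}(a\otimes X,Z)\simeq\Hom_{\C}(a,\iHom_{\C}(X,Z))$.
\end{enumerate}

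Two small points deserve to be made explicit. First, identifying the pushout with $L\C$ uses that the strong saturation of $\{c\otimes w : c\in\C,\ w\in W_0\}$ is exactly $W$; this is where closure of $W$ under tensoring enters, together with the fact that the forgetful functor $\PrL_{\C}\to\PrL$ preserves colimits. Second, in Step 2 what you actually show is that the unit map $L\C\simeq\C\otimes_{\C}L\C\to L\C\otimes_{\C}L\C$ is an equivalence. The multiplication is then an equivalence because, by unitality, it is a left inverse of this map.
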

\begin{remark}
	Let $\C\in\CAlg(\PrL_{\st})$ and $\M\in\PrL_{\C}$, and let $E\in\C$. We have two notions of $E$-localizations of $\M$: $L_E\M$ and $L_E\C\otimes_{\C}\M$. They coincide in the following cases:
	\begin{enumerate}
		\item If $\M\in\Pr^{\dbl}_{\C}$, then $L_E\C\otimes_{\C}\M\simeq L_E\M$ by \cref{prop:L_FL_E}. 
		\item If $I$ is a coidempotent coalgebra or $A$ is an idempotent algebra, then $L_I\C\otimes_{\C}\M\simeq L_I\M$ or $L_A\C\otimes_{\C}\M\simeq L_A\M$, respectively, by \cref{prop:L_AM}.
		\item Let $\K\subseteq\C^{\dbl}$. Then we have $L_{\K}\C\otimes_{\C}\M\simeq L_{\K}\M$ by \cref{cor:L_KM}.
	\end{enumerate}
\end{remark}
\begin{proposition}\label{prop:L_FL_E}
	Let $\C\in\CAlg(\PrL_{\st})$ and let $E\in\C$. Then $L_E\C$ is a $\C$-mode. Let $F\in\C$. Then $L_F\C\otimes_{\C}L_E\C\simeq L_F\C\cap L_E\C\subseteq\C$. Let $\M\in\PrL_{\C}$. Then $L_FL_E\M\simeq L_{E\otimes F}\M\simeq L_{L_EF}L_E\M$.

	If $\M\in\Pr^{\dbl}_{\C}$, then $L_F\C\otimes_{\C}\M\simeq L_F\M$. Then if $L_E\C$ is dualizable, we have
	\[
		L_F\C\otimes_{\C}L_E\C\simeq L_FL_E\C\simeq L_{E\otimes F}\C.
	\]
	In particular, if $L_E$ is smashing, then $L_F\C\otimes_{\C}L_E\C\simeq L_{E\otimes F}\C\simeq L_{L_EF}\C$.
\end{proposition}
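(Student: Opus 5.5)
The plan has four parts: prove each assertion in turn by comparing subcategories of $\M$, and isolate the dualizable case as the one point needing a structural argument.

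\textbf{Mode and intersection.} That $L_E\C$ is a $\C$-mode follows from the preceding lemma, since $L_E:\C\to L_E\C$ is a symmetric monoidal localization ($C_E\C$ is a localizing ideal). For $L_F\C\otimes_{\C}L_E\C$ I apply the same lemma with $\M=L_E\C$. It identifies this tensor product with the objects $X\in L_E\C$ such that $\iHom_{\C}(Y,X)\in L_F\C$ for all $Y\in L_E\C$; here $\iHom$ may be computed in $\C$, because $L_E\C$ is closed under internal Homs. Taking $Y=L_E\mathbbm{1}$ and using $\iHom(L_E\mathbbm{1},X)\simeq X$ shows $X\in L_F\C$. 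Conversely, $L_F\C$ is closed under $\iHom_{\C}(Y,-)$: for $F$-acyclic $Z$ we have $\Hom(Z,\iHom(Y,X))\simeq\Hom(Z\otimes Y,X)=0$, since $Z\otimes Y$ is again $F$-acyclic. This gives $L_F\C\cap L_E\C$.

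\textbf{The formula $L_FL_E\M\simeq L_{E\otimes F}\M\simeq L_{L_EF}L_E\M$.} I unwind the $\C$-action on $L_E\M$, which is $F\cdot X=L_E(F\otimes X)$. Since $L_EW\simeq0$ iff $E\otimes W\simeq0$, an object $X\in L_E\M$ is $F$-acyclic in $L_E\M$ iff $E\otimes F\otimes X\simeq0$. The same holds with $L_EF$ in place of $F$, because $L_E(L_EF\otimes X)\simeq L_E(F\otimes X)$; this gives the second equivalence once the first is proved.
\begin{itemize}
\item Every $(E\otimes F)$-local object $X$ is $E$-local, since $E$-acyclics are $(E\otimes F)$-acyclic. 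It is then $F$-local in $L_E\M$.
\item Conversely, let $X\in L_FL_E\M$ and let $Y\in\M$ be $(E\otimes F)$-acyclic. Then $\Hom(Y,X)\simeq\Hom(L_EY,X)$, and $L_EY$ is $(E\otimes F)$-acyclic because $E\otimes L_EY\simeq E\otimes Y$. Hence $\Hom(Y,X)=0$.
\end{itemize}

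\textbf{Dualizable $\M$: $L_F\C\otimes_{\C}\M\simeq L_F\M$.} One inclusion holds for any $\M$. If $X\in L_F\M$ and $Z\in\C$ is $F$-acyclic, then $\Hom(Z,\iHom_{\C}(Y,X))\simeq\Hom(Z\otimes Y,X)=0$, so $X$ satisfies the lemma's criterion. For the reverse inclusion I first treat atomically generated $\M=\P_{\C}(\M_0)$. There an object is determined by the family $\iHom(K,-)$ over atoms $K$. Atomicity gives $\iHom(K,F\otimes W)\simeq F\otimes\iHom(K,W)$, so $F$-acyclicity is detected pointwise, and the lemma's criterion reduces to pointwise $F$-locality. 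Computing $\Hom(W,X)$ as an enriched end (a limit of terms $\Hom_{\C}(W(K)\otimes\cdots,X(K'))$) then shows it vanishes for $W$ acyclic and $X$ in the criterion. For general dualizable $\M$, I write $\M$ as a retract, via $\C$-internal left adjoints, of an atomically generated module. I then check that both $L_F\C\otimes_{\C}-$ and $L_F(-)$ are compatible with such retracts: internal left adjoints preserve $F$-acyclics, and their $\C$-linear right adjoints preserve $F$-local objects. I expect this retract and enriched-end step to be the main obstacle.

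\textbf{The remaining claims.} If $L_E\C$ is dualizable, the previous step applied to $\M=L_E\C$ together with the formula for $L_FL_E\M$ gives $L_F\C\otimes_{\C}L_E\C\simeq L_FL_E\C\simeq L_{E\otimes F}\C$. If $L_E$ is smashing, then $L_E\C\simeq\Mod_{L_E\mathbbm{1}}(\C)$ is a retract of $\C$ via internal left adjoints, hence dualizable. The final identification with $L_{L_EF}\C$ uses $L_EF\simeq L_E\mathbbm{1}\otimes F$, which is $E\otimes F$-equivalent data: $L_{L_EF}\C=L_{L_E\mathbbm{1}\otimes F}\C$, and its acyclics coincide with those of $E\otimes F$ by the acyclicity computation in the second part.
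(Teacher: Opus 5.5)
Your proposal is correct. For the mode claim, the identification $L_F\C\otimes_{\C}L_E\C\simeq L_F\C\cap L_E\C$, and the formula $L_FL_E\M\simeq L_{E\otimes F}\M\simeq L_{L_EF}L_E\M$, you argue essentially as the paper does, by unwinding orthogonality conditions. The real difference is the dualizable step.

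\textbf{The paper's route.} The argument there is formal. For dualizable $\M$, the functor $-\otimes_{\C}\M$ preserves limits as well as colimits in $\PrL_{\C}$. So it sends $C_F\C=\fib(F\otimes-\colon\C\to\C)$ to $\fib(F\otimes-\colon\M\to\M)=C_F\M$, and it sends the cofiber $L_F\C\simeq\C/C_F\C$ to $\M/C_F\M\simeq L_F\M$.

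\textbf{Your route.} You work instead with the criterion from the lemma on modes. It identifies $L_F\C\otimes_{\C}\M$ with the right orthogonal of the objects $Z\otimes Y$, where $Z\in C_F\C$ and $Y\in\M$. This gives $L_F\M\subseteq L_F\C\otimes_{\C}\M$ for every $\M\in\PrL_{\C}$. It also isolates the real content: the objects $Z\otimes Y$ should generate $C_F\M$. You verify this for $\P_{\C}(\M_0)$ by pointwise detection and a bar resolution, then transfer it along retracts. This is heavier than the paper's argument, since it needs enriched presheaves and the retract characterization of dualizable modules. In exchange it tells you more about what can fail when $\M$ is not dualizable.

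\textbf{The retract step.} The step you flag as the main obstacle goes through as follows. Let $i\colon\M\to\N$ and $r\colon\N\to\M$ be $\C$-linear with $ri\simeq\id$, and let $X\in\M$ satisfy the criterion. Since $\iHom_{\C}(Y',r^RX)\simeq\iHom_{\C}(rY',X)$, the object $r^RX$ satisfies the criterion in $\N$, so it is $F$-local. Then $X\simeq i^Rr^RX$ is $F$-local, because $i^R$ is right adjoint to a functor that preserves $F$-acyclics. Only $\C$-linearity is used here, not internal left adjointness.

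\textbf{One small inaccuracy.} In the smashing case, the inclusion $L_E\C\hookrightarrow\C$ is generally not an internal left adjoint. For example, $\Sp[1/p]\hookrightarrow\Sp$ sends the compact object $\bbS[1/p]$ to a non-compact spectrum. So $L_E\C$ is a retract of $\C$ only in $\PrL_{\C}$. That still gives dualizability, as does the fact that $\Mod_{L_E\mathbbm{1}}(\C)$ is atomically generated by $L_E\mathbbm{1}$, so nothing breaks.
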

\begin{proof}
	Let $E\in\C$. Then $L_E\C$ is a $\C$-mode since $L_E:\C\to L_E\C$ is a symmetric monoidal localization. Let $F\in\C$. Then $L_F\C\otimes_{\C}L_E\C=\{X\in L_E\C:\iHom_{\C}(Y,X)\in L_F\C\text{ for all }Y\in L_E\C\}=\{X\in L_E\C:\Hom_{\C}(Z,\iHom_{\C}(Y,X))\simeq0\text{ for all }Y\in L_E\C,Z\in C_F\C\}=\{X\in L_E\C:\Hom_{\C}(Z\otimes L_EY,X)\simeq0\text{ for all }Y\in\C,Z\in C_F\C\}=\{X\in L_E\C:\Hom_{\C}(Z\otimes Y,X)\simeq0\text{ for all }Y\in\C,Z\in C_F\C\}=L_F\C\cap L_E\C$.
	
	Let $\M\in\PrL_{\C}$. Then $L_FL_E\M=\{X\in L_E\M:\Hom_{L_E\M}(Y,X)\simeq0\text{ for all }Y\in L_E\M\text{ such that }L_E(F\otimes Y)\simeq0\}=\{X\in L_E\M:\Hom_{\M}(Y,X)\simeq0\text{ for all }Y\in \M\text{ such that }E\otimes F\otimes Y\simeq0\}=L_{E\otimes F}\M$. Since $E\otimes L_EF\simeq E\otimes F$, $L_FL_E\M\simeq L_{E\otimes F}\M=L_{L_EF}L_E\M$.
	
	If $\M\in\Pr^{\dbl}_{\C}$, then $-\otimes_{\C}\M$ preserves both limits and colimits. So $C_F\C\otimes_{\C}\M\simeq \fib(F\otimes:\C\to\C)\otimes_{\C}\M\simeq\fib(F\otimes-:\M\to\M)\simeq C_F\M$ and $L_F\C\otimes_{\C}\M\simeq(\C/C_F\C)\otimes_{\C}\M\simeq\M/C_F\C\otimes_{\C}\M\simeq\M/C_F\M\simeq L_F\M$. Therefore, if $L_E\C$ is dualizable, then $L_E\C\otimes_{\C}L_F\C\simeq L_FL_E\C\simeq L_{E\otimes F}\C$.
\end{proof}
\begin{proposition}\label{prop:L_AM}
	Let $\C\in\CAlg(\PrL_{\st})$ and $I$ be a coidempotent coalgebra in $\C$, and let $A:=\mathbbm{1}/I$ be the associated idempotent algebra. Then for $\M\in\PrL_{\C}$, the short exact sequence $C_A\M\hookrightarrow\M\to L_A\M$ is identified with $C_A\C\otimes_{\C}\M\hookrightarrow\M\to L_A\C\otimes_{\C}\M$, and the short exact sequence $C_I\M\hookrightarrow\M\to L_I\M$ is identified with $C_I\C\otimes_{\C}\M\hookrightarrow\M\to L_I\C\otimes_{\C}\M$. In particular, we have $L_A\C\otimes_{\C}\M\simeq L_A\M$ and $L_I\C\otimes_{\C}\M\simeq L_I\M$.
\end{proposition}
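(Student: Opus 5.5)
The plan is to reduce everything to the split short exact sequence attached to the idempotent pair $(I,A)$ in $\C$ itself, and then transport it to $\M$ by $-\otimes_{\C}\M$. The basic input is the example on coidempotent coalgebras above. In $\C$ we have the short exact sequence
\[
	\coMod_I(\C)\hookrightarrow\C\to\Mod_A(\C).
\]
Its inclusion $\coMod_I(\C)\hookrightarrow\C$ has the $\C$-linear right adjoint $I\otimes-$. The localization $A\otimes-$ has a fully faithful right adjoint, and that adjoint is $\C$-linear. Here $\C_A:=\C^{I\text{-}\tors}=\coMod_I(\C)=C_A\C$ and $\Mod_A(\C)=C_I\C$.

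\emph{First step: transport to $\M$.} All functors and adjunctions in this sequence live in the $2$-category $\PrL_{\C}$: the unit and counit are $\C$-linear, being induced by the maps $I\to\mathbbm{1}\to A$. So applying the $2$-functor $-\otimes_{\C}\M$ preserves the adjunctions. It also preserves the identities witnessing full faithfulness (unit or counit an equivalence), and it preserves cofiber sequences, since $-\otimes_{\C}\M$ preserves colimits in $\PrL_{\C}$. The result is a sequence
\[
	\coMod_I(\C)\otimes_{\C}\M\hookrightarrow\M\to\Mod_A(\C)\otimes_{\C}\M.
\]
Its first functor is fully faithful, with right adjoint $I\otimes-$ and counit $I\otimes X\to X$. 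Its second functor is a localization with fully faithful right adjoint and unit $X\to A\otimes X$. We then identify the essential images concretely. The image of the first functor is $\{X\in\M: I\otimes X\xrightarrow{\simeq}X\}$, because the comonad $I\otimes-$ is idempotent. The right adjoint of the second functor has image $\{X\in\M: X\xrightarrow{\simeq}A\otimes X\}$.

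\emph{Second step: the $A$-sequence.} The cofiber sequence $I\otimes X\to X\to A\otimes X$ gives $A\otimes X\simeq0\iff I\otimes X\simeq X$. Hence $C_A\M=\coMod_I(\C)\otimes_{\C}\M$ as full subcategories of $\M$, so $C_A\M\simeq C_A\C\otimes_{\C}\M$. Next, $L_A\M=(C_A\M)^{\perp}$ is the right orthogonal of the torsion part of a localization sequence. By the adjunction, this right orthogonal is exactly the image of the fully faithful right adjoint of $\M\to\Mod_A(\C)\otimes_{\C}\M$, so $L_A\M\simeq L_A\C\otimes_{\C}\M$. Here we use that $L_A\C=\C/C_A\C\simeq\Mod_A(\C)$, which is the case $\M=\C$.

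\emph{Third step: the $I$-sequence.} By the same cofiber sequence, $I\otimes X\simeq0\iff X\simeq A\otimes X$. Thus $C_I\M$ is the $A$-local part, which equals $\Mod_A(\C)\otimes_{\C}\M=C_I\C\otimes_{\C}\M$. This inclusion is a $\C$-linear left adjoint, because its right adjoint $\iHom(A,-)$ is induced from $\C$. Therefore $C_I\C\otimes_{\C}\M\hookrightarrow\M$ is the tensored fully faithful inclusion. Since $-\otimes_{\C}\M$ preserves cofibers in $\PrL_{\C}$, we get
\[
	L_I\C\otimes_{\C}\M\simeq(\C/C_I\C)\otimes_{\C}\M\simeq\M/(C_I\C\otimes_{\C}\M)=\M/C_I\M\simeq(C_I\M)^{\perp}=L_I\M.
\]
The main point needing care is the first step, namely that $-\otimes_{\C}\M$ carries the fully faithful $\C$-internal left adjoints to fully faithful functors with the expected essential images for a general, non-dualizable $\M$. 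The argument relies only on functoriality of $-\otimes_{\C}\M$ on the $2$-category $\PrL_{\C}$ and on the idempotence of $I$ and $A$; no dualizability of $\M$ is used.
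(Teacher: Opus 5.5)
Your proposal is correct and is essentially the paper's argument. Both proofs tensor the sequence $C_A\C\hookrightarrow\C\to L_A\C$, whose adjunctions live in $\PrL_{\C}$, with $\M$, identify the resulting composites on $\M$ with $I\otimes-$ and $A\otimes-$, and read off the kernels from the cofiber sequence $I\otimes X\to X\to A\otimes X$. The only minor difference is in the $I$-part: the paper passes to right adjoints and tensors that second sequence, whereas you reuse the tensored fully faithful inclusion of the $A$-local part and take its cofiber in $\PrL_{\C}$; both routes work.
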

\begin{proof}
	Since $C_A\C\hookrightarrow\C\to L_A\C$ is a short exact sequence in $\Pr^{\iL}_{\C}$, $C_A\C\otimes_{\C}\M\hookrightarrow\M\to L_A\C\otimes_{\C}\M$ is also a short exact sequence in $\Pr^{\iL}_{\C}$. In particular, $L_A\C\hookrightarrow\C$ induces a fully faithful functor $L_A\C\otimes_{\C}\M\hookrightarrow\M$ such that precomposing with $L_A\otimes_{\C}\M:\M\to L_A\C\otimes_{\C}\M$ gives $A\otimes-:\M\to\M$. Then $C_A\C\otimes_{\C}\M\simeq\fib(L_A\otimes_{\C}\M:\M\to L_A\C\otimes_{\C}\M)\simeq\fib(A\otimes-:\M\to\M)\simeq C_A\M$, and the desired identification follows.

	Similarly, passing to right adjoints gives a short exact sequence $C_I\C\otimes_{\C}\M\hookrightarrow\M\to L_I\C\otimes_{\C}\M$ in $\PrL_{\C}$. In particular, $L_I\C\simeq C_A\C\hookrightarrow\C$ induces a fully faithful functor $L_I\C\otimes_{\C}\M\hookrightarrow\M$ such that precomposing with $L_I\otimes_{\C}\M:\M\to L_I\C\otimes_{\C}\M$ gives $I\otimes-:\M\to\M$. Then $C_I\C\otimes_{\C}\M\simeq\fib(L_I\otimes_{\C}\M:\M\to L_I\C\otimes_{\C}\M)\simeq\fib(I\otimes-:\M\to\M)\simeq C_I\M$, and the desired identification follows.
\end{proof}
\begin{lemma}\label{lem:LC=LC(Mod(L1))}
	Let $f:\C\to\D\in\CAlg(\PrL_{\st})$ be a $\C$-internal left adjoint with $f^R$ conservative, or equivalently, $\D\simeq\Mod_R(\C)$ for some $R\in\CAlg(\C)$. Let $L:\C\to L\C$ be a symmetric monoidal localization. Then $L\D\simeq L\C\otimes_{\C}\D\subseteq\D$ is the full subcategory spanned by the objects $X\in\D$ such that $f^R(X)\in L\C$.
\end{lemma}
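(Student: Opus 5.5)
The plan is to reduce everything to the description of base changes of modes in the preceding lemma (cf. \cite[Proposition 5.2.10]{CSY21height}). I read $L\D$ as the localization of $\D$ induced by $L$, i.e.\ the $\D$-mode $L\C\otimes_{\C}\D$. Since $L\C$ is a $\C$-mode, this is the pushout $L\C\otimes_{\C}\D$ in $\CAlg(\PrL_{\C})$ and is again a symmetric monoidal localization of $\D$. The identification $L\D\simeq L\C\otimes_{\C}\D\subseteq\D$ is therefore formal, and the content of the statement is the description of its objects.

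By the preceding lemma, applied with $\M=\D$, an object $X\in\D$ lies in $L\C\otimes_{\C}\D$ if and only if $\iHom_{\C}(Y,X)\in L\C$ for all $Y\in\D$. Here $\iHom_{\C}$ denotes the $\C$-valued mapping object of the $\C$-module $\D$. I will show this condition is equivalent to $f^R(X)\in L\C$.

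For the forward direction, take $Y=\mathbbm{1}_{\D}=f(\mathbbm{1}_{\C})$. Since $f$ is $\C$-linear with right adjoint $f^R$, we get $\iHom_{\C}(f(Z),X)\simeq\iHom_{\C}(Z,f^R X)$ for $Z\in\C$, and in particular $\iHom_{\C}(\mathbbm{1}_{\D},X)\simeq f^R X$.

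For the converse, assume $f^RX\in L\C$.
\begin{enumerate}
\item For free objects $Y=f(Z)=R\otimes Z$, we have $\iHom_{\C}(f(Z),X)\simeq\iHom_{\C}(Z,f^RX)$. The subcategory $L\C\subseteq\C$ is closed under $\iHom_{\C}(Z,-)$ for every $Z\in\C$. Indeed, $L$ is symmetric monoidal, so $L$-equivalences are stable under $-\otimes Z$, and $\Hom(W,\iHom(Z,X'))\simeq\Hom(W\otimes Z,X')$. Hence $\iHom_{\C}(f(Z),X)\in L\C$.
\item Since $f^R$ is conservative and $\D$ is stable, $\D$ is generated under colimits by the free objects $f(Z)$.
\item The class of $Y$ with $\iHom_{\C}(Y,X)\in L\C$ is closed under colimits in $Y$, because $\iHom_{\C}(-,X)$ sends colimits to limits and $L\C$ is closed under limits, being a reflective subcategory.
\end{enumerate}
Together these give $\iHom_{\C}(Y,X)\in L\C$ for all $Y\in\D$.

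The only step needing care is the generation claim in the converse, and checking that the mapping objects are taken in the $\C$-module structure on $\D$ induced by $f$. Both are standard for $\D\simeq\Mod_R(\C)$: every module is a geometric realization of free modules via the bar resolution. So I expect no serious obstacle. Finally, conservativity of $f^R$ together with the $\C$-internal left adjoint hypothesis is what gives the equivalence $\D\simeq\Mod_{f^R\mathbbm{1}}(\C)$ by Barr--Beck--Lurie and the projection formula, which justifies the ``equivalently'' in the statement.
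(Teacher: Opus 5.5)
Your argument is correct, but it takes a different route from the paper.

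\textbf{The paper's route.} The paper uses a Beck--Chevalley argument. Since $f$ is a $\C$-internal left adjoint, $f^R$ is a morphism in $\PrL_{\C}$. Hence the square formed by $f$, $L$, $L\otimes_{\C}\D$ and $L\C\otimes_{\C}f$ is horizontally right adjointable, which gives $f^R\circ(L\otimes_{\C}\D)\simeq L\circ f^R$. Then $X$ lies in $L\C\otimes_{\C}\D$ iff its unit $X\to(L\otimes_{\C}\D)(X)$ is an equivalence. By conservativity of $f^R$, this holds iff $f^RX\to Lf^RX$ is an equivalence.

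\textbf{Your route.} You use the membership criterion of the preceding lemma, $\iHom_{\C}(Y,X)\in L\C$ for all $Y$. You test it against $Y=\mathbbm{1}_{\D}$ for one direction, and reduce to free objects for the other. This only uses that $f$ is symmetric monoidal, which gives $\iHom_{\C}(f(Z),X)\simeq\iHom_{\C}(Z,f^RX)$, together with generation of $\D$ by free objects.

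\textbf{What each buys.} Your argument never needs $f^R$ to preserve colimits or be $\C$-linear, so the object-level description holds for any $f$ with conservative right adjoint. The internal-left-adjoint hypothesis enters only in the ``equivalently $\Mod_R(\C)$'' clause. The paper's argument is shorter. It also gives the compatibility $f^R\circ L_{\D}\simeq L\circ f^R$ of localization functors directly, which your argument does not immediately give.

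\textbf{One step you pass over.} You call $L\D\simeq L\C\otimes_{\C}\D$ formal by reading $L\D$ as the base change. In the paper, $L\D$ is the intrinsic localization of the $\C$-module $\D$, e.g.\ $L_E\D$ when $L=L_E$. The remark preceding \cref{prop:L_FL_E} treats $L_E\M$ and $L_E\C\otimes_{\C}\M$ as a priori different notions. The paper closes this in its last sentence: $\D\simeq\Mod_R(\C)$ is dualizable over $\C$, so \cref{prop:L_FL_E} applies. You should add that step. Alternatively, for $L=L_E$ you can check it directly with your bar-resolution argument: $f$ and $f^R$ are $\C$-linear, so they preserve $E$-acyclic objects, and $E$-acyclic $Y\in\D$ become colimits of free objects on $E$-acyclic objects of $\C$.
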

\begin{proof}
	Consider the commutative diagram
	\[\begin{tikzcd}
		\C & \D \\
		{L\C} & {L\C\otimes_{\C}\D}
		\arrow["f", shift left, from=1-1, to=1-2]
		\arrow["L"', shift right, from=1-1, to=2-1]
		\arrow[shift left, from=1-2, to=1-1]
		\arrow["{L\otimes_{\C}\D}"', shift right, from=1-2, to=2-2]
		\arrow[shift right, hook, from=2-1, to=1-1]
		\arrow["{L\C\otimes_{\C}f}", shift left, from=2-1, to=2-2]
		\arrow[shift right, hook, from=2-2, to=1-2]
		\arrow[shift left, from=2-2, to=2-1]
	\end{tikzcd}\]
	Since $f:\C\to\D$ is a $\C$-internal left adjoint, the commutative diagram is horizontally right adjointable. Let $X\in\D$. Then $X\in L\C\otimes_{\C}\D$ if and only if $X\to(L\otimes_{\C}\D)(X)$ is an equivalence, if and only if $f^R(X)\to f^R(L\otimes_{\C}\D)(X)\simeq(L\otimes_{\C}f^R)(X)\simeq L(f^R(X))$ is an equivalence, which is equivalent to $f^R(X)\in L\C$. Moreover, since $\D$ is a dualizable $\C$-module, $L\D\simeq L\C\otimes_{\C}\D$.
\end{proof}
Consequently, let $\C\in\CAlg(\PrL_{\st})$ and $R\in\CAlg(\C)$, and $\C\to L\C$ be a symmetric monoidal localization. Then $L\Mod_R(\C)\subseteq\Mod_R(\C)$ is the full subcategory spanned by $L$-local $R$-modules, which is identified with the full subcategory $L\Mod_{LR}(\C)\subseteq\Mod_{LR}(\C)$ spanned by $L$-local $LR$-modules.

Let $\C\in\PrL_{\st}$ and $L_0,L_1$ be two localizations. We have a commutative diagram 
\begin{equation}\label{eq:fracture}
	\begin{tikzcd}
	\id & {L_1} \\
	{L_0} & {L_0L_1}
	\arrow[from=1-1, to=1-2]
	\arrow[from=1-1, to=2-1]
	\arrow[from=1-2, to=2-2]
	\arrow[from=2-1, to=2-2]
\end{tikzcd}
\end{equation}
which induces a functor 
\[\C\to L_0\C\overrightarrow{\times}_{L_0}L_1\C:=(L_0\C)^{[1]}\times_{\ev_1,L_0\C,L_0}L_1\C\]
in $\PrL_{\st}$ that sends $X$ to $(L_0X,L_1X,L_0L_1X)$, and whose right adjoint sends $(X,Y,X\to L_0Y)$ to $X\times_{L_0Y}Y$.
\begin{lemma}[{\cite[Proposition A.8.11]{HA}, or \cite[Theorem 3.20]{MNN17}}]\label{lem:recollement}
	The functor $\C\to L_0\C\overrightarrow{\times}_{L_0}L_1\C$ is fully faithful if and only if \eqref{eq:fracture} is a pullback diagram, or equivalently, $L_0$ and $L_1$ are jointly conservative and $L_0$ preserves $L_1$-acyclic objects. It is an equivalence if and only if $\C$ is a recollement \textup{(}also called a semi-orthogonal decomposition in $\PrL_{\st}$\textup{)} of $L_0\C,L_1\C$, i.e. $L_0$ and $L_1$ are jointly conservative and $L_1L_0\simeq0$. 
\end{lemma}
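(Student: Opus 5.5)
We prove the two assertions of \cref{lem:recollement} separately, using throughout that the right adjoint $G$ of $F:\C\to L_0\C\overrightarrow{\times}_{L_0}L_1\C$ sends $(X,Y,X\to L_0Y)$ to $X\times_{L_0Y}Y$. Let $L_0$ and $L_1$ be Bousfield localizations with fully faithful right adjoints, and write $C_1:=\fib(\id\to L_1)$ for the $L_1$-acyclic part.

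\emph{Full faithfulness.} $F$ is fully faithful if and only if the unit $\id\to GF$ is an equivalence. Since $GF(X)=L_0X\times_{L_0L_1X}L_1X$, this says exactly that \eqref{eq:fracture} is a pullback square.

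To get the second description, we compare horizontal fibers: \eqref{eq:fracture} is a pullback if and only if the induced map $C_1X\to\fib(L_0X\to L_0L_1X)\simeq L_0C_1X$ is an equivalence for all $X$. Here we use that $L_0$ is exact, as the left adjoint of a stable localization.

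\begin{itemize}
\item Suppose \eqref{eq:fracture} is a pullback. If $X$ is $L_1$-acyclic, then $C_1X\simeq X$, so $X\simeq L_0X$ is $L_0$-local. Hence $L_0$ preserves $L_1$-acyclic objects. If $L_0X\simeq0\simeq L_1X$, the pullback gives $X\simeq0$, so $L_0$ and $L_1$ are jointly conservative.
\item Conversely, suppose these two conditions hold. The map $C_1X\to L_0C_1X$ is then a map between $L_1$-acyclics, since $L_0C_1X$ is $L_1$-acyclic by hypothesis. Its $L_1$-localization is therefore trivially an equivalence. Its $L_0$-localization is $L_0C_1X\to L_0L_0C_1X$, which is an equivalence. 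By joint conservativity, the map $C_1X\to L_0C_1X$ is an equivalence.
\end{itemize}

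\emph{Essential surjectivity.} $F$ is an equivalence if and only if it is fully faithful and $G$ is conservative, or equivalently, full faithfulness holds and $FG\simeq\id$.

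\begin{itemize}
\item Suppose $\C$ is a recollement, so $L_0,L_1$ are jointly conservative and $L_1L_0\simeq0$. An $L_0$-local object $Z$ satisfies $L_1Z\simeq0$, so it is $L_1$-acyclic. For an $L_1$-acyclic $X$, the square $L_0$ gives $X\simeq C_0X\oplus$-extension by $L_0X$; here $L_1C_0X\simeq L_1X\simeq0$ and $L_0C_0X\simeq0$, so $C_0X\simeq0$ by joint conservativity. Thus $L_1$-acyclic objects are exactly the $L_0$-local ones, and $L_0$ preserves them, giving full faithfulness.

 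For the counit, take $P=X\times_{L_0Y}Y$. Applying $L_1$ gives $L_1P\simeq L_1X\times_{L_1L_0Y}L_1Y\simeq Y$, since $L_1X=0$ and $L_1L_0Y=0$. Applying $L_0$ gives $L_0P\simeq X\times_{L_0Y}L_0Y\simeq X$. Hence $FG\simeq\id$.
\item Conversely, if $F$ is an equivalence, take $Z\in L_0\C$ and the object $(Z,0,Z\to0)$. Its preimage is $Z$, and $F(Z)=(L_0Z,L_1Z,\dots)$, so $L_1Z\simeq0$. This gives $L_1L_0\simeq0$, and joint conservativity comes from the first part.
\end{itemize}

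The main care is bookkeeping which objects are local versus acyclic. The rest is formal, consistent with the references \cite[Proposition A.8.11]{HA} and \cite[Theorem 3.20]{MNN17}.
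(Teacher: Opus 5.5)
Your proof is correct. The paper gives no argument of its own for this lemma and simply defers to \cite[Proposition A.8.11]{HA} and \cite[Theorem 3.20]{MNN17}. What you wrote is a self-contained version of the standard argument behind those references:
\begin{enumerate}
\item full faithfulness is read off from the unit $\id\to GF$;
\item via horizontal fibers, the pullback condition reduces to $C_1X\to L_0C_1X$ being an equivalence for all $X$, i.e.\ to every $L_1$-acyclic object being $L_0$-local;
\item essential surjectivity is checked on the counit by applying $L_0$ and $L_1$.
\end{enumerate}
A useful by-product of your route is the identification $C_1\C=L_0\C$ in the recollement case. This is exactly what underlies the short exact sequence $L_1\C\hookrightarrow\C\to L_0\C$ that the paper writes down right after the lemma.

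Two small presentational points.
\begin{itemize}
\item The phrase ``the square $L_0$ gives $X\simeq C_0X\oplus$-extension by $L_0X$'' should simply invoke the fiber sequence $C_0X\to X\to L_0X$.
\item When you conclude $FG\simeq\id$, say explicitly that the counit components $L_0P\to X$ and $L_1P\to Y$ are induced by the two projections out of $P=X\times_{L_0Y}Y$, so your computations show they are equivalences. Also note that equivalences in the oriented fiber product $L_0\C\overrightarrow{\times}_{L_0}L_1\C$ are detected componentwise.
\end{itemize}
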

If $L_1$ and $L_0$ define a recollement of $L_0\C,L_1\C$, then we have a short exact sequence in $\Pr^{\iL}_{\st}$:
\[\begin{tikzcd}
	{L_1\C} & \C & {L_0\C}
	\arrow["{L_1^L}", shift left=3, hook, from=1-1, to=1-2]
	\arrow[shift right=3, hook, from=1-1, to=1-2]
	\arrow["{L_1}"{description}, from=1-2, to=1-1]
	\arrow["{L_0}", shift left=3, from=1-2, to=1-3]
	\arrow["{L_0^R}"', shift right=3, from=1-2, to=1-3]
	\arrow[hook', from=1-3, to=1-2]
\end{tikzcd}\]
yielding inverse equivalences
\[
	L_1^L:L_1\C\simeq C_0\C:L_1.
\]
By the lax additivity of $\PrL_{\st}$, we also have $\C\simeq L_1\C {}_{L_0}\!\overrightarrow{\times}L_0\C:=L_1\C\times_{L_0,L_0\C,\ev_0}(L_0\C)^{[1]}$.

We specialize the preceding discussion to localizations with respect to objects $E,F\in\C$, recovering in a more general setting the familiar fracture square and recollement of chromatic homotopy theory (cf. \cite[Lemma 2.8]{LMMT24}, \cite[Lecture 34]{Lur10}, \cite[\Section 6]{BHV18}).
\begin{proposition}\label{prop:fracture_square_functors}
	Let $\C\in\CAlg(\PrL_{\st})$, and let $E,F\in\C$.
	\begin{enumerate}
		\item Suppose that $L_E$ preserves $L_F$-acyclic objects. Then we have a pullback diagram
		\begin{equation}
			\begin{tikzcd}
				{L_{E\oplus F}} & {L_F} \\
				{L_E} & {L_EL_F}
				\arrow[from=1-1, to=1-2]
				\arrow[from=1-1, to=2-1]
				\arrow["\lrcorner"{anchor=center, pos=0.125}, draw=none, from=1-1, to=2-2]
				\arrow[from=1-2, to=2-2]
				\arrow[from=2-1, to=2-2]
			\end{tikzcd}
		\end{equation}
		In particular, this assumption holds whenever $E$ is smashing.
		\item Suppose that $L_FL_E\simeq0$. Then we have a short exact sequence in $\Pr^{\iL}_{\C}$: 
		\[\begin{tikzcd}
			{L_F\C} & L_{E\oplus F}\C & {L_E\C}
			\arrow["{C_E}", shift left=3, hook, from=1-1, to=1-2]
			\arrow[shift right=3, hook, from=1-1, to=1-2]
			\arrow["{L_F}"{description}, from=1-2, to=1-1]
			\arrow["{L_E}", shift left=3, from=1-2, to=1-3]
			\arrow[shift right=3, from=1-2, to=1-3]
			\arrow[hook', from=1-3, to=1-2]
		\end{tikzcd}\]
		yielding inverse equivalences
		\[
			C_E:L_F\C\simeq C_EL_{E\oplus F}\C:L_F.
		\]
		Suppose moreover that $L_{E\oplus F}\C$ is a dualizable $\C$-module. Then this is a short exact sequence in $\Pr^{\dbl}_{\C}$, and $L_EL_{E\oplus  F}\C\simeq L_E\C$, and $L_{E\oplus F}E\in L_{E\oplus F}\C$ is smashing. In particular, these assumptions hold whenever both $E\oplus F$ and $E$ are smashing and $E\otimes F\simeq0$.
		
		\item The assumptions of (2) hold whenever $L_E$ and $L_F$ are jointly conservative, $F\in\C^{\dbl}$, and $E\otimes F\simeq0$.
		
		More generally, let $\K\subseteq\C^{\dbl}$. Suppose that $L_E$ and $L_{\K}$ are jointly conservative and $E\otimes\K=0$. Then we have a short exact sequence in $\Pr^{\at}_{\C}$
		\[\begin{tikzcd}
			{L_{\K}\C} & \C & {L_E\C}
			\arrow["{C_E}", shift left=3, hook, from=1-1, to=1-2]
			\arrow[shift right=3, hook, from=1-1, to=1-2]
			\arrow["{L_{\K}}"{description}, from=1-2, to=1-1]
			\arrow["{L_E}", shift left=3, from=1-2, to=1-3]
			\arrow[shift right=3, from=1-2, to=1-3]
			\arrow[hook', from=1-3, to=1-2]
		\end{tikzcd}\]
		and the short exact sequence $C_E\C\hookrightarrow\C\to L_E\C$ in $\Pr^{\iL}_{\st}$ identifies with $\C^{\K\text{-}\tors}\hookrightarrow\C\to\C^{\K\text{-}\loc}$.
	\end{enumerate}
\end{proposition}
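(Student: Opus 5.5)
The plan is to reduce each part to \cref{lem:recollement} and \cref{prop:local_duality}, applied inside the ambient category $L_{E\oplus F}\C$.

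\textbf{Part (1).} We work in $L_{E\oplus F}\C$ with the two localizations $L_0:=L_E$ and $L_1:=L_F$, both of which factor through $L_{E\oplus F}$ since $L_E\C,L_F\C\subseteq L_{E\oplus F}\C$. By \cref{lem:recollement}, the square is a pullback provided two conditions hold.
\begin{enumerate}
\item $L_E$ and $L_F$ are jointly conservative on $L_{E\oplus F}\C$. If $E\otimes X\simeq0$ and $F\otimes X\simeq0$, then $(E\oplus F)\otimes X\simeq0$, so $X$ is $E\oplus F$-acyclic. Being also $E\oplus F$-local, $X\simeq0$.
\item $L_E$ preserves $L_F$-acyclics. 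This is exactly the hypothesis.
\end{enumerate}
Applying the resulting pullback to $L_{E\oplus F}X$, and using $L_EL_{E\oplus F}\simeq L_E$ and $L_FL_{E\oplus F}\simeq L_F$, gives the stated square. When $E$ is smashing we have $L_E\simeq L_E\mathbbm{1}\otimes-$. This functor commutes with $F\otimes-$, so it sends $F$-acyclics to $F$-acyclics.

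\textbf{Part (2).} The condition $L_FL_E\simeq0$ is precisely the recollement condition of \cref{lem:recollement} for the pair $(L_0,L_1)=(L_E,L_F)$ on $L_{E\oplus F}\C$; joint conservativity was checked in part (1). The displayed short exact sequence in $\Pr^{\iL}_{\C}$ and the equivalence $C_E:L_F\C\simeq C_EL_{E\oplus F}\C$ are then the general consequence of a recollement recorded after that lemma. All functors involved are $\C$-linear, since they are Bousfield localizations by objects of $\C$.

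Now assume $L_{E\oplus F}\C$ is dualizable. Then $L_E\C\otimes_{\C}L_{E\oplus F}\C\simeq L_EL_{E\oplus F}\C\simeq L_{E\otimes(E\oplus F)}\C$ by \cref{prop:L_FL_E}, and this equals $L_E\C$ because $E\otimes(E\oplus F)$ and $E$ have the same acyclics. (Here $E\otimes E$ and $E$ having the same acyclics needs a short check, or one can argue directly that an $E$-local object is $E\oplus F$-local.)

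Next, $L_E\C$ is a localization of $L_{E\oplus F}\C$ whose inclusion admits a further right adjoint. Hence the localization functor on $L_{E\oplus F}\C$ is an internal left adjoint, i.e. smashing, and it coincides with localization at $L_{E\oplus F}E$ by \cref{prop:L_FL_E}.

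It remains to see that the sequence is dualizable:
\begin{itemize}
\item $L_E\C$ is a retract of $L_{E\oplus F}\C$ via an internal left adjoint, hence dualizable.
\item $L_F\C$ is the kernel of an internal localization between dualizable modules, hence dualizable as well.
\end{itemize}
In the special case where $E\oplus F$ and $E$ are both smashing and $E\otimes F\simeq0$, we get $L_FL_E\simeq L_E\mathbbm{1}\otimes L_F(-)$. I expect this to vanish because $F$-localization kills $E$-modules when $E\otimes F\simeq 0$ and $L_E$ is smashing, so $L_E\mathbbm{1}$ is $F$-acyclic. Dualizability of $L_{E\oplus F}\C\simeq\Mod_{L_{E\oplus F}\mathbbm{1}}(\C)$ is automatic.

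\textbf{Part (3).} Take $\K=\{F\}$, or general $\K\subseteq\C^{\dbl}$. By \cref{exm:local_duality}, $L_{\K}\C=\C^{\K\text{-}\cplt}$ and $C_{\K}\C=\C^{\K\text{-}\loc}$, with $\C^{\K\text{-}\tors}\hookrightarrow\C\to\C^{\K\text{-}\loc}$ a short exact sequence in $\Pr^{\at}_{\C}$. We then identify this sequence with $C_E\C\hookrightarrow\C\to L_E\C$ in two steps.
\begin{enumerate}
\item Since $E\otimes\K=0$, every $\K$-torsion object is $E$-acyclic.
\item Conversely, if $X$ is $E$-acyclic, then $L_{\K}X$ (equivalently $\Lambda^{\K}X$) is $E$-acyclic. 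Joint conservativity of $L_E$ and $L_\K$ applied to $L^{\K}X$, which satisfies $L_{\K}L^{\K}X\simeq0$ and is $E$-acyclic, forces $L^{\K}X\simeq0$. Hence $X$ is torsion.
\end{enumerate}
Thus $C_E\C=\C^{\K\text{-}\tors}$, and local duality gives the displayed sequence with $C_E\simeq i_{\tors}$ and $L_\K\simeq\Lambda^\K$. In particular $L_{E\oplus F}\C=\C$, $L_FL_E\simeq0$, and $F$ dualizable makes everything dualizable, so the assumptions of (2) hold.

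\textbf{Main obstacle.} The hardest step is the second inclusion $C_E\C\subseteq\C^{\K\text{-}\tors}$, in particular showing that $L^{\K}X$ is $E$-acyclic. This follows because $\Gamma^\K X$ is $E$-acyclic (torsion) and $X$ is $E$-acyclic, so the cofiber $L^{\K}X$ is too.
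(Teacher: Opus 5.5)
Your proof is correct, and for parts (1) and (2) it is the paper's argument: \cref{lem:recollement} applied to $L_E,L_F$ on $L_{E\oplus F}\C$, with the dualizable case handled through \cref{prop:L_FL_E}. Part (3) takes a different route.

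The paper first notes $\Gamma^{\K}\mathbbm{1}\in\C^{\K\text{-}\tors}\subseteq C_E\C$. It deduces $L_{\K}L_E\simeq\iHom_{\C}(\Gamma^{\K}\mathbbm{1},L_E(-))\simeq0$, since an $E$-acyclic object maps trivially into $E$-local ones. Part (2) then applies on $L_{E\oplus\K}\C=\C$, and the identification is read off from \cref{exm:local_duality}.

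You instead prove $C_E\C=\C^{\K\text{-}\tors}$ directly by double inclusion. The nontrivial inclusion comes from joint conservativity applied to $L^{\K}X$. You then get $L_{\K}L_E\simeq0$ afterwards, from $L_E\C=\C^{\K\text{-}\loc}=C_{\K}\C$. The paper's route checks the hypothesis of (2) in one line and lets the recollement do the rest. Yours is more hands-on and makes the equality of the two torsion theories explicit, which is the real content of the last sentence of (3).

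A few local statements need fixing, though none breaks the argument.

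\begin{enumerate}
\item In (2), the claim that $E\otimes(E\oplus F)$ and $E$ have the same acyclics is not a short check. Bousfield classes need not be idempotent: the Brown--Comenetz dual $I$ of the sphere has $I\otimes I\simeq0$. Use your fallback, which is what the paper does. By the first part of \cref{prop:L_FL_E}, $L_E\C\otimes_{\C}L_{E\oplus F}\C\simeq L_E\C\cap L_{E\oplus F}\C=L_E\C$. Dualizability of $L_{E\oplus F}\C$ then identifies the left side with $L_EL_{E\oplus F}\C\simeq L_{L_{E\oplus F}E}L_{E\oplus F}\C$.
\item In the special case you wrote $L_FL_E\simeq L_E\mathbbm{1}\otimes L_F(-)$, which is $L_EL_F$. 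The correct statement is $F\otimes L_EX\simeq L_EF\otimes X\simeq0$. So $L_EX$ is $F$-acyclic and $L_FL_E\simeq0$, as your next sentence indicates.
\item In (3), the sentence asserting that $L_{\K}X=\Lambda^{\K}X$ is $E$-acyclic for $E$-acyclic $X$ is false. For $E=\bbS[1/p]$ and $\K=\{\bbS/p\}$, the $p$-completion of $\cof(\bbS\to\bbS[1/p])$ is $\Sigma\bbS^{\wedge}_p$, which is not rationally trivial. The sentence is also unnecessary: your argument only uses that $L^{\K}X=\cof(\Gamma^{\K}X\to X)$ is $E$-acyclic and lies in $\C^{\K\text{-}\loc}=C_{\K}\C$.
\end{enumerate}
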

\begin{proof}
	\begin{enumerate}
		\item Applying \cref{lem:recollement} to the localizations $L_E:L_{E\oplus F}\C\to L_E\C$ and $L_F:L_{E\oplus F}\C\to L_F\C$, we obtain the desired pullback diagram. In particular, if $E$ is smashing, then $L_E\simeq L_E\mathbbm{1}\otimes-$ preserves $L_F$-acyclic objects.
		\item Similarly, applying \cref{lem:recollement} we obtain the desired short exact sequence in $\Pr^{\iL}_{\st}$. In particular, if $E$ is smashing and $E\otimes F=0$, then $F\otimes L_FL_E\simeq F\otimes L_E\simeq L_EF\otimes-\simeq0$, and hence $L_FL_E\simeq0$. 
		
		Suppose moreover that $L_{E\oplus F}\C$ is a dualizable $\C$-module. Then $L_{L_{E\oplus F}E}L_{E\oplus F}\C\simeq L_EL_{E\oplus F}\C\simeq L_E\C\otimes_{\C}L_{E\oplus F}\C\simeq L_E\C$ by \cref{prop:L_FL_E}, and hence $L_{E\oplus F}E\in L_{E\oplus F}\C$ is smashing. In particular, if $E\oplus F$ is smashing, then $L_{E\oplus F}\C$ is a dualizable $\C$-module.
		
		\item Assume that $L_E$ and $L_{\K}$ are jointly conservative, $\K\subseteq\C^{\dbl}$, and $E\otimes\K\simeq0$. Then $\Gamma^{\K}\mathbbm{1}\in\C^{{\K}\text{-}\tors}\subseteq C_E\C$, and it implies that $L_{\K}L_E\simeq\iHom_{\C}(\Gamma^{\K}\mathbbm{1},L_E)\simeq\iHom_{\C}(L_E\Gamma^{\K}\mathbbm{1},L_E)\simeq0$. Combining this with \cref{exm:local_duality} we obtain the desired identification. \qedhere
	\end{enumerate}
\end{proof}
\begin{corollary}\label{cor:L_KM}
	Let $\C\in\CAlg(\PrL_{\st})$ and $\M\in\PrL_{\C}$. Let $\K\subseteq\C^{\dbl}$ and $E\in\C$. Suppose that $L_E$ and $L_{\K}$ are jointly conservative and $E\otimes\K\simeq0$. Then we have a short exact sequence in $\Pr^{\iL}_{\C}$
	\[\begin{tikzcd}
			{L_{\K}\M} & \M & {L_{L_E\mathbbm{1}}\M}
			\arrow["{C_{L_E\mathbbm{1}}}", shift left=3, hook, from=1-1, to=1-2]
			\arrow[shift right=3, hook, from=1-1, to=1-2]
			\arrow["{L_{\K}}"{description}, from=1-2, to=1-1]
			\arrow["{L_{L_E\mathbbm{1}}}", shift left=3, from=1-2, to=1-3]
			\arrow[shift right=3, from=1-2, to=1-3]
			\arrow[hook', from=1-3, to=1-2]
	\end{tikzcd}\]
	with $L_{\K}\C\otimes_{\C}\M\simeq L_{\K}\M$. In particular, this assumption holds for $E=L^{\K}\mathbbm{1}$.
\end{corollary}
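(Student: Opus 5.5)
The plan is to obtain the sequence for $\M$ by tensoring the corresponding sequence for $\C$ itself with $\M$ over $\C$. The ambient identifications then follow from \cref{prop:L_AM}, applied to the idempotent algebra $A:=L_E\mathbbm{1}$ and the coidempotent coalgebra $I:=\Gamma^{\K}\mathbbm{1}$.

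\emph{Step 1 (the case $\M=\C$).} By \cref{prop:fracture_square_functors}(3) and \cref{exm:local_duality}, the hypotheses give a short exact sequence $\C^{\K\text{-}\tors}\hookrightarrow\C\to\C^{\K\text{-}\loc}$ in $\Pr^{\iL}_{\C}$. This sequence identifies with $C_E\C\hookrightarrow\C\to L_E\C$, and $L^{\K}$ is a smashing localization. Hence $L_E\simeq L^{\K}\simeq A\otimes-$ with $A=L_E\mathbbm{1}\simeq L^{\K}\mathbbm{1}$ an idempotent algebra, and the associated coidempotent coalgebra is $I=\fib(\mathbbm{1}\to A)\simeq\Gamma^{\K}\mathbbm{1}$. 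Passing to right adjoints as in \cref{exm:local_duality}, $L_{\K}\C=\C^{\K\text{-}\cplt}$. Through the torsion--complete equivalence this is $C_A\C\simeq L_I\C$, so we obtain the short exact sequence
\[
	L_{\K}\C\xrightarrow{C_A}\C\xrightarrow{L_A}L_A\C
\]
in $\Pr^{\iL}_{\C}$, with right adjoint $L_{\K}$ to the inclusion.

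\emph{Step 2 (tensoring with $\M$).} The functor $-\otimes_{\C}\M$ sends short exact sequences in $\Pr^{\iL}_{\C}$ to short exact sequences in $\Pr^{\iL}_{\C}$. Fully faithful internal left adjoints and localizations are preserved because the relevant adjunctions are $\C$-linear, and cofiber sequences are preserved because $-\otimes_{\C}\M$ preserves colimits. Tensoring gives $L_{\K}\C\otimes_{\C}\M\to\M\to L_A\C\otimes_{\C}\M$. By \cref{prop:L_AM}, $L_A\C\otimes_{\C}\M\simeq L_A\M=L_{L_E\mathbbm{1}}\M$. Also $L_{\K}\C\otimes_{\C}\M\simeq L_I\C\otimes_{\C}\M\simeq L_I\M$, and this identification is compatible with the functors $C_A$ and $L_I$.

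\emph{Step 3 (identifying $L_I\M$ with $L_{\K}\M$).} It remains to show $C_I\M=C_{\K}\M$ as subcategories of $\M$. The two inputs are $I\otimes K\simeq K$ for all $K\in\K$ (as noted in \cref{exm:local_duality}) and $I\in\Loc^{\otimes}(\K)$.
\begin{enumerate}
	\item If $I\otimes X\simeq0$, then $K\otimes X\simeq K\otimes I\otimes X\simeq0$ for every $K\in\K$.
	\item Conversely, the class of $Z\in\C$ with $Z\otimes X\simeq0$ is a localizing ideal. If it contains $\K$, it therefore contains $I$.
\end{enumerate}
Taking right orthogonals gives $L_I\M=L_{\K}\M$, which yields both the short exact sequence and $L_{\K}\C\otimes_{\C}\M\simeq L_{\K}\M$.

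\emph{Step 4 (the case $E=L^{\K}\mathbbm{1}$).} We check the hypotheses. First, $L^{\K}\mathbbm{1}\otimes K\simeq L^{\K}K\simeq0$ for $K\in\K$. Second, $L_{L^{\K}\mathbbm{1}}\simeq L^{\K}$ and $L_{\K}\simeq\Lambda^{\K}$ are jointly conservative, because $\Gamma^{\K}\simeq\Gamma^{\K}\Lambda^{\K}$ and the fiber sequence $\Gamma^{\K}\to\id\to L^{\K}$ holds.

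\emph{Expected obstacle.} The delicate point is Step 1: checking that the identification $L_{\K}\C\simeq C_A\C$ is compatible with the functors named in the statement (inclusion $C_{L_E\mathbbm{1}}$, right adjoint $L_{\K}$) and not merely an abstract equivalence. This compatibility is what lets Step 2 transport the labelled adjunctions to $\M$. I would settle it using the explicit formulas $C_A\simeq I\otimes-$ and $L_{\K}\simeq\iHom_{\C}(I,-)$ from \cref{exm:local_duality}.
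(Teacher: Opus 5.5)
Your proof is correct, but it is organized differently from the paper's. The paper does not tensor the sequence for $\C$ and then compare two kinds of completion. Instead it computes the right-hand piece directly inside $\M$, using three inputs:
\begin{itemize}
\item the mode lemma, which describes $L_E\C\otimes_{\C}\M$ as the objects $X$ with $\iHom_{\C}(Y,X)\in L_E\C$ for all $Y\in\M$;
\item the equality $L_E\C=C_{\K}\C$ from \cref{prop:fracture_square_functors}(3);
\item dualizability of each $K\in\K$, which lets $K\otimes-$ pass inside $\iHom_{\C}(Y,-)$.
\end{itemize}
Together these give $L_E\C\otimes_{\C}\M=C_{\K}\M$ as subcategories of $\M$. \cref{prop:L_AM} identifies this with $L_{L_E\mathbbm{1}}\M$, and the left piece is then read off as $\M/C_{\K}\M\simeq L_{\K}\M$.

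You instead apply \cref{prop:L_AM} twice, to the algebra $A=L_E\mathbbm{1}$ and to the coalgebra $I=\Gamma^{\K}\mathbbm{1}$. This reduces everything to the equality $C_I\M=C_{\K}\M$, which you prove by a localizing-ideal argument using $I\otimes K\simeq K$ and $I\in\Loc^{\otimes}(\K)$.

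What your route buys: it avoids the mode lemma, and it uses dualizability of $\K$ only through the smashing input in Step 1. It also makes clear that the comparison depends only on $\Loc^{\otimes}(I)=\Loc^{\otimes}(\K)$.

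What the paper's route buys: it is shorter and needs neither the auxiliary coalgebra nor the identification $L_{\K}\C=L_I\C$. It also yields directly the concrete description $L_{L_E\mathbbm{1}}\M=C_{\K}\M$ inside $\M$.

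Your Step 4 check that $E=L^{\K}\mathbbm{1}$ satisfies the hypotheses is correct; the paper leaves that check implicit.
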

\begin{proof}
	By \cref{prop:fracture_square_functors}(3), we have $L_E\C\otimes_{\C}\M=\{X\in\M:\iHom_{\C}(Y,X)\in L_E\C\text{ for all }Y\in\M\}=\{X\in\M:\K\otimes\iHom_{\C}(Y,X)\simeq0\text{ for all }Y\in\M\}=\{X\in\M:\iHom_{\C}(Y,\K\otimes X)\simeq0\text{ for all }Y\in\M\}=C_{\K}\M\subseteq\M$. By \cref{prop:L_AM}, we have $L_E\C\otimes_{\C}\M\simeq L_{L_E\mathbbm{1}}\C\otimes_{\C}\M\simeq L_{L_E\mathbbm{1}}\M$. Then the desired short exact sequence follows, and we have $L_{\K}\C\otimes_{\C}\M\simeq\M/C_{\K}\C\otimes_{\C}\M\simeq \M/L_E\C\otimes_{\C}\M\simeq\M/C_{\K}\M\simeq L_{\K}\M$.
\end{proof}

\begin{example}\label{exm:Mod(R)_I-tors/cplt}
	Let $R\in\CAlg(\Sp)$. In \cite[\Section 7]{SAG}, Lurie establishes the localization theory of $\bbE_2$-rings with respect to a homogeneous ideal. For $M\in\Mod(R)$ and $s\in\pi_d(R)$, we denote $M[s^{-1}]:=s^{-1}M\simeq s^{-1}R\otimes_RM\simeq\colim(M\xrightarrow{s\cdot}M[-d]\xrightarrow{s\cdot}\cdots)$. Let $I\subseteq\pi_*(R)$ be a homogeneous ideal.
	
	\begin{enumerate}
		\item We say $M$ is \emph{$I$-torsion} or \emph{$I$-nilpotent} if $M[x^{-1}]\simeq0$ for all homogeneous elements $x\in I$, denoted by $\Mod(R)_{I\text{-}\tors}\subseteq\Mod(R)$ with right adjoint $\Gamma_I$.
		\item We say $M$ is \emph{$I$-local} if it lies in the right orthogonal complement $\Mod(R)_{I\text{-}\loc}:=(\Mod(R)_{I\text{-}\tors})^{\perp}\subseteq\Mod(R)$ with left adjoint $L_I$ and, when it exists, a right adjoint $\Delta_I$.
		\item We say $M$ is \emph{$I$-complete} if it lies in the right orthogonal complement $\Mod(R)_{I\text{-}\cplt}:=(\Mod(R)_{I\text{-}\loc})^{\perp}$ $\subseteq\Mod(R)$ with, when it exists, a left adjoint $(-)^{\wedge}_I$.
	\end{enumerate}

	Let $S$ be a set of homogeneous generators of $I$. Then we have $\Mod(R)_{I\text{-}\tors}\simeq C_{\{R[s^{-1}]\}_{s\in S}}\Mod(R)$ and $\Mod(R)_{I\text{-}\loc}\simeq L_{\{R[s^{-1}]\}_{s\in S}}\Mod(R)$, with corresponding functors $\Gamma_I\simeq C_{\{R[s^{-1}]\}_{s\in S}}$ and $L_I\simeq L_{\{R[s^{-1}]\}_{s\in S}}$. 
	
	Suppose that $I$ is finitely generated and write $I=(x_0,\cdots,x_n)$ for a finite set of generators $x_i\in\pi_{d_i}(R)$. We denote $R/(x_0^{k_0},\cdots,x_n^{k_n}):=\bigotimes_{i=0}^nR/x_i^{k_i}\in\Mod(R)^{\omega}$. Then the definitions of $I$-torsion, $I$-local, and $I$-complete coincide with those of $R/(x_0,\cdots,x_n)$-torsion, $R/(x_0,\cdots,x_n)$-local, and $R/(x_0,\cdots,x_n)$-complete, respectively. Therefore, we have a short exact sequence in $\Pr^{\cg}_{\st}$:
	\[\begin{tikzcd}
		{\Mod(R)_{I\text{-}\tors}} & {\Mod(R)} & {\Mod(R)_{I\text{-}\loc}}
		\arrow[shift left=3, hook, from=1-1, to=1-2]
		\arrow["(-)^{\wedge}_I"', shift right=3, hook, from=1-1, to=1-2]
		\arrow["{\Gamma_I}"{description}, from=1-2, to=1-1]
		\arrow["{L_I}", shift left=3, from=1-2, to=1-3]
		\arrow["{\Delta_I}"', shift right=3, from=1-2, to=1-3]
		\arrow[hook', from=1-3, to=1-2]
	\end{tikzcd}\]
	yielding inverse equivalences $(-)^{\wedge}_I:\Mod(R)_{I\text{-}\tors}\simeq\Mod(R)_{I\text{-}\cplt}:\Gamma_I$.
	\begin{enumerate}
		\item $\Mod(R)_{I\text{-}\tors}\simeq\Loc(R/(x_0,\cdots,x_n))\simeq\Mod(R,\Gamma_IR)\simeq C_{L_IR}\Mod(R)$ with $\Gamma_I\simeq\Gamma_{(x_n)}\cdots\Gamma_{(x_0)}\simeq\Gamma_IR\otimes_R-\simeq\colim_{k_0,\cdots,k_n}R/(x_0^{k_0},\cdots,x_n^{k_n})[-k_0d_0-\cdots-k_nd_n][-n-1]\otimes_R-$.
		\item $\Mod(R)_{I\text{-}\loc}\simeq\Loc(\{R[x_i^{-1}]\}_{i=0}^n)\simeq\Mod(L_IR)\simeq L_{L_IR}\Mod(R)$ with $L_I\simeq\lim_{(\Delta_{\mathrm{inj}})_{/[n]}}x_{i_0}^{-1}\cdots x_{i_k}^{-1}\simeq L_IR\otimes_R-\simeq\lim_{(\Delta_{\mathrm{inj}})_{/[n]}}R[x_{i_0}^{-1},\cdots,x_{i_k}^{-1}]\otimes_R-$.
		\item $\Mod(R)_{I\text{-}\cplt}\simeq\bigcap_{i=0}^nL_{R/x_i}\Mod(R)\simeq L_{R/x_n}\cdots L_{R/x_0}\Mod(R)\simeq L_{R/(x_0,\cdots,x_n)}\Mod(R)$ with $(-)^{\wedge}_I\simeq L_{R/x_n}\cdots L_{R/x_0}\simeq\lim_{k_0,\cdots,k_n}(-)/(x_0^{k_0},\cdots,x_n^{k_n})$.
	\end{enumerate}

	More generally, let $\M\in\PrL_R$, and let $I$ be a homogeneous ideal with a set $S$ of homogeneous generators. Define $\M_{I\text{-}\tors}:=C_{\{R[s^{-1}]\}_{s\in S}}\M$, $\M_{I\text{-}\loc}:=L_{\{R[s^{-1}]\}_{s\in S}}\M$, and $\M_{I\text{-}\cplt}:=(\M_{I\text{-}\loc})^{\perp}\subseteq\M$.

	If $I=(x_0,\cdots,x_n)$ is finitely generated, taking $\K=\{R/(x_0,\cdots,x_n)\}$ in \cref{cor:L_KM} yields $\M_{I\text{-}\tors}\simeq C_{L_IR}\M\simeq\Mod(R)_{I\text{-}\tors}\otimes_R\M$, $\M_{I\text{-}\loc}\simeq L_{L_IR}\M\simeq\Mod(R)_{I\text{-}\loc}\otimes_R\M$ and $\M_{I\text{-}\cplt}\simeq L_{R/(x_0,\cdots,x_n)}\M\simeq\Mod(R)_{I\text{-}\cplt}\otimes_R\M$.
\end{example}
\begin{remark}
	More generally, let $F(0),\cdots,F(n)\in\C$ be a collection of objects such that $L_{F(j)}L_{F(i)}\simeq0$ for $j>i$, and let $E=F(0)\oplus\cdots\oplus F(n)$. In \cite[Proposition 3.2, Theorem 5.5]{ACB22}, Antol\'in-Camarena--Barthel construct a fracture $n$-cube of localization functors for $L_E$ and a fracture $n$-cube of localization categories for $L_E\C$. More precisely,
	\begin{enumerate}
		\item The $(\Delta_{\mathrm{inj}})_{/[n]}$-diagram of localization functors whose $\{i_0<\cdots<i_k\}$-term is $L_{F(i_0)}\cdots L_{F(i_k)}$, and whose transition maps are induced by natural transformations $\id\to L_{F(i)}$ has limit $L_E$.
		\item The $(\Delta_{\mathrm{inj}})_{/[n]}$-diagram of localization categories whose $\{i\}$-term is $(L_{F(i)}\C)^{[1]^{n-i}}$, and whose transition maps are induced by the natural transformations $\id\to L_{F(i)}$ has limit $L_E\C$. Equivalently, we have
		\[
			L_E\C\simeq L_{F(0)}\C\overrightarrow{\times}_{L_{F(0)}}L_{F(1)}\C\overrightarrow{\times}_{(L_{F(0)}\to L_{F(0)}L_{F(1)}\leftarrow L_{F(1)})}L_{F(2)}\C\overrightarrow{\times}\cdots L_{F(n)}\C.
		\]
	\end{enumerate}
\end{remark}
\begin{example}\label{exm:S[1/p]_S/p}
	Let $\C=\Sp$. Take $E=\bbS[1/p]$ and $F=\bbS/p$. Since $\bbS[1/p]\oplus\bbS/p$ detects $0$, $\bbS/p\in\Sp^{\dbl}$ and $\bbS[1/p]\otimes\bbS/p\simeq0$, \cref{prop:fracture_square_functors}(3) yields a short exact sequence in $\Pr^{\dbl}_{\st}$
	\[\begin{tikzcd}[column sep=4em]
		{\Sp_{p\text{-}\cplt}} & \Sp & {\Sp[1/p]}
		\arrow[shift left=3, hook, from=1-1, to=1-2]
		\arrow[shift right=3, hook, from=1-1, to=1-2]
		\arrow["{(-)^{\wedge}_p}"{description}, from=1-2, to=1-1]
		\arrow["{\bbS[1/p]\otimes-}", shift left=3, from=1-2, to=1-3]
		\arrow[shift right=3, from=1-2, to=1-3]
		\arrow[hook', from=1-3, to=1-2]
	\end{tikzcd}\]
	which is equivalent to the short exact sequence in \cref{exm:Mod(R)_I-tors/cplt} by taking $R=\bbS$ and $I=(p)$. Then we have a fracture square
	\[\begin{tikzcd}
		\id & {(-)^{\wedge}_p} \\
		{(-)[1/p]} & {(-)^{\wedge}_p[1/p]}
		\arrow[from=1-1, to=1-2]
		\arrow[from=1-1, to=2-1]
		\arrow["\lrcorner"{anchor=center, pos=0.125}, draw=none, from=1-1, to=2-2]
		\arrow[from=1-2, to=2-2]
		\arrow[from=2-1, to=2-2]
	\end{tikzcd}\]

	Take $E=\bbQ$ and $\K=\{\bbS/p\}_{p\text{ prime}}\subseteq\Sp^{\dbl}$. Then $L_E\simeq\bbQ\otimes-$ and $L_{\K}\simeq\prod_{p\text{ prime}}(-)^{\wedge}_p$. Since $\bbQ\oplus\{\bbS/p\otimes-\}_{p\text{ prime}}$ jointly detect $0$ and $\bbQ\otimes\bbS/p\simeq0$ for all primes $p$, we have a fracture square called the \emph{arithmetic fracture square}
	\[\begin{tikzcd}
		\id & {\prod_{p\text{ prime}}(-)^{\wedge}_p} \\
		{\bbQ\otimes-} & {\bbQ\otimes\prod_{p\text{ prime}}(-)^{\wedge}_p}
		\arrow[from=1-1, to=1-2]
		\arrow[from=1-1, to=2-1]
		\arrow["\lrcorner"{anchor=center, pos=0.125}, draw=none, from=1-1, to=2-2]
		\arrow[from=1-2, to=2-2]
		\arrow[from=2-1, to=2-2]
	\end{tikzcd}\]
\end{example}

\subsection{Fracture square of dualizable categories}\label{sec:2.2}

In this subsection, we apply completion theory to construct fracture squares for dualizable categories. We show that maps of rigid envelopes induce symmetric monoidal fracture squares, thereby generalizing the symmetric monoidal fracture square in Naumann--Pol--Ramzi \cite{NPR24}.

We begin with some technical lemmas providing pullback diagrams.
\begin{lemma}\label{lem:ses_pullback}
	Given a commutative diagram between short exact sequences in $\PrL_{\st}$
	\begin{equation}\label{eq:ses_pullback}\begin{tikzcd}
		\C & \D & \E \\
		\C & {\D'} & {\E'}
		\arrow["{j_!}", shift left, hook, from=1-1, to=1-2]
		\arrow[equals, from=1-1, to=2-1]
		\arrow["{j^*}", shift left, from=1-2, to=1-1]
		\arrow["{i^*}", shift left, from=1-2, to=1-3]
		\arrow["{f^*}"', shift right, from=1-2, to=2-2]
		\arrow["{i_*}", shift left, hook', from=1-3, to=1-2]
		\arrow["{g^*}"', shift right, from=1-3, to=2-3]
		\arrow["{j_!'}", shift left, hook, from=2-1, to=2-2]
		\arrow["{f_*}"', shift right, from=2-2, to=1-2]
		\arrow["{j^{\prime,*}}", shift left, from=2-2, to=2-1]
		\arrow["{i^{\prime,*}}", shift left, from=2-2, to=2-3]
		\arrow["{g_*}"', shift right, from=2-3, to=1-3]
		\arrow["{i'_*}", shift left, hook', from=2-3, to=2-2]
	\end{tikzcd}\end{equation}
	Suppose that the right square is both horizontally and vertically right adjointable. Then the right square is a pullback diagram in $\PrL_{\st}$.
\end{lemma}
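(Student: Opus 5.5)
We will show that the comparison functor $\Phi:\D\to\D'\times_{\E'}\E$, given by $X\mapsto(f^*X,i^*X,g^*i^*X\simeq i'^{,*}f^*X)$, is an equivalence. The right adjointability hypotheses supply its right adjoint. Horizontal right adjointability says that the Beck--Chevalley map $f^*i_*\to i'_*g^*$ is an equivalence. Vertical right adjointability says that $i^*f_*\to g_*i'^{,*}$ is an equivalence. The right adjoint $\Psi$ sends a triple $(Y,Z,\alpha:i'^{,*}Y\simeq g^*Z)$ to the pullback
\[
	\Psi(Y,Z,\alpha):=f_*Y\times_{f_*i'_*g^*Z}i_*Z,
\]
where $i_*Z\to i_*g_*g^*Z\simeq f_*i'_*g^*Z$ is the unit and $f_*Y\to f_*i'_*i'^{,*}Y\simeq f_*i'_*g^*Z$ is the unit followed by $\alpha$. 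Since all functors are exact, $\Psi$ preserves these pullbacks.

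First, $\Phi$ is fully faithful, i.e.\ the unit $X\to\Psi\Phi X=f_*f^*X\times_{f_*i'_*i'^{,*}f^*X}i_*i^*X$ is an equivalence. We test this after applying $j^*$ and $i^*$, which are jointly conservative because $\C\to\D\to\E$ is a fiber--cofiber sequence. Under $i^*$, the cospan becomes $i^*f_*f^*X\to i^*f_*i'_*i'^{,*}f^*X\leftarrow i^*X$. Vertical adjointability gives $i^*f_*\simeq g_*i'^{,*}$, so the left map becomes $g_*i'^{,*}f^*X\to g_*i'^{,*}i'_*i'^{,*}f^*X$, which is an equivalence. Hence the pullback is $i^*X$. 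Under $j^*$ we have $j^*i_*=0$, so the pullback becomes $j^*f_*f^*X\times_{j^*f_*i'_*(\cdots)}0$. We claim $j^*f_*\simeq j'^{,*}$. Since $f^*j_!\simeq j'_!$ (the left squares commute with $\C$ identified), taking right adjoints gives $j^*f_*\simeq j'^{,*}$, and then $j^*f_*i'_*=j'^{,*}i'_*=0$. So the pullback is $j'^{,*}f^*X\simeq j^*X$, the last step using $j'^{,*}f^*\simeq j^*$, which comes from $f^*$ commuting with the identity on $\C$ and the recollement. This last identification is the delicate point. I would establish it from the fiber sequence $j'_!j'^{,*}\to\id\to i'_*i'^{,*}$ applied to $f^*X$, combined with $f^*j_!j^*X\simeq j'_!j^*X$ and $f^*i_*i^*X\simeq i'_*g^*i^*X$, which is exactly where horizontal adjointability enters.

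Second, $\Psi$ is conservative. On the pullback category, $(Y,Z,\alpha)\simeq0$ if and only if $Y\simeq0$ and $Z\simeq0$. Suppose $\Psi(Y,Z,\alpha)=0$. Applying $i^*$ and using $i^*f_*\simeq g_*i'^{,*}$ as before gives $0\simeq g_*g^*Z\times_{g_*g^*Z}Z\simeq Z$. The cospan then degenerates, so $f_*Y\simeq0$ with $i'^{,*}Y\simeq0$, i.e.\ $Y\simeq j'_!W$ for some $W\in\C$. Then $f_*j'_!W\simeq j_!W$, obtained from $j'_!\simeq f^*j_!$ and the full faithfulness of $f_*$ on the image of $\C$ via $j^*f_*\simeq j'^{,*}$, so $W\simeq j^*f_*Y\simeq0$. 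Together with full faithfulness of $\Phi$, this gives the equivalence. I expect the main obstacle to be the careful bookkeeping of the identifications $j^*f_*\simeq j'^{,*}$ and $f^*i_*\simeq i'_*g^*$, namely checking that the Beck--Chevalley maps are compatible with the pullback structure maps.
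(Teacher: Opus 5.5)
Your proposal is correct and follows essentially the same route as the paper: show the comparison functor $\D\to\D'\times_{\E'}\E$ is fully faithful and its right adjoint $f_*Y\times_{i_*g_*g^*Z}i_*Z$ is conservative. In both arguments, vertical adjointability handles the $\E$-component and horizontal adjointability gives $j^*\simeq j^{\prime,*}f^*$. The paper checks full faithfulness by passing to fibers of rows and reducing to $j_!j^*\simeq f_*j'_!j^{\prime,*}f^*$ via $f_*j'_!\simeq j_!$, rather than testing on $i^*$ and $j^*$ separately, but the inputs are identical. In the conservativity step you can drop the detour through $f_*j'_!W\simeq j_!W$: $W\simeq j^{\prime,*}Y\simeq j^*f_*Y\simeq0$ follows directly from $j^*f_*\simeq j^{\prime,*}$, which is exactly what the paper does.
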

\begin{proof}
	The right square induces a pair of adjoint functors 
	\[
		F:\D\rightleftarrows\D'\times_{\E'}\E:G
	\]
	where $F$ sends $D\in\D$ to $(f^*D,i^*D,i^{\prime,*}f^*D\simeq g^*i^*D)\in\D'\times_{\E'}\E$, and $G$ sends $(D',E,i^{\prime,*}D'\simeq g^*E)\in\D'\times_{\E'}\E$ to $f_*D'\times_{i_*g_*g^*E}i_*E\in\D$.

	It suffices to show that $F$ is fully faithful and $G$ is conservative. $F$ is fully faithful if and only if the commutative diagram
	\[\begin{tikzcd}
		{\id_{\D}} & {i_*i^*} \\
		{f_*f^*} & {f_*i'_*i^{\prime,*}f^*\simeq i_*g_*g^*i^*}
		\arrow[from=1-1, to=1-2]
		\arrow[from=1-1, to=2-1]
		\arrow[from=1-2, to=2-2]
		\arrow[from=2-1, to=2-2]
	\end{tikzcd}\]
	is a pullback diagram. Passing to fibers of rows, we obtain that $F$ is fully faithful if and only if $j_!j^*\to f_*j_!'j^{\prime,*}f^*$ is an equivalence. By vertical right adjointability, $f_*j'_!\simeq j_!$. Since $j_!$ is fully faithful, $j_!j^*\to f_*j_!'j^{\prime,*}f^*$ is an equivalence if and only if $j^*\to j^{\prime,*}f^*$ is an equivalence. Thus, it follows from horizontal right adjointability that $F$ is fully faithful.

	To show that $G$ is conservative, it suffices to show that for any object $(D',E,i^{\prime,*}D'\simeq g^*E)\in\D'\times_{\E'}\E$ such that
	\[\begin{tikzcd}
		0 & {i_*E} \\
		{f_*D'} & {f_*i'_*i^{\prime,*}D'\simeq i_*g_*g^*E}
		\arrow[from=1-1, to=1-2]
		\arrow[from=1-1, to=2-1]
		\arrow["\lrcorner"{anchor=center, pos=0.125}, draw=none, from=1-1, to=2-2]
		\arrow[from=1-2, to=2-2]
		\arrow[from=2-1, to=2-2]
	\end{tikzcd}\]
	we have $D'\simeq0$ and $E\simeq0$. Applying $i^*$ and using the vertical right adjointability $i^*f_*\simeq g_*i^{\prime,*}$, the bottom row becomes an equivalence since $i_*$ and $i'_*$ are fully faithful. Then $E\simeq0$ and hence $f_*D'\simeq0$. It follows that $j^{\prime,*}D'\simeq j^*f_*D'\simeq0$, and $i^{\prime,*}D'\simeq g^*E\simeq0$. Therefore, $D'\simeq0$.
\end{proof}
\begin{remark}
	This is the right adjointable version of \cite[Proposition 2.10]{Aok26}. Dually, given a commutative diagram between short exact sequences in $\PrL_{\st}$
	\[\begin{tikzcd}
		\C & \D & \E \\
		{\C'} & {\D'} & \E
		\arrow["{j_!}", shift left, hook, from=1-1, to=1-2]
		\arrow["{f^*}"', shift right, from=1-1, to=2-1]
		\arrow["{j^*}", shift left, from=1-2, to=1-1]
		\arrow["{i^*}", shift left, from=1-2, to=1-3]
		\arrow["{g^*}"', shift right, from=1-2, to=2-2]
		\arrow["{i_*}", shift left, hook', from=1-3, to=1-2]
		\arrow[equals, from=1-3, to=2-3]
		\arrow["{f_*}"', shift right, from=2-1, to=1-1]
		\arrow["{j_!'}", shift left, hook, from=2-1, to=2-2]
		\arrow["{g_*}"', shift right, from=2-2, to=1-2]
		\arrow["{j^{\prime,*}}", shift left, from=2-2, to=2-1]
		\arrow["{i^{\prime,*}}", shift left, from=2-2, to=2-3]
		\arrow["{i'_*}", shift left, hook', from=2-3, to=2-2]
	\end{tikzcd}\]
	The left square is a pushout diagram in $\PrL_{\st}$ whenever it is both horizontally and vertically right adjointable. Indeed, after passing to right adjoints in the left square, both left and right squares are vertically left adjointable. It follows from the $\widehat{\Cat}_{\st}$ version of \cite[Proposition 2.10]{Aok26} that, after passing to right adjoints of all functors, the left square is a pullback diagram, and hence the original left square is a pushout diagram.
\end{remark}
\begin{remark}
	More generally, given a commutative diagram between short exact sequences in $\widehat{\Cat}_{\st}$
	\[\begin{tikzcd}
		\C & \D & \E \\
		{\C'} & {\D'} & \E
		\arrow["j", hook, from=1-1, to=1-2]
		\arrow["f"', from=1-1, to=2-1]
		\arrow["i", from=1-2, to=1-3]
		\arrow["g"', from=1-2, to=2-2]
		\arrow[equals, from=1-3, to=2-3]
		\arrow["{j'}", hook, from=2-1, to=2-2]
		\arrow["{i'}", from=2-2, to=2-3]
	\end{tikzcd}\]
	Then the left square is a pullback diagram in $\widehat{\Cat}_{\st}$. Indeed, since $\C'\times_{\D'}\D\hookrightarrow\D$ is fully faithful, $\C\hookrightarrow\C'\times_{\D'}\D$ is also fully faithful. It remains to show that it is essentially surjective. This is because every object $(c',d,j'c'\simeq gd)$ satisfies $id\simeq i'gd\simeq i'j'c'\simeq0$ and hence lies in the image of $\C$.

	Conversely, given a commutative diagram between short exact sequences in $\PrL_{\st}$ as in \eqref{eq:ses_pullback}, the right square is a pushout diagram in $\PrL_{\st}$. Indeed, passing to right adjoints, it follows that the right square is a pullback diagram in $\widehat{\Cat}_{\st}$.
\end{remark}

\begin{lemma}[{\cite[Proposition 1.87]{Efi25localizing}, \cite[Lemma 4.1, Corollary 4.5]{Ram24dualizable}, \cite[Remark 3.10]{LLS26}}]\label{lem:pullback_dbl}
	Let $\C\in\CAlg(\PrL_{\st})$. Given a pullback diagram in $\PrL_{\C}$
	\[\begin{tikzcd}
		{\M} & {\N} \\
		{\M'} & {\N'}
		\arrow[from=1-1, to=1-2]
		\arrow[from=1-1, to=2-1]
		\arrow["\lrcorner"{anchor=center, pos=0.125}, draw=none, from=1-1, to=2-2]
		\arrow[from=1-2, to=2-2]
		\arrow[from=2-1, to=2-2]
	\end{tikzcd}\]

	Suppose that $\M'\to\N'$ is a $\C$-internal localization and $\N\to\N'$ is a $\C$-internal left adjoint. Then the diagram is also a pullback diagram in $\Pr^{\iL}_{\C}$ and it is both horizontally and vertically right adjointable.
	
	Suppose moreover that the diagram lies in $\Pr^{\dbl}_{\C}$. Then the diagram is also a pullback diagram in $\Pr^{\dbl}_{\C}$.
\end{lemma}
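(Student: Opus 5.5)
Write $\M\to\N$ as $a$, $\M'\to\N'$ as $b$ (the $\C$-internal localization), $\N\to\N'$ as $g$, and $\M\to\M'$ as $f$. We first show the square is right adjointable, then deduce the pullback statements.

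\emph{Horizontal adjointability.} Since $b$ is a localization, $b^R$ is fully faithful. Because the square is a pullback in $\PrL_{\C}$, the functor $a$ is also a localization, with right adjoint $a^R(N)=(b^Rg(N),N,\mathrm{can})$. Indeed, $\Hom_{\M}((M',N,\alpha),(b^Rg N,N,\mathrm{can}))\simeq\Hom_{\N}(N_0,N)$ for any object $(M',N_0,\alpha)$, using the adjunction $b\dashv b^R$ and the fact that $\Hom$ in a pullback of categories is the pullback of the $\Hom$'s. This formula gives $f a^R\simeq b^R g$, so the square is horizontally right adjointable. Moreover $a^R$ preserves colimits and is $\C$-linear: $b^R$ and $g$ are, and colimits and $\C$-actions in the pullback are computed componentwise.

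\emph{Vertical adjointability.} Similarly, $f$ has right adjoint $f^R(M')=(M',\,g^R b M',\,\mathrm{can})$. This uses $g^R$, which exists because $g$ is a left adjoint. One has to check that this gives a genuine object of the pullback, namely that $g g^R b M'\simeq bM'$. This is the main obstacle, since $g g^R$ need not be the identity. To handle it, I would describe the right adjoint of the inclusion of the pullback more carefully. For $M'\in\M'$, the pair $(M',N)$ needs a gluing datum $b M'\simeq gN$. The adjoint is therefore the pullback object
\[
  \bigl(M'\times_{b^R b M'} b^R g g^R b M',\; g^R b M',\;\mathrm{can}\bigr).
\]
Its $\N'$-components agree because $b b^R\simeq\id$. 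Rather than verifying this by hand, the cleaner route is to cite \cite[Proposition 1.87]{Efi25localizing} for the general statement that, in a pullback along a localization, right adjoints are computed by such formulas. Under this formula, $g a^R$ versus $b f^R$ and $a f^R$ versus $g^R b$ match: applying $b$ kills the difference since $b b^R\simeq\id$. This gives vertical adjointability, and since $g^R$ and $b^R$ are $\C$-linear colimit-preserving, $f^R$ is too.

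\emph{Pullback in $\Pr^{\iL}_{\C}$.} Take $\mathcal{T}$ with $\C$-internal left adjoints $\mathcal{T}\to\M'$ and $\mathcal{T}\to\N$ agreeing in $\N'$. They induce a map $h\colon\mathcal{T}\to\M$ in $\PrL_{\C}$. Its right adjoint is built from the right adjoints of the two legs together with $a^R$ and $f^R$ via the same pullback formula, so $h^R$ is $\C$-linear and colimit-preserving. Uniqueness is inherited from $\PrL_{\C}$.

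\emph{Dualizable case.} $\Pr^{\dbl}_{\C}\subseteq\Pr^{\iL}_{\C}$ is full. If all four corners are dualizable, the pullback in $\Pr^{\iL}_{\C}$ already lies in the subcategory, so it is the pullback in $\Pr^{\dbl}_{\C}$ as well.
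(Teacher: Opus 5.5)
Your proof is correct. The paper gives no argument of its own here, only the citations to Efimov, Ramzi and Levy--Liang--Sosnilo, so your direct computation of the right adjoints of the projections inside $\M\simeq\M'\times_{\N'}\N$ is a self-contained substitute. For the last assertion you use the literal hypothesis that $\M$ itself is dualizable; you neither prove nor need that this follows from dualizability of $\M'$, $\N$, $\N'$.

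A few points need tidying.

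First, do not delegate the formula for $f^R$ to \cite[Proposition 1.87]{Efi25localizing}, which is the result being proved. The check is two lines. Set $P:=M'\times_{b^RbM'}b^Rgg^RbM'$; then $bP\simeq gg^RbM'$, since $bb^R\simeq\id$ and $b$ is exact, and
\[
	\Hom_{\M}\bigl((X,N_0,\alpha),(P,g^RbM',\mathrm{can})\bigr)\simeq\Hom(X,M')\times_{\Hom(bX,bM')}\Hom(N_0,g^RbM')\simeq\Hom(X,M').
\]
The second equivalence holds because $\Hom(N_0,g^RbM')\to\Hom(bX,bM')$ is the adjunction equivalence followed by $\alpha^*$.

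Second, the comparison of $ga^R$ with $bf^R$ is ill-typed. Vertical adjointability is only the assertion $af^R\simeq g^Rb$, which is the second component of your formula. The Beck--Chevalley mate is $g^R(\epsilon)\circ\eta_{g^R}\simeq\id$ by a triangle identity. The equivalence $bb^R\simeq\id$ is used only to make $(P,g^RbM',\mathrm{can})$ an object of the pullback, not for the Beck--Chevalley condition.

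Third, for a cone $u\colon\mathcal{T}\to\M'$, $v\colon\mathcal{T}\to\N$, the induced functor $h$ has right adjoint
\[
	h^R(M',N,\alpha)\simeq u^RM'\times_{u^Rb^RbM'}v^RN,
\]
built from $b^R$ rather than from $a^R$ and $f^R$. Here, as for $f^R$, colimit preservation and strict $\C$-linearity rely on pullbacks commuting with colimits and with $c\otimes-$, i.e.\ on stability. You should say this explicitly.
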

Consequently, if the diagram \eqref{eq:ses_pullback} is a commutative diagram between short exact sequences in $\Pr^{\iL}_{\st}$ or $\Pr^{\dbl}_{\st}$ that is both horizontally and vertically right adjointable, then the right square is a pullback diagram in $\Pr^{\iL}_{\st}$ or $\Pr^{\dbl}_{\st}$, respectively.
\begin{lemma}\label{lem:pullback_mechanism}
	Let $\C\in\CAlg(\PrL_{\st})$, and let $\D\in\PrL_{\C}$. Given a pullback diagram in $\PrL_{\C}$
	\begin{equation}\label{eq:pullback_of_modules}\begin{tikzcd}
		\M & \N \\
		{\M'} & {\N'}
		\arrow[from=1-1, to=1-2]
		\arrow[from=1-1, to=2-1]
		\arrow["\lrcorner"{anchor=center, pos=0.125}, draw=none, from=1-1, to=2-2]
		\arrow[from=1-2, to=2-2]
		\arrow[from=2-1, to=2-2]
	\end{tikzcd}\end{equation}
	Consider the canonical commutative diagram
	\begin{equation}\label{eq:induced_pullback_of_modules}\begin{tikzcd}
		{\M\otimes_{\C}\D} & {\N\otimes_{\C}\D} \\
		{\M'\otimes_{\C}\D} & {\N'\otimes_{\C}\D}
		\arrow[from=1-1, to=1-2]
		\arrow[from=1-1, to=2-1]
		\arrow[from=1-2, to=2-2]
		\arrow[from=2-1, to=2-2]
	\end{tikzcd}\end{equation}
	\begin{enumerate}
		\item Suppose that $\D\in\Pr^{\dbl}_{\C}$. Then \eqref{eq:induced_pullback_of_modules} is a pullback diagram in $\PrL_{\C}$.
		
		Suppose moreover that \eqref{eq:pullback_of_modules} is a pullback diagram in $\CAlg(\PrL_\C)$ and $\D\in\CAlg(\PrL_{\C})$. Then \eqref{eq:induced_pullback_of_modules} is a pullback diagram in $\CAlg(\PrL_{\C})$.
		\item Suppose that \eqref{eq:pullback_of_modules} lies in $\Pr^{\iL}_{\C}$ and $\M'\to\N'$ is a localization. Then \eqref{eq:induced_pullback_of_modules} is a pullback diagram in $\Pr^{\iL}_{\C}$.
		
		Suppose moreover that \eqref{eq:pullback_of_modules} is a pullback diagram in $\CAlg(\PrL_{\C})$ and $\D\in\CAlg(\PrL_{\C})$. Then \eqref{eq:induced_pullback_of_modules} is a pullback diagram in $\CAlg(\PrL_{\C})$.
		\item Suppose that \eqref{eq:pullback_of_modules} lies in $\Pr^{\dbl}_{\C}$ and $\M'\to\N'$ is a localization, and $\D\in\Pr^{\dbl}_{\C}$. Then \eqref{eq:induced_pullback_of_modules} is a pullback diagram in $\Pr^{\dbl}_{\C}$.
		
		Suppose moreover that \eqref{eq:pullback_of_modules} is a pullback diagram in $\CAlg(\Pr^{\dbl}_{\C})$ and $\D\in\CAlg(\Pr^{\dbl}_{\C})$. Then \eqref{eq:induced_pullback_of_modules} is a pullback diagram in $\CAlg(\Pr^{\dbl}_{\C})$.
	\end{enumerate}
\end{lemma}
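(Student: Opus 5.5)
The plan is to reduce everything to the behaviour of the functor $-\otimes_{\C}\D$ on limits, and then to upgrade the resulting pullbacks to the finer categories ($\CAlg$, $\Pr^{\iL}_{\C}$, $\Pr^{\dbl}_{\C}$) using \cref{lem:pullback_dbl}.

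For (1), I use that $\D\in\Pr^{\dbl}_{\C}$ is dualizable in $\PrL_{\C}$. Then $-\otimes_{\C}\D\simeq\iHom_{\C}(\D^\vee,-)$ is a right adjoint, so it preserves all limits. Hence \eqref{eq:induced_pullback_of_modules} is a pullback in $\PrL_{\C}$.

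For the $\CAlg$ refinement, the forgetful functor $\CAlg(\PrL_{\C})\to\PrL_{\C}$ creates limits. The square \eqref{eq:induced_pullback_of_modules} is the image of \eqref{eq:pullback_of_modules} under the symmetric monoidal functor $-\otimes_{\C}\D$. The latter is a map of commutative algebras once $\D$ is a commutative algebra: the tensor product of commutative algebras is the coproduct in $\CAlg$. So \eqref{eq:induced_pullback_of_modules} is a square in $\CAlg(\PrL_{\C})$ whose underlying square is a pullback, and it is therefore a pullback in $\CAlg(\PrL_{\C})$.

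For (2), $\D$ need not be dualizable, so I would argue directly. Since $\M'\to\N'$ is a $\C$-internal localization and $\N\to\N'$ is an internal left adjoint, \cref{lem:pullback_dbl} shows the square is horizontally and vertically right adjointable. Equivalently, the pullback is exhibited as a recollement-type gluing. I would express $\M$ as a fiber product built from a short exact sequence. Set $\K:=\fib(\M'\to\N')$. Then $\K\simeq\fib(\M\to\N)$, and $\K\hookrightarrow\M\to\N$ is a short exact sequence in $\Pr^{\iL}_{\C}$. Short exact sequences in $\Pr^{\iL}_{\C}$ are preserved by $-\otimes_{\C}\D$ for arbitrary $\D$: they are cofiber sequences whose maps are internal left adjoints, and cofibers and adjunctions internal to $\Mod_{\C}$ survive base change. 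This gives a map of short exact sequences
\[
\K\otimes_{\C}\D\hookrightarrow\M\otimes_{\C}\D\to\N\otimes_{\C}\D,
\qquad
\K\otimes_{\C}\D\hookrightarrow\M'\otimes_{\C}\D\to\N'\otimes_{\C}\D,
\]
with identity on the left. Its right square is horizontally and vertically right adjointable, because base change preserves the adjointability data, as all the functors are $\C$-internal left adjoints. \cref{lem:ses_pullback} then makes the right square a pullback in $\PrL_{\st}$, and \cref{lem:pullback_dbl} upgrades it to $\Pr^{\iL}_{\C}$. The $\CAlg$ refinement follows exactly as in (1).

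For (3), combine (2) with the last clause of \cref{lem:pullback_dbl}. All terms of \eqref{eq:induced_pullback_of_modules} are dualizable, being tensor products of dualizable modules. So the square is a pullback in $\Pr^{\dbl}_{\C}$, and the $\CAlg(\Pr^{\dbl}_{\C})$ version follows since limits there are detected on underlying objects in the same way.

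The main obstacle is (2): checking that base change along an arbitrary $\D$ preserves both the exactness of $\K\to\M\to\N$ and the two-sided right adjointability of the square. I expect this to follow by writing every right adjoint as a $\C$-linear functor, so that it is carried along by $-\otimes_{\C}\D$ as a $2$-functor, but it needs care.
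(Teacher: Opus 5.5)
Your proposal is correct and follows the paper's proof step for step. In (1) you use that $-\otimes_{\C}\D$ preserves limits when $\D$ is dualizable; in (2) you base-change the short exact sequences $\fib(\M'\to\N')\to\M'\to\N'$ and $\fib(\M\to\N)\to\M\to\N$ and then apply \cref{lem:ses_pullback} followed by \cref{lem:pullback_dbl}; in (3) you combine (2) with the dualizable clause of \cref{lem:pullback_dbl}; and the $\CAlg$ versions come from the forgetful functors reflecting limits. The only step you skip, which the paper states explicitly, is upgrading the pullback from $\PrL_{\st}$ to $\PrL_{\C}$ (using that $\PrL_{\C}\to\PrL_{\st}$ reflects limits) before invoking \cref{lem:pullback_dbl}.
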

\begin{proof}
	\begin{enumerate}
		\item Since $\D\in\Pr^{\dbl}_{\C}$, $-\otimes_{\C}\D:\PrL_{\C}\to\PrL_{\C}$ preserves limits. The symmetric monoidal version follows from the fact that $\CAlg(\PrL_{\C})\to\PrL_{\C}$ reflects limits. 
		\item Since $\fib(\M'\to\N')\to\M'\to\N'$ is a short exact sequence in $\Pr^{\iL}_{\C}$, $\fib(\M'\to\N')\otimes_{\C}\D\to\M'\otimes_{\C}\D\to\N'\otimes_{\C}\D$ is a short exact sequence in $\Pr^{\iL}_{\C}$. Similarly, since $\M\to\N$ is also a $\C$-internal localization, $\fib(\M\to\N)\otimes_{\C}\D\to\M\otimes_{\C}\D\to\N\otimes_{\C}\D$ is also a short exact sequence in $\Pr^{\iL}_{\C}$. Then \eqref{eq:induced_pullback_of_modules} is both horizontally and vertically right adjointable, and induces equivalences on fibers of rows. By \cref{lem:ses_pullback}, \eqref{eq:induced_pullback_of_modules} is a pullback diagram in $\PrL_{\st}$, and hence in $\PrL_{\C}$ since $\PrL_{\C}\to\PrL_{\st}$ reflects limits. Moreover, by \cref{lem:pullback_dbl} we see that \eqref{eq:induced_pullback_of_modules} is a pullback diagram in $\Pr^{\iL}_{\C}$. The symmetric monoidal version again follows from the fact that $\CAlg(\PrL_{\C})\to\PrL_{\C}$ reflects limits. 
		\item Combining (2) and \cref{lem:pullback_dbl}, we see that \eqref{eq:induced_pullback_of_modules} is a pullback diagram in $\Pr^{\dbl}_{\C}$. The symmetric monoidal version follows from the fact that $\CAlg(\Pr^{\dbl}_{\C})\to\Pr^{\dbl}_{\C}$ reflects limits.
		\qedhere
	\end{enumerate}
\end{proof}

This lemma provides a mechanism for constructing pullback diagrams. In particular, let $\M\to\N$ and $\M'\to\N'$ be functors in $\Pr^{\iL}_{\C}$ where $\M\to\N$ is a localization. Then the commutative diagram is both horizontally and vertically right adjointable
\[\begin{tikzcd}
	{\M\otimes_{\C}\M'} & {\N\otimes_{\C}\M'} \\
	{\M\otimes_{\C}\N'} & {\N\otimes_{\C}\N'}
	\arrow[from=1-1, to=1-2]
	\arrow[from=1-1, to=2-1]
	\arrow[from=1-2, to=2-2]
	\arrow[from=2-1, to=2-2]
\end{tikzcd}\]
Thus, the right adjointability assumptions arise naturally in practice.

\begin{example}\label{exm:fracture_square_L_E_L_F}
	Let $\C\in\CAlg(\PrL_{\st})$, and let $E,F\in\C$. Suppose that $L_FL_E\simeq0$. Then by \cref{prop:fracture_square_functors} we have a pullback diagram of $\bbE_\infty$-rings in $L_{E\oplus F}\C$
	\[\begin{tikzcd}
		{L_{E\oplus F}\mathbbm{1}} & {L_F\mathbbm{1}} \\
		{L_E\mathbbm{1}} & {L_EL_F\mathbbm{1}}
		\arrow[from=1-1, to=1-2]
		\arrow[from=1-1, to=2-1]
		\arrow["\lrcorner"{anchor=center, pos=0.125}, draw=none, from=1-1, to=2-2]
		\arrow[from=1-2, to=2-2]
		\arrow[from=2-1, to=2-2]
	\end{tikzcd}\]
	Since $L_E:L_{E\oplus F}\C\to L_E\C$ is smashing, $L_{E\oplus F}\mathbbm{1}\to L_E\mathbbm{1}$ is idempotent. By \cref{prop:Tam_pullback_homoepi}, we have a pullback diagram in $\CAlg(\PrL_{\st})$
	\[\begin{tikzcd}
		{L_{E\oplus F}\C} & {\Mod_{L_F\mathbbm{1}}(L_{E\oplus F}\C)} \\
		{\Mod_{L_E\mathbbm{1}}(L_{E\oplus F}\C)} & {\Mod_{L_EL_F\mathbbm{1}}(L_{E\oplus F}\C)}
		\arrow[from=1-1, to=1-2]
		\arrow[from=1-1, to=2-1]
		\arrow["\lrcorner"{anchor=center, pos=0.125}, draw=none, from=1-1, to=2-2]
		\arrow[from=1-2, to=2-2]
		\arrow[from=2-1, to=2-2]
	\end{tikzcd}\]
	which is also a pushout in $\CAlg(\PrL_{\st})$. Since this pullback diagram satisfies \cref{lem:pullback_mechanism}(2), for an arbitrary $\M\in\PrL_{L_{E\oplus F}\C}$, we have a pullback diagram in $\PrL_{\C}$
	\[\begin{tikzcd}
		{\M} & {\Mod_{L_F\mathbbm{1}}(\M)} \\
		{\Mod_{L_E\mathbbm{1}}(\M)} & {\Mod_{L_EL_F\mathbbm{1}}(\M)}
		\arrow[from=1-1, to=1-2]
		\arrow[from=1-1, to=2-1]
		\arrow["\lrcorner"{anchor=center, pos=0.125}, draw=none, from=1-1, to=2-2]
		\arrow[from=1-2, to=2-2]
		\arrow[from=2-1, to=2-2]
	\end{tikzcd}\]
	If $\M\in\Pr^{\dbl}_{\C}$, then the pullback diagram is a pullback diagram in $\Pr^{\dbl}_{\C}$.

	For example, take $E=\bbS[1/p]$ and $F=\bbS/p$ as in \cref{exm:S[1/p]_S/p}. Then the desired pullback diagram in $\CAlg(\PrL_{\st})$ is given by
	\[\begin{tikzcd}
		{\Sp} & {\Mod(\bbS^{\wedge}_p)} \\
		{\Sp[1/p]} & {\Mod(\bbS^{\wedge}_p)[1/p]}
		\arrow[from=1-1, to=1-2]
		\arrow[from=1-1, to=2-1]
		\arrow["\lrcorner"{anchor=center, pos=0.125}, draw=none, from=1-1, to=2-2]
		\arrow[from=1-2, to=2-2]
		\arrow[from=2-1, to=2-2]
	\end{tikzcd}\]
\end{example}
In \cite{NPR24}, Naumann--Pol--Ramzi construct a symmetric monoidal fracture square, based on the following construction used in \cite{LMMT24} to prove the $T(n)$-local Galois descent of $K$-theory. In this spirit, we reprove this descent result in \cref{sec:3}.
\begin{construction}[{\cite[Construction 4.86]{Ram26locallyrigid}, \cite[Remark 7.21]{Aok25}}]\label{constr:Ind_dbl}
	We have an adjunction
		\[\begin{tikzcd}
			{\CAlg^{\rig}(\PrL_{\st})^{\cg}} & {\CAlg^{\rig}(\PrL_{\st})} & {\CAlg(\PrL_{\st})}
			\arrow[shift left, hook, from=1-1, to=1-2]
			\arrow["{\Ind((-)^{\dbl})}", shift left, from=1-2, to=1-1]
			\arrow[shift left, hook, from=1-2, to=1-3]
			\arrow["{(-)^{\rig}}", shift left, from=1-3, to=1-2]
		\end{tikzcd}\]
	where $\CAlg^{\rig}(\PrL_{\st})^{\cg}\subseteq\CAlg^{\rig}(\PrL_{\st})$ denotes the full subcategory spanned by compactly generated rigid algebras. Moreover, we have $\Ind((\C^{\rig})^{\dbl})\simeq\Ind(\C^{\dbl})\hookrightarrow\C^{\rig}$ for $\C\in\CAlg^{\rig}(\PrL_{\st})$.

	Let $\C\in\CAlg(\PrL_{\st})$. Then any $\Sp$-algebra map $\D\to\C$ from $\D\in\CAlg^{\rig}(\PrL_{\st})^{\cg}$ factors through $\Ind(\C^{\dbl})\hookrightarrow\C^{\rig}\to\C$, and it induces an $\Sp$-algebra map $\D\to\Ind(\C^{\dbl})$.
\end{construction}
\begin{theorem}[{\cite[Theorem 5.11]{NPR24}}]\label{thm:NPR24}
	Let $\C\in\CAlg^{\rig}(\PrL_{\st})^{\cg}$, and let $\K\subseteq\C^{\dbl}$. The completion functor $\Lambda^{\K}:\C\to\C^{\K\text{-}\cplt}$ induces an $\Sp$-algebra map $\C\to\Ind((\C^{\K\text{-}\cplt})^{\dbl})$, which is a $\C$-internal left adjoint. Then the canonical commutative diagram is a pullback diagram in $\CAlg(\PrL_{\st})$
	\[\begin{tikzcd}
		\C & {\C^{\K\text{-}\loc}} \\
		{\Ind((\C^{\K\text{-}\cplt})^{\dbl})} & {\C^{\K\text{-}\loc}\otimes_{\C}\Ind((\C^{\K\text{-}\cplt})^{\dbl})}
		\arrow[from=1-1, to=1-2]
		\arrow[from=1-1, to=2-1]
		\arrow[from=1-2, to=2-2]
		\arrow[""{name=0, anchor=center, inner sep=0}, from=2-1, to=2-2]
		\arrow["\lrcorner"{anchor=center, pos=0.125}, draw=none, from=1-1, to=0]
	\end{tikzcd}\]
\end{theorem}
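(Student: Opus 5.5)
We prove the statement by exhibiting the square as the fracture square attached to a short exact sequence, using \cref{lem:ses_pullback} and \cref{lem:pullback_dbl}. Write $\I:=\C^{\K\text{-}\tors}\subseteq\C$ and $\widetilde{\C}:=\Ind((\C^{\K\text{-}\cplt})^{\dbl})$.

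\textbf{The functor $\C\to\widetilde{\C}$.} By \cref{exm:local_duality}, $\I$ is a smashing ideal and $\C^{\K\text{-}\loc}\simeq\C/\I$. Since $\C$ is rigid and compactly generated, \cref{constr:Ind_dbl} applied to the symmetric monoidal functor $\Lambda^{\K}:\C\to\C^{\K\text{-}\cplt}$ gives an $\Sp$-algebra map $f:\C\to\widetilde{\C}$. Because $\widetilde{\C}$ is rigid, $f$ is a map of rigid algebras and hence a $\C$-internal left adjoint.

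\textbf{$f$ is a map of rigid envelopes.} I will show that $\I\to\I\otimes_{\C}\widetilde{\C}$ is an equivalence.
\begin{enumerate}
\item By \cref{prop:local_duality}, $\Lambda^{\K}$ restricts to an equivalence $\I\simeq\C^{\K\text{-}\cplt\text{-}\tors}$.
\item $\I$ is compactly generated by $\K\otimes\C^{\omega}$. These are dualizable objects of $\C$, and $\Lambda^{\K}$ sends them to dualizable objects of $\C^{\K\text{-}\cplt}$. So $\I$ embeds into $\widetilde{\C}$ as the localizing ideal generated by $f(\K)$.
\item One checks that this ideal is $\I\otimes_{\C}\widetilde{\C}$, and that $\I$ is a smashing ideal of $\widetilde{\C}$.
\end{enumerate}
A cleaner route is via \cref{prop:map_of_rigid_envelopes}(1): it suffices to know that $\widetilde{\C}\to\I^{\rig}_{\C}$ is fully faithful. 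For this, I identify $\I^{\rig}_{\C}\simeq\iHom^{\dbl}_{\C}(\I,\C)$ with the rigidification $(\C^{\K\text{-}\cplt})^{\rig}$; these agree since $\I\simeq\C^{\K\text{-}\cplt}$ as locally rigid $\C$-algebras via local duality. \cref{constr:Ind_dbl} then gives $\Ind((\C^{\K\text{-}\cplt})^{\dbl})\hookrightarrow(\C^{\K\text{-}\cplt})^{\rig}$, which is the required full faithfulness.

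\textbf{Comparing the two short exact sequences.} Tensoring $\I\hookrightarrow\C\to\C^{\K\text{-}\loc}$ with $\widetilde{\C}$ over $\C$ gives a short exact sequence
\[\I\otimes_{\C}\widetilde{\C}\simeq\I\hookrightarrow\widetilde{\C}\to\C^{\K\text{-}\loc}\otimes_{\C}\widetilde{\C},\]
and this maps to the original sequence, with the identity on the left-hand term. The right square is horizontally right adjointable because the horizontal functors are $\C$-internal localizations. It is vertically right adjointable because $f$ is a $\C$-internal left adjoint and $\C^{\K\text{-}\loc}\otimes_{\C}-$ preserves this; this is the observation after \cref{lem:pullback_mechanism}. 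So \cref{lem:ses_pullback} makes the square a pullback in $\PrL_{\st}$. It is then a pullback in $\CAlg(\PrL_{\st})$, since the forgetful functor reflects limits.

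\textbf{Main obstacle.} The expected difficulty is the second step: proving the equivalence $\I\otimes_{\C}\widetilde{\C}\simeq\I$, equivalently the full faithfulness of $\widetilde{\C}\to\I^{\rig}$. This needs a careful identification of the rigidification of $\I$ with that of $\C^{\K\text{-}\cplt}$.
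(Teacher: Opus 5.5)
Your proof is correct, and your ``cleaner route'' is essentially the paper's argument: the paper proves \cref{cor:Ind_dbl} by getting $\C\to\Ind(\I^{\dbl})\hookrightarrow\I^{\rig}$ from \cref{constr:Ind_dbl}, using that full faithfulness together with \cref{prop:map_of_rigid_envelopes}(1) to see these are maps of rigid envelopes, and then applying \cref{thm:pullback_of_map_of_rigid_envelope} (that is, \cref{lem:ses_pullback} plus \cref{lem:pullback_dbl}); the statement is recovered by taking $\I=\C^{\K\text{-}\tors}\simeq\C^{\K\text{-}\cplt}$. Your generator-level sketch of $\I\otimes_{\C}\widetilde{\C}\simeq\I$ is therefore unnecessary, and the ``main obstacle'' you flag is easy: $\Gamma^{\K}$ and $\Lambda^{\K}$ are symmetric monoidal localizations of $\C$ that invert the same maps (since $\Lambda^{\K}\Gamma^{\K}\simeq\Lambda^{\K}$ and $\Gamma^{\K}\Lambda^{\K}\simeq\Gamma^{\K}$), so $\I\simeq\C^{\K\text{-}\cplt}$ as $\C$-algebras and their rigidifications agree.
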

In fact, this pullback diagram is a special case of the following fracture square.
\begin{proposition}\label{prop:pullback_diagram_of_completion}
	Let $\C\in\CAlg(\PrL_{\st})$, $\I\subseteq\C$ be a smashing ideal, and let $\A:=\C/\I$. Let $\M\in\Pr^{\dbl}_{\C}$. Then the canonical commutative diagram is a pullback diagram in $\Pr^{\dbl}_{\C}$
	\[\begin{tikzcd}
		\M & {\A\otimes_{\C}\M} \\
		{\M^{\wedge\I}} & {\M_{\eta}}
		\arrow[from=1-1, to=1-2]
		\arrow[from=1-1, to=2-1]
		\arrow["\lrcorner"{anchor=center, pos=0.125}, draw=none, from=1-1, to=2-2]
		\arrow[from=1-2, to=2-2]
		\arrow[from=2-1, to=2-2]
	\end{tikzcd}\]
	In particular, if $\C\in\CAlg^{\rig}(\PrL_{\st})$, then it is a pullback diagram in $\Pr^{\dbl}_{\st}$.
\end{proposition}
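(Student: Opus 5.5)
The plan is to apply \cref{lem:ses_pullback} in the enhanced form recorded right after \cref{lem:pullback_dbl}. Concretely, I want to exhibit the square as the right-hand square of a morphism of short exact sequences in $\Pr^{\dbl}_{\C}$ that is both horizontally and vertically right adjointable. The morphism is the one supplied by \cref{cor:generic_fiber}(2):
\[
(\I\otimes_{\C}\M\hookrightarrow\M\to\A\otimes_{\C}\M)\longrightarrow(\I\otimes_{\C}\M^{\wedge\I}\hookrightarrow\M^{\wedge\I}\to\M_{\eta}).
\]
This is a map of short exact sequences in $\Pr^{\dbl}_{\C}$, and its left vertical map $\I\otimes_{\C}\M\to\I\otimes_{\C}\M^{\wedge\I}$ is an equivalence. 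That equivalence is exactly the situation of diagram \eqref{eq:ses_pullback}, where the two left-hand categories are identified.

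The key steps are as follows.

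\textbf{Step 1: horizontal right adjointability.} The rows are short exact sequences, obtained by tensoring $\I\hookrightarrow\C\to\A$ with $\M$ and with $\M^{\wedge\I}$. I need the comparison $i^{\prime,*}f^*\to g^*i^*$ to pass to right adjoints, i.e. $f^*i_*\simeq i'_*g^*$. Here $i_*$ is induced by the fully faithful right adjoint $\A\hookrightarrow\C$, which is $\C$-linear because $\C\to\A$ is a smashing localization. Since $f:\M\to\M^{\wedge\I}$ is $\C$-linear, naturality of $\C$-linear functors in the $\C$-module variable gives the commutation. Equivalently, both composites are computed by $A\otimes-$ followed by $f$.

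\textbf{Step 2: vertical right adjointability.} I need $i^*f_*\simeq g_*i^{\prime,*}$, where $f_*$ is the right adjoint of the completion map $\M\to\M^{\wedge\I}$. I will argue as follows.
\begin{enumerate}
\item \cref{cor:completion}(2) says the square involving $\I\otimes_{\C}-$ is vertically right adjointable, so $f_*j'_!\simeq j_!$.
\item The map of sequences is $\C$-linear and internal left adjoint, so $f_*$ is $\C$-linear.
\item It follows that $f_*$ commutes with the $\C$-linear idempotent endofunctors $A\otimes-$ and $I\otimes-$.
\item Hence $f_*$ carries $\A$-local objects to $\A$-local objects, and the Beck--Chevalley map $i^*f_*\to g_*i^{\prime,*}$ is an equivalence. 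This can be checked after applying the fully faithful $i_*$.
\end{enumerate}

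\textbf{Step 3: conclusion.} With both adjointabilities in hand, \cref{lem:ses_pullback} gives a pullback in $\PrL_{\st}$. \cref{lem:pullback_dbl} then upgrades it to a pullback in $\Pr^{\iL}_{\C}$ and in $\Pr^{\dbl}_{\C}$: its hypotheses hold because $\M^{\wedge\I}\to\M_{\eta}$ is a $\C$-internal localization and $\A\otimes_{\C}\M\to\M_{\eta}$ is a $\C$-internal left adjoint.

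\textbf{Rigid case.} When $\C$ is rigid, $\Pr^{\dbl}_{\C}\simeq\Mod_{\C}(\Pr^{\dbl}_{\st})$. The forgetful functor to $\Pr^{\dbl}_{\st}$ has both adjoints, namely $\C\otimes-$ on the left and an internal-Hom coinduction on the right, because $\C$ is dualizable and rigid. It therefore preserves limits, so the square is also a pullback in $\Pr^{\dbl}_{\st}$.

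The main obstacle is Step 2. The subtle point is that $f_*$ must be $\C$-linear and not merely lax linear. That holds precisely because the completion map is a $\C$-internal left adjoint, being a morphism in $\Pr^{\dbl}_{\C}$. If that route turns out delicate, I would instead use the identification $\M^{\wedge\I}\simeq\iHom^{\dbl}_{\C}(\I,\M)$ and the adjunction of \cref{thm:unst_completion} to compute $f_*$ directly.
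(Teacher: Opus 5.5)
Your proposal is correct and follows the paper's proof. You take the map of short exact sequences from \cref{cor:generic_fiber}(2), whose left vertical arrow is an equivalence, show the right square is both horizontally and vertically right adjointable, and conclude with \cref{lem:ses_pullback} and \cref{lem:pullback_dbl}. The differences are minor: you verify the two adjointabilities by hand via $\C$-linearity of $f^*$ and $f_*$, where the paper cites the discussion after \cref{lem:pullback_mechanism}. You also justify the rigid case explicitly, by noting that the forgetful functor $\Mod_{\C}(\Pr^{\dbl}_{\st})\to\Pr^{\dbl}_{\st}$ preserves limits, which the paper leaves implicit.
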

\begin{proof}
	From \cref{cor:generic_fiber} we have a commutative diagram between short exact sequences in $\Pr^{\iL}_{\C}$
	\[\begin{tikzcd}
		{\I\otimes_{\C}\M} & \M & {\A\otimes_{\C}\M} \\
		{\I\otimes_{\C}\M^{\wedge\I}} & {\M^{\wedge\I}} & {\M_{\eta}}
		\arrow[hook, from=1-1, to=1-2]
		\arrow["\simeq"', from=1-1, to=2-1]
		\arrow[from=1-2, to=1-3]
		\arrow[from=1-2, to=2-2]
		\arrow[from=1-3, to=2-3]
		\arrow[hook, from=2-1, to=2-2]
		\arrow[from=2-2, to=2-3]
	\end{tikzcd}\]
	where the left vertical arrow is an equivalence by \cref{thm:unst_completion}. Since $\C\to\A$ is a $\C$-internal localization and $\M\to\M^{\wedge\I}$ is a $\C$-internal left adjoint, the right square is both horizontally and vertically right adjointable by the discussion following \cref{lem:pullback_mechanism}. By \cref{lem:ses_pullback}, the right square is a pullback diagram in $\PrL_{\C}$, and hence in $\Pr^{\dbl}_{\C}$ by \cref{lem:pullback_dbl}.
\end{proof}
\begin{theorem}\label{thm:pullback_of_map_of_rigid_envelope}
	Let $\C\in\CAlg(\PrL_{\st})$, $\I\subseteq\C$ be a smashing ideal, and let $\A:=\C/\I$. Then the canonical commutative diagram is a pullback diagram in $\CAlg(\Pr^{\dbl}_{\C})$
	\begin{equation}\label{eq:pullback_I_rig}\begin{tikzcd}
		\C & \A \\
		{\I^{\rig}_{\C}} & {\I_{\eta}}
		\arrow[from=1-1, to=1-2]
		\arrow[from=1-1, to=2-1]
		\arrow["\lrcorner"{anchor=center, pos=0.125}, draw=none, from=1-1, to=2-2]
		\arrow[from=1-2, to=2-2]
		\arrow[from=2-1, to=2-2]
	\end{tikzcd}\end{equation}
	In particular, if $\C\in\CAlg^{\rig}(\PrL_{\st})$, then $\I^{\rig}_{\C}\simeq\I^{\rig}$ and the diagram is a pullback diagram in $\CAlg(\Pr^{\dbl}_{\st})$.

	More generally, let $\C\in\CAlg(\PrL_{\st})$ and $\I\in\CAlg(\PrL_{\C})$ be locally rigid. Let $\overline{\C}\to\widetilde{\C}$ be a map of $\C$-rigid envelopes of $\I$, and let $\A:=\overline{\C}/\I$. Then the canonical diagram is a pullback diagram in $\CAlg(\Pr^{\dbl}_{\C})$
	\[\begin{tikzcd}
		{\overline{\C}} & \A \\
		{\widetilde{\C}} & {\A\otimes_{\overline{\C}}\widetilde{\C}}
		\arrow[from=1-1, to=1-2]
		\arrow[from=1-1, to=2-1]
		\arrow["\lrcorner"{anchor=center, pos=0.125}, draw=none, from=1-1, to=2-2]
		\arrow[from=1-2, to=2-2]
		\arrow[from=2-1, to=2-2]
	\end{tikzcd}\]
	In particular, if $\C\in\CAlg^{\rig}(\PrL_{\st})$, then the diagram is a pullback diagram in $\CAlg(\Pr^{\dbl}_{\st})$.
\end{theorem}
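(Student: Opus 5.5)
The plan is to deduce the first square from \cref{prop:pullback_diagram_of_completion} applied to $\M=\C$, and the general square from the first one applied inside $\widetilde{\C}$, using \cref{prop:map_of_rigid_envelopes}.

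\textbf{First square.} Take $\M=\C\in\Pr^{\dbl}_{\C}$ in \cref{prop:pullback_diagram_of_completion}. This gives a pullback square in $\Pr^{\dbl}_{\C}$ with corners $\C$, $\A\otimes_{\C}\C\simeq\A$, $\C^{\wedge\I}=\iHom^{\dbl}_{\C}(\I,\C)$ and $\C_\eta$.
\begin{itemize}
\item By \cref{thm:unst_completion}, $\iHom^{\dbl}_{\C}(\I,\C)\simeq\I^{\rig}_{\C}$.
\item By definition, $\C_\eta=\A\otimes_{\C}\I^{\rig}_{\C}$. This agrees with $\I_\eta$ as defined via the rigid envelope $\I^{\rig}_{\C}$, since $\I^{\rig}_{\C}/\I\simeq\A\otimes_{\C}\I^{\rig}_{\C}$ (tensor the short exact sequence $\I\to\C\to\A$ with $\I^{\rig}_{\C}$ and use $\I\otimes_{\C}\I^{\rig}_{\C}\simeq\I$).
\item All maps in the square are symmetric monoidal: $\C\to\A$ is a smashing localization; $\C\to\I^{\rig}_{\C}$ is a map of algebras by \cref{cor:completion}(1), since the completion functor is lax symmetric monoidal; and $\I_\eta$ is the pushout $\A\otimes_{\C}\I^{\rig}_{\C}$ in $\CAlg(\Pr^{\dbl}_{\C})$.
\end{itemize}
The forgetful functor $\CAlg(\Pr^{\dbl}_{\C})\to\Pr^{\dbl}_{\C}$ preserves and reflects limits, so this is a pullback in $\CAlg(\Pr^{\dbl}_{\C})$.

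\textbf{Rigid case.} If $\C$ is rigid, then $\I^{\rig}_{\C}\simeq\I^{\rig}$ by the observation that rigidification over a rigid $\C$ agrees with rigidification over $\Sp$. Also $\Pr^{\dbl}_{\C}\simeq\Mod_{\C}(\Pr^{\dbl}_{\st})$, and the forgetful functor to $\Pr^{\dbl}_{\st}$ preserves limits. So the square is a pullback in $\CAlg(\Pr^{\dbl}_{\st})$.

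\textbf{General square.} Let $\overline{\C}\to\widetilde{\C}$ be a map of $\C$-rigid envelopes.
\begin{enumerate}
\item Apply the first part with base $\overline{\C}$, ideal $\I\subseteq\overline{\C}$ and $\M=\widetilde{\C}$. \cref{prop:pullback_diagram_of_completion} gives a pullback square with corners $\widetilde{\C}$, $\A\otimes_{\overline{\C}}\widetilde{\C}$, $\widetilde{\C}^{\wedge\I}$ and $(\widetilde{\C})_\eta$.
\item By \cref{prop:map_of_rigid_envelopes}(2), $\widetilde{\C}^{\wedge\I}\simeq\iHom^{\dbl}_{\widetilde{\C}}(\I,\widetilde{\C})\simeq\I^{\rig}_{\C}$. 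The same holds for $\overline{\C}$, i.e. $\overline{\C}^{\wedge\I}\simeq\I^{\rig}_{\C}$.
\item Hence the completion maps $\overline{\C}\to\I^{\rig}_{\C}$ and $\widetilde{\C}\to\I^{\rig}_{\C}$ are compatible. The generic fibers $\overline{\C}_\eta$ and $\widetilde{\C}_\eta$ both identify with $\I_\eta$, which is independent of the rigid envelope by the remark preceding this section.
\item The first part gives pullback squares
\[\overline{\C}\simeq\A\times_{\I_\eta}\I^{\rig}_{\C},\qquad \widetilde{\C}\simeq(\A\otimes_{\overline{\C}}\widetilde{\C})\times_{\I_\eta}\I^{\rig}_{\C}.\]
\item By the pasting lemma, the outer square $\overline{\C}\to\A$ over $\widetilde{\C}\to\A\otimes_{\overline{\C}}\widetilde{\C}$ is a pullback. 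It commutes as a diagram of algebras by naturality of completion along the algebra map $\overline{\C}\to\widetilde{\C}$.
\end{enumerate}

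\textbf{Main obstacle.} The delicate step is checking that the identifications in steps (2) and (3) are compatible with the maps in the square, not just equivalences of objects. In particular, the map $\A\otimes_{\overline{\C}}\widetilde{\C}\to\I_\eta$ must be the one induced by $\widetilde{\C}\to\I^{\rig}_{\C}$. I would get this from the naturality of the counit $\I\otimes_{\overline{\C}}(-)\to\mathrm{id}$ and of the adjunction in \cref{thm:unst_completion}, together with the equivalence $\I\otimes_{\overline{\C}}\widetilde{\C}\simeq\I$ that defines a map of rigid envelopes.
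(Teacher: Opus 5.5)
Your argument is correct, but it runs in the opposite direction from the paper's.

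\textbf{The paper's route.} The paper proves the general square directly. The map $\overline{\C}\to\widetilde{\C}$ induces a map from the short exact sequence $\I\hookrightarrow\overline{\C}\to\A$ to $\I\otimes_{\overline{\C}}\widetilde{\C}\hookrightarrow\widetilde{\C}\to\A\otimes_{\overline{\C}}\widetilde{\C}$. Its left vertical arrow is an equivalence by the very definition of a map of rigid envelopes, so \cref{lem:ses_pullback} and \cref{lem:pullback_dbl} give the pullback at once. The first square is then the special case of the map of rigid envelopes $\C\to\I^{\rig}_{\C}$.

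\textbf{Your route.} You take the first square from \cref{prop:pullback_diagram_of_completion} with $\M=\C$, whose proof is that same short-exact-sequence argument. You then obtain the general square by pasting the completion squares of $\overline{\C}$ and $\widetilde{\C}$ over the common bottom edge $\I^{\rig}_{\C}\to\I_\eta$.

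\textbf{The cost, and why your sketch covers it.} The price is exactly the bookkeeping you flagged, and your sketch handles it:
\begin{itemize}
\item $\iHom^{\dbl}_{\overline{\C}}(\I,\overline{\C})\to\iHom^{\dbl}_{\overline{\C}}(\I,\widetilde{\C})$ is a map of $\I$-complete objects. After applying $\I\otimes_{\overline{\C}}-$ and \cref{cor:completion}(2), it becomes $\I\to\I\otimes_{\overline{\C}}\widetilde{\C}$. Hence it is an equivalence by the torsion--complete equivalence of \cref{thm:unst_completion}.
\item The pasted diagram commutes by naturality of $\mathrm{id}\to(-)^{\wedge\I}$, followed by $\A\otimes_{\overline{\C}}-$.
\end{itemize}
This uses the same defining equivalence $\I\simeq\I\otimes_{\overline{\C}}\widetilde{\C}$ that the paper uses once, so the detour through $\I^{\rig}_{\C}$ gains no generality. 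What it does buy is the picture that every fracture square attached to a map of rigid envelopes is cut out of the universal one over $\I^{\rig}_{\C}$.

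\textbf{Two inputs to state explicitly.}
\begin{itemize}
\item $\widetilde{\C}$ is rigid, hence dualizable, over $\overline{\C}$, since both are rigid $\C$-algebras. This is what lets \cref{prop:pullback_diagram_of_completion} apply to it.
\item Pullbacks in $\Pr^{\dbl}_{\overline{\C}}\simeq\Mod_{\overline{\C}}(\Pr^{\dbl}_{\C})$ remain pullbacks in $\Pr^{\dbl}_{\C}$.
\end{itemize}
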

\begin{proof}
	For the case that $\I\subseteq\C$ is a smashing ideal, the first part follows by applying the second part to the map of $\C$-rigid envelopes $\C\to\I^{\rig}_{\C}$. 

	For the second part, consider the commutative diagram between short exact sequences in $\Pr^{\iL}_{\C}$
	\[\begin{tikzcd}
		{\I} & {\overline{\C}} & {\A}\\
		{\I\otimes_{\overline{\C}}\widetilde{\C}} & {\widetilde{\C}} & {\A\otimes_{\overline{\C}}\widetilde{\C}}
		\arrow[hook, from=1-1, to=1-2]
		\arrow["\simeq"', from=1-1, to=2-1]
		\arrow[from=1-2, to=1-3]
		\arrow[from=1-2, to=2-2]
		\arrow[from=1-3, to=2-3]
		\arrow[hook, from=2-1, to=2-2]
		\arrow[from=2-2, to=2-3]
	\end{tikzcd}\]
	Then the result follows from \cref{lem:ses_pullback} and \cref{lem:pullback_dbl}.
\end{proof}
\begin{remark}
	By \cite[Corollary 4.89]{Ram26locallyrigid}, the forgetful functor $\CAlg^{\rig}(\PrL_{\st})\to\Pr^{\dbl}_{\st}$ preserves limits. Let $\C\in\CAlg^{\rig}(\PrL_{\st})$ and $\I\subseteq\C$ be a smashing ideal, or more generally, $\I\in\CAlg(\PrL_{\C})$ be locally rigid. Then the diagrams in \cref{thm:pullback_of_map_of_rigid_envelope} are pullback diagrams in $\CAlg^{\rig}(\PrL_{\st})$.
\end{remark}
\begin{corollary}
	Let $\C\in\CAlg^{\rig}(\PrL_{\st})$, $\I\subseteq\C$ be a smashing ideal, and let $\A:=\C/\I$. Then we have a pullback diagram of $\bbE_\infty$-rings
	\[\begin{tikzcd}
		{\End(\mathbbm{1}_{\C})} & {\End(\mathbbm{1}_{\A})} \\
		{\End(\mathbbm{1}_{\I^{\rig}})} & {\End(\mathbbm{1}_{\I_{\eta}})}
		\arrow[from=1-1, to=1-2]
		\arrow[from=1-1, to=2-1]
		\arrow["\lrcorner"{anchor=center, pos=0.125}, draw=none, from=1-1, to=2-2]
		\arrow[from=1-2, to=2-2]
		\arrow[from=2-1, to=2-2]
	\end{tikzcd}\]
	
	More generally, let $\C\in\CAlg(\PrL_{\st})$ and $\I\in\CAlg(\PrL_{\C})$ be locally rigid. Let $\overline{\C}\to\widetilde{\C}$ be a map of $\C$-rigid envelopes of $\I$, and let $\A:=\overline{\C}/\I$. Then we have a pullback diagram in $\CAlg(\C)$
	\[\begin{tikzcd}
		{\iEnd_{\C}(\mathbbm{1}_{\overline{\C}})} & {\iEnd_{\C}(\mathbbm{1}_{\A})} \\
		{\iEnd_{\C}(\mathbbm{1}_{\widetilde{\C}})} & {\iEnd_{\C}(\mathbbm{1}_{\A\otimes_{\overline{\C}}\widetilde{\C}})}
		\arrow[from=1-1, to=1-2]
		\arrow[from=1-1, to=2-1]
		\arrow["\lrcorner"{anchor=center, pos=0.125}, draw=none, from=1-1, to=2-2]
		\arrow[from=1-2, to=2-2]
		\arrow[from=2-1, to=2-2]
	\end{tikzcd}\]
\end{corollary}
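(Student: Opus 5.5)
The plan is to obtain the corollary directly from \cref{thm:pullback_of_map_of_rigid_envelope} by passing to endomorphisms of units. The key point is that taking the (internal) endomorphism object of the unit is a limit-preserving functor on the relevant category of algebras.

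We treat the general statement first and deduce the special case from it. Let $\overline{\C}\to\widetilde{\C}$ be a map of $\C$-rigid envelopes of $\I$. By \cref{thm:pullback_of_map_of_rigid_envelope}, the square with vertices $\overline{\C}$, $\A$, $\widetilde{\C}$ and $\A\otimes_{\overline{\C}}\widetilde{\C}$ is a pullback in $\CAlg(\Pr^{\dbl}_{\C})$. By the proof of that theorem (via \cref{lem:ses_pullback} and \cref{lem:pullback_dbl}), it is in fact already a pullback in $\PrL_{\C}$, hence in $\CAlg(\PrL_{\C})$, and it is horizontally and vertically right adjointable.

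Now consider the functor $\CAlg(\PrL_{\C})\to\CAlg(\C)$ given by $\D\mapsto\iEnd_{\C}(\mathbbm{1}_{\D})$. This is the lax symmetric monoidal right adjoint of the unit $\C\to\D$, evaluated at $\mathbbm{1}_{\D}$. We check the pullback property after forgetting to $\C$, since the forgetful functor $\CAlg(\C)\to\C$ reflects limits.

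Concretely, in a pullback $\D\simeq\D'\times_{\E'}\E$ in $\PrL_{\C}$, mapping objects are computed componentwise. For $c\in\C$ this gives
\[
	\Hom_{\D}(c\otimes\mathbbm{1},\mathbbm{1})\simeq\Hom_{\D'}(c\otimes\mathbbm{1},\mathbbm{1})\times_{\Hom_{\E'}(c\otimes\mathbbm{1},\mathbbm{1})}\Hom_{\E}(c\otimes\mathbbm{1},\mathbbm{1}),
\]
because the unit of $\D$ corresponds to the triple of units and the functors are $\C$-linear and symmetric monoidal. It follows that $\iEnd_{\C}(\mathbbm{1}_{\overline{\C}})$ is the pullback of the other three internal endomorphism objects, compatibly with the algebra structures, which is the second square of the corollary.

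For the first part, we apply this with $\C$ rigid, using the map of rigid envelopes $\C\to\I^{\rig}$ as in the proof of \cref{thm:pullback_of_map_of_rigid_envelope}, so that $\A\otimes_{\C}\I^{\rig}\simeq\I_{\eta}$. Here we take $\C=\Sp$ as the base, so that $\iEnd_{\Sp}=\End$. Concretely, a rigid $\C$ is rigid over $\Sp$, the square is a pullback in $\CAlg(\Pr^{\dbl}_{\st})$ and also in $\PrL_{\st}$, and the same mapping-spectrum computation gives a pullback of $\bbE_\infty$-rings.

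The only delicate step is identifying the square of categories as a pullback in $\PrL_{\C}$, as opposed to only in $\Pr^{\dbl}_{\C}$, since limits in $\Pr^{\dbl}_{\C}$ can differ from those in $\PrL_{\C}$. I would justify this by citing \cref{lem:pullback_dbl}, which says the $\PrL$-pullback is also the dualizable one, so the two agree.
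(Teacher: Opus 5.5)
Your proposal is correct and is essentially the argument the paper intends (the paper states the corollary without proof right after \cref{thm:pullback_of_map_of_rigid_envelope}): apply the limit-preserving functor $\iEnd_{\C}(\mathbbm{1}_{(-)})$, the right adjoint of $\Mod_{(-)}(\C)$, to that pullback square, which your componentwise mapping-spectrum computation verifies by hand. One small correction: the square is shown to be a pullback in $\PrL_{\C}$ by \cref{lem:ses_pullback}, as you say in your second paragraph; \cref{lem:pullback_dbl} only passes from $\PrL_{\C}$-pullbacks to $\Pr^{\dbl}_{\C}$-pullbacks, so it is not the right citation in your last paragraph.
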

\begin{corollary}\label{cor:Ind_dbl}
	Let $\C\in\CAlg^{\rig}(\PrL_{\st})^{\cg}$ and $\I\subseteq\C$ be a smashing ideal, and let $\A:=\C/\I$. Then the canonical commutative diagrams are pullback diagrams in $\CAlg^{\rig}(\PrL_{\st})$
	\[\begin{tikzcd}
		\C & \A \\
		{\Ind(\I^{\dbl})} & {\A\otimes_{\C}\Ind(\I^{\dbl})} \\
		{\I^{\rig}} & {\A\otimes_{\C}\I^{\rig}}
		\arrow[from=1-1, to=1-2]
		\arrow[from=1-1, to=2-1]
		\arrow[from=1-2, to=2-2]
		\arrow[""{name=0, anchor=center, inner sep=0}, from=2-1, to=2-2]
		\arrow[hook, from=2-1, to=3-1]
		\arrow[from=2-2, to=3-2]
		\arrow[""{name=1, anchor=center, inner sep=0}, from=3-1, to=3-2]
		\arrow["\lrcorner"{anchor=center, pos=0.125}, draw=none, from=1-1, to=0]
		\arrow["\lrcorner"{anchor=center, pos=0.125}, draw=none, from=2-1, to=1]
	\end{tikzcd}\]
	In particular, this recovers \cref{thm:NPR24} by taking $\I=\C^{\K\text{-}\tors}$.
\end{corollary}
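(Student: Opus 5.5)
The plan is to obtain both squares from the second part of \cref{thm:pullback_of_map_of_rigid_envelope}. This means checking that $\C\to\Ind(\I^{\dbl})$ and $\Ind(\I^{\dbl})\hookrightarrow\I^{\rig}$ are maps of $\Sp$-rigid envelopes of $\I$, and that they compose to the completion map $\C\to\I^{\rig}$.

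First, the maps. Since $\C$ is rigid and compactly generated, \cref{constr:Ind_dbl} applied to the composite $\C\to\I^{\rig}\to\I$ shows that $\C\to\I^{\rig}$ factors through the full subcategory $\Ind(\I^{\dbl})\simeq\Ind((\I^{\rig})^{\dbl})\hookrightarrow\I^{\rig}$. This inclusion is a fully faithful symmetric monoidal internal left adjoint, and $\Ind(\I^{\dbl})$ is rigid. Restricting the smashing ideal $\I\hookrightarrow\I^{\rig}$ (the left adjoint of $\I^{\rig}\to\I$) to $\Ind(\I^{\dbl})$ requires $\I$ to land inside $\Ind(\I^{\dbl})$. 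I would argue this through the factorization $\I\hookrightarrow\C\to\Ind(\I^{\dbl})\to\I^{\rig}$: by \cref{cor:completion}(2), the composite $\I\hookrightarrow\C\to\I^{\rig}$ is exactly the ideal inclusion. Hence $\Ind(\I^{\dbl})\to\I$ admits a fully faithful linear left adjoint, namely $\I\hookrightarrow\C\to\Ind(\I^{\dbl})$, so $\Ind(\I^{\dbl})$ is a rigid envelope of $\I$.

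Next, the map-of-envelopes conditions. For the lower map, $\Ind(\I^{\dbl})\to\I^{\rig}$ is fully faithful, so \cref{lem:1_rig}(2) (equivalently \cref{prop:map_of_rigid_envelopes}(1)) makes it a map of rigid envelopes. For the upper map, $\C\to\Ind(\I^{\dbl})$ is a map over $\I^{\rig}$ into an envelope that embeds fully faithfully into $\I^{\rig}$, so \cref{prop:map_of_rigid_envelopes}(1) applies directly: $\I\to\I\otimes_{\C}\Ind(\I^{\dbl})$ is an equivalence.

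Finally, the squares. Applying \cref{thm:pullback_of_map_of_rigid_envelope} to $\C\to\Ind(\I^{\dbl})$ gives the upper square. Applying it to $\Ind(\I^{\dbl})\to\I^{\rig}$, with $\A':=\Ind(\I^{\dbl})/\I\simeq\A\otimes_{\C}\Ind(\I^{\dbl})$ (which holds because $\I\otimes_{\C}\Ind(\I^{\dbl})\simeq\I$), gives the lower square with corner $\A'\otimes_{\Ind(\I^{\dbl})}\I^{\rig}\simeq\A\otimes_{\C}\I^{\rig}$. The squares are pullbacks in $\CAlg(\Pr^{\dbl}_{\st})$, hence in $\CAlg^{\rig}(\PrL_{\st})$ by the remark following that theorem. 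For the recovery of \cref{thm:NPR24}, take $\I=\C^{\K\text{-}\tors}$ and identify $\Ind(\I^{\dbl})$ with $\Ind((\C^{\K\text{-}\cplt})^{\dbl})$ using local duality (\cref{prop:local_duality}) and the equivalence $\I^{\rig}\simeq\iHom^{\dbl}_{\C}(\I,\C)$. I expect this identification, namely matching the rigidified completion with $\C^{\K\text{-}\cplt}$ at the level of dualizable objects, to be the main obstacle. I would first try comparing both sides through \cref{lem:1_rig}, using that units and endomorphisms agree.
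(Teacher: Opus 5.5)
Your argument is correct and is the paper's own proof. You factor $\C\to\I^{\rig}$ through $\Ind(\I^{\dbl})$ via \cref{constr:Ind_dbl}, use full faithfulness of $\Ind(\I^{\dbl})\hookrightarrow\I^{\rig}$ with \cref{prop:map_of_rigid_envelopes} to get maps of rigid envelopes, and then apply \cref{thm:pullback_of_map_of_rigid_envelope}. For the NPR24 clause, the obstacle you anticipate is not real: by \cref{exm:local_duality}, $\Lambda^{\K}\simeq L_{\K}$ is a symmetric monoidal localization, so local duality restricts to a symmetric monoidal equivalence $\C^{\K\text{-}\tors}\simeq\C^{\K\text{-}\cplt}$ (with $\Lambda^{\K}\Gamma^{\K}\simeq\Lambda^{\K}$ matching the maps out of $\C$, and $\A=\C^{\K\text{-}\loc}$), which identifies dualizable objects directly, whereas comparing units and endomorphisms via \cref{lem:1_rig} would not by itself give an equivalence of categories.
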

\begin{proof}
	From \cref{constr:Ind_dbl} we obtain a fully faithful $\Sp$-algebra map $\Ind(\I^{\dbl})\hookrightarrow\I^{\rig}$ over $\I$, and an $\Sp$-algebra map $\C\to\Ind(\I^{\dbl})$ as $\C$ is rigid compactly generated. Since $\Ind(\I^{\dbl})\hookrightarrow\I^{\rig}$ is fully faithful, by \cref{prop:map_of_rigid_envelopes}, they are maps of $\C$-rigid envelopes of $\I$. Thus, the result follows from \cref{thm:pullback_of_map_of_rigid_envelope}.
\end{proof}
\begin{example}\label{exm:Ind(R_I^dbl)}
	Let $R\in\CAlg(\Sp)$, and let $I\subseteq\pi_*(R)$ be a finitely generated ideal. Then $\Mod(R)\in\CAlg^{\rig}(\PrL_{\st})^{\cg}$, $\Mod(R)_{I\text{-}\tors}\subseteq\Mod(R)$ is a smashing ideal, and $\Mod(R)/\Mod(R)_{I\text{-}\tors}\simeq\Mod(R)_{I\text{-}\loc}\simeq\Mod(L_IR)$. Observe that by \cref{lem:LC=LC(Mod(L1))}, we have $\Mod(R)_{I\text{-}\cplt}\simeq\Mod(R^{\wedge}_I)_{I\text{-}\cplt}$. Then the inclusion $\Perf(R^{\wedge}_I)\subseteq\Mod(R)_{I\text{-}\cplt}^{\dbl}$ induces a fully faithful $\Sp$-algebra map $\Mod(R^{\wedge}_I)\hookrightarrow\Ind(\Mod(R)^{\dbl}_{I\text{-}\cplt})$. Since $\Ind(\Mod(R)^{\dbl}_{I\text{-}\cplt})\hookrightarrow(\Mod(R)_{I\text{-}\cplt})^{\rig}$ is fully faithful, $\Mod(R^{\wedge}_I)\hookrightarrow(\Mod(R)_{I\text{-}\cplt})^{\rig}$ is also fully faithful and hence a map of rigid envelopes by \cref{prop:map_of_rigid_envelopes}. In fact, this follows directly from \cref{lem:1_rig}. Therefore, we obtain pullback diagrams
	\[\begin{tikzcd}
		{\Mod(R)} & {\Mod(L_IR)}\\
		{\Mod(R^{\wedge}_I)} & {\Mod(L_IR\otimes_RR^{\wedge}_I)}\\
		{\Ind(\Mod(R)^{\dbl}_{I\text{-}\cplt})} & {\Mod(L_IR)\otimes_R\Ind(\Mod(R)^{\dbl}_{I\text{-}\cplt})}\\
		{(\Mod(R)_{I\text{-}\cplt})^{\rig}} & {\Mod(L_IR)\otimes_R(\Mod(R)_{I\text{-}\cplt})^{\rig}}
		\arrow[from=1-1, to=1-2]
		\arrow[from=1-1, to=2-1]
		\arrow[from=1-2, to=2-2]
		\arrow[""{name=0, anchor=center, inner sep=0}, from=2-1, to=2-2]
		\arrow[hook, from=2-1, to=3-1]
		\arrow[hook, from=2-2, to=3-2]
		\arrow[""{name=1, anchor=center, inner sep=0}, from=3-1, to=3-2]
		\arrow[hook, from=3-1, to=4-1]
		\arrow[hook, from=3-2, to=4-2]
		\arrow[""{name=2, anchor=center, inner sep=0}, from=4-1, to=4-2]
		\arrow["\lrcorner"{anchor=center, pos=0.125}, draw=none, from=1-1, to=0]
		\arrow["\lrcorner"{anchor=center, pos=0.125}, draw=none, from=2-1, to=1]
		\arrow["\lrcorner"{anchor=center, pos=0.125}, draw=none, from=3-1, to=2]
	\end{tikzcd}\]
	The top pullback diagrams recover the pullback square coming from \cref{exm:fracture_square_L_E_L_F,exm:Mod(R)_I-tors/cplt}. 

	The middle vertical maps are equivalences whenever $R$ is an \emph{adic $\bbE_\infty$-ring}, i.e. $R$ is connective and $I\subseteq\pi_0(R)$ by \cite[Theorem 11.3]{NP24}. Conversely, \cite[Remark 11.5]{NP24} shows that the middle vertical maps need not be equivalences when $R$ is nonconnective.
\end{example}
Consequently, we may construct pullback diagrams of modules from these symmetric monoidal fracture squares via \cref{lem:pullback_mechanism}.

\subsection{Chromatic layers and nuclear modules}\label{sec:2.3}

We apply the theories developed in the previous subsections to chromatic homotopy theory, reformulating parts of \cite[\Section 6]{BHV18}. We then introduce nuclear module categories as a relative version of nuclear categories, and construct fracture squares for nuclear module categories associated to chromatic layers.

We fix a prime $p$. For height $n=0$, let $F(0):=\bbS$ with $v_0=p^k$. For height $n\geq1$, let $F(n)$ denote a type $n$ finite $p$-local spectrum, equipped with a $v_n$-self map $v_n^{k_n}$ \cite{HS98}. Moreover, we can assume that $F(n)\simeq F(n-1)/v_{n-1}$, which is called a \emph{type $n$ generalized Moore spectrum} \cite{HS99} and can be constructed inductively as
\[
	F(n)\simeq\bbS/(p^k,v_1^{k_1},\cdots,v_{n-1}^{k_{n-1}}).
\]
Denote $T(n):=F(n)[v_n^{-1}]$, $L_n^f:=L_{T(0)\oplus\cdots\oplus T(n)}$, and $C_n^f:=C_{T(0)\oplus\cdots\oplus T(n)}$. Recall that $T(m)\otimes T(n)\simeq0$ and $F(m)\otimes T(n)\simeq0$ for all $m>n$, and $L_{F(m)\otimes T(n)}\simeq L_{T(n)\otimes T(n)}\simeq L_{T(n)}$ for all $m\leq n$.
\begin{lemma}\label{lem:L_n^f}
	Let $n\geq0$. Then we have a short exact sequence in $\Pr^{\dbl}_{\st}$
	\[\begin{tikzcd}[column sep=4em]
		{L_{F(n+1)}\Sp} & \Sp & {L_n^f\Sp}
		\arrow["{C_n^f}", shift left=3, hook, from=1-1, to=1-2]
		\arrow[shift right=3, hook, from=1-1, to=1-2]
		\arrow["{L_{F(n+1)}}"{description}, from=1-2, to=1-1]
		\arrow["{L_n^f}", shift left=3, from=1-2, to=1-3]
		\arrow[shift right=3, from=1-2, to=1-3]
		\arrow[hook', from=1-3, to=1-2]
	\end{tikzcd}\]
	yielding inverse equivalences
	\[
		C_n^f:L_{F(n+1)}\Sp\simeq C_n^f\Sp:L_{F(n+1)}.
	\]
	Then $L_n^f$ is an atomic (or finite) smashing localization and $L_{F(n+1)}\Sp\otimes L_n^f\Sp\simeq0$. In particular, this recovers \cref{exm:S[1/p]_S/p} by taking $n=0$.

	Moreover, let $\M\in\PrL_{\st}$. Then we have $L_{F(n+1)}\Sp\otimes\M\simeq L_{F(n+1)}\M$ and $L_n^f\Sp\otimes\M\simeq L_n^f\M$.
\end{lemma}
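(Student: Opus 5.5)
The plan is to apply \cref{prop:fracture_square_functors}(3) with $\C=\Sp$, $E:=T(0)\oplus\cdots\oplus T(n)$ and $\K:=\{F(n+1)\}\subseteq\Sp^{\dbl}=\Sp^{\omega}$. Three hypotheses must be checked.

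First, $E\otimes F(n+1)\simeq0$. This holds because $T(m)\otimes F(n+1)\simeq0$ for $m\leq n$, as recalled just before the lemma.

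Second, $L_E=L_n^f$ and $L_{F(n+1)}$ must be jointly conservative. Suppose $X$ satisfies $L_n^fX\simeq0$ and $F(n+1)\otimes X\simeq0$. Then $X$ lies in $C_n^f\Sp$ and is $F(n+1)$-acyclic. The standard description of $L_n^f$ as the finite localization away from type $\geq n+1$ finite spectra (Miller) identifies $C_n^f\Sp=\Loc(F(n+1))$. Hence $X\in\Loc(F(n+1))$ and $\Hom(F(n+1),X)\simeq D F(n+1)\otimes X\simeq0$. Since $\Loc(F(n+1))$ is generated by $F(n+1)$, it follows that $X\simeq0$. Equivalently, by the thick subcategory theorem, $F(n+1)\otimes X\simeq0$ forces $F\otimes X\simeq0$ for all finite type $\geq n+1$ spectra, and together with $L_n^fX\simeq0$ this kills $X$. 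This is the one genuinely chromatic input. I would cite Miller's finite localization theorem, or \cite{HS98} via the thick subcategory theorem, for $C_n^f\Sp=\Sp^{F(n+1)\text{-}\tors}$. I expect this identification to be the main step to justify carefully; everything else is formal.

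Granting these, \cref{prop:fracture_square_functors}(3) gives the displayed short exact sequence in $\Pr^{\at}_{\st}\subseteq\Pr^{\dbl}_{\st}$. It also identifies $C_n^f\Sp\hookrightarrow\Sp\to L_n^f\Sp$ with $\Sp^{F(n+1)\text{-}\tors}\hookrightarrow\Sp\to\Sp^{F(n+1)\text{-}\loc}$. The kernel is therefore compactly generated by $F(n+1)$, so $L_n^f$ is an atomic (finite) smashing localization by \cref{exm:local_duality}. The inverse equivalences $C_n^f:L_{F(n+1)}\Sp\simeq C_n^f\Sp:L_{F(n+1)}$ are those of \cref{prop:fracture_square_functors}(2), which is the local duality equivalence $\Lambda^{\K}i_{\tors}\dashv\Gamma^{\K}i_{\cplt}$ of \cref{prop:local_duality}.

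For $L_{F(n+1)}\Sp\otimes L_n^f\Sp\simeq0$, apply \cref{prop:L_FL_E}: since $L_n^f\Sp$ is dualizable,
\[
L_{F(n+1)}\Sp\otimes L_n^f\Sp\simeq L_{F(n+1)}L_n^f\Sp\simeq L_{L_n^f\bbS\otimes F(n+1)}\Sp,
\]
and $L_n^f\bbS\otimes F(n+1)\simeq L_n^fF(n+1)\simeq0$ by smashing and the first hypothesis. For $n=0$ we have $F(1)=\bbS/p^k$, and $L_{\bbS/p^k}=L_{\bbS/p}$ recovers \cref{exm:S[1/p]_S/p}.

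The last claim, for arbitrary $\M\in\PrL_{\st}$, follows from \cref{cor:L_KM} with $\K=\{F(n+1)\}$ and $E=T(0)\oplus\cdots\oplus T(n)$, which gives $L_{F(n+1)}\Sp\otimes\M\simeq L_{F(n+1)}\M$. For the other equivalence, $L_n^f\Sp=\Mod_{L_n^f\bbS}(\Sp)$ with $L_n^f\bbS$ idempotent, so \cref{prop:L_AM} gives $L_n^f\Sp\otimes\M\simeq L_{L_n^f\bbS}\M$. Finally, $L_{L_n^f\bbS}\M=L_E\M$ because $L_n^f\bbS\otimes X\simeq0$ if and only if $E\otimes X\simeq0$.
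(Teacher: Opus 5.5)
Your proposal is correct and is essentially the paper's proof. The paper likewise notes that $(T(0)\oplus\cdots\oplus T(n))\oplus F(n+1)$ detects $0$ and that $(T(0)\oplus\cdots\oplus T(n))\otimes F(n+1)\simeq0$, applies \cref{prop:fracture_square_functors}, and obtains the last part from \cref{cor:L_KM}. One refinement is worth making, in two parts:
\begin{enumerate}
\item The detection step does not need Miller's theorem, whose content $C_n^f\Sp=\Loc(F(n+1))$ is essentially what the lemma is deriving. With $E=T(0)\oplus\cdots\oplus T(n)$, downward induction along $F(m+1)=F(m)/v_m^{k_m}$ gives it directly: if $F(m+1)\otimes Y\simeq0$, then $v_m^{k_m}$ acts invertibly on $F(m)\otimes Y$, so $F(m)\otimes Y\simeq T(m)\otimes Y$.
\item That induction works for $Y$ in any $\M\in\PrL_{\st}$. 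Applying it to $Y=L_n^f\bbS\otimes X$, which is killed by $E$ (since $E\otimes L_n^f\bbS\simeq E$) and by $F(n+1)$ (since $L_n^fF(n+1)\simeq0$), justifies your final claim that $E\otimes X\simeq0$ forces $L_n^f\bbS\otimes X\simeq0$ for $X\in\M$. You assert that step without proof, and it is not merely the familiar statement for spectra.
\end{enumerate}
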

\begin{proof}
	Observe that $(T(0)\oplus\cdots\oplus T(n))\oplus F(n+1)$ detects $0$, and $(T(0)\oplus\cdots\oplus T(n))\otimes F(n+1)\simeq0$. Then applying \cref{prop:fracture_square_functors} gives us the desired short exact sequence. The last part follows from \cref{cor:L_KM}.
\end{proof}

More generally, we introduce the notion of length-$m$ chromatic layer.
\begin{definition}\label{def:chromatic_layer}
	Let $n\geq0$ and $1\leq m\leq n+1$. We define the \emph{$n$-th length-$m$ chromatic layer spectrum} as
	\[
		M_n^{m,f}:=\fib(L_n^f\bbS\to L_{n-m}^f\bbS),
	\]
	where $L_{-1}^f\bbS:=0$, and denote $M_n^{m,f}(-):=\fib(L_n^f\to L_{n-m}^f)\simeq M_n^{m,f}\otimes-$. Then $M_n^{1,f}$ is the \emph{$n$-th monochromatic layer spectrum} $M_n^f:=\fib(L_n^f\bbS\to L_{n-1}^f\bbS)$. For $n\geq0$, we also define
	\[
		C_n^f:=\fib(\bbS\to L_n^f\bbS).
	\]
\end{definition}
\begin{proposition}\label{prop:multichromatic_Hunital}
	Let $n\geq0$ and $1\leq m\leq n+1$. Then $M_n^{m,f}$ is an $H$-unital $\bbE_\infty$-ring, and $\Mod_H(M_n^{m,f})$ is a locally rigid $\Sp$-algebra.
\end{proposition}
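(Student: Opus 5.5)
Write $M_n^{m,f}=\fib(L_n^f\bbS\to L_{n-m}^f\bbS)$. The plan is to exhibit this as the fiber of a ring map between idempotent $\bbS$-algebras and then invoke \cref{prop:pullback_homoepi_idem}.

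\emph{Step 1: the ends are idempotent.} By \cref{lem:L_n^f}, $L_k^f$ is a smashing localization of $\Sp$ for every $k\geq0$. Hence $L_k^f\bbS$ is an idempotent $\bbE_\infty$-$\bbS$-algebra, with $L_k^f\bbS\otimes L_k^f\bbS\simeq L_k^f\bbS$. For $k=-1$ we have $L_{-1}^f\bbS=0$, which is trivially idempotent.

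\emph{Step 2: the map is a ring map.} Since $n-m\leq n$, every $L_n^f$-acyclic spectrum is $L_{n-m}^f$-acyclic. So $L_{n-m}^f L_n^f\simeq L_{n-m}^f$, and the unit gives an $\bbE_\infty$-map $L_n^f\bbS\to L_{n-m}^f\bbS$. Concretely, this is the localization map of idempotent algebras
\[
L_n^f\bbS\to L_{n-m}^f\bbS\otimes L_n^f\bbS\simeq L_{n-m}^f\bbS.
\]
Its fiber $M_n^{m,f}$ is therefore a nonunital $\bbE_\infty$-ring; the fiber of an $\bbE_\infty$-map carries a nonunital $\bbE_\infty$ structure via the augmented-algebra equivalence.

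\emph{Step 3: $H$-unitality.} The last assertion of \cref{prop:pullback_homoepi_idem} says that the fiber of a ring map between idempotent $\bbS$-algebras is $H$-unital. Applying it gives that $M_n^{m,f}$ is $H$-unital as a nonunital $\bbE_1$-ring, hence an $H$-unital $\bbE_\infty$-ring. Alternatively, one can check directly that $L_{n-m}^f\bbS\otimes M_n^{m,f}\simeq0$ using idempotence and $L_{n-m}^fL_n^f\simeq L_{n-m}^f$.

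\emph{Step 4: local rigidity.} By the remark following \cref{prop:tensor-H-unital}-type discussion (the Remark on idempotent $A$ with $I=\fib(\bbS\to A)$), it suffices to produce a coidempotent $\bbS$-coalgebra. The cleanest route is to show that $\Mod_H(M_n^{m,f})$ is a smashing ideal of a rigid category. Consider the smashing ideal
\[
\I:=\fib\bigl(L_n^f\Sp\to L_{n-m}^f\Sp\bigr)\subseteq L_n^f\Sp.
\]
This is a localizing ideal. Its inclusion is an internal left adjoint because $L_{n-m}^f$ restricted to $L_n^f\Sp$ is smashing, namely tensoring with the idempotent algebra $L_{n-m}^f\bbS$. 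By \cref{prop:Tam_pullback_homoepi} applied to the pullback
\[
(M_n^{m,f})^+\to\bbS \quad\text{over}\quad L_n^f\bbS\to L_{n-m}^f\bbS,
\]
we get $\Mod_H(M_n^{m,f})\simeq\Mod(L_n^f\bbS,M_n^{m,f})\simeq\I$. Finally, $L_n^f\Sp\simeq\Mod(L_n^f\bbS)$ is rigid, since it is compactly generated with the unit compact and compact objects dualizable. Hence $\I$ is a smashing ideal in a rigid category, so $\Mod_H(M_n^{m,f})$ is locally rigid with rigid envelope $L_n^f\Sp$.

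\emph{Main obstacle.} The delicate point is Step 4: verifying that the equivalence $\Mod_H(M_n^{m,f})\simeq\I$ is compatible with symmetric monoidal structures, so that locally rigid is meant as an $\Sp$-algebra rather than merely as a category. I would handle this by identifying $\I$ with $\coMod_{M_n^{m,f}}(L_n^f\Sp)$, where $M_n^{m,f}$ is the coidempotent coalgebra $\fib(L_n^f\bbS\to L_{n-m}^f\bbS)$ in $L_n^f\Sp$. I would then use that smashing ideals of a rigid category are locally rigid.
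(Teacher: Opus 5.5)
Your proposal is correct and follows the paper's proof. $H$-unitality comes from the last assertion of \cref{prop:pullback_homoepi_idem} applied to the $\bbE_\infty$-map of idempotent $\bbS$-algebras $L_n^f\bbS\to L_{n-m}^f\bbS$, and local rigidity comes from $\Mod_H(M_n^{m,f})$ being a smashing ideal of the rigid category $L_n^f\Sp$; your identification via \cref{prop:Tam_pullback_homoepi} just spells out what the paper leaves implicit. The aside at the start of Step 4 about producing a coidempotent $\bbS$-coalgebra is a detour you rightly abandon, since $M_n^{m,f}$ is coidempotent in $L_n^f\Sp$ rather than in $\Sp$ (e.g.\ $M_0^{1,f}=\bbS[1/p]$ is not), but nothing in your argument depends on it.
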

\begin{proof}
	Since $L_n^f\bbS\to L_{n-m}^f\bbS$ is an $\bbE_\infty$-ring map between idempotent $\bbS$-algebras, the fiber $M_n^{m,f}$ is an $H$-unital $\bbE_\infty$-ring by \cref{prop:pullback_homoepi_idem}. Then $\Mod_H(M_n^{m,f})$ is a smashing ideal of $L_n^f\Sp$, and hence a locally rigid $\Sp$-algebra.
\end{proof}
\begin{proposition}\label{prop:M_n^mf}
	Let $n\geq0$ and $1\leq m\leq n+1$. Then we have a short exact sequence in $\Pr^{\dbl}_{\st}$
	\[\begin{tikzcd}[column sep=6em]
		{L_{T(n)\oplus\cdots\oplus T(n-m+1)}\Sp} & {L_n^f\Sp} & {L_{n-m}^f\Sp}
		\arrow["{M_n^{m,f}}", shift left=3, hook, from=1-1, to=1-2]
		\arrow[shift right=3, hook, from=1-1, to=1-2]
		\arrow["{L_{F(n-m+1)}}"{description}, from=1-2, to=1-1]
		\arrow["{L_{n-m}^f}", shift left=3, from=1-2, to=1-3]
		\arrow[shift right=3, from=1-2, to=1-3]
		\arrow[hook', from=1-3, to=1-2]
	\end{tikzcd}\]
	yielding inverse equivalences
	\[
		M_n^{m,f}:L_{T(n)\oplus\cdots\oplus T(n-m+1)}\Sp\simeq M_n^{m,f}\Sp:L_{T(n)\oplus\cdots\oplus T(n-m+1)}.
	\]
	Then we have 
	\[
		L_{T(n)\oplus\cdots\oplus T(n-m+1)}\Sp\simeq L_n^f\Sp\otimes L_{F(n-m+1)}\Sp\simeq L_{L_n^fF(n-m+1)}\Sp,
	\]
	and $L_{T(n)\oplus\cdots\oplus T(n-m+1)}\Sp\otimes L_{n-m}^f\Sp\simeq0$. In particular, taking $m=1$ gives the $n$-th monochromatic layer category $L_{T(n)}\Sp\simeq M_n^f\Sp$.

	Moreover, let $\M\in\PrL_{\st}$. Then we have $L_{T(n)\oplus\cdots\oplus T(n-m+1)}\Sp\otimes\M\simeq L_{L_n^fF(n-m+1)}\M\simeq M_n^{m,f}\M$.
\end{proposition}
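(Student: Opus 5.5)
We plan to deduce this from \cref{prop:fracture_square_functors}(2)--(3), applied inside the ambient category $L_n^f\Sp$ rather than $\Sp$. Set $E:=T(0)\oplus\cdots\oplus T(n-m)$ and $F:=F(n-m+1)$ (for $m=n+1$ take $E=0$, $F=F(0)=\bbS$, and the statement is trivial). Then $L_E\Sp=L_{n-m}^f\Sp$. Work in $\C:=L_n^f\Sp$, which is rigid compactly generated since $L_n^f$ is a finite smashing localization by \cref{lem:L_n^f}. The object $L_n^fF(n-m+1)$ is compact, hence dualizable in $\C$. Moreover $E\otimes F\simeq0$, because $T(i)\otimes F(j)\simeq0$ for $j>i$.

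Joint conservativity of $L_E$ and $L_F$ on $\C$ is checked as follows. Let $X\in L_n^f\Sp$ with $F\otimes X\simeq0$ and $L_{n-m}^fX\simeq0$. The second condition makes $X$ $T(i)$-acyclic for $i\leq n-m$. The first makes $X$ $T(i)$-acyclic for $n-m<i\leq n$, since $T(i)$ is built from $F(i)$, which lies in the thick subcategory generated by $F(n-m+1)$. Being $L_n^f$-local and $T(0)\oplus\cdots\oplus T(n)$-acyclic, $X\simeq0$.

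With these inputs, \cref{prop:fracture_square_functors}(3) gives a short exact sequence in $\Pr^{\at}_{\C}\subseteq\Pr^{\dbl}_{\st}$,
\[
L_{L_n^fF}\C\hookrightarrow\C\to L_{n-m}^f\C=L_{n-m}^f\Sp .
\]
Its left term is identified with the fiber $C_{E}L_n^f\Sp$. By \cref{lem:L_n^f}, $L_n^f$ is smashing, so $L_{n-m}^f\simeq L_{n-m}^f\bbS\otimes-$ on $\C$. Hence the fiber is $\fib(L_n^f\to L_{n-m}^f)\simeq M_n^{m,f}\otimes-$, i.e. the category $M_n^{m,f}\Sp=\Mod_H(M_n^{m,f})$ (cf. \cref{prop:multichromatic_Hunital}). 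This gives the inverse equivalences displayed in the statement, with the torsion side $M_n^{m,f}\Sp$ and the complete side $L_{L_n^fF}\C$.

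It remains to identify the complete side. By \cref{prop:L_FL_E}, $L_{L_n^fF}\C\simeq L_F\Sp\otimes L_n^f\Sp\simeq L_{L_n^fF}\Sp$. We compare this with $L_{T(n)\oplus\cdots\oplus T(n-m+1)}\Sp$ by showing that the two have the same acyclics. On one side, $L_n^fF$-acyclicity of $Y$ means $L_n^f(F\otimes Y)\simeq0$, i.e. $F\otimes Y$ is $T(i)$-acyclic for all $i\leq n$. Since $F\otimes T(i)\simeq0$ for $i\leq n-m$, this says $F\otimes Y$ is $T(i)$-acyclic for $n-m<i\leq n$. For such $i$ we have $L_{F\otimes T(i)}\simeq L_{T(i)}$, because $F(n-m+1)$ has type at most $i$, so this in turn says $Y$ is $T(i)$-acyclic for $n-m<i\leq n$.

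The vanishing $L_{T(n)\oplus\cdots}\Sp\otimes L_{n-m}^f\Sp\simeq0$ follows from the vanishing of $L_FL_E$ established inside \cref{prop:fracture_square_functors}. The final claim about arbitrary $\M\in\PrL_{\st}$ follows from \cref{cor:L_KM} with $\K=\{L_n^fF\}$, together with \cref{lem:L_n^f} to pass from $\M$ to $L_n^f\M$.

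The main obstacle is this acyclics comparison: one must be careful that the Bousfield classes $\langle F(j)\otimes T(i)\rangle=\langle T(i)\rangle$ for $j\leq i$ are used correctly. The rest is bookkeeping with earlier results.
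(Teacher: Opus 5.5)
Your proposal is correct. It runs the paper's argument in mirror image.

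\textbf{The paper's route.} It applies \cref{prop:fracture_square_functors}(2) in the ambient $\Sp$, with $E=T(0)\oplus\cdots\oplus T(n-m)$ and $F=T(n-m+1)\oplus\cdots\oplus T(n)$. Because $L_{n-m}^f$ and $L_n^f$ are smashing and the telescopes are mutually orthogonal, this immediately gives the sequence $L_{T(n)\oplus\cdots\oplus T(n-m+1)}\Sp\hookrightarrow L_n^f\Sp\to L_{n-m}^f\Sp$ in $\Pr^{\dbl}_{\st}$ and the equivalence with $M_n^{m,f}\Sp$. No conservativity check is needed, since joint conservativity is automatic inside $L_{E\oplus F}\Sp$. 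The paper then identifies the right adjoint with $L_{F(n-m+1)}$, and the left term with $L_n^f\Sp\otimes L_{F(n-m+1)}\Sp\simeq L_{L_n^fF(n-m+1)}\Sp$. It does this by tensoring the height-$(n-m)$ sequence of \cref{lem:L_n^f} with $L_n^f\Sp$, using exactly the Bousfield class identity $\langle L_n^fF(n-m+1)\rangle=\langle T(n-m+1)\oplus\cdots\oplus T(n)\rangle$ that you prove, together with \cref{lem:LC=LC(Mod(L1))}.

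\textbf{Your route.} You instead use part (3) inside $L_n^f\Sp$ with $\K=\{L_n^fF(n-m+1)\}$, which is the relative version of the paper's own proof of \cref{lem:L_n^f}. The cost is a hand-made joint conservativity argument via the thick subcategory theorem. The gain is that you get the $F(n-m+1)$-side directly, including that the left term is compactly generated by $L_n^fF(n-m+1)$. The Bousfield class comparison then supplies the telescopic description.

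Two bookkeeping remarks. First, identifying the fiber with $M_n^{m,f}\otimes-$ uses that $L_{n-m}^f$ is smashing, not only $L_n^f$; this is \cref{lem:L_n^f} at height $n-m$. Second, to match the label $L_{F(n-m+1)}$ in the statement, you should note that $L_{F(n-m+1)}$ on $\Sp$ preserves $L_n^f$-local spectra, as the paper does via \cref{lem:LC=LC(Mod(L1))}. Then your $L_{L_n^fF}$ on $L_n^f\Sp$ is literally its restriction.
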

\begin{proof}
	Since both $(T(0)\oplus\cdots\oplus T(n-m))$ and $(T(0)\oplus\cdots\oplus T(n-m))\oplus(T(n-m+1)\oplus\cdots\oplus T(n))$ are smashing, and $(T(0)\oplus\cdots\oplus T(n-m))\otimes(T(n-m+1)\oplus\cdots\oplus T(n))\simeq0$, applying \cref{prop:fracture_square_functors} gives the desired short exact sequence. 
	
	We apply $L_n^f\Sp\otimes-$ to the short exact sequence $L_{F(n-m+1)}\Sp\hookrightarrow\Sp\to L_{n-m}^f\Sp$. Since $(T(0)\oplus\cdots\oplus T(n))\otimes F(n-m+1)\simeq(T(n-m+1)\oplus\cdots\oplus T(n))\otimes F(n-m+1)$, we have $L_n^f\Sp\otimes L_{F(n-m+1)}\Sp\simeq L_{L_n^fF(n-m+1)}\Sp\simeq L_{(T(0)\oplus\cdots\oplus T(n))\otimes F(n-m+1)}\Sp\simeq L_{T(n-m+1)\oplus\cdots\oplus T(n)}\Sp$ by \cref{prop:L_FL_E}. Then by \cref{lem:LC=LC(Mod(L1))}, the localization functor $L_n^f\Sp\to L_{F(n-m+1)}L_n^f\Sp$ is identified with $L_{F(n-m+1)}$.

	Let $\M\in\PrL_{\st}$. Then $L_{L_n^fF(n-m+1)}\M\simeq L_{L_n^f\bbS\otimes F(n-m+1)}\M\simeq L_{F(n-m+1)}L_n^f\M\simeq L_{F(n-m+1)}\Sp\otimes L_n^f\Sp\otimes\M\simeq L_{T(n)\oplus\cdots\oplus T(n-m+1)}\Sp\otimes\M\simeq M_n^{m,f}\M$ by \cref{prop:L_FL_E,prop:L_AM}.
\end{proof}
\begin{corollary}
	Let $n\geq0$ and $1\leq m\leq n+1$. Then we have an equivalence of locally rigid $\Sp$-algebras
	\[
		L_{T(n)\oplus\cdots\oplus T(n-m+1)}\Sp\simeq\Mod_H(M_n^{m,f}),
	\]
	with rigid envelope $L_n^f\Sp\to L_{T(n)\oplus\cdots\oplus T(n-m+1)}\Sp$.
\end{corollary}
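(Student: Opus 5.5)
The plan is to identify both sides as the same smashing ideal of $L_n^f\Sp$ and then read off local rigidity and the rigid envelope from the general theory of \cref{sec:1.3}. By \cref{prop:M_n^mf} we have a short exact sequence in $\Pr^{\dbl}_{\st}$
\[
	L_{T(n)\oplus\cdots\oplus T(n-m+1)}\Sp\hookrightarrow L_n^f\Sp\xrightarrow{L_{n-m}^f}L_{n-m}^f\Sp,
\]
in which the left inclusion is given by $M_n^{m,f}\otimes-$ (via the inverse equivalence with $M_n^{m,f}\Sp$). Since $L_n^f\bbS\to L_{n-m}^f\bbS$ is a map of idempotent $\bbS$-algebras, it is an idempotent $L_n^f\bbS$-algebra, hence a homological epimorphism of $\bbE_\infty$-rings with fiber $M_n^{m,f}$. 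The fiber of the base change functor $\Mod(L_n^f\bbS)\to\Mod(L_{n-m}^f\bbS)$ is therefore $\Mod(L_n^f\bbS,M_n^{m,f})$, which under $\Mod(L_n^f\bbS)\simeq L_n^f\Sp$ and $\Mod(L_{n-m}^f\bbS)\simeq L_{n-m}^f\Sp$ agrees with the fiber of $L_{n-m}^f$ in the sequence above. This gives $L_{T(n)\oplus\cdots\oplus T(n-m+1)}\Sp\simeq\Mod(L_n^f\bbS,M_n^{m,f})$.

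Next, \cref{prop:multichromatic_Hunital} shows that $M_n^{m,f}$ is $H$-unital, so \cref{cor:fib_Hunital} gives $\Mod(L_n^f\bbS,M_n^{m,f})\simeq\Mod_H(M_n^{m,f})$. Concretely, \cref{prop:Tam_pullback_homoepi} applies to the pullback square with $(M_n^{m,f})^+\to\bbS$ over $L_n^f\bbS\to L_{n-m}^f\bbS$. The main point requiring care is that this equivalence is symmetric monoidal. Both sides are smashing ideals, namely coidempotent comodule categories $\coMod_{M_n^{m,f}}(\cdot)$, and the functor $\Mod((M_n^{m,f})^+)\to L_n^f\Sp$ is symmetric monoidal. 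I would argue that the induced equivalence of fibers $\Mod_H(M_n^{m,f})\simeq\coMod_{M_n^{m,f}}(L_n^f\Sp)$ is compatible with the ideal structures, and that the symmetric monoidal structure on a smashing ideal is determined by the ideal. This determination holds because the structure is obtained by restricting the ambient tensor product, with unit the coidempotent coalgebra $M_n^{m,f}$.

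Finally, for local rigidity and the rigid envelope: $L_n^f\Sp=\Mod(L_n^f\bbS)$ is rigid, and $L_{T(n)\oplus\cdots\oplus T(n-m+1)}\Sp\subseteq L_n^f\Sp$ is a smashing ideal, since the inclusion is $M_n^{m,f}\otimes-$ with the right adjoint $L_{F(n-m+1)}$ being a symmetric monoidal localization, and both lie in $\Pr^{\iL}$ by the short exact sequence. By the definition of rigid envelope, the functor $L_n^f\Sp\to L_{T(n)\oplus\cdots\oplus T(n-m+1)}\Sp$ is therefore a rigid envelope, and local rigidity follows from the remark that smashing ideals are locally rigid. I expect no real obstacle beyond the bookkeeping of the monoidal identification in the previous paragraph.
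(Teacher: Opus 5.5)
Your proposal is correct and is the argument the paper intends. The corollary is stated without a separate proof right after \cref{prop:multichromatic_Hunital} and \cref{prop:M_n^mf}, and follows exactly as you say: \cref{cor:fib_Hunital} identifies $\Mod_H(M_n^{m,f})$ with the fiber of the smashing localization $L_n^f\Sp\to L_{n-m}^f\Sp$; \cref{prop:M_n^mf} identifies that fiber with $L_{T(n)\oplus\cdots\oplus T(n-m+1)}\Sp$; and this is a smashing ideal of the rigid category $L_n^f\Sp$. Your symmetric monoidal bookkeeping goes beyond what the paper spells out, and it works as you indicate: base change along the $\bbE_\infty$-map $(M_n^{m,f})^+\to L_n^f\bbS$ is symmetric monoidal and sends the idempotent algebra $\bbS$ to $L_{n-m}^f\bbS$, so it restricts to a unit-preserving symmetric monoidal equivalence between the two ideals.
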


\begin{corollary}
	Let $n\geq0$ and $\M\in\PrL_{\st}$. Then we have $L_{T(n)}\Sp\otimes\M\simeq L_{T(n)}\M$.
\end{corollary}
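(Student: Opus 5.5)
This is the case $m=1$ of the last assertion of \cref{prop:M_n^mf}. For $n\geq0$ that proposition, with $m=1$, gives for every $\M\in\PrL_{\st}$ an equivalence
\[
	L_{T(n)}\Sp\otimes\M\simeq L_{L_n^fF(n)}\M\simeq M_n^f\M .
\]
So the plan is to identify $L_{L_n^fF(n)}\M$ with $L_{T(n)}\M$, where the latter is defined intrinsically as the $T(n)$-local objects of $\M$ (right orthogonal to the $T(n)$-acyclics).

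By the remark following the definition of $E$-acyclic and $E$-local objects, $L_E\M$ depends only on which objects $X$ satisfy $E\otimes X\simeq0$. It therefore suffices to show that $L_n^fF(n)$ and $T(n)$ have the same Bousfield class in $\Sp$: then $L_n^fF(n)\otimes X\simeq0$ if and only if $T(n)\otimes X\simeq0$ for all $X\in\M$, by the module action of $\Sp$.

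For the Bousfield class comparison, recall from \cref{prop:L_FL_E} that $L_n^f F(n)$ and $L_n^f\bbS\otimes F(n)$ define the same localization. We also use the facts recalled before \cref{lem:L_n^f}: $\langle L_n^f\bbS\rangle=\langle T(0)\oplus\cdots\oplus T(n)\rangle$, $F(n)\otimes T(i)\simeq0$ for $i<n$, and $F(n)\otimes T(n)$ is Bousfield equivalent to $T(n)$. Together these give
\[
	\langle L_n^f\bbS\otimes F(n)\rangle=\langle (T(0)\oplus\cdots\oplus T(n))\otimes F(n)\rangle=\langle T(n)\otimes F(n)\rangle=\langle T(n)\rangle .
\]
This is exactly the chain already used in the proof of \cref{prop:M_n^mf}. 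The only point requiring care is that these Bousfield class equalities hold after tensoring with arbitrary objects of $\M$, not just spectra. This follows because the acyclicity conditions are all of the form $E\otimes X\simeq0$ with $E\in\Sp$ acting on $\M$.

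There is no real obstacle. The one subtlety is the case $n=0$, where $T(0)=\bbS[1/p]$ and $F(0)=\bbS$; there the statement reduces directly to \cref{lem:L_n^f}, which gives $L_0^f\Sp\otimes\M\simeq L_0^f\M$.
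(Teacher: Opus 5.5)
\textbf{There is a genuine gap in the transfer step.} Your reduction to the case $m=1$ of \cref{prop:M_n^mf} is fine, and so is the computation $\langle L_n^fF(n)\rangle=\langle T(n)\rangle$ in $\Sp$. The gap is the step from this Bousfield equivalence to
\[
	L_n^fF(n)\otimes X\simeq0\iff T(n)\otimes X\simeq0
\]
for $X$ in an arbitrary $\M\in\PrL_{\st}$. Appealing to ``the module action of $\Sp$'' does not justify it.

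For fixed $X\in\M$, the class $\{E\in\Sp:E\otimes X\simeq0\}$ is the kernel of the colimit-preserving functor $-\otimes X:\Sp\to\M$, so it is a localizing subcategory of $\Sp$. Conversely, every set-generated localizing subcategory $\mathcal{L}=\Loc(S)$ arises this way: take $\M=\Sp/\mathcal{L}$ and $X$ the image of $\bbS$. So what you need, uniformly in $\M$, is $\Loc(L_n^fF(n))=\Loc(T(n))$, i.e. that each spectrum is built from the other under colimits. Bousfield equivalence only records which spectra the two objects annihilate, and it does not formally give this.

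One inclusion is easy. $T(n)=\colim_kF(n)[-kd]$ lies in $\Loc(F(n))$ and is $L_n^f$-local, so $T(n)\simeq L_n^f\bbS\otimes T(n)\in\Loc(L_n^fF(n))$. This gives the implication $L_n^fF(n)\otimes X\simeq0\Rightarrow T(n)\otimes X\simeq0$.

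The reverse inclusion $L_n^fF(n)\in\Loc(T(n))$ is where the real input lies. The natural way to get it is the equivalence of spectra $L_n^fF(n)\simeq T(n)$: the $L_n^f$-localization of a type $n$ complex is its $v_n$-telescope (\cite[Lecture 28, Proposition 1]{Lur10}). This is exactly what the paper's one-line proof uses. With it, $L_{L_n^fF(n)}\M=L_{T(n)}\M$ holds by definition, and the Bousfield-class computation is unnecessary.
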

\begin{proof}
	It follows from $L_n^fF(n)\simeq T(n)$ (e.g. \cite[Lecture 28, Proposition 1]{Lur10}).
\end{proof}
\begin{remark}
	Let $\M\in\PrL_{\st}$. In \cite[Definition 5.5.1]{CSY21height}, Carmeli--Schlank--Yanovski introduce the notion of \emph{stable height} and define $\M_{\leq^{\st}n}:=C_{F(n+1)}\M$, $\M_{>^{\st}n}:=L_{F(n+1)}\M$, and $\M_{n^{\st}}:=C_{F(n+1)}\M\cap L_{F(n)}\M$. Then \cref{lem:L_n^f,prop:M_n^mf} recover \cite[Proposition 5.5.5]{CSY21height}, i.e. $\M_{\leq^{\st}n}\simeq L_n^f\Sp\otimes\M$, $\M_{>^{\st}n}\simeq L_{F(n+1)}\Sp\otimes\M$ and $\M_{n^{\st}}\simeq L_{T(n)}\M\simeq L_{T(n)}\Sp\otimes\M$.
\end{remark}
Next, we turn to Morava $E$-theories and reformulate some results in \cite{HS99} (see also \cite{Hea23}).

For height $n=0$, let $E_0=K(0)=\bbQ$. For height $n\geq1$, let $E_n$ denote the \emph{even periodic Morava $E$-theory $\bbE_\infty$-ring of height $n$} with coefficients
\[
	\pi_*E_n\simeq\bbZ_p[\![v_1,\cdots,v_{n-1}]\!][u^{\pm1}],\quad|v_i|=0,~|u|=2.
\]
Let $K(n)$ denote the \emph{Morava $K$-theory of height $n$} with coefficients
\[
	\pi_*K(n)\simeq\bbF_p[v_n^{\pm1}],\quad|v_n|=2(p^n-1).
\]
Denote $L_n:=L_{T(0)\oplus K(1)\oplus\cdots\oplus K(n)}$. Recall that $L_{K(n)}\Sp\subseteq L_{T(n)}\Sp$ and $L_n\Sp\subseteq L_n^f\Sp$, $K(m)\otimes K(n)\simeq0$ and $K(m)\otimes F(n)\simeq 0$ for $m<n$, $L_{K(n)\otimes K(n)}\simeq L_{K(n)}$, and $L_{K(m)\otimes F(n)}\simeq L_{K(m)}$ for $m\geq n$.
\begin{lemma}
	$L_n$ is a smashing localization. Moreover, let $\M\in\PrL_{\st}$. Then we have $L_n\Sp\otimes\M\simeq L_n\M$.
\end{lemma}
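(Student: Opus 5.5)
The plan is to deduce smashing-ness from the Hopkins--Ravenel smash product theorem and then obtain the module statement formally from \cref{prop:L_FL_E}, in the way \cref{lem:L_n^f} used \cref{cor:L_KM}.

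\textbf{Smashing.} By the Hopkins--Ravenel smash product theorem (see \cite{HS99} or \cite[Lecture 34]{Lur10}), the $p$-local localization $L_{E_n}\simeq L_{K(0)\oplus\cdots\oplus K(n)}$ is smashing. To pass to our integral $L_n=L_{T(0)\oplus K(1)\oplus\cdots\oplus K(n)}$, I will compare acyclics. A spectrum $X$ is $T(0)\oplus K(1)\oplus\cdots\oplus K(n)$-acyclic exactly when $X[1/p]\simeq0$ and $K(i)\otimes X\simeq0$ for $1\leq i\leq n$. Since $X[1/p]\simeq0$, $X$ is $p$-local, so this is the same as $X$ being $p$-local and $E_n$-acyclic. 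Hence $L_n X\simeq\fib\bigl(L_{E_n}(X_{(p)})\oplus X[1/p]\to L_{E_n}(X_{(p)})[1/p]\bigr)$, where the right-hand side is the arithmetic fracture square. Each term is smashing in $X$, because rationalization, $p$-localization and $L_{E_n}$ all are, and fibers of colimit-preserving functors preserve colimits. Therefore $L_n$ preserves colimits, which is equivalent to $L_n\simeq L_n\bbS\otimes-$. So $L_n$ is a symmetric monoidal localization with $L_n\bbS$ an idempotent algebra, and $\Sp\to L_n\Sp\simeq\Mod_{L_n\bbS}(\Sp)$ is smashing.

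\textbf{Modules.} With $E=T(0)\oplus K(1)\oplus\cdots\oplus K(n)$ smashing, the claim $L_n\Sp\otimes\M\simeq L_n\M$ is the case $\C=\Sp$ of the statement in \cref{prop:L_FL_E}: since $\M\in\PrL_{\st}$ is an $\Sp$-module, $L_E\Sp\otimes\M\simeq L_E\M$ whenever $\Sp\to L_E\Sp$ is smashing. Concretely, $L_n\Sp\simeq\Mod_{L_n\bbS}(\Sp)$ is dualizable, so tensoring the short exact sequence $C_n\Sp\hookrightarrow\Sp\to L_n\Sp$ (which lies in $\Pr^{\iL}_{\st}$ because $L_n$ is smashing) with $\M$ gives a short exact sequence $C_n\Sp\otimes\M\hookrightarrow\M\to L_n\Sp\otimes\M$. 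The composite $\M\to L_n\Sp\otimes\M\hookrightarrow\M$ is $L_n\bbS\otimes-$. Its kernel is $\{X: L_n\bbS\otimes X\simeq0\}$, which equals $C_{L_n\bbS}\M$, and this coincides with $C_E\M$ because $E\otimes L_n\bbS\simeq E$ and $L_n\bbS\otimes E\simeq E$ force the same acyclics. The local objects then agree as right orthogonals, exactly as in the proof of \cref{prop:L_AM}.

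\textbf{Main obstacle.} The only deep input is the smash product theorem itself; everything else is the formal machinery already built. The one point needing care is the reduction from the integral localization $L_n$ to the $p$-local $L_{E_n}$, handled by the fracture square above. Beyond that, the only thing to check is that $C_{L_n\bbS}\M=C_E\M$ holds inside an arbitrary $\M$, not just in $\Sp$. This follows because both conditions are expressed through the action of the idempotent $L_n\bbS$ on objects of $\M$, as in \cref{prop:L_AM}.
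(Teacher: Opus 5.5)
Your smashing argument is correct but takes a different route from the paper. The paper checks that $(L_n)_{(p)}\simeq L_{E_n}$ and $(L_n)_{(q)}\simeq(-)_{(q)}$ for $q\neq p$ are smashing, then invokes Liang's local-to-global criterion \cite[Theorem 6.6]{Lia26telescope}. You instead write the integral localization explicitly as the pullback $L_{E_n}(X_{(p)})\times_{L_{E_n}(X_{(p)})[1/p]}X[1/p]$. This is valid: the acyclics are the $p$-power torsion $E_n$-acyclics, all three corners are $L_n$-local, and the comparison map has fiber $\Gamma_p$ of an $E_n$-acyclic. Your version is more elementary and self-contained; Hopkins--Ravenel is the only deep input in both. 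The paper's proof does not argue the module statement separately.

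The gap is in your module step. First, \cref{prop:L_FL_E} does not say what you quote: its identification $L_F\C\otimes_{\C}\M\simeq L_F\M$ assumes $\M$ is dualizable and has no smashing hypothesis. More seriously, your identification $C_{L_n\bbS}\M=C_E\M$, with $E=T(0)\oplus K(1)\oplus\cdots\oplus K(n)$, is only half justified. The identity $E\otimes L_n\bbS\simeq E$ (your two identities are the same one) gives $C_{L_n\bbS}\M\subseteq C_E\M$. The condition $E\otimes X\simeq0$, however, is not expressed through $L_n\bbS$, so the reverse inclusion is not formal. In $\Sp$ it is exactly smashing-ness. For arbitrary $\M\in\PrL_{\st}$ it is equivalent to $L_n\bbS\in\Loc(E)$. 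To see this, take $\M=\Sp/\Loc(E)$ and $X$ the image of $\bbS$, so that $Y\otimes X$ is the image of $Y$. Then $E\otimes X\simeq0$, whereas $L_n\bbS\otimes X\simeq0$ if and only if $L_n\bbS\in\Loc(E)$. That is a statement about localizing subcategories, and the Bousfield equivalence $\langle L_n\bbS\rangle=\langle E\rangle$ coming from smashing-ness does not formally imply it.

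It can be supplied, for instance, from the nilpotence form of Hopkins--Ravenel, $L_{E_k}\bbS\in\mathrm{Thick}^{\otimes}(E_k)$. For $1\leq k\leq n$, the height-$k$ layer $\fib(L_{E_k}\bbS\to L_{E_{k-1}}\bbS)\simeq L_{E_k}\fib(\bbS\to L^f_{k-1}\bbS)$ lies in $\Loc(L_{E_k}F(k))$. Moreover, $L_{E_k}F(k)\in\mathrm{Thick}^{\otimes}(E_k\otimes F(k))\subseteq\Loc(K(k))$, since $E_k\otimes F(k)$ is built from the residue field $E_k/\mathfrak{m}$, which lies in $\Loc(K(k))$. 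Your arithmetic square then handles the $T(0)$ and rational pieces. Without such an input, your argument proves only $L_n\Sp\otimes\M\simeq L_{L_n\bbS}\M$ (this is \cref{prop:L_AM} with $A=L_n\bbS$), or the full statement for dualizable $\M$.
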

\begin{proof}
	Observe that $L_{\bbQ\oplus\{\bbS_{(q)}\}_{q\neq p\text{ prime}}}\simeq L_{T(0)}$. Then $L_n\simeq L_{E_n\oplus\{\bbS_{(q)}\}_{q\neq p\text{ prime}}}$ and $\bbQ\otimes L_n\simeq\bbQ\otimes-$, so that $(L_n)_{(p)}\simeq L_{K(0)\oplus\cdots\oplus K(n)}\simeq L_{E_n}$ and $(L_n)_{(q)}\simeq(-)_{(q)}$ are smashing localizations of $\Sp_{(p)}$ and $\Sp_{(q)}$ for all primes $q\neq p$, respectively. By \cite[Theorem 6.6]{Lia26telescope}, we obtain that $L_n$ is a smashing localization.
\end{proof}
Moreover, since $\pi_*K(n)$ is a graded field, for every spectrum $X\in\Sp$ such that $K(n)\otimes X\neq0$, $K(n)\otimes X$ is a direct sum of shifts of $K(n)$. It follows that every $L_{K(n)}\Sp$ is a \emph{smashing field} \cite[Definition 1.11]{Lia26telescope}.
\begin{proposition}\label{prop:L_n}
	Let $n\geq1$ and $1\leq m\leq n$. Then we have a short exact sequence in $\Pr^{\dbl}_{\st}$
	\[\begin{tikzcd}[column sep=6em]
		{L_{K(n)\oplus\cdots\oplus K(n-m+1)}\Sp} & L_n\Sp & {L_{n-m}\Sp}
		\arrow[shift left=3, hook, from=1-1, to=1-2]
		\arrow[shift right=3, hook, from=1-1, to=1-2]
		\arrow["{L_{F(n-m+1)}}"{description}, from=1-2, to=1-1]
		\arrow["{L_{n-m}}", shift left=3, from=1-2, to=1-3]
		\arrow[shift right=3, from=1-2, to=1-3]
		\arrow[hook', from=1-3, to=1-2]
	\end{tikzcd}\]
	Then we have
	\begin{align*}
		L_{K(n)\oplus\cdots\oplus K(n-m+1)}\Sp&\simeq L_n\Sp\otimes L_{T(n)\oplus\cdots\oplus T(n-m+1)}\Sp\\
		&\simeq L_n\Sp\otimes L_{F(n-m+1)}\Sp\\
		&\simeq L_{L_nF(n-m+1)}\Sp,
	\end{align*}
	and $L_{K(n)\oplus\cdots\oplus K(n-m+1)}\Sp\otimes L_{n-m}\Sp\simeq0$. In particular, taking $m=1$ gives $L_{K(n)}\Sp\simeq L_{T(n)}\Sp\otimes L_n\Sp$.

	Moreover, let $\M\in\PrL_{\st}$. Then we have $L_{K(n)\oplus\cdots\oplus K(n-m+1)}\Sp\otimes\M\simeq L_{L_nF(n-m+1)}\M$.
\end{proposition}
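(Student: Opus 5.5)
The plan is to imitate the proof of \cref{prop:M_n^mf}, with $L_n$ in place of $L_n^f$ and $K(i)$ in place of $T(i)$. Put $E:=T(0)\oplus K(1)\oplus\cdots\oplus K(n-m)$, so that $L_E=L_{n-m}$, and $F:=K(n-m+1)\oplus\cdots\oplus K(n)$, so that $E\oplus F$ gives $L_n$.

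\textbf{Step 1: the short exact sequence.} We check the hypotheses of \cref{prop:fracture_square_functors}(2).
\begin{enumerate}
\item By the preceding lemma, both $L_E=L_{n-m}$ and $L_{E\oplus F}=L_n$ are smashing.
\item We have $E\otimes F\simeq0$, since $K(i)\otimes K(j)\simeq0$ for $i\neq j$ and $T(0)\otimes K(j)\simeq0$ for $j\geq1$.
\end{enumerate}
This yields a short exact sequence $L_F\Sp\hookrightarrow L_n\Sp\to L_{n-m}\Sp$ in $\Pr^{\dbl}_{\st}$. Dualizability holds because $L_n\Sp$ is a smashing localization of $\Sp$, hence dualizable. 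The same proposition also gives $L_F\Sp\otimes L_{n-m}\Sp\simeq0$, since $L_FL_{n-m}\simeq0$.

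\textbf{Step 2: the right adjoint is $L_{F(n-m+1)}$.} Since $L_n$ is smashing, \cref{prop:L_FL_E} gives
\[
L_n\Sp\otimes L_{F(n-m+1)}\Sp\simeq L_{L_n\bbS\otimes F(n-m+1)}\Sp=L_{L_nF(n-m+1)}\Sp,
\]
and also identifies it with $L_{E\oplus F}\otimes F(n-m+1)$-localization. Now use the recalled facts.
\begin{enumerate}
\item For $j\geq n-m+1$, $K(j)\otimes F(n-m+1)$ is Bousfield equivalent to $K(j)$.
\item For $j<n-m+1$, $K(j)\otimes F(n-m+1)\simeq0$, and $T(0)\otimes F(n-m+1)\simeq0$ as long as $n-m+1\geq1$, which holds because $m\leq n$.
\end{enumerate}
Hence $(E\oplus F)\otimes F(n-m+1)$ is Bousfield equivalent to $F$, giving $L_n\Sp\otimes L_{F(n-m+1)}\Sp\simeq L_F\Sp$. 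By \cref{lem:LC=LC(Mod(L1))}, the localization $L_n\Sp\to L_{F(n-m+1)}L_n\Sp$ is $L_{F(n-m+1)}$ restricted to $L_n\Sp$. This is the middle map of the displayed sequence.

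For the remaining equivalence with $L_n\Sp\otimes L_{T(n)\oplus\cdots\oplus T(n-m+1)}\Sp$, first rewrite $L_{T(n)\oplus\cdots\oplus T(n-m+1)}\Sp\simeq L_n^f\Sp\otimes L_{F(n-m+1)}\Sp$ by \cref{prop:M_n^mf}. Then use $L_n\Sp\otimes L_n^f\Sp\simeq L_n\Sp$, which holds because $L_n\Sp\subseteq L_n^f\Sp$ and both are modes (\cref{prop:L_FL_E}, intersection formula).

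\textbf{Step 3: arbitrary $\M$.} For $\M\in\PrL_{\st}$, we have $L_{F(n-m+1)}\Sp\otimes\M\simeq L_{F(n-m+1)}\M$ by \cref{cor:L_KM}, since $F(n-m+1)$ is dualizable and $F(n-m+1)\oplus L_{n-m}^f\bbS$ detects zero. We also have $L_n\Sp\otimes\M\simeq L_n\M$ by the lemma. Therefore
\[
L_F\Sp\otimes\M\simeq L_{F(n-m+1)}L_n\M\simeq L_{L_n\bbS\otimes F(n-m+1)}\M
\]
by \cref{prop:L_FL_E}.

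\textbf{Main obstacle.} I expect the Bousfield-class bookkeeping in Step 2 to be the delicate point. The key input is the condition $m\leq n$: it forces $n-m+1\geq1$, so $F(n-m+1)$ kills the rational summand $T(0)$.
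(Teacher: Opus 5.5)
Your proposal is correct and follows essentially the paper's route. You apply \cref{prop:fracture_square_functors}(2) with $E=T(0)\oplus K(1)\oplus\cdots\oplus K(n-m)$ and $F=K(n-m+1)\oplus\cdots\oplus K(n)$, use \cref{prop:L_FL_E} with the same Bousfield-class bookkeeping to get $L_n\Sp\otimes L_{F(n-m+1)}\Sp\simeq L_{K(n)\oplus\cdots\oplus K(n-m+1)}\Sp$, invoke \cref{lem:LC=LC(Mod(L1))} to identify the middle functor, and run the same chain of equivalences for arbitrary $\M$. The differences are minor: you get the comparison with $L_n\Sp\otimes L_{T(n)\oplus\cdots\oplus T(n-m+1)}\Sp$ via \cref{prop:M_n^mf} and $L_n\Sp\otimes L_n^f\Sp\simeq L_n\Sp$ rather than by tensoring the $L_n^f$-layer sequence with $L_n\Sp$, and you also record the vanishing $L_{K(n)\oplus\cdots\oplus K(n-m+1)}\Sp\otimes L_{n-m}\Sp\simeq0$, which follows from $L_FL_{n-m}\simeq0$ via the intersection formula of \cref{prop:L_FL_E} and which the paper's proof leaves implicit.
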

\begin{proof}
	Since both $T(0)\oplus K(1)\oplus\cdots\oplus K(n-m)$ and $(T(0)\oplus K(1)\oplus\cdots\oplus K(n-m))\oplus(K(n-m+1)\oplus\cdots\oplus K(n))$ are smashing, and $(T(0)\oplus K(1)\oplus\cdots\oplus K(n-m))\otimes(K(n-m+1)\oplus\cdots\oplus K(n))\simeq0$, applying \cref{prop:fracture_square_functors} gives the desired short exact sequence. 
	
	We apply $L_n\Sp\otimes-$ to the short exact sequence $L_{F(n-m+1)}\Sp\hookrightarrow\Sp\to L_{n-m}^f\Sp$. Since we have $L_{(T(0)\oplus K(1)\oplus\cdots\oplus K(n))\otimes F(n-m+1)}\Sp\simeq L_{K(n-m+1)\oplus\cdots\oplus K(n)}\Sp$, $L_n\Sp\otimes L_{F(n-m+1)}\Sp\simeq L_{L_nF(n-m+1)}\Sp\simeq L_{(T(0)\oplus K(1)\oplus\cdots\oplus K(n))\otimes F(n-m+1)}\Sp\simeq L_{K(n-m+1)\oplus\cdots\oplus K(n)}\Sp$ by \cref{prop:L_FL_E}. It follows that $L_n\Sp\otimes L_{n-m}^f\Sp\simeq L_{n-m}\Sp$. Therefore, \cref{lem:LC=LC(Mod(L1))} implies that the localization $L_n\Sp\to L_{F(n-m+1)}L_n\Sp$ is identified with $L_{F(n-m+1)}$.
	
	On the other hand, applying $L_n\Sp\otimes-$ to the short exact sequence $L_{T(n)\oplus\cdots\oplus T(n-m+1)}\Sp\hookrightarrow L_n^f\Sp\to L_{n-m}^f\Sp$ gives equivalences $L_n\Sp\otimes L_{T(n)\oplus\cdots\oplus T(n-m+1)}\Sp\simeq L_n\Sp\otimes L_{K(n)\oplus\cdots\oplus K(n-m+1)}\Sp$.

	Let $\M\in\PrL_{\st}$. Then $L_{L_nF(n-m+1)}\M\simeq L_{L_n\bbS\otimes F(n-m+1)}\M\simeq L_{F(n-m+1)}L_n\M\simeq L_{F(n-m+1)}\Sp\otimes L_n\Sp\otimes\M\simeq L_{K(n)\oplus\cdots\oplus K(n-m+1)}\Sp\otimes\M$.
\end{proof}

We now specialize the theory of rigid envelopes to locally rigid categories with $\omega_1$-compact unit, a condition satisfied by both $\Mod(R)_{I\text{-}\cplt}$ and $L_{T(n)\oplus\cdots\oplus T(n-m+1)}\Sp$. In this setting, rigidification admits a concrete description in terms of nuclear objects. We introduce the notion of nuclear module categories and construct symmetric monoidal fracture squares for them.
\begin{definition}[{\cite[Definition 8.1]{CS26complex}}]
	Let $\C\in\CAlg(\PrL)$. We say a map $X\to Y\in\C$ is \emph{trace-class} if the map $\mathbbm{1}\to\iHom_{\C}(X,Y)$ corresponding to $X\to Y$ admits a lift 
	\[\begin{tikzcd}
		& {X^{\vee}\otimes Y} \\
		{\mathbbm{1}} & {\iHom_{\C}(X,Y)}
		\arrow[from=1-2, to=2-2]
		\arrow[dashed, from=2-1, to=1-2]
		\arrow[from=2-1, to=2-2]
	\end{tikzcd}\]
	Equivalently (cf. \cite[Definition 4.4.1]{KNP24} or \cite[Definition 3.1]{Ram26locallyrigid}), $X\to Y$ is trace-class if and only if there exists an object $Z\in\C$ with maps $\mathbbm{1}\to Y\otimes Z$ and $Z\otimes X\to\mathbbm{1}$, and a commutative diagram
	\[\begin{tikzcd}
		& {Y\otimes Z\otimes X} & \\
		X && Y
		\arrow[from=1-2, to=2-3]
		\arrow[from=2-1, to=1-2]
		\arrow[from=2-1, to=2-3]
	\end{tikzcd}\]
\end{definition}
The second characterization immediately implies that symmetric monoidal functors preserve trace-class maps. Moreover, it extends to the $\bbE_1$-monoidal case, where we refer to it as the notion of a \emph{left trace-class} map. Equivalently, it is characterized by the first definition with $X^{\vee}\otimes Y$ replaced by $Y\otimes X^{l\vee}$.
\begin{definition}[{\cite[Definition 2.17, Lemma 2.18, Lemma 2.21]{Aok25}}]\label{def:nuclear}
	Let $\C\in\CAlg(\PrL)$. We say an $\Ind$-object $X\in\Ind(\C)$ is \emph{nuclear} if it satisfies one of the following equivalent conditions:
	\begin{enumerate}
		\item Any map $j(C)\to X$ from $C\in\C$ factors as $j(C)\to j(C')\to X$ for some trace-class map $C\to C'$.
		\item $X$ is equivalent to $\colim_Ij(C_i)$ with $I$ directed and $C_i\in\C$, so that for every $i\in I$ there exists some $i\leq j$ with $C_i\to C_j$ trace-class.
		\item $X$ is the filtered colimit of \emph{basic nuclear objects} (i.e. objects of the form $\colim_{\bbN}j(C_n)$ where all maps $C_n\to C_{n+1}$ are trace-class). 
	\end{enumerate}
	Let $\Nuc(\C)$ denote the full subcategory of $\Ind(\C)$ spanned by nuclear $\Ind$-objects, and call it the \emph{nuclear module category} of $\C$.
\end{definition}
\begin{proposition}
	Let $\C\in\CAlg(\PrL)$. Then $\Nuc(\C)$ is $\omega_1$-accessible with $\omega_1$-compact objects consisting of basic nuclear objects, and is closed under tensor products. Let $\C\in\CAlg(\PrL_{\st})$. Then $\Nuc(\C)$ is $\omega_1$-compactly generated, and $\Nuc(\C)\in\CAlg(\PrL_{\st})$.
\end{proposition}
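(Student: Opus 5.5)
Our plan is to reduce everything to the characterization (3) in \cref{def:nuclear}: $\Nuc(\C)\subseteq\Ind(\C)$ is the closure under filtered colimits of the basic nuclear objects $\colim_{\bbN}j(C_n)$ with trace-class transitions. We first check that basic nuclear objects are $\omega_1$-compact in $\Ind(\C)$. They are countable colimits of the compact objects $j(C_n)$, hence $\omega_1$-compact in $\Ind(\C)$, and therefore also in the full subcategory $\Nuc(\C)$, which is closed under filtered colimits in $\Ind(\C)$ by (3). For $\omega_1$-accessibility we write an arbitrary nuclear $X$ as an $\omega_1$-filtered colimit of basic nuclear objects. Using (2), present $X\simeq\colim_Ij(C_i)$ with $I$ directed and with trace-class steps $C_i\to C_j$ cofinally. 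For each $i$, choose inductively a chain $i=i_0\leq i_1\leq\cdots$ with every $C_{i_k}\to C_{i_{k+1}}$ trace-class. The countable chains in $I$, ordered by cofinal refinement (equivalently, countable subsets of $I$ closed under this choice function and directed), form an $\omega_1$-directed poset, and $X$ is the colimit over it of the corresponding basic nuclear objects. Retracts are handled in the same way. It then follows that the $\omega_1$-compact objects of $\Nuc(\C)$ are exactly the retracts of basic nuclear objects, and we plan to show they are basic nuclear on the nose by the standard reindexing trick of Aoki.

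For closure under tensor products, we use that the Day-convolution tensor product on $\Ind(\C)$ commutes with filtered colimits in each variable. It therefore suffices to treat two basic nuclear objects, for which
\[
	\colim_nj(C_n)\otimes\colim_nj(D_n)\simeq\colim_nj(C_n\otimes D_n)
\]
by cofinality of the diagonal. The maps $C_n\otimes D_n\to C_{n+1}\otimes D_{n+1}$ are trace-class: the tensor product of two trace-class maps is trace-class, since from witnesses $(Z,\mathbbm{1}\to C'\otimes Z,Z\otimes C\to\mathbbm{1})$ and $(W,\dots)$ we obtain the witness $Z\otimes W$ by tensoring, using the symmetric monoidal structure. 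The unit must also be checked. The identity of $\mathbbm{1}$ is trace-class, so $j(\mathbbm{1})$ is basic nuclear, and $\Nuc(\C)$ is a symmetric monoidal subcategory of $\Ind(\C)$.

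In the stable case we need that $\Nuc(\C)$ is a stable subcategory closed under colimits, so that it is presentable by the accessibility already shown and becomes an object of $\CAlg(\PrL_{\st})$. Shifts preserve trace-class maps, since $\Sigma$ is tensoring with an invertible object. For finite colimits we form cofibers levelwise of a map between nuclear objects. First reduce, via filtered colimits and reindexing, to a levelwise map of basic nuclear sequences $C_n\to D_n$. The cofiber sequence $\colim_n\cof(C_n\to D_n)$ need not have trace-class transitions directly. Instead we pass to the composite of two steps and use the criterion (1): any map from $j(E)$ to the cofiber lifts after one step to a map that factors through a trace-class map, because trace-class maps form a two-sided ideal. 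This is the step we expect to be the main obstacle, namely showing that nuclear objects are closed under cofibers, and we would follow the argument of \cite[Lemma 2.21]{Aok25} or \cite{CS26complex}. Once closure under all colimits is established, $\omega_1$-compact generation follows from the accessibility step, and the tensor product preserves colimits in each variable because it does so in $\Ind(\C)$.
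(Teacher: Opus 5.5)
Your outline follows the paper's proof closely: basic nuclear objects are $\omega_1$-compact, nuclear objects are $\omega_1$-filtered colimits of basic nuclear ones in the style of Efimov, retracts are handled by reindexing, tensor closure is checked on basic nuclear objects, and closure under cofibers comes from Aoki. However, your accessibility step has a genuine gap. You never show that the basic nuclear objects form an \emph{essentially small} collection. Here $\C$ is a large presentable category, so $\Ind(\C)$ is not accessible, and the sequences $(C_n)_{n\in\bbN}$ with trace-class transitions range over a large collection. Knowing that every nuclear object is an $\omega_1$-filtered colimit of $\omega_1$-compact basic nuclear objects therefore does not yet give $\omega_1$-accessibility. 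It consequently does not give presentability in the stable case either, which you invoke there.

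The missing idea, which the paper supplies, is a reduction to $\kappa$-compact terms. Choose $\kappa$ with $\C\in\Pr^{\kappa}$ and $\mathbbm{1}\in\C^{\kappa}$. Let $f:X\to Y$ be trace-class, and write $Y\simeq\colim_\alpha Y_\alpha$ as a $\kappa$-filtered colimit of $\kappa$-compact objects. Since $X^{\vee}\otimes-$ preserves colimits and $\mathbbm{1}$ is $\kappa$-compact, the lift $\mathbbm{1}\to X^{\vee}\otimes Y$ factors through some $X^{\vee}\otimes Y_\alpha$. Hence $f$ factors as $X\to Y_\alpha\to Y$ with $X\to Y_\alpha$ trace-class. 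Apply this to each step of a basic nuclear sequence and use the ideal property. Then every basic nuclear object is equivalent to one with $\kappa$-compact terms, so $\Nuc(\C)\subseteq\Ind(\C^{\kappa})$ and the basic nuclear objects form an essentially small set. With this inserted, the rest of your accessibility, retract and tensor arguments go through.

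A smaller point concerns the cofiber step. The justification you sketch, that maps lift ``because trace-class maps form a two-sided ideal,'' does not suffice. The real input is a genuinely stable lemma. Given maps of cofiber sequences whose components on the first two terms are trace-class, the composite of two consecutive induced maps on the cofibers is trace-class. This uses more than the ideal property. For example, post-composition with a trace-class map $d:D\to D'$ makes $\mathrm{id}_{X^{\vee}}\otimes d$ factor through $X^{\vee}\otimes D\to\iHom(X,D)$. This kills the discrepancy between the chosen lifts on the two legs of the square after one further step. The paper does not reprove this either; it cites \cite[Lemma 7.7]{Aok25} for the closure of basic nuclear objects under cofibers, and that is the reference you want here.
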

\begin{proof}
	Basic nuclear objects are $\omega_1$-compact in $\Ind(\C)$, and hence $\omega_1$-compact in $\Nuc(\C)$. Suppose that $\C\in\Pr^{\kappa}$ and $\mathbbm{1}\in\C^{\kappa}$. Then every trace-class map factors through a $\kappa$-compact object and $\Nuc(\C)\subseteq\Ind(\C^{\kappa})$. So basic nuclear objects are of the form $\colim_{\bbN}j(C_n)$ where all maps $C_n\to C_{n+1}$ are trace-class between $\kappa$-compacts. As in the proof of \cite[Proposition 1.24]{Efi25localizing}, we can show that every nuclear object is the $\omega_1$-filtered colimit of basic nuclear objects. It follows that $\Nuc(\C)$ is $\omega_1$-accessible. Conversely, let $X\in\Nuc(\C)^{\omega_1}$. Then $X$ is a retract of a basic nuclear object $\colim_{\bbN}j(C_n)$ associated to $f:\colim_{\bbN}j(C_n)\to\colim_{\bbN}j(C_n)$. We may assume that $f$ is induced by a sequence of maps $(C_n\to C_{i(n)})_n$ with a cofinal functor $i:\bbN\to\bbN$. Hence, $X\simeq\colim_{m\in\bbN}\colim_{n\in\bbN}j(C_{i^m(n)})\simeq\colim_{\bbN}j(C_{i^n(n)})$ is basic nuclear. Suppose moreover that $\C\in\CAlg(\Pr^{\kappa})$. Then basic nuclear objects are closed under tensor products. Therefore, $\Nuc(\C)$ is closed under tensor products.

	Let $\C\in\CAlg(\PrL_{\st})$. Then by \cite[Lemma 7.7]{Aok25}, basic nuclear objects are closed under cofibers. It follows that $\Nuc(\C)\in\CAlg(\PrL_{\st})$.
\end{proof}
\begin{remark}
	The notion of nuclear module category in \cref{def:nuclear} recovers the notions of nuclear module categories of Clausen--Scholze, Meyer--Wagner, and Efimov in the corresponding settings.
	\begin{enumerate}
		\item Let $\C\in\Pr^{\cg}$ with $\mathbbm{1}\in\C^\omega$. Then $\Nuc(\C)\subseteq\Ind(\C^{\omega})\simeq\C$ coincides with the nuclear module category in the sense of Clausen--Scholze \cite[Definition 8.5]{Sch26analytic,CS26complex}, namely, the full subcategory spanned by objects $X\in\C$ such that 
		\[
			\Hom_{\C}(\mathbbm{1},C^{\vee}\otimes X)\simeq\Hom_{\C}(C,X)
		\]
		for every $C\in\C^{\omega}$, or equivalently, such that every map $C\to X$ from $C\in\C^{\omega}$ is trace-class. Indeed, objects in $\Nuc(\C)$ are filtered colimits of basic nuclear objects, and the claim follows from \cite[Lemma 4.7.8]{KNP24}.
		
		Moreover, if $\C\in\CAlg(\Pr^{\cg})$, then by \cite[Proposition 7.11]{Aok25}, $X\in\C$ is nuclear if and only if
		\[
			C^{\vee}\otimes X\simeq\iHom_{\C}(C,X)
		\]
		for every $C\in\C^{\omega}$.
		\item Let $\C\in\Pr^{\kappa}$ with $\mathbbm{1}\in\C^{\kappa}$. Then $\Nuc(\C)\subseteq\Ind(\C^{\kappa})$ coincides with the nuclear module category in the sense of Meyer--Wagner \cite[Remark 2.5]{MW25refinedTC}, namely, the $\Ind_{\omega_1}$-completion of the full subcategory spanned by the basic nuclear objects.
		
		Moreover, if $\kappa\geq\omega$ and $\C\in\CAlg(\Pr^{\kappa})$, then the colimit functor $k:\Ind(\C^\kappa)\to\C$ induces an equivalence 
		\[
			\Nuc(\Ind(\C^{\kappa}))\simeq\Nuc(\C).
		\]
		Thus, it suffices to study nuclear module categories for $\C\in\CAlg(\Pr^{\cg})$. Indeed, we only need to verify the observations in \cite[Remark 1.24]{Efi25limit}. Since $k:\Ind(\C^{\kappa})\to\C$ is symmetric monoidal and $j:\C\hookrightarrow\Ind(\C^{\kappa})$ is fully faithful, $j$ preserves internal Homs, and in particular, duals. Since $j$ is also symmetric monoidal, a map $X\to Y$ is trace-class in $\C$ if and only if $j(X)\to j(Y)$ is trace-class in $\Ind(\C^{\kappa})$. Therefore, under the equivalence $k:\Ind(\Ind(\C^{\kappa})^{\omega})\simeq\Ind(\C^{\kappa})$, the nuclear module category $\Nuc(\C)\subseteq\Ind(\C^{\kappa})$ coincides with $\Nuc(\Ind(\C^{\kappa}))\subseteq\Ind(\Ind(\C^{\kappa})^{\omega})$.
		\item Let $\C\in\Pr^{\ca}$ with $\mathbbm{1}\in\C^{\omega}$. Then every trace-class map is compact, $\Nuc(\C)\subseteq\widehat{j}(\C)\subseteq\Ind(\C^{\omega_1})$, and $k(\Nuc(\C))\subseteq\C$ coincides with the nuclear module category in the sense of Efimov \cite[Definition 3.2]{Efi25rigidity}, namely, the full subcategory spanned by objects $X\in\C$ such that every compact map $C\to X$ from $C\in\C^{\omega_1}$ is trace-class.
		
		Indeed, $\Nuc(\C)\subseteq\widehat{j}(\C)$ consists of objects of the form $\widehat{j}(X)\simeq\colim_Ij(C_i)\simeq\colim_I\widehat{j}(C_i)$ with $I$ directed and $C_i\in\C$, so that for every $i\in I$ there exists some $i\leq j$ with $C_i\to C_j$ trace-class. It follows that every object of $k(\Nuc(\C))$ is the $\omega_1$-filtered colimit of basic nuclear objects. The converse follows from the same argument as in \cite[Lemma 4.7.8]{KNP24}. This recovers \cite[Proposition 3.5 and 3.6]{Efi25rigidity}.
	\end{enumerate}
\end{remark}
\begin{remark}
	More generally, Aoki introduces the notion of $\I$-restricted $\Ind$-objects in \cite{Aok25}, unifying basic nuclear objects and compactly exhaustible objects. 
	
	Let $\C\in\PrL$ and $\I$ be an \emph{idealoid} in $\C$, i.e., a class of morphisms such that for any $f\in\I$ we have $gfh\in\I$ for any composable maps $g,h\in\C$. We say an $\Ind$-object $X\in\Ind(\C)$ is \emph{$\I$-restricted} if any map $j(C)\to X$ factors as $j(C)\to j(C')\to X$ for some map $C\to C'\in\I$, and let $\Ind_{\I}(\C)$ denote the full subcategory of $\Ind(\C)$ spanned by $\I$-restricted $\Ind$-objects.
	
	We say $\I$ is \emph{accessible} if there exists some $\kappa$ such that every map in $\I$ factors through a $\kappa$-compact object. In this case, $\Ind_{\I}(\C)$ is $\omega_1$-accessible with $\omega_1$-compact objects precisely of the form $\colim_{\bbN}j(C_n)$ where all maps $C_n\to C_{n+1}\in\I$. 
	
	Let $\I_{\mathrm{sd}}$ denote the subset of $\I$ consisting of maps $f\in\C$ such that there exists a functor $F:\bbQ\cap[0,1]\to\C$ with $F(a)\to F(b)\in\I$ for all $a<b$ and $F(0)\to F(1)$ is identified with $f$. Then $\Ind_{\I_{\mathrm{sd}}}(\C)$ recovers $(\C,\I)^{\ca}$ in \cite[Definition 2.7.1]{KNP24} and $(\C,\I)$ in \cite[Definition 4.22]{Ram24dualizable} in the corresponding settings.
	
	The basic examples are as follows:
	\begin{enumerate}
		\item Let $\C\in\PrL$ and $\I=\{\text{compact maps}\}$. We call an object of $\Ind_{\I}(\C)$ a \emph{Schwartz} $\Ind$-object and an object of $\Ind_{\I_{\mathrm{sd}}}(\C)$ a \emph{very Schwartz} $\Ind$-object. By \cref{rmk:compass} or \cite[Proposition 3.1]{Aok25}, $\C$ is compactly assembled if and only if every object is the colimit of some Schwartz $\Ind$-object. In particular, every Schwartz $\Ind$-object of a compactly assembled category is very Schwartz.
		\item Let $\C\in\CAlg(\PrL_{\st})$ and $\I=\{\text{trace-class maps}\}$. Then $\Nuc(\C):=\Ind_{\I}(\C)$ by definition, and we call an object in $\VNuc(\C):=\Ind_{\I_{\mathrm{sd}}}(\C)$ a \emph{very nuclear} $\Ind$-object. By \cite[Proposition 7.15]{Aok25}, $\C$ is rigid if and only if $\mathbbm{1}\in\C^{\omega}$ and every object is the colimit of some nuclear $\Ind$-object. In particular, every nuclear $\Ind$-object of a rigid $\Sp$-algebra is very nuclear.
	\end{enumerate}
\end{remark}
\begin{theorem}[{\cite[Theorem 4.4.16]{KNP24}, \cite[Theorem 4.81]{Ram26locallyrigid}, \cite[Corollary 7.20]{Aok25}}]\label{thm:Nuc=rig}
	Let $\C\in\CAlg(\PrL_{\st})$. We have an equivalence
	\[\C^{\rig}\simeq\VNuc(\C),\]
	i.e., $\C^{\rig}\subseteq\Ind(\C)$ is the localizing stable subcategory generated by objects of the form $\colim_{\bbQ_{\geq0}}j(C_p)$ where $C_p\to C_q$ are trace-class for all $p<q$.
\end{theorem}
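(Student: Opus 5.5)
The plan is to build the equivalence $\C^{\rig}\simeq\VNuc(\C)$ in three steps: choose a good ambient category, show $\VNuc(\C)$ is a rigid algebra mapping to $\C$, and then check it has the universal property of the rigidification.

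\emph{Ambient category.} Choose $\kappa>\omega$ with $\C\in\CAlg(\Pr^{\kappa}_{\st})$ and $\mathbbm{1}\in\C^{\kappa}$. By the remark following the rigidification theorem, $\C^{\rig}\simeq\Ind(\C^{\kappa})^{\rig}$. Every trace-class map factors through a $\kappa$-compact object, so $\VNuc(\C)$ lives inside $\Ind(\C^{\kappa})$ as well. We may therefore replace $\C$ by the compactly generated category $\Ind(\C^{\kappa})$, in which $\mathbbm{1}$ is compact. The comparison with $\Nuc(\Ind(\C^{\kappa}))$ in the preceding remark justifies that this replacement does not change the very nuclear objects.

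\emph{$\VNuc(\C)$ is rigid.} Let $\VNuc(\C)\subseteq\Ind(\C)$ be the localizing stable subcategory generated by the objects $\colim_{\bbQ_{\geq0}}j(C_q)$ with all transition maps trace-class. These generators are closed under tensor products, and the unit $\colim j(\mathbbm{1})$ with identity transitions is among them, since identities on dualizable objects are trace-class. So $\VNuc(\C)$ is a symmetric monoidal subcategory.

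For rigidity we use the criterion \cite[Proposition 7.15]{Aok25}: $\mathbbm{1}$ should be compact, and every object should be a colimit of nuclear $\Ind$-objects. The key input is that a trace-class map $f:X\to Y$ induces a map $j(X)\to j(Y)$ in $\Ind(\C)$ which is itself trace-class, with the witness $\mathbbm{1}\to Y\otimes Z$ pushed forward along $j$. Along a $\bbQ_{\geq0}$-indexed chain we can always interpolate, so each transition factors as a composite of two trace-class maps. This gives the "self-dividing" structure, and it yields that the multiplication on $\VNuc(\C)$ has a linear right adjoint satisfying the projection formula. Equivalently, the compact maps in $\VNuc(\C)$ are trace-class, which is the rigidity criterion of \cite[Definition 4.4.1]{KNP24}.

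\emph{Universal property.} The colimit functor restricts to a symmetric monoidal map $k:\VNuc(\C)\to\C$. Given a rigid algebra $\D$ and a symmetric monoidal left adjoint $F:\D\to\C$, we need $F$ to factor uniquely through $k$. By rigidity, every object of $\D$ is a colimit of $\bbQ_{\geq0}$-chains of trace-class maps between compacts; this is \cite[Proposition 7.15]{Aok25} applied to $\D$. Applying $F$ produces trace-class chains in $\C$, hence objects of $\VNuc(\C)$, and this gives the factorization.

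Uniqueness uses the right adjoint $\C\to\Ind(\C)$ of $k$. Via the nuclear condition, we would show that $\Hom_{\Ind(\C)}(V,j'X)\simeq\Hom_{\C}(kV,X)$ for very nuclear $V$. Trace-class maps let us lift maps out of $j(C_q)$ through $j(C_{q'})$, which gives the needed approximation.

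\emph{Main obstacle.} The hardest step is the last one: proving that the factorization is unique up to contractible choice, i.e. that $k$ is the counit of the adjunction between inclusion and rigidification. I would first try to reduce to the description of $\C^{\rig}$ as the colimit of rigid full sub-$\Sp$-algebras of $\Ind(\C)$, from \cite[Lemma 4.72]{Ram26locallyrigid}. It then suffices to show two things:
\begin{enumerate}
\item every rigid full subalgebra of $\Ind(\C)$ over $\C$ lies in $\VNuc(\C)$;
\item $\VNuc(\C)$ is itself such a subalgebra.
\end{enumerate}
The second point is the rigidity established above. The first follows because compact objects of a rigid subalgebra are dualizable, and their exhausting $\bbQ$-chains consist of trace-class maps.
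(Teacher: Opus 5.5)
\textbf{There is a genuine gap: you never prove that $\VNuc(\C)$ is rigid.} The overall architecture is sensible. Your fallback in the last paragraph would make the universal property automatic: show that $\VNuc(\C)$ is a rigid full subalgebra of $\Ind(\C)$ containing every other one, then use \cite[Lemma 4.72]{Ram26locallyrigid}. But both versions of your argument rest on rigidity of $\VNuc(\C)$, which is asserted rather than shown, and your ``key input'' does not bear on it.

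Knowing that $j(C_a)\to j(C_c)$ is trace-class in $\Ind(\C)$, with witness $j(Z)$, produces no trace-class maps of $\VNuc(\C)$. None of $j(C_a)$, $j(C_c)$, $j(Z)$ lies in $\VNuc(\C)$. Trace-classness in $\VNuc(\C)$ needs a witness object inside $\VNuc(\C)$. The internal Hom of $\VNuc(\C)$ is $\iHom_{\Ind(\C)}$ followed by the right adjoint of the inclusion, which is not a priori compatible with $\otimes$, so nothing transfers for free.

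Two further points are missing. You never show that $\VNuc(\C)$ is dualizable, which your reformulation ``compact maps are trace-class'' presupposes. And interpolating along $\bbQ$ does not by itself produce a colimit-preserving right adjoint of the multiplication satisfying the projection formula. To apply \cite[Proposition 7.15]{Aok25} to $\VNuc(\C)$, you must write each generator as a colimit of maps that are trace-class \emph{inside} $\VNuc(\C)$.

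\textbf{How to close it.} This needs an actual construction.
For a generator $V=\colim_q j(C_q)$, put $V_{<a}:=\colim_{q<a}j(C_q)$. This again lies in $\VNuc(\C)$, and $V\simeq\colim_a V_{<a}$.
Fix $a<a'<q<s<b$. A witness inside $\VNuc(\C)$ for $V_{<a}\to V_{<b}$ can be built from the weak duals $C_r^{\vee}=\iHom(C_r,\mathbbm{1})$, using that duals of trace-class maps are trace-class. Set $W:=\colim_{r\downarrow a'}j(C_r^{\vee})$ over $r\in(a',s)$; this is a generator of $\VNuc(\C)$.
Take the pairing $W\otimes V_{<a}\to j(C_{a'}^{\vee})\otimes j(C_{a'})\to\mathbbm{1}$.
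Take the copairing $\mathbbm{1}\to j(C_s)\otimes j(C_q^{\vee})\to V_{<b}\otimes W$, given by a lift of the trace-class map $C_q\to C_s$.
The triangle identity then recovers $V_{<a}\to V_{<b}$.

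\textbf{Other problems.}
Your item (1) is also under-argued. A rigid subalgebra need not be generated by compact objects; sheaves of spectra on a compact manifold are rigid but not compactly generated. Argue instead that every object of a rigid subalgebra is a colimit of $\bbQ$-chains of maps trace-class in the subalgebra, hence trace-class in $\Ind(\C)$. Since $\mathbbm{1}$ is compact, such maps factor through representables with trace-class first leg, so these chains can be interleaved with chains of representables.
The same error (``trace-class maps between compacts'') affects your direct universal-property argument.
The uniqueness step there is empty as written: $\Hom(V,jX)\simeq\Hom(kV,X)$ holds for every Ind-object $V$ and says nothing about maps into $\VNuc(\C)$.
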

Analogous descriptions of $\C$-rigid algebras and $\C$-rigidifications over a general base $\C\in\CAlg(\PrL)$ are given in \cite[Theorem 4.98, Construction 4.100]{Ram26locallyrigid}.
\begin{theorem}[{\cite{Efi25limit,Aok25}}]
	Let $\C\in\CAlg(\PrL_{\st})$. Suppose that every trace-class map is the composition of two trace-class maps. Then we have an equivalence
	\[
		\C^{\rig}\simeq\Nuc(\C).
	\]
	In particular, the assumption holds in the following two cases.
	\begin{enumerate}
		\item Let $\C\in\Pr^{\kappa}_{\st}$ and $\mathbbm{1}\in\C^{\kappa}$. Suppose that every trace-class map between $\kappa$-compacts is the composition of two trace-class maps between $\kappa$-compacts.
		\item Let $\C\in\CAlg(\PrL_{\st})$ be locally rigid with $\mathbbm{1}\in\C^{\omega_1}$. In this case, the right adjoint $\C\to\C^{\rig}$ generates $\C^{\rig}$ under colimits and is symmetric monoidal.
	\end{enumerate}
	
	If the assumption holds, let $\overline{\C}\to\C$ be a rigid envelope. Then $\Nuc(\C)\simeq\C^{\rig}\simeq\iHom^{\dbl}_{\overline{\C}}(\C,\overline{\C})$.
\end{theorem}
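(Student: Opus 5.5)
We will prove the general statement first and then deduce the two special cases.

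\textbf{Step 1: the nuclear category is rigid.} Assume every trace-class map in $\C$ is a composite of two trace-class maps. Let $f_0\colon C\to C'$ be trace-class. Factor it as $C\to C_{1/2}\to C'$ with both maps trace-class. Then factor each of these again, and continue. By induction over dyadic rationals, and then passing to a cofinal $\bbQ\cap[0,1]$-indexed system, we obtain a functor $F\colon\bbQ\cap[0,1]\to\C$ with $F(0)\to F(1)=f_0$ and every $F(a)\to F(b)$, $a<b$, trace-class. A small coherence check is needed here: the dyadic construction only gives a functor from the dyadic rationals, but that poset is already dense and order-isomorphic to $\bbQ\cap[0,1]$. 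This shows that the idealoid $\I$ of trace-class maps satisfies $\I_{\mathrm{sd}}=\I$, hence $\VNuc(\C)=\Ind_{\I_{\mathrm{sd}}}(\C)=\Ind_{\I}(\C)=\Nuc(\C)$. \cref{thm:Nuc=rig} then gives $\C^{\rig}\simeq\Nuc(\C)$.

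\textbf{Step 2: case (1).} Since $\mathbbm{1}\in\C^{\kappa}$, every trace-class map factors through a $\kappa$-compact object: the witness $Z$ and the maps $\mathbbm{1}\to Y\otimes Z$, $Z\otimes X\to\mathbbm{1}$ can be replaced by $\kappa$-compact approximations, as in the proof that $\Nuc(\C)\subseteq\Ind(\C^{\kappa})$. So every trace-class map $X\to Y$ is a composite $X\to X'\to Y'\to Y$ in which the middle map is trace-class between $\kappa$-compacts. By hypothesis that middle map splits into two trace-class maps, so the general assumption holds.

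\textbf{Step 3: case (2).} This is the main obstacle. For locally rigid $\C$ with $\mathbbm{1}\in\C^{\omega_1}$, I would first pass to a rigid envelope: $\C$ sits as a smashing ideal $\C\subseteq\overline{\C}$ with $\overline{\C}$ rigid. In rigid categories every compact map is trace-class and vice versa (after tensoring appropriately), and compact maps in a compactly assembled category factor as composites of compact maps (\cref{rmk:compass}). I would transport this factorization through the idempotent coalgebra $\Gamma_I=I\otimes-$. This works because $\C\simeq\coMod_I(\overline{\C})$, a map in $\C$ is trace-class in $\C$ exactly when its image is trace-class in $\overline{\C}$ relative to the unit $I$, and $\omega_1$-compactness of $\mathbbm{1}_{\C}=I$ lets us write $I$ as a sequential colimit whose transition maps are trace-class.

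For the additional claim, the right adjoint $\C\to\C^{\rig}$ sends $X$ to $\colim j(C_n)$ for such presentations. It is lax monoidal, and strongly monoidal because basic nuclear objects are closed under $\otimes$. Its image generates, since every basic nuclear object is the image of its colimit in $\C$. Here I would follow Efimov's argument for Schwartz/nuclear objects.

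\textbf{Step 4: last identification.} By \cref{thm:unst_completion}, $\C^{\rig}\simeq\C^{\rig}_{\overline{\C}}\simeq\iHom^{\dbl}_{\overline{\C}}(\C,\overline{\C})$ for any rigid envelope $\overline{\C}\to\C$. Combining this with Step 1 gives $\Nuc(\C)\simeq\C^{\rig}\simeq\iHom^{\dbl}_{\overline{\C}}(\C,\overline{\C})$.
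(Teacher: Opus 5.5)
\textbf{Steps 1 and 4 match the paper.} Interpolating trace-class maps along $\bbQ\cap[0,1]$ gives $\I_{\mathrm{sd}}=\I$, hence $\VNuc(\C)=\Nuc(\C)$, and \cref{thm:Nuc=rig} applies. The last identification is \cref{thm:unst_completion}.

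\textbf{The genuine gap is Step 2.} Since $\mathbbm{1}$ is $\kappa$-compact, you can shrink the \emph{target} and the witness. The coevaluation $\mathbbm{1}\to Y\otimes Z$ factors through $Y'\otimes Z'$ with $Y',Z'$ $\kappa$-compact, so a trace-class $f\colon X\to Y$ factors as $X\xrightarrow{f'}Y'\to Y$ with $f'$ trace-class. Nothing shrinks the \emph{source}, though. The evaluation $Z'\otimes X\to\mathbbm{1}$ only yields a map $X\to\iHom(Z',\mathbbm{1})$, which need not be $\kappa$-compact, and compactness arguments never produce maps \emph{out of} $X$.

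In fact your claim fails, and so does the literal assumption, in case (1). Take (light) solid abelian groups with $\kappa=\omega$; there the hypothesis of (1) holds. The inclusion $\bigoplus_{\bbN}\bbZ\to\prod_{\bbN}\bbZ$ is trace-class: use the witness $Z=\prod_{\bbN}\bbZ$, with coevaluation the diagonal element of $\prod_{\bbN}\bbZ\otimes\prod_{\bbN}\bbZ\simeq\prod_{\bbN\times\bbN}\bbZ$. On the other hand, duals of compact objects lie in $\Ind(\Perf(\bbZ))$, so every trace-class map out of a compact object factors through a perfect complex. Now suppose the inclusion were a composite of two trace-class maps. Shrinking the target of the first factor as above, the inclusion would factor through a perfect complex, which is impossible since $\bigoplus_{\bbN}\bbZ$ is not finitely generated.

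\textbf{How the paper handles case (1).} It works with sequences rather than single maps. Given a basic nuclear object $\colim_{n}j(C_n)$, factor each $C_n\to C_{n+1}$ as $C_n\to D_n\to C_{n+1}$, with $D_n$ $\kappa$-compact and $C_n\to D_n$ trace-class; this is the target-shrinking above. The interleaved maps $D_n\to C_{n+1}\to D_{n+1}$ are trace-class between $\kappa$-compacts and have the same colimit. The hypothesis of (1), together with your dyadic interpolation (which stays among $\kappa$-compacts), puts them in $\I_{\mathrm{sd}}$. So basic nuclear objects are very nuclear. Since both subcategories are closed under filtered colimits in $\Ind(\C)$, this gives $\Nuc(\C)=\VNuc(\C)$.

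\textbf{Step 3 is not yet an argument.} Trace-class data in $\C$ have coevaluations $I\to Y\otimes Z$ out of $I=\mathbbm{1}_{\C}$. There is no map $\mathbbm{1}_{\overline{\C}}\to I$, only the counit $I\to\mathbbm{1}_{\overline{\C}}$, so nothing transports through $\Gamma_I$ as you describe. Also, closure of basic nuclear objects under $\otimes$ does not show that the right adjoint $\C\to\C^{\rig}$ is strongly monoidal on all objects. For case (2) the paper simply invokes \cite[Theorem 4.2]{Efi25limit}, and you should do the same or reproduce that argument.
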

\begin{proof}
	It follows that every nuclear $\Ind$-object is very nuclear, so we have $\C^{\rig}\simeq\Nuc(\C)$.

	The first case follows from the fact that every sequence of trace-class maps factors as a sequence of trace-class maps between $\kappa$-compacts, while the second case follows from \cite[Theorem 4.2]{Efi25limit}.
\end{proof}
As a corollary, we recover the following special case of a theorem in \cite{Zhob}. Zhou's proof applies under the weaker assumptions that $j_*$ generates $\C$ under colimits and preserves tensor products.
\begin{corollary}
	Let $\I\in\CAlg(\PrL_{\st})$ be locally rigid with $\mathbbm{1}\in\I^{\omega_1}$, and $j^*:\C\to\I$ be a rigid envelope. Then $\C\simeq\I^{\rig}$ if and only if the right adjoint $j_*:=(j^*)^R:\I\hookrightarrow\C$ is symmetric monoidal.
\end{corollary}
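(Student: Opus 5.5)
We prove both directions using the preceding theorem: since $\I$ is locally rigid with $\mathbbm{1}\in\I^{\omega_1}$, case (2) applies, so $\I^{\rig}\simeq\Nuc(\I)$, and the right adjoint $\rho:\I\to\I^{\rig}$ of the counit $\epsilon:\I^{\rig}\to\I$ is symmetric monoidal and generates $\I^{\rig}$ under colimits. By the remark following the definition of rigid envelopes, the $\I$-completion functor $f:\C\to\I^{\rig}\simeq\iHom^{\dbl}_{\C}(\I,\C)$ is a map of rigid envelopes. It is a symmetric monoidal functor over $\I$, i.e. $\epsilon\circ f\simeq j^*$. Passing to right adjoints gives $j_*\simeq f^R\circ\rho$.

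\emph{Necessity.} If $f$ is an equivalence, then $j_*\simeq f^{-1}\circ\rho$. This is a composite of symmetric monoidal functors, so $j_*$ is symmetric monoidal.

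\emph{Sufficiency.} Assume $j_*$ is symmetric monoidal. Then $j_*\mathbbm{1}_{\I}\simeq\mathbbm{1}_{\C}$, and the unit map $\mathbbm{1}_{\C}\to j_*\mathbbm{1}_{\I}$ is this identification. By \cref{lem:1_rig}(1), $f$ is therefore fully faithful. It remains to show that $f$ is essentially surjective.

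The essential image of $f$ contains $f(j_*X)$ for every $X\in\I$. I claim that $f(j_*X)\simeq\rho(X)$. The plan is to compare $f\circ j_*$ and $\rho$ as lax/strong monoidal functors $\I\to\I^{\rig}$. Write $f^R\rho\simeq j_*$. Since $f$ is fully faithful, $f^Rf\simeq\id$, and we get a map $f j_*\simeq f f^R\rho\to\rho$ from the counit. To see that this map is an equivalence, apply $f^R$: by full faithfulness this gives the identity of $j_*$. It therefore suffices to know that $f^R$ is conservative on objects of the form $\rho(X)$, or, more cleanly, that $\rho(X)$ lies in the essential image of $f$.

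Here the symmetric monoidality of $j_*$ is used. It makes $f$ compatible with the nuclear description. A basic nuclear object of $\I$, of the form $\colim_{\bbN}C_n$ with $C_n\to C_{n+1}$ trace-class in $\I$, corresponds to $\rho$ of it in $\Nuc(\I)$. Because $j_*$ is symmetric monoidal, it preserves trace-class maps, so $\colim_{\bbN}j_*C_n$ is a basic nuclear object of the rigid category $\C$. Under $f$, which is symmetric monoidal and colimit-preserving, it maps to the Ind-object $\colim j(C_n)=\rho(\colim C_n)$; here one uses $\epsilon f\simeq j^*$ and $j^*j_*\simeq\id$. Hence every basic nuclear object lies in the image of $f$. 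Since these generate $\I^{\rig}$ under colimits, and the image of $f$ is closed under colimits (as $f$ is a fully faithful colimit-preserving functor), $f$ is an equivalence.

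The main obstacle is exactly this identification $f(\colim j_*C_n)\simeq\colim j(C_n)$ inside $\Nuc(\I)\subseteq\Ind(\I^{\omega_1})$. That is, one must check that $f$ sends $j_*C$ to an Ind-object whose trace-class transition maps recover $j(C)$ after taking colimits. I would attack it using the second characterization of trace-class maps (data $\mathbbm{1}\to Y\otimes Z$ and $Z\otimes X\to\mathbbm{1}$), transported along the symmetric monoidal $j_*$ and then along $f$.
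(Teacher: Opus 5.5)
Your necessity direction, the identity $j_*\simeq f^R\circ\rho$, and the full faithfulness of $f$ via \cref{lem:1_rig} are fine. The gap is essential surjectivity: it is not proved, and the route through basic nuclear objects fails as written.

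First, the identification $\colim_n j(C_n)\simeq\rho(\colim_n C_n)$ is false, because $\rho$ does not preserve colimits. Take $\I=\Mod(\bbZ)_{p\text{-}\cplt}$ and $C_n=\bbZ^{\wedge}_p$ with transition maps $p$; these are trace-class since the source is the unit, which is dualizable. Then $\colim_n C_n\simeq0$ in $\I$, so $\rho(\colim_n C_n)\simeq0$. But the basic nuclear Ind-object $\colim_n j(C_n)$ is nonzero: maps from $j(\bbZ^{\wedge}_p)$ into it form $\bbQ_p$.

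Second, the same example shows why $\epsilon f\simeq j^*$ and $j^*j_*\simeq\id$ cannot determine $f(j_*C)$. They only compute $\epsilon f(j_*C)$, while $\epsilon$ kills the quotient $\I^{\rig}/\I$, which contains the nonzero object $\colim_n j(C_n)$ above. Deciding whether $f(j_*C)\simeq\rho(C)$ is exactly the essential surjectivity you are trying to prove. Transporting trace-class data along $j_*$ and $f$ only shows that $f(\colim_n j_*C_n)$ is some sequential colimit along trace-class maps, not which one. You flag this step yourself as the main obstacle, and it is left open.

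The missing idea is to prove that $f^R$ is conservative, which is precisely what your first reduction needed.
\begin{enumerate}
\item Since $f$ is an internal left adjoint, $f^R$ preserves colimits.
\item The lax structure map $f^R(X)\otimes f^R(Y)\to f^R(X\otimes Y)$ is an equivalence for $X,Y$ in the image of $\rho$, because $\rho$ and $f^R\rho\simeq j_*$ are both symmetric monoidal. It is therefore an equivalence for all $X,Y$, because $\rho(\I)$ generates $\I^{\rig}$ under colimits.
\item As $\I^{\rig}$ is a rigid $\C$-algebra, it is self-dual over $\C$ via $X\mapsto f^R(X\otimes-)$, i.e.\ $\I^{\rig}\simeq\FunL_{\C}(\I^{\rig},\C)$. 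So $f^R(K)\simeq0$ gives $f^R(K\otimes-)\simeq f^R(K)\otimes f^R(-)\simeq0$, hence $K\simeq0$.
\end{enumerate}
With conservativity in hand, the counit $ff^R\rho(X)\to\rho(X)$ becomes an equivalence after applying $f^R$, so it is an equivalence. Hence $\rho(\I)$ lies in the essential image of $f$, and $f$ is an equivalence. This is the paper's argument; the Ind-object description of $\Nuc(\I)$ is not needed.
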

\begin{proof}
	Let $\I\to\C$ be symmetric monoidal. Then $F:\C\to\I^{\rig}$ is fully faithful by \cref{lem:1_rig}. It remains to show that $F^R$ is conservative. Since the composition of $F^R$ and the right adjoint $\I\hookrightarrow\I^{\rig}$ is given by $j_*$, which is symmetric monoidal, and $\I\hookrightarrow\I^{\rig}$ generates $\I^{\rig}$ under colimits, $F^R$ is also symmetric monoidal. Let $X\in\I^{\rig}$ such that $F^R(X)\simeq0$. Since $\I^{\rig}$ is self-dual over $\C$, we have $\I^{\rig}\simeq\FunL_{\C}(\I^{\rig},\C)$, which sends $X$ to $F^R(X\otimes-)\simeq F^R(X)\otimes F^R(-)\simeq0$. Hence $X\simeq0$.
\end{proof}
Let $R\in\CAlg(\Sp)$ and $I\subseteq\pi_*(R)$ be a finitely generated homogeneous ideal. Then the $I$-completion $R^{\wedge}_I$ is $\omega_1$-compact in $\Mod(R)_{I\text{-}\cplt}$ since $(-)^{\wedge}_I:\Mod(R)\to\Mod(R)$ preserves $\omega_1$-filtered colimits. More generally, we have the following lemma.
\begin{lemma}
	Let $\C\in\CAlg(\Pr^{\kappa}_{\st})$ and $E\in\C^{\kappa}$ where $\kappa>\omega$. Then $C_E\C\hookrightarrow\C\to L_E\C$ is a short exact sequence in $\Pr^{\kappa}_{\st}$. In particular, $L_E\mathbbm{1}\in(L_E\C)^{\kappa}$.
\end{lemma}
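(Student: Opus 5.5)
We must show $C_E\C\hookrightarrow\C\to L_E\C$ is a short exact sequence in $\Pr^{\kappa}_{\st}$: both categories are $\kappa$-compactly generated, and the inclusion $C_E\C\hookrightarrow\C$ and the localization $L_E:\C\to L_E\C$ have right adjoints preserving $\kappa$-filtered colimits.

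\textbf{Step 1: the acyclics.} Since $E\in\C^{\kappa}$ and $\mathbbm{1}\in\C^{\kappa}$ (part of $\C\in\CAlg(\Pr^{\kappa}_{\st})$: $\kappa$-compacts form a symmetric monoidal subcategory containing the unit), the functor $E\otimes-:\C\to\C$ preserves $\kappa$-compact objects. Hence its right adjoint $\iHom_{\C}(E,-)$ preserves $\kappa$-filtered colimits. Then $C_E\C=\fib(E\otimes-)$ is closed under all colimits. I would show $C_E\C$ is $\kappa$-compactly generated by $C_E\C\cap\C^{\kappa}$. The standard route is a small-object argument. Given $X\in C_E\C$, write $X=\colim_I X_i$ as a $\kappa$-filtered colimit of $\kappa$-compacts. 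Since $E\otimes X_i$ is $\kappa$-compact and maps to $E\otimes X\simeq0$, each $E\otimes X_i$ is killed at some later stage. Because $\kappa>\omega$, a countable iteration of this refinement stays inside $\C^{\kappa}$ and makes the system ind-acyclic, using the standard Lemma that $\kappa$-compacts are closed under countable colimits when $\kappa$ is uncountable. This argument is the main obstacle. An alternative is to use that $\C\to L_E\C$ is the localization at the set of maps between $\kappa$-compacts becoming $E$-equivalences, and to invoke the known fact that localizations of $\kappa$-compactly generated categories at maps between $\kappa$-compacts, which are $\kappa$-accessible in the $\kappa>\omega$ stable setting, are in $\Pr^{\kappa}$.

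\textbf{Step 2: the inclusion.} Once $C_E\C$ is $\kappa$-compactly generated by objects that are $\kappa$-compact in $\C$, the inclusion preserves $\kappa$-compacts. Hence its right adjoint $C_E$ preserves $\kappa$-filtered colimits.

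\textbf{Step 3: the local part.} The quotient $L_E\C\simeq\C/C_E\C$ is then computed in $\Pr^{\kappa}_{\st}$ as $\Ind_{\kappa}$ of the Verdier quotient $\C^{\kappa}/(C_E\C)^{\kappa}$. Therefore $L_E$ preserves $\kappa$-compacts, and the inclusion $L_E\C\hookrightarrow\C$ preserves $\kappa$-filtered colimits. Equivalently, the inclusion is computed as $\fib(X\to C_E X)$ after shifting, and $C_E$ preserves $\kappa$-filtered colimits by Step 2.

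\textbf{Step 4: the unit.} Finally, $L_E\mathbbm{1}$ is the image of $\mathbbm{1}\in\C^{\kappa}$ under $L_E$, which preserves $\kappa$-compacts. Hence $L_E\mathbbm{1}\in(L_E\C)^{\kappa}$.
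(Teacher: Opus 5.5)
Your proposal is correct and follows essentially the same route as the paper. The key step is to write every $X\in C_E\C$ as a directed colimit of countable sequential colimits $\colim_{\bbN}X_{i,n}$ of $\kappa$-compacts whose transition maps become null after $E\otimes-$; these colimits are $\kappa$-compact because $\kappa>\omega$, and they are $E$-acyclic, which gives $(C_E\C)^{\kappa}=\C^{\kappa}\cap C_E\C$ and places the sequence in $\Pr^{\kappa}_{\st}$. Your fallback route, localizing at $E$-equivalences between $\kappa$-compacts, is not independent of this: identifying $L_E$ with that localization is equivalent to the same generation statement, whereas the paper's alternative is to cite \cite[Proposition 2.31]{Ram24dualizable}.
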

\begin{proof}
	Since $\C^{\kappa}\cap C_E\C\subseteq(C_E\C)^{\kappa}$, it suffices to write every object $X\in C_E\C$ as a colimit of $E$-acyclic $\kappa$-compact objects. Write $X\simeq\colim_PX_p$ where $P$ is $\kappa$-filtered directed and $X_p\in\C^{\kappa}$. Since $E\otimes X_p\in\C^{\kappa}$, $E\otimes X_p\to\colim_PE\otimes X_p\simeq E\otimes X\simeq0$ factors through a null map $E\otimes X_p\to E\otimes X_{p'}$ induced by $p\leq p'$. Repeating this process, we may write $X\simeq\colim_IX_i$ with $I$ directed and $X_i\simeq\colim_{\bbN}X_{i,n}$, where $X_{i,n}\in\C^{\kappa}$ and $E\otimes X_{i,n}\to E\otimes X_{i,n+1}$ is null. It follows from $\kappa>\omega$ that $X_i\in\C^{\kappa}\cap C_E\C$ and hence $C_E\C$ is $\kappa$-compactly generated. Therefore, we have $\C^{\kappa}\cap C_E\C=(C_E\C)^{\kappa}$ and $C_E\C\hookrightarrow\C\to L_E\C$ is a short exact sequence in $\Pr^{\kappa}_{\st}$.

	This can also be deduced from \cite[Proposition 2.31]{Ram24dualizable}\footnote{We thank Jiacheng Liang for pointing this out.}.
\end{proof}
Let $n\geq1$ and $1\leq m\leq n$. It follows that $L_{T(n)\oplus\cdots\oplus T(n-m+1)}\bbS$ is $\omega_1$-compact. Moreover, let $\C\in\CAlg^{\rig}(\PrL_{\st})$. Then $L_{T(n)\oplus\cdots\oplus T(n-m+1)}\C$ is locally rigid with $\omega_1$-compact unit.
\begin{definition}
	\begin{enumerate}
		\item Let $R\in\CAlg(\Sp)$ and $I\subseteq\pi_*(R)$ be a finitely generated homogeneous ideal. We define the \emph{category of nuclear $R^{\wedge}_I$-modules} as
		\[
			\Nuc(R^{\wedge}_I):=\Nuc(\Mod(R)_{I\text{-}\cplt}).
		\]
		\item Let $n\geq0$ and $1\leq m\leq n+1$. We define the \emph{category of nuclear $L_{T(n)\oplus\cdots\oplus T(n-m+1)}\bbS$-modules} as
		\[
			\Nuc(L_{T(n)\oplus\cdots\oplus T(n-m+1)}\bbS):=\Nuc(L_{T(n)\oplus\cdots\oplus T(n-m+1)}\Sp).
		\]
	\end{enumerate}
\end{definition}
For example, for $R\in\CAlg(\Sp)$, $\Nuc(L_{K(n)\oplus\cdots\oplus K(n-m+1)}R)\simeq\Nuc(L_{K(n)\oplus\cdots\oplus K(n-m+1)}\Mod(R))$.
\begin{example}\label{exm:R^BS^1}
		Let $R\in\CAlg(\Sp)$ be complex orientable (e.g. $R$ is even) and $t\in\pi_{-2}(R^{hS^1})$ be a complex orientation. By \cite[Lemma 3.2]{MW25refinedTC}, the homotopy $S^1$-fixed points functor induces a symmetric monoidal equivalence
		\[
			(-)^{hS^1}:\Mod(R)^{BS^1}\xrightarrow{\simeq}\Mod(R^{hS^1})_{t\text{-}\cplt},
		\]
		where $\Mod(R)^{BS^1}$ is equipped with the pointwise symmetric monoidal structure.
		
		In fact, this is a Koszul duality phenomenon. Indeed, $R^{hS^1}$ is an augmented $\bbE_\infty$-algebra in $\Mod(R)$, and the augmentation exhibits $R\simeq R^{hS^1}/t$ as a dualizable $R^{hS^1}$-module. Then $\mathrm{Bar}(R^{hS^1})\simeq R^{S^1}$ is a dualizable commutative $R$-bialgebra with dual $\mathrm{Bar}(R^{hS^1})^{\vee}\simeq R[S^1]$. By \cite[Corollary 2.3.3]{Rak26derham}, we have
		\[
			\coMod_{R^{S^1}}(\Mod(R))\simeq\Mod(R[S^1])\simeq\Mod(R)^{BS^1}.
		\]
		Moreover, by \cite[Proposition 3.2.5]{Rak26derham}, $R\otimes_{R^{hS^1}}-:\Mod(R^{hS^1})\to\Mod(R)$ lifts to a symmetric monoidal functor $R\otimes_{R^{hS^1}}-:\Mod(R^{hS^1})\to\Mod(R)^{BS^1}$ with fully faithful right adjoint $(-)^{hS^1}$, whose essential image consists of $R\simeq R^{hS^1}/t$-local objects. This gives the desired equivalence.
		
		In particular, $\Mod(R)^{BS^1}$ is locally rigid with $\omega_1$-compact unit. We have an equivalence
		\[
			\Nuc(R^{hS^1})\simeq(\Mod(R)^{BS^1})^{\rig}\simeq\Nuc(\Mod(R)^{BS^1}).
		\]
		This category provides a natural target of \emph{refined topological Hochschild homology} \cite{Sch24refined,MW25refinedTC}.
\end{example}
\begin{example}\label{exm:R_I^solid}
		Let $R\in\CAlg(\Sp)$ and $I\subseteq\pi_*(R)$ be a finitely generated homogeneous ideal. Following \cite[Subsection 7.2]{Efi25limit}, we reconstruct the category of nuclear solid modules defined in \cite{Sch26analytic} without using condensed mathematics. We denote by $\Perf(R^{\wedge,\solid}_I)\subseteq\Mod(R^{\wedge}_I)_{I\text{-}\cplt}$ the thick subcategory generated by $(\bigoplus_{\bbN}R)^{\wedge}_I$, and denote $\Mod(R^{\wedge,\solid}_I):=\Ind(\Perf(R^{\wedge,\solid}_I))\simeq\Mod(\End_R(\bigoplus_{\bbN}R)^{\wedge}_I)$. By \cite[Proposition 7.4]{Efi25limit} (see \cite[Appendix D]{LLS26} for more details), $\Mod(R^{\wedge,\solid}_I)$ satisfies the property that every trace-class map between compacts is the composition of two trace-class maps between compacts.
		
		We define the \emph{category of nuclear solid $R^{\wedge}_I$-modules} as
		\[
			\Nuc(R^{\wedge,\solid}_I):=\Nuc(\Mod(R^{\wedge,\solid}_I)).
		\]
		By \cite[Corollary 7.6]{Efi25limit}, we have a natural fully faithful $\Sp$-algebra map
		\[
			\Nuc(R^{\wedge,\solid}_I)\simeq\Mod(R^{\wedge,\solid}_I)^{\rig}
			\hookrightarrow\Ind(\Mod(R)_{I\text{-}\cplt}^{\omega_1})^{\rig}\simeq(\Mod(R)_{I\text{-}\cplt})^{\rig}\simeq\Nuc(R^{\wedge}_I).
		\]
		Since the embedding $\Perf(R^{\wedge}_I)\hookrightarrow\Perf(R^{\wedge,\solid}_I)$ induces a fully faithful $\Sp$-algebra map $\Mod(R^{\wedge}_I)\hookrightarrow\Mod(R^{\wedge,\solid}_I)$, by \cref{prop:map_of_rigid_envelopes} we obtain maps of rigid envelopes of $\Mod(R)_{I\text{-}\cplt}$
		\[
			\Mod(R)\to\Mod(R^{\wedge}_I)\hookrightarrow\Nuc(R^{\wedge,\solid}_I)\hookrightarrow\Nuc(R^{\wedge}_I).
		\]
		
		We define $\Calk_R^{\mathrm{top}}(\bigoplus_{\bbN}R)^{\wedge}_I:=\End_{\Calk(R)}(\bigoplus_{\bbN}R)^{\wedge}_I\simeq\cof[((\bigoplus_{\bbN}R)^{\vee}\otimes(\bigoplus_{\bbN}R))^{\wedge}_I\to\End_R((\bigoplus_{\bbN}R)^{\wedge}_I)]$, where $\Calk(R):=\Calk^{\cont}(\Mod(R))\subseteq\Ind(\Mod(R))$ denotes the image of $\widehat{j}/j:\Mod(R)\to\Ind(\Mod(R))$ by \cite[Proposition 1.64]{Efi25localizing}. By \cite[Corollary 7.7]{Efi25limit}, we have a short exact sequence in $\Pr^{\dbl}_{\st}$
		\[
			\Nuc(R^{\wedge,\solid}_I)\hookrightarrow\Mod(\End_R(\bigoplus_{\bbN}R)^{\wedge}_I)\to\Mod(\Calk^{\mathrm{top}}_R(\bigoplus_{\bbN}R)^{\wedge}_I),
		\]
		where $\End_R(\bigoplus_{\bbN}R)^{\wedge}_I\to\Calk^{\mathrm{top}}_R(\bigoplus_{\bbN}R)^{\wedge}_I$ is an idempotent $\bbE_1$-ring map. Let $\End^f_R(\bigoplus_{\bbN}R)^{\wedge}_I$ denote the fiber, which is an $H$-unital ring over $R$. Therefore, we have
		\[
			\Nuc(R^{\wedge,\solid}_I)\simeq\Mod_{H,\End^f_R(\bigoplus_{\bbN}R)^{\wedge}_I}\Mod(R).
		\]

		In particular, if $I=(0)$, we obtain $\Mod(R^{\solid})\simeq\Mod(\End_R(\bigoplus_{\bbN}R))$ and $\Nuc(R^{\solid})\simeq\Mod(R)\simeq\Mod_{H,\End_R^f(\bigoplus_{\bbN}R)}\Mod(R)$.
\end{example}
\begin{remark}
	Let $R$ be an adic $\bbE_\infty$-ring. Then $\End^f_R(\bigoplus_{\bbN}R)^{\wedge}_I$ is an $H$-unital ring as it is connective. Moreover, \cite[Theorem C]{LLS26} shows that $\Nuc(R^{\wedge,\solid}_I)$ is the additive rigidification
	\[
		\Nuc(R^{\wedge,\solid}_I)\simeq\Mod_H(\End^f_R(\bigoplus_{\bbN}R)^{\wedge}_I)\simeq\Sp(\Mod(R)_{I\text{-}\cplt,\geq0})^{\rig}_{\ad}.
	\]
\end{remark}

\cref{thm:Nuc=rig} motivates us to define the nuclear module category with respect to a rigid envelope $\C\to\I$ as the $\I$-completion category.
\begin{definition}
	Let $\I\in\CAlg(\PrL_{\st})$ be locally rigid with rigid envelope $\C\to\I$. Suppose that $\mathbbm{1}_{\I}\in\I^{\omega_1}$. Let $\M\in\Pr^{\dbl}_{\C}$. We define the \emph{nuclear $\I$-module category} of $\M$ as
	\[
		\Nuc_{\I}(\M):=\M^{\wedge\I}\simeq\iHom_{\C}^{\dbl}(\I,\M).
	\]
	Let $\M\in\Pr^{\dbl}_{\I}$. We simply denote $\Nuc_{\I}(\M):=\M^{\wedge\I}\simeq\iHom_{\I^{\rig}}^{\dbl}(\I,\M)\simeq\iHom^{\dbl}_{\C}(\I,\M)$.

	Suppose moreover that $\C$ is idempotent over $\Sp$ and let $\M\in\Pr^{\dbl}_{\st}$. We denote $\Nuc_{\I}(\M):=\Nuc_{\I}(\C\otimes\M)$, which agrees with the preceding definition whenever $\M\in\Pr^{\dbl}_{\C}$.
\end{definition}
In particular, $\Nuc_{\I}(\D)\simeq\Nuc(\I\otimes_{\C}\D)$ for any $\D\in\CAlg^{\rig}(\PrL_{\C})$, which justifies the terminology.
\begin{definition}
	\begin{enumerate}
		\item Let $R\in\CAlg(\Sp)$ and $I\subseteq\pi_*(R)$ be a finitely generated homogeneous ideal. We denote
		\[
			\Nuc_I(-):=\Nuc_{\Mod(R)_{I\text{-}\cplt}}(-).
		\]
		Let $S\in\CAlg(\Mod(R))$. We define $\Nuc(S^{\wedge}_I):=\Nuc_I(\Mod(S))$ and $\Nuc(L_IS^{\wedge}_I):=\Nuc_I(\Mod(S))_{\eta}\simeq L_I\Nuc_I(\Mod(S))$.
		\item Let $n\geq0$ and $1\leq m\leq n+1$. We denote
		\[
			\Nuc_{T(n)\oplus\cdots\oplus T(n-m+1)}(-):=\Nuc_{L_{T(n)\oplus\cdots\oplus T(n-m+1)}\Sp}(-).
		\]
		Let $R\in\CAlg(\Sp)$. We define $\Nuc(L_{T(n)\oplus\cdots\oplus T(n-m+1)}R):=\Nuc_{T(n)\oplus\cdots\oplus T(n-m+1)}(\Mod(R))$ and
		\begin{align*}
			\Nuc(L_{n-m}^fL_{T(n)\oplus\cdots\oplus T(n-m+1)}R):=&\;\Nuc_{T(n)\oplus\cdots\oplus T(n-m+1)}(\Mod(R))_{\eta}\\
			\simeq&\;L_{n-m}^f\Nuc_{T(n)\oplus\cdots\oplus T(n-m+1)}(\Mod(R)).
		\end{align*}
	\end{enumerate}
\end{definition}
\begin{example}
	Let $R\in\CAlg(\Sp)$ and $I\subseteq\pi_*(R)$ be a finitely generated homogeneous ideal. We define
	\[
		\Nuc(L_IR^{\wedge,\solid}_I):=\Nuc(R^{\wedge,\solid}_I)_{\eta}\simeq\Mod(L_IR)\otimes_R\Nuc(R^{\wedge,\solid}_I)
	\]
	and
	\[
		\Nuc(L_IR^{\wedge}_I):=\Nuc(R^{\wedge}_I)_{\eta}\simeq L_I\Nuc(R^{\wedge}_I).
	\]
\end{example}

The $\omega_1$-compactness ensures that $\Nuc_{\I}$ is an exact functor by the following results of Efimov.
\begin{definition}[{\cite[Definition 1.48]{Efi25limit}, \cite[Theorem C.6]{Efi25localizing}}]
	Let $\C\in\CAlg(\PrL)$ and $\D\in\Pr^{\dbl}_{\C}$.
	\begin{enumerate}
		\item We say $\D$ is \emph{proper} over $\C$ if the evaluation map $\ev_{\D}:\D^{\vee}\otimes_{\C}\D\to\C$ is a $\C$-internal left adjoint. 
		\item We say $\D$ is \emph{smooth} over $\C$ if the coevaluation map $\coev_{\D}:\C\to\D\otimes_{\C}\D^{\vee}$ is a $\C$-internal left adjoint. 
		\item Let $\C\in\CAlg(\Pr^{\lambda})$ with $\kappa\geq\lambda>\omega$. We say $\D$ is \emph{$\kappa$-compact} if $\D\in(\Pr^{\dbl}_{\C})^{\kappa}$, or equivalently, $\D\in(\Pr^{\kappa}_{\C})^{\dbl}$ by \cite[Theorem 4.24]{Sch26gestalten} (i.e. both $\ev_{\D}:\D^{\vee}\otimes_{\C}\D\to\C$ and $\coev_{\D}:\C\to\D^{\vee}\otimes_{\C}\D$ admit right adjoints that preserve $\kappa$-filtered colimits).
	\end{enumerate}
\end{definition}
\begin{theorem}[{\cite[Theorem 3.6]{Efi25limit}}]\label{thm:Homdbl_proper}
	Let $\C\in\CAlg^{\rig}(\PrL_{\st})$ and $\D\in\PrL_{\C}$. Suppose that $\D$ is proper and $\omega_1$-compact over $\C$, i.e. $\ev_{\D}$ is an internal left adjoint and $\coev_{\D}(\mathbbm{1})\in(\D\otimes_{\C}\D^{\vee})^{\omega_1}$. Then $\iHom^{\dbl}_{\C}(\D,-):\Pr^{\dbl}_{\C}\to\Pr^{\dbl}_{\C}$ preserves short exact sequences.
\end{theorem}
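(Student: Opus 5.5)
The plan is to reduce to the compactly generated case via Efimov's functorial presentation of a dualizable category by compactly generated ones, and then to use properness and $\omega_1$-compactness of $\D$ to describe $\iHom^{\dbl}_{\C}(\D,-)$ explicitly on compactly generated (more precisely, $\C$-atomically generated) targets.

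First, I would identify $\iHom^{\dbl}_{\C}(\D,\M)$ inside $\FunL_{\C}(\D,\M)\simeq\D^{\vee}\otimes_{\C}\M$. A $\C$-linear functor $F:\D\to\M$ is a morphism in $\Pr^{\dbl}_{\C}$ exactly when $F^R$ is colimit preserving and $\C$-linear. Properness of $\D$ (that is, $\ev_{\D}$ is a $\C$-internal left adjoint) should imply that these functors form a localizing $\C$-submodule $\F(\D,\M)\subseteq\D^{\vee}\otimes_{\C}\M$. The object $F$ corresponds to $\ev_{\D}\otimes\M$ applied to an object of $\D^{\vee}\otimes\D\otimes\M$, and the condition on $F^R$ can be expressed through the continuous right adjoint $\ev_{\D}^R$. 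I then expect $\iHom^{\dbl}_{\C}(\D,\M)$ to be the dualizable coreflection of $\F(\D,\M)$, and this is where $\omega_1$-compactness enters. Since $\coev_{\D}(\mathbbm{1})$ is $\omega_1$-compact, this coreflection should be computed as $\Ind$ of $\omega_1$-compact objects with the ``compact map'' (Schwartz) condition, in the manner of \cref{rmk:compass}. This gives a formula that commutes with the relevant $\omega_1$-filtered colimits in $\M$.

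Second, I would treat atomically generated targets. For $\M\simeq\P_{\C}(\M_0)$, the formula from the first step should reduce to $\iHom^{\dbl}_{\C}(\D,\M)\simeq\D^{\vee}\otimes_{\C}\M$. The reason is that $\D^{\vee}\otimes_{\C}-$ preserves atomic generation up to retracts, and every functor $\D\to\P_{\C}(\M_0)$ classified by $\D^{\vee}\otimes\M$ has $\C$-linear continuous right adjoint thanks to properness; here rigidity of $\C$ lets me identify $\C$-atomic with dualizable-type conditions. Since $\D^{\vee}\otimes_{\C}-$ preserves all short exact sequences in $\Pr^{\iL}_{\C}$ (\cref{lem:pullback_mechanism}), the statement holds for short exact sequences of atomically generated modules.

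Third, I would pass to general dualizable $\M$. Efimov's continuous Calkin construction gives a functorial short exact sequence $\M\hookrightarrow\Ind_{\C}(\M^{\omega_1})\to\Calk_{\omega_1}(\M)$ in $\Pr^{\dbl}_{\C}$, whose middle and right terms are atomically generated, and which is functorial and exact in $\M$. A short exact sequence $\M'\to\M\to\M''$ then sits inside a $3\times3$ diagram whose rows are these Calkin sequences. The middle and right columns are short exact by the second step, once I check that $\Ind(\M^{\omega_1})$ is exact in $\M$ on $\omega_1$-compacts. Applying $\iHom^{\dbl}_{\C}(\D,-)$, which preserves fibers as a right adjoint in $\Pr^{\dbl}_{\C}$, I would then deduce exactness of the left column by a $3\times3$ argument using \cref{lem:ses_pullback}.

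The main obstacle is the first step. The key point there is proving that the dualizable coreflection of $\F(\D,\M)$ is an exact functor of $\M$, meaning it preserves localizations and does not merely preserve fully faithful inclusions. I would attack it by showing that $\omega_1$-compactness of $\coev_{\D}(\mathbbm{1})$ makes $\iHom^{\dbl}_{\C}(\D,-)$ commute with the $\Ind(\M^{\omega_1})$ construction, so that exactness reduces to exactness of Verdier quotients of small $\omega_1$-compact categories.
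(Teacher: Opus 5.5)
There is a genuine gap. Your second step claims $\iHom^{\dbl}_{\C}(\D,\M)\simeq\D^{\vee}\otimes_{\C}\M$ for atomically generated $\M$, and the whole reduction rests on it. It is false even for $\M$ compactly generated.

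Take $\C=\Sp$, $G$ a nontrivial finite group, and $\D=\Sp^{BG}\simeq\Sp_{hG}$ with trivial action. Then $\D$ is self-dual and $\omega_1$-compact. It is also proper: the right adjoint of $\ev_{\D}$ sends $\bbS$ to the diagonal bimodule $\bbS[G]$ and preserves colimits. Since $\iHom^{\dbl}(-,\M)$ turns colimits into dualizable limits, $\iHom^{\dbl}(\Sp_{hG},\M)\simeq\lim^{\dbl}_{BG}\M=\M^{hG,\dbl}$, whereas $\D^{\vee}\otimes\M\simeq\M_{hG}$. For $\M=\Sp$ the comparison between them is the norm map $\Ind(\Perf(\bbS[G]))\simeq\Sp_{hG}\hookrightarrow\Sp^{hG,\dbl}\simeq\Ind(\Fun(BG,\Sp^{\omega}))$. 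Its cofiber $\Sp^{tG,\dbl}$ is nonzero, since its unit has endomorphism ring $\bbS^{tG}\neq0$. So your claim would force categorical Tate vanishing for every compactly generated category with trivial action.

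Your local justification fails even earlier. Already for $\C=\D=\M=\Sp$, the functor $X\otimes-$ with $X$ noncompact has the non-continuous right adjoint $\iHom(X,-)$. The internal left adjoints $\Sp\to\Sp$ form $\Sp^{\omega}$, which is not a localizing subcategory. So the first step's picture of $\iHom^{\dbl}_{\C}(\D,\M)$ as a coreflection of a localizing subcategory of $\FunL_{\C}(\D,\M)$ cannot be right. In fact the canonical functor $\iHom^{\dbl}_{\C}(\D,\M)\to\FunL_{\C}(\D,\M)$ is generally not fully faithful. In the example it is the colimit functor $\Ind(\Fun(BG,\Sp^{\omega}))\to\Fun(BG,\Sp)$, and $\bbS$ is not compact in the target.

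What properness actually gives is Efimov's Proposition 3.4, which the paper uses to construct the norm map. The canonical functor $\iHom^{\dbl}_{\C}(\D,\M)\to\D^{\vee}\otimes_{\C}\M$ has a fully faithful internal left adjoint. So $\D^{\vee}\otimes_{\C}\M$ is only a piece of $\iHom^{\dbl}_{\C}(\D,\M)$, and the complement is Tate-type and generally nonzero.

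The third step is circular even if atomically generated targets were handled correctly. Applying $\iHom^{\dbl}_{\C}(\D,-)$ to the Calkin rows $\M\hookrightarrow\Ind(\M^{\omega_1})\to\Ind(\Calk^{\cont}_{\omega_1}(\M))$ yields only fiber sequences in $\Pr^{\dbl}_{\C}$. That these remain short exact is the theorem itself for an arbitrary dualizable $\M$. With rows that are merely left exact, exactness of the middle and right columns does not make $\iHom^{\dbl}_{\C}(\D,\M)\to\iHom^{\dbl}_{\C}(\D,\M'')$ a localization. To correct a lift of an object of the target into the kernel of the middle row, you would need $\iHom^{\dbl}_{\C}(\D,\Ind(\M'^{\omega_1}))\to\iHom^{\dbl}_{\C}(\D,\Ind(\Calk^{\cont}_{\omega_1}(\M')))$ to be a localization, which is the row for $\M'$.

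Preserving fibers is automatic for a right adjoint; the entire content of the theorem is preserving quotients. The missing idea is a direct treatment of the part of $\iHom^{\dbl}_{\C}(\D,\M)$ invisible to $\D^{\vee}\otimes_{\C}\M$. Since $\D^{\vee}\otimes_{\C}-$ is exact and embeds functorially, you must show that the quotient $\iHom^{\dbl}_{\C}(\D,\M)/(\D^{\vee}\otimes_{\C}\M)$ is exact in $\M$. That is where $\omega_1$-compactness of $\coev_{\D}(\mathbbm{1})$ has to do real work, beyond supplying a formula.
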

\begin{corollary}
	Let $\I\in\CAlg(\PrL_{\st})$ be locally rigid with $\mathbbm{1}\in\I^{\omega_1}$, and $\C\to\I$ be a rigid envelope. Then $\Nuc_{\I}:\Pr^{\dbl}_{\C}\to\Pr^{\dbl}_{\C}$ preserves short exact sequences.
\end{corollary}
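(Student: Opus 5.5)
The plan is to deduce the corollary directly from \cref{thm:Homdbl_proper}, applied with $\D=\I$ regarded as a dualizable $\C$-module. By definition $\Nuc_{\I}(\M)=\iHom^{\dbl}_{\C}(\I,\M)$, so it suffices to check two things: that $\I$ is proper over $\C$, and that it is $\omega_1$-compact over $\C$.

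\textbf{Dualizability, duality data and properness.} Since $\C\to\I$ is a rigid envelope, $\I\subseteq\C$ is a smashing ideal, hence $\I\simeq\coMod_I(\C)$ is locally rigid over $\C$ and in particular dualizable. The multiplication $\I\otimes_{\C}\I\to\I$ is an equivalence, because $\C\to\I$ is idempotent. Using the self-duality of a locally rigid algebra, $\I^{\vee}\simeq\I$, the evaluation is
\[
\ev_{\I}:\I\otimes_{\C}\I\simeq\I\hookrightarrow\C,
\]
the inclusion of the smashing ideal, composed with the multiplication. This is a $\C$-internal left adjoint by the very definition of a smashing ideal, which gives properness. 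Dually, the coevaluation is
\[
\coev_{\I}:\C\to\I\to\I\otimes_{\C}\I,
\]
the unit $j^*$ followed by the right adjoint of multiplication, which here is an equivalence. Hence $\coev_{\I}(\mathbbm{1}_{\C})$ identifies with $\mathbbm{1}_{\I}$ under $\I\otimes_{\C}\I\simeq\I$.

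\textbf{$\omega_1$-compactness.} The hypothesis $\mathbbm{1}_{\I}\in\I^{\omega_1}$ then says exactly that $\coev_{\I}(\mathbbm{1})\in(\I\otimes_{\C}\I^{\vee})^{\omega_1}$. With both hypotheses verified, \cref{thm:Homdbl_proper} applies and shows that $\iHom^{\dbl}_{\C}(\I,-)$ preserves short exact sequences in $\Pr^{\dbl}_{\C}$.

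\textbf{Main obstacle.} \cref{thm:Homdbl_proper} is stated for a base $\C\in\CAlg^{\rig}(\PrL_{\st})$, which matches our situation because $\C$ is rigid (over $\Sp$) by definition of a rigid envelope. The delicate step is the identification of the duality data: checking that the self-duality pairing of $\I$ over $\C$ is indeed $\I\otimes_{\C}\I\xrightarrow{\simeq}\I\hookrightarrow\C$ together with its partner, so that $\coev$ sends the unit to $\mathbbm{1}_{\I}$. I would verify this using the description $\I\simeq\coMod_I(\C)$. The triangle identities there reduce to the idempotence $I\otimes I\simeq I$ and the equivalence $\I\otimes_{\C}\I\simeq\I$.

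If instead the $\omega_1$-compactness in \cref{thm:Homdbl_proper} is interpreted relative to $\Pr^{\omega_1}_{\C}$, I would argue with the right adjoints of $\ev$ and $\coev$ directly. The right adjoint of $\ev$ is $I\otimes-$ composed with the diagonal identification, and it preserves all colimits. The right adjoint of $\coev$ is $j_*$, which preserves $\omega_1$-filtered colimits because $\mathbbm{1}_{\I}$ is $\omega_1$-compact and $\Hom$ out of the unit detects $j_*$ via $\C$-linearity.
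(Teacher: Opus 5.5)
Your proposal is correct and follows the paper's own route. The paper states this corollary as a direct consequence of Efimov's \cref{thm:Homdbl_proper}, applied to $\I$ as a module over the rigid base $\C$, and your identifications supply exactly the properness and $\omega_1$-compactness the paper leaves implicit: $\ev_{\I}$ is the multiplication equivalence $\I\otimes_{\C}\I\simeq\I$ followed by the smashing-ideal inclusion $\I\hookrightarrow\C$, hence a $\C$-internal left adjoint, and $\coev_{\I}(\mathbbm{1}_{\C})$ corresponds to $\mathbbm{1}_{\I}$.
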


Let $n\geq0$ and $1\leq m\leq n+1$. We focus on the rigid envelope $L_n^f\Sp\to L_{T(n)\oplus\cdots\oplus T(n-m+1)}\Sp$ where $L_n^f\Sp$ is idempotent over $\Sp$, and denote by $\Nuc_{T(n)\oplus\cdots\oplus T(n-m+1)}$ the nuclear $L_{T(n)\oplus\cdots\oplus T(n-m+1)}\Sp$-module category. Combining \cref{prop:pullback_diagram_of_completion,thm:pullback_of_map_of_rigid_envelope}, we obtain:
\begin{theorem}\label{thm:Nuc_T(n)}
	Let $n\geq0$ and $1\leq m\leq n+1$.
	\begin{enumerate}
		\item The $T(n)\oplus\cdots\oplus T(n-m+1)$-completion functor $\Nuc_{T(n)\oplus\cdots\oplus T(n-m+1)}:\Pr^{\dbl}_{\st}\to\Pr^{\dbl}_{\st}$ is lax symmetric monoidal, and preserves rigid $\Sp$-algebras and short exact sequences.
		\item Let $\C\in\Pr^{\dbl}_{\st}$. Then we have an equivalence
		\[
			L_{T(n)\oplus\cdots\oplus T(n-m+1)}\C\simeq L_{T(n)\oplus\cdots\oplus T(n-m+1)}\Nuc_{T(n)\oplus\cdots\oplus T(n-m+1)}(\C),
		\]
		which is the horizontal fiber of the pullback diagram in $\Pr^{\dbl}_{\st}$
		\[\begin{tikzcd}
			{L_n^f\C} & {L_{n-m}^f\C} \\
			{\Nuc_{T(n)\oplus\cdots\oplus T(n-m+1)}(\C)} & {L_{n-m}^f\Nuc_{T(n)\oplus\cdots\oplus T(n-m+1)}(\C)}
			\arrow[from=1-1, to=1-2]
			\arrow[from=1-1, to=2-1]
			\arrow[from=1-2, to=2-2]
			\arrow[""{name=0, anchor=center, inner sep=0}, from=2-1, to=2-2]
			\arrow["\lrcorner"{anchor=center, pos=0.125}, draw=none, from=1-1, to=0]
		\end{tikzcd}\]
		\item Let $\C\in\CAlg^{\rig}(\PrL_{\st})$. Then we have a pullback diagram in $\CAlg^{\rig}(\PrL_{\st})$
		\[\begin{tikzcd}
			{L_n^f\C} & {L_{n-m}^f\C} \\
			{\Nuc_{T(n)\oplus\cdots\oplus T(n-m+1)}(\C)} & {L_{n-m}^f\Nuc_{T(n)\oplus\cdots\oplus T(n-m+1)}(\C)}
			\arrow[from=1-1, to=1-2]
			\arrow[from=1-1, to=2-1]
			\arrow[from=1-2, to=2-2]
			\arrow[""{name=0, anchor=center, inner sep=0}, from=2-1, to=2-2]
			\arrow["\lrcorner"{anchor=center, pos=0.125}, draw=none, from=1-1, to=0]
		\end{tikzcd}\]
		whose endomorphism rings form a pullback diagram of $\bbE_\infty$-rings
		\[\begin{tikzcd}
			{L_n^f\End(\mathbbm{1}_{\C})} & {L_{n-m}^f\End(\mathbbm{1}_{\C})} \\
			{L_{T(n)\oplus\cdots\oplus T(n-m+1)}\End(\mathbbm{1}_{\C})} & {L_{n-m}^fL_{T(n)\oplus\cdots\oplus T(n-m+1)}\End(\mathbbm{1}_{\C})}
			\arrow[from=1-1, to=1-2]
			\arrow[from=1-1, to=2-1]
			\arrow[from=1-2, to=2-2]
			\arrow[""{name=0, anchor=center, inner sep=0}, from=2-1, to=2-2]
			\arrow["\lrcorner"{anchor=center, pos=0.125}, draw=none, from=1-1, to=0]
		\end{tikzcd}\]
	\end{enumerate}
\end{theorem}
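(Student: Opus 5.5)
We write $\I:=L_{T(n)\oplus\cdots\oplus T(n-m+1)}\Sp$ throughout and use its rigid envelope $\overline{\C}:=L_n^f\Sp\to\I$. By \cref{prop:M_n^mf} this envelope sits in a short exact sequence $\I\hookrightarrow L_n^f\Sp\to L_{n-m}^f\Sp$ in $\Pr^{\dbl}_{\st}$, and by the preceding lemma $\mathbbm{1}_{\I}$ is $\omega_1$-compact. The plan is to deduce each part from the general completion theory of \cref{sec:1.3,sec:2.2}, specialized to this envelope.

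For (1), we use the factorization $\Nuc_{\I}(-)=\iHom^{\dbl}_{L_n^f\Sp}(\I,L_n^f\Sp\otimes-)$. The functor $L_n^f\Sp\otimes-$ is symmetric monoidal because $L_n^f\Sp$ is idempotent. It preserves short exact sequences because $L_n^f\Sp$ is dualizable. It preserves rigid algebras because $L_n^f\Sp$ is rigid. For the second factor, $\I$-completion is lax symmetric monoidal and preserves rigid $L_n^f\Sp$-algebras by \cref{cor:completion}(1). A rigid $L_n^f\Sp$-algebra is also rigid over $\Sp$, since rigidity composes. Finally, short exact sequences are preserved by the corollary of \cref{thm:Homdbl_proper} on $\Nuc_{\I}$.

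For (2), we apply \cref{prop:pullback_diagram_of_completion} with base $L_n^f\Sp$, smashing ideal $\I$, quotient $\A=L_{n-m}^f\Sp$, and $\M=L_n^f\C$. This produces a pullback in $\Pr^{\dbl}_{L_n^f}$; since $L_n^f\Sp$ is rigid, it is also a pullback in $\Pr^{\dbl}_{\st}$. It remains to identify the corners:
\[
	\A\otimes_{L_n^f\Sp}L_n^f\C\simeq L_{n-m}^f\C,\qquad \M_\eta\simeq L_{n-m}^f\Nuc_{\I}(\C).
\]
Both follow from $L_{n-m}^f\Sp\otimes_{L_n^f\Sp}(L_n^f\Sp\otimes\M)\simeq L_{n-m}^f\Sp\otimes\M$ together with \cref{lem:L_n^f}. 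The horizontal fibers are $\I\otimes_{L_n^f}L_n^f\C\simeq\I\otimes\C$, which equals $L_{T(n)\oplus\cdots}\C$ by \cref{prop:M_n^mf}, and $\I\otimes\Nuc_{\I}(\C)$. These fibers agree by the left vertical equivalence in \cref{cor:generic_fiber}(2), which is exactly the claimed equivalence.

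For (3), we take $\C$ rigid and apply \cref{thm:pullback_of_map_of_rigid_envelope} to the map of rigid envelopes $L_n^f\C\to\Nuc_{\I}(\C)=(\I\otimes\C)^{\rig}$. This identification of $\Nuc_{\I}(\C)$ with the rigidification comes from \cref{thm:unst_completion}, using the rigid envelope $L_n^f\C\to\I\otimes\C$ of $\I\otimes\C$. The theorem then gives a pullback in $\CAlg(\Pr^{\dbl}_{\st})$, and the subsequent remark upgrades it to a pullback in $\CAlg^{\rig}(\PrL_{\st})$. Passing to endomorphisms of the unit, the corollary after that theorem yields a pullback of $\bbE_\infty$-rings.

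Identifying the four unit endomorphism rings is the main obstacle. For the top row, $\End(\mathbbm{1}_{L\C})\simeq L\End(\mathbbm{1}_\C)$ holds for smashing $L$, since $\mathbbm{1}_\C$ is compact and $L\mathbbm{1}$ is an idempotent algebra. For the bottom-left corner, \cref{lem:1_rig} gives $\End(\mathbbm{1}_{(\I\otimes\C)^{\rig}})\simeq\End(\mathbbm{1}_{\I\otimes\C})$. The latter is the endomorphism ring of the unit of $L_{T(n)\oplus\cdots}\C$, which I expect to compute as $L_{T(n)\oplus\cdots}\End(\mathbbm{1}_\C)$ via the right adjoint $\C\to L\C$ and the unit being compact in $\C$. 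For the bottom-right corner there are two options. One is to deduce it from the pullback itself: the top row and bottom left are already known, and the chromatic fracture square for $\End(\mathbbm{1}_\C)$ (\cref{prop:fracture_square_functors}(1)) supplies the same pullback. The other is to compute it directly as $L_{n-m}^f$ applied to the bottom-left ring, since $L_{n-m}^f$ is smashing and applied after completion.
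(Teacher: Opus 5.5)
Your proposal is correct and is essentially the paper's argument. Parts (1) and (2) come from \cref{cor:completion}, the corollary to \cref{thm:Homdbl_proper}, \cref{cor:generic_fiber} and \cref{prop:pullback_diagram_of_completion}, applied to the rigid envelope $L_n^f\Sp\to L_{T(n)\oplus\cdots\oplus T(n-m+1)}\Sp$. Part (3) comes from \cref{thm:pullback_of_map_of_rigid_envelope}, with the four unit rings identified by \cref{lem:End(1_L)}. That lemma handles the non-smashing bottom-left corner directly because $\mathbbm{1}_{\C}$ is compact, and the bottom-right corner because $\Nuc_{T(n)\oplus\cdots\oplus T(n-m+1)}(\C)$ is rigid.

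For the bottom-right corner, use your second option. The first does not work as stated: three known corners of a pullback of $\bbE_\infty$-rings do not determine the fourth unless you first construct an actual comparison map of squares.
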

In particular, the last pullback diagram of $\bbE_\infty$-rings comes from \cref{lem:End(1_L)}.

\section{Chromatic purity}\label{sec:3}
Throughout this section, we use the theory of localizing invariants and their continuous extensions developed by Efimov. We begin by recalling the background.

Let $\E\in\PrL_{\st}$. A functor $F:\Pr^{\dbl}_{\st}\to\E$ or $F:\Pr^{\cg}_{\st}\to\E$ is called a (\emph{finitary}) \emph{localizing invariant} if it preserves short exact sequences and filtered colimits. Denote by $\Fun^{\loc}(\Pr^{\dbl}_{\st},\E)$ and $\Fun^{\loc}(\Pr^{\cg}_{\st},\E)$ the categories of localizing invariants, respectively. By \cite[Theorem 4.10]{Efi25localizing} (or more generally, \cite[Theorem 1.19]{Efi25limit}), the precomposition functor induces an equivalence
\[
	\Fun^{\loc}(\Pr^{\dbl}_{\st},\E)\xrightarrow{\simeq}\Fun^{\loc}(\Pr^{\cg}_{\st},\E),
\]
with inverse functor $(-)^{\cont}$ defined by
\[
	F^{\cont}(\C):=F(\Ind(\Calk^{\cont}_{\omega_1}(\C)))[-1].
\]
Here, for $\kappa>\omega$, we define the \emph{continuous Calkin $\kappa$-category} of $\C\in\Pr^{\dbl}_{\st}$ as $\Calk^{\cont}_{\kappa}(\C):=(\Ind(\C^{\kappa})/\widehat{j}(\C))^{\omega}$, where $\Ind(\Calk^{\cont}_{\kappa}(\C))\simeq\Ind(\C^{\kappa})/\widehat{j}(\C)\simeq\fib(k:\Ind(\C^{\kappa})\to\C)$. We also define the \emph{continuous Calkin category} $\Calk^{\cont}(\C):=\colim_{\kappa}\Calk^{\cont}_{\kappa}(\C)$, which coincides with the definition in \cref{exm:R_I^solid}.

We write $K_{T(n)}:=L_{T(n)}K$ and $K^{\cont}_{T(n)}:=L_{T(n)}K^{\cont}\simeq(K_{T(n)})^{\cont}$ for the $T(n)$-local $K$-theory and $T(n)$-local continuous $K$-theory, respectively, and regard them as taking values in $L_{T(n)}\Sp$.

Using this formalism, we develop the chromatic theory of dualizable stable categories, with chromatic purity for continuous $K$-theory at its center. Working within the framework of nuclear completion, we establish purity, descent, redshift, blueshift, and continuity theorems, together with their nuclear refinements and applications.

\subsection{Purity and nuclear purity}\label{sec:3.1}

We generalize the chromatic purity theorem for algebraic $K$-theory to continuous $K$-theory of dualizable stable categories, and reformulate it in terms of $H$-unital rings. We then use categorical completion theory to obtain a canonical nuclear refinement of the purity theorem. The resulting refinement extends further to arbitrary rigid envelopes.
\begin{theorem}[Purity]\label{thm:purity_dualizable}
	Let $n\geq1$ and $\C\in\Pr^{\dbl}_{\st}$. Then the canonical maps induce equivalences
	\[
		K^{\cont}_{T(n)}(\C)\xrightarrow{\simeq}K^{\cont}_{T(n)}(L_n^f\C)\xleftarrow{\simeq}K^{\cont}_{T(n)}(M_n^{2,f}\C).
	\]
	In particular, we have an equivalence
	\[
		K_{T(n)}^{\cont}(\C)\simeq K_{T(n)}^{\cont}(L_{T(n)\oplus T(n-1)}\C).
	\]
\end{theorem}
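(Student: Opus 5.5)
The plan is to reduce everything to the compactly generated purity theorem of \cite{LMMT24,CMNN24} by presenting $\C$ as the fiber of an internal localization between compactly generated categories, and then to use that the relevant chromatic operations are given by base change along dualizable (even idempotent) $\Sp$-algebras, which preserve short exact sequences.

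\emph{Presentation.} Write $\C\simeq\fib(\D\to\E)$ with $\D\to\E$ a localization in $\Pr^{\cg}_{\st}$ (e.g. via the continuous Calkin construction, or $\D=\Ind(\C^{\omega_1})$). Since $L_n^f\Sp$ and $M_n^{2,f}\Sp\simeq L_{T(n)\oplus T(n-1)}\Sp$ are dualizable, $-\otimes L_n^f\Sp$ and $-\otimes M_n^{2,f}\Sp$ preserve short exact sequences in $\Pr^{\dbl}_{\st}$. By \cref{lem:L_n^f,prop:M_n^mf} these tensor products agree with $L_n^f(-)$ and $M_n^{2,f}(-)$. Moreover, $L_n^f\Sp$ is a finite (atomic) smashing localization, so $L_n^f\D$ and $L_n^f\E$ remain compactly generated, and the maps between them stay in $\Pr^{\cg}_{\st}$. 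Applying $K^{\cont}_{T(n)}$, which is a localizing invariant restricting to $K_{T(n)}$ on $\Pr^{\cg}_{\st}$, we get fiber sequences, and a five-lemma argument reduces the first equivalence $K^{\cont}_{T(n)}(\C)\simeq K^{\cont}_{T(n)}(L_n^f\C)$ to the compactly generated statement $K_{T(n)}(\D)\simeq K_{T(n)}(L_n^f\D)$, which is exactly the cited purity theorem.

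\emph{Second map.} Use the short exact sequence of \cref{prop:M_n^mf} with $m=2$, namely $M_n^{2,f}\Sp\hookrightarrow L_n^f\Sp\to L_{n-2}^f\Sp$, tensored with $\C$. This yields a short exact sequence
\[
	M_n^{2,f}\C\to L_n^f\C\to L_{n-2}^f\C
\]
in $\Pr^{\dbl}_{\st}$. It therefore suffices to show $K^{\cont}_{T(n)}(L_{n-2}^f\C)\simeq0$. Running the same presentation argument with $L_{n-2}^f$, whose values on $\D,\E$ are again compactly generated, reduces this to the vanishing $K_{T(n)}(L_{n-2}^f\D)\simeq0$ from the compactly generated theorem. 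For $n=1$ we have $L_{-1}^f=0$, and the statement is trivial.

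\emph{Last statement.} Finally, $M_n^{2,f}\C\simeq L_{T(n)\oplus T(n-1)}\Sp\otimes\C\simeq L_{T(n)\oplus T(n-1)}\C$ by \cref{prop:M_n^mf} together with \cref{prop:L_FL_E} for dualizable $\C$. Combining this with the two equivalences above gives the claimed identification.

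The main obstacle I expect is verifying that base change along $L_n^f\Sp$ (and along $L_{n-2}^f\Sp$) carries the chosen compactly generated presentation to a compactly generated one with a compact-preserving localization, so that Efimov's continuous extension genuinely agrees with ordinary $K$-theory on these terms. The plan is to handle this using finiteness of the smashing localization, where $L_n^f$ is generated by compacts and its kernel is compactly generated by $F(n+1)$.
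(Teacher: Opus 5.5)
Your proposal is correct and is essentially the paper's argument. The paper notes that $K^{\cont}_{T(n)}(L_n^f(-))$ and $K^{\cont}_{T(n)}(M_n^{2,f}(-))$ are localizing invariants, invokes Efimov's equivalence $\Fun^{\loc}(\Pr^{\dbl}_{\st},\Sp)\simeq\Fun^{\loc}(\Pr^{\cg}_{\st},\Sp)$, and so reduces to the compactly generated purity theorem; your presentation $\C\simeq\fib(\D\to\E)$ together with the five lemma (and the vanishing of $K_{T(n)}(L_{n-2}^f\D)$ for the second map) just unwinds this, and it needs only preservation of short exact sequences, not finitariness.
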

\begin{proof}
	Since $L_n^f$ and $M_n^{2,f}$ preserve short exact sequences in $\Pr^{\dbl}_{\st}$, $K_{T(n)}^{\cont}(L_n^f(-))$ and $K^{\cont}_{T(n)}(M_n^{2,f}(-))$ are localizing invariants. Then $K_{T(n)}^{\cont}\to K_{T(n)}^{\cont}(L_n^f(-))\leftarrow K_{T(n)}^{\cont}(M_n^{2,f}(-))$ are equivalences since they restrict to equivalences on $\Pr^{\cg}_{\st}$ by the compactly generated version of the purity theorem recalled in the Introduction. Therefore, the canonical maps $\C\to L_n^f\C\leftarrow M_n^{2,f}\C$ induce equivalences on $K_{T(n)}^{\cont}$. The last assertion follows from the equivalence $L_{T(n)\oplus T(n-1)}\C\simeq M_n^{2,f}\C$ by \cref{prop:M_n^mf}.
\end{proof}

Since every dualizable stable category is equivalent to the $H$-module category over an $H$-unital ring, \cref{thm:purity_dualizable} can be reformulated as the following purity of $H$-unital rings.
\begin{corollary}[Purity of $H$-unital rings]\label{cor:purity_Hunital}
	Let $n\geq1$ and $I$ be an $H$-unital ring. Then we have an equivalence
	\[
		K_{T(n)}^{\cont}(I)\simeq K_{T(n)}^{\cont}(M_n^{2,f}\otimes I).
	\]
\end{corollary}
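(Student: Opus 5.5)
The plan is to reduce the corollary to \cref{thm:purity_dualizable} applied to $\C:=\Mod_H(I)$. By definition $K^{\cont}(I)=K^{\cont}(\Mod_H(I))$, so the left-hand side is $K^{\cont}_{T(n)}(\Mod_H(I))$. By \cref{thm:purity_dualizable}, the canonical maps give an equivalence
\[
	K^{\cont}_{T(n)}(\Mod_H(I))\simeq K^{\cont}_{T(n)}(M_n^{2,f}\Mod_H(I)).
\]
Here $M_n^{2,f}\Mod_H(I)$ means $L_{T(n)\oplus T(n-1)}\Sp\otimes\Mod_H(I)$, using the identification $M_n^{2,f}\M\simeq L_{T(n)\oplus T(n-1)}\Sp\otimes\M$ from \cref{prop:M_n^mf}. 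It remains to identify this category with $\Mod_H(M_n^{2,f}\otimes I)$.

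For that identification I would use \cref{prop:tensor-H-unital}. Take $R'=L_n^f\bbS$ and $S'=L_{n-2}^f\bbS$, both idempotent $\bbS$-algebras, and note that $L_n^f\bbS\to L_{n-2}^f\bbS$ is a ring map between them. Its fiber is $I'=M_n^{2,f}$, which is $H$-unital by \cref{prop:multichromatic_Hunital}; the case $n=1$ is included via the convention $L_{-1}^f\bbS=0$. \cref{prop:tensor-H-unital} then says that $I\otimes M_n^{2,f}$ is $H$-unital and that
\[
	\Mod_H(I\otimes M_n^{2,f})\simeq\Mod_H(I)\otimes\Mod_H(M_n^{2,f}).
\]
Finally, by the corollary following \cref{prop:M_n^mf}, $\Mod_H(M_n^{2,f})\simeq L_{T(n)\oplus T(n-1)}\Sp$. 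Tensoring gives $\Mod_H(I)\otimes L_{T(n)\oplus T(n-1)}\Sp\simeq M_n^{2,f}\Mod_H(I)$.

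The only point needing care is compatibility: the equivalence $\Mod_H(M_n^{2,f})\simeq L_{T(n)\oplus T(n-1)}\Sp$ should match the tensor-product description of $M_n^{2,f}\Mod_H(I)$. Two facts should suffice. First, $\Mod_H(M_n^{2,f})=\fib(\Mod(L_n^f\bbS)\to\Mod(L_{n-2}^f\bbS))$ by \cref{cor:fib_Hunital}. Second, this fiber is the smashing ideal $L_{T(n)\oplus T(n-1)}\Sp\subseteq L_n^f\Sp$ described by the short exact sequence in \cref{prop:M_n^mf}. Since only an abstract equivalence of $K$-theory spectra is claimed, no further naturality is needed, and combining the two displays gives the stated equivalence.
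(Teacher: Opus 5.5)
Your proposal is correct and follows the paper's own proof: the paper likewise combines \cref{prop:tensor-H-unital} and \cref{prop:multichromatic_Hunital} to get $\Mod_H(M_n^{2,f}\otimes I)\simeq\Mod_H(M_n^{2,f})\otimes\Mod_H(I)\simeq M_n^{2,f}\Mod_H(I)$, and then applies \cref{thm:purity_dualizable} (passing through $L_n^f\Mod_H(I)$). Your explicit checks — the $n=1$ case via $L_{-1}^f\bbS=0$, and the identification $\Mod_H(M_n^{2,f})\simeq L_{T(n)\oplus T(n-1)}\Sp$ via \cref{cor:fib_Hunital} and \cref{prop:M_n^mf} — spell out steps the paper leaves implicit.
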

\begin{proof}
	By \cref{prop:tensor-H-unital,prop:multichromatic_Hunital}, $\Mod_H(M_n^{2,f}\otimes I)\simeq\Mod_H(M_n^{2,f})\otimes\Mod_H(I)\simeq M_n^{2,f}\Mod_H(I)$. It follows that $K_{T(n)}^{\cont}(I)\simeq K_{T(n)}^{\cont}(L_n^fI)\simeq K_{T(n)}^{\cont}(M_n^{2,f}\otimes I)$.
\end{proof}
\begin{remark}
	We compare \cref{cor:purity_Hunital} with the Purity of $\bbE_1$-rings recalled in the Introduction. Let $n\geq1$, and let $R\to S$ be an $\bbE_1$-ring map. Then the Purity of $\bbE_1$-rings says that $K_{T(n)}(R)\to K_{T(n)}(S)$ is an equivalence whenever $R\to S$ is a $T(n)\oplus T(n-1)$-local equivalence, while \cref{cor:purity_Hunital} says that it is an equivalence whenever $R\to S$ is an $M_n^{2,f}$-local equivalence. Note that $R\to S$ is a $T(n)\oplus T(n-1)$-local equivalence if and only if $L_n^fR\to L_n^fS$ is, which is equivalent to $R\to S$ being an $M_n^{2,f}$-local equivalence by \cref{prop:M_n^mf}. In this sense, when restricted to maps of $\bbE_1$-rings, \cref{cor:purity_Hunital} is equivalent to the Purity of $\bbE_1$-rings.
\end{remark}

Unlike the case of $\bbE_1$-rings, the equivalence $K^{\cont}_{T(n)}(\C)\simeq K^{\cont}_{T(n)}(L_{T(n)\oplus T(n-1)}\C)$ is not induced directly by the localization functor $\C\to L_{T(n)\oplus T(n-1)}\C$, since the latter need not be an $\Sp$-internal left adjoint. To overcome this issue, we invoke categorical completion theory, which provides a replacement for the localization map and leads to the following nuclear refinement of the purity theorem.
\begin{theorem}[Nuclear purity]\label{thm:nuc_purity}
	Let $n\geq1$ and $\C\in\Pr^{\dbl}_{\st}$. Then the canonical map $\C\to\Nuc_{T(n)\oplus T(n-1)}(\C)$ induces an equivalence
	\[
		K_{T(n)}^{\cont}(\C)\xrightarrow{\simeq}K_{T(n)}^{\cont}(\Nuc_{T(n)\oplus T(n-1)}(\C)).
	\]
\end{theorem}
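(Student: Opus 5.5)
We will deduce this from \cref{thm:purity_dualizable} together with the fracture square of \cref{thm:Nuc_T(n)}(2) for $m=2$. Write $\N:=\Nuc_{T(n)\oplus T(n-1)}(\C)$. The canonical map $\C\to\N$ factors as $\C\to L_n^f\C\to\N$. By \cref{thm:purity_dualizable}, the first map induces an equivalence on $K^{\cont}_{T(n)}$. It therefore suffices to show that $L_n^f\C\to\N$ induces an equivalence on $K^{\cont}_{T(n)}$.

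By \cref{thm:Nuc_T(n)}(2), the square with corners $L_n^f\C$, $L_{n-2}^f\C$, $\N$ and $L_{n-2}^f\N$ is a pullback in $\Pr^{\dbl}_{\st}$. Its horizontal maps are localizations, and the two horizontal fibers are both identified with $L_{T(n)\oplus T(n-1)}\C\simeq L_{T(n)\oplus T(n-1)}\N$. So we have a map of short exact sequences in $\Pr^{\dbl}_{\st}$ that is an equivalence on the fiber terms. Applying the localizing invariant $K^{\cont}_{T(n)}$ gives a map of fiber sequences of spectra that is an equivalence on fibers. Hence the square
\[
K^{\cont}_{T(n)}(L_n^f\C)\to K^{\cont}_{T(n)}(L_{n-2}^f\C),\qquad K^{\cont}_{T(n)}(\N)\to K^{\cont}_{T(n)}(L_{n-2}^f\N)
\]
is a pullback of spectra. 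The vanishing half of purity gives $K^{\cont}_{T(n)}(L_{n-2}^f\D)\simeq0$ for every dualizable $\D$. This follows from the compactly generated statement, because $\D\mapsto K^{\cont}_{T(n)}(L_{n-2}^f\D)$ is a localizing invariant: $L_{n-2}^f\Sp\otimes-$ preserves short exact sequences and filtered colimits, and the invariant vanishes on $\Pr^{\cg}_{\st}$. The right column of the square therefore vanishes, so the left vertical map is an equivalence. (For $n=1$, $L_{-1}^f:=0$ and this step is trivial.)

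An alternative, which avoids the pullback and uses the torsion equivalence directly, is to compare the maps $M_n^{2,f}\C\to\C$ and $M_n^{2,f}\N\to\N$. By \cref{cor:completion}(2) and \cref{prop:M_n^mf}, $M_n^{2,f}\C\simeq M_n^{2,f}\N$ compatibly with the completion map. By \cref{thm:purity_dualizable}, applied to both $\C$ and $\N$, each inclusion $M_n^{2,f}(-)\to(-)$ induces an equivalence on $K^{\cont}_{T(n)}$. The commuting triangle then forces $K^{\cont}_{T(n)}(\C)\to K^{\cont}_{T(n)}(\N)$ to be an equivalence. I would actually write the proof this way, since it is shorter.

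The main point to check carefully is this compatibility: the identification $M_n^{2,f}\C\simeq M_n^{2,f}\N$, that is, $\I\otimes_{L_n^f\Sp}L_n^f\C\simeq\I\otimes_{L_n^f\Sp}\N$ for $\I=L_{T(n)\oplus T(n-1)}\Sp$, must commute with the completion map $\C\to\N$ and the torsion inclusions. This is exactly the left square in \cref{cor:completion}(2), after using $M_n^{2,f}\Sp\otimes-\simeq\I\otimes_{L_n^f\Sp}L_n^f(-)$ from \cref{prop:M_n^mf}.
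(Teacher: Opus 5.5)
Your proposal is correct and matches the paper. The torsion-comparison argument you say you would write is the paper's main proof: the completion map is an equivalence on the $L_{T(n)\oplus T(n-1)}$-parts, and then \cref{thm:purity_dualizable} applies. Your fracture-square argument is exactly the paper's alternative proof via \cref{thm:Nuc_T(n)}.

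One small precision: the torsion inclusion of $M_n^{2,f}\C$ in $\Pr^{\dbl}_{\st}$ lands in $L_n^f\C$, not in $\C$. So the commuting triangle should be formed with $L_n^f\C$ and then combined with $K^{\cont}_{T(n)}(\C)\simeq K^{\cont}_{T(n)}(L_n^f\C)$, which your factorization $\C\to L_n^f\C\to\Nuc_{T(n)\oplus T(n-1)}(\C)$ already sets up.
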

\begin{proof}
	Since the canonical map $\C\to\Nuc_{T(n)\oplus T(n-1)}(\C)$ induces an equivalence 
	\[
		L_{T(n)\oplus T(n-1)}\C\simeq L_{T(n)\oplus T(n-1)}\Nuc_{T(n)\oplus T(n-1)}(\C),
	\]
	it induces an equivalence on $K^{\cont}_{T(n)}$ by \cref{thm:purity_dualizable}.
\end{proof}
\cref{thm:nuc_purity} and \cref{thm:purity_dualizable} are equivalent. Indeed, observe that 
\[
	\Nuc_{T(n)\oplus T(n-1)}(\C)\simeq\Nuc_{T(n)\oplus T(n-1)}(L_{T(n)\oplus T(n-1)}\C).
\]
However, \cref{thm:nuc_purity} is better suited to studying continuous $K$-theory and chromatic behavior of rigid $\Sp$-algebras, since $\Nuc_{T(n)}(-)$ preserves rigidity, whereas $L_{T(n)}(-)$ does not. In fact, we will show in \cref{sec:3.3} that $L_{T(n)}\C$ cannot be rigid when $n\geq1$ and $L_{T(n)}\C$ is nonzero, so $\Nuc_{T(n)}(\C)$ is the natural object for studying rigid $\Sp$-algebras.

We now give a second proof of nuclear purity using the fracture square.
\begin{lemma}
	Given a commutative diagram in $\Pr^{\dbl}_{\st}$
	\[\begin{tikzcd}
		\D & \E \\
		{\D'} & {\E'}
		\arrow[from=1-1, to=1-2]
		\arrow[from=1-1, to=2-1]
		\arrow[from=1-2, to=2-2]
		\arrow[from=2-1, to=2-2]
	\end{tikzcd}\]
	Suppose that both $\D\to\E$ and $\D'\to\E'$ are localizations and the induced functor $\C:=\fib(\D\to\E)\to\C':=\fib(\D'\to\E')$ is an equivalence. Then it induces a pullback diagram
	\[\begin{tikzcd}
		K^{\cont}(\D) & K^{\cont}(\E) \\
		{K^{\cont}(\D')} & {K^{\cont}(\E')}
		\arrow[from=1-1, to=1-2]
		\arrow[from=1-1, to=2-1]
		\arrow[from=1-2, to=2-2]
		\arrow[from=2-1, to=2-2]
	\end{tikzcd}\]
	In particular, this holds for a pullback diagram in $\Pr^{\dbl}_{\st}$ where the horizontal arrows are localizations.
\end{lemma}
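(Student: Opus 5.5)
The plan is to use the defining property of continuous $K$-theory as a localizing invariant: $K^{\cont}:\Pr^{\dbl}_{\st}\to\Sp$ sends short exact sequences in $\Pr^{\dbl}_{\st}$ to fiber sequences of spectra.

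First we produce short exact sequences in $\Pr^{\dbl}_{\st}$ from the two rows. Since $\D\to\E$ is a morphism of $\Pr^{\dbl}_{\st}$ (so a strongly continuous functor) and a localization, its fiber $\C$ is a dualizable stable category. The inclusion $\C\hookrightarrow\D$ is an internal left adjoint, and
\[
	\C\hookrightarrow\D\to\E
\]
is a short exact sequence in $\Pr^{\iL}_{\st}$, hence in $\Pr^{\dbl}_{\st}$. Concretely, the right adjoint of $\D\to\E$ preserves colimits, so the fiber is closed under colimits. Its inclusion then has a colimit-preserving right adjoint, given by the fiber of the unit $\id\to i_*i^*$, and $\C$ is a retract of $\D$ in $\PrL_{\st}$, so it is dualizable. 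The same holds for $\C'\hookrightarrow\D'\to\E'$.

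Next, the commutative square induces a map of these short exact sequences. The functor $\D\to\D'$ restricts to $\C\to\C'$, and this is compatible with the inclusions. The restricted functor is the equivalence in the hypothesis.

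Applying $K^{\cont}$ gives a map of fiber sequences
\[
	K^{\cont}(\C)\to K^{\cont}(\D)\to K^{\cont}(\E),\qquad K^{\cont}(\C')\to K^{\cont}(\D')\to K^{\cont}(\E').
\]
The map on fibers $K^{\cont}(\C)\to K^{\cont}(\C')$ is an equivalence. In the stable category $\Sp$, a commutative square whose induced map on horizontal fibers is an equivalence is a pullback. Hence the $K^{\cont}$ square is a pullback.

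For the last assertion, take a pullback square in $\Pr^{\dbl}_{\st}$ whose horizontal arrows are localizations. We check that the induced map on fibers is an equivalence. By \cref{lem:pullback_dbl}, the square is also a pullback in $\PrL_{\st}$, because $\lim^{\dbl}$ agrees with the $\PrL$-pullback in this situation. Fibers commute with pullbacks, so $\fib(\D\to\E)\simeq\fib(\D'\to\E')$. Explicitly, an object of $\D\simeq\D'\times_{\E'}\E$ lying over $0\in\E$ is the same as an object of $\D'$ lying over $0\in\E'$.

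The main obstacle is bookkeeping rather than mathematics. One must ensure that the fiber sequences really live in $\Pr^{\dbl}_{\st}$, meaning the functors are strongly continuous, so that $K^{\cont}$ applies. One must also ensure that the map of fiber sequences is the one induced on fibers. The first point is the paragraph above on internal adjoints; the second is formal.
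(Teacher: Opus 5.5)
Your proposal is correct and is the paper's argument: the square induces a map of short exact sequences $\C\hookrightarrow\D\to\E$ and $\C'\hookrightarrow\D'\to\E'$ in $\Pr^{\dbl}_{\st}$ that is an equivalence on fibers. Applying $K^{\cont}$ then gives a map of fiber sequences whose right square is therefore a pullback.

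For the final assertion, note that \cref{lem:pullback_dbl} goes the other way: it upgrades a $\PrL_{\st}$-pullback to a $\Pr^{\dbl}_{\st}$-pullback. It is simpler to paste pullbacks inside $\Pr^{\dbl}_{\st}$ to get $\fib^{\dbl}(\D\to\E)\simeq\fib^{\dbl}(\D'\to\E')$, and then use, as in your first paragraph, that for internal localizations the dualizable fiber coincides with the $\PrL_{\st}$-fiber.
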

\begin{proof}
	The given square induces a commutative diagram between short exact sequences in $\Pr^{\dbl}_{\st}$
	\[\begin{tikzcd}
		\C & \D & \E \\
		{\C'} & {\D'} & {\E'}
		\arrow[hook, from=1-1, to=1-2]
		\arrow["\simeq"', from=1-1, to=2-1]
		\arrow[from=1-2, to=1-3]
		\arrow[from=1-2, to=2-2]
		\arrow[from=1-3, to=2-3]
		\arrow[hook, from=2-1, to=2-2]
		\arrow[from=2-2, to=2-3]
	\end{tikzcd}\]
	Then the continuous $K$-theory sends this to a commutative diagram of fiber sequences in $\Sp$
	\[\begin{tikzcd}
		K^{\cont}(\C) & K^{\cont}(\D) & K^{\cont}(\E) \\
		{K^{\cont}(\C')} & {K^{\cont}(\D')} & {K^{\cont}(\E')}
		\arrow[from=1-1, to=1-2]
		\arrow["\simeq"', from=1-1, to=2-1]
		\arrow[from=1-2, to=1-3]
		\arrow[from=1-2, to=2-2]
		\arrow[from=1-3, to=2-3]
		\arrow[from=2-1, to=2-2]
		\arrow[from=2-2, to=2-3]
	\end{tikzcd}\]
	As the left vertical map is an equivalence, the right square is a pullback diagram.
\end{proof}
\begin{proof}[Alternative proof of {\cref{thm:nuc_purity}}.]
	By \cref{thm:Nuc_T(n)}, we have a pullback diagram
	\[\begin{tikzcd}
			{K^{\cont}_{T(n)}(L_n^f\C)} & {K^{\cont}_{T(n)}(L_{n-2}^f\C)} \\
			{K^{\cont}_{T(n)}(\Nuc_{T(n)\oplus T(n-1)}(\C))} & {K^{\cont}_{T(n)}(L_{n-2}^f\Nuc_{T(n)\oplus T(n-1)}(\C))}
			\arrow[from=1-1, to=1-2]
			\arrow[from=1-1, to=2-1]
			\arrow[from=1-2, to=2-2]
			\arrow[""{name=0, anchor=center, inner sep=0}, from=2-1, to=2-2]
			\arrow["\lrcorner"{anchor=center, pos=0.125}, draw=none, from=1-1, to=0]
	\end{tikzcd}\]
	Then the result follows from \cref{thm:purity_dualizable}.
\end{proof}
More generally, every rigid envelope functorially gives rise to a fracture square of continuous $K$-theory.
\begin{proposition}
	Let $\I\in\CAlg(\PrL_{\st})$ be locally rigid and $\C\to\I$ be a rigid envelope with $\A:=\C/\I$. Let $\M\in\Pr^{\dbl}_{\C}$. Then we have a fracture square
	\[\begin{tikzcd}
			{K^{\cont}(\M)} & {K^{\cont}(\A\otimes_{\C}\M)} \\
			{K^{\cont}(\M^{\wedge\I})} & {K^{\cont}(\M_{\eta})}
			\arrow[from=1-1, to=1-2]
			\arrow[from=1-1, to=2-1]
			\arrow[from=1-2, to=2-2]
			\arrow[""{name=0, anchor=center, inner sep=0}, from=2-1, to=2-2]
			\arrow["\lrcorner"{anchor=center, pos=0.125}, draw=none, from=1-1, to=0]
	\end{tikzcd}\]
\end{proposition}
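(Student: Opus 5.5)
The plan is to combine the categorical fracture square of \cref{prop:pullback_diagram_of_completion} with the lemma just proved, which turns a square of localizations with equivalent fibers into a pullback of continuous $K$-theory spectra.

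\textbf{Step 1: build the categorical square.} Since $\C\to\I$ is a rigid envelope, $\I\subseteq\C$ is a smashing ideal and $\A=\C/\I$. For $\M\in\Pr^{\dbl}_{\C}$, \cref{cor:generic_fiber}(2) gives a commutative diagram of short exact sequences in $\Pr^{\dbl}_{\C}$:
\[
\I\otimes_{\C}\M\hookrightarrow\M\to\A\otimes_{\C}\M,\qquad \I\otimes_{\C}\M^{\wedge\I}\hookrightarrow\M^{\wedge\I}\to\M_{\eta}.
\]
Here the left vertical comparison $\I\otimes_{\C}\M\to\I\otimes_{\C}\M^{\wedge\I}$ is an equivalence by \cref{thm:unst_completion}. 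Thus the square with vertices $\M$, $\A\otimes_{\C}\M$, $\M^{\wedge\I}$, $\M_{\eta}$ has horizontal maps that are localizations and induces an equivalence on horizontal fibers.

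\textbf{Step 2: pass to $\Pr^{\dbl}_{\st}$.} To apply the preceding lemma, all four categories and all maps must live in $\Pr^{\dbl}_{\st}$, not merely in $\Pr^{\dbl}_{\C}$. This is the main point to check, since $\C$ is rigid over $\Sp$. The restriction functor $\Pr^{\dbl}_{\C}\to\Pr^{\dbl}_{\st}$ is well defined for rigid $\C$: dualizable $\C$-modules are dualizable over $\Sp$, and $\C$-internal left adjoints are $\Sp$-internal left adjoints. This is the same observation used in the last clause of \cref{prop:pullback_diagram_of_completion}. The restriction also preserves short exact sequences, because fully faithful inclusions and quotients are computed underlying.

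\textbf{Step 3: conclude.} With the square now in $\Pr^{\dbl}_{\st}$, the rows are short exact sequences, the horizontal maps are localizations, and the fiber comparison is an equivalence. The preceding lemma then applies directly: $K^{\cont}$ sends both rows to fiber sequences, the map on fibers $K^{\cont}(\I\otimes_{\C}\M)\to K^{\cont}(\I\otimes_{\C}\M^{\wedge\I})$ is an equivalence, and hence the right-hand square of $K$-theory spectra is a pullback.

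Alternatively, one can note that \cref{prop:pullback_diagram_of_completion} already states that the square is a pullback in $\Pr^{\dbl}_{\st}$ with horizontal localizations, and invoke the ``in particular'' clause of the lemma. Naturality of the square in $\M$ gives the functoriality claimed in the statement.
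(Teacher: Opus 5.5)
Your proposal is correct and is essentially the paper's proof: the paper applies $K^{\cont}$ to the pullback square of \cref{prop:pullback_diagram_of_completion}, which lies in $\Pr^{\dbl}_{\st}$ because $\C$ is rigid, using the preceding lemma — exactly your ``alternative'' paragraph. Your main route just unpacks that square into the diagram of short exact sequences from \cref{cor:generic_fiber}, which is how \cref{prop:pullback_diagram_of_completion} is itself proved, and then uses the fiber-equivalence form of the lemma instead of its pullback form.
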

\begin{proof}
	Apply $K^{\cont}$ to the pullback diagram in \cref{prop:pullback_diagram_of_completion}.
\end{proof}
Applying the proposition to the maps of rigid envelopes in \cref{exm:R_I^solid} yields the following corollary.
\begin{corollary}\label{cor:K(Nuc(R_I))}
	Let $R\in\CAlg(\Sp)$ and let $I\subseteq\pi_*(R)$ be a finitely generated homogeneous ideal. Then we have pullback diagrams
	\[\begin{tikzcd}
		{K(R)} & {K(R^{\wedge}_I)} & {K^{\cont}(\Nuc(R^{\wedge,\solid}_I))} & {K^{\cont}(\Nuc(R^{\wedge}_I))} \\
		{K(L_IR)} & {K(L_IR^{\wedge}_I)} & {K^{\cont}(\Nuc(L_IR^{\wedge,\solid}_I))} & {K^{\cont}(\Nuc(L_IR^{\wedge}_I))}
		\arrow[from=1-1, to=1-2]
		\arrow[from=1-1, to=2-1]
		\arrow["\lrcorner"{anchor=center, pos=0.125}, draw=none, from=1-1, to=2-2]
		\arrow[from=1-2, to=1-3]
		\arrow[from=1-2, to=2-2]
		\arrow["\lrcorner"{anchor=center, pos=0.125}, draw=none, from=1-2, to=2-3]
		\arrow[from=1-3, to=1-4]
		\arrow[from=1-3, to=2-3]
		\arrow[from=1-4, to=2-4]
		\arrow[from=2-1, to=2-2]
		\arrow[from=2-2, to=2-3]
		\arrow[""{name=0, anchor=center, inner sep=0}, from=2-3, to=2-4]
		\arrow["\lrcorner"{anchor=center, pos=0.125}, draw=none, from=1-3, to=0]
	\end{tikzcd}\]
	where the common fiber of the vertical maps is $K(\Mod(R)_{I\text{-}\cplt})$.
\end{corollary}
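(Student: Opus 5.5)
The plan is to apply the preceding proposition, on fracture squares of continuous $K$-theory from a rigid envelope, to the chain of maps of rigid envelopes of $\I:=\Mod(R)_{I\text{-}\cplt}$ from \cref{exm:R_I^solid},
\[
	\Mod(R)\to\Mod(R^{\wedge}_I)\hookrightarrow\Nuc(R^{\wedge,\solid}_I)\hookrightarrow\Nuc(R^{\wedge}_I),
\]
and then paste the resulting squares.

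\textbf{Step 1: the outer square.} Take the rigid envelope $\Mod(R)\to\I$, with $\A=\Mod(R)/\I\simeq\Mod(L_IR)$ by \cref{exm:Mod(R)_I-tors/cplt}. By \cref{thm:pullback_of_map_of_rigid_envelope}, each map of rigid envelopes $\overline{\C}\to\widetilde{\C}$ gives a pullback square in $\Pr^{\dbl}_{\st}$ with rows $\overline{\C}\to\A\otimes_{\overline{\C}}\overline{\C}$ and $\widetilde{\C}\to\A\otimes_{\overline{\C}}\widetilde{\C}$. The horizontal maps are localizations whose fibers both equal $\I\otimes_{\overline{\C}}\widetilde{\C}\simeq\I$.

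\textbf{Step 2: identify the bottom row.} The bottom-row terms are as follows.
\begin{enumerate}
	\item $\Mod(L_IR)\otimes_R\Mod(R^{\wedge}_I)\simeq\Mod(L_IR^{\wedge}_I)$, which is base change of module categories.
	\item $\Mod(L_IR)\otimes_R\Nuc(R^{\wedge,\solid}_I)=\Nuc(L_IR^{\wedge,\solid}_I)$, by definition.
	\item $\Mod(L_IR)\otimes_R\Nuc(R^{\wedge}_I)\simeq L_I\Nuc(R^{\wedge}_I)=\Nuc(L_IR^{\wedge}_I)$, using \cref{cor:L_KM} with $\K=\{R/(x_0,\dots,x_n)\}$.
\end{enumerate}
Since $\A\otimes_{\Mod(R)}(-)$ is compatible with composing maps of envelopes, the bottom row is functorial along the chain. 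This gives a commutative ladder of short exact sequences in $\Pr^{\dbl}_{\st}$ whose left terms are all the same category $\I$.

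\textbf{Step 3: apply $K^{\cont}$.} Continuous $K$-theory is localizing, so each row becomes a fiber sequence with common fiber $K^{\cont}(\I)$. On the compactly generated terms $K^{\cont}$ agrees with $K$, which gives $K(R)$, $K(L_IR)$, $K(R^{\wedge}_I)$ and $K(L_IR^{\wedge}_I)$. By the lemma above, equivalent horizontal fibers make every adjacent square, and hence every composite, a pullback.

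The statement writes the common fiber as $K(\Mod(R)_{I\text{-}\cplt})$. I read this as $K^{\cont}$ of the dualizable category $\I$. I would note that $\I\simeq\Mod(R)_{I\text{-}\tors}$ is compactly generated, so this continuous $K$-theory is ordinary $K$-theory.

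\textbf{Main obstacle.} The delicate point is that the statement draws the squares horizontally, with vertical maps given by $L_I$-localization. I must check that the squares produced by \cref{thm:pullback_of_map_of_rigid_envelope} are the transposes of those drawn, and that the generic-fiber identifications in Step 2 are compatible with the transition functors of the chain. I would handle this by deducing all the squares from the single functor $\Mod(L_IR)\otimes_R(-)$ applied to the chain.
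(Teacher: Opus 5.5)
Your proposal is correct and matches the paper's proof: the paper likewise applies \cref{thm:pullback_of_map_of_rigid_envelope} to the chain of maps of rigid envelopes $\Mod(R)\to\Mod(R^{\wedge}_I)\hookrightarrow\Nuc(R^{\wedge,\solid}_I)\hookrightarrow\Nuc(R^{\wedge}_I)$ from \cref{exm:R_I^solid}, and then applies $K$-theory, with common fiber $\Mod(R)_{I\text{-}\cplt}$. Your additional remarks only make explicit what the paper leaves implicit: the identification of the bottom row, and the fact that $\Mod(R)_{I\text{-}\cplt}\simeq\Mod(R)_{I\text{-}\tors}$ is compactly generated, so its continuous $K$-theory is ordinary $K$-theory.
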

\begin{proof}
	By \cref{exm:R_I^solid}, we obtain a sequence of maps of rigid envelopes
	\[
		\Mod(R)\to\Mod(R^{\wedge}_I)\hookrightarrow\Nuc(R^{\wedge,\solid}_I)\hookrightarrow\Nuc(R^{\wedge}_I).
	\]
	By \cref{thm:pullback_of_map_of_rigid_envelope}, we have pullback diagrams
	\[\begin{tikzcd}
		{\Mod(R)} & {\Mod(R^{\wedge}_I)} & {\Nuc(R^{\wedge,\solid}_I)} & {\Nuc(R^{\wedge}_I)} \\
		{\Mod(L_IR)} & {\Mod(L_IR^{\wedge}_I)} & {\Nuc(L_IR^{\wedge,\solid}_I)} & {\Nuc(L_IR^{\wedge}_I)}
		\arrow[from=1-1, to=1-2]
		\arrow[from=1-1, to=2-1]
		\arrow["\lrcorner"{anchor=center, pos=0.125}, draw=none, from=1-1, to=2-2]
		\arrow[from=1-2, to=1-3]
		\arrow[from=1-2, to=2-2]
		\arrow["\lrcorner"{anchor=center, pos=0.125}, draw=none, from=1-2, to=2-3]
		\arrow[from=1-3, to=1-4]
		\arrow[from=1-3, to=2-3]
		\arrow[from=1-4, to=2-4]
		\arrow[from=2-1, to=2-2]
		\arrow[from=2-2, to=2-3]
		\arrow[""{name=0, anchor=center, inner sep=0}, from=2-3, to=2-4]
		\arrow["\lrcorner"{anchor=center, pos=0.125}, draw=none, from=1-3, to=0]
	\end{tikzcd}\]
	where the common fiber of the vertical maps is $\Mod(R)_{I\text{-}\cplt}$. Applying $K$-theory completes the proof.
\end{proof}
A special case of the middle pullback square appears in \cite[(5.9)]{And23thesis}, where it is established by showing that the fibers of its two vertical maps are equivalent. For the corresponding categorical statement for condensed adic $\bbE_\infty$-rings, see \cite[Lemma 1.33]{Cor23}. One can also prove directly that the two vertical maps in the outer rectangle have equivalent fibers. 

For rigid symmetric monoidal categories, the nuclear completion $\Nuc_{T(n)\oplus T(n-1)}\C$ can be replaced by any rigid envelope of $L_{T(n)\oplus T(n-1)}\C$ under $L_n^f\C$.
\begin{proposition}
	Let $n\geq1$ and $\C\in\CAlg^{\rig}(\PrL_{\st})$, and let $L_n^f\C\to\D$ be a map of rigid envelopes of $L_{T(n)\oplus T(n-1)}\C$. Then $\C\to\D$ induces an equivalence
	\[
		K^{\cont}_{T(n)}(\C)\xrightarrow{\simeq}K^{\cont}_{T(n)}(\D).
	\]
	In particular, this recovers \cref{thm:nuc_purity} for rigid $\Sp$-algebras by taking $L_n^f\C\to\Nuc_{T(n)\oplus T(n-1)}(\C)$.
\end{proposition}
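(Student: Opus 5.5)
Write $\I:=L_{T(n)\oplus T(n-1)}\C\simeq M_n^{2,f}\C$ and $\A:=L_n^f\C/\I\simeq L_{n-2}^f\C$. The plan is to factor $\C\to\D$ as $\C\to L_n^f\C\to\D$. By \cref{thm:purity_dualizable}, $\C\to L_n^f\C$ is already an equivalence on $K^{\cont}_{T(n)}$, so it remains to show that $L_n^f\C\to\D$ is as well. For this we use the fracture square attached to a map of rigid envelopes.

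\textbf{Step 1: the categorical fracture square.} Since $\C$ is rigid, $L_n^f\C$ is rigid over $\Sp$, and $\I\subseteq L_n^f\C$ is a smashing ideal. Apply \cref{thm:pullback_of_map_of_rigid_envelope} to the map of rigid envelopes $L_n^f\C\to\D$ (over the base $\Sp$). This gives a pullback diagram in $\CAlg(\Pr^{\dbl}_{\st})$ with rows
\[
L_n^f\C\to\A,\qquad \D\to\A\otimes_{L_n^f\C}\D.
\]
The horizontal maps are localizations with fibers $\I$ and $\I\otimes_{L_n^f\C}\D$. The latter is identified with $\I$ by the defining condition of a map of rigid envelopes.

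\textbf{Step 2: pass to $K$-theory.} By the lemma preceding the alternative proof of \cref{thm:nuc_purity}, applying $K^{\cont}$ (and then $L_{T(n)}$) yields a pullback square of spectra
\[
\begin{tikzcd}
K^{\cont}_{T(n)}(L_n^f\C) & K^{\cont}_{T(n)}(\A)\\
K^{\cont}_{T(n)}(\D) & K^{\cont}_{T(n)}(\A\otimes_{L_n^f\C}\D)
\arrow[from=1-1,to=1-2]\arrow[from=1-1,to=2-1]\arrow[from=1-2,to=2-2]\arrow[from=2-1,to=2-2]
\end{tikzcd}
\]
It therefore suffices to show that both right-hand terms vanish.

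\textbf{Step 3: vanishing on the generic fiber.} Both $\A$ and $\A\otimes_{L_n^f\C}\D$ are modules over $\A\simeq L_{n-2}^f\Sp\otimes\C$, so they are $L_{n-2}^f$-local dualizable categories. Concretely, $\A\otimes_{L_n^f\C}\D\simeq L_{n-2}^f\Sp\otimes\D$, using \cref{lem:L_n^f} and the identification of $\A$ as the base change of the smashing localization $L_n^f\Sp\to L_{n-2}^f\Sp$. For an $L_{n-2}^f$-local $\E$, the canonical map $\E\to L_{n-2}^f\E$ is an equivalence. \cref{thm:purity_dualizable} (more precisely, its extension of the compactly generated vanishing $K_{T(n)}(L_{n-2}^f\C)\simeq0$ to dualizable categories via the localizing-invariant argument) then gives $K^{\cont}_{T(n)}(\E)\simeq0$. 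Hence both right-hand corners vanish, and the left vertical map is an equivalence.

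\textbf{Main obstacle.} The delicate point is the vanishing statement in Step 3. \cref{thm:purity_dualizable} is stated as an equivalence of two maps rather than as a vanishing. I would derive the vanishing by the same argument: $K^{\cont}_{T(n)}(L_{n-2}^f(-))$ is a localizing invariant that vanishes on $\Pr^{\cg}_{\st}$ by \cite{LMMT24,CMNN24}, hence vanishes on $\Pr^{\dbl}_{\st}$. For $n=1$ the convention $L_{-1}^f=0$ makes it trivial. I would also double-check that the identification $\A\otimes_{L_n^f\C}\D\simeq L_{n-2}^f\D$ holds. This identification is only needed to see that the category is $L_{n-2}^f$-local, which already follows from its being an $\A$-module.

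Finally, for the special case, $L_n^f\C\to\Nuc_{T(n)\oplus T(n-1)}(\C)$ is the completion map $L_n^f\C\to\I^{\rig}$. By the remark after the definition of rigid envelopes, this is a map of rigid envelopes, so the special case recovers \cref{thm:nuc_purity} for rigid $\C$.
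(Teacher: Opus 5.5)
Your proposal is correct and is essentially the paper's proof: apply \cref{thm:pullback_of_map_of_rigid_envelope} to $L_n^f\C\to\D$, pass to $K^{\cont}_{T(n)}$, and use \cref{thm:purity_dualizable} both for $\C\to L_n^f\C$ and to kill the $L_{n-2}^f$-local corners $K^{\cont}_{T(n)}(L_{n-2}^f\C)$ and $K^{\cont}_{T(n)}(L_{n-2}^f\D)$. The vanishing you single out as the main obstacle follows directly from the theorem as stated: for $L_{n-2}^f$-local $\E$ one has $K^{\cont}_{T(n)}(\E)\simeq K^{\cont}_{T(n)}(M_n^{2,f}\E)$, and $M_n^{2,f}\E\simeq0$ because $L_{T(n)\oplus T(n-1)}\Sp\otimes L_{n-2}^f\Sp\simeq0$.
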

\begin{proof}
	Consider the locally rigid $\Sp$-algebra $L_{T(n)\oplus T(n-1)}\C$ with rigid envelope $L_n^f\C$. By \cref{thm:pullback_of_map_of_rigid_envelope}, we have a pullback diagram in $\Sp$
	\[\begin{tikzcd}
			{K^{\cont}_{T(n)}(L_n^f\C)} & {K^{\cont}_{T(n)}(L_{n-2}^f\C)} \\
			{K^{\cont}_{T(n)}(\D)} & {K^{\cont}_{T(n)}(L_{n-2}^f\D)}
			\arrow[from=1-1, to=1-2]
			\arrow[from=1-1, to=2-1]
			\arrow[from=1-2, to=2-2]
			\arrow[""{name=0, anchor=center, inner sep=0}, from=2-1, to=2-2]
			\arrow["\lrcorner"{anchor=center, pos=0.125}, draw=none, from=1-1, to=0]
	\end{tikzcd}\]
	Then the result follows from \cref{thm:purity_dualizable}.
\end{proof}
\begin{corollary}
	Let $n\geq1$ and $\C\in\CAlg^{\rig}(\PrL_{\st})^{\cg}$. Then $\C\to\Ind(L_{T(n)\oplus T(n-1)}\C)^{\dbl}$ induces an equivalence
	\[
		K^{\cont}_{T(n)}(\C)\xrightarrow{\simeq}K^{\cont}_{T(n)}(\Ind(L_{T(n)\oplus T(n-1)}\C)^{\dbl}).
	\]
\end{corollary}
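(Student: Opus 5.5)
The plan is to deduce this from the preceding proposition by exhibiting $L_n^f\C\to\Ind((L_{T(n)\oplus T(n-1)}\C)^{\dbl})$ as a map of rigid envelopes of $\I:=L_{T(n)\oplus T(n-1)}\C$. First, since $\C$ is rigid and compactly generated and $L_n^f$ is a finite smashing localization (\cref{lem:L_n^f}), $L_n^f\C\simeq L_n^f\Sp\otimes\C$ is again rigid and compactly generated. By \cref{prop:M_n^mf} with $m=2$, $\I\simeq L_{T(n)\oplus T(n-1)}\Sp\otimes\C$ is a smashing ideal of $L_n^f\C$, namely the base change of the smashing ideal $L_{T(n)\oplus T(n-1)}\Sp\subseteq L_n^f\Sp$. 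Thus $L_n^f\C\to\I$ is a rigid envelope with quotient $L_{n-2}^f\C$.

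Next, I would apply \cref{cor:Ind_dbl} to the rigid compactly generated algebra $L_n^f\C$ and this smashing ideal. \cref{constr:Ind_dbl} supplies the $\Sp$-algebra map $L_n^f\C\to\Ind(\I^{\dbl})$, since $L_n^f\C$ is compactly generated rigid and maps to $\I$. It also gives the fully faithful inclusion $\Ind(\I^{\dbl})\hookrightarrow\I^{\rig}$ over $\I$. By \cref{prop:map_of_rigid_envelopes}(1), fully faithfulness into the rigidification makes $L_n^f\C\to\Ind(\I^{\dbl})$ a map of rigid envelopes of $\I$. The composite $\C\to L_n^f\C\to\Ind(\I^{\dbl})$ is the canonical map in the statement. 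I read $\Ind(L_{T(n)\oplus T(n-1)}\C)^{\dbl}$ as $\Ind(\I^{\dbl})$; the target of the map from \cref{constr:Ind_dbl} forces this reading.

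Finally, the preceding proposition, applied with $\D=\Ind(\I^{\dbl})$, gives $K^{\cont}_{T(n)}(\C)\simeq K^{\cont}_{T(n)}(\D)$. Concretely, the pullback square of \cref{cor:Ind_dbl} yields a $K$-theoretic fracture square. Its right column consists of $L_{n-2}^f$-local categories, and these have vanishing $K^{\cont}_{T(n)}$ by \cref{thm:purity_dualizable}. The remaining identification $K^{\cont}_{T(n)}(\C)\simeq K^{\cont}_{T(n)}(L_n^f\C)$ is the other half of \cref{thm:purity_dualizable}.

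The only step requiring care is checking that $\I$ really is a smashing ideal of $L_n^f\C$ and that $L_n^f\C$ stays rigid compactly generated, so that the hypotheses of \cref{cor:Ind_dbl} hold. Both follow from base-changing the corresponding statements for $\Sp$ along the rigid algebra $\C$.
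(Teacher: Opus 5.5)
Your proposal is correct and follows the paper's own proof. The paper likewise notes that $L_n^f\C$ is a compactly generated rigid envelope of $L_{T(n)\oplus T(n-1)}\C$, invokes \cref{cor:Ind_dbl} to get the map of rigid envelopes $L_n^f\C\to\Ind((L_{T(n)\oplus T(n-1)}\C)^{\dbl})$, and concludes by the preceding proposition, whose proof is exactly the fracture-square-plus-purity argument you sketch.
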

\begin{proof}
	Since $L_n^f\C\in\CAlg^{\rig}(\PrL_{\st})^{\cg}$ is a rigid envelope of $L_{T(n)\oplus T(n-1)}\C$, by \cref{cor:Ind_dbl}, $L_n^f\C\to\Ind(L_{T(n)\oplus T(n-1)}\C)^{\dbl}$ is a map of rigid envelopes. Then the result follows.
\end{proof}
In particular, for $R\in\CAlg(\Sp)$, we obtain equivalences
\begin{align*}
	K^{\cont}_{T(n)}(R)&\simeq K^{\cont}_{T(n)}(L_{T(n)\oplus T(n-1)}R)\simeq K^{\cont}_{T(n)}(L_{T(n)\oplus T(n-1)}\Mod(R))\\
	&\simeq K^{\cont}_{T(n)}(\Ind(L_{T(n)\oplus T(n-1)}\Mod(R)^{\dbl}))\simeq K^{\cont}_{T(n)}(\Nuc(L_{T(n)\oplus T(n-1)}R)).
\end{align*}

As a corollary, we restrict to $T(n+1)$-localized $K$-theory of $L_n^f$-local categories.
\begin{corollary}\label{cor:purity_T(n)}
	Let $n\geq0$.
	\begin{enumerate}
		\item Let $\C\in\Pr^{\dbl}_{L_n^f}$. Then we have an equivalence
		\[
			K^{\cont}_{T(n+1)}(\C)\simeq K^{\cont}_{T(n+1)}(L_{T(n)}\C),
		\]
		which also appears in \cite[Proposition 2.24]{BMCSY25redshift} in the form $K^{\cont}_{T(n+1)}(\C)\simeq K^{\cont}_{T(n+1)}(M_n^f\C)$.
		
		Moreover, the $T(n)$-completion map induces an equivalence
		\[
			K^{\cont}_{T(n+1)}(\C)\xrightarrow{\simeq}K^{\cont}_{T(n+1)}(\Nuc_{T(n)}(\C)).
		\]
		\item Let $\C\in\CAlg^{\rig}(\PrL_{L_n^f})^{\cg}$. Then $\C\to\Ind(L_{T(n)}\C)^{\dbl}$ induces an equivalence
		\[
			K^{\cont}_{T(n+1)}(\C)\xrightarrow{\simeq}K^{\cont}_{T(n+1)}(\Ind(L_{T(n)}\C)^{\dbl}).
		\]
	\end{enumerate}
\end{corollary}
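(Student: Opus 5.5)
The plan is to apply \cref{thm:purity_dualizable} and \cref{thm:nuc_purity} at height $n+1$, and then use $L_n^f$-locality of $\C$ to simplify the chromatic layers that appear.

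For (1), let $\C\in\Pr^{\dbl}_{L_n^f}$. \cref{thm:purity_dualizable} at height $n+1\geq1$ gives
\[
K^{\cont}_{T(n+1)}(\C)\simeq K^{\cont}_{T(n+1)}(M_{n+1}^{2,f}\C).
\]
By \cref{prop:M_n^mf}, $M_{n+1}^{2,f}\C\simeq L_{T(n+1)\oplus T(n)}\Sp\otimes\C$. Since $\C\simeq L_n^f\Sp\otimes\C$, this becomes $(L_{T(n+1)\oplus T(n)}\Sp\otimes L_n^f\Sp)\otimes\C$. I would compute the first factor using \cref{prop:L_FL_E}, with $L_n^f$ smashing: it is $L_{(T(n+1)\oplus T(n))\otimes L_n^f\bbS}\Sp$. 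Because $T(n+1)\otimes L_n^f\bbS\simeq0$ and $T(n)\otimes L_n^f\bbS\simeq T(n)$, this is $L_{T(n)}\Sp$. The corollary after \cref{prop:M_n^mf} then identifies $L_{T(n)}\Sp\otimes\C\simeq L_{T(n)}\C$, and also $\simeq M_n^f\C$, which gives the form stated in \cite{BMCSY25redshift}. The same identification holds functorially in $\C$: the canonical map $\C\to L_{n+1}^f\C$ is an equivalence for $L_n^f$-local $\C$, and the inclusion $M_{n+1}^{2,f}\C\to L_{n+1}^f\C$ is identified with $M_n^f\C\to\C$.

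For the nuclear statement, \cref{thm:nuc_purity} at height $n+1$ says that $\C\to\Nuc_{T(n+1)\oplus T(n)}(\C)$ is a $K^{\cont}_{T(n+1)}$-equivalence. So it suffices to show $\Nuc_{T(n+1)\oplus T(n)}(\C)\simeq\Nuc_{T(n)}(\C)$ compatibly with the completion maps. One route uses the definitions:
\[
\Nuc_{T(n+1)\oplus T(n)}(\C)=\iHom^{\dbl}_{L_{n+1}^f\Sp}(L_{T(n+1)\oplus T(n)}\Sp,L_{n+1}^f\C).
\]
Here $L_{n+1}^f\C\simeq\C$ is an $L_n^f\Sp$-module, and $L_n^f\Sp$ is idempotent over $L_{n+1}^f\Sp$. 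The adjunction from \cref{thm:unst_completion}/\cref{prop:corig} lets us replace the source by $L_n^f\Sp\otimes_{L_{n+1}^f\Sp}L_{T(n+1)\oplus T(n)}\Sp\simeq L_{T(n)}\Sp$, computed over $L_n^f\Sp$. The result is $\iHom^{\dbl}_{L_n^f\Sp}(L_{T(n)}\Sp,\C)=\Nuc_{T(n)}(\C)$.

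A cleaner route avoids the internal Hom manipulation. Both completions are $T(n)$-local-equivalent to $\C$: \cref{thm:Nuc_T(n)}(2) gives $L_{T(n)}\Nuc_{T(n)}(\C)\simeq L_{T(n)}\C$, and the right-hand side is $K^{\cont}_{T(n+1)}$-equivalent to $\C$ by the first part. Applying the first part to $\Nuc_{T(n)}(\C)$, which is again $L_n^f$-local, gives
\[
K^{\cont}_{T(n+1)}(\Nuc_{T(n)}(\C))\simeq K^{\cont}_{T(n+1)}(L_{T(n)}\C)\simeq K^{\cont}_{T(n+1)}(\C),
\]
and naturality of $L_{T(n)}$ shows the composite is the map induced by completion. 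I will take this second route. The main obstacle is checking that these identifications are induced by the canonical completion map and not just abstract equivalences; this needs the naturality of the first equivalence in $\C$.

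For (2), let $\C\in\CAlg^{\rig}(\PrL_{L_n^f})^{\cg}$. Then $L_{T(n)}\C$ is locally rigid with rigid envelope $\C$, since $\C$ is $L_n^f$-local and $L_{T(n)}\Sp\subseteq L_n^f\Sp$ is a smashing ideal. By \cref{cor:Ind_dbl}, $\C\to\Ind(L_{T(n)}\C)^{\dbl}$ is a map of rigid envelopes, so \cref{thm:pullback_of_map_of_rigid_envelope} gives a pullback square with quotients $L_{n-1}^f\C$ and $L_{n-1}^f\Ind(L_{T(n)}\C)^{\dbl}$. Applying $K^{\cont}_{T(n+1)}$, each quotient is $L_{n-1}^f$-local, so by \cref{thm:purity_dualizable} at height $n+1$ its $K^{\cont}_{T(n+1)}$ vanishes, being the $K^{\cont}_{T(n+1)}$ of an $L_{(n+1)-2}^f$-localization. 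The pullback square then forces the left vertical map to be an equivalence. When $n=0$ the quotient is $L_{-1}^f=0$ and the square is immediate.
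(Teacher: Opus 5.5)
Your proposal is correct. The first equivalence in (1) is handled exactly as in the paper: apply \cref{thm:purity_dualizable} at height $n+1$ and identify the length-two layer of an $L_n^f$-local category with its $T(n)$-layer. The paper writes this as $\fib(L_{n+1}^f\C\to L_{n-1}^f\C)\simeq\fib(\C\to L_{n-1}^f\C)$, while you use \cref{prop:L_FL_E}; the content is the same.

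For the remaining statements the paper takes what you call your first route. It identifies $\Nuc_{T(n+1)\oplus T(n)}(\C)\simeq\iHom^{\dbl}_{L_n^f\Sp}(L_n^f\Sp\otimes L_{T(n+1)\oplus T(n)}\Sp,\C)\simeq\Nuc_{T(n)}(\C)$. It then specializes \cref{thm:nuc_purity}, and the corollary on $\Ind((L_{T(n)\oplus T(n-1)}\C)^{\dbl})$ just before the statement, to height $n+1$, using $L_{n+1}^f\C\simeq\C$.

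Your chosen route reruns the first proof of \cref{thm:nuc_purity} one height down. The completion map $c$ becomes an equivalence after $L_{T(n)}\Sp\otimes-$ by \cref{cor:completion}(2). The inclusion $M_n^f(-)\hookrightarrow(-)$ is natural, and you have just shown it is a $K^{\cont}_{T(n+1)}$-equivalence on every $L_n^f$-local dualizable category, including $\Nuc_{T(n)}(\C)$; two-out-of-three then finishes. For (2) you inline the rigid-envelope pullback square, with quotients $L_{n-1}^f\C$ and $L_{n-1}^f\Ind((L_{T(n)}\C)^{\dbl})$.

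What each route buys: yours avoids checking that the internal-Hom identification is compatible with the two completion maps, but it reproves rather than cites. The paper's route additionally yields the categorical identification $\Nuc_{T(n+1)\oplus T(n)}(\C)\simeq\Nuc_{T(n)}(\C)$. That identification is what makes (2) a literal special case of the earlier corollary.

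One citation point. The vanishing $K^{\cont}_{T(n+1)}(\D)\simeq0$ for $L_{n-1}^f$-local dualizable $\D$ is not part of \cref{thm:purity_dualizable} as stated. It does follow from it: apply $K^{\cont}_{T(n+1)}$ to the short exact sequence $M_{n+1}^{2,f}\D\hookrightarrow L_{n+1}^f\D\to L_{n-1}^f\D$ in $\Pr^{\dbl}_{\st}$, which is \cref{prop:M_n^mf} tensored with $\D$.
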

\begin{proof}
	Let $\C\in\Pr^{\dbl}_{L_n^f}$. Since $\C$ is $L_n^f$-local, $L_{T(n+1)\oplus T(n)}\C\simeq\fib(L_{n+1}^f\C\to L_{n-1}^f\C)\simeq\fib(\C\to L_{n-1}^f\C)\simeq L_{T(n)}\C$. Moreover, we have $\Nuc_{T(n+1)\oplus T(n)}(\C)\simeq\iHom^{\dbl}_{L_{n+1}^f\Sp}(L_{T(n+1)\oplus T(n)}\Sp,\C)\simeq\iHom^{\dbl}_{L_n^f\Sp}(L_n^f\Sp\otimes L_{T(n+1)\oplus T(n)}\Sp,\C)\simeq\iHom^{\dbl}_{L_n^f\Sp}(L_{T(n)}\Sp,\C)\simeq\Nuc_{T(n)}(\C)$. Therefore, the results follow.
\end{proof}
In particular, for $R\in\CAlg(L_n^f\Sp)$ (e.g. $R\in\CAlg(L_{T(n)}\Sp)$), we obtain equivalences
\begin{align*}
	K^{\cont}_{T(n+1)}(R)&\simeq K^{\cont}_{T(n+1)}(L_{T(n)}R)\simeq K^{\cont}_{T(n+1)}(L_{T(n)}\Mod(R))\\
	&\simeq K^{\cont}_{T(n+1)}(\Ind(L_{T(n)}\Mod(R)^{\dbl}))\simeq K^{\cont}_{T(n+1)}(\Nuc(L_{T(n)}R)).
\end{align*}
The equivalence $K^{\cont}_{T(n+1)}(R)\simeq K^{\cont}_{T(n+1)}(\Ind(L_{T(n)}\Mod(R)^{\dbl}))$ recovers \cite[Proposition 4.15]{CMNN24}.

\subsection{Descent and nuclear descent}\label{sec:3.2}

We generalize the chromatic descent theorem to continuous $K$-theory of dualizable homotopy fixed points. We then study norm maps of dualizable stable categories and establish a descent theorem for nuclear module categories. As an application of the purity and descent theorems, we reprove the $T(n)$-local Galois descent theorem for finite $p$-groups and then prove its nuclear analogue for arbitrary finite groups.

\begin{theorem}[Descent]\label{thm:descent_dualizable}
	Let $n\geq0$ and $\C\in(\Pr^{\dbl}_{L_n^f})^{BG}$ with $G$ a finite $p$-group. Then the canonical maps induce equivalences
	\[
		K_{T(n+1)}^{\cont}(\C^{hG,\dbl})\xrightarrow{\simeq}K_{T(n+1)}^{\cont}(\C)^{hG},\quad K_{T(n+1)}^{\cont}(\C)_{hG}\xrightarrow{\simeq}K_{T(n+1)}^{\cont}(\C_{hG}).
	\]
\end{theorem}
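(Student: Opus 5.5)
We follow the strategy of \cref{thm:purity_dualizable}: reduce the dualizable statement to the compactly generated descent theorem of \cite{CMNN24} recalled in the Introduction, using that both sides are localizing invariants in $\C$ that agree on compactly generated inputs. Let $\Pr^{\dbl,BG}_{L_n^f}:=\Fun(BG,\Pr^{\dbl}_{L_n^f})$. A short exact sequence in it, or a filtered colimit, is one that is so pointwise. We consider the four functors $\Pr^{\dbl,BG}_{L_n^f}\to L_{T(n+1)}\Sp$
\[
	\C\mapsto K^{\cont}_{T(n+1)}(\C^{hG,\dbl}),\quad \C\mapsto K^{\cont}_{T(n+1)}(\C)^{hG},\quad \C\mapsto K^{\cont}_{T(n+1)}(\C)_{hG},\quad \C\mapsto K^{\cont}_{T(n+1)}(\C_{hG}),
\]
together with the natural comparison maps between them.

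\textbf{Step 1: all four functors preserve short exact sequences and filtered colimits.}
\begin{enumerate}
	\item For $(-)_{hG}$ and $(-)^{hG,\dbl}$ with $G$ finite, exactness is the input from \cite{Efi25rigidity} cited in the Introduction. $(-)_{hG}$ commutes with filtered colimits because it is itself a colimit.
	\item For $(-)^{hG,\dbl}$, commutation with filtered colimits holds because $BG$ is a finite category and, for finite $G$, $\lim^{\dbl}_{BG}$ is a finite limit in $\Pr^{\dbl}_{\st}$. Finite limits commute with filtered colimits there, which can be checked on compact objects of Calkin presentations. This is the point requiring the most care, and I would cite Efimov's results on dualizable limits directly.
	\item $(-)^{hG}$ and $(-)_{hG}$ applied to spectra are exact. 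In the stable category $L_{T(n+1)}\Sp$, $(-)^{hG}$ is a finite limit for $G$ finite up to the usual $BG$-skeleton issues. Better: ambidexterity in $L_{T(n+1)}\Sp$ identifies $(-)^{hG}\simeq(-)_{hG}$, so both commute with filtered colimits.
\end{enumerate}
Hence all four are localizing invariants on $\Pr^{\dbl,BG}_{L_n^f}$.

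\textbf{Step 2: pass from dualizable to compactly generated inputs.} We need an equivariant version of Efimov's extension theorem: a natural transformation of localizing invariants on $\Pr^{\dbl,BG}_{\st}$ is an equivalence once it is one on $\Pr^{\cg,BG}_{\st}$. The argument is that the continuous Calkin construction is functorial, so a $G$-action on $\C$ induces one on the short exact sequence
\[
	\C\hookrightarrow\Ind(\C^{\omega_1})\to\Ind(\Calk^{\cont}_{\omega_1}(\C)).
\]
The last two terms are compactly generated with $G$ acting by compact-preserving functors, and they remain $L_n^f$-local after applying $L_n^f\otimes-$, since $L_n^f$ is a finite smashing localization. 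Applying any localizing invariant to this sequence and comparing fiber sequences reduces the claim to the compactly generated case.

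\textbf{Step 3: compare with \cite{CMNN24}.} For compactly generated $\C$ with $G$ acting through $\Pr^{\cg}_{\st}$:
\begin{enumerate}
	\item $\C_{hG}$ is compactly generated and equals the $\PrL_{\st}$-colimit.
	\item $\C^{hG,\dbl}$ should be identified with $\C^{hG,\cg}=\Ind((\C^{\omega})^{hG})$. This is the dualizable limit of a diagram of compactly generated categories with compact-preserving transitions; I would verify it via Efimov's description of dualizable limits.
\end{enumerate}
With these identifications, the maps in the statement become exactly those of the Descent theorem of \cite[Theorem C, Proposition 4.1]{CMNN24}, which are equivalences.

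The main obstacle is the equivariant reduction of Step 2 together with the identification $\C^{hG,\dbl}\simeq\C^{hG,\cg}$ in Step 3. The fallback is to use \cite{Efi25rigidity}'s description of $\lim^{\dbl}$ for finite diagrams to check that it is an internal-left-adjoint-compatible completion of $\Ind((\C^\omega)^{hG})$.
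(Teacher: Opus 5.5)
Your route is the paper's. You take the $G$-equivariant Calkin sequence $\C\hookrightarrow\Ind(\C^{\omega_1})\to\Ind(\Calk^{\cont}_{\omega_1}(\C))$, use that $(-)_{hG}$ and $(-)^{hG,\dbl}$ preserve short exact sequences (\cref{lem:hGdbl}), and apply the compactly generated descent theorem of \cite{CMNN24} to the two compactly generated terms. The homotopy-orbit half goes through as you wrote it.

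The fixed-point half has a gap exactly at the step you flag as the main obstacle. The theorem of \cite{CMNN24} is about $\Ind((\D^{\omega})^{hG})$, so you must show $\D^{hG,\dbl}\simeq\D^{hG,\cg}$ for $\D=\Ind(\C^{\omega_1})$ and $\D=\Ind(\Calk^{\cont}_{\omega_1}(\C))$. Neither justification you sketch works.
\begin{enumerate}
	\item Compact-preserving transition functors do not make a dualizable limit of compactly generated categories agree with $\Ind$ of the limit of compact objects. Already for towers, $\lim^{\dbl}_r\Mod(R/I^r)\simeq\Nuc(R^{\wedge}_I)$ is in general not $\Ind(\lim_r\Perf(R/I^r))$.
	\item $BG$ is not a finite diagram, so no finite-limit argument is available either.
\end{enumerate}

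The missing input is \cref{lem:limdbl}, and it uses the exactness of $\lim^{\dbl}_{BG}$ essentially.
\begin{enumerate}
	\item For the middle term, $\Ind((-)^{\omega_1})$ is right adjoint to the inclusion $\Pr^{\dbl}_{\st}\to\PrL_{\st,\omega_1}$, so it preserves limits. Hence the dualizable and compactly generated fixed points of $\Ind(\C^{\omega_1})$ agree.
	\item For the Calkin term, one only has a fully faithful internal left adjoint $\Ind(\Calk^{\cont}_{\omega_1}(\C))^{hG,\cg}\hookrightarrow\Ind(\Calk^{\cont}_{\omega_1}(\C))^{hG,\dbl}$, and the functor out of $\Ind(\C^{\omega_1})^{hG,\dbl}$ factors through it. Because $(-)^{hG,\dbl}$ preserves short exact sequences, that functor is a localization, hence essentially surjective. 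So the embedding is an equivalence.
\end{enumerate}
This yields the short exact sequence $\C^{hG,\dbl}\hookrightarrow\Ind(\C^{\omega_1})^{hG,\cg}\to\Ind(\Calk^{\cont}_{\omega_1}(\C))^{hG,\cg}$, to which \cite{CMNN24} applies termwise. It is exactness of the dualizable limit, not any finiteness of $BG$ or compact-preservation, that makes the identification work.

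Separately, drop the filtered-colimit claims in your Step~1. The Calkin reduction uses only that the four functors send short exact sequences to fiber sequences. The reason you give is also false: $BG$ is not a finite category, and $(-)^{hG}$ does not commute with filtered colimits in $\Sp$.
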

\begin{lemma}[{cf. \cite[Theorem 1.91]{Efi25localizing}}]\label{lem:limdbl}
	Let $\C_i:I\to\Pr^{\dbl}_{\st}$ be a functor. Then we have
	\begin{equation}\label{eq:limit_ses}
		\lim^{\dbl}_I\C_i\simeq\fib^{\dbl}\left(\lim^{\cg}_I\Ind(\C_i^{\omega_1})\to\lim^{\cg}_I\Ind(\Calk^{\cont}_{\omega_1}(\C_i))\right).
	\end{equation}
	In particular, if $\lim^{\dbl}_I$ preserves short exact sequences, then \eqref{eq:limit_ses} is a short exact sequence in $\Pr^{\dbl}_{\st}$.
\end{lemma}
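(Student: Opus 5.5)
The plan is to use the canonical Calkin presentation of each $\C_i$ and the fact that compactly generated limits are well behaved. For each $i$ we have the short exact sequence in $\Pr^{\dbl}_{\st}$
\[
	\C_i\xrightarrow{\widehat{j}}\Ind(\C_i^{\omega_1})\to\Ind(\Calk^{\cont}_{\omega_1}(\C_i)),
\]
and it is natural in $i$ for internal left adjoints. Indeed, $\widehat{j}$ commutes with internal left adjoints because the right adjoint $F^R$ preserves $\omega_1$-filtered colimits, so $F$ preserves $\omega_1$-compacts. The middle and right terms lie in $\Pr^{\cg}_{\st}$, and the transition functors between them are compact-preserving. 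I would therefore first take limits in $\Pr^{\cg}_{\st}$, which are computed as $\Ind$ of the limit of the small categories of compacts. This gives an internal left adjoint
\[
	\lim^{\cg}_I\Ind(\C_i^{\omega_1})\to\lim^{\cg}_I\Ind(\Calk^{\cont}_{\omega_1}(\C_i)),
\]
and I would define $\fib^{\dbl}$ of it as the fiber computed in $\Pr^{\dbl}_{\st}$.

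The key step is to check the universal property. Let $\D$ be dualizable and consider a compatible family of internal left adjoints $\D\to\C_i$. Composing with $\widehat{j}$ gives a compatible family $\D\to\Ind(\C_i^{\omega_1})$ whose composite to the Calkin terms is zero. I then need this family to factor uniquely through $\lim^{\cg}$. The clean route is to use $\Hom_{\Pr^{\dbl}}(\D,\Ind(\A))\simeq\Fun^{\mathrm{ex}}(\ldots)$, i.e. Efimov's description of maps from a dualizable category into a compactly generated one as exact functors from $\A$ into $\Calk$-type data, or equivalently via $\D^{\vee}\otimes\Ind(\A)$. Since $\lim^{\cg}_I\Ind(\A_i)\simeq\Ind(\lim\A_i)$ and $\D^\vee\otimes-$ commutes with limits in $\PrL$, the universal property for maps into the compactly generated limit follows. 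I then need to compare maps into the fiber with maps into each $\C_i$. Here I use that $\Hom^{\dbl}(\D,-)$ sends the short exact sequence $\C_i\to\Ind(\C_i^{\omega_1})\to\Ind(\Calk)$ to a fiber sequence of anima: full faithfulness gives the fiber, and the maps landing in $\widehat{j}(\C_i)$ are exactly those killed by the quotient. Limits commute with fibers, which yields \eqref{eq:limit_ses}. This matching of mapping spaces is the main obstacle, in particular showing that internal left adjoints into $\Ind(\C_i^{\omega_1})$ with zero image in the Calkin category factor through $\widehat{j}$ as internal left adjoints. I would handle it by citing Efimov's \cite[Theorem 1.91]{Efi25localizing} argument, adapted verbatim.

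For the final assertion, suppose $\lim^{\dbl}_I$ preserves short exact sequences. Since $\Ind(\C_i^{\omega_1})$ and $\Ind(\Calk^{\cont}_{\omega_1}(\C_i))$ are dualizable, the relevant question is whether $\lim^{\dbl}$ agrees with $\lim^{\cg}$ on the two right-hand terms. This holds, since by the previous step the $\lim^{\dbl}$ of a diagram of compactly generated categories with compact-preserving functors is the $\lim^{\cg}$. Applying the exact functor $\lim^{\dbl}_I$ to the termwise sequences then gives a short exact sequence whose first term is $\lim^{\dbl}_I\C_i$, matching \eqref{eq:limit_ses}.
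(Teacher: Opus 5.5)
There is a genuine gap. You use one claim twice, in the universal-property paragraph and explicitly in the final paragraph: that $\lim^{\dbl}$ of a diagram of compactly generated categories with compact-preserving transition functors agrees with $\lim^{\cg}$. This is false in general. The inclusion $\Pr^{\cg}_{\st}\hookrightarrow\Pr^{\dbl}_{\st}$ is a \emph{left} adjoint, with right adjoint $\Ind((-)^{\omega})$, so it need not preserve limits. What is true is only that $\lim^{\cg}_I\Ind(\A_i)\simeq\Ind\bigl((\lim^{\dbl}_I\Ind(\A_i))^{\omega}\bigr)$ is the compactly generated core of the dualizable limit, sitting inside it via a fully faithful internal left adjoint. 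For example, $\lim^{\cg}_n\Mod(\bbZ/p^n)$ is compactly generated, whereas $\lim^{\dbl}_n\Mod(\bbZ/p^n)\simeq\Nuc(\bbZ^{\wedge}_p)$ by Efimov (cf.\ \cref{exm:Nuc(R_I)}), which is not. Your justification via $\D^{\vee}\otimes-$ does not repair this. It only computes colimit-preserving functors into the $\PrL$-limit, which is neither $\Ind(\lim_I\A_i)$ nor sensitive to the internal-left-adjoint condition. By contrast, the point you single out as the main obstacle is routine: an internal left adjoint into $\Ind(\C_i^{\omega_1})$ killed by the Calkin quotient factors through $\widehat{j}$ as an internal left adjoint, because $\widehat{j}$ is fully faithful with colimit-preserving right adjoint.

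What is actually true is asymmetric between the two terms. For the middle term the equality does hold, but for a specific reason. The diagram $i\mapsto\Ind(\C_i^{\omega_1})$ is the image of $i\mapsto\C_i$ under the right adjoint $\Ind((-)^{\omega_1})\colon\PrL_{\st,\omega_1}\to\Pr^{\dbl}_{\st}$ of the forgetful functor. Hence its dualizable limit is $\Ind\bigl((\lim_I\C_i)^{\omega_1}\bigr)$, with the limit taken in $\PrL_{\st,\omega_1}$; this is compactly generated and so equals $\lim^{\cg}$. For the Calkin term you only get a fully faithful internal left adjoint
\[
\lim^{\cg}_I\Ind(\Calk^{\cont}_{\omega_1}(\C_i))\hookrightarrow\lim^{\dbl}_I\Ind(\Calk^{\cont}_{\omega_1}(\C_i)).
\]
The missing idea is that the dualizable fiber does not change when the target is enlarged along such an embedding. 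The functor $\Hom_{\Pr^{\dbl}_{\st}}(\D,-)$ turns the embedding into an inclusion of path components containing the zero functor, so the fibers over zero agree. Equivalently, use the intrinsic description of $\fib^{\dbl}$ in \cite[Proposition 1.84]{Efi25localizing}. With this fix, your fiber-of-mapping-anima computation goes through and is essentially the paper's argument.

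For the final assertion you must also avoid the false claim. If $\lim^{\dbl}_I$ preserves short exact sequences, then $\lim^{\dbl}_I\Ind(\C_i^{\omega_1})\to\lim^{\dbl}_I\Ind(\Calk^{\cont}_{\omega_1}(\C_i))$ is a localization, hence essentially surjective. It factors through the fully faithful embedding above, so that embedding is an equivalence. Agreement of $\lim^{\cg}$ and $\lim^{\dbl}$ on the Calkin term is thus a consequence of the hypothesis, not a general fact.
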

\begin{proof}
	By \cite[Proposition 1.89]{Efi25localizing} (or more generally, \cite[Theorem 1.63]{Ram24dualizable}), we have adjunctions:
	\[\begin{tikzcd}
		{\Pr^{\cg}_{\st}} & {\Pr^{\dbl}_{\st}} & {\PrL_{\st,\omega_1}}
		\arrow[shift left, hook, from=1-1, to=1-2]
		\arrow["{\Ind((-)^\omega)}", shift left, from=1-2, to=1-1]
		\arrow[shift left, from=1-2, to=1-3]
		\arrow["{\Ind((-)^{\omega_1})}", shift left, from=1-3, to=1-2]
	\end{tikzcd}\]
	For an arbitrary functor $\D_i:I\to\PrL_{\st,\omega_1}$, we have $\lim^{\dbl}_I\Ind(\D_i^{\omega_1})\simeq\lim^{\cg}_I\Ind(\D_i^{\omega_1})$. For an arbitrary functor $\D_i:I\to\Pr^{\dbl}_{\st}$, we have an internal left adjoint fully faithful functor $\lim^{\cg}_I\Ind(\D_i^{\omega})\hookrightarrow\lim^{\dbl}_I\D_i$.

	Now we write the limit as
	\[
		\lim^{\dbl}_I\C_i\simeq\fib^{\dbl}\left(\lim^{\dbl}_I\Ind(\C_i^{\omega_1})\to\lim^{\dbl}_I\Ind(\Calk^{\cont}_{\omega_1}(\C_i))\right).
	\]
	By adjunction, the functor factors as
	\[\begin{tikzcd}
		{\lim^{\dbl}_I\C_i} & {\lim^{\dbl}_I\Ind(\C_i^{\omega_1})} & {\lim^{\dbl}_I\Ind(\Calk^{\cont}_{\omega_1}(\C_i))} \\
		{\lim^{\dbl}_I\C_i} & {\lim^{\cg}_I\Ind(\C_i^{\omega_1})} & {\lim^{\cg}_I\Ind(\Calk^{\cont}_{\omega_1}(\C_i))}
		\arrow[from=1-1, to=1-2]
		\arrow[from=1-2, to=1-3]
		\arrow[equals, from=2-1, to=1-1]
		\arrow[from=2-1, to=2-2]
		\arrow["\simeq"', from=2-2, to=1-2]
		\arrow[from=2-2, to=2-3]
		\arrow[hook, from=2-3, to=1-3]
	\end{tikzcd}\]
	By \cite[Proposition 1.84]{Efi25localizing}, the dualizable fiber $\fib^{\dbl}(F)$ of a functor $F$ in $\Pr^{\dbl}_{\st}$ is the largest dualizable full subcategory of the fiber $\fib(F)$ in $\PrL_{\st}$ such that $\fib^{\dbl}(F)\to\C$ is an internal left adjoint. Then the bottom row is also a fiber sequence in $\Pr^{\dbl}_{\st}$ and \eqref{eq:limit_ses} follows.
	
	In particular, if $\lim^{\dbl}_I$ preserves short exact sequences, then the top row is a short exact sequence, where the functor $\lim^{\dbl}_I\Ind(\C_i^{\omega_1})\to\lim^{\dbl}_I\Ind(\Calk^{\cont}_{\omega_1}(\C_i))$ is a localization and hence surjective. Therefore, the fully faithful functor $\lim^{\cg}_I\Ind(\Calk^{\cont}_{\omega_1}(\C_i))\hookrightarrow\lim^{\dbl}_I\Ind(\Calk^{\cont}_{\omega_1}(\C_i))$ is surjective and thus is an equivalence.
\end{proof}

\begin{definition}
	Let $G\in\Grp(\An)$. We define the \emph{homotopy orbits} and \emph{homotopy fixed points} as the dualizable colimit and dualizable limit
	\[
		(-)_{hG}:=\colim_{BG}:(\Pr^{\dbl}_{\st})^{BG}\to\Pr^{\dbl}_{\st},\quad(-)^{hG,\dbl}:=\lim^{\dbl}_{BG}:(\Pr^{\dbl}_{\st})^{BG}\to\Pr^{\dbl}_{\st}.
	\]
\end{definition}
By \cite[Proposition 4.4]{CSY24cyclotomic}, we have an equivalence $(\Pr^{\dbl}_{\st})^{BG}\simeq\Mod_{\Sp[G]}(\Pr^{\dbl}_{\st})$ where $\Sp[G]\simeq\Sp^G$ is equipped with the Day convolution $\bbE_1$-algebra structure. This can be upgraded to a symmetric monoidal equivalence by \cite[Example 2.2.9]{Rak26derham}, where the symmetric monoidal structure of $(\Pr^{\dbl}_\st)^{BG}$ is pointwise and that of $\Mod_{\Sp[G]}(\Pr^{\dbl}_{\st})$ comes from the cocommutative $\Sp$-bialgebra structure on $\Sp[G]$.

By \cite[Theorem 6.1]{Efi25rigidity}, $e_!:\Sp\to\Sp[G]$ is rigid in the $\bbE_1$-sense. We rewrite $(-)_{hG}\simeq\Sp\otimes_{\Sp[G]}-$ and $(-)^{hG,\dbl}\simeq\iHom^{\dbl}_{\Sp[G]}(\Sp,-)$, where $\iHom^{\dbl}_{\Sp[G]}(\Sp,-)$ denotes the internal Hom relative to $\Pr^{\dbl}_{\st}$. Then $(-)_{hG}$ preserves short exact sequences. If $G$ has compact underlying anima, then by \cite[Theorem 6.2]{Efi25rigidity}, $\Sp$ is proper and $\omega_1$-compact over $\Sp[G]$ in the $\bbE_1$-sense with evaluation $\ev_{\Sp/\Sp[G]}=\operatorname{const}:\Sp\otimes\Sp\simeq\Sp\to\Sp[G]$ and coevaluation $\coev_{\Sp/\Sp[G]}:\Sp\to\Sp\otimes_{\Sp[G]}\Sp\simeq\Sp_{hG}\simeq\Sp^{BG}$. It follows from the $\bbE_1$-analog of \cref{thm:Homdbl_proper} that $(-)^{hG,\dbl}$ also preserves short exact sequences.
\begin{lemma}[{\cite[Theorem 6.2]{Efi25rigidity}}]\label{lem:hGdbl}
	Let $G\in\Grp(\An)$. Then the homotopy orbits functor $(-)_{hG}:(\Pr^{\dbl}_{\st})^{BG}\to\Pr^{\dbl}_{\st}$ preserves short exact sequences. Suppose moreover that $G$ is a compact group anima, i.e. $G\in\An^{\omega}$. Then the homotopy fixed points functor $(-)^{hG,\dbl}:(\Pr^{\dbl}_{\st})^{BG}\to\Pr^{\dbl}_{\st}$ preserves short exact sequences.
\end{lemma}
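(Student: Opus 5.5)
The plan is to realize $(-)_{hG}$ and $(-)^{hG,\dbl}$ as (relative) tensor and internal Hom over $\Sp[G]$ in $\Pr^{\dbl}_{\st}$, and deduce exactness from Efimov's rigidity and properness results, as in the discussion preceding the statement. The first step is the identification $(\Pr^{\dbl}_{\st})^{BG}\simeq\Mod_{\Sp[G]}(\Pr^{\dbl}_{\st})$ of \cite[Proposition 4.4]{CSY24cyclotomic}, where $\Sp[G]$ carries its Day convolution $\bbE_1$-structure. Under this equivalence the colimit over $BG$ is $\Sp\otimes_{\Sp[G]}-$, with $\Sp$ regarded as an $\Sp[G]$-module via the augmentation. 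The limit in $\Pr^{\dbl}_{\st}$ over $BG$ is then the right adjoint to the restriction $\Pr^{\dbl}_{\st}\to\Mod_{\Sp[G]}(\Pr^{\dbl}_{\st})$ along trivial action, namely $\iHom^{\dbl}_{\Sp[G]}(\Sp,-)$.

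For homotopy orbits, colimits in $\Pr^{\dbl}_{\st}$ are computed in $\PrL_{\st}$, and $\colim_{BG}$ commutes with fibers and cofibers of a diagram of short exact sequences. The only subtlety is that the first map stays fully faithful and an internal left adjoint. I would handle this by writing $\Sp\otimes_{\Sp[G]}-$ as the geometric realization of the bar construction $\Sp\otimes\Sp[G]^{\otimes\bullet}\otimes-$. Each term preserves short exact sequences in $\Pr^{\iL}_{\st}$, since $-\otimes\D$ does for dualizable $\D$. I would then use that $e_!:\Sp\to\Sp[G]$ is rigid \cite[Theorem 6.1]{Efi25rigidity}, so that relative tensor products over $\Sp[G]$ preserve internal left adjoints and fully faithful internal left adjoints. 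This is the $\bbE_1$-analogue of the statement that rigid base change restricts to $\Pr^{\iL}$.

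For fixed points, assume $G\in\An^{\omega}$. I would invoke \cite[Theorem 6.2]{Efi25rigidity}, which says $\Sp$ is proper and $\omega_1$-compact over $\Sp[G]$. Here the evaluation is $\mathrm{const}:\Sp\to\Sp^{BG}$, whose right adjoint $\lim_{BG}$ preserves colimits precisely because $BG$ is suitably finite. The coevaluation $\Sp\to\Sp_{hG}$ then sends the unit to an $\omega_1$-compact object. Next I would run the proof of \cref{thm:Homdbl_proper} in the $\bbE_1$ setting, with $\Sp[G]$ rigid over $\Sp$ in place of a rigid commutative base. Properness makes $\iHom^{\dbl}_{\Sp[G]}(\Sp,-)$ agree with the dual-tensor description $\Sp^{\vee}\otimes_{\Sp[G]}-$ composed with an internal right adjoint. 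The $\omega_1$-compactness lets one control dualizable limits through the $\Ind(\C^{\omega_1})$ and Calkin presentation of \cref{lem:limdbl}. From this the dualizable fiber agrees with the naive one.

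The main obstacle is this $\bbE_1$ transcription of \cref{thm:Homdbl_proper}. Efimov's argument uses symmetric monoidality of the base, for example in identifying left and right duals. I would try first to work with left modules throughout, taking the right dual of $\Sp$ over $\Sp[G]$. I would then check that each step only uses the bimodule structure of $\Sp$ together with the rigidity of $e_!$.
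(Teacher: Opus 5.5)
Your route is the paper's: identify $(\Pr^{\dbl}_{\st})^{BG}\simeq\Mod_{\Sp[G]}(\Pr^{\dbl}_{\st})$ and write $(-)_{hG}\simeq\Sp\otimes_{\Sp[G]}-$ and $(-)^{hG,\dbl}\simeq\iHom^{\dbl}_{\Sp[G]}(\Sp,-)$. For orbits you use rigidity of $e_!:\Sp\to\Sp[G]$. For fixed points you use properness and $\omega_1$-compactness of $\Sp$ over $\Sp[G]$, plus an $\bbE_1$-analogue of \cref{thm:Homdbl_proper}; the paper also only asserts that analogue, the lemma itself being \cite[Theorem 6.2]{Efi25rigidity}.

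For orbits, the bar construction is unnecessary, and a geometric realization would not by itself preserve full faithfulness. Rigidity of $e_!$ already makes every $\Sp$-internal left adjoint between $\Sp[G]$-modules $\Sp[G]$-internal. The $2$-functor $\Sp\otimes_{\Sp[G]}-$ then preserves these adjunctions, full faithfulness, and cofibers, which is all you need.

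One step is wrong as written: the identification of the evaluation.
\begin{itemize}
\item The evaluation is not $\const:\Sp\to\Sp^{BG}$. It is $\ev_{\Sp/\Sp[G]}=\const:\Sp\otimes\Sp\simeq\Sp\to\Sp[G]\simeq\Sp^{G}$, whose right adjoint is $\lim_G$. That preserves colimits because $G$ itself is a compact anima.
\item The functor $\lim_{BG}=(-)^{hG}$ does not preserve colimits even for $G=C_2$, and $BC_2$ is not finite. For instance, $\bigoplus_{n\geq0}\Sigma^n\bbF_2\simeq\prod_{n\geq0}\Sigma^n\bbF_2$, so $\pi_0$ of its $hC_2$ is $\prod_n H^n(BC_2;\bbF_2)=\prod_n\bbF_2$. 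By contrast, $\pi_0\bigoplus_n(\Sigma^n\bbF_2)^{hC_2}=\bigoplus_n\bbF_2$.
\item The category $\Sp^{BG}\simeq\Sp\otimes_{\Sp[G]}\Sp$ is the target of the coevaluation. There the only finiteness available is that $BG$ is $\omega_1$-compact, so $(-)^{hG}$ commutes with $\omega_1$-filtered colimits. This is exactly the $\omega_1$-compactness of $\coev(\bbS)$, not properness.
\end{itemize}
With properness and $\omega_1$-compactness attributed to the right functors, your fixed-point argument coincides with the paper's.
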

\begin{proof}[Proof of {\cref{thm:descent_dualizable}}]
	Since $L_n^f\Sp$ is smashing, we have a short exact sequence 
	\[
		\C\hookrightarrow\Ind(\C^{\omega_1})\to\Ind(\Calk^{\cont}_{\omega_1}(\C))
	\]
	in $(\Pr^{\dbl}_{L_n^f})^{BG}$. Combining \cref{lem:limdbl,lem:hGdbl}, we obtain a short exact sequence in $\Pr^{\dbl}_{L_n^f}$
	\[
		\C^{hG,\dbl}\hookrightarrow\Ind(\C^{\omega_1})^{hG,\cg}\to\Ind(\Calk^{\cont}_{\omega_1}(\C))^{hG,\cg}.
	\]
	Applying $K_{T(n+1)}^{\cont}$ and using the compactly generated version of the descent theorem recalled in the Introduction, we obtain
	\begin{align*}
		K_{T(n+1)}^{\cont}(\C^{hG,\dbl})&\simeq\fib\left(K_{T(n+1)}^{\cont}(\Ind(\C^{\omega_1})^{hG,\cg})\to K_{T(n+1)}^{\cont}(\Ind(\Calk^{\cont}_{\omega_1}(\C))^{hG,\cg})\right)\\
		&\simeq\fib\left(K_{T(n+1)}^{\cont}(\Ind(\C^{\omega_1}))^{hG}\to K_{T(n+1)}^{\cont}(\Ind(\Calk^{\cont}_{\omega_1}(\C)))^{hG}\right)\\
		&\simeq K_{T(n+1)}^{\cont}(\C)^{hG}.
	\end{align*}

	Similarly, the second equivalence follows from \cref{lem:hGdbl} and the same compactly generated descent theorem.
\end{proof}
By \cref{thm:tate_vanishing_kuhn}, the norm map of spectra induces an equivalence $K^{\cont}_{T(n+1)}(\C)_{hG}\simeq K^{\cont}_{T(n+1)}(\C)^{hG}$. We show that this norm map is induced by the norm map in $\Pr^{\dbl}_{\st}$.
\begin{construction}[{\cite{Zhoa}}]
	Let $G\in\Grp(\An)$ be a compact group anima and $\C\in(\Pr^{\dbl}_{\st})^{BG}$. We define the \emph{norm map} $\Nm:\C_{hG}\to\C^{hG,\dbl}$ as the fully faithful internal left adjoint
	\[
		\C_{hG}\simeq\Sp\otimes_{\Sp[G]}\C\to\iHom^{\dbl}_{\Sp[G]}(\Sp,\Sp\otimes\Sp\otimes_{\Sp[G]}\C)\xrightarrow{\ev_{\Sp/\Sp[G]}}\iHom^{\dbl}_{\Sp[G]}(\Sp,\C)\simeq\C^{hG,\dbl},
	\]
	which is the left adjoint of the canonical map $\iHom^{\dbl}_{\Sp[G]}(\Sp,\C)\to\Sp\otimes_{\Sp[G]}\C$ by \cite[Proposition 3.4]{Efi25limit}, since $\Sp$ is proper over $\Sp[G]$.
\end{construction}
\begin{lemma}
	Let $G\in\Grp(\An)$ be a compact group anima. The norm map $\Nm:\C_{hG}\hookrightarrow\C^{hG,\dbl}$ coincides with the composition $\C_{hG}\simeq(\Sp[G]_{hG}\otimes\C)_{hG}\hookrightarrow(\Sp[G]^{hG,\dbl}\otimes\C)_{hG}\to\C^{hG}$, where the second map is the usual norm map. If $G$ is a finite group, then $\Sp\simeq\Sp[G]^{hG,\dbl}$ and $\Nm:\C_{hG}\hookrightarrow\C^{hG}$ coincides with the usual norm map.
\end{lemma}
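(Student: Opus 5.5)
We will compare both maps through their right adjoints, working in $\Mod_{\Sp[G]}(\Pr^{\dbl}_{\st})$, where $(-)_{hG}\simeq\Sp\otimes_{\Sp[G]}-$ and $(-)^{hG,\dbl}\simeq\iHom^{\dbl}_{\Sp[G]}(\Sp,-)$. Since $\Nm$ is defined as the left adjoint of the canonical map $c_{\C}:\iHom^{\dbl}_{\Sp[G]}(\Sp,\C)\to\Sp\otimes_{\Sp[G]}\C$ (the right adjoint of the norm, built from $\ev_{\Sp/\Sp[G]}$ by \cite[Proposition 3.4]{Efi25limit}), it suffices to identify the right adjoint of the proposed composite with $c_{\C}$. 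The first step is to treat the universal case $\C=\Sp[G]$ with its regular action. Here $\Sp[G]_{hG}\simeq\Sp$ and $\Sp[G]^{hG,\dbl}\simeq\iHom^{\dbl}_{\Sp[G]}(\Sp,\Sp[G])$. By properness of $\Sp$ over $\Sp[G]$ this is identified with the dual of $\Sp$ over $\Sp[G]$ twisted appropriately, and the norm for $\Sp[G]$ becomes the map $\Sp\to\Sp[G]^{hG,\dbl}$ adjoint to $\ev_{\Sp/\Sp[G]}=\const$.

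Next we use the $\Sp[G]$-bimodule structure. We write $\C\simeq\Sp[G]\otimes_{\Sp[G]}\C$, where $\Sp[G]$ carries the diagonal action. The functors $(-)_{hG}$ and $(-)^{hG,\dbl}$ applied to the first factor commute with $-\otimes_{\Sp[G]}\C$ up to the canonical lax map $\Sp[G]^{hG,\dbl}\otimes\C\to\C^{hG,\dbl}$, which is the "usual norm" second map. Naturality of the construction of $\Nm$ in the $\Sp[G]$-module variable then shows that $\Nm_{\C}$ factors as the base change of $\Nm_{\Sp[G]}$, followed by the comparison map $(\Sp[G]^{hG,\dbl}\otimes\C)_{hG}\to\C^{hG}$. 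Concretely, this is a diagram chase: both composites are internal left adjoints, and their right adjoints are obtained by applying $\iHom^{\dbl}_{\Sp[G]}(\Sp,-)\to\Sp\otimes_{\Sp[G]}-$ to the unit/evaluation. We check that these right adjoints agree using the triangle identities for the duality datum $(\ev_{\Sp/\Sp[G]},\coev_{\Sp/\Sp[G]})$.

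For $G$ finite, $BG$ is a finite groupoid and $\Sp^{BG}$ is rigid compactly generated. We claim that $\Sp[G]^{hG,\dbl}\simeq\Sp$. Indeed, the $\PrL$-limit $\Fun(BG,\Sp[G])$ with the free action is $\Sp$, it is already dualizable, and the limit projection is an internal left adjoint because the ambidexterity in $\Pr^{\dbl}_{\st}$ for finite groups provides the adjoint. Hence $\lim^{\dbl}$ agrees with the $\PrL$-limit here. Under this identification, the first map becomes an equivalence, so $\Nm$ is the ordinary norm map $\C_{hG}\to\C^{hG}$ (and in the finite case $\C^{hG,\dbl}$ is compared with $\C^{hG}$ via the same reasoning).

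The main obstacle is the coherent identification in the second step: verifying that Zhou's norm, defined through $\ev_{\Sp/\Sp[G]}$ and internal Homs in $\Pr^{\dbl}_{\st}$, is natural enough in $\C$ to reduce to the regular representation. I would handle this by observing that all functors involved are $\Sp[G]$-linear in $\C$ and are determined by their values on $\Sp[G]$, using the equivalence $(\Pr^{\dbl}_{\st})^{BG}\simeq\Mod_{\Sp[G]}(\Pr^{\dbl}_{\st})$.
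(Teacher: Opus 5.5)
The two parts of your proposal have different status. The reduction in the general case is sound in spirit but under-argued. The finite case contains a genuine gap, which I take first.

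\textbf{The finite case.} You deduce $\Sp[G]^{hG,\dbl}\simeq\lim^{\PrL}_{BG}\Sp[G]\simeq\Sp$ from two facts: the $\PrL$-limit is dualizable, and the projection $\const:\Sp\to\Sp[G]$ is an internal left adjoint. That is not enough. The $\PrL$-limit is the limit in $\Pr^{\dbl}_{\st}$ only if, in addition, every compatible family of internal left adjoints $\D\to\Sp[G]$ induces an \emph{internal} left adjoint $\D\to\Sp$. Equivalently, the projection must also detect compact maps.

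The trivial action shows your criterion fails. For nontrivial finite $G$, $\lim^{\PrL}_{BG}\Sp\simeq\Sp^{BG}$ is compactly generated and $e^*:\Sp^{BG}\to\Sp$ is an internal left adjoint. Yet $\Sp^{hG,\dbl}\simeq(\Sp^{BG})^{\rig}$ is not $\Sp^{BG}$, because the unit of $\Sp^{BG}$ is not compact: $(-)^{hG}$ does not preserve filtered colimits, and otherwise $\bbS^{tG}$ would vanish. Concretely, $\id_{\Sp}$ with trivial equivariance induces $\const:\Sp\to\Sp^{BG}$, which is not an internal left adjoint.

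For the same reason, several of your side claims are false:
\begin{enumerate}
\item $\Sp^{BG}$ is not rigid.
\item $\Pr^{\dbl}_{\st}$ is not ambidextrous for finite groups; the nonvanishing of $\C^{tG,\dbl}$ is exactly the failure of ambidexterity. The projection has a colimit-preserving right adjoint simply because $\const^R\simeq\prod_G$ and $G$ is finite.
\item Your parenthetical comparison of $\C^{hG,\dbl}$ with the $\PrL$-fixed points is wrong in general. It is also unnecessary: in the paper's proof the usual norm lands in $\iHom^{\dbl}_{\Sp[G]}(\Sp,\C)\simeq\C^{hG,\dbl}$, which is what $\C^{hG}$ means in the statement.
\end{enumerate}

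The missing step is the one the paper supplies. Since $\lim^{\dbl}_I\C_i\simeq\Ind_{\I_{\mathrm{sd}}}(\lim^{\PrL}_I\C_i)$ by \cite[Corollary 2.7.11]{KNP24}, it suffices that $\const$ preserves and detects compact maps. Preservation holds because $\const$ and $\const^L\simeq\colim_G$ are internal left adjoints. Detection holds because, $G$ being finite and $\Sp$ pointed, $f$ is a retract of $\const^L\const f\simeq f[G]$. For the trivial action this retract argument is unavailable, since $\bbS$ is not a retract of $\bbS[G]$ in $\Sp^{BG}$.

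\textbf{The general identification.} Reducing to the universal case $\Sp[G]$ and using naturality is the right idea, but your mechanism is not yet an argument. Comparing right adjoints would require computing the right adjoint of the usual norm $(\Sp[G]^{hG,\dbl}\otimes\C)_{hG}\to\C^{hG,\dbl}$, which you never do. Your claim that all the functors are $\Sp[G]$-linear and determined by their values on $\Sp[G]$ is false for $(-)^{hG,\dbl}$, which is only lax linear. What is true is that a transformation from the linear functor $\Sp\otimes_{\Sp[G]}-$ to a lax-linear functor is determined by its component at $\Sp[G]$. Using this, however, presupposes that $\Nm$ is compatible with the lax structure, and that compatibility is the real content.

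The paper verifies it directly on left adjoints, with a commutative diagram:
\begin{enumerate}
\item the top row is Zhou's norm, i.e.\ the unit $\Sp\otimes_{\Sp[G]}\C\to\iHom^{\dbl}_{\Sp[G]}(\Sp,\Sp\otimes\Sp\otimes_{\Sp[G]}\C)$ followed by $\ev_{\Sp/\Sp[G]}$;
\item the bottom row is the universal composite $\Sp\to\iHom^{\dbl}_{\Sp[G]}(\Sp,\Sp\otimes\Sp)\to\iHom^{\dbl}_{\Sp[G]}(\Sp,\Sp[G])$ tensored with $B:=\Sp\otimes_{\Sp[G]}\C$;
\item the vertical maps are the assembly maps $\iHom^{\dbl}_{\Sp[G]}(\Sp,A)\otimes B\to\iHom^{\dbl}_{\Sp[G]}(\Sp,A\otimes B)$.
\end{enumerate}
Commutativity is naturality of assembly. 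The bottom row is $\C_{hG}\to(\Sp[G]^{hG,\dbl}\otimes\C)_{hG}$, and the right vertical map unwinds to the usual norm. No right adjoints or triangle identities are needed.
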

\begin{proof}
	We have the following commutative diagram
	\[\begin{tikzcd}
		{\Sp\otimes_{\Sp[G]}\C} & {\iHom^{\dbl}_{\Sp[G]}(\Sp,\Sp\otimes_{\Sp[G]}\C)} & {\iHom^{\dbl}_{\Sp[G]}(\Sp,\Sp[G]\otimes_{\Sp[G]}\C)} \\
		{\Sp\otimes\Sp\otimes_{\Sp[G]}\C} & {\iHom^{\dbl}_{\Sp[G]}(\Sp,\Sp)\otimes\Sp\otimes_{\Sp[G]}\C} & {\iHom^{\dbl}_{\Sp[G]}(\Sp,\Sp[G])\otimes\Sp\otimes_{\Sp[G]}\C}
		\arrow[from=1-1, to=1-2]
		\arrow["{\ev_{\Sp/\Sp[G]}}", from=1-2, to=1-3]
		\arrow["\simeq", from=2-1, to=1-1]
		\arrow[from=2-1, to=2-2]
		\arrow[from=2-2, to=1-2]
		\arrow["{\ev_{\Sp/\Sp[G]}}"', from=2-2, to=2-3]
		\arrow[from=2-3, to=1-3]
	\end{tikzcd}\]
	The composition of the bottom functors is identified with $\C_{hG}\to(\Sp[G]^{hG,\dbl}\otimes\C)_{hG}$. The right vertical functor is identified with the composition
	\begin{align*}
		\Sp\otimes_{\Sp[G]}(\iHom^{\dbl}_{\Sp[G]}(\Sp,\Sp[G])\otimes\C)&\to\Sp\otimes_{\Sp[G]}\iHom^{\dbl}_{\Sp[G]}(\Sp,\Sp[G]\otimes\C)\\
		&\to\iHom^{\dbl}_{\Sp[G]}(\Sp,\Sp\otimes_{\Sp[G]}(\Sp[G]\otimes\C))\\
		&\simeq\iHom^{\dbl}_{\Sp[G]}(\Sp,\C),
	\end{align*}
	which is the usual norm map.

	Now let $G$ be a finite group. To show that $\Sp[G]^{hG,\dbl}\to\lim^{\PrL_{\st}}_{BG}\Sp[G]\simeq\Sp$ is an equivalence, by \cite[Corollary 2.7.11, Lemma 2.7.8]{KNP24}, it suffices to show that $\const:\Sp\to\Sp[G]$ preserves and detects compact maps. Since $\const$ and $\const^L\simeq\colim_G$ are $\Sp$-internal left adjoints, they preserve compact maps. It remains to show that a map $f:X\to Y\in\Sp$ is compact if $\const^L\circ\const f\simeq f[G]:X[G]\to Y[G]$ is compact. Since $\Sp$ is pointed and $G$ is finite, $f$ is a retract of $f[G]$. Therefore, $\const:\Sp\to\Sp[G]$ preserves and detects compact maps.
\end{proof}
\begin{remark}\label{rmk:lim^dbl}
	More generally, let $\C_i:I\to\Pr^{\dbl}_{\st}$ be a functor. By \cite[Corollary 2.7.11]{KNP24}, the dualizable limit can be computed by 
	\[
		\lim^{\dbl}_I\C_i\simeq\Ind_{\I_{\mathrm{sd}}}(\lim_I^{\PrL}\C_i),
	\]
	where $\I$ denotes the idealoid of maps whose image is compact in each $\C_i$.
\end{remark}
\begin{corollary}
	Let $n\geq0$ and $\C\in(\Pr^{\dbl}_{L_n^f})^{BG}$ with $G$ a finite $p$-group. Then the norm map induces an equivalence
	\[
		K_{T(n+1)}^{\cont}(\C_{hG})\xrightarrow{\simeq}K_{T(n+1)}^{\cont}(\C^{hG,\dbl}).
	\]
\end{corollary}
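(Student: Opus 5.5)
The plan is to combine the two equivalences of \cref{thm:descent_dualizable} with Kuhn-type Tate vanishing for $T(n+1)$-local spectra. The goal is to check that the triangle
\[
	K^{\cont}_{T(n+1)}(\C)_{hG}\xrightarrow{\simeq}K^{\cont}_{T(n+1)}(\C_{hG})\xrightarrow{K(\Nm)}K^{\cont}_{T(n+1)}(\C^{hG,\dbl})\xrightarrow{\simeq}K^{\cont}_{T(n+1)}(\C)^{hG}
\]
composes to the spectral norm map of the $G$-spectrum $K^{\cont}_{T(n+1)}(\C)$, computed in $L_{T(n+1)}\Sp$. The outer two maps are equivalences by \cref{thm:descent_dualizable}. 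The spectral norm is an equivalence because $T(n+1)$-local Tate constructions of finite groups vanish (\cref{thm:tate_vanishing_kuhn}). Two-out-of-three then gives that $K(\Nm)$ is an equivalence.

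First I will reduce the identification of the composite to a statement about $K^{\cont}$ applied to categorical norm maps, using the preceding lemma. That lemma says that for $G$ finite, $\Nm:\C_{hG}\to\C^{hG,\dbl}$ is the usual norm map, built from the maps $\C\to\C_{hG}$ and $\C^{hG,\dbl}\to\C$ together with the averaging over $G$. Functoriality of $K^{\cont}$ then transports each ingredient.
\begin{itemize}
\item The assembly map $K(\C)_{hG}\to K(\C_{hG})$ is induced by the insertions $\C\to\C_{hG}$.
\item The coassembly map $K(\C^{hG,\dbl})\to K(\C)^{hG}$ is induced by the projection $\C^{hG,\dbl}\to\C$.
\end{itemize}
The composite $\C\to\C_{hG}\xrightarrow{\Nm}\C^{hG,\dbl}\to\C$ is $\bigoplus_{g\in G}g$, since $\C^{hG,\dbl}\simeq\lim^{\PrL}$ for finite $G$ and the norm map is the ambidexterity norm. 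By additivity of $K^{\cont}$ this composite induces $\sum_g g$ on $K$-theory. That is exactly the datum that characterises the spectral norm as the unique map factoring $\sum_g g$ through orbits and fixed points.

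The main obstacle is making this uniqueness argument coherent, not just on homotopy. The cleanest route is to express $\Nm$ through the ambidexterity structure of finite groupoids in $\Pr^{\dbl}_{\st}$: the norm is the canonical transformation $\colim_{BG}\Rightarrow\lim^{\dbl}_{BG}$ coming from the adjunctions $\colim\dashv\const\dashv\lim$. Here $\ev_{\Sp/\Sp[G]}$ witnesses that the left and right adjoints of $\const$ agree. $K^{\cont}$ is additive and preserves finite products, so it carries this semiadditive structure to the one on $L_{T(n+1)}\Sp$. In any semiadditive context the norm transformation is preserved by functors that preserve the relevant (co)limits up to the assembly and coassembly maps. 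If that general statement is not available off the shelf, I would instead check the equality after precomposing with $K(\C)\to K(\C)_{hG}$ and postcomposing with $K(\C)^{hG}\to K(\C)$. Since the norm is determined by its restriction as a map of $G$-objects (for instance via the universal property of the norm in \cite{HA}'s ambidexterity formalism), this suffices.

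An alternative worth keeping as a fallback is to use the nuclear descent machinery instead. Since $\Nm$ is a fully faithful internal left adjoint, $K^{\cont}(\C_{hG})\to K^{\cont}(\C^{hG,\dbl})$ has cofiber $K^{\cont}(\C^{tG,\dbl})$ when the Tate construction is defined as a cofiber. It would then suffice to show $K^{\cont}_{T(n+1)}(\C^{tG,\dbl})\simeq 0$. But that depends on the later Tate vanishing result, so I would use the direct argument above.
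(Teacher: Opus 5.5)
Your strategy is the paper's. The outer maps are the equivalences of \cref{thm:descent_dualizable}, the spectral norm of $K^{\cont}_{T(n+1)}(\C)$ in $L_{T(n+1)}\Sp$ is invertible by \cref{thm:tate_vanishing_kuhn}, and the lemma just before the corollary, which identifies $\Nm$ with the usual norm map, lets $K^{\cont}$ carry $\Nm$ to that spectral norm. Two-out-of-three then finishes. Your main device for making that last step coherent is a reasonable way to fill in what the paper leaves implicit: norms coming from ambidexterity are compatible with functors preserving finite coproducts, such as $K^{\cont}$. Several of your supporting claims are wrong, however, and need repair.

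\textbf{The claim $\C^{hG,\dbl}\simeq\lim^{\PrL}_{BG}\C$ is false.} In $\PrL$, limits and colimits over $BG$ coincide (pass to right adjoints, which are the inverse equivalences), so $\lim^{\PrL}_{BG}\C\simeq\C_{hG}$. Your claim would therefore make $\Nm$ an equivalence and $\C^{tG,\dbl}\simeq0$. That contradicts $\End(\mathbbm{1}_{\Sp^{tG,\dbl}})\simeq\bbS^{tG}$ from \cref{prop:End(1_tG)}. The identification $\Sp[G]^{hG,\dbl}\simeq\Sp$ in the lemma is special to the induced object.

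The formula $q\circ\Nm\circ\iota\simeq\bigoplus_{g\in G}g$ is still true, for a different reason. $\Nm$ is the fully faithful left adjoint of the comparison $c\colon\C^{hG,\dbl}\to\lim^{\PrL}_{BG}\C\simeq\C_{hG}$. The projection $q$ factors as $p\circ c$, so $q\circ\Nm\simeq p\circ c\circ\Nm\simeq p$. The insertion $\iota$ is $p^L$, and $pp^L\simeq\bigoplus_{g\in G}g$.

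\textbf{The ambidexterity description is also misstated.} $\ev_{\Sp/\Sp[G]}$ does not make the left and right adjoints of $\const$ agree; again, that would force the norm to be invertible. $\Pr^{\dbl}_{\st}$ is only semiadditive. The norm for $BG$ instead comes from $\delta_!\simeq\delta_*$ for the diagonal $\delta\colon BG\to BG\times BG$, whose fibres are finite sets.

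\textbf{Your first fallback does not work.} A map $X_{hG}\to X^{hG}$ is a point of $\lim_{BG\times BG}\Hom(X,X)$. Its restriction to the underlying map $X\to X$, obtained by precomposing with the insertion and postcomposing with the projection, does not determine it. So checking that this restriction is $\sum_g g$ cannot identify your composite with the norm; you really need the coherent compatibility of norms with $K^{\cont}$.

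\textbf{Your other fallback is legitimate and not circular.} \cref{thm:tate_vanishing} is proved independently of this corollary. Together with \cref{thm:purity_dualizable} it gives $K^{\cont}_{T(n+1)}(\C^{tG,\dbl})\simeq0$ for $n\geq1$. For $n=0$, $\End(\mathbbm{1}_{(L_0^f\Sp)^{tG,\dbl}})\simeq\bbS[1/p]^{tG}\simeq0$ when $G$ is a $p$-group, so $\C^{tG,\dbl}\simeq0$. This is exactly the route of \cref{thm:nuc_descent}, and for $n\geq1$ it removes the $p$-group hypothesis.
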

In fact, the $p$-group assumption can be removed using the Tate vanishing result that will be proved in \cref{sec:3.3}. To prepare for this, we study $\Sp_{hG}$-nuclear categories. This study also provides a natural categorical analogue of chromatic descent.
\begin{lemma}\label{lem:Sp^BG}
	Let $G\in\Grp(\An)$ be a compact group anima and $\Sp^{BG}$ be equipped with the pointwise symmetric monoidal structure. Then $\Sp^{BG}$ is locally rigid over $\Sp$ with $\mathbbm{1}_{\Sp^{BG}}\in(\Sp^{BG})^{\omega_1}$. Consequently, let $e:*\to BG$ be a base point. Then $e^*:\Sp^{BG}\to\Sp$ is rigid.
\end{lemma}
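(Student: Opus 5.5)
Since $G$ is compact, the plan is to identify $\Sp^{BG}$ with a smashing ideal of a rigid category, and then read off both local rigidity and $\omega_1$-compactness of the unit from that identification. The natural candidate is the Koszul-dual side. Namely, $\Sp^{BG}\simeq\Fun(BG,\Sp)$ with the pointwise structure is a module category over the cochains $\bbS^{BG}:=\lim_{BG}\bbS$. The functor
\[
	(-)^{hG}:\Sp^{BG}\to\Mod(\bbS^{BG})
\]
is lax symmetric monoidal, and its left adjoint is $\bbS\otimes_{\bbS^{BG}}-$, with $\bbS$ carrying the trivial action. I would first show that $\bbS\otimes_{\bbS^{BG}}-:\Mod(\bbS^{BG})\to\Sp^{BG}$ is a symmetric monoidal localization with fully faithful right adjoint $(-)^{hG}$, restricted to its essential image.

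That alone is not quite right, though, because $\Sp^{BG}$ need not be a localization of $\Mod(\bbS^{BG})$: its generator $\bbS[G]$ is not in the image. So instead I would take as rigid envelope $\Mod(\bbS^{BG})$ only when the localization works, and otherwise use the following direct route.

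\textbf{Direct route.} Write $\Sp^{BG}\simeq\Mod_{\Sp[G]}(\Sp)$, where $\Sp[G]$ carries the convolution structure; it is compactly generated by the single compact object $\bbS[G]=e_!\bbS$. The plan is to apply the criterion of \cref{exm:compact_dbl} together with a local rigidity criterion for compactly generated categories. Concretely:
\begin{enumerate}
	\item Dualizable objects in $\Sp^{BG}$ are exactly those local systems whose underlying spectrum is finite, since duals are computed pointwise. In particular $\bbS[G]$ is dualizable, because $G$ is compact, so $\Sigma^\infty_+G$ is a finite spectrum.
	\item Since compact objects form the thick subcategory generated by $\bbS[G]$, we get $(\Sp^{BG})^{\omega}\subseteq(\Sp^{BG})^{\dbl}$, and hence $\Sp^{BG}$ is locally rigid by \cref{exm:compact_dbl}.
	\item For the unit, write $\mathbbm{1}$ as a countable colimit of compacts. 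The bar/skeletal filtration of $BG$ gives $\mathbbm{1}\simeq\colim_k$ of finite cell local systems built from $\bbS[G]^{\otimes k}$, i.e.\ $\mathbbm{1}\simeq|\bbS[G^{\bullet+1}]|$. Each term is compact because $G$ is compact, and the geometric realization is a sequential colimit of skeleta, so $\mathbbm{1}\in(\Sp^{BG})^{\omega_1}$.
\end{enumerate}

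\textbf{Rigidity of $e^*$.} The functor $e^*:\Sp^{BG}\to\Sp$ is symmetric monoidal, and its right adjoint is $e_*=$ coinduction. I need $e^*$ to be an $\Sp^{BG}$-internal left adjoint and the multiplication $\Sp\otimes_{\Sp^{BG}}\Sp\to\Sp$ to be internal. Here $e_*$ preserves colimits because the fibre of $*\to BG$ is $G$, which is compact, so $e_*\simeq e_!$ up to the dualizing twist. The projection formula $e_*(X\otimes e^*Y)\simeq e_*X\otimes Y$ holds by base change. Then $\Sp\otimes_{\Sp^{BG}}\Sp\simeq\Sp^{G}$ (local systems on $\Omega BG = G$) by the standard base change for local systems, and the multiplication is pullback along the diagonal $*\to G$. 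Its right adjoint is pushforward along a point of the compact anima $G$, which again preserves colimits and is linear. This shows $e^*$ is rigid over $\Sp^{BG}$.

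\textbf{Main obstacle.} I expect the identification $\Sp\otimes_{\Sp^{BG}}\Sp\simeq\Fun(G,\Sp)$ and the verification that the relevant right adjoints are linear to be the delicate step. If this gets messy, I would instead argue via the rigidity criterion that the unit $\mathbbm{1}_{\Sp}=e^*\mathbbm{1}$ is $\Sp^{BG}$-atomic, since $e_*$ preserves colimits and is linear. Local rigidity of $\Sp$ over $\Sp^{BG}$ then follows because $\Sp$ is generated by the image of $e^*$ applied to dualizable objects.
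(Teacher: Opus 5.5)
Your first half is correct and is essentially the paper's argument. $\Sp^{BG}$ is compactly generated by $\bbS[G]=e_!\bbS$, whose underlying spectrum $\Sigma^\infty_+G$ is finite, so compacts are dualizable and \cref{exm:compact_dbl} applies. Your bar resolution $\mathbbm{1}\simeq|\bbS[G^{\bullet+1}]|$ is the other side of the paper's remark that $\lim_{BG}=\Hom(\mathbbm{1},-)$ commutes with $\omega_1$-filtered colimits. The aborted Koszul-duality paragraph can simply be deleted. Your check that $e^*$ is an $\Sp^{BG}$-internal left adjoint is also fine: $e^*e_*\simeq\iHom(\Sigma^\infty_+G,-)$ preserves colimits, and the projection formula holds.

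The gap is the multiplication step. The identification $\Sp\otimes_{\Sp^{BG}}\Sp\simeq\Fun(G,\Sp)$ is false unless $G$ is discrete.
\begin{itemize}
\item Since $e_*$ preserves colimits, satisfies the projection formula, and is conservative ($X$ is a retract of $e^*e_*X$), Barr--Beck gives $\Sp\simeq\Mod_{e_*\bbS}(\Sp^{BG})$.
\item Hence $\Sp\otimes_{\Sp^{BG}}\Sp\simeq\Mod_{e_*\bbS\otimes e_*\bbS}(\Sp^{BG})\simeq\Mod(C^*(G;\bbS))$, using $e_*\bbS\otimes e_*\bbS\simeq e_*e^*e_*\bbS$.
\item The comparison functor to $\Fun(G,\Sp)$ is the fully faithful inclusion of $\Loc(\mathbbm{1})$, the ind-unipotent local systems. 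For $G=S^1$, $\Fun(S^1,\Sp)\simeq\Mod(\bbS[t^{\pm1}])$, while the image consists only of $(t-1)$-nilpotent modules.
\end{itemize}
Your next claim fails even in your own model. The right adjoint of evaluation at $e\in G$ has fibre $\iHom(\Sigma^\infty_+\Omega G,X)$. This commutes with colimits only if $\Omega G$ is compact, not $G$; for $G=S^1$ it is $\prod_{\bbZ}X$. So as written the argument only covers finite discrete $G$.

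The fallback does not close the gap as stated, because ``$\Sp$ is generated by $e^*$ of dualizables'' is not a local rigidity criterion. What you would need is the relative form of \cref{exm:compact_dbl}: $\Sp$ is atomically generated over $\Sp^{BG}$ by $\mathbbm{1}$, and every $\Sp^{BG}$-atomic spectrum is finite, hence dualizable.

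The cleanest repair is the Barr--Beck identification above. For every $A\in\CAlg(\C)$, the map $\C\to\Mod_A(\C)$ is rigid. Indeed, the multiplication $\Mod_{A\otimes A}(\C)\to\Mod_A(\C)$ is base change along $A\otimes A\to A$, and its right adjoint (restriction) preserves colimits and is linear.

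The paper avoids computing $\Sp\otimes_{\Sp^{BG}}\Sp$ altogether. It gets local rigidity of $e^*$ from the two-out-of-three property \cite[Corollary 4.19]{Ram26locallyrigid}, applied to $\const$ and $e^*\circ\const\simeq\id$; this is where the first half is used. It then upgrades $\Sp$-internality of $e^*$ to $\Sp^{BG}$-internality via \cite[Proposition 4.18]{Ram26locallyrigid}.
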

\begin{proof}
	For the first part we follow \cite[Example 4.4.10]{KNP24}. Let $e:*\to BG$ be a base point. Since $e^*:\Sp^{BG}\to\Sp$ creates colimits and $e^*e_*\simeq\iHom_{\bbS}(\bbS[G],-)$ preserves filtered colimits, $e^*$ is an $\Sp$-internal left adjoint and preserves compact objects. Then $(\Sp^{BG})^{\omega}\subseteq(\Sp^{\omega})^{BG}=(\Sp^{\dbl})^{BG}=(\Sp^{BG})^{\dbl}$. Since $\Sp^{BG}$ is compactly generated, by \cref{exm:compact_dbl} we obtain that $\Sp^{BG}$ is locally rigid over $\Sp$. Since $\const^R\simeq\lim_{BG}:\Sp^{BG}\to\Sp$ preserves $\omega_1$-filtered colimits, $\mathbbm{1}_{\Sp^{BG}}$ is $\omega_1$-compact.

	For the second part, since both $\const$ and $e^*\circ\const\simeq\id$ are locally rigid, $e^*$ is locally rigid by \cite[Corollary 4.19]{Ram26locallyrigid}. Since $e^*$ is an $\Sp$-internal left adjoint and $\Sp^{BG}$ is locally rigid over $\Sp$, $e^*$ is an $\Sp^{BG}$-internal left adjoint by \cite[Proposition 4.18]{Ram26locallyrigid}. Therefore, $e^*:\Sp^{BG}\to\Sp$ is rigid.
\end{proof}
Consequently, $\const:\C\to\C^{BG}$ is locally rigid and $e^*:\C^{BG}\to\C$ is rigid for any $\C\in\CAlg(\PrL_{\st})$. If $\C$ is locally rigid, then $\C^{BG}$ is also locally rigid. Moreover, we can compute the rigidification of $\C^{BG}$ via dualizable homotopy fixed points whenever $\C$ is rigid.
\begin{lemma}[{\cite{Zhoa}}]\label{cor:C_hG_locrig}
	Let $G\in\Grp(\An)$ be a compact group anima and $\C\in\CAlg(\PrL_{\st})^{BG}$. Then $(\C_{hG})^{\rig}\simeq(\C^{\rig})^{hG,\dbl}$. If $\C$ is rigid, then $\C_{hG}$ is locally rigid over $\Sp$ with $(\C_{hG})^{\rig}\simeq\C^{hG,\dbl}$, and $\mathbbm{1}_{\C_{hG}}\in(\C_{hG})^{\omega_1}$. In particular, the fully faithful left adjoint of $\C^{hG,\dbl}\to\C_{hG}$ is given by the norm map.
\end{lemma}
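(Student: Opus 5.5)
The plan is to realise $\C_{hG}$ as a smashing ideal inside the rigid category $\C^{hG,\dbl}$, and then read off everything from the completion theory of \cref{sec:1.3}.

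First, the rigidification formula. Every $\C\in\CAlg(\PrL_{\st})^{BG}$ has the counit $\C^{\rig}\to\C$, which is $G$-equivariant by functoriality of $(-)^{\rig}$. Since $\Sp[G]$ is rigid over $\Sp$, $\C^{\rig}$ is also the rigidification relative to $\Sp[G]$. I would then check the universal property directly. Consider a rigid $\Sp$-algebra $\D$ with an algebra map $\D\to\C_{hG}\simeq\Sp\otimes_{\Sp[G]}\C$. By adjunction this corresponds to a map $\D\otimes\Sp[G]\to\C$ of $\Sp[G]$-algebras, where $\D$ carries the trivial action. Because $\D\otimes\Sp[G]$ is rigid over $\Sp[G]$, this map factors uniquely through $\C^{\rig}$. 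Passing back through the $\Pr^{\dbl}$-version of the adjunction $\Sp\otimes_{\Sp[G]}-\dashv\iHom^{\dbl}_{\Sp[G]}(\Sp,-)$ gives a map $\D\to(\C^{\rig})^{hG,\dbl}$. One still has to know that $(\C^{\rig})^{hG,\dbl}$ is itself rigid; for this I would use \cref{cor:completion}(1) in its $\bbE_1$-form, applied to the rigid map $e_!:\Sp\to\Sp[G]$ and the proper, $\omega_1$-compact $\Sp$ over $\Sp[G]$. Ordering these identifications correctly is the main obstacle. If the direct universal property becomes unwieldy, the fallback is to compare via the norm map.

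Second, assume $\C$ is rigid. By \cref{lem:Sp^BG} and the discussion after it, $\C^{BG}$ is locally rigid. There is a rigid map $\C_{hG}\to\C$, the colimit inclusion, which is the base change of $e^*$; it presents $\C_{hG}$ as a categorical analogue of the $\Sp^{BG}$ situation. The key step is the norm map $\Nm:\C_{hG}\hookrightarrow\C^{hG,\dbl}$, which is fully faithful and an internal left adjoint by the Construction via \cite[Proposition 3.4]{Efi25limit}. I would show it is symmetric monoidal and that its image is a smashing ideal. The ideal property comes from the projection formula $\Nm(x)\otimes y\simeq\Nm(x\otimes\mathrm{res}(y))$, using the lemma identifying $\Nm$ with the usual norm. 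The smashing property is internal left adjointness, which holds because $\Nm$ is a $\C^{hG,\dbl}$-linear internal left adjoint.

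This exhibits $\C^{hG,\dbl}\to\C_{hG}$, the right adjoint of $\Nm$, as a rigid envelope. Hence $\C_{hG}$ is locally rigid, and by the Remark after the definition of rigid envelopes we get $(\C_{hG})^{\rig}\simeq\iHom^{\dbl}_{\C^{hG,\dbl}}(\C_{hG},\C^{hG,\dbl})$. To see this is equivalent to $\C^{hG,\dbl}$, I would apply \cref{lem:1_rig}(1): the envelope embeds into the rigidification precisely when the unit $\mathbbm{1}\to j_*\mathbbm{1}$ is an equivalence. Here $j_*\mathbbm{1}$ is computed from $\End(\mathbbm{1}_{\C_{hG}})\simeq\End(\mathbbm{1}_\C)^{hG}$. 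By the first part, the resulting embedding is also essentially surjective, so this is consistent with $(\C^{\rig})^{hG,\dbl}=\C^{hG,\dbl}$.

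Third, the $\omega_1$-compactness of the unit follows as in \cref{lem:Sp^BG}. The right adjoint of $\C\to\C_{hG}$, after restriction, computes $\lim_{BG}$. For compact $G$ this is a finite limit up to retracts, so it commutes with $\omega_1$-filtered colimits, provided $\mathbbm{1}_\C$ is compact, which holds because $\C$ is rigid. Finally, the fully faithful left adjoint of $\C^{hG,\dbl}\simeq(\C_{hG})^{\rig}\to\C_{hG}$ is unique. Since $\Nm$ is a fully faithful left adjoint of that map by construction, the two coincide.
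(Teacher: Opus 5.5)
Your outline (universal property of rigidification, then the norm as a smashing-ideal inclusion, then a Hom computation for the unit) parallels the paper's. However, the first part, on which everything else rests, has a genuine gap.

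\textbf{The gap: rigidity of the dualizable fixed points.} To run Yoneda in $\CAlg^{\rig}$ you need $(\C^{\rig})^{hG,\dbl}$ to be rigid, and your proposed justification does not work.
\begin{enumerate}
\item \cref{cor:completion}(1) concerns completion along a smashing ideal of a symmetric monoidal base. Here $\Sp$ is not a smashing ideal of $\Sp[G]$, which is only $\bbE_1$-monoidal.
\item Properness and $\omega_1$-compactness of $\Sp$ over $\Sp[G]$ give only exactness of $(-)^{hG,\dbl}$, as in \cref{thm:Homdbl_proper}, not preservation of rigidity.
\item The fallback via the norm map does not avoid this. Exhibiting $\C^{hG,\dbl}\to\C_{hG}$ as a rigid envelope presupposes that $\C^{hG,\dbl}$ is rigid. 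In the paper, both \cref{thm:hGdbl=Nuc} and the rigidity statement in \cref{prop:tG} are deduced from this very lemma.
\end{enumerate}
The missing input is \cite[Corollary 4.89]{Ram26locallyrigid}: the forgetful functor $\CAlg^{\rig}(\PrL_{\st})\to\Pr^{\dbl}_{\st}$ preserves limits. With it, the paper's proof is short:
\begin{enumerate}
\item $\C_{hG}\simeq\lim^{\CAlg(\PrL_{\st})}_{BG}\C$, since $BG$-indexed colimits in $\PrL_{\st}$ agree with limits as categories.
\item $(-)^{\rig}$ is a right adjoint, so it preserves this limit.
\item The resulting limit in $\CAlg^{\rig}(\PrL_{\st})$ is therefore $(\C^{\rig})^{hG,\dbl}$.
\end{enumerate}
For rigid $\C$, the counit $\C^{hG,\dbl}\to\C_{hG}$ then has the norm as fully faithful internal left adjoint by \cite[Proposition 3.4]{Efi25limit}, so $\C_{hG}$ is a smashing ideal. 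Your projection-formula argument and the $j_*\mathbbm{1}$ computation via \cref{lem:1_rig} are then unnecessary. In any case, knowing $\End(\mathbbm{1})$ alone would not determine $j_*\mathbbm{1}$.

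\textbf{Errors in the bookkeeping.}
\begin{enumerate}
\item $\Sp\otimes_{\Sp[G]}-$ is left adjoint to the trivial-action functor, so it does not describe maps \emph{into} $\C_{hG}$.
\item $\D\otimes\Sp[G]$ is the induced module, not $\D$ with trivial action.
\item $\Sp\otimes_{\Sp[G]}-$ and $\iHom^{\dbl}_{\Sp[G]}(\Sp,-)$ both go from $\Sp[G]$-modules to $\Pr^{\dbl}_{\st}$, so they are not adjoint to each other.
\item $\C$ is not a commutative $\Sp[G]$-algebra, so ``rigidification relative to $\Sp[G]$'' has no meaning here. All you need is that $(-)^{\rig}$ is functorial, so $G$ acts on $\C^{\rig}$ compatibly with the counit.
\end{enumerate}
The correct description uses $\C_{hG}\simeq\C^{hG}$ in $\PrL_{\st}$: algebra maps $\D\to\C_{hG}$ are equivariant maps $\const(\D)\to\C$, and one then uses the adjunction $\const\dashv(-)^{hG,\dbl}$ on $\CAlg(\Pr^{\dbl}_{\st})$.

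\textbf{Two smaller errors.} First, the norm map is not symmetric monoidal; like any smashing-ideal inclusion, it is only non-unitally so. Second, in your third part, $\lim_{BG}$ is not a finite limit up to retracts, since $BG$ is not compact even for $G=C_p$. What is true is that $BG$ is a countable colimit of compact anima, so $(-)^{hG}$ commutes with $\omega_1$-filtered colimits in $\Sp$. Combine this with
\[
\Hom_{\C_{hG}}(\mathbbm{1},-)\simeq\Hom_{\C}(\mathbbm{1},p(-))^{hG}
\]
for the projection $p\colon\C_{hG}\to\C$, and with compactness of $\mathbbm{1}_{\C}$, to get $\omega_1$-compactness of the unit.
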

\begin{proof}
	Since both $(-)^{\rig}:\CAlg(\PrL_{\st})\to\CAlg^{\rig}(\PrL_{\st})$ and $\CAlg^{\rig}(\PrL_{\st})\to\Pr^{\dbl}_{\st}$ preserve limits, we have
	\[
		(\C^{\rig})^{hG,\dbl}\simeq\lim^{\CAlg(\Pr^{\dbl}_{\st})}_{BG}\C^{\rig}\simeq\lim^{\CAlg^{\rig}(\PrL_{\st})}_{BG}\C^{\rig}\simeq(\lim^{\CAlg(\PrL_{\st})}_{BG}\C)^{\rig}\simeq(\C_{hG})^{\rig}.
	\]
	If $\C$ is rigid, then $(\C_{hG})^{\rig}\simeq\C^{hG,\dbl}$. Therefore, $\C_{hG}\hookrightarrow\C^{hG,\dbl}$ is a smashing ideal and hence $\C_{hG}$ is locally rigid. Let $p:\C_{hG}\to\C$ denote the projection map. Since $\Hom_{\C_{hG}}(\mathbbm{1}_{\C_{hG}},-)\simeq\Hom_{\C}(\mathbbm{1}_{\C},p(-))^{hG}$ and $(-)^{hG}$ preserves $\omega_1$-filtered colimits, $\mathbbm{1}_{\C_{hG}}$ is $\omega_1$-compact.
\end{proof}

In particular, let $\C\in\CAlg^{\rig}(\PrL_{\st})^{BG}$ and $G\in\Grp(\An)$ be a compact group anima. Then we have $\End(\mathbbm{1}_{\C^{hG,\dbl}})\simeq\End(\mathbbm{1}_{\C_{hG}})\simeq\End(\mathbbm{1}_{\C})^{hG}$ by \cref{lem:1_rig}.

\begin{definition}
	Let $G\in\Grp(\An)$ be a compact group anima and $R\in\CAlg(\Sp)^{BG}$. We define the \emph{category of nuclear $R^{hG}$-modules} as
	\[
		\Nuc(R^{hG}):=\Nuc(\Mod(R)_{hG}).
	\]
\end{definition}
If $R$ is complex orientable with trivial $S^1$-action, this recovers \cref{exm:R^BS^1}.
\begin{lemma}\label{lem:CX_corig}
	Let $\C\in\CAlg(\PrL)$ and $X\in\An$, and let $\C^X$ be equipped with the pointwise symmetric monoidal structure. Then $\C^X$ is corigid over $\C$ in the sense of \cref{prop:corig}(1).
\end{lemma}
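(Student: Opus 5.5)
We need to show that $\C^X$ with the pointwise structure is corigid over $\C$. Concretely, the unit $\const:\C\to\C^X$ must admit a $\C$-linear left adjoint, and the multiplication $\C^X\otimes_{\C}\C^X\to\C^X$ must admit a $\C^X\otimes_{\C}\C^X$-linear left adjoint. The plan is to identify everything with left and right Kan extensions along maps of anima, and then use that these Kan extensions are compatible with tensoring.

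First, I will use $\C^X\simeq\Fun(X,\C)\simeq\C\otimes\An^X$, since $\An^X\simeq\P(X)$ is dualizable. This gives $\C^X\otimes_{\C}\C^X\simeq\C\otimes\An^X\otimes\An^X\simeq\C^{X\times X}$. Under this identification the multiplication is restriction $\delta^*$ along the diagonal $\delta:X\to X\times X$, and the unit is $p^*$ for $p:X\to *$.

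Both $p^*$ and $\delta^*$ have left adjoints, namely $p_!=\colim_X$ and $\delta_!$. These left adjoints are $\C$-linear, because left Kan extension along a map of anima is computed pointwise by colimits, which commute with $c\otimes-$ for $c\in\C$. Alternatively, $p_!$ and $\delta_!$ are obtained by tensoring the corresponding functors on $\P(-)$ with $\C$.

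The remaining, and main, point is $\C^{X\times X}$-linearity of $\delta_!$: for $F\in\C^{X\times X}$ and $G\in\C^X$ we need the projection formula $\delta_!(\delta^*F\otimes G)\simeq F\otimes\delta_!G$, compatibly with the module structures. I will check this pointwise at $(x,y)\in X\times X$. The left-hand side is $\colim_{z\in X,\ \delta z\simeq(x,y)}F(z,z)\otimes G(z)$. The fiber of $\delta$ over $(x,y)$ is the path space, over which $F(z,z)\simeq F(x,y)$ is constant by transport. Since $\otimes$ preserves colimits in each variable, the left-hand side becomes $F(x,y)\otimes\colim G(z)$, which is the right-hand side. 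This is the standard projection formula for left Kan extension along a map with anima fibers.

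The analogous verification for $p_!$, namely $\C$-linearity, is the special case where $F$ is constant. Coherence of the lax-to-strong linear structure then follows because the canonical mate map exists and is checked to be an equivalence pointwise. I expect the only real care to be needed in this coherence step; the pointwise computation itself is routine.
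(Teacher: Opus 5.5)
Your argument is correct and follows the same route as the paper. The paper simply cites \cite[Proposition 2.1.2, Lemma 2.1.6]{Rak26Tate} for exactly the facts you verify by hand: $\C^X\otimes_{\C}\C^X\simeq\C^{X\times X}$ with multiplication $\delta^*$ and unit $p^*$, the $\C$-linearity of $p_!=\colim_X$, and the $\C^{X\times X}$-linearity of $\delta_!$ via the projection formula for Kan extension along maps of anima. The coherence point you flag is most easily handled by proving the projection formula for $\C=\An$, where it is just pasting of pullbacks over $X\times X$, and then base changing along $\C\otimes-$, which preserves both adjunctions and module-linearity.
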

\begin{proof}
	It follows from \cite[Proposition 2.1.2, Lemma 2.1.6]{Rak26Tate} that $\C\to\C^X$ admits a $\C$-linear left adjoint and $\C^X\otimes_{\C}\C^X\simeq\C^{X\times X}\to\C^X$ admits a $\C^X\otimes_{\C}\C^X$-linear left adjoint.
\end{proof}
\begin{theorem}\label{thm:hGdbl=Nuc}
	Let $G\in\Grp(\An)$ be a compact group anima. Then we have adjunctions
	\[\begin{tikzcd}[column sep=6em]
		{\Pr^{\dbl}_{\Sp_{hG}}} & {\Pr^{\dbl}_{\st}}
		\arrow[shift left=3, from=1-1, to=1-2]
		\arrow["{\Nuc_{\Sp_{hG}}}"', shift right=3, from=1-1, to=1-2]
		\arrow["{\Sp_{hG}\otimes-}"{description}, from=1-2, to=1-1]
	\end{tikzcd}\]
	Moreover, we have adjunctions
	\[\begin{tikzcd}[column sep=6em]
		{(\Pr^{\dbl}_{\st})^{BG}} & {\Pr^{\dbl}_{\st}}
		\arrow["{(-)_{hG}}", shift left=3, from=1-1, to=1-2]
		\arrow["{\Nuc_{\Sp_{hG}}(-)_{hG}}"', shift right=3, from=1-1, to=1-2]
		\arrow["{\mathrm{const}}"{description}, from=1-2, to=1-1]
	\end{tikzcd}\]
	In other words, $(-)^{hG,\dbl}\simeq\Nuc_{\Sp_{hG}}((-)_{hG})$ and the norm map coincides with the $\Sp_{hG}$-completion map.
\end{theorem}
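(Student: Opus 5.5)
The first adjunction is \cref{prop:corig} applied to $\I:=\Sp_{hG}\simeq\Sp^{BG}$ over $\C=\Sp$. That proposition needs two inputs. First, $\Sp^{BG}$ must be corigid over $\Sp$, which is \cref{lem:CX_corig} with $X=BG$. Second, it must be locally rigid, which is \cref{lem:Sp^BG}. Given both, \cref{prop:corig}(1) says the left adjoint of $\Sp_{hG}\otimes-:\Pr^{\dbl}_{\st}\to\Pr^{\dbl}_{\Sp_{hG}}$ is the forgetful functor. \Cref{prop:corig}(2) says its right adjoint is the completion $(-)^{\wedge\Sp_{hG}}$, and this equals $\Nuc_{\Sp_{hG}}$ by definition, since $\mathbbm{1}_{\Sp^{BG}}$ is $\omega_1$-compact by \cref{lem:Sp^BG}.

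For the second adjunction, I factor $(-)_{hG}\simeq\Sp\otimes_{\Sp[G]}-$ through $\Pr^{\dbl}_{\Sp_{hG}}$. Consider a module $\M\in\Mod_{\Sp[G]}(\Pr^{\dbl}_{\st})\simeq(\Pr^{\dbl}_{\st})^{BG}$. Its orbits $\M_{hG}$ are canonically a module over $\Sp\otimes_{\Sp[G]}\Sp\simeq\Sp_{hG}$, by the bialgebra structure on $\Sp[G]$, equivalently by the pointwise symmetric monoidal structure. This gives a functor $(\Pr^{\dbl}_{\st})^{BG}\to\Pr^{\dbl}_{\Sp_{hG}}$. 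I claim it is an equivalence onto $\Pr^{\dbl}_{\Sp_{hG}}$, with inverse $\N\mapsto\Sp\otimes_{\Sp_{hG}}\N$ via the rigid map $e^*:\Sp^{BG}\to\Sp$ of \cref{lem:Sp^BG}. In the other direction, $\const$ should correspond to $\Sp_{hG}\otimes-$, since $(\const\D)_{hG}\simeq\Sp^{BG}\otimes\D$.

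Granting this, $(-)_{hG}$ is the composite of an equivalence with the forgetful functor $\Pr^{\dbl}_{\Sp_{hG}}\to\Pr^{\dbl}_{\st}$. The adjunctions then follow formally from the first part: $(-)_{hG}\dashv\const\dashv\Nuc_{\Sp_{hG}}((-)_{hG})$. But $\const$ already has the right adjoint $(-)^{hG,\dbl}=\lim^{\dbl}_{BG}$, so uniqueness of adjoints gives $(-)^{hG,\dbl}\simeq\Nuc_{\Sp_{hG}}((-)_{hG})$.

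For the norm map, the unit of $\forget\dashv\Nuc_{\Sp_{hG}}$ evaluated at $\C_{hG}$ is the completion map $\C_{hG}\to\Nuc_{\Sp_{hG}}(\C_{hG})$, by \cref{cor:completion}. Under the identification above this becomes a map $\C_{hG}\to\C^{hG,\dbl}$. The natural transformation $(-)_{hG}\Rightarrow(-)^{hG,\dbl}$ obtained from the two-sided adjunction is the left adjoint of the counit-type map $\C^{hG,\dbl}\to\C_{hG}$. Zhou's construction, via \cite[Proposition 3.4]{Efi25limit}, characterises $\Nm$ exactly as this left adjoint, so the two maps agree.

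The main obstacle is the equivalence $(\Pr^{\dbl}_{\st})^{BG}\simeq\Pr^{\dbl}_{\Sp_{hG}}$, i.e.\ the Koszul-type duality between $\Sp[G]$-modules and $\Sp^{BG}$-modules in dualizable categories. My first attempt is to check that the unit and counit are equivalences. The counit $\Sp\otimes_{\Sp^{BG}}\M_{hG}\simeq\M$ should follow from $\Sp\otimes_{\Sp^{BG}}\Sp^{BG}\otimes_{\Sp[G]}\Sp[G]\simeq\Sp\otimes_{\Sp[G]}\Sp[G]$, plus $\Sp\otimes_{\Sp^{BG}}\Sp\simeq\Sp[G]$ (i.e.\ $\Sp^{G}$) as an $\Sp$-bimodule. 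That last identity holds because $\Sp^{BG}\to\Sp$ is a Galois-type cover, and compactness of $G$ should make this computation valid.

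If this equivalence is hard to establish in general, the fallback is to avoid it. I would instead verify directly that $\Hom_{\Pr^{\dbl}_{\st}}(\D,\Nuc_{\Sp_{hG}}(\C_{hG}))\simeq\Hom_{(\Pr^{\dbl}_{\st})^{BG}}(\const\D,\C)$, combining the first adjunction with the fact that $\Sp_{hG}$-linear maps $\Sp^{BG}\otimes\D\to\C_{hG}$ correspond to $G$-equivariant maps $\D\to\C$.
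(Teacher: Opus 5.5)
Your first adjunction is handled exactly as in the paper. The paper cites \cref{cor:C_hG_locrig} where you cite \cref{lem:Sp^BG}; either one gives local rigidity of $\Sp_{hG}$ and the $\omega_1$-compact unit.

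\textbf{The gap.} The second part rests on a claim that fails for non-discrete compact $G$: the functor $(-)_{hG}\colon(\Pr^{\dbl}_{\st})^{BG}\to\Pr^{\dbl}_{\Sp_{hG}}$ is not an equivalence in general. Take $G=S^1$. Then $\Sp[S^1]\simeq\Mod(\bbS[t^{\pm1}])$ with $\bbS[t^{\pm1}]=\bbS[\bbZ]$, and the augmentation defining $(-)_{hS^1}$ is base change along $t\mapsto1$. For $R=\bbS[t^{\pm1}][(t-1)^{-1}]$, the category $\Mod(R)$ is a nonzero object of $(\Pr^{\dbl}_{\st})^{BS^1}$, yet $\Mod(R)_{hS^1}\simeq\Mod(R\otimes_{\bbS[t^{\pm1}]}\bbS)\simeq0$. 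So $(-)_{hS^1}$ is not even conservative. The paper's remark after the theorem asserts the equivalence only for discrete $G$.

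The specific step of yours that breaks is $\Sp\otimes_{\Sp^{BG}}\Sp\simeq\Sp[G]$. Since $e^*$ is rigid with conservative right adjoint, $\Sp\simeq\Mod_{e_*\bbS}(\Sp^{BG})$. The projection formula then gives $\Sp\otimes_{\Sp^{BG}}\Sp\simeq\Mod(\bbS^{G})$, modules over the cochain ring $\bbS^{G}=\lim_G\bbS$. For finite $G$ this is $\prod_G\Sp\simeq\Sp[G]$. For $G=S^1$ it is only the full subcategory of $\Fun(S^1,\Sp)$ generated by the constant local system, so compactness of $G$ does not rescue your counit computation.

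\textbf{What is actually needed.} You only need the other half: $\Sp\otimes_{\Sp_{hG}}-$ is a fully faithful \emph{left adjoint} of $(-)_{hG}$. Its unit $\N\simeq(\Sp\otimes_{\Sp_{hG}}\N)_{hG}$ holds by commuting $\Sp\otimes_{\Sp_{hG}}-$ with $\colim_{BG}$. The paper proves this adjunction in $\PrL$ and then restricts it to dualizable objects, in three steps.
\begin{enumerate}
\item The counit $\Sp\otimes_{\Sp_{hG}}\C_{hG}\to\C$ is the base change, along the rigid map $\Sp_{hG}\to\Sp$, of the projection $p\colon\C_{hG}\to\C$.
\item The map $p$ is an $\Sp$-internal left adjoint, because $p$ creates colimits and $pp^R\simeq\lim_G g(-)$ preserves colimits for compact $G$. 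By local rigidity of $\Sp_{hG}$ it is then also $\Sp_{hG}$-internal, so the counit is an internal left adjoint.
\item Dualizability over the locally rigid $\Sp_{hG}$ is detected after forgetting.
\end{enumerate}
Since $\const\simeq(\Sp\otimes_{\Sp_{hG}}-)\circ(\Sp_{hG}\otimes-)$, composing right adjoints gives $\const\dashv\Nuc_{\Sp_{hG}}((-)_{hG})$. Hence $(-)^{hG,\dbl}\simeq\Nuc_{\Sp_{hG}}((-)_{hG})$, with no equivalence required.

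Your fallback points in this direction, but the ``fact'' it invokes is precisely the non-formal step. You cannot just restrict an $\Sp_{hG}$-linear internal left adjoint $\Sp^{BG}\otimes\D\to\C_{hG}$ along $\D\to\Sp^{BG}\otimes\D$. The reason is that $\const\colon\Sp\to\Sp^{BG}$ is in general not an internal left adjoint, since the unit is only $\omega_1$-compact. Recovering an equivariant internal left adjoint $\D\to\C$ therefore needs the internal-left-adjointness of the counit above.

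A smaller slip in your norm-map paragraph: the forgetful functor is left adjoint to $\Sp_{hG}\otimes-$, not to $\Nuc_{\Sp_{hG}}$. So the completion map is not the unit of an adjunction between the forgetful functor and $\Nuc_{\Sp_{hG}}$.
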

\begin{proof}
	The first part follows by combining \cref{lem:CX_corig,cor:C_hG_locrig,prop:corig}. For the second part, since $(-)_{hG}:(\PrL_{\st})^{BG}\to\PrL_{\st}$ factors through $(-)_{hG}:(\PrL_{\st})^{BG}\to\PrL_{\Sp_{hG}}$, we obtain an adjunction
	\[\begin{tikzcd}[column sep=6em]
		{(\PrL_{\st})^{BG}} & {\PrL_{\Sp_{hG}}}
		\arrow["{(-)_{hG}}"', shift right, from=1-1, to=1-2]
		\arrow["{(\Sp\otimes_{\Sp_{hG}}-)}"', shift right, hook', from=1-2, to=1-1]
	\end{tikzcd}\]
	Here, the left adjoint is fully faithful since the unit of $\C\in\PrL_{\Sp_{hG}}$ is given by
	\[
	(\Sp\otimes_{\Sp_{hG}}\C)_{hG}\simeq\colim_{BG}^{\PrL_{\Sp_{hG}}}\Sp\otimes_{\Sp_{hG}}\C\simeq\Sp_{hG}\otimes_{\Sp_{hG}}\C\simeq\C.
	\]
	To describe the counit of $\C\in(\PrL_{\st})^{BG}$, we apply \cite[Theorem 4.7.5.2]{HA} to the totalization of right adjoints
	\[
	\C_{hG}\simeq\Tot^{\PrL_{\st}}(\C^{G^\bullet}).
	\]
	The projection map $p:\C_{hG}\to\C$ admits a right adjoint $p^R:\C\to\C_{hG}$, with $pp^R\simeq\lim_Gg(-)$. Then the counit $\Sp\otimes_{\Sp_{hG}}\C_{hG}\to\C$ is given pointwise by the base change of $p:\C_{hG}\to\C$ along $\Sp_{hG}\to\Sp$.
	
	Since $p$ creates colimits and $\lim_G$ preserves filtered colimits, $p$ is an $\Sp$-internal left adjoint and hence an $\Sp_{hG}$-internal left adjoint as $\Sp_{hG}$ is locally rigid. Then the counit is a pointwise $\Sp$-internal left adjoint since $\Sp_{hG}\to\Sp$ is rigid by \cref{lem:Sp^BG}. Moreover, since $\Sp_{hG}$ is locally rigid, the forgetful functor $\PrL_{\Sp_{hG}}\to\PrL_{\st}$ reflects dualizability by \cite[Proposition 4.17]{Ram26locallyrigid}. Thus, this adjunction restricts to an adjunction
	\[\begin{tikzcd}[column sep=6em]
		{(\Pr^{\dbl}_{\st})^{BG}} & {\Pr^{\dbl}_{\Sp_{hG}}}
		\arrow["{(-)_{hG}}"', shift right, from=1-1, to=1-2]
		\arrow["{(\Sp\otimes_{\Sp_{hG}}-)}"', shift right, hook', from=1-2, to=1-1]
	\end{tikzcd}\]
	Combining this adjunction with that obtained in the first part yields the desired adjunction.
\end{proof}
\begin{remark}
	The homotopy orbits functor $(-)_{hG}:(\PrL_{\st})^{BG}\to\PrL_{\Sp_{hG}}$ is an equivalence if and only if it is conservative. If $G$ is a finite group, or more generally, a discrete group, then we obtain a symmetric monoidal equivalence $(-)_{hG}:(\PrL_{\st})^{BG}\simeq\Mod_{\Sp_{hG}}(\PrL_{\st})$, and hence $(-)_{hG}:(\Pr^{\dbl}_{\st})^{BG}\simeq\Pr^{\dbl}_{\Sp_{hG}}$. Therefore, for a finite group $G$, \cref{thm:hGdbl=Nuc} follows immediately.
\end{remark}
The above theorem provides the key input for constructing the lax symmetric monoidal structure of the Tate construction in \cref{sec:3.3}, thereby answering a question of Zhou. We state nuclear descent here alongside the other descent results, although its proof uses \cref{thm:tate_vanishing}, proved in \cref{sec:3.3} independently of this theorem and the intervening results.
\begin{theorem}[Nuclear descent]\label{thm:nuc_descent}
	Let $n\geq1$ and $\C\in(\Pr^{\dbl}_{L_n^f})^{BG}$ with $G$ a finite group. Then the norm map induces equivalences
	\[
		L_{T(n)}\C_{hG}\xrightarrow{\simeq}L_{T(n)}\C^{hG,\dbl},
	\]
	and
	\[
		\Nuc_{T(n)}(\C_{hG})\xrightarrow{\simeq}\Nuc_{T(n)}(\C^{hG,\dbl})\xrightarrow{\simeq}\Nuc_{T(n)}(\C)^{hG,\dbl}.
	\]

	Consequently, the norm map induces an equivalence
	\[
		K_{T(n+1)}^{\cont}(\C_{hG})\xrightarrow{\simeq}K_{T(n+1)}^{\cont}(\C^{hG,\dbl}).
	\]
\end{theorem}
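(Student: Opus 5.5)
The first step is to identify the norm map with a completion map. By \cref{thm:hGdbl=Nuc}, $\Nm:\C_{hG}\hookrightarrow\C^{hG,\dbl}$ is the $\Sp_{hG}$-completion map, with $\C^{hG,\dbl}\simeq\Nuc_{\Sp_{hG}}(\C_{hG})$. In particular, $\C_{hG}\subseteq\C^{hG,\dbl}$ is the torsion part with respect to the smashing ideal $\Sp_{hG}\subseteq\Sp^{hG,\dbl}$. Let $\A$ denote the quotient $\Sp^{hG,\dbl}/\Sp_{hG}$. By \cref{cor:generic_fiber} and \cref{prop:pullback_diagram_of_completion}, we get a short exact sequence in $\Pr^{\dbl}_{L_n^f}$
\[
	\C_{hG}\xrightarrow{\Nm}\C^{hG,\dbl}\to\C^{tG,\dbl},
\]
where $\C^{tG,\dbl}$ is the generic fiber, i.e.\ Zhou's Tate construction. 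Here the $L_n^f$-linearity is inherited because $L_n^f\Sp$ is an idempotent $\Sp$-algebra.

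Next I apply $L_{T(n)}\simeq L_{T(n)}\Sp\otimes-$. By \cref{prop:M_n^mf}, this functor is tensoring with a dualizable $\Sp$-module, so it preserves short exact sequences in $\Pr^{\dbl}_{\st}$. The sequence therefore gives
\[
	L_{T(n)}\C_{hG}\to L_{T(n)}\C^{hG,\dbl}\to L_{T(n)}\C^{tG,\dbl}.
\]
The categorical Tate vanishing \cref{thm:tate_vanishing} gives $L_{T(n)}\C^{tG,\dbl}\simeq0$ for $L_n^f$-local $\C$. Hence the first map is an equivalence, which is the first assertion.

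For the nuclear statements, recall $\Nuc_{T(n)}(\M)\simeq\Nuc_{T(n)}(L_{T(n)}\M)$: it is $\iHom^{\dbl}_{L_n^f}(L_{T(n)}\Sp,\M)$, and by \cref{thm:unst_completion} it depends only on $L_{T(n)}\Sp\otimes_{L_n^f}\M$. The first equivalence $\Nuc_{T(n)}(\C_{hG})\simeq\Nuc_{T(n)}(\C^{hG,\dbl})$ then follows from the previous step.

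For the second equivalence I use the tensor–Hom adjunction in $\Pr^{\dbl}_{L_n^f}$. Since $(-)^{hG,\dbl}\simeq\iHom^{\dbl}_{\Sp[G]}(\Sp,-)$ and $\Nuc_{T(n)}=\iHom^{\dbl}_{L_n^f}(L_{T(n)}\Sp,-)$ are both internal Homs, they commute:
\[
	\Nuc_{T(n)}(\C^{hG,\dbl})\simeq\Nuc_{T(n)}(\C)^{hG,\dbl}.
\]
Equivalently, $\lim^{\dbl}_{BG}$ commutes with the right adjoint $\Nuc_{T(n)}$. The main point to check carefully is that the composite map obtained this way agrees with the one induced by the norm. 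I would verify this by noting that both are the canonical comparison maps of limits.

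Finally, for $K$-theory I use \cref{cor:purity_T(n)}(1). For $L_n^f$-local $\M$, the completion $\M\to\Nuc_{T(n)}(\M)$ induces an equivalence on $K^{\cont}_{T(n+1)}$. I apply this to both $\C_{hG}$ and $\C^{hG,\dbl}$, which are $L_n^f$-local, and compare with the naturality square for $\Nm$. The bottom map $\Nuc_{T(n)}(\Nm)$ is an equivalence by the previous step, so $K^{\cont}_{T(n+1)}(\Nm)$ is an equivalence.

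I expect the main obstacle to be the Tate-vanishing input. That is where $L_{n-1}^f$-locality of $\C^{tG,\dbl}$ must be deduced from $\End(\mathbbm{1})\simeq(L_n^f\bbS)^{tG}$ and Kuhn's theorem. Since that result is proved later independently, I take it as given here.
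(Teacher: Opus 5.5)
Your proposal is correct and follows essentially the same route as the paper: apply $L_{T(n)}$ to the norm cofiber sequence $\C_{hG}\to\C^{hG,\dbl}\to\C^{tG,\dbl}$ and use \cref{thm:tate_vanishing}; then use that $\Nuc_{T(n)}(-)$ depends only on $L_{T(n)}(-)$ and, being a right adjoint, commutes with $(-)^{hG,\dbl}$; finally deduce the $K$-theoretic statement from \cref{cor:purity_T(n)}. Your detour through \cref{thm:hGdbl=Nuc} in the first step is harmless but not needed, since the norm is a fully faithful internal left adjoint by construction and $\C^{tG,\dbl}$ is by definition its cofiber.
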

\begin{proof}
	Assuming \cref{thm:tate_vanishing}, the first equivalence follows from the vanishing $L_{T(n)}\C^{tG,\dbl}\simeq0$. It follows from \cref{thm:unst_completion} that $\Nuc_{T(n)}(-)$ preserves dualizable limits, and hence $\Nuc_{T(n)}(\C_{hG})\simeq\Nuc_{T(n)}(L_{T(n)}\C_{hG})\simeq\Nuc_{T(n)}(L_{T(n)}\C^{hG,\dbl})\simeq\Nuc_{T(n)}(\C^{hG,\dbl})\simeq\Nuc_{T(n)}(\C)^{hG,\dbl}$. Therefore, by \cref{cor:purity_T(n)}, the norm map induces an equivalence on $K^{\cont}_{T(n+1)}$.
\end{proof}
\begin{corollary}
	Let $n\geq1$ and $\C\in\CAlg^{\rig}(\Pr^{\dbl}_{L_n^f})^{BG}$ with $G$ a finite group. Then the canonical maps induce symmetric monoidal equivalences
	\[
		\Nuc_{T(n)}(\C_{hG})\xleftarrow{\simeq}\Nuc_{T(n)}(\C^{hG,\dbl})\xrightarrow{\simeq}\Nuc_{T(n)}(\C)^{hG,\dbl}\xrightarrow{\simeq}(L_{T(n)}\C_{hG})^{\rig}.
	\]
\end{corollary}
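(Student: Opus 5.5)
The last two equivalences in the chain, $\Nuc_{T(n)}(\C_{hG})\xleftarrow{\simeq}\Nuc_{T(n)}(\C^{hG,\dbl})\xrightarrow{\simeq}\Nuc_{T(n)}(\C)^{hG,\dbl}$, come directly from \cref{thm:nuc_descent}. The first arrow is the nuclear completion of the norm map $\Nm:\C_{hG}\hookrightarrow\C^{hG,\dbl}$. By \cref{thm:nuc_descent} it becomes an equivalence after $L_{T(n)}$, and $\Nuc_{T(n)}(\M)\simeq\Nuc_{T(n)}(L_{T(n)}\M)$, so it is an equivalence. The second arrow is the commutation of $\Nuc_{T(n)}=\iHom^{\dbl}_{L_n^f\Sp}(L_{T(n)}\Sp,-)$ with dualizable limits.

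The work is to upgrade these to symmetric monoidal equivalences and to pin down the last term. For the monoidal structure I would use that every functor involved is lax symmetric monoidal:
\begin{enumerate}
\item $\Nuc_{T(n)}$ is lax symmetric monoidal and preserves rigid algebras by \cref{thm:Nuc_T(n)}(1).
\item $(-)^{hG,\dbl}$ is a right adjoint of the symmetric monoidal functor $\const$, hence lax symmetric monoidal.
\item $(-)_{hG}$ on $\CAlg$ is computed as a limit in $\CAlg(\PrL_{\st})$.
\end{enumerate}
By \cref{cor:C_hG_locrig}, $\C^{hG,\dbl}\simeq(\C_{hG})^{\rig}$ and $\Nm$ is the fully faithful left adjoint of the symmetric monoidal projection $\C^{hG,\dbl}\to\C_{hG}$. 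So the $\Nuc_{T(n)}$ of the right adjoint is a symmetric monoidal functor $\Nuc_{T(n)}(\C^{hG,\dbl})\to\Nuc_{T(n)}(\C_{hG})$, inverse to $\Nuc_{T(n)}(\Nm)$. This identifies the arrow written "$\leftarrow$" as symmetric monoidal. The comparison $\Nuc_{T(n)}(\C^{hG,\dbl})\to\Nuc_{T(n)}(\C)^{hG,\dbl}$ is the canonical lax structure map between two lax symmetric monoidal functors, and it is an equivalence. The lax structure maps of an equivalence of lax functors are equivalences, so it is symmetric monoidal.

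For the last term, I would identify $\Nuc_{T(n)}(\C_{hG})$ with $(L_{T(n)}\C_{hG})^{\rig}$ using \cref{thm:unst_completion}. The category $\C_{hG}$ is locally rigid with rigid envelope $\C^{hG,\dbl}$ by \cref{cor:C_hG_locrig}, and it is $L_n^f$-local. Hence $L_{T(n)}\C_{hG}\simeq L_{T(n)}\Sp\otimes_{L_n^f\Sp}\C_{hG}$ is the smashing ideal of the rigid algebra $L_{T(n)}\Sp\otimes_{L_n^f\Sp}\C^{hG,\dbl}$ inside it, so it is locally rigid. Equivalently, $L_{T(n)}\C_{hG}\simeq L_{T(n)}\C^{hG,\dbl}$ by \cref{thm:nuc_descent}, and this is $\I\otimes_{L_n^f\Sp}\D$ with $\I=L_{T(n)}\Sp$ and $\D=\C^{hG,\dbl}$ rigid over $L_n^f\Sp$. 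The "Moreover" part of \cref{thm:unst_completion} then gives
\[
(L_{T(n)}\C^{hG,\dbl})^{\rig}_{L_n^f}\simeq\iHom^{\dbl}_{L_n^f\Sp}(L_{T(n)}\Sp,\C^{hG,\dbl})=\Nuc_{T(n)}(\C^{hG,\dbl}).
\]
The $L_n^f\Sp$-rigidification agrees with the absolute one because $L_n^f\Sp$ is rigid (indeed idempotent) over $\Sp$. This rigidification is naturally symmetric monoidal, since it is the coreflection into $\CAlg^{\rig}$.

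The main obstacle is checking that the arrow $\Nuc_{T(n)}(\C)^{hG,\dbl}\to(L_{T(n)}\C_{hG})^{\rig}$ in the statement, however it is canonically defined, agrees with the composite of the identifications above. I would settle this by the universal property of rigidification. All terms are rigid, and each map is a map of rigid algebras lying over $L_{T(n)}\C_{hG}$ via the counit. Since $(-)^{\rig}$ is right adjoint to the inclusion of rigid algebras, such a map is unique up to equivalence, so the maps agree.
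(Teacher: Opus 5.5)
Your handling of the first two arrows matches the paper, which simply cites \cref{thm:nuc_descent} for them and only argues the last equivalence. For that last equivalence your route is different.

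\textbf{The last equivalence: paper's route versus yours.} The paper applies \cref{thm:unst_completion} to the rigid $G$-equivariant algebra $\C$ itself, which gives $\Nuc_{T(n)}(\C)\simeq(L_{T(n)}\C)^{\rig}$. It then uses the first assertion of \cref{cor:C_hG_locrig}, that rigidification commutes with the $BG$-indexed limit:
$((L_{T(n)}\C)^{\rig})^{hG,\dbl}\simeq((L_{T(n)}\C)_{hG})^{\rig}\simeq(L_{T(n)}\C_{hG})^{\rig}$.
This lands directly on the third term. It is a limit computation in $\CAlg^{\rig}(\PrL_{\st})$, so it is symmetric monoidal for free, and it uses no Tate vanishing. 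The compatibility problem you call the main obstacle therefore never arises.

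You instead apply \cref{thm:unst_completion} to $\C^{hG,\dbl}$, transport along $L_{T(n)}\C^{hG,\dbl}\simeq L_{T(n)}\C_{hG}$, and use the universal property of $(-)^{\rig}$ to match your composite with the canonical arrow. That works, but it spends Tate vanishing and an extra coherence check on a step that is formal. One small slip: for $n\geq1$, $L_{T(n)}\Sp\otimes_{L_n^f\Sp}\C^{hG,\dbl}$ is only locally rigid, not rigid. Your ``Equivalently'' sentence is the correct argument.

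\textbf{A step that fails as written.} You apply $\Nuc_{T(n)}$ to the projection $j^*\colon\C^{hG,\dbl}\to\C_{hG}$ and invoke lax monoidality of $\Nuc_{T(n)}$ on $\Pr^{\dbl}_{\st}$, treating $\C_{hG}$ as a commutative algebra there. But $j^*$ is not an internal left adjoint, and $\C_{hG}\notin\CAlg(\Pr^{\dbl}_{\st})$.
\begin{enumerate}
\item $j^*$ sends the compact unit of $\C^{hG,\dbl}$ to $\mathbbm{1}_{\C_{hG}}$, which is only $\omega_1$-compact.
\item If $\mathbbm{1}_{\C_{hG}}$ were compact, the locally rigid $\C_{hG}$ would be rigid, and $\Nm$ would be an equivalence.
\item Then $\End(\mathbbm{1}_{\C})^{tG}\simeq\End(\mathbbm{1}_{\C^{tG,\dbl}})\simeq0$ by \cref{prop:End(1_tG)}.
\item This fails whenever $\End(\mathbbm{1}_{\C})^{tG}\not\simeq0$, for instance for $L_n^f\Sp$ with trivial $C_p$-action.
\end{enumerate}

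\textbf{How to repair it.} Localize first.
\begin{enumerate}
\item $L_{T(n)}j^*$ is symmetric monoidal. Since $j^*\Nm\simeq\id$, it is inverse to the equivalence $L_{T(n)}\Nm$.
\item Hence $L_{T(n)}\C_{hG}\simeq L_{T(n)}\C^{hG,\dbl}$ as commutative algebras in $\Pr^{\dbl}_{T(n)}$. The unit and multiplication on the right are base-changed from $\C^{hG,\dbl}$, so they are internal left adjoints.
\item Then use $\Nuc_{T(n)}(\M)\simeq\Nuc_{T(n)}(L_{T(n)}\M)$, together with the lax monoidality of $\Nuc_{T(n)}\colon\Pr^{\dbl}_{T(n)}\to\Pr^{\dbl}_{L_n^f}$, the right adjoint of $L_{T(n)}\Sp\otimes_{L_n^f\Sp}-$.
\end{enumerate}
This is the only sense in which the arrow $\Nuc_{T(n)}(\C_{hG})\leftarrow\Nuc_{T(n)}(\C^{hG,\dbl})$ in the statement can be induced by the projection.
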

\begin{proof}
	By \cref{thm:nuc_descent}, we only need to show the last equivalence. It follows from \cref{thm:unst_completion,cor:C_hG_locrig} that $\Nuc_{T(n)}(\C)^{hG,\dbl}\simeq((L_{T(n)}\C)^{\rig})^{hG,\dbl}\simeq(L_{T(n)}\C_{hG})^{\rig}$.
\end{proof}
\begin{remark}
	If $n=0$, \cref{thm:nuc_descent} holds for $\C\in(\Pr^{\dbl}_{T(0)})^{BG}$ with $G$ a finite $p$-group or $\C\in(\Pr^{\dbl}_{\bbQ})^{BG}$ with $G$ a finite group.
\end{remark}
In particular, for $R\in\CAlg(L_n^f\Sp)^{BG}$, we obtain symmetric monoidal equivalences
\[
	\Nuc_{T(n)}(\Mod(R)_{hG})\simeq\Nuc_{T(n)}(\Nuc(R^{hG}))\simeq\Nuc(L_{T(n)}R)^{hG,\dbl}.
\]

We now apply \cref{thm:purity_dualizable,thm:descent_dualizable} to reprove the $T(n)$-local Galois descent theorem of \cite{CMNN24}. Let $R\to S\in\CAlg(L_{T(n)}\Sp)$ with $G$ a finite group. Recall that we say $R\to S$ is a \emph{$T(n)$-local $G$-Galois extension} if there is an $R$-linear $G$-action on $S$ such that both $R\to S^{hG}$ and $S\otimes_RS\to\prod_GS$, $(a_1,a_2)\mapsto(a_1g(a_2))_{g\in G}$ are $T(n)$-local equivalences. A $T(n)$-local $G$-Galois extension $R\to S$ is always \emph{$T(n)$-locally faithful}, i.e. $L_{T(n)}(S\otimes_R-)$ is conservative. Indeed, $R\simeq S^{hG}\simeq L_{T(n)}S_{hG}$ and hence $\id_{L_{T(n)}\Mod(R)}\simeq L_{T(n)}(S\otimes_R-)_{hG}$.
\begin{corollary}[Galois descent, {\cite[Corollary 4.16]{CMNN24}}]
	Let $n\geq0$ and $R\to S$ be a $T(n)$-local $G$-Galois extension with $G$ a finite $p$-group. Then the canonical map induces an equivalence
	\[
		K_{T(n+1)}(R)\simeq K_{T(n+1)}(S)^{hG}.
	\]
\end{corollary}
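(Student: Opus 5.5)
The plan is to pass from $K_{T(n+1)}(R)$ to a statement about dualizable categories with $G$-action, and then combine \cref{cor:purity_T(n)} with \cref{thm:descent_dualizable}. Since $R,S\in\CAlg(L_{T(n)}\Sp)\subseteq\CAlg(L_n^f\Sp)$, all module categories involved are $L_n^f$-local, so purity lets us work with $T(n)$-localized module categories. By \cref{cor:purity_T(n)}(1) we have
\[
	K_{T(n+1)}(R)\simeq K^{\cont}_{T(n+1)}(L_{T(n)}\Mod(R)),\qquad K_{T(n+1)}(S)\simeq K^{\cont}_{T(n+1)}(L_{T(n)}\Mod(S)),
\]
and these equivalences are natural in the ring. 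In particular they are $G$-equivariant for the $R$-linear $G$-action on $S$.

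Set $\D:=L_{T(n)}\Mod(S)\simeq L_{T(n)}\Sp\otimes\Mod(S)$. It carries a $G$-action, lies in $(\Pr^{\dbl}_{L_n^f})^{BG}$, and is dualizable because it is a localization of a compactly generated category by the smashing-type localization of \cref{prop:M_n^mf}. The key categorical step is to identify $\D^{hG,\dbl}$ with $L_{T(n)}\Mod(R)$, or at least to show their $K_{T(n+1)}$ agree.

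Galois descent for $T(n)$-local modules gives $L_{T(n)}\Mod(R)\simeq\lim^{\PrL}_{BG}L_{T(n)}\Mod(S)$, by $T(n)$-local faithfulness and Galois descent for modules, as in \cite{CMNN24}. It remains to compare this with the dualizable limit. Since $G$ is finite, the $\PrL$-limit of a $G$-diagram is also a colimit in $\PrL_{\st}$ (ambidexterity for finite groups in stable presentable categories: $\lim_{BG}\simeq\colim_{BG}$ via the norm, which is an equivalence in $\PrL_{\st}$ for finite $G$). Hence $\lim^{\PrL}_{BG}\D\simeq\D_{hG}$ is dualizable. For the dualizable limit I would then use \cref{rmk:lim^dbl} to check that the canonical internal left adjoint $\D_{hG}\to\D^{hG,\dbl}$, i.e.\ the norm map, becomes an equivalence after $K_{T(n+1)}$. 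The corollary after \cref{rmk:lim^dbl} (norm map induces $K_{T(n+1)}$-equivalence for finite $p$-groups) gives exactly this.

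Combining the pieces:
\[
	K_{T(n+1)}(R)\simeq K^{\cont}_{T(n+1)}(\D_{hG})\simeq K^{\cont}_{T(n+1)}(\D^{hG,\dbl})\simeq K^{\cont}_{T(n+1)}(\D)^{hG}\simeq K_{T(n+1)}(S)^{hG},
\]
where the third equivalence is \cref{thm:descent_dualizable}.

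The main obstacle is the identification $L_{T(n)}\Mod(R)\simeq(L_{T(n)}\Mod(S))_{hG}$ in $\PrL_{\st}$, together with checking that the resulting map on $K$-theory is the canonical one. For the identification I would first show that $L_{T(n)}(S\otimes_R-)$ is comonadic with comonad $\prod_G(-)$, using $S\otimes_RS\simeq\prod_GS$ $T(n)$-locally. This comparison of the comonad with the $G$-action is what realizes the limit over $BG$, which in turn is equivalent to homotopy orbits for finite $G$.
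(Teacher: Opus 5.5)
Your proposal is correct and follows essentially the paper's route: identify $L_{T(n)}\Mod(R)\simeq(L_{T(n)}\Mod(S))_{hG}$ by Galois descent for $T(n)$-local modules, then apply \cref{cor:purity_T(n)} and \cref{thm:descent_dualizable}. The paper simply cites \cite[Theorem 9.4]{Mat16} for that identification, so your comonadicity sketch is not needed. The only other difference is cosmetic: with $\D=L_{T(n)}\Mod(S)$, you pass through $\D^{hG,\dbl}$ via the norm-map corollary and the fixed-point half of descent, whereas the paper goes from $K_{T(n+1)}(\D_{hG})$ to $K_{T(n+1)}(\D)^{hG}$ directly, implicitly using the orbit half together with $T(n+1)$-local Tate vanishing.
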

\begin{proof}
	By \cite[Theorem 9.4]{Mat16}, we have a natural equivalence $L_{T(n)}\Mod(R)\simeq(L_{T(n)}\Mod(S))_{hG}$. Then it follows from \cref{cor:purity_T(n),thm:descent_dualizable} that
	\begin{align*}
		K_{T(n+1)}(R)\simeq&\;K_{T(n+1)}(L_{T(n)}\Mod(R))\simeq K_{T(n+1)}((L_{T(n)}\Mod(S))_{hG})\\
		\simeq&\;K_{T(n+1)}(L_{T(n)}\Mod(S))^{hG}\simeq K_{T(n+1)}(S)^{hG}.
	\end{align*}
\end{proof}
In fact, our proof only uses the compactly generated version of purity and descent, whereas the original proof in \cite{CMNN24} relies on the first equivalence of \cref{prop:nuc_Galois} together with \cref{cor:purity_T(n)}(2).
\begin{proposition}[Nuclear Galois descent]\label{prop:nuc_Galois}
	Let $n\geq1$ and $R\to S$ be a $T(n)$-local $G$-Galois extension with $G$ a finite group. Then the canonical maps induce symmetric monoidal equivalences
	\[
		\Ind(L_{T(n)}\Mod(R)^{\dbl})\xrightarrow{\simeq}\Ind(L_{T(n)}\Mod(S)^{\dbl})^{hG,\cg},
	\]
	and
	\[
		\Nuc(L_{T(n)}R)\xrightarrow{\simeq}\Nuc_{T(n)}(\Nuc(S^{hG}))\xrightarrow{\simeq}\Nuc(L_{T(n)}S)^{hG,\dbl}.
	\]

	Consequently, the canonical map induces an equivalence
	\[
		K_{T(n+1)}(R)\xrightarrow{\simeq}K^{\cont}_{T(n+1)}(\Nuc(S^{hG})).
	\]
\end{proposition}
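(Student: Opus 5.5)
The plan is to start from Mathew's Galois descent, \cite[Theorem 9.4]{Mat16}, which gives a symmetric monoidal equivalence $L_{T(n)}\Mod(R)\simeq(L_{T(n)}\Mod(S))_{hG}$. All the stated equivalences are then obtained by applying rigidification, nuclear completion and $\Ind((-)^{\dbl})$ to this identification, and comparing the outputs using \cref{cor:C_hG_locrig}, \cref{thm:nuc_descent} and \cref{constr:Ind_dbl}.

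\textbf{Nuclear equivalences.} Set $\C:=\Mod(S)$ with its $G$-action; it is rigid and $L_n^f$-local after replacing it by $L_n^f\Mod(S)$, and we have $\Mod(S)_{hG}=\Mod(S^{hG})$-type data, i.e.\ $\Nuc(S^{hG})=\Nuc(\Mod(S)_{hG})$.
\begin{enumerate}
\item By the corollary after \cref{thm:nuc_descent}, applied to the rigid algebra $L_n^f\Mod(S)$ with its $G$-action, we obtain symmetric monoidal equivalences
\[
\Nuc_{T(n)}(\Mod(S)_{hG})\simeq\Nuc_{T(n)}(\Mod(S))^{hG,\dbl}\simeq(L_{T(n)}\Mod(S)_{hG})^{\rig}.
\]
\item The middle term is $\Nuc(L_{T(n)}S)^{hG,\dbl}$.
\item Since $L_{T(n)}(\Mod(S)_{hG})\simeq(L_{T(n)}\Mod(S))_{hG}\simeq L_{T(n)}\Mod(R)$, the right-hand term is $(L_{T(n)}\Mod(R))^{\rig}=\Nuc(L_{T(n)}R)$.
\item It remains to identify $\Nuc_{T(n)}(\Nuc(S^{hG}))$ with $\Nuc_{T(n)}(\Mod(S)_{hG})$. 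This holds because $\Mod(S)_{hG}\hookrightarrow\Nuc(S^{hG})$ is the norm map, and $\Nuc_{T(n)}$ turns it into an equivalence by \cref{thm:nuc_descent}.
\end{enumerate}
I still need to check that the resulting maps agree with the canonical ones. The key point is that $\Mod(R)\to\Mod(S)$ factors through $\Mod(S)_{hG}$ symmetric monoidally.

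\textbf{$\Ind((-)^{\dbl})$ equivalence.} By \cref{constr:Ind_dbl}, $\Ind((-)^{\dbl})$ is right adjoint to the inclusion of compactly generated rigid algebras, composed with rigidification. Right adjoints preserve limits, and $\lim^{\cg}$ of compactly generated rigid algebras computes the limit in $\CAlg^{\rig}(\PrL_{\st})^{\cg}$. Hence
\[
\Ind(L_{T(n)}\Mod(S)^{\dbl})^{hG,\cg}\simeq\Ind\bigl(((L_{T(n)}\Mod(S))^{\rig})^{hG,\dbl}\bigr)^{\dbl}.
\]
By the previous step and \cref{constr:Ind_dbl}, this is $\Ind((L_{T(n)}\Mod(R))^{\rig,\dbl})\simeq\Ind(L_{T(n)}\Mod(R)^{\dbl})$. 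The delicate point is that $\CAlg^{\rig}(\PrL_{\st})^{\cg}$ is coreflective, so limits there are the coreflection of limits in $\CAlg^{\rig}$, and one has to verify that this agrees with $\lim^{\cg}$. I expect this to be the main obstacle; I would try to check it directly on dualizable objects, using that $(\lim^{\dbl}\C_i)^{\dbl}=\lim\C_i^{\dbl}$ for the rigid pieces.

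\textbf{$K$-theory.} $K_{T(n+1)}(R)$ agrees with $K^{\cont}_{T(n+1)}(\Nuc(L_{T(n)}R))$ by \cref{cor:purity_T(n)}. That category is equivalent to $\Nuc_{T(n)}(\Nuc(S^{hG}))$, and \cref{cor:purity_T(n)} applied to the $L_n^f$-local category $\Nuc(S^{hG})$ shows that $K^{\cont}_{T(n+1)}$ of it agrees with $K^{\cont}_{T(n+1)}(\Nuc(S^{hG}))$. This requires working with the $L_n^f$-localized versions throughout.
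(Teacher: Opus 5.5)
Your proposal is correct. For the nuclear and $K$-theoretic statements it is essentially the paper's argument: Mathew's equivalence, then \cref{thm:nuc_descent} to invert the norm map $\Mod(S)_{hG}\hookrightarrow\Mod(S)^{hG,\dbl}\simeq\Nuc(S^{hG})$ after applying $\Nuc_{T(n)}$, then \cref{cor:purity_T(n)} at both ends. The paper identifies $\Nuc(L_{T(n)}R)$ with $\Nuc_{T(n)}(\Mod(S)_{hG})$ using that $\Nuc_{T(n)}$ only depends on the $T(n)$-localization. You instead pass through the rigidification term in the corollary following \cref{thm:nuc_descent}, which amounts to the same thing. One small correction: $\Mod(S)_{hG}$ is the category of $G$-equivariant $S$-modules, not $\Mod(S^{hG})$. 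You only use the definition $\Nuc(S^{hG})=\Nuc(\Mod(S)_{hG})$, so nothing breaks.

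The genuine difference is in the first equivalence. The paper applies \cite[Proposition 4.6.1.11]{HA} directly to the symmetric monoidal equivalence $L_{T(n)}\Mod(R)\simeq(L_{T(n)}\Mod(S))_{hG}$, which is a limit in $\CAlg(\widehat{\Cat})$. This gives $L_{T(n)}\Mod(R)^{\dbl}\simeq(L_{T(n)}\Mod(S)^{\dbl})^{hG}$, and taking $\Ind$ finishes, since limits in $\Pr^{\cg}_{\st}$ are $\Ind$ of limits of compact objects.

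Your route through the coreflection $\Ind((-)^{\dbl})$, rigidification and \cref{cor:C_hG_locrig} also works. The point you flag as the main obstacle is that $\lim^{\cg}$ computes the limit in $\CAlg^{\rig}(\PrL_{\st})^{\cg}$. It is settled by exactly the same input: dualizability in a limit of symmetric monoidal categories is detected pointwise. Hence $\Ind(\lim_i\C_i^{\dbl})$ is compactly generated rigid and visibly has the required universal property.

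Once you have that fact, the detour is superfluous. Even within your framework, applying the single right adjoint $\C\mapsto\Ind(\C^{\dbl})$ on $\CAlg(\PrL_{\st})$ to $L_{T(n)}\Mod(R)\simeq\lim_{BG}L_{T(n)}\Mod(S)$ would avoid \cref{cor:C_hG_locrig}. What your formulation buys is a uniform picture: both displayed equivalences come from applying right adjoints, $\Ind((-)^{\dbl})$ and $(-)^{\rig}$, to the one descent equivalence. This makes their compatibility under $\Ind(\C^{\dbl})\hookrightarrow\C^{\rig}$ transparent. The paper's version is simply shorter.
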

\begin{proof}
	Since $L_{T(n)}\Mod(R)\simeq(L_{T(n)}\Mod(S))_{hG}$, we have $L_{T(n)}\Mod(R)^{\dbl}\simeq(L_{T(n)}\Mod(S)^{\dbl})^{hG}$ by \cite[Proposition 4.6.1.11]{HA}, and the first equivalence follows. For the second equivalence, by \cref{thm:nuc_descent} we have $\Nuc(L_{T(n)}R)\simeq\Nuc_{T(n)}(L_{T(n)}\Mod(R))\simeq\Nuc_{T(n)}(L_{T(n)}\Mod(S)_{hG})\simeq\Nuc_{T(n)}(\Mod(S)_{hG})\simeq\Nuc_{T(n)}(\Nuc(S^{hG}))\simeq\Nuc(L_{T(n)}S)^{hG,\dbl}$.
\end{proof}
\begin{remark}
	In their forthcoming work, Burklund--Clausen prove the corresponding Galois descent theorem for arbitrary finite groups \cite[00:15:35]{Cla24perfection}: if $R\to S$ is a $T(n)$-local $G$-Galois extension with $G$ a finite group, then the canonical map induces an equivalence
	\[
		K_{T(n+1)}(R)\xrightarrow{\simeq}K_{T(n+1)}(S)^{hG}.
	\]
	Combining this result with \cref{prop:nuc_Galois} yields equivalences
	\begin{align*}
		K^{\cont}_{T(n+1)}(\Nuc(S^{hG}))\simeq K_{T(n+1)}(R)\simeq K_{T(n+1)}(S)^{hG}.
	\end{align*}
\end{remark}

\subsection{Redshift or noshift, and blueshift}\label{sec:3.3}

We study the chromatic height of symmetric monoidal stable categories. We prove that continuous $K$-theory raises chromatic height by at most $1$, and, combining this with Ramzi's noshift theorem and dimension map construction in \cite{Ram26noshift}, show that redshift and noshift are the only possible behaviors for rigid symmetric monoidal stable categories. We then prove a categorical Tate vanishing theorem for dualizable stable categories and show that the $C_p$-Tate fixed points of the trivial action lower chromatic height by at most $1$, thereby establishing a blueshift theorem for rigid symmetric monoidal stable categories.

We begin by recalling the notion of chromatic height for $\bbE_\infty$-rings. The following theorem ensures that it is well defined.
\begin{theorem}[{\cite[Theorem 1.1]{Hah22Hinfty}, \cite[Theorem 1.5]{BSY22nullstellensatz}}]\label{thm:Hahn}
	Let $R\in\CAlg(\Sp)$ and $n\geq0$. If $L_{T(n)}R\simeq0$, then $L_{T(n+1)}R\simeq0$.
\end{theorem}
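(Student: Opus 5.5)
The plan is Hahn's argument via the Tate-valued Frobenius, combined with the lax monoidality of $L_{T(n)}$. The first step is a reduction to a statement about $T(n+1)$-local rings. Suppose $L_{T(n)}R\simeq0$ and, for contradiction, $B:=L_{T(n+1)}R\neq0$; note that $B$ is an $\bbE_\infty$-ring under $R$. For any $\bbE_\infty$-$R$-algebra $A$, lax monoidality makes $L_{T(n)}A$ a module over $L_{T(n)}R\simeq0$, so $L_{T(n)}A\simeq0$. It therefore suffices to produce an $\bbE_\infty$-ring map $B\to A$ with $L_{T(n)}A\neq0$.

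For $A$ we take the Tate construction $B^{tC_p}$ with trivial action, and for the map the Tate-valued Frobenius $\varphi\colon B\to B^{tC_p}$, which exists because $B$ is $\bbE_\infty$ and is an $\bbE_\infty$-ring map by Nikolaus--Scholze. This makes $B^{tC_p}$ an $\bbE_\infty$-$R$-algebra. The reduction then leaves a single claim: if $B$ is a nonzero $T(n+1)$-local $\bbE_\infty$-ring, then $L_{T(n)}(B^{tC_p})\neq0$.

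For the claim, the upper bound is Kuhn's theorem: $B^{tC_p}$ is $L_n^f$-local, so all of its chromatic information sits at heights $\leq n$. For nonvanishing, we first reduce to the universal case. The spectrum $B^{tC_p}$ is a $(L_{T(n+1)}\bbS)^{tC_p}$-algebra, but that direction does not help, since a ring receiving a map from something with $L_{T(n)}\neq0$ can still have $L_{T(n)}=0$. So we argue on $B$ directly. Because $B\neq0$ and $B$ is $T(n+1)$-local, the chromatic Nullstellensatz of Burklund--Schlank--Yuan gives an $\bbE_\infty$-map $B\to E$ to a Lubin--Tate theory of height $n+1$, after $T(n+1)$-localization; it is nonzero since $L_{T(n+1)}B=B\neq0$. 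Functoriality of $(-)^{tC_p}$ on $\bbE_\infty$-rings then gives a ring map $B^{tC_p}\to E^{tC_p}$. By the same module argument as above, $L_{T(n)}(B^{tC_p})=0$ would force $L_{T(n)}(E^{tC_p})=0$.

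It therefore remains to show $L_{K(n)}(E^{tC_p})\neq0$, which is the blueshift computation of Greenlees--Sadofsky and Hovey--Sadofsky, equivalently the Ando--Morava--Sadofsky description of $\pi_*E^{tC_p}$ as $\pi_*E$ with the $p$-series inverted: a nonzero ring of height exactly $n$. I expect the main obstacle to be arranging the reduction to $E$ through $\bbE_\infty$-maps only, since a mere homotopy ring map does not give a map on Tate constructions with the needed multiplicativity. If invoking the Nullstellensatz feels circular, since it is the second citation of the statement, the alternative is Hahn's original route: $B\neq0$ is $T(n+1)$-local, so some $K(n+1)$-homology of a finite type-$(n+1)$ quotient of $B$ is nonzero. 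Then one uses power operations on $\varphi$, namely the nilpotence of the classes killed by the Frobenius together with the fact that $v_{n+1}$ acts invertibly, to show that the unit of $B^{tC_p}$ survives $T(n)$-localization. This is the analytically delicate step.
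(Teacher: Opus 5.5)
Your argument is correct, granting two inputs: the Burklund--Schlank--Yuan Nullstellensatz, and the classical fact that $L_{K(n)}(E^{tC_p})\neq 0$ for a height-$(n+1)$ Lubin--Tate theory $E$. The paper itself gives no argument here; it only cites Hahn (power operations on $H_\infty$-rings) and Burklund--Schlank--Yuan (the Nullstellensatz).

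Your first step is a useful observation in its own right. It shows that the statement follows formally from the Tate-form result the paper cites in \cref{sec:3.3} (\cite[Proposition 4.7]{Hah22Hinfty}, \cite[Theorem 9.8]{BSY22nullstellensatz}), applied to $B=L_{T(n+1)}R$. Your second step is then the Nullstellensatz proof of that Tate-form result.

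The route is, however, longer than it needs to be:
\begin{itemize}
\item The Frobenius is not needed to make $B^{tC_p}$ an $\bbE_\infty$-$R$-algebra. The canonical $\bbE_\infty$-maps $B\to B^{hC_p}\to B^{tC_p}$ for the trivial action already do this.
\item Kuhn's upper bound plays no role in the argument.
\item Above all, the Tate construction is superfluous. Once the Nullstellensatz gives you an $\bbE_\infty$-map $B\to E(L)$ with $L$ algebraically closed, $E(L)$ is itself an $\bbE_\infty$-$R$-algebra. It satisfies $L_{T(n)}E(L)\neq 0$, since already $L_{K(n)}E(L)\neq 0$ because $E(L)$ has height exactly $n+1$. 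By your module argument this contradicts $L_{T(n)}R\simeq 0$.
\end{itemize}

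This one-line version is essentially the Nullstellensatz-based argument behind the second citation. It also removes your worry about working only with $\bbE_\infty$-maps. For the module argument even a unital homotopy ring map would suffice, since $E$ is then a retract of $B\otimes E$.

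Invoking the Nullstellensatz is not circular, because it is a statement at the single height $n+1$.

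Your fallback ``Hahn's original route'' is only a gesture. The sentence about nilpotence of classes killed by the Frobenius and invertibility of $v_{n+1}$ is not an argument. If you wanted to avoid the Nullstellensatz, the actual content of Hahn's power-operation proof would still be missing.
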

\begin{definition}
	Let $R\in\CAlg(\Sp)$ be nonzero. We define the \emph{chromatic height} of $R$ as
	\[
		\operatorname{ht}(R):=\max\{n:L_{T(n)}R\not\simeq0\},
	\]
	where $\operatorname{ht}(R):=-1$ if $L_{T(0)}R\simeq R[1/p]\simeq0$, and $\operatorname{ht}(R):=\infty$ if $L_{T(n)}R\not\simeq0$ for all $n$.
\end{definition}
Equivalently, $\operatorname{ht}(R)=\max\{n:L_{T(n)}R\not\simeq0\}=\min\{n:L_{T(n)}R\simeq0\}-1$. With this notion of height, Ausoni--Rognes' redshift conjecture for $\bbE_\infty$-rings has been proved.
\begin{theorem}[Redshift, {\cite[Redshift Theorem]{LMMT24}, \cite[Theorem E]{BSY22nullstellensatz}}]
	Let $R\in\CAlg(\Sp)$ be nonzero and of height $\operatorname{ht}(R_{(p)})\geq0$. Then
	\[
		\operatorname{ht}(K(R))=\operatorname{ht}(R)+1.
	\]
\end{theorem}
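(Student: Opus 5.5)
The plan is to prove the two inequalities $\operatorname{ht}(K(R))\leq\operatorname{ht}(R)+1$ and $\operatorname{ht}(K(R))\geq\operatorname{ht}(R)+1$ separately. Set $n:=\operatorname{ht}(R)$; the hypothesis $\operatorname{ht}(R_{(p)})\geq0$ guarantees $n\geq0$.

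First, the height of $K(R)$ is well defined. Since $R\neq0$, the unit of $\Mod(R)$ gives $K(R)\neq0$, and $K(R)$ is an $\bbE_\infty$-ring. By \cref{thm:Hahn} its $T(m)$-local support is therefore an initial segment of $\bbN$. So it suffices to show that $L_{T(n+2)}K(R)\simeq0$ and $L_{T(n+1)}K(R)\not\simeq0$; the latter, by \cref{thm:Hahn}, also gives nonvanishing at all lower heights. If $n=\infty$, only the lower bound is needed, and it is obtained by running the argument below at every finite height.

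\emph{Upper bound.} I would apply \cref{thm:purity_dualizable} (equivalently, the purity of $\bbE_1$-rings recalled in the Introduction) with $m:=n+2\geq2$:
\[
K_{T(m)}(R)\simeq K_{T(m)}(L_{T(m)\oplus T(m-1)}R).
\]
By \cref{thm:Hahn} and the definition of height, $L_{T(m)}R\simeq0$ and $L_{T(m-1)}R\simeq0$. Hence $L_{T(m)\oplus T(m-1)}R\simeq0$, and its $K$-theory vanishes. This step is formal.

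\emph{Lower bound.} First reduce along the $\bbE_\infty$-map $K(R)\to K(L_{T(n)}R)$. If $A\to B$ is an $\bbE_\infty$-map, then $L_{T(n+1)}B$ is a module over $L_{T(n+1)}A$, so $L_{T(n+1)}B\neq0$ forces $L_{T(n+1)}A\neq0$. It therefore suffices to treat a nonzero $T(n)$-local $\bbE_\infty$-ring $R$. Next, invoke the chromatic Nullstellensatz of Burklund--Schlank--Yuan \cite{BSY22nullstellensatz}. For $n\geq1$, $L_{T(n)}R\neq0$ implies $L_{K(n)}R\neq0$ for $\bbE_\infty$-rings, and then there is an $\bbE_\infty$-map $L_{K(n)}R\to E(\overline{\bbF}_p)$ to a Lubin--Tate theory of height $n$. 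Composing with $R\to L_{K(n)}R$ and repeating the same reduction, it suffices to show $L_{T(n+1)}K(E_n)\neq0$ for a Lubin--Tate theory $E_n$. For $n=0$, instead map $R[1/p]$ to an algebraically closed field $F$ of characteristic $0$, such as a residue field of $\pi_0$ after passing to an appropriate discrete quotient. Suslin rigidity then gives $K(F)^{\wedge}_p\simeq ku^{\wedge}_p$, so $L_{T(1)}K(F)\simeq KU_p\neq0$.

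The main obstacle is the nonvanishing $L_{T(n+1)}K(E_n)\neq0$ for $n\geq1$; everything else is formal given purity and the Nullstellensatz. Here I would invoke Yuan's theorem \cite{Yua21redshift}, which produces this nonvanishing for $E_n$ (equivalently, the existence of a suitable primitive higher root of unity in $T(n)$-local $K$-theory). Alternatively, one can use the $\bbE_\infty$-map to $K$ of a height-$(n+1)$-detected target constructed in \cite{BSY22nullstellensatz}. The delicate points are compatibility between $K(n)$- and $T(n)$-localizations and the need for $\bbE_\infty$ (not merely $\bbE_1$) structures so that the module argument applies. I would rely on the cited results for these rather than reprove them.
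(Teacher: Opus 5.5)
Your outline matches the paper's account of this theorem: purity for the upper bound, exactly as in the proof of \cref{thm:redshift}, and for the lower bound the Burklund--Schlank--Yuan Nullstellensatz, Yuan's theorem and downward closure. The upper bound is fine. Two steps of the lower bound, however, fail as written.

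First, the Nullstellensatz gives an $\bbE_\infty$-map $L_{T(n)}R\to E(L)$ for \emph{some} algebraically closed field $L$ of characteristic $p$, and $L$ cannot be taken to be $\overline{\bbF}_p$. Take $R=E(L)$ with $L$ algebraically closed of positive transcendence degree over $\bbF_p$. Any ring map $\pi_0E(L)\to\pi_0E(\overline{\bbF}_p)\to\overline{\bbF}_p$ kills $p$ on $W(L)$, so it gives a field embedding $L=W(L)/p\hookrightarrow\overline{\bbF}_p$, which does not exist. This error matters for your argument. Downward closure only moves nonvanishing from the target of a ring map to its source, and the only available map is $E(\overline{\bbF}_p)\to E(L)$. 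So $L_{T(n+1)}K(E(\overline{\bbF}_p))\not\simeq0$ says nothing about $E(L)$. You need Yuan's result in a form that covers every $E(L)$, namely as a criterion in terms of primitive height-$n$ $p$-th roots of unity. These roots live in the ring itself, not in its $K$-theory, and the criterion is a sufficient condition rather than an equivalent reformulation. $E(L)$ inherits such roots from $E(\overline{\bbF}_p)$, since they push forward along $\bbE_\infty$-maps.

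Second, your height-$0$ step only handles connective $R$. The hypotheses allow, e.g., $R=\KU\otimes\bbQ\simeq\bbQ[\beta^{\pm1}]$ with $|\beta|=2$. This ring has height $0$ but admits no $\bbE_\infty$-map to any nonzero discrete ring, because the unit $\beta$ would have to map to a unit in $\pi_2=0$. So there is no discrete quotient to pass to, and Suslin rigidity for algebraically closed fields is never reached. This case needs a nonvanishing input that does not go through discrete fields. One natural option is the roots-of-unity argument underlying Yuan's theorem, applied after base change to $R\otimes\bbQ(\zeta_p)$, which is still nonzero. With these two repairs your proposal becomes the argument the paper cites.
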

The upper bound $\operatorname{ht}(K(R))\leq\operatorname{ht}(R)+1$ is proved in \cite{LMMT24,CMNN24} using purely categorical arguments. Therefore, it extends naturally to symmetric monoidal stable categories. By contrast, the proof of the lower bound proceeds by reducing to a class of basic examples, called Nullstellensatzian objects. In \cite{BSY22nullstellensatz}, Burklund--Schlank--Yuan show that every $\bbE_\infty$-ring $R$ such that $\operatorname{ht}(R_{(p)})\geq0$ admits a map to a Lubin--Tate theory $\bbE_\infty$-ring $E_n(L)$ of the same height, and such Lubin--Tate theories satisfy chromatic redshift by \cite{Yua21redshift,BSY22nullstellensatz}. Since chromatic redshift is ``downward closed", this proves chromatic redshift under the hypotheses of the preceding theorem.

The same reduction strategy applies to the categorical setting. In particular, Nullstellensatzian objects remain the testing objects for chromatic redshift. In \cite{Ram26noshift}, Ramzi proves that every rigid $\Sp$-algebra admits a map to another rigid $\Sp$-algebra satisfying chromatic noshift. Thus, the testing objects themselves already produce counterexamples to chromatic redshift. Moreover, since chromatic noshift is ``upward closed", we obtain a class of failures of chromatic redshift. Combined with the dimension map constructed in loc. cit., this shows that chromatic redshift and chromatic noshift are the only possible behaviors for rigid $\Sp$-algebras.

Motivated by the case of $\bbE_\infty$-rings, we define the chromatic height of a symmetric monoidal stable category via its endomorphism $\bbE_\infty$-ring of the unit. Recall the adjunction
\[
	\Mod(-):\CAlg(\Sp)\rightleftarrows\CAlg(\PrL_{\st}):\End(\mathbbm{1}_{(-)}).
\]
The unit is an equivalence $R\simeq\End(\mathbbm{1}_{\Mod(R)})$ for $R\in\CAlg(\Sp)$, so that $\Mod(-)$ is fully faithful. The counit is given by the symmetric monoidal functor $\Mod(\End(\mathbbm{1}_{\C}))\to\C$ for $\C\in\CAlg(\PrL_{\st})$, which is fully faithful whenever $\mathbbm{1}\in\C^{\omega}$. 
\begin{lemma}\label{lem:End(1_L)}
	Let $\C\in\CAlg(\PrL_{\st})$ and $L:\Sp\to L\Sp$ be a symmetric monoidal localization. If $\mathbbm{1}\in\C^{\omega}$, then $\End(\mathbbm{1}_{L\Sp\otimes\C})\simeq L\End(\mathbbm{1}_{\C})$ and $L\Mod(\End(\mathbbm{1}_{\C}))\hookrightarrow L\Sp\otimes\C$ is fully faithful.
\end{lemma}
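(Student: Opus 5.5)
The plan is to compute the unit of $L\Sp\otimes\C$ directly and then compare module categories. By the general identification $L\Sp\otimes\M\simeq L\M$, which holds for any $\M\in\PrL_{\st}$ since $L\Sp$ is a mode (as in the lemma preceding \cref{prop:L_FL_E}, with $L\Sp\otimes\C\subseteq\C$ the objects $X$ such that $\Hom_{\C}(Y,X)\in L\Sp$ for all $Y$), the category $L\Sp\otimes\C$ is a full subcategory of $\C$. Its symmetric monoidal structure is the localized one, and its unit is $L\mathbbm{1}_{\C}$, where $L:\C\to L\Sp\otimes\C$ is the left adjoint $L\otimes\C$. Therefore
\[
\End(\mathbbm{1}_{L\Sp\otimes\C})\simeq\Hom_{\C}(L\mathbbm{1}_{\C},L\mathbbm{1}_{\C})\simeq\Hom_{\C}(\mathbbm{1}_{\C},L\mathbbm{1}_{\C}),
\]
and the first step reduces to identifying $\Hom_{\C}(\mathbbm{1},L\mathbbm{1}_{\C})$ with $L\Hom_{\C}(\mathbbm{1},\mathbbm{1})$.

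For this, I would write $L\mathbbm{1}_{\C}$ as the image of $\mathbbm{1}$ under $L\otimes\C$ and use compactness of $\mathbbm{1}$. The functor $\Hom_{\C}(\mathbbm{1},-):\C\to\Sp$ is then a $\Sp$-linear colimit-preserving functor; equivalently, $-\otimes\mathbbm{1}:\Sp\to\C$ is an internal left adjoint. Base changing the right adjoint along $L\Sp\otimes-$ gives a commutative, right adjointable square. This is the same argument as \cref{lem:LC=LC(Mod(L1))}, with $f=\mathbbm{1}\otimes-:\Sp\to\C$, which is $\Sp$-internal because $\mathbbm{1}\in\C^{\omega}$. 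Horizontal right adjointability yields $\Hom_{\C}(\mathbbm{1},(L\otimes\C)X)\simeq L\Hom_{\C}(\mathbbm{1},X)$, and taking $X=\mathbbm{1}$ gives the claim. Multiplicativity is automatic, since the map $\End(\mathbbm{1}_{\C})\to\End(\mathbbm{1}_{L\Sp\otimes\C})$ is the $\bbE_\infty$ map induced by the symmetric monoidal functor $\C\to L\Sp\otimes\C$, and it exhibits the target as an $L$-localization.

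For the second claim, I would use that the counit $\Mod(\End(\mathbbm{1}_{\C}))\to\C$ is a fully faithful $\Sp$-internal left adjoint when $\mathbbm{1}$ is compact. Tensoring with $L\Sp$ preserves fully faithful internal left adjoints, because base change preserves the unit equivalence of the right-adjointable adjunction. This gives a fully faithful functor $L\Sp\otimes\Mod(\End(\mathbbm{1}_{\C}))\simeq L\Mod(\End(\mathbbm{1}_{\C}))\hookrightarrow L\Sp\otimes\C$.

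The main obstacle is checking right adjointability, i.e.\ that the $\Sp$-linear structure on the right adjoint $\Hom_{\C}(\mathbbm{1},-)$ is strict, which is exactly where compactness of $\mathbbm{1}$ enters.
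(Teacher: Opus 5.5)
Your proposal is correct and follows essentially the paper's argument: horizontal right adjointability, which holds because $\mathbbm{1}\in\C^{\omega}$, identifies the endomorphisms of the unit of $L\Sp\otimes\C$ with $L\End(\mathbbm{1}_{\C})$, and full faithfulness follows by base changing the fully faithful $\Sp$-internal left adjoint $\Mod(\End(\mathbbm{1}_{\C}))\hookrightarrow\C$ along $L\Sp\otimes-$. The only difference is cosmetic. You apply adjointability to $\mathbbm{1}\otimes-:\Sp\to\C$ and read off $\Hom_{\C}(\mathbbm{1},L\mathbbm{1}_{\C})$ directly, whereas the paper applies it to $\Mod(\End(\mathbbm{1}_{\C}))\to\C$ and then uses \cref{lem:LC=LC(Mod(L1))} to compute the endomorphisms of $L\End(\mathbbm{1}_{\C})$ in $L\Mod(\End(\mathbbm{1}_{\C}))$.
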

\begin{proof}
	Write $f:\Mod(\End(\mathbbm{1}_{\C}))\to\C$. Then we have a commutative diagram
	\[\begin{tikzcd}
		{\Mod(\End(\mathbbm{1}_{\C}))} & \C \\
		{L\Mod(\End(\mathbbm{1}_{\C}))} & {L\Sp\otimes\C}
		\arrow["f", hook, shift left, from=1-1, to=1-2]
		\arrow["L"', shift right, from=1-1, to=2-1]
		\arrow[shift left, from=1-2, to=1-1]
		\arrow["{L\otimes\C}"', shift right, from=1-2, to=2-2]
		\arrow[shift right, hook, from=2-1, to=1-1]
		\arrow["{L\Sp\otimes f}", shift left, from=2-1, to=2-2]
		\arrow[shift right, hook, from=2-2, to=1-2]
		\arrow[shift left, from=2-2, to=2-1]
	\end{tikzcd}\]
	Since $\mathbbm{1}\in\C^{\omega}$, the commutative diagram is horizontally right adjointable. Then $(L\Sp\otimes f)^R(\mathbbm{1}_{L\Sp\otimes\C})\simeq(L\Sp\otimes f)^R(L\otimes\C)(\mathbbm{1}_{\C})\simeq Lf^R(\mathbbm{1}_{\C})\simeq L\End(\mathbbm{1}_{\C})$. By \cref{lem:LC=LC(Mod(L1))}, $L\Mod(\End(\mathbbm{1}_{\C}))\simeq L\Mod(L\End(\mathbbm{1}_{\C}))$, and therefore $\End(\mathbbm{1}_{L\Sp\otimes\C})\simeq L\End(\mathbbm{1}_{\C})$.
	
	In particular, since $\Mod(\End(\mathbbm{1}))\hookrightarrow\C$ is an $\Sp$-internal left adjoint, $L\Sp\otimes\Mod(\End(\mathbbm{1}_{\C}))\hookrightarrow L\Sp\otimes\C$ is also fully faithful.
\end{proof}
\begin{definition}\label{def:ht(C)}
	Let $\C\in\CAlg(\PrL_{\st})$ be nonzero. We define the \emph{chromatic height} of $\C$ as
	\[
		\operatorname{ht}(\C):=\operatorname{ht}(\End(\mathbbm{1}_{\C})).
	\]
\end{definition}
In particular, if $\C$ is nonzero and $T(n)$-local, then $\End(\mathbbm{1}_{\C})$ is $T(n)$-local and we have $\operatorname{ht}(\C)=n$. 
\begin{theorem}[Redshift bound; Redshift or noshift]\label{thm:redshift}
	Let $\C\in\CAlg(\Pr^{\dbl}_{\st})$ such that $K^{\cont}(\C)$ is nonzero. Then
	\[
		\operatorname{ht}(K^{\cont}(\C))\leq\operatorname{ht}(\C)+1.
	\]
	Moreover, if $\C\in\CAlg^{\rig}(\PrL_{\st})$, then we have 
	\[
		\operatorname{ht}(\C)\leq\operatorname{ht}(K^{\cont}(\C))\leq\operatorname{ht}(\C)+1.
	\]
\end{theorem}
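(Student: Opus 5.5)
The plan is to prove the upper bound by reducing, via purity, to the vanishing of $T(m)$-local continuous $K$-theory on a category that is killed by $T(m-1)$-localization, and to take the lower bound essentially from Ramzi.

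\textbf{Upper bound.} Write $h:=\operatorname{ht}(\C)$; if $h=\infty$ there is nothing to prove. Put $R:=\End(\mathbbm{1}_{\C})$. We must show $L_{T(m)}K^{\cont}(\C)\simeq0$ for all $m\geq h+2$; in fact by \cref{thm:Hahn}, applied to the $\bbE_\infty$-ring $K^{\cont}(\C)$ (which is $\bbE_\infty$ since $\C\in\CAlg(\Pr^{\dbl}_{\st})$ and $K^{\cont}$ is lax symmetric monoidal), it suffices to show $L_{T(h+2)}K^{\cont}(\C)\simeq0$. Set $n:=h+2\geq1$ (if $h=-1$, then $n=1$). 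By \cref{thm:purity_dualizable}, $K^{\cont}_{T(n)}(\C)\simeq K^{\cont}_{T(n)}(M_n^{2,f}\C)$, so it suffices to show $M_n^{2,f}\C\simeq L_{T(n)\oplus T(n-1)}\C\simeq0$.

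For this, note that $L_{T(n)\oplus T(n-1)}\C\simeq L_{T(n)\oplus T(n-1)}\Sp\otimes\C$ is a symmetric monoidal category, whose unit is the image of $\mathbbm{1}_{\C}$. Its endomorphism ring receives an $\bbE_\infty$-map from $R$, hence from $L_{T(n)\oplus T(n-1)}R$. Since $n-1=h+1>h$, we have $L_{T(n-1)}R\simeq0$, and by \cref{thm:Hahn} also $L_{T(n)}R\simeq0$, so $L_{T(n)\oplus T(n-1)}R\simeq0$. Then the unit endomorphism ring of $L_{T(n)\oplus T(n-1)}\C$ is a module over the zero ring, hence zero, so the unit is zero and the category vanishes.

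\textbf{Caveat on this step.} I am avoiding \cref{lem:End(1_L)}, which assumes $\mathbbm{1}\in\C^\omega$. The ring map $R\to\End(\mathbbm{1}_{L\Sp\otimes\C})$ exists for any symmetric monoidal localization. Its target is $L$-local, since $L\Sp\otimes\C$ is $L\Sp$-linear and therefore its mapping spectra are $L$-local. So the map factors through $LR=0$, and this needs no compactness.

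\textbf{Lower bound for rigid $\C$.} I would invoke Ramzi's dimension map from \cite{Ram26noshift}: for rigid $\C$ there is an $\bbE_\infty$-ring map $K^{\cont}(\C)\to\End(\mathbbm{1}_{\C})$, or at least a ring map after localizations, compatible with $T(m)$-localization. Hence $L_{T(h)}K^{\cont}(\C)$ admits a ring map to $L_{T(h)}R\neq0$ and is nonzero, giving $\operatorname{ht}(K^{\cont}(\C))\geq h$. The case $h=\infty$ is handled in the same way for every $m$, and $h=-1$ is trivial.

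\textbf{Main obstacle.} I expect the main obstacle to be pinning down the exact form of Ramzi's dimension map, namely that it is a map of $\bbE_\infty$-rings, or at least a unital map so that a nonzero target forces a nonzero source. If only a unital $\bbE_1$ or homotopy-ring map is available, the argument still works, since a ring map to a nonzero ring has nonzero source.
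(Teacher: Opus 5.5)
Your proposal is correct and follows the paper's proof. The paper gets the upper bound from purity (\cref{thm:purity_dualizable}) together with the vanishing of $L_{T(n+1)}\C$ and $L_{T(n+2)}\C$ for $n=\operatorname{ht}(\C)$. That vanishing is \cref{prop:ht(C)}(1), which rests on your observation that the localized category is linear over the zero ring $L\End(\mathbbm{1}_{\C})$, with no compactness of the unit needed. The lower bound is, as in the paper, Ramzi's dimension map $\dim\colon K^{\cont}(\C)\to\End(\mathbbm{1}_{\C})$ (\cref{def:dim}). This map is an $\bbE_\infty$-ring map for rigid $\C$, which resolves your concern about its multiplicativity.
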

\begin{remark}
	Let $\C\in\CAlg^{\rig}(\PrL_{\st})$. We denote by
	\[
		\U_{\loc}:\Pr^{\dbl}_{\C}\to\mathrm{Mot}^{\loc}_{\C}
	\]
	the universal (finitary) localizing invariant, and call $\mathrm{Mot}^{\loc}_{\C}$ the \emph{category of noncommutative $\C$-motives}. The corepresentability of $K$-theory yields an equivalence of $\bbE_\infty$-rings
	\[
		K^{\cont}(\C)\simeq\End(\mathbbm{1}_{\mathrm{Mot}^{\loc}_{\C}}),
	\]
	whence $\operatorname{ht}(\mathrm{Mot}^{\loc}_{\C})=\operatorname{ht}(K^{\cont}(\C))$. Moreover, by \cite[Theorem 3.1]{Efi25rigidity}, $\mathrm{Mot}^{\loc}_{\C}$ is again a rigid $\Sp$-algebra. Therefore, \cref{thm:redshift} is equivalent to the statement that taking the category of noncommutative motives, $\mathrm{Mot}^{\loc}_{(-)}$, preserves or raises chromatic height by at most $1$:
	\[
		\operatorname{ht}(\C)\leq\operatorname{ht}(\mathrm{Mot}^{\loc}_{\C})\leq\operatorname{ht}(\C)+1.
	\]
\end{remark}

The following proposition, together with the purity theorem, proves the redshift bound of \cref{thm:redshift}. 
\begin{proposition}\label{prop:ht(C)}
	Let $\C\in\CAlg(\PrL_{\st})$.
	\begin{enumerate}
		\item We have 
		\[
			\operatorname{ht}(\C)\geq\max\{n:L_{T(n)}\C\not\simeq0\}\geq\min\{n:L_{T(n)}\C\simeq0\}-1.
		\]
		\item If $\mathbbm{1}\in\C^{\omega}$, then $\End(\mathbbm{1}_{L_{T(n)}\C})=L_{T(n)}\End(\mathbbm{1}_{\C})$ and we have
		\[
			\operatorname{ht}(\C)=\max\{n:L_{T(n)}\C\not\simeq0\}=\min\{n:L_{T(n)}\C\simeq0\}-1.
		\]
	\end{enumerate}
\end{proposition}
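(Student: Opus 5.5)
The plan is to compare $\End(\mathbbm{1}_{L_{T(n)}\C})$ with $L_{T(n)}\End(\mathbbm{1}_{\C})$ and then invoke \cref{thm:Hahn}. Write $R:=\End(\mathbbm{1}_{\C})$. We use \cref{lem:LC=LC(Mod(L1))}-type reasoning, or more directly the right adjoint of $L_{T(n)}\otimes\C:\C\to L_{T(n)}\C$. The last equality in each chain is just the downward closure from \cref{thm:Hahn}, once we know that the sets involved are downward closed. In (1), however, the set $\{n:L_{T(n)}\C\not\simeq0\}$ need not a priori be an interval. The inequality $\max\geq\min-1$ is trivial in any case: if $m=\min\{n:L_{T(n)}\C\simeq0\}$ and $m\ge1$, then $m-1$ lies in the set. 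Conventions with $-1$ and $\infty$ are handled the same way.

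For (1), the key observation is that $L_{T(n)}\C$ is a module over $L_{T(n)}\Sp\otimes\Mod(R)\simeq L_{T(n)}\Mod(R)$. More concretely, $R$ acts on every mapping spectrum of $\C$ through the symmetric monoidal functor $\Mod(R)\to\C$, which is the counit. Tensoring with $L_{T(n)}\Sp$ gives a symmetric monoidal functor $L_{T(n)}\Mod(R)\to L_{T(n)}\C$. If $L_{T(n)}R\simeq0$, then $L_{T(n)}\Mod(R)\simeq0$, since it is the category of $T(n)$-local $R$-modules and $L_{T(n)}R$ is its unit. A symmetric monoidal functor out of the zero category forces the unit of the target to be zero, so $L_{T(n)}\C\simeq0$. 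Contrapositively, $L_{T(n)}\C\not\simeq0$ implies $L_{T(n)}R\not\simeq0$, that is, $n\le\operatorname{ht}(R)$ by \cref{thm:Hahn}. This gives $\operatorname{ht}(\C)\ge\max\{n:L_{T(n)}\C\not\simeq0\}$.

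For (2), with $\mathbbm{1}\in\C^{\omega}$, we apply \cref{lem:End(1_L)} to the localization $L=L_{T(n)}$ (using $L_{T(n)}\Sp\otimes\C\simeq L_{T(n)}\C$ from \cref{prop:M_n^mf}) to get $\End(\mathbbm{1}_{L_{T(n)}\C})\simeq L_{T(n)}R$. Hence $L_{T(n)}\C\simeq0$ if and only if its unit vanishes, if and only if $L_{T(n)}R\simeq0$. The two sets $\{n:L_{T(n)}\C\not\simeq0\}$ and $\{n:L_{T(n)}R\not\simeq0\}$ then coincide. The latter is downward closed by \cref{thm:Hahn}, which yields both equalities.

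The main obstacle I anticipate is checking in (1) that the base-changed functor $L_{T(n)}\Sp\otimes\Mod(R)\to L_{T(n)}\Sp\otimes\C$ really exists as a symmetric monoidal functor, and that its source is zero when $L_{T(n)}R=0$. This should follow from functoriality of $L_{T(n)}\Sp\otimes-$ on $\CAlg(\PrL_{\st})$ together with \cref{lem:LC=LC(Mod(L1))}.
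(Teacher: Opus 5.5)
Your proposal is correct and follows the paper's proof. In (1) the paper likewise notes that $L_{T(n)}\C$ is an algebra over $L_{T(n)}\Mod(\End(\mathbbm{1}_{\C}))$, which vanishes when $n>\operatorname{ht}(\C)$. In (2) it uses \cref{lem:End(1_L)} to identify $\End(\mathbbm{1}_{L_{T(n)}\C})\simeq L_{T(n)}\End(\mathbbm{1}_{\C})$ and then the downward closure from \cref{thm:Hahn}, exactly as you do; your step $L_{T(n)}R\not\simeq0\Rightarrow n\le\operatorname{ht}(R)$ in (1) is really just the definition of the maximum, with Hahn's theorem only ensuring the conventions are consistent.
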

\begin{proof}
	\begin{enumerate}
		\item If $n>\operatorname{ht}(\C)$, then $L_{T(n)}\C$ is an algebra over $L_{T(n)}\Mod(\End(\mathbbm{1}_{\C}))\simeq0$, and hence $L_{T(n)}\C\simeq0$.
		\item By \cref{lem:End(1_L)}, we have $\End(\mathbbm{1}_{L_{T(n)}\C})\simeq L_{T(n)}\End(\mathbbm{1})$. If $n\leq\operatorname{ht}(\C)$, then $L_{T(n)}\End(\mathbbm{1})\not\simeq0$ and hence $L_{T(n)}\C\not\simeq0$. \qedhere
	\end{enumerate}
\end{proof}
\begin{example}
	Let $n\geq1$ and $\C=L_{T(n)}\Sp$. Then we have $\operatorname{ht}(L_{T(n)}\Sp)=n$ and $\{m:L_{T(m)}\C\not\simeq0\}=\{n\}$ by \cref{prop:M_n^mf}. So $\operatorname{ht}(L_{T(n)}\Sp)=\max\{m:L_{T(m)}\C\not\simeq0\}>\min\{m:L_{T(m)}\C\simeq0\}-1=-1$. This also shows that the compact unit assumption in \cref{lem:End(1_L)} cannot be omitted, since $L_{T(n-1)}L_{T(n)}\Sp\simeq0$ while $L_{T(n-1)}L_{T(n)}\bbS\not\simeq0$.
\end{example}

To establish the noshift bound, we review the construction of \cite{Ram26noshift}. For rigid $\Sp$-algebras, Ramzi constructs a dimension map 
\[
	\dim:K^{\cont}(\C)\to\End(\mathbbm{1}_{\C}),
\]
which implies that $\operatorname{ht}(K^{\cont}(\C))\geq\operatorname{ht}(\C)$.

As observed in \cite[Remark 1.3]{Ram26noshift}, the construction extends naturally to the general rigid setting.
\begin{definition}
	Let $\C\in\CAlg(\PrL)$. We define the \emph{Hochschild homology} functor as the symmetric monoidal functor
	\[
		\HH(-/\C):\Pr^{\dbl}_{\C}\to\C^{BS^1},
	\]
	obtained by applying \cite[Definition 2.11, Theorem 2.14]{HSS17} to the symmetric monoidal $2$-category $\PrL_{\C}$. Indeed, for $\M\in\Pr^{\dbl}_{\C}$, we define $\HH(\M/\C):=\ev_{\M/\C}\circ\coev_{\M/\C}(\mathbbm{1}_{\C})$. We write
	\[
		\THH(-):=\HH(-/\Sp)
	\]
	for the \emph{topological Hochschild homology}.
\end{definition}
The following base change property follows immediately from the definition. For $\D\in\CAlg(\Pr^{\dbl}_{\C})$ and $\M\in\Pr^{\dbl}_{\C}$, we have $\HH(\D\otimes_{\C}\M/\D)\simeq\mathbbm{1}_{\D}\otimes\HH(\M/\C)$.
\begin{construction}
	By \cite[Theorem 1.54]{Ram26noshift}, there exists a unique natural transformation
	\[
		\HH(-/-)\to\iEnd_{(-)}(\mathbbm{1}_{(-)})
	\]
	of functors with respect to $\C\in\CAlg(\PrL)$ and $\D\in\CAlg^{\rig}(\PrL_{\C})$. Moreover, there exists a unique $S^1$-equivariant refinement of this natural transformation. Indeed, this natural transformation is given by the following explicit construction. Let $\C\in\CAlg(\PrL)$ and $\D\in\CAlg^{\rig}(\PrL_{\C})$. Since the multiplication $\D\otimes_{\C}\D\to\D$ lies in $\Pr^{\dbl}_{\C}$, it induces a map
	\[
		\mathbbm{1}_{\D}\otimes\HH(\D/\C)\simeq\HH(\D\otimes_{\C}\D/\D)\to\HH(\D/\D)\simeq\mathbbm{1}_{\D}.
	\]
	By adjunction, this yields an $S^1$-equivariant $\bbE_\infty$-ring map
	\[
		\HH(\D/\C)\to\End_{\C}(\mathbbm{1}_{\D}),
	\]
	which we call the \emph{trace map}.
\end{construction}
The rigidity condition is essential in the above construction, since it ensures that the multiplication functor is internally left adjoint and hence induces the trace map.

Now assume that $\C=\Sp$. By \cite[Theorem 3.4]{HSS17}, $\THH$ is a localizing invariant. By the corepresentability of $K$-theory, $\THH$ induces a natural transformation
\[
	\mathrm{tr}:K^{\cont}(-)\to\Hom_{\Sp}(\THH(\Sp),\THH(-))\simeq\THH(-),
\]
classified by the element $1\in\pi_0\THH(\Sp)\simeq\pi_0(\bbS)\simeq\bbZ$. This is called the \emph{Dennis trace map}.
\begin{definition}\label{def:dim}
	We define the \emph{dimension map} as the composition
	\[
		\dim:K^{\cont}\to\THH\to\End(\mathbbm{1}_{(-)}).
	\]
	For every $\C\in\CAlg^{\rig}(\PrL_{\st})$, it induces an $\bbE_\infty$-ring map
	\[
		\dim:K^{\cont}(\C)\to\End(\mathbbm{1}_{\C}).
	\]
\end{definition}

Ramzi's main result shows that the chromatic redshift phenomenon of $K$-theory of rigid $\Sp$-algebras fails in general, and provides examples of rigid $\Sp$-algebras satisfying noshift.
\begin{theorem}[Noshift, {\cite[Noshift Theorem]{Ram26noshift}}]\label{thm:noshift}
	Let $n\geq0$ and $\C\in\CAlg^{\rig}(\PrL_{T(n)})$ be nonzero. Then there exists a fully faithful $\C\to\D\in\CAlg^{\rig}(\PrL_{T(n)})$ such that
	\[
		K^{\cont}_{T(n+1)}(\Ind(\D^{\dbl}))\simeq0.
	\]
\end{theorem}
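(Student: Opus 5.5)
Since $K_{T(n+1)}(\Ind(\D^{\dbl}))\simeq L_{T(n+1)}K(\D^{\dbl})$ is an $\bbE_\infty$-ring, the plan is to show that its unit class $[\mathbbm{1}_{\D}]$ becomes zero after $T(n+1)$-localization. So we must construct $\D$ so that some explicit relation among classes of dualizable objects in $K_0(\D^{\dbl})$ forces $[\mathbbm{1}]$ to be $T(n+1)$-locally nilpotent. We cannot simply kill $[\mathbbm{1}]$ integrally, for example by adjoining a dualizable $X$ with $X\simeq\mathbbm{1}\oplus X$. Ramzi's dimension map of \cref{def:dim}, $K^{\cont}(\D)\to\End(\mathbbm{1}_{\D})$, is an $\bbE_\infty$-ring map, so such a relation would give $1=0$ in $\pi_0\End(\mathbbm{1}_{\D})$ and force $\D\simeq0$. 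The relation therefore has to be invisible to $\dim$, which lands in a $T(n)$-local ring, while still being visible at height $n+1$.

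The construction I would use goes through the higher semiadditivity of $L_{T(n)}\Sp$. Every $T(n)$-local category is $\infty$-semiadditive, so for a $\pi$-finite $p$-space $A$ (for instance $A=B^{n}C_p$ or $B^{n+1}C_p$) colimits over $A$ of dualizable objects are dualizable. In the Burklund--Clausen and Ben-Moshe--Carmeli--Schlank--Yanovski description, $T(n+1)$-local $K$-theory of $T(n)$-local categories detects the ``categorical cardinalities'' $|A|$ at height $n+1$. I would aim for the following mechanism: choose $\D$ so that the class of $\colim_{A}\mathbbm{1}$ equals $[\mathbbm{1}]$ times an element that is $T(n+1)$-locally invertible, while semiadditivity forces the same class to be $[\mathbbm{1}]$ times a $T(n+1)$-locally nilpotent element. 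Together these give $L_{T(n+1)}[\mathbbm{1}]=0$.

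Concretely, I would carry out the following steps in order.
\begin{enumerate}
\item Reduce to the universal case. Maps of rigid algebras induce ring maps on $K_{T(n+1)}(\Ind((-)^{\dbl}))$, and vanishing is inherited by any target. So it suffices to build a single fully faithful rigid extension $\C\to\D$, for example $\D=\C\otimes\D_0$ for a suitable nonzero rigid $T(n)$-local $\D_0$ with $\C\to\C\otimes\D_0$ fully faithful. Full faithfulness would follow from $L_{T(n)}\bbS\to\End(\mathbbm{1}_{\D_0})$ being split injective, or $\D_0$ being faithful.
\item Construct $\D_0$ as a transfinite colimit in $\CAlg^{\rig}(\PrL_{T(n)})$. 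At each stage I freely adjoin, using Ramzi's free rigid algebras, nuclear witnesses that identify the relevant $K_0$-relation. Colimits of rigid algebras are rigid, and each stage should be checkable to be unit-faithful.
\item Verify that the relation kills $[\mathbbm{1}]$ in $\pi_0K_{T(n+1)}$ while leaving $\End(\mathbbm{1}_{\D_0})$ nonzero. Consistency with the noshift lower bound of \cref{thm:redshift} requires exactly this: $\dim$ survives, but only at height $n$.
\end{enumerate}

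The main obstacle is step 3 together with the faithfulness in step 2: adjoining witnesses can easily collapse the unit. I would first try the cleanest candidate, namely rigidifying the $T(n)$-local $\infty$-semiadditive closure along $B^{n+1}C_p$, taking $\D_0=\Nuc_{T(n)}$ of a homotopy-orbit category as in \cref{cor:C_hG_locrig}. Then I would compute its $T(n+1)$-local $K$-theory via \cref{cor:purity_T(n)} and the descent results, \cref{thm:descent_dualizable} and \cref{thm:nuc_descent}. The hope is that a Tate-type identity at height $n+1$ exhibits $[\mathbbm{1}]$ as a multiple of a class that vanishes after $T(n+1)$-localization.
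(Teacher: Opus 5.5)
\textbf{There is a genuine gap.} As written, this is a research plan rather than a proof. Steps 2 and 3 never say which relation in $K_0(\D^{\dbl})$ is to be adjoined, or why it kills $[\mathbbm{1}]$ after $T(n+1)$-localization.

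The semiadditive half of your mechanism cannot supply that relation. The object $\colim_A\mathbbm{1}_{\D}$ is the image of the dualizable object $L_{T(n)}\Sigma^\infty_+A$. So whatever higher semiadditivity forces about its class already holds in $K_{T(n+1)}(\Ind((L_{T(n)}\Sp)^{\dbl}))\simeq K_{T(n+1)}(L_{T(n)}\bbS)$. That ring is nonzero by \cref{cor:purity_T(n)} and redshift for $\bbE_\infty$-rings. Everything therefore rests on a $\D$-specific relation, which you do not construct.

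The one concrete candidate you name provably fails. By your own Step 1 remark, nonvanishing passes from the target of a rigid algebra map to its source. A homotopy-orbit category $\C_{hG}\simeq\lim_{BG}\C$ always has a symmetric monoidal projection to $\C$. Hence $\Nuc_{T(n)}(\Sp_{hG})$, or the rigidification of any such homotopy-orbit or functor category, maps to $\Nuc_{T(n)}(\Sp)=\Nuc(L_{T(n)}\bbS)$. By \cref{constr:Ind_dbl}, $\Ind(\Nuc(L_{T(n)}\bbS)^{\dbl})\simeq\Ind((L_{T(n)}\Sp)^{\dbl})$, whose $T(n+1)$-local $K$-theory is the nonzero ring above. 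Consistently, \cref{thm:descent_dualizable,thm:nuc_descent} would only exhibit these $K$-theories as homotopy fixed points or orbits of nonzero rings.

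Step 1 also contains an error. For rigid $\D_0$, the unit of the adjunction for $\C\to\C\otimes\D_0$ is $X\to X\otimes\End(\mathbbm{1}_{\D_0})$. Full faithfulness therefore needs $\mathbbm{1}_{\C}\simeq\mathbbm{1}_{\C}\otimes\End(\mathbbm{1}_{\D_0})$. Split injectivity of $L_{T(n)}\bbS\to\End(\mathbbm{1}_{\D_0})$ gives only faithfulness: for $\D_0=L_{T(n)}\Sp\times L_{T(n)}\Sp$ the functor is the diagonal $\C\to\C\times\C$.

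\textbf{The missing idea is the Nullstellensatz,} which is what the paper invokes after the statement, following Ramzi. One does not engineer $\D$ around a prescribed relation. Instead one takes $\C\to\D$ with $\D$ $\omega_1$-Nullstellensatzian in $\CAlg^{\rig}(\PrL_{T(n)})$. This means $\D\neq0$ and every nonzero $\omega_1$-compact rigid $\D$-algebra admits a $\D$-algebra map back to $\D$, so $\D$ is a retract of each of them. Such $\D$ exists under any nonzero $\C$ by a transfinite small-object argument \cite[Example 3.14, Corollary 3.15]{Ram26noshift}. Your Step 2 has the right shape, but the property to aim for is this algebraic closedness, not a specific $K_0$-relation. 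The vanishing $K^{\cont}_{T(n+1)}(\Ind(\D^{\dbl}))\simeq0$ is then Ramzi's $K$-theoretic Nullstellensatz theorem; it follows from algebraic closedness, not from a relation chosen in advance. Your consistency checks are correct: the relation must be invisible to $\dim$ from \cref{def:dim}, and $\End(\mathbbm{1}_{\D})$ must survive at height $n$. But they do not by themselves lead to that theorem.
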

In fact, let $\D\in\CAlg^{\rig}(\PrL_{T(n)})$ be \emph{$\omega_1$-Nullstellensatzian}, namely, $\D$ is nonzero and every nonzero $\E\in\CAlg^{\rig}(\PrL_{\D})^{\omega_1}$ admits a map back to $\D$. Then \cite[$K$-theoretic Nullstellensatz Theorem]{Ram26noshift} gives
\[
	K^{\cont}_{T(n+1)}(\Ind(\D^{\dbl}))\simeq0.
\]
This provides a class of examples for which chromatic redshift fails, since for a nonzero $\C\in\CAlg^{\rig}(\PrL_{T(n)})$, there exists a map $\C\to\D$ where $\D$ is $\omega_1$-Nullstellensatzian in $\CAlg^{\rig}(\PrL_{T(n)})$ by \cite[Example 3.14, Corollary 3.15]{Ram26noshift}.
\begin{proof}[Proof of {\cref{thm:redshift}}]
	If $\operatorname{ht}(\C)=\infty$, the upper bound is tautological. Assume $\operatorname{ht}(\C)=n\geq-1$. By \cref{prop:ht(C)}, $L_{T(n+1)}\C\simeq0$ and $L_{T(n+2)}\C\simeq0$. Then by \cref{thm:purity_dualizable}, we have $K^{\cont}_{T(n+2)}(\C)\simeq K^{\cont}_{T(n+2)}(L_{T(n+2)\oplus T(n+1)}\C)\simeq0$. Therefore, $K$-theory increases the chromatic height by at most $1$
	\[
		\operatorname{ht}(K^{\cont}(\C))\leq\operatorname{ht}(\C)+1.
	\]

	If $\C$ is rigid, then we have a dimension map $\dim:K^{\cont}(\C)\to\End(\mathbbm{1}_{\C})$. It follows that $\operatorname{ht}(K^{\cont}(\C))\geq\operatorname{ht}(\C)$, and hence
	\[
		\operatorname{ht}(\C)\leq\operatorname{ht}(K^{\cont}(\C))\leq\operatorname{ht}(\C)+1.
	\]
	For $n\geq0$, take $\C=\Mod(R)$, where $R$ is a $p$-local $\bbE_\infty$-ring of height $n$. Then we have $\operatorname{ht}(K^{\cont}(\C))=\operatorname{ht}(\C)+1$. Let $\D$ be as in \cref{thm:noshift}. Then we have $\operatorname{ht}(K^{\cont}(\Ind(\D^{\dbl})))=\operatorname{ht}(\Ind(\D^{\dbl}))$ where $\Ind(\D^{\dbl})$ is rigid.
\end{proof}
\begin{corollary}
	Let $n\geq0$. Then there exists a nonzero $\D\in\CAlg^{\rig}(\Pr^{\dbl}_{T(n)})$ such that
	\[
		\operatorname{ht}(K^{\cont}(\Ind(\D^{\dbl})))=\operatorname{ht}(\Ind(\D^{\dbl})),\quad\operatorname{ht}(K^{\cont}(\D^{\rig}))=\operatorname{ht}(\D^{\rig}).
	\]
	In other words, the rigid $\Sp$-algebras $\Ind(\D^{\dbl})$ and $\D^{\rig}$ both satisfy chromatic noshift.
\end{corollary}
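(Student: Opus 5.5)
Fix $n\geq0$ and a nonzero $\C\in\CAlg^{\rig}(\PrL_{T(n)})$ as in \cref{thm:noshift}, and take $\D$ to be the rigid $T(n)$-local $\Sp$-algebra produced there; concretely, choose an $\omega_1$-Nullstellensatzian $\D$ receiving a map from $\C$, as in \cite[Example 3.14, Corollary 3.15]{Ram26noshift}. Since $\D$ is rigid, it is dualizable, so $\D\in\CAlg^{\rig}(\Pr^{\dbl}_{T(n)})$ and $\D\neq0$. The two categories in question are $\Ind(\D^{\dbl})$ and $\D^{\rig}$. Since $\D$ is already rigid, $\D^{\rig}\simeq\D$. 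By \cref{constr:Ind_dbl}, $\Ind(\D^{\dbl})\hookrightarrow\D$ is a fully faithful rigid subalgebra, and its unit is compact.

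First compute heights. The unit endomorphisms are unchanged by the fully faithful inclusion, so $\End(\mathbbm{1}_{\Ind(\D^{\dbl})})\simeq\End(\mathbbm{1}_{\D})$. This ring is a nonzero $T(n)$-local $\bbE_\infty$-ring, so by the remark after \cref{def:ht(C)} both categories have height exactly $n$.

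Next compute the height of $K$-theory. By \cref{thm:redshift}, which applies because both categories are rigid, $n\leq\operatorname{ht}(K^{\cont}(-))\leq n+1$. It therefore suffices to show $K^{\cont}_{T(n+1)}(-)\simeq0$. For $\Ind(\D^{\dbl})$ this is exactly the conclusion of \cref{thm:noshift}, or equivalently the $K$-theoretic Nullstellensatz. For $\D^{\rig}\simeq\D$, note that $\Ind(\D^{\dbl})\to\D$ is a map of rigid envelopes of $L_{T(n)}\D=\D$ under $L_n^f$, since it is fully faithful over $\D^{\rig}$ (\cref{prop:map_of_rigid_envelopes}). \cref{cor:purity_T(n)}(2), or the rigid-envelope purity proposition in \cref{sec:3.1}, then gives $K^{\cont}_{T(n+1)}(\D)\simeq K^{\cont}_{T(n+1)}(\Ind(\D^{\dbl}))\simeq0$.

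Finally, $K^{\cont}$ of a nonzero rigid algebra is nonzero, because the dimension map is an $\bbE_\infty$-ring map to a nonzero ring; so the height of $K^{\cont}$ is defined. The main obstacle is the purity comparison between $\D$ and $\Ind(\D^{\dbl})$. I will handle it by checking that the $T(n)$-completion of each is $\D$ itself, using $\Nuc_{T(n)}(\D)\simeq(L_{T(n)}\D)^{\rig}\simeq\D$ from \cref{thm:unst_completion}, and then applying \cref{cor:purity_T(n)}(1) to both.
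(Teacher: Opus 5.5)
Your treatment of $\Ind(\D^{\dbl})$ is fine: it is \cref{thm:noshift} combined with the bounds of \cref{thm:redshift}. (The identification $\End(\mathbbm{1}_{\Ind(\D^{\dbl})})\simeq\End(\mathbbm{1}_{\D})$ holds because $\D^{\dbl}\subseteq\D$ is full and the unit is compact in $\Ind(\D^{\dbl})$, not because $\Ind(\D^{\dbl})$ embeds in $\D$.)

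The half concerning $\D^{\rig}$, however, rests on a false step: ``since $\D$ is already rigid, $\D^{\rig}\simeq\D$.'' The $\D$ supplied by \cref{thm:noshift} is rigid over $L_{T(n)}\Sp$, not over $\Sp$. For $n\geq1$, no nonzero $T(n)$-local category has compact unit (\cref{prop:End(1_Nuc_T(n))}(1)). So $\D$ is not a rigid $\Sp$-algebra and $\D^{\rig}\not\simeq\D$; only for $n=0$, where $\Sp[1/p]$ is itself rigid, is your identification correct. Everything downstream inherits this error:
\begin{itemize}
\item \cref{constr:Ind_dbl} gives a fully faithful map $\Ind(\D^{\dbl})\hookrightarrow\D^{\rig}$, not into $\D$.
\item The composite $\Ind(\D^{\dbl})\to\D$ is not fully faithful: its essential image would be a nonzero $T(n)$-local algebra with compact unit.
\item It is also not an internal left adjoint, since the unit lies in $\D^{\dbl}$ but is not compact in $\D$. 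So it is not a map of rigid envelopes and does not even induce a map on $K^{\cont}$.
\item Your fallback makes the same mistake, since $(L_{T(n)}\D)^{\rig}$ is $\D^{\rig}$, not $\D$. Once corrected, \cref{cor:purity_T(n)}(1) only gives $K^{\cont}_{T(n+1)}(\D^{\rig})\simeq K^{\cont}_{T(n+1)}(L_{T(n)}\D^{\rig})$. Relating this to $K^{\cont}_{T(n+1)}(\Ind(\D^{\dbl}))\simeq K^{\cont}_{T(n+1)}(L_{T(n)}\Ind(\D^{\dbl}))$ would require $L_{T(n)}\Ind(\D^{\dbl})\simeq L_{T(n)}\D^{\rig}$, which nothing in the Nullstellensatz construction provides.
\end{itemize}

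No purity comparison is needed. The embedding $\Ind(\D^{\dbl})\hookrightarrow\D^{\rig}$ is a map of rigid $\Sp$-algebras, hence a morphism in $\CAlg(\Pr^{\dbl}_{\st})$. It therefore induces an $\bbE_\infty$-ring map $K^{\cont}(\Ind(\D^{\dbl}))\to K^{\cont}(\D^{\rig})$. So $L_{T(n+1)}K^{\cont}(\D^{\rig})$ is an algebra over $K^{\cont}_{T(n+1)}(\Ind(\D^{\dbl}))\simeq0$, and hence vanishes. By \cref{lem:1_rig}, $\End(\mathbbm{1}_{\D^{\rig}})\simeq\End(\mathbbm{1}_{\D})$ is a nonzero $T(n)$-local ring, so $\operatorname{ht}(\D^{\rig})=n$. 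The dimension-map lower bound of \cref{thm:redshift} applies because $\D^{\rig}$ is rigid over $\Sp$, and it yields $\operatorname{ht}(K^{\cont}(\D^{\rig}))=n$. This is exactly the paper's argument.
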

\begin{proof}
	By \cref{constr:Ind_dbl} we have a map $\Ind(\D^{\dbl})\to\D^{\rig}$. Then it follows from $K^{\cont}_{T(n+1)}(\Ind(\D^{\dbl}))\simeq0$ that $K^{\cont}_{T(n+1)}(\D^{\rig})\simeq0$, and hence $\operatorname{ht}(K^{\cont}(\D^{\rig}))=\operatorname{ht}(\D^{\rig})$.
\end{proof}
Moreover, we obtain examples of $T(n)$-complete rigid $\Sp$-algebras that satisfy chromatic noshift.
\begin{proposition}\label{prop:End(1_Nuc_T(n))}
	Let $n\geq0$ and $\C\in\CAlg(\PrL_{\st})$.
	\begin{enumerate}
		\item If $n\geq1$ and $\C$ is $T(n)$-local and $\mathbbm{1}\in\C^{\omega}$, then $\C\simeq0$.
		\item If $\C\in\CAlg(\Pr^{\dbl}_{\st})$, then $\Nuc_{T(n)}(\C)\in\CAlg(\Pr^{\dbl}_{\st})$ is $T(n)$-complete and 
		\[
			\End(\mathbbm{1}_{\Nuc_{T(n)}\C})\simeq L_{T(n)}\End(\mathbbm{1}_{\C}).
		\]
	\end{enumerate}
	In particular, if $\C\in\CAlg(\Pr^{\dbl}_{\st})$ is nonzero and $T(n)$-complete, we have $\operatorname{ht}(\C)=n$.
\end{proposition}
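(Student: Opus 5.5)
For (1), I would use \cref{prop:ht(C)}(2) together with the compactness of the unit. Since $\mathbbm{1}\in\C^{\omega}$, \cref{lem:End(1_L)} gives $\End(\mathbbm{1}_{L_{T(m)}\C})\simeq L_{T(m)}\End(\mathbbm{1}_{\C})$ for every $m$. Because $\C$ is $T(n)$-local, $R:=\End(\mathbbm{1}_{\C})$ is a $T(n)$-local $\bbE_\infty$-ring. For $n\geq1$ we have $L_{T(n)}L_{T(n-1)}\Sp\simeq0$, which follows from \cref{prop:M_n^mf} since $L_{T(n)}\Sp\otimes L_{n-1}^f\Sp\simeq0$. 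Hence $L_{T(n-1)}\C\simeq L_{T(n-1)}\Sp\otimes L_{T(n)}\Sp\otimes\C\simeq0$, so $L_{T(n-1)}R\simeq\End(\mathbbm{1}_{L_{T(n-1)}\C})\simeq0$. By Hahn's \cref{thm:Hahn} this forces $L_{T(n)}R\simeq0$. But $R$ is $T(n)$-local, so $R\simeq0$, and therefore $\mathbbm{1}_{\C}\simeq0$ and $\C\simeq0$. The key point is that compactness of the unit makes the height detectable by categorical localizations, while a $T(n)$-local category has vanishing $T(n-1)$-localization.

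For (2), I would first show that $\Nuc_{T(n)}(\C)=\iHom^{\dbl}_{L_n^f\Sp}(L_{T(n)}\Sp,L_n^f\C)$ is $T(n)$-complete. This follows from \cref{thm:unst_completion}: completion is idempotent, since $\iHom^{\dbl}(\I,-)$ lands in $\I$-complete objects, and completions are the image of the fully faithful right adjoint. The algebra structure comes from \cref{thm:Nuc_T(n)}(1), which says the completion is lax symmetric monoidal. For the unit, I would argue as follows. By \cref{thm:unst_completion} and \cref{prop:corig}, the completion $\Nuc_{T(n)}(\C)$ is the rigidification-type object whose unit, by \cref{lem:1_rig}, satisfies $\End(\mathbbm{1}_{\Nuc_{T(n)}\C})\simeq\End(\mathbbm{1}_{L_{T(n)}\C})$. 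Concretely, there is a projection $\Nuc_{T(n)}(\C)\to L_{T(n)}\Sp\otimes\Nuc_{T(n)}(\C)\simeq L_{T(n)}\C$ whose right adjoint sends unit to unit. It then remains to compute $\End(\mathbbm{1}_{L_{T(n)}\C})$. Since $L_{T(n)}\C\simeq L_{T(n)}\Sp\otimes\C$ and $L_{T(n)}\Sp$ is dualizable, the right adjoint of $\C\to L_{T(n)}\C$ is given by tensoring the mapping spectra with the $T(n)$-localization. This should identify $\Hom(\mathbbm{1},\mathbbm{1})$ in $L_{T(n)}\C$ with $L_{T(n)}\End(\mathbbm{1}_{\C})$; this is analogous to \cref{lem:End(1_L)}, but without assuming the unit is compact.

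\textbf{Main obstacle.} The hard step is exactly this unit computation when $\mathbbm{1}_{\C}$ need not be compact, because \cref{lem:End(1_L)} then does not apply. The remedy I would try is to use the $\omega_1$-compactness and dualizability of $L_{T(n)}\Sp$. Writing $L_{T(n)}\Sp\otimes\C\simeq\Fun^{\mathrm{L}}((L_{T(n)}\Sp)^{\vee},\C)$ and using $L_{T(n)}\simeq\lim_r L_n^f(F(n)_r\otimes-)$ for a tower of generalized Moore spectra, the mapping spectrum becomes a limit of $L_n^f(F(n)_r\otimes\End(\mathbbm{1}))$, which is exactly $L_{T(n)}\End(\mathbbm{1})$. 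The last claim then follows immediately: if $\C$ is nonzero and $T(n)$-complete, then $\C\simeq\Nuc_{T(n)}(\C)$, so $\End(\mathbbm{1}_{\C})$ is a nonzero $T(n)$-local ring and $\operatorname{ht}(\C)=n$.
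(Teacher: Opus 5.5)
Your part (1) and the final assertion follow the paper's argument: \cref{lem:End(1_L)} and \cref{prop:M_n^mf} give $L_{T(n-1)}\End(\mathbbm{1}_{\C})\simeq 0$, and Hahn's theorem then forces the $T(n)$-local ring $\End(\mathbbm{1}_{\C})$ to vanish. In (2), the $T(n)$-completeness and the algebra structure are also fine.

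The gap is the unit computation in (2) when $\C$ is not rigid. You invoke \cref{lem:1_rig}, but that lemma concerns the rigidification map $\I^{\rig}_{\C}\to\I$. \cref{thm:unst_completion} identifies $\iHom^{\dbl}_{\C}(\I,\D)$ with $(\I\otimes_{\C}\D)^{\rig}$ only for rigid $\D$, and \cref{prop:corig} only identifies adjoints of $\I\otimes_{\C}-$ and says nothing about units. For a general $\C\in\CAlg(\Pr^{\dbl}_{\st})$, $L_{T(n)}\C$ need not be locally rigid and $\Nuc_{T(n)}(\C)$ need not be rigid, so it is not a rigidification. Nothing you cite shows that the right adjoint of $\Nuc_{T(n)}(\C)\to L_{T(n)}\C$ preserves the unit.

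Conversely, what you call the main obstacle is not one. The unit map $\Sp\to\C$ is a morphism of $\Pr^{\dbl}_{\st}$, i.e.\ an $\Sp$-internal left adjoint, so $\mathbbm{1}_{\C}\in\C^{\omega}$ automatically. Then \cref{lem:End(1_L)} gives $\End(\mathbbm{1}_{L_{T(n)}\C})\simeq L_{T(n)}\End(\mathbbm{1}_{\C})$ directly. Your Moore-tower remedy would in any case need compactness to pull $L_n^f\bbS\otimes-$ out of $\Hom(\mathbbm{1},-)$. Without compactness the analogous formula fails, as the example following \cref{prop:ht(C)} shows.

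The missing idea is a reduction to the rigid case, and this is where compactness of the unit is actually used.
\begin{enumerate}
\item Set $R:=\End(\mathbbm{1}_{\C})$. Compactness makes $\Mod(R)\hookrightarrow\C$ a fully faithful, symmetric monoidal, $\Sp$-internal left adjoint.
\item $\Nuc_{T(n)}$ is lax symmetric monoidal and preserves short exact sequences in $\Pr^{\dbl}_{\st}$ by \cref{thm:Nuc_T(n)}(1). So it yields a fully faithful, unit-preserving functor $\Nuc(L_{T(n)}R)\hookrightarrow\Nuc_{T(n)}(\C)$, and hence $\End(\mathbbm{1}_{\Nuc_{T(n)}(\C)})\simeq\End(\mathbbm{1}_{\Nuc(L_{T(n)}R)})$.
\item Since $\Mod(R)$ is rigid, $\Nuc(L_{T(n)}R)\simeq(L_{T(n)}\Mod(R))^{\rig}$ by \cref{thm:unst_completion}. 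Your argument now applies verbatim: \cref{lem:1_rig} and \cref{lem:End(1_L)} give $L_{T(n)}R$.
\end{enumerate}
This is how the paper proceeds.
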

\begin{proof}
	\begin{enumerate}
		\item If $\C$ is $T(n)$-local and $\mathbbm{1}\in\C^{\omega}$, then by \cref{lem:End(1_L),prop:M_n^mf}, $L_{T(n-1)}\End(\mathbbm{1}_{\C})\simeq\End(\mathbbm{1}_{L_{T(n-1)}\C})\simeq0$. Since $\End(\mathbbm{1}_{\C})$ is a $T(n)$-local $\bbE_\infty$-ring, $\End(\mathbbm{1}_{\C})\simeq0$ and hence $\C\simeq0$.
		\item If $\C$ is rigid, then by \cref{thm:unst_completion} we have $\Nuc_{T(n)}(\C)\simeq(L_{T(n)}\C)^{\rig}$, and hence $\End(\mathbbm{1}_{\Nuc_{T(n)}(\C)})\simeq\End(\mathbbm{1}_{L_{T(n)}\C})\simeq L_{T(n)}\End(\mathbbm{1}_{\C})$ by \cref{lem:1_rig,lem:End(1_L)}. In general, let $\C\in\CAlg(\Pr^{\dbl}_{\st})$. The fully faithful $\Sp$-internal algebra map $\Mod(\End(\mathbbm{1}_{\C}))\hookrightarrow\C$ induces a fully faithful $\Sp$-internal algebra map $\Nuc(L_{T(n)}\End(\mathbbm{1}_{\C}))\hookrightarrow\Nuc_{T(n)}(\C)$. Since $\Mod(\End(\mathbbm{1}_{\C}))$ is rigid, we have
		\[
			\End(\mathbbm{1}_{\Nuc_{T(n)}(\C)})\simeq\End(\mathbbm{1}_{\Nuc(L_{T(n)}\End(\mathbbm{1}_{\C}))})\simeq L_{T(n)}\End(\mathbbm{1}_{\C}).
		\]
	\end{enumerate}
\end{proof}
For $n\geq1$, it follows that there are no nonzero rigid $T(n)$-local $\Sp$-algebras.
\begin{corollary}
	Let $\D\in\CAlg^{\rig}(\PrL_{\st})$ and $\operatorname{ht}(\D)=n\geq0$. If $\D$ satisfies chromatic redshift or noshift, then the $T(n)$-complete rigid $\Sp$-algebra $\Nuc_{T(n)}(\D)$ also satisfies chromatic redshift or noshift, respectively.
\end{corollary}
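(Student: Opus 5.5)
Since $\operatorname{ht}(\D)=n\geq0$, $\D$ is rigid and $\Nuc_{T(n)}(\D)$ is again rigid by \cref{thm:Nuc_T(n)}(1). By \cref{prop:End(1_Nuc_T(n))}(2) its unit has endomorphism ring $L_{T(n)}\End(\mathbbm{1}_{\D})$, which is nonzero because $n=\operatorname{ht}(\D)$. So $\Nuc_{T(n)}(\D)$ is a nonzero $T(n)$-complete rigid $\Sp$-algebra with $\operatorname{ht}(\Nuc_{T(n)}(\D))=n$. It therefore suffices to compare $\operatorname{ht}(K^{\cont}(\Nuc_{T(n)}(\D)))$ with $\operatorname{ht}(K^{\cont}(\D))$. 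By \cref{thm:redshift}, both lie in $\{n,n+1\}$, so only the question of whether $T(n+1)$-local $K$-theory vanishes needs to be settled. (For the lower bound one also needs the $K$-theory to be nonzero; this follows because the dimension map is a ring map to a nonzero target.)

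The key step is to identify $K^{\cont}_{T(n+1)}(\D)$ with $K^{\cont}_{T(n+1)}(\Nuc_{T(n)}(\D))$. Nuclear purity, \cref{thm:nuc_purity}, applied at height $n+1$ gives
\[
K^{\cont}_{T(n+1)}(\D)\simeq K^{\cont}_{T(n+1)}(\Nuc_{T(n+1)\oplus T(n)}(\D)).
\]
Next I would identify $\Nuc_{T(n+1)\oplus T(n)}(\D)$ with $\Nuc_{T(n)}(\D)$. Since $\operatorname{ht}(\D)=n$, \cref{prop:ht(C)} gives $L_{T(m)}\D\simeq0$ for all $m>n$; this uses that the unit is compact, which holds because $\D$ is rigid. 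Hence $\D\to L_{n+1}^f\D$ should factor through $L_n^f\D$, i.e.\ $L_{n+1}^f\D\simeq L_n^f\D$. Then the computation in the proof of \cref{cor:purity_T(n)} applies verbatim: it gives $\Nuc_{T(n+1)\oplus T(n)}(\D)\simeq\Nuc_{T(n)}(L_n^f\D)\simeq\Nuc_{T(n)}(\D)$, compatibly with the completion maps. Therefore the $T(n)$-completion map induces an equivalence $K^{\cont}_{T(n+1)}(\D)\simeq K^{\cont}_{T(n+1)}(\Nuc_{T(n)}(\D))$.

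It then follows that $\D$ satisfies redshift (the $T(n+1)$-local $K$-theory is nonzero) or noshift (it is zero) exactly when $\Nuc_{T(n)}(\D)$ does. The main obstacle is justifying $L_n^f\D\simeq L_{n+1}^f\D$. Vanishing of $L_{T(n+1)}\D$ gives this via the short exact sequence of \cref{prop:M_n^mf} with $m=1$: its fiber term is $L_{T(n+1)}\D$, which vanishes, and the sequence stays exact after applying $-\otimes\D$ because $\D$ is dualizable and $L_n^f$ is smashing.
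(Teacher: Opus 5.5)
Your argument is correct and is essentially the paper's. Both proofs get $\operatorname{ht}(\Nuc_{T(n)}(\D))=n$ from \cref{prop:End(1_Nuc_T(n))}, and then use purity together with $L_{T(n+1)}\D\simeq0$ to identify $K^{\cont}_{T(n+1)}(\D)$ with $K^{\cont}_{T(n+1)}(\Nuc_{T(n)}(\D))$. The paper routes this through $L_{T(n)}\D$ using plain purity on both sides, whereas you use nuclear purity at height $n+1$ and the identification $\Nuc_{T(n+1)\oplus T(n)}(\D)\simeq\Nuc_{T(n)}(\D)$; this has the minor bonus of showing that the equivalence is induced by the completion map.
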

\begin{proof}
	Note that $\operatorname{ht}(\Nuc_{T(n)}\D)=n$, and $K^{\cont}_{T(n+1)}(\Nuc_{T(n)}\D)\simeq K^{\cont}_{T(n+1)}(L_{T(n)}\D)\simeq K^{\cont}_{T(n+1)}(\D)$ since $L_{T(n+1)}\D\simeq0$ by \cref{prop:ht(C)}(2).
\end{proof}
\begin{question}
	Under what conditions does a rigid $\Sp$-algebra satisfy chromatic redshift?
\end{question}

In \cite{Zhoa}, Zhou introduces the Tate construction for dualizable categories. Combining this with the results of \cref{sec:3.2}, we show that the Tate construction is lax symmetric monoidal. This allows us to extend the blueshift phenomenon for the Tate construction of spectra to dualizable categories.
\begin{definition}[{\cite{Zhoa}}]
	Let $G\in\Grp(\An)$ be a compact group anima. We define the \emph{dualizable Tate fixed points} as the cofiber of the norm map, or equivalently, the $\Sp_{hG}$-generic fiber
	\[
		(-)^{tG,\dbl}:=\cof(\Nm:(-)_{hG}\hookrightarrow(-)^{hG,\dbl})\simeq((-)_{hG})_{\eta}.
	\]
\end{definition}
\begin{proposition}\label{prop:tG}
	Let $G\in\Grp(\An)$ be a compact group anima. Then both the homotopy fixed points functor $(-)^{hG,\dbl}:(\Pr^{\dbl}_{\st})^{BG}\to\Pr^{\dbl}_{\st}$ and the dualizable Tate fixed points functor $(-)^{tG,\dbl}:(\Pr^{\dbl}_{\st})^{BG}\to\Pr^{\dbl}_{\st}$ are lax symmetric monoidal and preserve rigid $\Sp$-algebras.
\end{proposition}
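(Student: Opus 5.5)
The plan is to write both functors as composites of functors already known to be lax symmetric monoidal and to preserve rigid algebras. By \cref{thm:hGdbl=Nuc} we have
\[
	(-)^{hG,\dbl}\simeq\Nuc_{\Sp_{hG}}((-)_{hG})=((-)_{hG})^{\wedge\Sp_{hG}}.
\]
By the definition above we also have
\[
	(-)^{tG,\dbl}\simeq((-)_{hG})_{\eta}.
\]
Here $\Sp_{hG}$ is a locally rigid $\Sp$-algebra with rigid envelope $\Sp^{hG,\dbl}\simeq(\Sp_{hG})^{\rig}$ (\cref{cor:C_hG_locrig}), and it has $\omega_1$-compact unit by \cref{lem:Sp^BG}. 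So it suffices to check two things: that $(-)_{hG}:(\Pr^{\dbl}_{\st})^{BG}\to\Pr^{\dbl}_{\Sp_{hG}}$ has the right properties, and that the $\Sp_{hG}$-completion and $\Sp_{hG}$-generic fiber, viewed as functors $\Pr^{\dbl}_{\Sp_{hG}}\to\Pr^{\dbl}_{\st}$, do too.

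\textbf{Step 1: homotopy orbits.} I will show that $(-)_{hG}=\colim_{BG}$ into $\PrL_{\Sp_{hG}}$ is symmetric monoidal for the pointwise structure. It is a colimit of a diagram of $\Sp_{hG}$-modules and satisfies $(\C\otimes\D)_{hG}\simeq\C_{hG}\otimes_{\Sp_{hG}}\D_{hG}$. This can be read off from the equivalence $(\PrL_{\st})^{BG}\simeq\Mod_{\Sp_{hG}}(\PrL_{\st})$ for finite (discrete) $G$ in the remark after \cref{thm:hGdbl=Nuc}. For general compact $G$, it is at least lax (indeed oplax-compatible) via the left adjoint of the fully faithful $\Sp\otimes_{\Sp_{hG}}-$. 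Since $(-)_{hG}$ is a colimit in $\CAlg(\PrL)$ of symmetric monoidal categories, it sends pointwise rigid algebras to $\CAlg$-objects that are locally rigid over $\Sp$ (\cref{cor:C_hG_locrig}). In general it does not send them to rigid algebras; this is exactly why we complete afterwards.

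\textbf{Step 2: completion and generic fiber.} I apply \cref{cor:completion}(1) and \cref{cor:generic_fiber}(1) in the rigid envelope $\overline{\C}=\Sp^{hG,\dbl}$ with smashing ideal $\I=\Sp_{hG}$. This gives that
\[
	\iHom^{\dbl}_{\overline{\C}}(\I,-)\quad\text{and}\quad(\overline{\C}/\I)\otimes_{\overline{\C}}\iHom^{\dbl}_{\overline{\C}}(\I,-)
\]
are lax symmetric monoidal on $\Pr^{\dbl}_{\overline{\C}}$ and preserve rigid $\overline{\C}$-algebras. Since $\overline{\C}$ is rigid over $\Sp$, a rigid $\overline{\C}$-algebra is rigid over $\Sp$. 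Precomposing with the restriction $\Pr^{\dbl}_{\I}\to\Pr^{\dbl}_{\overline{\C}}$ gives the notation $\M^{\wedge\I},\M_\eta$ for $\M\in\Pr^{\dbl}_{\I}$; this restriction is lax monoidal, being right adjoint to the symmetric monoidal $\I\otimes_{\overline{\C}}-$. Composing lax monoidal functors yields the lax structures on $(-)^{hG,\dbl}$ and $(-)^{tG,\dbl}$.

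\textbf{Step 3: rigidity.} Let $\C$ be a pointwise rigid algebra. Then $\C_{hG}$ is a locally rigid $\I$-algebra, and $\overline{\C}\to\C_{hG}$ factors so that $\C^{hG,\dbl}\simeq(\C_{hG})^{\rig}$ by \cref{cor:C_hG_locrig}. This gives rigidity of the homotopy fixed points directly. It agrees with $\iHom^{\dbl}(\I,\C_{hG})$ by \cref{thm:unst_completion}, using $\I\otimes_{\overline{\C}}\C^{hG,\dbl}\simeq\C_{hG}$. For the Tate construction, $\A\otimes_{\overline{\C}}(-)$ is a symmetric monoidal base change along the rigid map $\overline{\C}\to\A$ (an idempotent localization). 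It therefore sends the rigid algebra $\C^{hG,\dbl}$ to a rigid algebra.

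I expect the main obstacle to be Step 1: making the symmetric monoidal structure on $(-)_{hG}$ valued in $\Sp_{hG}$-modules precise for a general compact group anima. It must also be compatible with the identification of the norm with the completion map. I would first try to deduce it from the lax structure on the right adjoint $\Sp\otimes_{\Sp_{hG}}-$ together with full faithfulness, as in the proof of \cref{thm:hGdbl=Nuc}.
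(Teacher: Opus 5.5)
\textbf{Step~1 does not just need more detail; it fails as stated.} Your lax structures on $(-)^{hG,\dbl}\simeq((-)_{hG})^{\wedge\Sp_{hG}}$ and $(-)^{tG,\dbl}\simeq((-)_{hG})_{\eta}$ come from composing functors. So you need $(-)_{hG}\colon(\Pr^{\dbl}_{\st})^{BG}\to\Pr^{\dbl}_{\Sp_{hG}}$ to be lax (or strong) symmetric monoidal.

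The equivalence $(\C\otimes\D)_{hG}\simeq\C_{hG}\otimes_{\Sp_{hG}}\D_{hG}$ holds for discrete $G$, where $(-)_{hG}$ is an equivalence. It fails for general compact $G$. Take $G=S^1$ and $\C=\D=\Fun(S^1,\Sp)$ with the translation action. Then $(\C\otimes\D)_{hS^1}\simeq\Fun((S^1\times S^1)_{hS^1},\Sp)\simeq\Fun(S^1,\Sp)$. On the other hand, using $\Sp\simeq\Mod_{e_*\bbS}(\Sp^{BS^1})$ for $e\colon *\to BS^1$, one gets $\C_{hS^1}\otimes_{\Sp_{hS^1}}\D_{hS^1}\simeq\Sp\otimes_{\Sp^{BS^1}}\Sp\simeq\Mod(C^*(S^1;\bbS))$. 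This is only the localizing subcategory of $\Fun(S^1,\Sp)$ generated by the constant local system.

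Your proposed fallback goes the wrong way. Since $\Sp\otimes_{\Sp_{hG}}-$ is symmetric monoidal, its left adjoint $(-)_{hG}$ inherits an \emph{oplax} structure, and full faithfulness of the right adjoint does not upgrade it. An oplax functor followed by the lax functors $(-)^{\wedge\Sp_{hG}}$ and $(-)_{\eta}$ is not lax.

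Step~2 has a related slip. In the dualizable setting the forgetful functor $\Pr^{\dbl}_{\I}\to\Pr^{\dbl}_{\overline{\C}}$ is the \emph{left} adjoint of $\I\otimes_{\overline{\C}}-$ (\cref{thm:unst_completion}, \cref{prop:corig}). Its would-be unit $j^*\colon\overline{\C}\to\I$ is not an internal left adjoint. What is right adjoint to $\I\otimes_{\overline{\C}}-$, and hence lax, is the composite $(-)^{\wedge\I}$, by \cref{prop:corig}(2).

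\textbf{The missing idea is that you do not need $(-)_{hG}$ for the lax structure at all.} By definition $(-)^{hG,\dbl}=\lim^{\dbl}_{BG}$ is right adjoint to the symmetric monoidal functor $\const\colon\Pr^{\dbl}_{\st}\to(\Pr^{\dbl}_{\st})^{BG}$. It is therefore lax symmetric monoidal by doctrinal adjunction, which is the paper's argument.

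From this, each $\C^{hG,\dbl}$ is canonically an $\Sp^{hG,\dbl}$-module, and it is $\Sp_{hG}$-complete by \cref{thm:hGdbl=Nuc}. So $\C^{tG,\dbl}\simeq(\C^{hG,\dbl})_{\eta}$ is the $\Sp_{hG}$-generic fiber, i.e.\ base change along $\Sp^{hG,\dbl}\to\Sp^{hG,\dbl}/\Sp_{hG}$. \Cref{cor:generic_fiber} then gives the lax structure and preservation of rigidity for $(-)^{tG,\dbl}$.

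Your rigidity arguments in Step~3 are fine and agree with the paper's: \cref{cor:C_hG_locrig} for homotopy fixed points, and base change along the smashing localization for the Tate construction.
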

\begin{proof}
	Since $(-)^{hG,\dbl}$ is the right adjoint of the symmetric monoidal functor $\const:\Pr^{\dbl}_{\st}\to(\Pr^{\dbl}_{\st})^{BG}$, it admits a canonical lax symmetric monoidal structure. By \cref{cor:C_hG_locrig}, $(-)^{hG,\dbl}$ preserves rigid $\Sp$-algebras. For the dualizable Tate fixed points, by \cref{thm:hGdbl=Nuc}, we see that $(-)^{tG,\dbl}\simeq\cof(\Nm:(-)_{hG}\hookrightarrow\Nuc_{\Sp_{hG}}((-)_{hG}))\simeq\Nuc_{\Sp_{hG}}((-)_{hG})_{\eta}\simeq((-)^{hG,\dbl})_{\eta}$ is the $\Sp_{hG}$-generic fiber. Therefore, by \cref{cor:generic_fiber}, $(-)^{tG,\dbl}$ is lax symmetric monoidal and preserves rigid $\Sp$-algebras.
\end{proof}
\begin{proposition}[{\cite{Zhoa}}]\label{prop:End(1_tG)}
	Let $\C\in\CAlg(\Pr^{\dbl}_{\st})^{BG}$ with $G\in\Grp(\An)$ a compact group anima. Then we have
	\[
		\End(\mathbbm{1}_{\C^{hG,\dbl}})\simeq\End(\mathbbm{1}_{\C})^{hG},\quad\End(\mathbbm{1}_{\C^{tG,\dbl}})\simeq\End(\mathbbm{1}_{\C})^{tG}.
	\]
\end{proposition}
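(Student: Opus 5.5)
The plan is to prove the two formulas in turn, since the second follows from the first and the generic-fiber description of the Tate construction in \cref{prop:tG}.

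For the homotopy fixed points, I would use that $(-)^{hG,\dbl}$ is a dualizable limit. The composite $\CAlg(\Pr^{\dbl}_{\st})\to\CAlg(\PrL_{\st})\xrightarrow{\End(\mathbbm{1}_{(-)})}\CAlg(\Sp)$ should preserve limits, but the first step is the subtle one: dualizable limits differ from limits in $\PrL_{\st}$. The remedy is to compare with the $\PrL$-limit $\lim_{BG}\C\simeq\C_{hG}$. By \cref{rmk:lim^dbl}, $\C^{hG,\dbl}$ is a colocalization of it, namely $\Ind_{\I_{\mathrm{sd}}}$ of the $\PrL$-limit. Its right adjoint $\C^{hG,\dbl}\to\lim^{\PrL}_{BG}\C$ is symmetric monoidal and sends the unit to the unit, exactly as in \cref{lem:1_rig}: the unit $\mathbbm{1}_{\C^{hG,\dbl}}$ is the Yoneda-type image of $\mathbbm{1}$. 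So I would imitate the proof of \cref{lem:1_rig} and show that this right adjoint induces an equivalence on $\End(\mathbbm{1})$. Then
\[
\End(\mathbbm{1}_{\lim^{\PrL}_{BG}\C})\simeq\lim_{BG}\End(\mathbbm{1}_{\C})=\End(\mathbbm{1}_{\C})^{hG},
\]
since mapping spectra in a limit of categories are limits of mapping spectra. In the rigid case this is already recorded after \cref{cor:C_hG_locrig}. For general dualizable $\C$, I would first reduce via the fully faithful internal left adjoint $\Mod(\End(\mathbbm{1}_{\C}))\hookrightarrow\C$ (with $G$-action), which $(-)^{hG,\dbl}$ preserves. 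This reduces to the rigid category $\Mod(R)$ with $R=\End(\mathbbm{1}_{\C})$, where the rigid computation applies.

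For the Tate construction, I would use the cofiber sequence $\C_{hG}\xrightarrow{\Nm}\C^{hG,\dbl}\to\C^{tG,\dbl}$ in $\Pr^{\dbl}_{\st}$. This is a smashing localization of $\C^{hG,\dbl}$ by the smashing ideal $\Sp_{hG}\otimes\C^{hG,\dbl}$ (identified with the image of $\C_{hG}$ by \cref{thm:hGdbl=Nuc}). Hence $\C^{tG,\dbl}\simeq\Mod_A(\C^{hG,\dbl})$ for the idempotent algebra $A=\cof(\Gamma\mathbbm{1}\to\mathbbm{1})$, and $\End(\mathbbm{1}_{\C^{tG,\dbl}})\simeq\Hom(\mathbbm{1},A)=\cof(\Hom(\mathbbm{1},\Gamma\mathbbm{1})\to\End(\mathbbm{1}))$, where $\Gamma\mathbbm{1}=\Nm\Nm^R(\mathbbm{1})$. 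The key claim is then
\[
\Hom_{\C^{hG,\dbl}}(\mathbbm{1},\Nm\Nm^R\mathbbm{1})\simeq\End(\mathbbm{1}_{\C})_{hG},
\]
with the induced map to $\End(\mathbbm{1}_{\C})^{hG}$ being the classical norm. This I would again reduce to $\C=\Mod(R)$, and then to the universal case $\Sp^{hG,\dbl}$ with $\Sp_{hG}=\Sp^{BG}$. There $\Nm^R\mathbbm{1}$ is the unit of $\Sp^{BG}$, and $\Hom$ from the atomic unit into $\Nm(\bbS)$ is computed in the compactly generated rigid category $\Sp^{hG,\dbl}$. That computation gives $\bbS_{hG}$, by identifying $\Nm$ with the usual norm in the preceding lemma.

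The main obstacle I expect is this last identification: the mapping spectrum from the unit into the image of the norm. The unit of $\C_{hG}$ is only $\omega_1$-compact, so I cannot simply commute colimits. I would try to use \cref{lem:1_rig} applied to the rigid envelope $\C^{hG,\dbl}\to\C_{hG}$ together with $\Nm^R\Nm\simeq\id$. A projection formula $\Nm(X)\simeq\Nm(\mathbbm{1})\otimes X$ should then reduce everything to the single spectrum $\Hom(\mathbbm{1},\Nm\mathbbm{1})$, computed for $\Sp$ and base-changed along $R$.
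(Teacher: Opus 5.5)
Your treatment of $\End(\mathbbm{1}_{\C^{hG,\dbl}})$ is the paper's argument: reduce along the fully faithful internal left adjoint $\Mod(R)\hookrightarrow\C$ with $R=\End(\mathbbm{1}_{\C})$, then use the rigid case via \cref{lem:1_rig}. Your setup for the Tate part is also correct: $\End(\mathbbm{1}_{\C^{tG,\dbl}})$ is the cofiber of $\Hom(\mathbbm{1},\Nm(\mathbbm{1}_{\C_{hG}}))\to\End(\mathbbm{1}_{\C^{hG,\dbl}})$.

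However, the step that carries all the content is missing: computing $\Hom_{\C^{hG,\dbl}}(\mathbbm{1},\Nm(-))$ and identifying its map to $\End(\mathbbm{1})$ with the norm. The tools you propose cannot supply it.
\begin{itemize}
\item \cref{lem:1_rig} and adjunction only compute $\Hom(\mathbbm{1},j_*M)\simeq p(M)^{hG}$, where $j_*$ is the right adjoint of $\Nm^R$. The identity $\Nm^R\Nm\simeq\id$ only controls maps between objects in the image of $\Nm$. The unit $\mathbbm{1}_{\C^{hG,\dbl}}\simeq j_*\mathbbm{1}$ is not in that image, and the discrepancy between $\Nm=j_!$ and $j_*$ is precisely the Tate term.
\item The projection formula does not reduce matters to the single spectrum $\Hom(\mathbbm{1},\Nm\mathbbm{1})$ for $\Sp$. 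After base change you need $\Hom(\mathbbm{1},\Nm X)$ for $X=R$ with its possibly nontrivial $G$-action, that is, the whole functor $X\mapsto\Hom(\mathbbm{1},\Nm X)$. This functor is not determined by its value at $X=\bbS$ unless $G$ acts trivially on $R$.
\item The value you predict is wrong for non-discrete compact $G$. It should be $(\bbS[G]^{hG}\otimes R)_{hG}$, for instance $(\Sigma R)_{hS^1}$ when $G=S^1$; this is the source of the Nikolaus--Scholze norm.
\end{itemize}

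The missing argument, as in the paper, works functorially in $M\in\Mod(R)_{hG}$.
\begin{enumerate}
\item The unit of the rigid category $\Mod(R)^{hG,\dbl}$ is compact, so $\Hom(\mathbbm{1},\Nm(-))$ preserves colimits.
\item On compact $M$ one has $\Nm M\simeq j_*M$. This holds because compact objects of the locally rigid category $\Mod(R)_{hG}$ are dualizable, and $(\Mod(R)^{hG,\dbl})^{\omega}\simeq(\Mod(R)_{hG})^{\dbl}$ by \cref{constr:Ind_dbl}. Hence $\Hom(\mathbbm{1},\Nm M)\simeq p(M)^{hG}$ for such $M$.
\item The functor $(\bbS[G]^{hG}\otimes p(-))_{hG}$ also preserves colimits. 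Its norm map to $p(-)^{hG}$ is an equivalence on the induced generators $p^L(N)$, hence on all compact objects.
\item Both functors are therefore the colimit-preserving approximation of $p(-)^{hG}$. This gives $\Hom(\mathbbm{1},\Nm M)\simeq(\bbS[G]^{hG}\otimes p(M))_{hG}$, compatibly with the maps to $p(M)^{hG}$.
\item Taking cofibers at $M=\mathbbm{1}$ yields $R^{tG}$.
\end{enumerate}
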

\begin{proof}
	The fully faithful $\Sp$-internal algebra map $\Mod(\End(\mathbbm{1}_{\C}))\hookrightarrow\C$ induces a map in $\CAlg(\Pr^{\dbl}_{\st})^{BG}$. Then it induces fully faithful $\Sp$-algebra maps $\Mod(\End(\mathbbm{1}_{\C}))^{hG,\dbl}\hookrightarrow\C^{hG,\dbl}$ and $\Mod(\End(\mathbbm{1}_{\C}))^{tG,\dbl}\hookrightarrow\C^{tG,\dbl}$. It suffices to show that $\End(\mathbbm{1}_{\Mod(R)^{hG,\dbl}})\simeq R^{hG}$ and $\End(\mathbbm{1}_{\Mod(R)^{tG,\dbl}})\simeq R^{tG}$ for $R=\End(\mathbbm{1}_{\C})$.
	
	The first equivalence has been proved after the proof of \cref{cor:C_hG_locrig}. For the second equivalence, we consider the short exact sequence induced by the norm map 
	\[\begin{tikzcd}
		{\Mod(R)_{hG}} & {\Mod(R)^{hG,\dbl}} & {\Mod(R)^{tG,\dbl}}
		\arrow["{j_!}", shift left=3, hook, from=1-1, to=1-2]
		\arrow["{j_*}"', shift right=3, hook, from=1-1, to=1-2]
		\arrow["{j^*}"{description}, from=1-2, to=1-1]
		\arrow["{i^*}", shift left=3, from=1-2, to=1-3]
		\arrow["{i^!}"', shift right=3, from=1-2, to=1-3]
		\arrow["{i_*}"{description}, hook', from=1-3, to=1-2]
	\end{tikzcd}\]
	Let $t:=\cof(j_!\to j_*)\simeq i_*i^*j_*$. Then we obtain a fiber sequence
	\begin{align*}
		\Hom_{\Mod(R)^{hG,\dbl}}(\mathbbm{1}_{\Mod(R)^{hG,\dbl}},j_!(-))\to&\;\Hom_{\Mod(R)_{hG}}(\mathbbm{1}_{\Mod(R)_{hG}},-)\\
		\to&\;\Hom_{\Mod(R)^{tG,\dbl}}(\mathbbm{1}_{\Mod(R)^{tG,\dbl}},t(-)).
	\end{align*}
	Denote by $p:\Mod(R)_{hG}\to\Mod(R)$ the projection functor and by $p^L$ its left adjoint, so that $pp^L\simeq\colim_Gg(-)$. We have a colimit preserving functor $\Hom_R(p(\mathbbm{1}_{\Mod(R)_{hG}}),p(-))\simeq\Hom_R(R,p(-))\simeq p(-):\Mod(R)_{hG}\to\Sp^{BG}$. For every $M\in\Mod(R)_{hG}$, we have
	\[
		\Hom_{\Mod(R)_{hG}}(\mathbbm{1}_{\Mod(R)_{hG}},M)\simeq p(M)^{hG}.
	\]
	Since $\Mod(R)_{hG}$ is dualizable, the canonical map $j_!\to j_*$ is an equivalence on $(\Mod(R)_{hG})^{\omega}$. More precisely, by \cref{constr:Ind_dbl}, $(\Mod(R)^{hG,\dbl})^{\omega}\simeq(\Mod(R)_{hG})^{\rig,\dbl}\simeq(\Mod(R)_{hG})^{\dbl}$. Then $j_!$ and $j_*$ agree with the canonical inclusion $(\Mod(R)_{hG})^{\omega}\subseteq(\Mod(R)_{hG})^{\dbl}$. So, for every $M\in(\Mod(R)_{hG})^{\omega}$, the canonical map induces an equivalence
	\[
		\Hom_{\Mod(R)^{hG,\dbl}}(\mathbbm{1}_{\Mod(R)^{hG,\dbl}},j_!M)\xrightarrow{\simeq}\Hom_{\Mod(R)_{hG}}(\mathbbm{1}_{\Mod(R)_{hG}},M)\simeq p(M)^{hG}.
	\]

	On the other hand, \cite[Theorem I.4.1]{NS18} provides a natural norm map $(\bbS[G]^{hG}\otimes p(-))_{hG}\to p(-)^{hG}$. For every $M\in\Mod(R)$, the $G$-equivariant spectrum $pp^L(M)$ is induced from the underlying spectrum of $M$, and hence the norm map $(\bbS[G]^{hG}\otimes pp^L(M))_{hG}\simeq(\bbS[G]^{hG}\otimes\colim_Gg(M))_{hG}\simeq(\colim_Gg(M))^{hG}\simeq(pp^L(M))^{hG}$ is an equivalence. Since $p^L$ preserves compact objects and generates $\Mod(R)_{hG}$, the norm map $(\bbS[G]^{hG}\otimes p(M))_{hG}\simeq p(M)^{hG}$ is an equivalence for every $M\in(\Mod(R)_{hG})^{\omega}$. So, the functors $(\bbS[G]^{hG}\otimes p(-))_{hG}$ and $\Hom_{\Mod(R)^{hG,\dbl}}(\mathbbm{1}_{\Mod(R)^{hG,\dbl}},j_!(-))$ both preserve colimits, and their natural maps to $p(-)^{hG}$ are equivalences on compact objects. Hence, for every $M\in\Mod(R)_{hG}$, we have
	\[
		\Hom_{\Mod(R)^{hG,\dbl}}(\mathbbm{1}_{\Mod(R)^{hG,\dbl}},j_!M)\simeq(\bbS[G]^{hG}\otimes p(M))_{hG}\to p(M)^{hG}.
	\]
	Therefore, taking cofibers we obtain 
	\[
		\Hom_{\Mod(R)^{tG,\dbl}}(\mathbbm{1}_{\Mod(R)^{tG,\dbl}},t(M))\simeq p(M)^{tG}.
	\]
	The desired result follows by taking $M=\mathbbm{1}_{\Mod(R)_{hG}}$.
\end{proof}
Since $t:\C_{hG}\to\C^{hG,\dbl}$ is lax symmetric monoidal, this proposition gives a categorical interpretation of the lax symmetric monoidal structure on Tate fixed points $(-)^{tG}:\C^{BG}\to\C$ by taking the trivial action.

We can now establish a categorical analogue of the Tate vanishing phenomenon for $L_{T(n)}\Sp$, lifting the following classical result in $\Pr^{\dbl}_{T(n)}$.
\begin{theorem}[Tate vanishing, {\cite[Theorem 1.5]{Kuhn04Tate}, \cite[Corollary 2.7]{CM17Tate}}]\label{thm:tate_vanishing_kuhn}
	Let $n\geq1$ and $X\in(L_{T(n)}\Sp)^{BG}$ with $G$ a finite group. Then $L_{T(n)}(X^{tG})\simeq0$.
\end{theorem}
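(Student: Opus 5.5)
The plan is to deduce the statement from the structure of $X^{tG}$ as a module over the $\bbE_\infty$-ring $\bbS^{tG}$ (or, more precisely, over $(L_{T(n)}\bbS)^{tG}$), and then to reduce to $p$-groups. The first step is the reduction to Sylow subgroups. Let $P\leq G$ be a Sylow $p$-subgroup. $X^{tG}$ is a module over the ring $(L_{T(n)}\bbS)^{tG}$, and the restriction/transfer composite $X^{tG}\to X^{tP}\to X^{tG}$ is multiplication by $[G:P]$. Since $T(n)$-local spectra are $p$-local, this index is a unit after $T(n)$-localization, so $L_{T(n)}X^{tG}$ is a retract of $L_{T(n)}X^{tP}$. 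It therefore suffices to treat the case where $G$ is a $p$-group. A further dévissage along a central series ($X^{tG}$ can be built from $(X^{tN})^{h(G/N)}$ and related pieces) reduces to $G=C_p$, although it is cleaner to argue uniformly for $p$-groups, as below.

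For a $p$-group $G$, the key input is that the $T(n)$-local Tate construction is itself $T(n-1)$-complete, or more concretely that $L_{T(n)}(X^{tG})$ is a module over $L_{T(n)}\bigl((L_{T(n)}\bbS)^{tG}\bigr)$. So it is enough to show that the $\bbE_\infty$-ring $L_{T(n)}\bigl((L_{T(n)}\bbS)^{tG}\bigr)$ vanishes, and for this it suffices to show that $(L_{T(n)}\bbS)^{tG}$, or even $(L_{K(n)}\bbS)^{tG}$ with an $E_n$-based argument, is $L_{n-1}^f$-local. Because $T(n)$-localization kills $L_{n-1}^f$-local objects, the conclusion then follows. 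To prove $L_{n-1}^f$-locality I would follow Kuhn's approach. Since $L_n^f$ is a finite localization, $L_n^f X\simeq \colim_k \Hom(F(n+1)_k^{\vee}\text{-type towers},X)$ can be written as a filtered colimit. The Tate construction commutes with the relevant filtered colimits, because for finite $G$ the functor $(-)_{hG}$ commutes with all colimits and $(-)^{hG}$ commutes with colimits of uniformly bounded type after finite localization; this should give $L_n^f(X^{tG})\simeq (L_n^fX)^{tG}$. It then remains to show that $T(n)\otimes (L_n^f Y)^{tG}\simeq0$, i.e.\ that the Tate construction of the finite localization is $v_n$-torsion-free-killed. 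Via the cofiber $\Sigma^\infty BG_+$ and the Segal-conjecture-type identification of $\bbS^{tG}$ with a completion of the Burnside ring, this amounts to showing that the canonical map $L_{T(n)}\bbS\to L_{T(n)}(\bbS^{tG})$ is zero on $F(n)$.

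I expect the main obstacle to be exactly this last point. The cleanest route I would try first is the ambidexterity result of Kuhn: in $L_{T(n)}\Sp$ the norm map $X_{hG}\to X^{hG}$ is an equivalence for finite $G$, which is precisely the statement, since $X^{tG}$ is the cofiber of the norm. Concretely, one shows that the $T(n)$-localized norm $L_{T(n)}(X_{hG})\to X^{hG}$ is an equivalence by reducing, via the thick subcategory theorem and the fact that both sides are exact in $X$ and preserve the relevant colimits, to $X=L_{T(n)}F(n)$ with trivial action. The cofiber of the norm there is controlled by $v_n$-periodic homology of $BG$, and the finiteness of $K(n)_*BG$ (Ravenel) makes $BG$ $T(n)$-locally dualizable, which yields invertibility of the norm. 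From invertibility of the norm it follows that $L_{T(n)}(X^{tG})\simeq0$ for every $X\in(L_{T(n)}\Sp)^{BG}$.
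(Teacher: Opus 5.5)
Your preliminary reductions are sound. For $p$-local $X\in\Sp^{BG}$, the composite $X\to X\otimes\bbS[G/P]\to X$ is an equivalence in $\Sp^{BG}$: this is checked on underlying spectra, where it is multiplication by $[G:P]$. Hence $X^{tG}$ is a retract of $(X\otimes\bbS[G/P])^{tG}\simeq X^{tP}$. Strictly speaking, the composite on Tate constructions is induced by $\bbS\to\bbS[G/P]\to\bbS$ in $\Sp^{BG}$ (the Burnside class of $G/P$, not the integer $[G:P]$), but you only use the retraction. Likewise $X^{tG}$ is a module over $(L_{T(n)}\bbS)^{tG}$, so everything reduces to $L_{T(n)}\bigl((L_{T(n)}\bbS)^{tG}\bigr)\simeq0$. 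Tensoring with $F(n)$, on which $v_n$ then acts invertibly, this is equivalent to $\bigl(L_{T(n)}F(n)\bigr)^{tG}\simeq0$ for the trivial action. The variant with $L_{K(n)}\bbS$ would not suffice, since a $T(n)$-local $X$ need not be an $L_{K(n)}\bbS$-module.

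However, two of your intermediate claims are false. First, $L_n^f(X^{tG})\simeq(L_n^fX)^{tG}$ already fails for $n=0$, $X=\bbS$, $G=C_p$: we have $(\bbS[1/p])^{tC_p}\simeq0$, while $\bbS^{tC_p}[1/p]\simeq\bbS^{\wedge}_p[1/p]\not\simeq0$ by the Segal conjecture for $C_p$. The Tate construction does not commute with the filtered colimits presenting $L_n^f$ (otherwise it would commute with $L_n^f$), and this failure is exactly the height shift being proved. Second, the reformulation through $\bbS^{tG}$ targets the wrong object. By the same Segal conjecture, $L_{T(n)}\bbS\to L_{T(n)}(\bbS^{tC_p})$ is an equivalence, not zero on $F(n)$. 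What must vanish is the Tate construction of $T(n)$-local objects after $T(n)$-localization, not anything built from $\bbS^{tG}$.

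The decisive step is then missing. As you note, $T(n)$-local invertibility of the norm for finite $G$ is equivalent to the statement, so invoking Kuhn's ambidexterity is circular, and your concrete sketch does not replace it. The reduction to $L_{T(n)}F(n)$ with trivial action is fine, since it is your module argument again. But the asserted colimit-preservation of $X\mapsto X^{hG}$ on $(L_{T(n)}\Sp)^{BG}$ is itself equivalent to the theorem, because the norm is an equivalence on the generators $L_{T(n)}\bbS[G]$; so it cannot be used.

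More seriously, finiteness of $K(n)_*BG$ only controls $K(n)$-local phenomena. It is the input for the $K(n)$-local (complex-oriented, $E_n$-based) Tate vanishing, but it gives neither dualizability of $L_{T(n)}\Sigma^\infty_+BG$ nor invertibility of the norm. The reason is that $K(n)$-homology does not detect equivalences between $T(n)$-local spectra once the telescope conjecture fails, e.g.\ $L_{T(n)}\bbS\to L_{K(n)}\bbS$ for $n\geq2$. Even granting dualizability, you would still have to show that the duality is implemented by the norm.

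What is needed is a genuinely telescopic argument that $\bigl(L_{T(n)}F(n)\bigr)^{tG}\simeq0$; this is the actual content of \cite{Kuhn04Tate} and of the short proof in \cite{CM17Tate}. The paper does not reprove this. It imports the theorem from those references and only records, via the chromatic fracture square, the equivalent reformulation that $X^{tG}$ is $L_{n-1}^f$-local for $X\in(L_n^f\Sp)^{BG}$.
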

Equivalently, let $X\in(L_n^f\Sp)^{BG}$. Then $X^{tG}$ is $L_{n-1}^f$-local. Indeed, applying $L_{T(n)}(-)^{tG}$ to the chromatic fracture square of $X$ and using $L_{T(n)}L_{n-1}^f\simeq0$, we obtain $L_{T(n)}X^{tG}\simeq L_{T(n)}(L_{T(n)}X)^{tG}\simeq0$. Then it follows from the chromatic fracture square of $X^{tG}$ that $X^{tG}\simeq L_n^fX^{tG}\simeq L_{n-1}^fX^{tG}$.
\begin{theorem}[Tate vanishing]\label{thm:tate_vanishing}
	Let $n\geq1$ and $\C\in(\Pr^{\dbl}_{L_n^f})^{BG}$ with $G$ a finite group. Then $\C^{tG,\dbl}$ is $L_{n-1}^f$-local and we have an equivalence
	\[
		L_{T(n)}\C^{tG,\dbl}\simeq0.
	\]
	In particular, for $\C\in(\Pr^{\dbl}_{T(n)})^{BG}$ we have $L_{T(n)}\C^{tG,\dbl}\simeq0$.
\end{theorem}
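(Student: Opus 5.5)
The strategy is to reduce categorical Tate vanishing to the spectral statement \cref{thm:tate_vanishing_kuhn}. The reduction uses the lax symmetric monoidal structure on $(-)^{tG,\dbl}$ from \cref{prop:tG} and the computation of the unit endomorphisms in \cref{prop:End(1_tG)}. The key observation is that $\C^{tG,\dbl}$ is a module over $(L_n^f\Sp)^{tG,\dbl}$, so it suffices to treat the universal case $\C=L_n^f\Sp$ with trivial action.

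First, I regard $\C\in(\Pr^{\dbl}_{L_n^f})^{BG}$ as a module over the commutative algebra $L_n^f\Sp$ with trivial $G$-action in $(\Pr^{\dbl}_{\st})^{BG}$. This is legitimate because $L_n^f\Sp$ is idempotent and $\C$ is $L_n^f$-local, so $\C\simeq L_n^f\Sp\otimes\C$ equivariantly. Since $(-)^{tG,\dbl}$ is lax symmetric monoidal by \cref{prop:tG}, $\D:=(L_n^f\Sp)^{tG,\dbl}$ is a commutative algebra in $\Pr^{\dbl}_{\st}$ and $\C^{tG,\dbl}$ is a $\D$-module.

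Second, I compute $\End(\mathbbm{1}_{\D})\simeq(L_n^f\bbS)^{tG}$ by \cref{prop:End(1_tG)}. By \cref{thm:tate_vanishing_kuhn}, in its reformulated version stated just after it, $(L_n^f\bbS)^{tG}$ is $L_{n-1}^f$-local. Hence $\D$ is an algebra over $\Mod((L_n^f\bbS)^{tG})$, which is an $L_{n-1}^f$-local category. Then $L_{n-1}^f\Sp\otimes\D\simeq\D$: the unit of $\D$ is $L_{n-1}^f$-local, and for a module category over an $L_{n-1}^f$-local ring this gives locality because $L_{n-1}^f\Sp$ is smashing, being an idempotent algebra in $\PrL_{\st}$ with $L_{n-1}^f\Sp\otimes\Mod(R)\simeq\Mod(L_{n-1}^fR)$. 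Consequently $\C^{tG,\dbl}$, as a $\D$-module, is also $L_{n-1}^f$-local, since being $E$-local for a mode is stable under modules over local algebras (cf.\ \cref{prop:L_FL_E}).

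Third, $L_{T(n)}\C^{tG,\dbl}\simeq L_{T(n)}\Sp\otimes L_{n-1}^f\Sp\otimes\C^{tG,\dbl}\simeq0$, because $L_{T(n)}\Sp\otimes L_{n-1}^f\Sp\simeq0$ by \cref{prop:M_n^mf}. The final assertion follows since a $T(n)$-local category is $L_n^f$-local.

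The main obstacle is the first step: making the module structure of $\C^{tG,\dbl}$ over $(L_n^f\Sp)^{tG,\dbl}$ precise, in particular viewing $L_n^f$-local $G$-categories as modules over the trivial-action algebra $L_n^f\Sp$ in $(\Pr^{\dbl}_{\st})^{BG}$. I would handle this through the identification $(\Pr^{\dbl}_{\st})^{BG}\simeq\Mod_{\Sp[G]}(\Pr^{\dbl}_{\st})$ together with smashingness of $L_n^f$.
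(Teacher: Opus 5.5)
Your proposal is correct and follows essentially the same route as the paper: you realize $\C^{tG,\dbl}$ as a module over $(L_n^f\Sp)^{tG,\dbl}$ via the lax symmetric monoidal structure of \cref{prop:tG}, identify the unit endomorphisms with $(L_n^f\bbS)^{tG}$ by \cref{prop:End(1_tG)}, use \cref{thm:tate_vanishing_kuhn} together with smashingness of $L_{n-1}^f$, and finish with \cref{prop:M_n^mf}. For the final assertion, one thing needs to be made explicit: a dualizable $L_{T(n)}\Sp$-module must also be a dualizable $L_n^f\Sp$-module, not merely $L_n^f$-local, before the first part applies; the paper gets this from \cref{thm:unst_completion}, and that is the precise content behind your remark that a $T(n)$-local category is $L_n^f$-local.
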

\begin{proof}
	Since $L_n^f\Sp$ is rigid, we have $(\Pr^{\dbl}_{L_n^f})^{BG}\simeq\Mod_{L_n^f\Sp}((\Pr^{\dbl}_{\st})^{BG})$. By \cref{prop:tG}, $(-)^{tG,\dbl}$ is lax symmetric monoidal. Then $(-)^{tG,\dbl}:(\Pr^{\dbl}_{L_n^f})^{BG}\to\Pr^{\dbl}_{\st}$ factors through $\Mod_{(L_n^f\Sp)^{tG,\dbl}}(\Pr^{\dbl}_{\st})$. So it suffices to show that $(L_n^f\Sp)^{tG,\dbl}$ is $L_{n-1}^f$-local. 
	
	By \cref{prop:End(1_tG)}, we have
	\[
		\End(\mathbbm{1}_{(L_n^f\Sp)^{tG,\dbl}})\simeq(L_n^f\bbS)^{tG},
	\]
	which is $L_{n-1}^f$-local by \cref{thm:tate_vanishing_kuhn}. Therefore, since $L_{n-1}^f$ is smashing, $(L_n^f\Sp)^{tG,\dbl}$ is also $L_{n-1}^f$-local, and hence $\C^{tG,\dbl}$ is $L_{n-1}^f$-local. We have $L_{T(n)}\C^{tG,\dbl}\simeq0$ by \cref{prop:M_n^mf}.

	In particular, let $\C\in(\Pr^{\dbl}_{T(n)})^{BG}$. Since $\Pr^{\dbl}_{T(n)}\simeq\Pr^{\dbl}_{L_n^f\Sp,L_{T(n)}\Sp\text{-}\tors}$ by \cref{thm:unst_completion}, $\C^{tG,\dbl}$ is $L_{n-1}^f\Sp$-local and hence $L_{T(n)}\C^{tG,\dbl}\simeq0$.
\end{proof}
This also completes the proof of \cref{thm:nuc_descent}.
\begin{corollary}
	Let $n\geq1$ and $\C\in\CAlg(\Pr^{\dbl}_{L_n^f})^{BG}$ with $G$ a finite group and $\C^{tG,\dbl}$ nonzero. Then $\operatorname{ht}(\C^{tG,\dbl})\leq n-1$.
\end{corollary}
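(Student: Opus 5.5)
We argue through the endomorphism $\bbE_\infty$-ring of the unit, which controls the height by definition. Set $R:=\End(\mathbbm{1}_{\C})$, with the induced $G$-action. \cref{prop:End(1_tG)} gives
\[
	\End(\mathbbm{1}_{\C^{tG,\dbl}})\simeq R^{tG},
\]
so $\operatorname{ht}(\C^{tG,\dbl})=\operatorname{ht}(R^{tG})$ as soon as $\C^{tG,\dbl}$ is nonzero. That hypothesis implies $R^{tG}\not\simeq0$, since a nonzero presentably symmetric monoidal category has nonzero unit, so the height is defined.

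The key step is \cref{thm:tate_vanishing}: $\C^{tG,\dbl}$ is $L_{n-1}^f$-local. Concretely, $\C^{tG,\dbl}$ is a module over $(L_n^f\Sp)^{tG,\dbl}$, and the proof of \cref{thm:tate_vanishing} shows the latter is $L_{n-1}^f$-local. Alternatively, one can read this off directly from $R$. Since $\C$ is $L_n^f$-local, $R$ is $L_n^f$-local, and Kuhn's theorem (\cref{thm:tate_vanishing_kuhn}, in its $L_n^f$ form) shows that $R^{tG}$ is $L_{n-1}^f$-local.

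Either route gives the conclusion. For $m\geq n$ we have $T(m)\otimes L_{n-1}^f\bbS\simeq0$. Hence $L_{T(m)}R^{tG}\simeq L_{T(m)}L_{n-1}^f R^{tG}\simeq0$, which means $\operatorname{ht}(R^{tG})\leq n-1$.

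The main point to watch is the nonzero hypothesis together with the possibility of height $-1$. Neither causes trouble: the definition of height only requires the ring to be nonzero, and the bound $\leq n-1$ holds trivially whenever the height is smaller. So the argument reduces to the identification in \cref{prop:End(1_tG)} followed by Kuhn's Tate vanishing.
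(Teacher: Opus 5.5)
Your proposal is correct and is essentially the paper's argument. The paper reads the corollary off from \cref{thm:tate_vanishing}: since $\C^{tG,\dbl}$ is $L_{n-1}^f$-local, its unit endomorphism ring $\End(\mathbbm{1}_{\C^{tG,\dbl}})\simeq R^{tG}$ is $L_{n-1}^f$-local, hence $T(m)$-acyclic for all $m\geq n$. Your second route simply inlines the ingredients of that theorem's proof, namely \cref{prop:End(1_tG)} together with Kuhn's \cref{thm:tate_vanishing_kuhn}, applying them to $R$ directly instead of to $(L_n^f\bbS)^{tG}$.
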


For $\bbE_\infty$-rings, the Tate vanishing theorem admits a partial converse.
\begin{theorem}[{\cite[Proposition 4.7]{Hah22Hinfty}, \cite[Theorem 9.8]{BSY22nullstellensatz}}]
	Let $R\in\CAlg(\Sp)$ and $n\geq0$. If $L_{T(n)}R^{tC_p}\simeq0$, then $L_{T(n+1)}R\simeq0$.
\end{theorem}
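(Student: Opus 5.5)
The plan is to reduce to the case where $R$ is $T(n+1)$-local and nonzero, then to a single Lubin--Tate theory via the Nullstellensatz, where the claim becomes an explicit computation of Tate fixed points. Throughout, $C_p$ acts trivially on $R$, and I use two facts repeatedly. First, $(-)^{tC_p}$ is lax symmetric monoidal on $\CAlg(\Sp)$ with trivial actions. Second, if $A\to B$ is an $\bbE_\infty$-ring map with $L_{T(n)}A\simeq0$, then $L_{T(n)}B\simeq0$, since $B$ is an $A$-module and $L_{T(n)}$ is smashing on $L_n^f\Sp$ (see \cref{prop:M_n^mf}).

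I argue by contraposition. Suppose $L_{T(n+1)}R\not\simeq0$, and set $R':=L_{T(n+1)}R$, a nonzero $T(n+1)$-local $\bbE_\infty$-ring. The localization map $R\to R'$ is $\bbE_\infty$, so it induces an $\bbE_\infty$-map $R^{tC_p}\to R'^{tC_p}$. By the second fact, it suffices to show $L_{T(n)}R'^{tC_p}\not\simeq0$. So I may assume $R$ is $T(n+1)$-local and nonzero.

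Next I apply the chromatic Nullstellensatz of Burklund--Schlank--Yuan \cite{BSY22nullstellensatz}. Every nonzero $T(n+1)$-local $\bbE_\infty$-ring admits an $\bbE_\infty$-map $R\to E:=E_{n+1}(L)$ to a Lubin--Tate theory of height $n+1$ over some algebraically closed field $L$. This map yields an $\bbE_\infty$-map $R^{tC_p}\to E^{tC_p}$. By the second fact again, it suffices to prove $L_{T(n)}E^{tC_p}\not\simeq0$.

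This last step is the main obstacle, and it is a concrete computation. Since $E$ is complex oriented and the action is trivial, I use the standard formula
\[
	\pi_*E^{tC_p}\simeq\pi_*E(\!(x)\!)/[p](x).
\]
Here $[p](x)=px+\cdots+(\mathrm{unit})\cdot v_{n+1}x^{p^{n+1}}+\cdots$ is the $p$-series of the formal group. By Weierstrass preparation, inverting $x$ turns $E^{tC_p}$ into a complete local ring in which the Hasse invariant $v_{n+1}$ becomes divisible by lower $v_i$. The result of Ando--Morava--Sadofsky and Greenlees--Sadofsky identifies $E^{tC_p}$, after $K(n)$-localization, as a faithfully flat extension of a height $n$ Lubin--Tate theory. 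In particular $K(n)\otimes E^{tC_p}\not\simeq0$, and since $L_{K(n)}\Sp\subseteq L_{T(n)}\Sp$, this gives $L_{T(n)}E^{tC_p}\not\simeq0$.

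The case $n=0$ needs separate treatment, where $E$ has height $1$ and I need $E^{tC_p}[1/p]\not\simeq0$. Here I would directly check that $x$ inverted forces $p$ to be a unit times a power of $x$, so that $p$ is not nilpotent and the rationalization is nonzero. The delicate points I expect are verifying that $v_n$ survives in the quotient modulo $(p,\dots,v_{n-1})$, and checking that the ring stays nonzero after completion. I would handle both by the degree count in the Weierstrass factorization of $[p](x)$ modulo $(p,\dots,v_{n-1})$. That factorization has leading term $v_nx^{p^n}$ and makes $v_n$ invertible but not zero.
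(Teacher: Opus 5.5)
Your argument is correct. The paper gives no proof of its own and simply quotes the result from Hahn and Burklund--Schlank--Yuan. Your route is the Nullstellensatz-style argument behind the cited BSY statement: pass to $L_{T(n+1)}R$, map it to $E_{n+1}(L)$ by the chromatic Nullstellensatz, and use $\pi_*E^{tC_p}\cong E_*(\!(x)\!)/[p](x)$. This ring is flat over $E_*$, and its reduction modulo $(p,u_1,\dots,u_{n-1})$ is a nonzero ring in which $u_n$ is invertible, which gives $K(n)\otimes E^{tC_p}\not\simeq0$.

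One small correction concerns your second fact. It holds for any ring map $A\to B$ simply because $T(n)\otimes B\simeq(T(n)\otimes A)\otimes_A B$. It should not be justified by smashing-ness of $L_{T(n)}$ on $L_n^f\Sp$, which fails for $n\geq1$; only $M_n^f\otimes-$ is smashing.
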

\begin{corollary}
	Let $R\in\CAlg(\Sp)$ with $R^{tC_p}$ nonzero. Then we have $\operatorname{ht}(R^{tC_p})\geq\operatorname{ht}(R)-1$. Moreover, if $R\in\CAlg(L_n^f\Sp)$ with $\operatorname{ht}(R)=n\geq1$ \textup{(}e.g. $R$ is $T(n)$-local\textup{)}, then we have $\operatorname{ht}(R^{tC_p})=\operatorname{ht}(R)-1$.
\end{corollary}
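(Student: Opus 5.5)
The plan is to derive both claims from the cited theorem (Hahn / Burklund--Schlank--Yuan) together with Kuhn's Tate vanishing \cref{thm:tate_vanishing_kuhn}.

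\emph{Lower bound.} Put $h:=\operatorname{ht}(R^{tC_p})$; it is well defined because $R^{tC_p}$ is a nonzero $\bbE_\infty$-ring. If $h=\infty$ there is nothing to prove, so assume $h<\infty$. By definition $L_{T(h+1)}R^{tC_p}\simeq0$. The cited theorem applied with $n=h+1$ then gives $L_{T(h+2)}R\simeq0$. By \cref{thm:Hahn}, vanishing at one height propagates to all larger heights, so $\operatorname{ht}(R)\leq h+1$, which is $\operatorname{ht}(R^{tC_p})\geq\operatorname{ht}(R)-1$. The case $h=-1$ is included: there $L_{T(0)}R^{tC_p}\simeq0$ forces $L_{T(1)}R\simeq0$, so $\operatorname{ht}(R)\leq0$. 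I need to be careful that $\operatorname{ht}(R)=-1$ is also compatible with the inequality; it is, trivially.

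\emph{Equality in the $L_n^f$-local case.} Now let $R\in\CAlg(L_n^f\Sp)$ with $\operatorname{ht}(R)=n\geq1$, and give $R$ the trivial $C_p$-action, so $R\in(L_n^f\Sp)^{BC_p}$. By the reformulation following \cref{thm:tate_vanishing_kuhn}, $R^{tC_p}$ is $L_{n-1}^f$-local. Then $L_{T(m)}R^{tC_p}\simeq0$ for every $m\geq n$; one can see this from \cref{prop:M_n^mf}, or directly because $T(m)$ is $L_{n-1}^f$-acyclic for $m\geq n$. Hence, if $R^{tC_p}$ is nonzero, $\operatorname{ht}(R^{tC_p})\leq n-1$. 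Combined with the lower bound this gives $\operatorname{ht}(R^{tC_p})=n-1$.

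\emph{Remaining step: nonzeroness of $R^{tC_p}$.} The statement as written assumes $R^{tC_p}$ nonzero, so I only need this if the hypothesis is meant to be implied in the second part. If so, I would argue by contradiction: if $R^{tC_p}\simeq0$, then $L_{T(n-1)}R^{tC_p}\simeq0$, and the cited theorem gives $L_{T(n)}R\simeq0$, contradicting $\operatorname{ht}(R)=n$. Since $n\geq1$, the index $n-1\geq0$ is allowed. So this step follows from the same input, and nothing in the argument is a serious obstacle.

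Finally, for the parenthetical example: a $T(n)$-local ring is $L_n^f$-local, and if it is nonzero its height is exactly $n$ (as noted after \cref{def:ht(C)}), so it satisfies the hypotheses.
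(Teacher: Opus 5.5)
Your proof is correct and follows the route the paper intends. The paper gives no separate proof; it combines the Hahn/Burklund--Schlank--Yuan partial converse for the lower bound (which you apply at height $\operatorname{ht}(R^{tC_p})+1$) with Kuhn's Tate vanishing, in the form that $R^{tC_p}$ is $L_{n-1}^f$-local, for the upper bound. Your additional check that $R^{tC_p}\not\simeq0$ holds automatically in the second part, by the same theorem at height $n-1$, is a point the paper leaves implicit.
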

Combining this with \cref{prop:End(1_tG)} yields the following blueshift theorem.
\begin{corollary}[Blueshift]\label{cor:blueshift}
	Let $\C\in\CAlg(\Pr^{\dbl}_{\st})$ with $\C^{tC_p,\dbl}$ nonzero. Then we have 
	\[
		\operatorname{ht}(\C^{tC_p,\dbl})\geq\operatorname{ht}(\C)-1.
	\]
	Moreover, if $\C\in\CAlg(\Pr^{\dbl}_{L_n^f})$ with $\operatorname{ht}(\C)=n\geq1$ \textup{(}e.g. $\C$ is $T(n)$-complete\textup{)}, then we have
	\[
		\operatorname{ht}(\C^{tC_p,\dbl})=\operatorname{ht}(\C)-1.
	\]
	In other words, $\C$ satisfies chromatic blueshift.
\end{corollary}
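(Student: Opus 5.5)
The plan is to reduce both assertions to the corresponding statements for the $\bbE_\infty$-ring $R:=\End(\mathbbm{1}_{\C})$ with trivial $C_p$-action. The bridge is \cref{prop:End(1_tG)}, which gives $\End(\mathbbm{1}_{\C^{tC_p,\dbl}})\simeq R^{tC_p}$. By \cref{def:ht(C)} this means $\operatorname{ht}(\C^{tC_p,\dbl})=\operatorname{ht}(R^{tC_p})$ and $\operatorname{ht}(\C)=\operatorname{ht}(R)$.

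First, I note that $\C^{tC_p,\dbl}\neq0$ forces $R^{tC_p}\neq0$. Indeed, in a nonzero presentably symmetric monoidal stable category the unit is nonzero, so its endomorphism ring is nonzero. Hence the preceding corollary on $\bbE_\infty$-rings, a consequence of Hahn's theorem, applies and gives
\[
\operatorname{ht}(\C^{tC_p,\dbl})=\operatorname{ht}(R^{tC_p})\geq\operatorname{ht}(R)-1=\operatorname{ht}(\C)-1 .
\]
The degenerate heights $-1$ and $\infty$ are covered by the ring statement.

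For the equality, suppose $\C$ is $L_n^f$-local with $\operatorname{ht}(\C)=n\geq1$. The Tate vanishing corollary following \cref{thm:tate_vanishing} gives $\operatorname{ht}(\C^{tC_p,\dbl})\leq n-1$. Combined with the lower bound, this yields equality.

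For the parenthetical claim, let $\C$ be nonzero and $T(n)$-complete, so that $\C\simeq\Nuc_{T(n)}(\C)$. By \cref{prop:End(1_Nuc_T(n))}, $R\simeq L_{T(n)}R$, so $\operatorname{ht}(\C)=n$. Moreover $\C$ is a $\Nuc_{T(n)}(-)$-image, hence an $L_n^f\Sp$-module, so it lies in $\CAlg(\Pr^{\dbl}_{L_n^f})$.

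The main obstacle is this last point: confirming that a $T(n)$-complete $\C$ is genuinely $L_n^f$-local as an algebra. The intended argument is that $\iHom^{\dbl}_{L_n^f\Sp}(L_{T(n)}\Sp,-)$ lands in $\Pr^{\dbl}_{L_n^f}$ with its induced commutative algebra structure. Everything else is a direct transfer through $\End(\mathbbm{1})$.
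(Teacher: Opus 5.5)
Your proposal is correct and follows the paper's route: transfer both claims through $\End(\mathbbm{1}_{\C^{tC_p,\dbl}})\simeq\End(\mathbbm{1}_{\C})^{tC_p}$ from \cref{prop:End(1_tG)} and apply the ring-level blueshift corollary. For the upper bound you cite the categorical Tate vanishing corollary rather than observing $\End(\mathbbm{1}_{\C})\in\CAlg(L_n^f\Sp)$, but both rest on the same Kuhn input. The step you flag as the main obstacle is immediate: $L_n^f\Sp$ is idempotent in $\PrL_{\st}$, so for a commutative algebra, lying in $\Pr^{\dbl}_{L_n^f}$ (and hence in $\CAlg(\Pr^{\dbl}_{L_n^f})$) is a property, and $\Nuc_{T(n)}(-)$ lands there by definition.
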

In particular, let $\C\in\CAlg(\Pr^{\dbl}_{\st})^{BG}$ with $G$ a finite group and $\C^{tG,\dbl}$ nonzero. If $\C$ is $L_n^f$-local for some $n$ and $\operatorname{ht}(\C)\geq1$, then $\operatorname{ht}(\C^{tG,\dbl})\leq\operatorname{ht}(\C)-1$ and, if $G=C_p$ and the action is trivial, $\operatorname{ht}(\C^{tC_p,\dbl})=\operatorname{ht}(\C)-1$. Indeed, by \cref{prop:ht(C)}, we obtain that $\C$ is $L_{\operatorname{ht}(\C)}^f$-local.

There is another blueshift phenomenon given by the $T(n)$-generic fiber.
\begin{proposition}
	Let $n\geq1$ and $\C\in\Pr^{\dbl}_{\st}$. Then $\Nuc_{T(n)}(\C)_{\eta}$ is $L_{n-1}^f$-local. 
	
	Let $\C\in\CAlg(\Pr^{\dbl}_{\st})$. If $\operatorname{ht}(\C)\geq n$, then $\operatorname{ht}(\Nuc_{T(n)}(\C)_{\eta})=n-1$. If $\operatorname{ht}(\C)<n$, then $\Nuc_{T(n)}(\C)_{\eta}\simeq0$.
\end{proposition}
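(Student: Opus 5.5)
Here $\Nuc_{T(n)}(\C)_{\eta}$ is the $T(n)$-generic fiber, i.e. the generic fiber for the rigid envelope $L_n^f\Sp\to L_{T(n)}\Sp$, whose complementary localization is $L_{n-1}^f\Sp$. So by definition
\[
\Nuc_{T(n)}(\C)_{\eta}\simeq L_{n-1}^f\Sp\otimes_{L_n^f\Sp}\Nuc_{T(n)}(\C)\simeq L_{n-1}^f\Nuc_{T(n)}(\C).
\]
This is the bottom-right corner of the pullback square in \cref{thm:Nuc_T(n)}(2) with $m=1$. Since $L_{n-1}^f\Sp$ is an idempotent $L_n^f\Sp$-algebra, tensoring with it lands in $L_{n-1}^f$-local categories. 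This settles the first claim: $\Nuc_{T(n)}(\C)_{\eta}$ is $L_{n-1}^f$-local, and $L_{T(n)}\Nuc_{T(n)}(\C)_\eta\simeq0$ by \cref{prop:M_n^mf}.

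For the height statement, take $\C\in\CAlg(\Pr^{\dbl}_{\st})$ and compute the unit endomorphisms. \cref{prop:End(1_Nuc_T(n))}(2) gives $\End(\mathbbm{1}_{\Nuc_{T(n)}(\C)})\simeq L_{T(n)}R$, where $R:=\End(\mathbbm{1}_{\C})$. Next, pass to the generic fiber. The unit of $\Nuc_{T(n)}(\C)$ need not be compact, so \cref{lem:End(1_L)} does not apply directly. Instead, reduce to the rigid case as in the proof of \cref{prop:End(1_Nuc_T(n))}. The fully faithful internal algebra map $\Mod(R)\hookrightarrow\C$ induces a fully faithful map $\Nuc(L_{T(n)}R)\hookrightarrow\Nuc_{T(n)}(\C)$. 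Tensoring with $L_{n-1}^f\Sp$ over $L_n^f\Sp$ preserves full faithfulness of internal left adjoints, so it suffices to treat $\C=\Mod(R)$.

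In that case $\Mod(L_n^f R)\to\Nuc(L_{T(n)}R)$ is a map of rigid envelopes of $L_{T(n)}\Mod(R)$. The pullback square of unit endomorphism rings in \cref{thm:Nuc_T(n)}(3) then identifies
\[
\End(\mathbbm{1}_{\Nuc_{T(n)}(\C)_{\eta}})\simeq L_{n-1}^fL_{T(n)}R.
\]
This identification is the main step; the remaining work is spectral.

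First suppose $\operatorname{ht}(\C)<n$, so $L_{T(n)}R\simeq0$ and hence $L_{T(n)}R=0$. Then the unit ring of $\Nuc_{T(n)}(\C)$ vanishes, so $\Nuc_{T(n)}(\C)\simeq0$, and therefore its generic fiber vanishes as well.

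Now suppose $\operatorname{ht}(\C)\geq n$. Then $S:=L_{T(n)}R$ is a nonzero $T(n)$-local $\bbE_\infty$-ring of height $n$, and the generic fiber has unit ring $L_{n-1}^fS$. We need $\operatorname{ht}(L_{n-1}^fS)=n-1$, i.e. $L_{T(n-1)}S\neq0$, since $L_{T(m)}L_{n-1}^fS$ vanishes for $m\geq n$ and equals $L_{T(m)}S$ for $m\leq n-1$. This is exactly the example after \cref{prop:ht(C)}: the nonvanishing $L_{T(n-1)}L_{T(n)}\bbS\neq0$. Applying the Hahn/Burklund--Schlank--Yuan argument to the $\bbE_\infty$-ring $S$ gives nonvanishing of $L_{T(n-1)}S$ for any nonzero $T(n)$-local $\bbE_\infty$-ring $S$. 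If that argument is not available in this form, I would instead cite the blueshift result for $\bbE_\infty$-rings (\cite[Proposition 4.7]{Hah22Hinfty} together with Tate vanishing), which shows that $T(n)$-local rings have nonzero $T(n-1)$-localization.

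I expect the main obstacle to be this last spectral input, $L_{T(n-1)}S\neq0$, together with the identification of the generic fiber's unit ring.
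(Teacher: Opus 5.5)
Your argument is correct and is essentially the paper's: $\Nuc_{T(n)}(\C)_\eta\simeq L_{n-1}^f\Nuc_{T(n)}(\C)$, then $\End(\mathbbm{1}_{\Nuc_{T(n)}(\C)_\eta})\simeq L_{n-1}^fL_{T(n)}\End(\mathbbm{1}_{\C})$. After that, the contrapositive of \cref{thm:Hahn} applied to $S=L_{T(n)}\End(\mathbbm{1}_{\C})$ gives $L_{T(n-1)}S\not\simeq0$ directly, so your hedge about the spectral input is unnecessary. The only divergence is your detour through $\Mod(R)$ and \cref{thm:Nuc_T(n)}(3), which rests on a false premise: every object of $\CAlg(\Pr^{\dbl}_{\st})$ has compact unit, because the unit map $\Sp\to\C$ must be an internal left adjoint (this is also what makes your $\Mod(R)\hookrightarrow\C$ fully faithful and internal), and \cref{prop:End(1_Nuc_T(n))}(2) places $\Nuc_{T(n)}(\C)$ in $\CAlg(\Pr^{\dbl}_{\st})$, so the paper simply applies \cref{lem:End(1_L)} to $\Nuc_{T(n)}(\C)$ directly.
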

\begin{proof}
	Let $\C\in\Pr^{\dbl}_{\st}$. Then we have $\Nuc_{T(n)}(\C)_{\eta}\simeq L_{n-1}^f\Nuc_{T(n)}(\C)$, and hence it is $L_{n-1}^f$-local. Let $\C\in\CAlg(\Pr^{\dbl}_{\st})$. By \cref{lem:End(1_L),prop:End(1_Nuc_T(n))}, we have 
	\[
		\End(\mathbbm{1}_{\Nuc_{T(n)}(\C)_{\eta}})\simeq L_{n-1}^f\End(\mathbbm{1}_{\Nuc_{T(n)}(\C)})\simeq L_{n-1}^fL_{T(n)}\End(\mathbbm{1}_{\C}).
	\]
	If $\operatorname{ht}(\C)\geq n$, then $L_{T(n)}\End(\mathbbm{1}_{\C})\not\simeq0$ and $\operatorname{ht}(\Nuc_{T(n)}(\C)_{\eta})=n-1$. If $\operatorname{ht}(\C)\leq n-1$, then $L_{T(n)}(\C)\simeq0$ and hence $\Nuc_{T(n)}(\C)_{\eta}\simeq\Nuc_{T(n)}(\C)\simeq0$.
\end{proof}

\begin{example}
	Let $n\geq0$. Then we have $\operatorname{ht}(\Nuc(L_{T(n)}\bbS))=\operatorname{ht}(L_{T(n)}\bbS)=n$, $\operatorname{ht}(K^{\cont}(\Nuc(L_{T(n)}\bbS)))=\operatorname{ht}(K(L_{T(n)}\bbS))=n+1$, and for $n\geq1$, $\operatorname{ht}(\Nuc(L_{T(n)}\bbS)^{tC_p,\dbl})=n-1$. Thus $\Nuc(L_{T(n)}\bbS)$ satisfies chromatic redshift, and for $n\geq1$, chromatic blueshift.
\end{example}

\subsection{Limit topology of nuclear categories}\label{sec:3.4}

We identify localization categories as limits of module categories over towers of $\bbE_1$-algebras under certain conditions, thereby generalizing the limit topology theorem of \cite{LZ25}. We then apply this to nuclear module categories and prove the continuity of continuous $K$-theory with respect to such towers. Finally, we construct the towers arising from Burklund's $\bbE_1$-structures following \cite{MW25refinedTC} and apply the resulting theorem to $T(n)$-localizations and $T(n)$-nuclear module categories. Our results also admit $\bbE_1$- and $\bbE_2$-variants, which we compare with the continuity theorem of \cite{BM26quotient} for even $\bbE_2$-quotients as in \cite{HW18}.

We begin with the following assumptions, inspired by \cite[2.20(V)]{MW25refinedTC}.
\begin{assumption}\label{asp:V_r}
	Let $\C\in\CAlg(\PrL_{\st})$ and $V\in\C$. Let
	\[
		\cdots\to V_2\to V_1\to V_0=V
	\]
	be a tower of $\bbE_1$-algebras in $\C$ such that
	\begin{enumerate}
		\item For every $r\geq0$, $V_r\in\C^{\dbl}$, and the induced map $V_r\otimes V_{r+1}\to V_r\otimes V_r$ factors through the multiplication $V_r\otimes V_{r+1}\to V_r$ as a map of $V_r\otimes V_{r+1}^{\op}$-modules.
		\item The canonical map $(-)\to\lim_r(V_r\otimes-)$ exhibits the latter as the $L_V$-localization functor.
	\end{enumerate}
\end{assumption}
\begin{theorem}[Limit topology]\label{thm:lim_topology}
	Let $\C\in\CAlg(\PrL_{\st})$ and $V\in\C$. Let $\cdots\to V_2\to V_1\to V_0=V$ be a tower of $\bbE_1$-algebras in $\C$ satisfying \cref{asp:V_r}. Then the base change functors induce canonical equivalences
	\[
		L_V\C\xrightarrow{\simeq}\lim_rL_V\Mod_{V_r}(\C)\xleftarrow{\simeq}\lim_r\Mod_{V_r}(\C).
	\]
	Consequently, let $\M\in\PrL_{\C}$. Then the base change functors also induce canonical equivalences
	\[
		L_V\M\xrightarrow{\simeq}\lim_rL_V\Mod_{V_r}(\M)\xleftarrow{\simeq}\lim_r\Mod_{V_r}(\M).
	\]
\end{theorem}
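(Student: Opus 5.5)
We first establish the right-hand equivalence $\lim_rL_V\Mod_{V_r}(\C)\simeq\lim_r\Mod_{V_r}(\C)$. We then show that the functor $F:L_V\C\to\lim_r\Mod_{V_r}(\C)$, $X\mapsto(V_r\otimes X)_r$, is an equivalence by analysing its right adjoint $G:(M_r)_r\mapsto\lim_rM_r$ (limit of underlying objects in $\C$). The module statement for $\M\in\PrL_{\C}$ then follows by the same argument run in $\M$, since every ingredient is $\C$-linear: $V_r\otimes-$ acts on $\M$ and $\lim_r(V_r\otimes-)$ is still $L_V$ on $\M$ because $V_r\in\C^{\dbl}$.

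\textbf{Step 1: every $V_r$-module is $V$-local.} Let $M$ be a left $V_r$-module and $Y$ a $V$-acyclic object. Maps $Y\to M$ factor through $V_r\otimes Y$, so it suffices to show $V_r\otimes Y\simeq0$. We have $V\otimes Y\simeq0$, and $V_r$ is a $V$-local object since it receives the identity through $V_r\to V_r\otimes V_{r'}$ in the tower limit. A cleaner route is to use Assumption (2) applied to $V_r$: the map $V_r\to\lim_s V_s\otimes V_r$ is $L_V$, and by Assumption (1) $V_r$ is a retract of $V_r\otimes V_{r+1}$ as a $V_r$-module. Hence $V_r$ is a retract of a $V_{r+1}$-module and of the $V$-local object $\lim_s V_s\otimes V_r$. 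So $V_r$ is $V$-local, and being dualizable, $V_r\otimes Y\simeq\iHom(V_r^\vee,\ldots)$ vanishes on $V$-acyclics.

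Consequently $\Mod_{V_r}(\C)=L_V\Mod_{V_r}(\C)$, which gives the right-hand equivalence. It also shows that $F$ restricted to $L_V\C$ makes sense.

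\textbf{Step 2: $F$ is fully faithful.} The unit is $X\to GF(X)=\lim_r V_r\otimes X$. By Assumption (2) this is $X\to L_VX$, which is an equivalence for $X\in L_V\C$.

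\textbf{Step 3: $G$ is conservative, equivalently the counit is an equivalence.} For a compatible family $(M_r)$ with $V_r\otimes_{V_{r+1}}M_{r+1}\simeq M_r$, set $M=\lim_sM_s$; we must show $V_r\otimes M\to M_r$ is an equivalence. Since $V_r$ is dualizable, $V_r\otimes M\simeq\lim_sV_r\otimes M_s$, so we need to compare the pro-systems $(V_r\otimes M_s)_s$ and the constant system $M_r$.

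This is where the factorization in Assumption (1) enters, in the style of Li--Zhang. We write $M_s\simeq V_s\otimes_{V_{s+1}}M_{s+1}$. The map $V_r\otimes M_{s}\to V_r\otimes V_{s}\otimes_{V_s}M_s$ fits with the action map $V_r\otimes M_s\to M_r$ (via $M_s\to M_r$ and the $V_r$-action). Using the factorization of $V_s\otimes V_{s+1}\to V_s\otimes V_s$ through $V_s$ as bimodules, we build maps $M_r\to V_r\otimes M_{r+1}$, and more generally back-maps $M_s\to V_r\otimes M_{s+1}$. These exhibit $(V_r\otimes M_s)_s$ and $(M_s)_{s}$-derived constant $M_r$ as pro-isomorphic. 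The limits then agree, giving $V_r\otimes M\simeq M_r$.

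Constructing these bimodule-level back maps coherently, so that the ind/pro-isomorphism is honest rather than merely levelwise, is the main obstacle. I would first try to phrase it as: the tower of functors $V_r\otimes_{V_s}-$ is pro-equivalent to $V_r\otimes-$, and deduce everything from that.
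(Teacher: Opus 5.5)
There is a genuine gap in Step 3. Your outline is the paper's: the adjunction $F\dashv G$, the unit handled by \cref{asp:V_r}(2), and the counit reduced, via dualizability of $V_r$, to showing $\lim_{s\geq r}V_r\otimes M_s\simeq M_r$. But that last step carries the entire content of \cref{asp:V_r}(1), and you leave it open. The maps you sketch also have the wrong shape: your ``more general'' back maps $M_s\to V_r\otimes M_{s+1}$ have the wrong source.

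What is needed, for each $s\geq r$, is a map $d_s\colon M_r\to V_r\otimes M_s$ with $d_s\circ q_{s+1}\simeq t_s$ and $q_s\circ d_s\simeq\id_{M_r}$. Here $t_s\colon V_r\otimes M_{s+1}\to V_r\otimes M_s$ is the transition map, and $q_s\colon V_r\otimes M_s\simeq(V_r\otimes V_s)\otimes_{V_s}M_s\to V_r\otimes_{V_s}M_s\simeq M_r$ is induced by multiplication. The paper builds $d_s$ in two moves:
\begin{enumerate}
\item Base change the bimodule factorization of \cref{asp:V_r}(1) at level $s$ along $V_r\otimes_{V_s}-$. This shows that $V_r\otimes V_{s+1}\to V_r\otimes V_s$ factors through the multiplication $V_r\otimes V_{s+1}\to V_r$ as $V_r\otimes V_{s+1}^{\op}$-modules.
\item Apply $-\otimes_{V_{s+1}}M_{s+1}$ and use $V_r\otimes_{V_{s+1}}M_{s+1}\simeq M_r$.
\end{enumerate}
The coherence problem you anticipate does not arise, because for a tower these homotopy-level factorizations already give pro-constancy. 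The sequences $\fib(q_s)\to V_r\otimes M_s\to M_r$ split. Hence the induced maps $\fib(q_{s+1})\to\fib(q_s)$ are null, so $\lim_s\fib(q_s)\simeq0$ and $\lim_{s\geq r}V_r\otimes M_s\simeq M_r$.

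Two further points. First, in Step 1 the inference from ``$V_r$ is $V$-local and dualizable'' to ``$V_r\otimes Y\simeq0$ for $V$-acyclic $Y$'' is invalid. In $\Mod(\bbZ_p)$ with $V=\bbF_p$, the unit $\bbZ_p$ is dualizable and $V$-local, yet $\bbZ_p\otimes\bbQ_p\not\simeq0$. Your retraction idea does work if you apply it to an arbitrary left $V_r$-module $M$ rather than to $V_r$. The composite $M\to\lim_sV_s\otimes M\to V_r\otimes M\to M$ is the identity by the unit axiom, so $M$ is a retract of $L_VM$ and hence $V$-local. This uses only \cref{asp:V_r}(2), and is more economical than the paper, which derives $L_V(V_r\otimes-)\simeq V_r\otimes-$ from the pro-constancy of Step 3.

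Second, for $\M\in\PrL_{\C}$, dualizability of $V_r$ does not by itself show that $\lim_r(V_r\otimes-)$ is the $L_V$-localization on $\M$. You need two things:
\begin{enumerate}
\item $X\to\lim_rV_r\otimes X$ is a $V$-equivalence. This uses $V\in\C^{\dbl}$ together with pro-constancy of $(V\otimes V_r\otimes X)_r$, which is again the Step 3 argument.
\item The target is $V$-local. The paper obtains this from $L_V\M\simeq L_V\C\otimes_{\C}\M$ and $\iHom_{\C}(Y,\lim_rV_r\otimes X)\simeq L_V\iHom_{\C}(Y,X)$.
\end{enumerate}
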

\begin{proof}
	The proof follows the same strategy as \cite[Theorem 2.2.1]{LZ25}. We have an adjunction
	\[
		F:\C\rightleftarrows\lim_r\Mod_{V_r}(\C):G
	\]
	where $F$ sends $M\in\C$ to $(V_r\otimes M)_r\in\lim_r\Mod(V_r)$ with $V_r\otimes_{V_{r+1}}(V_{r+1}\otimes M)\simeq V_r\otimes M$, and $G$ sends $(M_r)_r$ to $\lim_rM_r\in\C$ with transition maps $M_{r+1}\simeq V_{r+1}\otimes_{V_{r+1}}M_{r+1}\to V_r\otimes_{V_{r+1}}M_{r+1}\simeq M_r$.

	We first show that $G$ is fully faithful, or equivalently, the counit map is an equivalence. Let $(M_r)_r\in\lim_r\Mod(V_r)$. Since $V_r\in\C^{\dbl}$, $V_r\otimes-$ preserves limits. It suffices to show that for every $r\geq0$, the canonical map $V_r\otimes\lim_sM_s\simeq V_r\otimes\lim_{s\geq r}M_s\simeq\lim_{s\geq r}V_r\otimes M_s\simeq\lim_{s\geq r}(V_r\otimes V_s)\otimes_{V_s}M_s\to M_r$ is an equivalence. By \cref{asp:V_r}(1), for every $s\geq r$, the transition map $V_r\otimes M_{s+1}\simeq(V_r\otimes V_{s+1})\otimes_{V_{s+1}}M_{s+1}\to(V_r\otimes V_s)\otimes_{V_{s+1}}M_{s+1}\simeq V_r\otimes M_s$ factors through the multiplication map $(V_r\otimes V_{s+1})\otimes_{V_{s+1}}M_{s+1}\to V_r\otimes_{V_{s+1}}M_{s+1}$. Then we obtain a factorization for every $s\geq r$
	\[\begin{tikzcd}
		{(V_r\otimes V_{s+1})\otimes_{V_{s+1}}M_{s+1}} & {(V_r\otimes V_s)\otimes_{V_{s+1}}M_{s+1}} \\
		{V_r\otimes_{V_{s+1}}M_{s+1}} & {V_r\otimes_{V_{s+1}}M_{s+1}}
		\arrow[from=1-1, to=1-2]
		\arrow[from=1-1, to=2-1]
		\arrow[from=1-2, to=2-2]
		\arrow[dashed, from=2-1, to=1-2]
		\arrow[equals, from=2-1, to=2-2]
	\end{tikzcd}\]
	It follows that $(V_r\otimes M_s)_{s\geq r}$ is pro-constant with value $M_r$. Therefore, the counit map is an equivalence $\lim_{s\geq r}V_r\otimes M_s\xrightarrow{\simeq}M_r$, and hence $G$ is fully faithful under \cref{asp:V_r}(1).

	By \cref{asp:V_r}(2), for every $r\geq0$, we have $L_V(V_r\otimes-)\simeq\lim_{s\geq r}(V_r\otimes V_s\otimes-)\simeq V_r\otimes-$. Then the $L_V$-localization functor induces an equivalence $L_V:\Mod_{V_r}(\C)\xrightarrow{\simeq}L_V\Mod_{V_r}(\C)$ for every $r\geq0$. This proves the second equivalence.

	Now we replace $\C$ by $L_V\C$ in the adjunction and show that it forms a pair of inverse equivalences. By \cref{lem:LC=LC(Mod(L1))}, we have $\Mod_{V_r}(L_V\C)\simeq L_V\Mod_{V_r}(\C)\simeq\Mod_{V_r}(\C)$. Then the unit map is given by $(-)\to\lim_r(V_r\otimes-)$, which is an equivalence on $L_V\C$ by \cref{asp:V_r}(2). Hence the base change functors induce the first equivalence.

	Consequently, let $\M\in\PrL_{\C}$. By \cref{cor:L_KM}, we have $L_V\M\simeq L_V\C\otimes_{\C}\M$ and $\Mod_{V_r}(L_V\M)\simeq L_V\Mod_{V_r}(\C)\otimes_{\C}\M\simeq\Mod_{V_r}(\M)$. We show that the $L_V$-localization functor of $\M$ is given by $\lim_r(V_r\otimes-)$. Since $(V\otimes V_r)_{r\geq0}$ is pro-constant with value $V$, we have $V\otimes-\simeq\lim_r(V\otimes V_r\otimes-)\simeq V\otimes\lim_r(V_r\otimes-)$. Then $(-)\to\lim_r(V_r\otimes-)$ is an $L_V$-equivalence. Note that for any $X\in\M$, $\iHom_{\C}(X,\lim_r(V_r\otimes-))\simeq\lim_r(V_r\otimes\iHom_{\C}(X,-))\simeq L_V\iHom_{\C}(X,-)$ is $L_V$-local. Then $\lim_r(V_r\otimes-)$ is $L_V$-local, and hence is the $L_V$-localization functor of $\M$. Therefore, the same argument applies, yielding the desired equivalences.
\end{proof}
\begin{remark}
	The proof above adapts the argument of Li--Zhang in \cite[Theorem 2.2.1]{LZ25}. In their work, Li--Zhang consider a tower of type $n$ generalized Moore spectra $\cdots\to V_2\to V_1\to V_0$ and prove the analogous statement for $L_{K(n)}\Mod(R)$ with $R\in\CAlg(L_{K(n)}\Sp)$. A key input is the splitting $V_r\otimes V_s\simeq V_r\oplus\bigoplus_iV_r[d_{s,i}]$, where $d_{s,i}>0$ and the transition map is the identity on the $V_r$-summand and nilpotent on $V_r[d_{s,i}]$-summands. After passing to a suitable cofinal subtower satisfying \cref{asp:V_r}, one can reduce the proof to the present situation. We will explain this reduction in \cref{exm:LZ}.
	
	Under \cref{asp:V_r}(1), the proof also identifies $\lim_r\Mod_{V_r}(\C)$ with the full subcategory of $\C$ spanned by objects $M\in\C$ for which $M\to\lim_r(V_r\otimes M)$ is an equivalence. This identification holds in the more general setting of \cite[Theorem 2.2.2]{BM26quotient}. We will revisit it in \cref{rmk:E_1/E_2_limit_topology}.
\end{remark}
Observe that in \cite{MW25refinedTC}, Meyer--Wagner replace \cref{asp:V_r}(2) with the assumption that each $V_r$ is contained in the thick tensor ideal generated by $V$. However, the only property of this assumption used in the subsequent arguments is that each $V_r$ belongs to $\C^{V\text{-}\tors}$. Therefore, their results carry over in our setting, yielding the following proposition.
\begin{proposition}\label{prop:MW_traceclass}
	Let $\C\in\CAlg(\PrL_{\st})$ and $V\in\C$. Let $\cdots\to V_2\to V_1\to V_0=V$ be a tower of $\bbE_1$-algebras in $\C$ satisfying \cref{asp:V_r}(1), and assume that for every $r\geq0$, $V_r\in\C^{V\text{-}\tors}$. Then \cref{asp:V_r}(2) is automatically satisfied. Moreover, the base change functors 
	\[
		V_r\otimes_{V_{r+1}}-:\Mod_{V_{r+1}}(\C)\to\Mod_{V_r}(\C)
	\]
	are trace-class in $\Pr^{\dbl}_{\C}$, and induce a canonical equivalence
	\[
		\colim_r\left(\Ind(\Mod_{V_r^{\op}}(\C^{\dbl}))\otimes_{\Ind(\C^{\dbl})}\C\right)\xrightarrow{\simeq}\C^{V\text{-}\tors}.
	\]
	Consequently, we have a pro-equivalence
	\[
		\left(\Mod_{V_r}(\C)\right)_{r\geq0}\xrightarrow{\simeq}\left(\iHom^{\cg}_{\Ind(\C^{\dbl})}(\Ind(\Mod_{V_r^{\op}}(\C^{\dbl})),\Ind(\C^{\dbl}))\otimes_{\Ind(\C^{\dbl})}\C\right)_{r\geq0}
	\]
	whose limit is given by $L_V\C$.
\end{proposition}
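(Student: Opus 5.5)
The plan has four steps: verify \cref{asp:V_r}(2), prove the trace-class property, identify the colimit with $\C^{V\text{-}\tors}$, and pass to the dual pro-system.

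\textbf{Step 1: \cref{asp:V_r}(2).} Set $L:=\lim_r(V_r\otimes-)$ with the natural map $\id\to L$. I need two facts: $L$ lands in $L_V\C$, and $\id\to L$ is an $L_V$-equivalence.

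For the second fact, apply \cref{asp:V_r}(1) as in the proof of \cref{thm:lim_topology}. The pro-system $(V\otimes V_r)_r$ is pro-constant with value $V$. Since $V=V_0$ is dualizable, $V\otimes-$ commutes with the limit, so $V\otimes X\to V\otimes LX$ is an equivalence.

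For the first fact, I must show $\Hom(E,LX)\simeq0$ for every $V$-acyclic $E$. Since $E$ is $V$-acyclic, $\Loc^{\otimes}(V)\otimes E=0$. Each $V_r$ lies in $\C^{V\text{-}\tors}$, so $V_r\otimes E\simeq0$. Dualizability of $V_r$ then gives $\Hom(E,V_r\otimes X)\simeq\Hom(V_r^\vee\otimes E,X)$. This vanishes once $V_r^\vee$ is also $V$-torsion. I would get that from dualizability, arguing that the torsion ideal on dualizables is closed under duals in the relevant sense. If that argument is awkward, I would instead use the pro-constancy of $(V_r\otimes V_s)_s$ to write $V_r\otimes X$ as a retract-level limit of $V$-torsion tensors.

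\textbf{Step 2: trace-class base change.} Follow Meyer--Wagner. The factorization in \cref{asp:V_r}(1) exhibits the bimodule $V_r\otimes_{V_{r+1}}$ as trace-class in $\Pr^{\dbl}_{\C}$. Concretely, $\Mod_{V_r}(\C)$ is dualizable over $\C$ with dual $\Mod_{V_r^{\op}}(\C)$, and $V_r\in\C^{\dbl}$. The factorization gives a coevaluation-type element $\C\to\Mod_{V_r}\otimes_\C\Mod_{V_{r+1}^{\op}}$, namely $V_r\otimes V_{r+1}\to V_r$ as a bimodule, and this element lifts the functor. That is precisely the trace-class condition.

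\textbf{Step 3: the colimit equals $\C^{V\text{-}\tors}$.} The colimit of the duals $\Ind(\Mod_{V_r^{\op}}(\C^{\dbl}))\otimes_{\Ind(\C^{\dbl})}\C$ maps into $\C$ via $V_r\otimes_{V_r}-$-type functors. Its image generates $\Loc^{\otimes}(\{V_r\})=\C^{V\text{-}\tors}$, using $V\in\{V_r\}$ together with Step 1. Full faithfulness follows from the trace-class property: in a sequential colimit of trace-class maps, Hom spaces are computed by the pro-constant system, which is the same mechanism as in \cref{thm:lim_topology}.

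\textbf{Step 4: dualize.} Dualizing the ind-system yields the pro-system of internal Homs. Trace-class transition maps make the pro-systems $(\Mod_{V_r}(\C))_r$ and $(\text{duals of duals})_r$ pro-equivalent. Their limit is $\iHom^{\dbl}_\C(\C^{V\text{-}\tors},\C)$, which by \cref{thm:lim_topology} equals $\lim_r\Mod_{V_r}(\C)\simeq L_V\C$.

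The main obstacle is Step 3, and in particular the identification of the colimit with the torsion subcategory in this relative, non-compactly generated setting.
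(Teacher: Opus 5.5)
Your Step 1 works, and the point you hesitate over is immediate: $V_r^\vee$ is a retract of $V_r^\vee\otimes V_r\otimes V_r^\vee$, so $V_r\otimes E\simeq0$ forces $V_r^\vee\otimes E\simeq0$. (The paper instead writes $V_r\otimes X\simeq V_r\otimes L_VX$ and uses that $V$-acyclics form a tensor ideal.)

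\textbf{The genuine gap is Step 2.} Trace-class is not a statement about the $\PrL_{\C}$-dual $\Mod_{V_{r+1}^{\op}}(\C)$. Every colimit-preserving $\C$-linear functor $\Mod_{V_{r+1}}(\C)\to\Mod_{V_r}(\C)$ is classified by an object of $\Mod_{V_{r+1}^{\op}}(\C)\otimes_{\C}\Mod_{V_r}(\C)\simeq\Mod_{V_r\otimes V_{r+1}^{\op}}(\C)$. Here that object is the bimodule $V_r$ itself, not a map $V_r\otimes V_{r+1}\to V_r$. For the classifying functor out of $\C$ to be a morphism of $\Pr^{\dbl}_{\C}$, this bimodule would have to be $\C$-atomic. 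That fails in the motivating examples. Take $\bbE_1$-quotients $V_{r+1}=\bbS/p^{k'}\to V_r=\bbS/p^k$ with $k<k'$. Perfectness of $V_r$ over $V_r\otimes V_{r+1}^{\op}$ would put $V_r\otimes_{V_{r+1}}V_r$ in the thick subcategory generated by the finite spectrum $V_r\otimes V_r$. But its $\bbZ$-homology is $\mathrm{Tor}^{\bbZ/p^{k'}}_*(\bbZ/p^k,\bbZ/p^k)$, which is nonzero in every degree.

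The missing idea is the weak dual. For $\C$ rigid compactly generated, $\iHom^{\cg}_{\C}(\Mod_{V_{r+1}}(\C),\C)\simeq\Ind(\Mod_{V_{r+1}^{\op}}(\C^{\omega}))$, generated by right $V_{r+1}$-modules whose \emph{underlying object} is compact. Write $m:V_r\otimes V_{r+1}\to V_r$ for the action, $s$ for the factorization in \cref{asp:V_r}(1), and $\mu$ for the multiplication of $V_r$. Then $\mu sm=m$, and $m$ is split on underlying objects by the unit of $V_{r+1}$. Hence $\mu s$ is an equivalence, and the bimodule $V_r$ is a retract of $V_r\otimes V_r$, which is induced from the right $V_{r+1}$-module $V_r\in\C^{\omega}$. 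This is what makes $V_r\otimes_{V_{r+1}}-$ trace-class (Meyer--Wagner's Lemma 2.23, which the paper cites). It is also why $\Ind(\Mod_{V_r^{\op}}(\C^{\dbl}))$, not $\Mod_{V_r^{\op}}(\C)$, appears in the statement. Your Step 4 interleaving needs trace-class with respect to precisely this weak dual.

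\textbf{Step 3, which you leave open, needs a reduction your plan lacks.} Since $\C$ is an arbitrary object of $\CAlg(\PrL_{\st})$, the paper first proves the trace-class and colimit statements for the rigid compactly generated $\Ind(\C^{\dbl})$, whose compact objects $\C^{\dbl}$ contain every $V_r$.
\begin{itemize}
\item The forgetful functors $\Mod_{V_r^{\op}}(\C^{\dbl})\to\C^{\dbl}$ are compatible with restriction along $V_{r+1}\to V_r$ (the dual transition functors). They induce $\colim_r\Ind(\Mod_{V_r^{\op}}(\C^{\dbl}))\simeq\Ind(\C^{\dbl})^{\{V_r\}\text{-}\tors}$.
\item Full faithfulness comes from the pro-constancy of $(V_r\otimes_{V_s}V_r)_{s\geq r}$ with value $V_r\otimes V_r$, as in the proof of \cref{prop:ML}, not from trace-class-ness as such.
\item Essential surjectivity holds because the image contains the generators $C\otimes V_r$ with $C\in\C^{\dbl}$.
\end{itemize}
One then base changes along $\Ind(\C^{\dbl})\to\C$, which preserves colimits and trace-class maps. \cref{cor:L_KM} gives $\Ind(\C^{\dbl})^{\{V_r\}\text{-}\tors}\otimes_{\Ind(\C^{\dbl})}\C\simeq\C^{\{V_r\}\text{-}\tors}=\C^{V\text{-}\tors}$. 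The last equality comes from $V_0=V$ and $V_r\in\C^{V\text{-}\tors}$, not from Step 1.

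\textbf{A smaller slip in Step 4.} The limit of the pro-system, taken in $\PrL_{\C}$, is $\lim_r\Mod_{V_r}(\C)\simeq L_V\C$ by \cref{thm:lim_topology} and Step 1. It is not $\iHom^{\dbl}_{\C}(\C^{V\text{-}\tors},\C)$, which for rigid $\C$ is the nuclear completion $\Nuc_V(\C)$ and generally differs from $L_V\C$.
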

\begin{proof}
	As shown in the proof of \cref{thm:lim_topology}, we have $V\otimes-\simeq\lim_r(V\otimes V_r\otimes-)\simeq V\otimes\lim_r(V_r\otimes-)$. So it suffices to show that $\lim_r(V_r\otimes-)$ is $L_V$-local. Since $V_r\in\C^{V\text{-}\tors}$, we have $V_r\otimes-\simeq V_r\otimes L_V(-)$, and hence $V_r\otimes-$ is $L_V$-local as $V_r\in\C^{\dbl}$. Then $\lim_r(V_r\otimes-)$ is also $L_V$-local, and hence $(-)\to\lim_r(V_r\otimes-)$ exhibits the latter as the $L_V$-localization functor.

	We first assume that $\C\in\CAlg^{\rig}(\PrL_{\st})^{\cg}$ and use only \cref{asp:V_r}(1). After proving the trace-class and pro-equivalence assertions in this setting, we will base change along $\Ind(\C^{\dbl})\to\C$ for a general $\C$. It follows from \cite[Lemma 2.23]{MW25refinedTC} that $V_r\otimes_{V_{r+1}}-:\Mod_{V_{r+1}}(\C)\to\Mod_{V_r}(\C)$ is trace-class in $\Pr^{\cg}_{\C}$. The predual category of $\Mod_{V_{r+1}}(\C)$ in $\Pr^{\cg}_{\C}$ is given by
	\[
		\iHom^{\cg}_{\C}(\Mod_{V_{r+1}}(\C),\C)\simeq\Ind(\Fun_{\C^\omega}(\Mod_{V_{r+1}}(\C)^{\omega},\C^{\omega}))\simeq\Ind(\Mod_{V_{r+1}^{\op}}(\C^{\omega})),
	\]
	since $\Fun_{\C^{\omega}}(\Mod_{V_{r+1}}(\C)^{\omega},\C^{\omega})\subseteq\FunL_{\C}(\Mod_{V_{r+1}}(\C),\C)\simeq\Mod_{V_{r+1}^{\op}}(\C)$ is identified with $\Mod_{V_{r+1}^{\op}}(\C^{\omega})$. Therefore, the dual functors
	\[
		\Ind(\Mod_{V_r^{\op}}(\C^{\omega}))\to\Ind(\Mod_{V_{r+1}^{\op}}(\C^{\omega})),
	\]
	which are induced by the forgetful functors $\Mod_{V_r^{\op}}(\C^{\omega})\to\Mod_{V_{r+1}^{\op}}(\C^{\omega})$, are trace-class in $\Pr^{\cg}_{\C}$. Each forgetful functor $\Mod_{V_r^{\op}}(\C^\omega)\to\C^{\omega}$ takes values in $\C^{\{V_r\}_{r\geq0}\text{-}\tors}$. The argument in the proof of \cite[Lemma 2.25]{MW25refinedTC} shows that the dual functors induce a $\C$-linear equivalence
	\[
		\colim_r\Ind(\Mod_{V_r^{\op}}(\C^\omega))\xrightarrow{\simeq}\C^{\{V_r\}_{r\geq0}\text{-}\tors}.
	\]
	Consequently, since the base change functors are trace-class in $\Pr^{\cg}_{\C}$, they also induce a pro-equivalence
	\[
		\left(\Mod_{V_r}(\C)\right)_{r\geq0}\xrightarrow{\simeq}\left(\iHom^{\cg}_{\C}(\Ind(\Mod_{V_r^{\op}}(\C^{\omega})),\C)\right)_{r\geq0}.
	\]

	For the general case, base change along the $\Sp$-algebra map $\Ind(\C^{\dbl})\to\C$. It follows that both $V_r\otimes_{V_{r+1}}-:\Mod_{V_{r+1}}(\C)\to\Mod_{V_r}(\C)$ and $\Ind(\Mod_{V_r^{\op}}(\C^{\dbl}))\otimes_{\Ind(\C^{\dbl})}\C\to\Ind(\Mod_{V_{r+1}^{\op}}(\C^{\dbl}))\otimes_{\Ind(\C^{\dbl})}\C$ are trace-class in $\Pr^{\dbl}_{\C}$. By \cref{cor:L_KM}, $\Ind(\C^{\dbl})^{\{V_r\}_{r\geq0}\text{-}\tors}\otimes_{\Ind(\C^{\dbl})}\C\simeq\C^{\{V_r\}_{r\geq0}\text{-}\tors}\simeq\C^{V\text{-}\tors}$ as $V_0=V$ and $V_r\in\C^{V\text{-}\tors}$ for all $r\geq0$. Since we have shown that \cref{asp:V_r} holds, we obtain the desired pro-equivalence with limit $L_V\C$.
\end{proof}
This proposition provides a method for computing refined localizing invariants of $L_V\C$, such as the refined topological Hochschild homology mentioned in \cref{exm:R^BS^1}, whenever $\C$ is a proper and smooth rigid $\Sp$-algebra. In particular, the resulting pro-equivalence generalizes \cite[Proposition 5.25]{Efi25limit} as shown in \cref{exm:Nuc(R_I)} for discrete commutative rings.

To apply Efimov's continuity theorem, it remains to verify the strongly Mittag-Leffler conditions introduced by Efimov \cite{Efi25limit} for these towers.
\begin{definition}[{\cite[Definition 5.1]{Efi25limit}}]
	Let $\C\in\CAlg^{\rig}(\PrL_{\st})$. Let $\cdots\to\C_2\to\C_1\to\C_0$ be a tower in $\Pr^{\dbl}_{\C}$. We denote by $f_{mn}:\C_m\to\C_n$ the transition maps for $m\geq n\geq0$. We say that $(\C_r)_{r\geq0}$ satisfies the \emph{strongly Mittag-Leffler condition over $\C$} if
	\begin{enumerate}
		\item For every $n\geq0$, $(f_{mn}f^R_{mn})_{m\geq n}:\C_n\to\C_n$ is pro-constant.
		\item For every $n,k\geq0$, the functor $\lim_{m\geq n,k}f_{mk}f_{mn}^R:\C_n\to\C_k$ is an $\Sp$-internal left adjoint and admits a left adjoint.
	\end{enumerate}
\end{definition}
We simply say strongly Mittag-Leffler condition for strongly Mittag-Leffler condition over $\Sp$. In fact, we may verify the following stronger condition recently introduced by Ben-Moshe \cite{BM26quotient}.
\begin{definition}[{\cite[Definition 2.1.4]{BM26quotient}}]
	Let $\C\to\cdots\to\C_2\to\C_1\to\C_0$ be a tower in $\Pr^{\dbl}_{\st}$. We denote by $f_n:\C\to\C_n$ and $f_{mn}:\C_m\to\C_n$ the transition maps for $m\geq n\geq0$. We say that $\C\to(\C_r)_{r\geq0}$ satisfies the \emph{strongly Mittag-Leffler condition} if
	\begin{enumerate}
		\item For every $n\geq0$, $f_nf_n^R\to(f_{mn}f_{mn}^R)_{m\geq n}$ is a pro-equivalence.
		\item For every $n\geq0$, $f_n^R$ is an $\Sp$-internal left adjoint and $f_n$ admits a left adjoint.
	\end{enumerate}
\end{definition}
In particular, the strongly Mittag-Leffler condition of $\C\to\cdots\to\C_2\to\C_1\to\C_0$ implies the strongly Mittag-Leffler condition of $\cdots\to\C_2\to\C_1\to\C_0$.
\begin{proposition}\label{prop:ML}
	Let $\C\in\CAlg^{\rig}(\PrL_{\st})$ and $V\in\C$. Let $\cdots\to V_2\to V_1\to V_0=V$ be a tower of $\bbE_1$-algebras in $\C$ satisfying \cref{asp:V_r}(1). Then the tower
	\[
		\C\to\cdots\to\Mod_{V_2}(\C)\to\Mod_{V_1}(\C)\to\Mod_{V_0}(\C)
	\]
	satisfies the strongly Mittag-Leffler condition. Consequently, let $\M\in\Pr^{\dbl}_{\C}$. Then the tower
	\[
		\M\to\cdots\to\Mod_{V_2}(\M)\to\Mod_{V_1}(\M)\to\Mod_{V_0}(\M)
	\]
	also satisfies the strongly Mittag-Leffler condition.
\end{proposition}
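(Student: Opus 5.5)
We verify the two conditions in Ben-Moshe's definition directly for $f_n=V_n\otimes-:\C\to\Mod_{V_n}(\C)$ and $f_{mn}=V_n\otimes_{V_m}-:\Mod_{V_m}(\C)\to\Mod_{V_n}(\C)$. The right adjoints are the forgetful functors, so $f_nf_n^R\simeq V_n\otimes-$ on $\Mod_{V_n}(\C)$ (the left $V_n$-action coming from the first factor), and $f_{mn}f_{mn}^R\simeq V_n\otimes_{V_m}-$, restricted along $V_m\to V_n$. The $\bbE_1$-algebra case is simply the left/right-module bookkeeping of the commutative case.

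\textbf{Condition (2).} Since $V_n\in\C^{\dbl}$ and $\C$ is rigid, $V_n$ is compact in the appropriate relative sense, so tensoring with $V_n$ is $\C$-internal. Concretely, the forgetful functor $f_n^R:\Mod_{V_n}(\C)\to\C$ preserves colimits and is $\C$-linear. Its right adjoint $\iHom_{\C}(V_n,-)\simeq V_n^{\vee}\otimes-$ (with the induced module structure) also preserves colimits. Hence $f_n^R$ is an $\Sp$-internal (indeed $\C$-internal) left adjoint.

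A left adjoint of $f_n=V_n\otimes-$ is given by $M\mapsto V_n^{\vee}\otimes_{V_n}M$, or equivalently by the forgetful functor twisted by duality. More cleanly, $V_n\otimes-\simeq\iHom_{\C}(V_n^{\vee},-)$ as functors $\C\to\Mod_{V_n}(\C)$, and this is right adjoint to $V_n^{\vee}\otimes_{V_n}-$, where $V_n^{\vee}$ is a $V_n$-bimodule-type object.

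\textbf{Condition (1).} This is the real content, and it rests on the pro-constancy argument from the proof of \cref{thm:lim_topology}. For $M\in\Mod_{V_n}(\C)$ we must show that
\[
V_n\otimes M\to\bigl(V_n\otimes_{V_m}M\bigr)_{m\geq n}
\]
is a pro-equivalence, naturally in $M$ (at the level of endofunctors). Write $V_n\otimes_{V_m}M\simeq(V_n\otimes_{V_m}V_n)\otimes_{V_n}M$. This reduces the claim to the statement that the pro-system of $V_n$-bimodules $(V_n\otimes_{V_m}V_n)_{m\geq n}$ receives a pro-equivalence from the constant system $V_n\otimes V_n$.

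Apply \cref{asp:V_r}(1) as in the proof of \cref{thm:lim_topology}. The transition $V_n\otimes V_{m+1}\to V_n\otimes V_m$ factors through the multiplication $V_n\otimes V_{m+1}\to V_n$, as a map of $V_n\otimes V_{m+1}^{\op}$-modules. Base changing along $V_{m+1}\to V_m$ and then $V_m\to V_n$, the transition $V_n\otimes_{V_{m+1}}V_n\to V_n\otimes_{V_m}V_n$ factors through $V_n\otimes_{V_{m+1}}V_n\to V_n$. Dually, the map from the constant term $V_n\otimes V_n$ factors compatibly. This gives, for each $m$, maps in both directions between the levels of the two pro-systems that compose to transition maps, which is exactly a pro-isomorphism. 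The same reasoning yields pro-constancy of $(f_{mn}f_{mn}^R)_{m\geq n}$.

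\textbf{Main obstacle.} The hard step is making these factorizations coherent enough to obtain a genuine pro-equivalence of functors rather than objectwise. We handle this by working with bimodules, so that everything is a pro-system in $\mathrm{BMod}_{V_n,V_n}(\C)$. Tensoring over $V_n$ then transports the pro-equivalence functorially, and a levelwise-up-to-shift factorization of a tower suffices to conclude a pro-isomorphism.

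\textbf{Extension to $\M$.} For $\M\in\Pr^{\dbl}_{\C}$, apply $-\otimes_{\C}\M$. We have $\Mod_{V_r}(\C)\otimes_{\C}\M\simeq\Mod_{V_r}(\M)$, and tensoring preserves adjunctions of $\C$-internal functors, pro-equivalences of $\C$-linear functors, and the existence of left adjoints. Hence both conditions transfer to $\M$.
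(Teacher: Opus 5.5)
\textbf{Gap in condition (1).} Your treatment of condition (2), your reduction of condition (1) to the pro-equivalence $V_n\otimes V_n\to(V_n\otimes_{V_m}V_n)_{m\geq n}$ of $V_n$-bimodules, and your transfer to $\M$ all agree with the paper. The gap is the factorization you use to prove that pro-equivalence.

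You claim that base change of \cref{asp:V_r}(1) makes the transition $V_n\otimes_{V_{m+1}}V_n\to V_n\otimes_{V_m}V_n$ factor through the multiplication $V_n\otimes_{V_{m+1}}V_n\to V_n$. This is false. Take $\C=\Mod(\bbZ)$ and $V_r=\bbZ/p^{2^r}$, which satisfies \cref{asp:V_r}(1). For $m>n$, set $I=(p^{2^n})\subseteq\bbZ/p^{2^m}$. Then $\pi_1(V_n\otimes_{V_m}V_n)\cong I/I^2\cong\bbZ/p^{2^n}$, and the transitions are isomorphisms on $\pi_1$. But $\pi_1(V_n)=0$, so no transition can factor through $V_n$.

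What base change along $-\otimes_{V_{m+1}}V_n$ actually gives is only a factorization of $V_n\otimes V_n\to V_n\otimes(V_m\otimes_{V_{m+1}}V_n)$ through $V_n\otimes_{V_{m+1}}V_n$, and this says nothing about the transition. You have transplanted the collapse $(V_r\otimes V_s)_{s\geq r}\approx V_r$ from the proof of \cref{thm:lim_topology}. Here, however, the index varies in the ring over which one tensors, and the limiting behaviour is the opposite: $V_n\otimes_{V_m}V_n$ tends to $V_n\otimes V_n$, not to $V_n$. A factorization through $V_n$ would make the pro-system look like the constant system on $V_n$, incompatible with a pro-equivalence from $V_n\otimes V_n$ unless $V_n$ is idempotent. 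Likewise, your sentence about the constant term ``factoring compatibly'' never produces the maps $V_n\otimes_{V_{m'}}V_n\to V_n\otimes V_n$ that a pro-inverse needs.

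\textbf{The missing idea.} The paper, following \cite[Lemma 2.25]{MW25refinedTC}, lets the varying algebra act on a fixed object. Write $V_n\otimes_{V_m}V_n\simeq V_m\otimes_{V_m^{\op}\otimes V_m}(V_n\otimes V_n)$, where $V_n\otimes V_n$ is a $V_m^{\op}\otimes V_m$-module through the inner actions. Precompose the factorization in \cref{asp:V_r}(1) with $V_{m+1}\otimes V_{m+1}\to V_m\otimes V_{m+1}$. This shows that $V_{m+1}\otimes V_{m+1}\to V_m\otimes V_m$ factors through the multiplication $V_{m+1}\otimes V_{m+1}\to V_{m+1}$ as $V_{m+1}$-bimodules.

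Now tensor this factorization with $V_n\otimes V_n$ over $V_{m+1}^{\op}\otimes V_{m+1}$. Then compose with the action map $(V_m^{\op}\otimes V_m)\otimes_{V_{m+1}^{\op}\otimes V_{m+1}}(V_n\otimes V_n)\to V_n\otimes V_n$, which retracts the unit because $V_n$ is a $V_m$-module on both sides. The result is a map $V_n\otimes_{V_{m+1}}V_n\to V_n\otimes V_n$ that splits the canonical map $V_n\otimes V_n\to V_n\otimes_{V_{m+1}}V_n$. As in the paper's diagram, the transition to $V_n\otimes_{V_m}V_n$ factors through this splitting, which supplies the pro-inverse. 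The factorization runs through $V_n\otimes V_n$, not through $V_n$.
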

\begin{proof}
	Let $n\geq0$. Since $V_n\in\C^{\dbl}$, $f_n\simeq V_n\otimes-$ admits a left adjoint, $f_n^{RR}\simeq\iHom_{\C}(V_n,-)$ admits a right adjoint, and hence $f_n^R$ is an $\Sp$-internal left adjoint. The unit map is given by $f_nf_n^R\simeq V_n\otimes-\simeq(V_n\otimes V_n)\otimes_{V_n}(-)\to(V_n\otimes_{V_m}V_n)\otimes_{V_n}(-)\simeq V_n\otimes_{V_m}(-)\simeq f_{mn}f^R_{mn}$ for $m\geq n$. Then $(f_{mn}f^R_{mn})_{m\geq n}$ is pro-constant with value $f_nf_n^R$ since $(V_n\otimes_{V_m}V_n)_{m\geq n}$ is pro-constant with value $V_n\otimes V_n$. This comes from \cite[Lemma 2.25]{MW25refinedTC}. Indeed, viewing $V_n\otimes V_n$ as a $V_m^{\op}\otimes V_m$-module, it forgets to a $V_{m+1}^{\op}\otimes V_{m+1}$-module. Then $\id:V_n\otimes V_n\to V_n\otimes V_n$ factors through $(V_m^{\op}\otimes V_m)\otimes_{V_{m+1}^{\op}\otimes V_{m+1}}(V_n\otimes V_n)\to V_n\otimes V_n$. By \cref{asp:V_r}(1), $V_{m+1}\otimes V_{m+1}\to V_m\otimes V_m$ factors through $V_{m+1}\otimes V_{m+1}\to V_{m+1}$ as a map of $V_{m+1}\otimes V_{m+1}^{\op}$-modules. Then $V_n\otimes V_n\to(V_m^{\op}\otimes V_m)\otimes_{V_{m+1}^{\op}\otimes V_{m+1}}(V_n\otimes V_n)$ factors through $V_{m+1}\otimes_{V_{m+1}^{\op}\otimes V_{m+1}}(V_n\otimes V_n)\simeq V_n\otimes_{V_{m+1}}V_n$. Therefore, we obtain a factorization for every $m>n$
	\[\begin{tikzcd}
		{V_n\otimes_{V_{m+1}}V_n} & {V_n\otimes_{V_m}V_n} \\
		{V_n\otimes V_n} & {V_n\otimes V_n}
		\arrow[from=1-1, to=1-2]
		\arrow[from=1-1, to=2-1]
		\arrow[from=1-2, to=2-2]
		\arrow[dashed, from=2-1, to=1-2]
		\arrow[equals, from=2-1, to=2-2]
	\end{tikzcd}\]
	It follows that $(V_n\otimes_{V_{m+1}}V_n)_{m\geq n}$ is pro-constant with value $V_n\otimes V_n$. More precisely,
	\[
		V_n\otimes V_n\to(V_n\otimes_{V_m}V_n)_{m\geq n}
	\]
	is a pro-equivalence.

	Therefore, $\C\to(\Mod_{V_r}(\C))_{r\geq0}$ satisfies the strongly Mittag-Leffler condition. In fact, the proof shows that it satisfies the strongly Mittag-Leffler condition over $\C$. Consequently, let $\M\in\Pr^{\dbl}_{\C}$. Applying $-\otimes_{\C}\M$, \cite[Proposition 2.1.6]{BM26quotient} shows that $\M\simeq\C\otimes_{\C}\M\to(\Mod_{V_r}(\C)\otimes_{\C}\M)_{r\geq0}\simeq(\Mod_{V_r}(\M))_{r\geq0}$ also satisfies the strongly Mittag-Leffler condition.
\end{proof}
\begin{remark}\label{rmk:E_1-rig}
	In this remark, we discuss $\bbE_1$-rigidifications and limits of rigid $\bbE_1$-monoidal stable categories, which will be used to identify the rigidification of the limit category with the dualizable limit.
	
	The $\bbE_1$-rigidification of a presentably $\bbE_1$-monoidal stable category $\C\in\Alg(\PrL_{\st})$ is given by
	\[
		\C^{\bbE_1\text{-}\rig}\simeq\Ind_{\I_{\mathrm{sd}}}(\C),
	\]
	where $\I$ denotes the idealoid of both left and right trace-class maps. Equivalently, $\C^{\bbE_1\text{-}\rig}\subseteq\Ind(\C)$ is the localizing stable subcategory generated by objects of the form $\colim_{\bbQ_{\geq0}}j(C_p)$, where $C_p\to C_q$ are both left and right trace-class for all $p<q$. Indeed, the proof of \cite[Theorem 4.4.16]{KNP24} or \cite[Corollary 7.20]{Aok25} adapts to this setting. It follows that $\C^{\bbE_1\text{-}\rig}$ is dualizable, every compact map is both left and right trace-class, and hence the multiplication functor is a bilinear internal left adjoint. The universal property follows in the same way.

	Moreover, the forgetful functor $\Alg^{\rig}(\PrL_{\st})\to\Pr^{\dbl}_{\st}$ preserves limits. Indeed, the proof of \cite[Corollary 4.89]{Ram26locallyrigid} carries over after replacing ``hom" with ``both left and right homs".
\end{remark}
Combining the strongly Mittag-Leffler condition with the description of $\bbE_1$-rigidifications, we obtain the following proposition.
\begin{proposition}\label{prop:E_1-rig}
	Let $\cdots\to\C_2\to\C_1\to\C_0$ be a tower of rigid $\bbE_1$-monoidal stable categories satisfying the strongly Mittag-Leffler condition. Suppose that $\C_r$ is a rigid $\bbE_{r+1}$-monoidal stable category and $\C_{r+1}\to\C_r$ is an $\bbE_{r+1}$-monoidal functor for every $r\geq1$. Then $\lim_r\C_r$ is a locally rigid $\Sp$-algebra and the canonical map induces an equivalence of rigid $\Sp$-algebras
	\[
		(\lim_r\C_r)^{\rig}\xrightarrow{\simeq}\lim^{\dbl}_r\C_r.
	\]
\end{proposition}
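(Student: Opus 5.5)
We prove this in three steps: identify the limit as a locally rigid $\Sp$-algebra, compare the dualizable limit with the limit in $\PrL_{\st}$ via the strongly Mittag-Leffler condition, and then recognize the comparison as the rigidification.

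\textbf{Symmetric monoidal structure on the limit.} An $\bbE_\infty$-structure on a sequential limit only needs compatible $\bbE_k$-structures for every $k$. The tail $(\C_r)_{r\geq k}$ consists of $\bbE_{k+1}$-monoidal categories with $\bbE_{k+1}$-monoidal transition functors, so $\lim_r\C_r$ is $\bbE_{k+1}$-monoidal for each $k$. These structures are compatible, and hence $\lim_r\C_r\in\CAlg(\PrL_{\st})$. Since $\CAlg$ commutes with limits and $\bbE_\infty=\colim_k\bbE_k$, the same argument makes $\lim^{\dbl}_r\C_r$ an $\bbE_\infty$-algebra in $\Pr^{\dbl}_{\st}$. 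By \cref{rmk:E_1-rig}, the forgetful functor $\Alg^{\rig}(\PrL_{\st})\to\Pr^{\dbl}_{\st}$ preserves limits, so $\lim^{\dbl}_r\C_r$ is computed by the limit of the rigid $\bbE_1$-monoidal tower, and it is $\bbE_1$-rigid. Applying the same argument to the $\bbE_{k+1}$-rigid tails shows that $\lim^{\dbl}_r\C_r$ is a rigid $\Sp$-algebra. The canonical comparison $\lim^{\dbl}_r\C_r\to\lim_r\C_r$ is the counit of the inclusion $\Pr^{\dbl}_{\st}\to\PrL_{\st}$, and it is symmetric monoidal.

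\textbf{Using the strongly Mittag-Leffler condition.} Efimov's theory gives two facts about the comparison $\lim^{\dbl}_r\C_r\to\lim_r\C_r$: it has a fully faithful $\Sp$-linear left adjoint, and $\lim_r\C_r$ is dualizable. The cited statement is \cite[Theorem 5.4/Proposition 5.6]{Efi25limit}, or equivalently \cite[Proposition 2.1.6]{BM26quotient}. The left adjoint sends $(X_r)$ to the object built from $\lim_{m}f_{mk}f_{mn}^R$. Condition (2) of the strongly Mittag-Leffler condition makes these projections internal left adjoints. Hence $\lim_r\C_r\hookrightarrow\lim^{\dbl}_r\C_r$ is an internal left adjoint embedding, and it is an ideal because the comparison is symmetric monoidal and the left adjoint satisfies the projection formula. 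This step is the main obstacle. We would first try to verify the projection formula levelwise, using that each $\C_r\to\C_n$ is a map of rigid $\bbE_1$-categories. We would then pass to the limit using pro-constancy of $f_{mn}f_{mn}^R$.

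\textbf{Identifying the rigidification.} Granting the previous step, $\lim_r\C_r$ is a smashing ideal of the rigid algebra $\lim^{\dbl}_r\C_r$. Therefore $\lim_r\C_r$ is locally rigid, and $\lim^{\dbl}_r\C_r$ is a rigid envelope of it. To show the map $(\lim_r\C_r)^{\rig}\to\lim^{\dbl}_r\C_r$ is an equivalence, we use universal properties in both directions:
\[
\Hom_{\CAlg^{\rig}}(\D,(\lim\C_r)^{\rig})\simeq\Hom_{\CAlg}(\D,\lim\C_r)\simeq\lim\Hom_{\CAlg}(\D,\C_r).
\]
For a rigid $\D$, any symmetric monoidal functor $\D\to\C_r$ is an internal left adjoint, and so is any induced map into a limit. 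Such maps therefore factor uniquely through $\lim^{\dbl}$. This yields $\Hom(\D,\lim^{\dbl}\C_r)$, but with $\lim^{\dbl}$ taken in rigid $\bbE_1$-algebras, so we must check that the $\bbE_\infty$-structures match. For this we use the tail argument again. Both sides then corepresent the same functor on $\CAlg^{\rig}(\PrL_{\st})$, which gives the claimed equivalence.
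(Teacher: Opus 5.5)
Your outline matches the paper's proof. The symmetric monoidal structure on $\lim_r\C_r$ comes from the tails (the paper cites \cite[Corollary 1.3.5]{LZ25}). The identification $(\lim_r\C_r)^{\rig}\simeq\lim^{\dbl}_r\C_r$ is formal, from \cref{rmk:E_1-rig} and the fact that rigidification is a right adjoint. Local rigidity comes from Efimov's strongly Mittag-Leffler results \cite[Proposition 5.5, Remark 5.15]{Efi25limit}: $\lim_r\C_r$ is dualizable, and the comparison $F\colon\overline{\C}:=\lim^{\dbl}_r\C_r\to\lim_r\C_r$ has a fully faithful left adjoint $L$.

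Your Yoneda argument in the last step is just that formal identification written out, and it does not need your second step. The paper also avoids your $\bbE_\infty$-versus-tails bookkeeping by passing through $(\lim_r\C_r)^{\rig}\simeq(\lim_r\C_r)^{\bbE_1\text{-}\rig}$, since $(-)^{\bbE_1\text{-}\rig}$ is a right adjoint on $\Alg(\PrL_{\st})$ that fixes each $\C_r$.

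The gap is the step you call the main obstacle. You leave local rigidity conditional on a projection formula for $L$. The route you sketch (levelwise projection formulas, then pro-constancy of $f_{mn}f^R_{mn}$) is neither needed nor clearly workable. The missing observation is that you have already shown $\overline{\C}$ is rigid, and over a rigid base the projection formula is automatic.

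Indeed, $L$ is canonically oplax $\overline{\C}$-linear, with structure map $\phi_x\colon L(F(x)\otimes c)\to x\otimes L(c)$ given by the mate of the monoidality of $F$. Take a trace-class map $f\colon x\to y$ witnessed by $\mathbbm{1}\to y\otimes z$ and $z\otimes x\to\mathbbm{1}$. Define $\psi$ as the composite $x\otimes L(c)\to x\otimes L(F(z)\otimes F(y)\otimes c)\xrightarrow{x\otimes\phi_z}x\otimes z\otimes L(F(y)\otimes c)\to L(F(y)\otimes c)$. By naturality and multiplicativity of $\phi$, it satisfies $\phi_y\circ\psi\simeq f\otimes L(c)$ and $\psi\circ\phi_x\simeq L(F(f)\otimes c)$. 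Every object of $\overline{\C}$ is a filtered colimit along (cofinally) trace-class maps, and $L$ preserves colimits, so every $\phi_x$ is an equivalence.

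Hence the essential image of $L$ is a localizing ideal of $\overline{\C}$ whose inclusion has the colimit-preserving, $\overline{\C}$-linear right adjoint $F$. That is, it is a smashing ideal of a rigid category, so $\lim_r\C_r$ is locally rigid with rigid envelope $\overline{\C}$. This is exactly what the paper's ``therefore'' relies on.

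Two smaller corrections:
\begin{enumerate}
\item A symmetric monoidal functor from a rigid test category into $\lim_r\C_r$ is generally not an internal left adjoint, because the unit of $\lim_r\C_r$ need not be compact (e.g.\ $L_{T(n)}\Sp\simeq\lim_r\Mod(L_{T(n)}V_r)$). Only its components are internal left adjoints, and that is what gives the factorization through $\lim^{\dbl}_r\C_r$.
\item \cite[Proposition 2.1.6]{BM26quotient} is a base-change statement for strongly Mittag-Leffler towers. It is not an equivalent form of Efimov's result.
\end{enumerate}
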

\begin{proof}
	By \cite[Corollary 1.3.5]{LZ25}, $\lim_r\C_r$ admits a natural symmetric monoidal structure. It follows from \cref{rmk:E_1-rig} that $(\lim_r\C_r)^{\rig}\simeq(\lim_r\C_r)^{\bbE_1\text{-}\rig}\simeq\lim^{\dbl}_r\C_r$.
	
	By \cite[Proposition 5.5]{Efi25limit}, $\lim_r\C_r$ is dualizable and each projection map $\lim_r\C_r\to\C_r$ is an $\Sp$-internal left adjoint. Then the projection maps induce a fully faithful functor $\lim_r\C_r\hookrightarrow\lim^{\dbl}_r\C_r$, which is the left adjoint to the canonical functor $\lim^{\dbl}_r\C_r\to\lim_r\C_r$ by \cite[Remark 5.15]{Efi25limit}. Therefore, $\lim_r\C_r$ is locally rigid with rigidification $\lim^{\dbl}_r\C_r$.
\end{proof}
Efimov proves a continuity theorem for continuous $K$-theory of towers satisfying the strongly Mittag-Leffler condition, which we will use to establish our continuity theorem.
\begin{proposition}[{\cite[Corollary 6.2]{Efi25limit}}]
	Let $\cdots\to\C_2\to\C_1\to\C_0$ be a tower in $\Pr^{\dbl}_{\st}$ satisfying the strongly Mittag-Leffler condition. Then the canonical maps induce an equivalence
	\[
		K^{\cont}(\lim^{\dbl}_r\C_r)\xrightarrow{\simeq}\lim_rK^{\cont}(\C_r).
	\]
\end{proposition}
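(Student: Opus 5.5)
This is Efimov's \cite[Corollary 6.2]{Efi25limit}. We reduce it to compactly generated inputs via the continuous Calkin construction, and conclude with an exactness argument.

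\emph{Setup.} By \cref{lem:limdbl}, $\lim^{\dbl}_r\C_r$ is the dualizable fiber of
\[
	\lim^{\cg}_r\Ind(\C_r^{\omega_1})\to\lim^{\cg}_r\Ind(\Calk^{\cont}_{\omega_1}(\C_r)).
\]
A cleaner route, however, is through the strongly Mittag-Leffler condition itself. First, condition (2) (the functors $\lim_{m}f_{mk}f_{mn}^R$ are internal left adjoints with left adjoints) should imply, as in \cite[Proposition 5.5]{Efi25limit}, that the $\PrL$-limit $\lim_r\C_r$ is already dualizable. Second, it should imply that each projection $\lim_r\C_r\to\C_n$ is an internal left adjoint.

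\emph{Step 1.} Show that $\lim^{\dbl}_r\C_r$ fits into a short exact sequence in $\Pr^{\dbl}_{\st}$
\[
	\lim^{\dbl}_r\C_r\to\prod^{\dbl}_r\C_r\xrightarrow{\id-\mathrm{shift}}\prod^{\dbl}_r\C_r .
\]
Here $\prod^{\dbl}$ is the dualizable product, on which $K^{\cont}$ commutes with countable products (by Efimov's theorem for products). The obstruction is that this sequence is only a fiber sequence; to know the second map is a localization, one needs pro-constancy of $(f_{mn}f_{mn}^R)_m$, i.e. condition (1). I would construct the essential surjectivity, or rather the fact that the right adjoint is fully faithful, by writing the right adjoint explicitly via the limit formula $\lim_m f_{mk}f_{mn}^R$ and using pro-constancy to identify its composite with the identity.

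\emph{Step 2.} Apply $K^{\cont}$, which preserves short exact sequences. We then get
\[
	K^{\cont}(\lim^{\dbl}_r\C_r)\simeq\fib\Bigl(\prod_r K^{\cont}(\C_r)\xrightarrow{\id-\mathrm{shift}}\prod_rK^{\cont}(\C_r)\Bigr)\simeq\lim_rK^{\cont}(\C_r),
\]
using commutation of $K^{\cont}$ with infinite products of dualizable categories.

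The main obstacle is Step 1: proving that $\id-\mathrm{shift}$ is a localization, equivalently that the dualizable limit is computed as an exact fiber. This is exactly where the strongly Mittag-Leffler hypotheses enter, and I would follow Efimov's analysis of the adjoint $\lim_m f_{mk}f_{mn}^R$ closely.
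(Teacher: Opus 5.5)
Your Step~1 is where the argument fails, and the problem is not just upgrading a fiber sequence to a short exact sequence.

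First, there is no functor ``$\id-\mathrm{shift}$'' on $\prod_r\C_r$. Functors between categories cannot be subtracted, and $\id-\mathrm{shift}$ only makes sense after you apply an additive invariant. More seriously, the canonical functor $\lim^{\dbl}_r\C_r\to\prod_r\C_r$ (whose components are the projections) is in general not fully faithful. So it cannot be the kernel of any functor in $\PrL_{\st}$ or $\Pr^{\dbl}_{\st}$, where fibers are full subcategories. The constant tower $\C_r=\C$ with identity transitions already shows this. It trivially satisfies the strongly Mittag-Leffler condition, and $\lim^{\dbl}_r\C_r\simeq\C$ maps diagonally, with $\Hom_{\prod_r\C}((X)_r,(Y)_r)\simeq\prod_r\Hom_{\C}(X,Y)$. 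A limit of categories is an equalizer: objects $(X_r)_r$ together with identifications $f_{r+1,r}X_{r+1}\simeq X_r$. It is not the kernel of a single functor, so the spectral Milnor sequence has no direct categorification with $\prod_r\C_r$ in the middle. Your Step~2 (that $K^{\cont}$ commutes with countable products) is correct, but it has nothing to apply to. Likewise, your plan to analyse a right adjoint of $\id-\mathrm{shift}$ via $\lim_mf_{mk}f_{mn}^R$ has no functor to work on.

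What is missing is a genuinely categorical replacement. You need a dualizable category that contains $\lim^{\dbl}_r\C_r$ as the kernel of a localization in $\Pr^{\dbl}_{\st}$ and has continuous $K$-theory $\prod_rK^{\cont}(\C_r)$. You also need to identify the quotient and show the induced map on $K^{\cont}$ is $\id-\mathrm{shift}$. The natural candidates are lax limits of the tower: objects $(X_r)_r$ with maps $f_{r+1,r}X_{r+1}\to X_r$, in which the strict limit sits fully faithfully. These do not work naively. For the constant tower on $\Sp$, the lax limit in $\PrL_{\st}$ is $\Fun(\bbN^{\op},\Sp)$, whose continuous $K$-theory is $\bigoplus_rK(\bbS)$ rather than $\prod_rK(\bbS)$. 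Moreover, the inclusion of constant towers has right adjoint $\lim_{\bbN^{\op}}$, which does not preserve colimits, so the inclusion is not an internal left adjoint. The real work lies in handling the passage from $\PrL$-limits to dualizable limits and the internal adjointness of the relevant functors. That is where the strongly Mittag-Leffler hypotheses have to enter. The paper does not reprove this; it imports it from \cite[Corollary 6.2]{Efi25limit}.

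The facts you recall from \cite[Proposition 5.5]{Efi25limit} are also not a cleaner route. The $\PrL$-limit is in general only a proper full subcategory of $\lim^{\dbl}_r\C_r$, for example $\Mod(R)_{I\text{-}\cplt}\subseteq\Nuc(R^{\wedge}_I)$ for towers of quotients of $R$. Its continuous $K$-theory generally differs from $\lim_rK^{\cont}(\C_r)$.
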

\begin{theorem}[Continuity]\label{thm:continuity_K}
	Let $\C\in\CAlg^{\rig}(\PrL_{\st})$ and $V\in\C$. Let $\cdots\to V_2\to V_1\to V_0=V$ be a tower of $\bbE_1$-algebras in $\C$ satisfying \cref{asp:V_r}. Suppose that $V_r$ is an $\bbE_{r+1}$-algebra and $V_{r+1}\to V_r$ is an $\bbE_{r+1}$-algebra map for every $r\geq1$. Then, for every $\D\in\CAlg^{\rig}(\PrL_{\C})$, the tower $\D\to(\Mod_{V_r}(\D))_{r\geq0}$ satisfies the strongly Mittag-Leffler condition. Moreover, the base change functors induce equivalences of $\D$-algebras
	\[
		L_V\D\simeq\lim_r\Mod_{V_r}(\D),\quad\Nuc_V(\D)\simeq\lim^{\dbl}_r\Mod_{V_r}(\D),
	\]
	and the canonical maps induce an equivalence
	\begin{equation}\label{eq:continuity}
		K^{\cont}(\Nuc_V(\D))\xrightarrow{\simeq}\lim_rK^{\cont}(\Mod_{V_r}(\D)).
	\end{equation}
\end{theorem}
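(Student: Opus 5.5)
The plan is to assemble the statement from \cref{prop:ML}, \cref{thm:lim_topology}, \cref{prop:E_1-rig} and Efimov's continuity theorem (\cite[Corollary 6.2]{Efi25limit}), after checking that base change to $\D$ preserves all the hypotheses.

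\textbf{Reduction to $\C$.} Since $\D\in\CAlg^{\rig}(\PrL_{\C})$ and $\C$ is rigid, $\D$ is a rigid $\Sp$-algebra and a dualizable $\C$-module. The images of the $V_r$ under the unit $\C\to\D$ are $\bbE_1$-algebras (and $\bbE_{r+1}$-algebras for $r\geq1$) in $\D$. Symmetric monoidal functors preserve dualizable objects, and the factorization in \cref{asp:V_r}(1) is a map of bimodules, so it is also preserved. Thus \cref{asp:V_r}(1) holds in $\D$. By \cref{prop:ML}, applied to $\C$ with $\M=\D$, or directly to $\D$, the tower $\D\to(\Mod_{V_r}(\D))_{r\geq0}$ satisfies the strongly Mittag-Leffler condition.

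\textbf{First equivalence.} The equivalence $L_V\D\simeq\lim_r\Mod_{V_r}(\D)$ is the module version of \cref{thm:lim_topology}, applied with $\M=\D$. It is an equivalence of $\D$-algebras for two reasons. First, each $\Mod_{V_r}(\D)$ carries an $\bbE_{r+1}$-monoidal structure, and by \cite[Corollary 1.3.5]{LZ25} the limit carries a symmetric monoidal structure. Second, the comparison functor $L_V\D\to\lim_r\Mod_{V_r}(\D)$ is the limit of the monoidal base change functors.

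\textbf{Second equivalence.} Each $\Mod_{V_r}(\D)$ is rigid $\bbE_{r+1}$-monoidal, since $V_r$ is dualizable and $\D$ is rigid. Together with the strongly Mittag-Leffler condition, \cref{prop:E_1-rig} gives $(\lim_r\Mod_{V_r}(\D))^{\rig}\simeq\lim^{\dbl}_r\Mod_{V_r}(\D)$, so $(L_V\D)^{\rig}\simeq\lim^{\dbl}_r\Mod_{V_r}(\D)$. It remains to identify $(L_V\D)^{\rig}$ with $\Nuc_V(\D)$. Here $\Nuc_V$ means completion with respect to the locally rigid algebra $L_V\C$, with rigid envelope $\C\to L_V\C$. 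The map $\C\to L_V\C$ is a localization by \cref{asp:V_r}(2), with fiber $\C^{V\text{-}\tors}$, which is smashing since $V$ is dualizable. By \cref{thm:unst_completion},
\[
\Nuc_V(\D)=\iHom^{\dbl}_{\C}(L_V\C,\D)\simeq(L_V\C\otimes_{\C}\D)^{\rig}.
\]
By \cref{prop:L_FL_E}, since $\D$ is dualizable over $\C$, we have $L_V\C\otimes_{\C}\D\simeq L_V\D$. Hence $\Nuc_V(\D)\simeq(L_V\D)^{\rig}$, giving the second equivalence.

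\textbf{Continuity.} Applying Efimov's continuity theorem to the strongly Mittag-Leffler tower $(\Mod_{V_r}(\D))_r$ and composing with $\Nuc_V(\D)\simeq\lim^{\dbl}_r\Mod_{V_r}(\D)$ yields \eqref{eq:continuity}.

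\textbf{Main obstacle.} The delicate point is the compatibility of the monoidal structures. \cref{prop:E_1-rig} needs the limit's symmetric monoidal structure, coming from the increasingly commutative tower, to agree with the one on $L_V\D$, so that the $\bbE_1$-rigidification equals the $\Sp$-rigidification. I would handle this by checking that the equivalence of \cref{thm:lim_topology} is the limit of $\bbE_{r+1}$-monoidal functors $L_V\D\to\Mod_{V_r}(\D)$. The symmetric monoidal structure on a limit is determined by its $\bbE_k$-truncations for all $k$ (the $\bbE_\infty$ structure being the limit of the $\bbE_k$ ones), so these compatible monoidal functors should pin it down.
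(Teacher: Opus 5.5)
Your proposal follows the paper's proof essentially step for step: \cref{prop:ML} for the strongly Mittag-Leffler condition, the module version of \cref{thm:lim_topology} for $L_V\D\simeq\lim_r\Mod_{V_r}(\D)$, \cref{thm:unst_completion} together with $L_V\C\otimes_{\C}\D\simeq L_V\D$ to get $\Nuc_V(\D)\simeq(L_V\D)^{\rig}$, and then \cref{prop:E_1-rig} and Efimov's continuity theorem; the paper settles the monoidal-compatibility point you flag by citing \cite{LZ25}, and for \eqref{eq:continuity} alone it can be sidestepped as in \cref{rmk:continuity}, since an $\bbE_1$-monoidal equivalence already identifies the rigidifications. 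One justification needs correcting: the fiber of $L_V\colon\C\to L_V\C$ is the category $C_V\C=\C^{V\text{-}\loc}$ of $V$-acyclics, not $\C^{V\text{-}\tors}$ (and $L_V$ is generally not smashing); what makes $\C\to L_V\C$ a rigid envelope is that local duality identifies $L_V\C=\C^{V\text{-}\cplt}\simeq\C^{V\text{-}\tors}$, which sits in $\C$ as a smashing ideal via the left adjoint of $L_V$ (\cref{exm:local_duality}).
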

\begin{proof}
	By \cref{prop:ML}, the tower $\D\to(\Mod_{V_r}(\D))_{r\geq0}$ satisfies the strongly Mittag-Leffler condition. By \cref{thm:lim_topology}, the base change functors induce a $\D$-algebra equivalence $L_V\D\simeq\lim_r\Mod_{V_r}(\D)$. $L_V\C$ is locally rigid over $\C$, and $L_V\mathbbm{1}_{\C}\in(L_V\C)^{\omega_1}$ since $V\in\C^{\dbl}$ is compact. By \cref{thm:unst_completion}, we have $\Nuc_V(\D)\simeq\iHom^{\dbl}_{\C}(L_V\C,\D)\simeq(L_V\D)^{\rig}$. Then \cref{prop:E_1-rig} gives $\D$-algebra equivalences
	\[
		\Nuc_V(\D)\simeq(L_V\D)^{\rig}\simeq(\lim_r\Mod_{V_r}(\D))^{\rig}\simeq\lim_r^{\dbl}\Mod_{V_r}(\D).
	\]
	Therefore, the desired continuity follows from Efimov's continuity theorem.
\end{proof}
In particular, this theorem recovers \cite[Proposition 5.23, Corollary 6.3]{Efi25limit}, as shown in \cref{exm:Nuc(R_I)}.

\begin{remark}\label{rmk:continuity}
	To obtain only the equivalence \eqref{eq:continuity}, it suffices to assume that $V_r$ is an $\bbE_2$-algebra and $V_{r+1}\to V_r$ is an $\bbE_2$-algebra map for every $r\geq1$. Indeed, $\Mod_{V_r}(\D)$ is a tower of rigid $\bbE_1$-monoidal stable categories and the equivalence $L_V\D\simeq\lim_r\Mod_{V_r}(\D)$ is $\bbE_1$-monoidal. Hence \cref{rmk:E_1-rig} gives equivalences $\Nuc_V(\D)\simeq(L_V\D)^{\rig}\simeq(L_V\D)^{\bbE_1\text{-}\rig}\simeq(\lim_r\Mod_{V_r}(\D))^{\bbE_1\text{-}\rig}\simeq\lim^{\dbl}_r\Mod_{V_r}(\D)$.
\end{remark}

Finally, following \cite[Subsection 2.5]{MW25refinedTC}, we construct our main examples satisfying \cref{asp:V_r}, to which \cref{thm:continuity_K} applies. Burklund develops an obstruction theory for constructing $\bbE_1$-quotients and uses a deformation of the ambient category to establish the required obstruction vanishing \cite{Bur22}. Meyer--Wagner then construct a cofinal subtower satisfying \cref{asp:V_r} in \cite{MW25refinedTC}. We recall their results in the following form.
\begin{theorem}[{\cite[Theorem 1.5]{Bur22}, \cite[Corollary 2.30]{MW25refinedTC}}]
	Let $\C\in\CAlg(\PrL_{\st})$ and $v:I\to\mathbbm{1}$ such that $\mathbbm{1}/v$ admits a right unital multiplication. Then there exists a tower of $\bbE_1$-algebras
	\[
		\cdots\to\mathbbm{1}/v^4\to\mathbbm{1}/v^3\to\mathbbm{1}/v^2
	\]
	such that for every $n\geq2$, $\mathbbm{1}/v^n$ is an $\bbE_{n-1}$-algebra and $\mathbbm{1}/v^{n+1}\to\mathbbm{1}/v^n$ is an $\bbE_{n-1}$-algebra map.
	
	Suppose moreover that $\C\in\CAlg^{\rig}(\PrL_{\st})$ and $I\in\C^{\dbl}$. Then for every $k\geq3$ and $s\geq 2k+3$, $\mathbbm{1}/v^k\otimes\mathbbm{1}/v^s\to\mathbbm{1}/v^k\otimes\mathbbm{1}/v^k$ factors through $\mathbbm{1}/v^k$ as a map of $\mathbbm{1}/v^k\otimes(\mathbbm{1}/v^s)^{\op}$-modules. Therefore, there exists a cofinal subtower
	\[
		\cdots\to\mathbbm{1}/v^{k_2}\to\mathbbm{1}/v^{k_1}\to\mathbbm{1}/v^{k_0}
	\]
	satisfying \cref{asp:V_r}, and for every $r\geq0$, $\mathbbm{1}/v^{k_r}\in\C^{\mathbbm{1}/v\text{-}\tors}$.
\end{theorem}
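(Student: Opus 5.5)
This theorem is essentially a citation of Burklund's $\bbE_1$-quotient theorem together with the tower-factorization results of Meyer--Wagner, so the plan is to assemble those inputs and then check the hypotheses of \cref{asp:V_r}.

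\textbf{Step 1: the tower of quotients.} For the first part, I would apply Burklund's obstruction theory \cite[Theorem 1.5]{Bur22} in the symmetric monoidal stable category $\C$. Take the cofiber $\mathbbm{1}/v$ with its right unital multiplication. Burklund's deformation argument, carried out in $\C$ rather than $\Sp$ (the obstruction groups are formal and only use $\C\in\CAlg(\PrL_{\st})$), shows that $\mathbbm{1}/v^n$ carries an $\bbE_{n-1}$-structure. The same argument shows that the reduction maps $\mathbbm{1}/v^{n+1}\to\mathbbm{1}/v^n$ are $\bbE_{n-1}$-maps, so these structures assemble into a tower. This is exactly the form recorded in \cite[Corollary 2.30]{MW25refinedTC}.

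\textbf{Step 2: the factorization condition.} Next assume $\C$ is rigid and $I\in\C^{\dbl}$. Then each $\mathbbm{1}/v^n$ is a finite colimit of dualizable objects, hence dualizable, which gives the first half of \cref{asp:V_r}(1). The main obstacle is the factorization of $\mathbbm{1}/v^k\otimes\mathbbm{1}/v^s\to\mathbbm{1}/v^k\otimes\mathbbm{1}/v^k$ through $\mathbbm{1}/v^k$ as bimodules for $s\ge 2k+3$. I would follow Meyer--Wagner's argument:
\begin{itemize}
\item The bimodule map in question is the image of the unit under the map $\Hom_{\mathbbm{1}/v^k\otimes(\mathbbm{1}/v^s)^{\op}}(\mathbbm{1}/v^k\otimes\mathbbm{1}/v^s,-)$.
\item The obstruction to factoring lies in the fiber of multiplication. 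That fiber is built from shifted copies of $I^{\otimes j}\otimes\mathbbm{1}/v^k$.
\item On these pieces, the map induced from $\mathbbm{1}/v^s\to\mathbbm{1}/v^k$ acts by a power of $v$ at least $s-k$.
\item Since $v^k$ acts nullhomotopically on $\mathbbm{1}/v^k$ after a bounded number of doublings, the bound $s\ge 2k+3$ kills the obstruction.
\end{itemize}
Rigidity enters only in letting us compute these bimodule homs by duals, so I would transcribe their $\Sp$-level argument verbatim.

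\textbf{Step 3: the cofinal subtower and torsion.} Choose $k_0\ge3$ and $k_{r+1}\ge 2k_r+3$. The resulting cofinal subtower satisfies \cref{asp:V_r}(1) by Step 2. Each $\mathbbm{1}/v^{k_r}$ is built from $\mathbbm{1}/v$ by iterated extensions, via the cofiber sequences $\mathbbm{1}/v^{a}\otimes I^{\otimes b}\to\mathbbm{1}/v^{a+b}\to\mathbbm{1}/v^b$. It therefore lies in $\C^{\mathbbm{1}/v\text{-}\tors}$. \Cref{prop:MW_traceclass} then yields \cref{asp:V_r}(2) for $V=\mathbbm{1}/v^{k_0}$, which has the same torsion and localization as $\mathbbm{1}/v$.
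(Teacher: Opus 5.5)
Your overall route is the paper's. The paper gives no argument beyond citing \cite[Theorem 1.5]{Bur22} for the tower and \cite[Corollary 2.30]{MW25refinedTC} for the factorization, and your Steps 1 and 3 are fine. The torsion claim via the cofiber sequences $\mathbbm{1}/v^a\otimes I^{\otimes b}\to\mathbbm{1}/v^{a+b}\to\mathbbm{1}/v^b$ is correct. So is your unproved claim that $\mathbbm{1}/v^{k_0}$ and $\mathbbm{1}/v$ generate the same localizing ideal: right unitality makes $\mathbbm{1}/v\otimes v$ null, so $\mathbbm{1}/v$ is a retract of $\mathbbm{1}/v^{k_0}\otimes\mathbbm{1}/v$. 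This is exactly what you need to apply \cref{prop:MW_traceclass} with $V=V_0=\mathbbm{1}/v^{k_0}$.

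The gap is in Step 2, insofar as it is meant as an argument rather than a citation. Your mechanism only sees left $\mathbbm{1}/v^k$-modules. Right unitality makes $\mathbbm{1}/v^k\otimes v^s$ null, which splits $\mathbbm{1}/v^k\otimes\mathbbm{1}/v^s\simeq\mathbbm{1}/v^k\oplus\Sigma\,\mathbbm{1}/v^k\otimes I^{\otimes s}$ as left modules. Under this splitting, the induced map between the fibers of the two multiplications is induced by $v^{s-k}$, and it is already null once $s\geq 2k$; no ``doublings'' are involved. That proves only a left-module factorization, and it never uses $k\geq 3$ or the margin in $s\geq 2k+3$.

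\cref{asp:V_r}(1), however, asks for a factorization of $\mathbbm{1}/v^k\otimes(\mathbbm{1}/v^s)^{\op}$-modules, and this is genuinely used: in \cref{thm:lim_topology} one applies $-\otimes_{V_{s+1}}M_{s+1}$ to it. The right $\mathbbm{1}/v^s$-action on $\mathbbm{1}/v^k\otimes\mathbbm{1}/v^s$ need not respect your splitting, and rewriting bimodule homs via duals does not remove this issue.

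The missing idea is control of the multiplicative structure. For $k\geq 3$ the first part makes $\mathbbm{1}/v^k$ an $\bbE_2$-algebra, so one can consider $\bbE_1$-$\mathbbm{1}/v^k$-algebras. Burklund's obstruction theory for $\bbE_1$-quotients then shows that, once the relevant obstruction groups vanish, $\mathbbm{1}/v^k\otimes\mathbbm{1}/v^s$ is the trivial square-zero extension and the transition and multiplication maps are the standard ones. The factorization then holds as algebra maps, hence as bimodule maps. \cref{exm:E_2_quotient} carries out exactly this in the graded setting, where the vanishing holds for weight reasons. In general you must either supply that vanishing or cite \cite[Corollary 2.30]{MW25refinedTC} outright.
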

\begin{construction}
	Let $\C\in\CAlg^{\rig}(\PrL_{\st})$ and $v_i:I_i\to\mathbbm{1}$ for $0\leq i\leq n$, such that each $I_i\in\C^{\dbl}$ and $\mathbbm{1}/v_i$ admits a right unital multiplication. We denote $V:=\mathbbm{1}/(v_0,\cdots,v_n)$. Then there exists a cofinal tower $(k_{0,r},\cdots,k_{n,r})_{r\geq0}$ such that the tower of $\bbE_1$-algebras
	\[
		V_r:=\mathbbm{1}/(v_0^{k_{0,r}},\cdots,v_n^{k_{n,r}})
	\]
	satisfies \cref{asp:V_r}, and for every $r\geq1$, $V_r\in\C^{V\text{-}\tors}$ and is an $\bbE_{r+1}$-algebra, and $V_{r+1}\to V_r$ is an $\bbE_{r+1}$-algebra map. Here $V_0$ need not equal the unpowered quotient $V$. Applying \cref{thm:continuity_K}, we obtain $\C$-algebra equivalences
	\[
		L_{\mathbbm{1}/(v_0,\cdots,v_n)}(\C)\simeq\lim_r\Mod_{\mathbbm{1}/(v_0^{k_{0,r}},\cdots,v_n^{k_{n,r}})}(\C),\quad
		\Nuc_{\mathbbm{1}/(v_0,\cdots,v_n)}(\C)\simeq\lim^{\dbl}_r\Mod_{\mathbbm{1}/(v_0^{k_{0,r}},\cdots,v_n^{k_{n,r}})}(\C),
	\]
	together with an equivalence of continuous $K$-theory spectra
	\[
		K^{\cont}(\Nuc_{\mathbbm{1}/(v_0,\cdots,v_n)}(\C))\simeq\lim_rK^{\cont}(\Mod_{\mathbbm{1}/(v_0^{k_{0,r}},\cdots,v_n^{k_{n,r}})}(\C)).
	\]
\end{construction}
\begin{example}\label{exm:Nuc(R_I)}
	In this example, we study even quotients of $\bbE_\infty$-rings. Let $R\in\CAlg(\Sp)$ and $v\in\pi_{2*}(R)$ be an even element. Then by \cite[Remark 5.5]{Bur22}, $R/v^2$ admits a unital multiplication. In general, let $I=(v_0,\cdots,v_n)\subseteq\pi_*(R)$ be an evenly finitely generated homogeneous ideal. Choosing a cofinal sequence $(k_{0,r},\cdots,k_{n,r})_{r\geq0}$ as above, we obtain equivalences of $\Sp$-algebras
	\[
		\Mod(R)_{I\text{-}\cplt}\simeq\lim_r\Mod(R/(v_0^{k_{0,r}},\cdots,v_n^{k_{n,r}})),\quad
		\Nuc(R^{\wedge}_I)\simeq\lim^{\dbl}_r\Mod(R/(v_0^{k_{0,r}},\cdots,v_n^{k_{n,r}})).
	\]
	Therefore, we have an equivalence
	\[
		K^{\cont}(\Nuc(R^{\wedge}_I))\simeq\lim_rK(R/(v_0^{k_{0,r}},\cdots,v_n^{k_{n,r}})).
	\]

	This recovers Efimov's result for discrete commutative rings with finitely generated ideals \cite{Efi25limit}. In this case, $R/(v_0^{k_0},\cdots,v_n^{k_n})\simeq R\otimes_{\bbZ[x_0,\cdots,x_n]}\bbZ$ naturally admits an $\bbE_\infty$-ring structure, where $x_i$ maps to $v_i^{k_i}$ in $R$ and to $0$ in $\bbZ$. More generally, the above equivalences apply to any adic $\bbE_\infty$-ring $R$ with a finitely generated ideal of definition $I\subseteq\pi_0(R)$.
\end{example}
\begin{remark}\label{rmk:E_1/E_2_limit_topology}
	In this remark we discuss the $\bbE_1$- and $\bbE_2$-monoidal variants of our results. In \cref{asp:V_r}, the assumption $\C\in\CAlg(\PrL_{\st})$ can be weakened to $\C\in\Alg(\PrL_{\st})$, provided that each $V_r$ is both left and right dualizable and that the factorizations are formulated in terms of bimodules. With this modification, \cref{thm:lim_topology,prop:ML} and the continuity statement for the dualizable limit in \cref{thm:continuity_K} remain valid for the underlying $\Sp$-modules. In particular, \cref{prop:ML} allows us to deduce \cref{thm:lim_topology} directly from \cite[Theorem 2.2.2]{BM26quotient}.
	
	If $\C\in\Alg_{\bbE_2}(\PrL_{\st})$, then $L_V\C$ naturally inherits an $\bbE_2$-monoidal structure over $\C$. If, moreover, $V_r$ is an $\bbE_2$-algebra and $V_{r+1}\to V_r$ is an $\bbE_2$-algebra map for every $r\geq1$, then for every rigid $\bbE_2$-monoidal stable category $\D$ over $\C$, we obtain an $\bbE_1$-monoidal equivalence $L_V\D\simeq\lim_r\Mod_{V_r}(\D)$. Hence \cref{rmk:E_1-rig} gives equivalences $(L_V\D)^{\bbE_1\text{-}\rig}\simeq(\lim_r\Mod_{V_r}(\D))^{\bbE_1\text{-}\rig}\simeq\lim^{\dbl}_r\Mod_{V_r}(\D)$.

	Burklund's obstruction theory for $\bbE_1$-quotients \cite[\Section2]{Bur22} only requires $\C\in\Alg(\PrL_{\st})$, whereas the tower construction of Burklund and Meyer--Wagner recalled above applies when $\C\in\Alg_{\bbE_2}(\PrL_{\st})$. In particular, in \cref{exm:Nuc(R_I)} we may also consider even quotients of $\bbE_3$-rings.
\end{remark}
Using this remark, we show that the main example in \cite{BM26quotient} satisfies our \cref{asp:V_r}.
\begin{example}\label{exm:E_2_quotient}
	Let $d\in\bbZ$. The free graded $\bbE_1$-ring $\bbS[x]:=\Free_{\bbE_1}(\bbS(1)[2d])$ has a canonical graded $\bbE_2$-ring structure by \cite{Rot}. The weight truncations $\bbS[x]/x^i:=w_{<i}(\bbS[x])$ and their transition maps inherit graded $\bbE_2$-ring structures. Thus, $(\bbS[x]/x^{i+1})_{i\geq0}$ forms a tower of $\bbE_1$-algebras in the rigid $\bbE_1$-monoidal stable category $\Mod_{\bbS[x]}(\Gr(\Sp))$. We show that this tower admits a cofinal subtower satisfying the $\bbE_1$-variant of \cref{asp:V_r}.

	By \cite[Proposition 3.1.13--3.1.15]{BM26quotient}, $\bbS[x]/x^i$ is the cofiber of $x^i:\bbS[x](i)[2di]\to\bbS[x]$ with any choice of $\bbS[x]\otimes\bbS[x]^{\op}$-module structure on this map. Moreover, it is both left and right dualizable and lies in the thick ideal of $\Mod_{\bbS[x]}(\Gr(\Sp))$ generated by $\bbS[x]/x\simeq\bbS$.
	
	For the factorization condition, we apply the strategy of \cite[Proposition 2.27, Corollary 2.30]{MW25refinedTC} to $x:\bbS[x](1)[2d]\to\bbS[x]$. Just as Burklund deforms the ambient category to a larger one to make the obstructions vanish, the argument for even $\bbE_2$-quotients views $\bbS[x]$ as a graded $\bbE_2$-ring in $\Gr(\Sp)$, where weight considerations force the required vanishing.
	
	Let $k\geq j\geq i\geq1$. Since $\bbS[x]/x^i$ is an $\bbE_1$-$\bbS[x]$-algebra, we obtain an equivalence $\bbS[x]/x^i\otimes_{\bbS[x]}\bbS[x]/x^j\simeq\bbS[x]/x^i\oplus(\bbS[x](1)[2d])^{\otimes_{\bbS[x]}j}/x^i[1]$ of $\bbS[x]/x^i$-modules. We can also identify $\bbS[x]/x^i$-modules $(\bbS[x](1)[2d])^{\otimes_{\bbS[x]}j}/x^i\simeq(\bbS[x]/x^i(1)[2d])^{\otimes_{\bbS[x]/x^i}j}$. For $l\geq2$, the mapping spectra occurring in \cite[Proposition 2.4]{Bur22} satisfy
	\begin{align*}
		\Hom_{\Mod_{\bbS[x]/x^i}(\Gr(\Sp))}&(((\bbS[x](1)[2d]/x^i)^{\otimes_{\bbS[x]/x^i}k}[2])^{\otimes_{\bbS[x]/x^i}l}[-3],\bbS[x]/x^i\otimes_{\bbS[x]}\bbS[x]/x^j)\\
		&\simeq\Hom_{\Mod_{\bbS[x]}(\Gr(\Sp))}(\bbS[x](kl)[2dkl+2l-3],\bbS[x]/x^i\oplus(\bbS[x](1)[2d])^{\otimes_{\bbS[x]}j}/x^i[1])\\
		&\simeq\Hom_{\Mod_{\bbS[x]}(\Gr(\Sp))}(\bbS[x](kl)[2dkl+2l-3],\bbS[x]/x^i\oplus\bbS[x]/x^i(j)[2dj+1])\simeq0.
	\end{align*}
	Indeed, $\bbS[x](kl)[2dkl+2l-3]$ is concentrated in weight $\geq kl$, whereas $\bbS[x]/x^i\oplus\bbS[x]/x^i(j)[2dj+1]$ is concentrated in weight $[0,i)\cup[j,j+i)$, and $j+i\leq kl$. In the case $k=j$, \cite[Proposition 2.4, Remark 2.5]{Bur22} shows that the $\bbE_1$-$\bbS[x]/x^i$-algebra structure on $\bbS[x]/x^i\otimes_{\bbS[x]}\bbS[x]/x^j$ is unique, so it coincides with the trivial square-zero structure. For general $k\geq j$, the argument in the proof of \cite[Theorem 5.2]{Bur22} shows that the $\bbE_1$-$\bbS[x]/x^i$-algebra map $\bbS[x]/x^i\otimes_{\bbS[x]}\bbS[x]/x^k\to\bbS[x]/x^i\otimes_{\bbS[x]}\bbS[x]/x^j$ is unique, and hence coincides with the map of trivial square-zero extensions induced by $x^{k-j}:(\bbS[x](1)[2d])^{\otimes_{\bbS[x]}k}/x^i\to(\bbS[x](1)[2d])^{\otimes_{\bbS[x]}j}/x^i$.

	Similarly, for $l\geq2$, we have
	\begin{align*}
		\Hom_{\Mod_{\bbS[x]/x^i}(\Gr(\Sp))}&(((\bbS[x](1)[2d]/x^i)^{\otimes_{\bbS[x]/x^i}k}[2])^{\otimes_{\bbS[x]/x^i}l}[-3],\bbS[x]/x^i)\\
		&\simeq\Hom_{\Mod_{\bbS[x]}(\Gr(\Sp))}(\bbS[x](kl)[2dkl+2l-3],\bbS[x]/x^i)\simeq0.
	\end{align*}
	Then the multiplication $\bbS[x]/x^i\otimes_{\bbS[x]}\bbS[x]/x^k\to\bbS[x]/x^i$ is unique, and hence coincides with the augmentation map $\bbS[x]/x^i\oplus(\bbS[x](1)[2d])^{\otimes_{\bbS[x]}k}/x^i[1]\to\bbS[x]/x^i$. Taking $k=2i$ and $j=i$, we obtain that $x^i:(\bbS[x](1)[2d])^{\otimes_{\bbS[x]}2i}/x^i\to(\bbS[x](1)[2d])^{\otimes_{\bbS[x]}i}/x^i$ is $0$, and hence the map
	\[
		\bbS[x]/x^i\otimes_{\bbS[x]}\bbS[x]/x^{2i}\to\bbS[x]/x^i\otimes_{\bbS[x]}\bbS[x]/x^i
	\]
	factors through $\bbS[x]/x^i$ as a map of $\bbE_1$-$\bbS[x]/x^i$-algebras. This gives the required factorization of $\bbS[x]/x^i\otimes_{\bbS[x]}(\bbS[x]/x^{2i})^{\op}$-modules. Therefore, the $\bbE_1$-variant of \cref{prop:MW_traceclass} shows that the cofinal subtower $(\bbS[x]/x^{2^r})_{r\geq0}$ satisfies the $\bbE_1$-variant of \cref{asp:V_r}.

	Now let $R$ be an even $\bbE_2$-ring and $v_0,\cdots,v_n\in\pi_{2*}(R)$ be homogeneous elements. By \cite[Proposition 4.0.2, Remark 4.0.3]{BM26quotient}, we may choose an $\bbE_2$-map $\bbS[x_0]\otimes\cdots\otimes\bbS[x_n]\to R$ sending $x_i$ to $v_i$ in $R$. Then we obtain the $\bbE_1$-$R$-algebras
	\[
		R/(v_0^{k_0},\cdots,v_n^{k_n})\simeq R\otimes_{\bbS[x_0]\otimes\cdots\otimes\bbS[x_n]}(\bbS[x_0]/x_0^{k_0}\otimes\cdots\otimes\bbS[x_n]/x_n^{k_n}).
	\]
	The resulting tower therefore admits a cofinal subtower satisfying the $\bbE_1$-variant of \cref{asp:V_r}, and hence satisfies Ben-Moshe's strongly Mittag-Leffler condition. Applying our \cref{thm:continuity_K,rmk:E_1/E_2_limit_topology}, we recover \cite[Theorem A]{BM26quotient}. The same argument applies in the general categorical setting and yields \cite[Theorem 4.3.6]{BM26quotient}.
\end{example}
\begin{remark}\label{rmk:Nuc_V(C)}
	Let $\C\in\CAlg^{\rig}(\PrL_{\st})$ and $V\in\C^{\dbl}$. Then the quotient of $L_V\C\hookrightarrow\Nuc_V(\C)$ is given by $\Nuc_V(\C)_{\eta}\simeq C_V\C\otimes_{\C}\Nuc_V(\C)$ by \cref{cor:generic_fiber,exm:local_duality}, which is $V$-acyclic. If moreover $\C\in\CAlg^{\rig}(\PrL_{\st})^{\cg}$, then \cref{cor:Ind_dbl} implies that the quotient of $L_V\C\hookrightarrow\Ind((L_V\C)^{\dbl})$ is also $V$-acyclic. In the symmetric monoidal setting, these observations, together with \cref{rmk:E_1/E_2_limit_topology,exm:E_2_quotient}, recover the corresponding comparisons in \cite[Theorem 4.4.8, Proposition 4.5.9]{BM26quotient} and, in particular, those in \cite[Theorem B]{BM26quotient}.
\end{remark}

We apply these results to $T(n)$-localizations and $T(n)$-nuclear module categories. To do so, we need to construct a tower of type $n$ generalized Moore spectra, to which the above construction does not directly apply. Instead, we use the following technical construction.
\begin{construction}[{\cite[Example 2.34]{MW25refinedTC}}]\label{constr:V_r}
	Let $n\geq1$. We can inductively construct a tower of type $n$ generalized Moore spectra
	\[
		V_r:=\bbS/(p^{k_{0,r}},v_1^{k_{1,r}},\cdots,v_{n-1}^{k_{n-1,r}}),
	\]
	such that for every $r\geq1$, $V_r$ is an $\bbE_{r+1}$-ring and $V_{r+1}\to V_r$ is an $\bbE_{r+1}$-ring map, and such that the tower of $\bbE_1$-rings satisfies \cref{asp:V_r}. Then we have a symmetric monoidal equivalence
	\[
		L_{F(n)}\Sp\simeq\lim_r\Mod(V_r).
	\]
	Applying \cref{prop:M_n^mf} and tensoring with $L_n^f\Sp$, we obtain equivalences of $\Sp$-algebras
	\[
		L_{T(n)}\Sp\simeq\lim_r\Mod(L_n^fV_r),\quad\Nuc(L_{T(n)}\bbS)\simeq\lim^{\dbl}_r\Mod(L_n^fV_r).
	\]
	In particular, we have $L_n^fV_r\simeq L_{T(n)}V_r$ and
	\[
		L_{T(n)}\simeq\lim_r(V_r\otimes L_n^f(-))\simeq\lim_r(L_{T(n)}V_r\otimes-),
	\]
	which can also be deduced from \cite[Proposition 7.10]{HS99}. Then for every $r\geq0$, we have $V_r\otimes L_{T(n)}\simeq L_{T(n)}V_r\otimes-\simeq L_{T(n)}(V_r\otimes-)$ and hence $\Mod_{V_r}(L_{T(n)}\Sp)\simeq L_{T(n)}\Mod(V_r)\simeq\Mod(L_{T(n)}V_r)$.
\end{construction}
\begin{theorem}[Nuclear limit topology]\label{thm:nuc_lim_topology}
	Let $n\geq1$, and let $\C\in\CAlg(\PrL_{\st})$. Then the base change functors induce canonical equivalences of $\Sp$-algebras
	\[
		L_{T(n)}\C\xrightarrow{\simeq}\lim_rL_{T(n)}\Mod_{V_r}(\C)\xleftarrow{\simeq}\lim_r\Mod_{V_r}(L_{T(n)}\C).
	\]
	If $\C\in\CAlg^{\rig}(\PrL_{\st})$, then the base change functors induce canonical equivalences of rigid $\Sp$-algebras
	\[
		\Nuc_{T(n)}\C\xrightarrow{\simeq}\lim_r^{\dbl}L_{T(n)}\Mod_{V_r}(\C)\xleftarrow{\simeq}\lim_r^{\dbl}\Mod_{V_r}(L_{T(n)}\C).
	\]

	Moreover, if $\C\in\CAlg^{\rig}(\PrL_{L_n^f})$, then the base change functors induce an equivalence
	\[
		K_{T(n+1)}^{\cont}(\C)\xrightarrow{\simeq}L_{T(n+1)}\left(\lim_rK^{\cont}(\Mod_{L_{T(n)}V_r}(\C))\right).
	\]
\end{theorem}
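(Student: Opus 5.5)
The plan is to reduce everything to \cref{thm:lim_topology}, \cref{thm:continuity_K} and \cref{constr:V_r}, applied to the tower $(L_n^fV_r)_r$ in the rigid base $L_n^f\Sp$, and then to base change along $L_n^f\Sp\to L_n^f\C$.

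For the first chain of equivalences, note that $L_{T(n)}\C\simeq L_{T(n)}\Sp\otimes\C$ for any $\C\in\PrL_{\st}$, by the corollary following \cref{prop:M_n^mf}. Moreover $L_{T(n)}\simeq L_{L_n^fV_0}$ on $L_n^f\Sp$, since $L_n^fF(n)\simeq T(n)$ and $V_0$ is of type $n$. By \cref{constr:V_r}, the tower $(L_n^fV_r)_r$ of $\bbE_1$-algebras in $L_n^f\Sp$ satisfies \cref{asp:V_r} with $V=L_n^fV_0$: it is obtained from the tower $(V_r)$ by applying the symmetric monoidal smashing localization $L_n^f$, which preserves dualizability and the bimodule factorizations, and condition (2) follows from $L_{T(n)}\simeq\lim_r(L_{T(n)}V_r\otimes-)$. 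I then apply the module version of \cref{thm:lim_topology} with $\M=L_n^f\C\in\PrL_{L_n^f\Sp}$. This gives
\[
	L_{T(n)}\C\simeq\lim_rL_{T(n)}\Mod_{V_r}(\C)\simeq\lim_r\Mod_{L_n^fV_r}(L_n^f\C).
\]
The last term is also identified with $\lim_r\Mod_{V_r}(L_{T(n)}\C)$, using $V_r\otimes L_{T(n)}\simeq L_{T(n)}(V_r\otimes-)$ from \cref{constr:V_r}. Symmetric monoidality follows from \cite[Corollary 1.3.5]{LZ25}, as in \cref{prop:E_1-rig}, because the $V_r$ are increasingly commutative.

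For the rigid case, I apply \cref{thm:continuity_K} with base $L_n^f\Sp$ and $\D=L_n^f\C$, which is rigid over $L_n^f\Sp$. This yields $\Nuc_{T(n)}(\C)=\Nuc_{L_n^fV_0}(L_n^f\C)\simeq\lim^{\dbl}_r\Mod_{L_n^fV_r}(L_n^f\C)$. I identify the terms with $L_{T(n)}\Mod_{V_r}(\C)\simeq\Mod_{V_r}(L_{T(n)}\C)$ as above. The notational point to check here is that $\Nuc_{T(n)}$, defined via the envelope $L_n^f\Sp\to L_{T(n)}\Sp$, coincides with $\Nuc_V$ over the base $L_n^f\Sp$. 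This holds because both are $\iHom^{\dbl}_{L_n^f\Sp}(L_{T(n)}\Sp,-)$, once we use $L_{T(n)}\Sp\simeq L_V(L_n^f\Sp)$.

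For the $K$-theory statement, take $\C$ to be $L_n^f$-local and rigid. By \cref{cor:purity_T(n)}, $K^{\cont}_{T(n+1)}(\C)\simeq K^{\cont}_{T(n+1)}(\Nuc_{T(n)}(\C))$. By \eqref{eq:continuity}, $K^{\cont}(\Nuc_{T(n)}\C)\simeq\lim_rK^{\cont}(\Mod_{L_n^fV_r}(\C))$. Since $\C$ is $L_n^f$-local, $L_n^fV_r\otimes\C\simeq V_r\otimes\C$, and we have $L_{T(n)}V_r\simeq L_n^fV_r$, so the terms are $\Mod_{L_{T(n)}V_r}(\C)$. Applying $L_{T(n+1)}$ gives the claim.

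The main obstacle I expect is verifying that the localized tower still satisfies \cref{asp:V_r} inside $L_n^f\Sp$, and that the required higher $\bbE_{r+1}$-structures survive $L_n^f$. I would handle this by noting that $L_n^f$ is symmetric monoidal and smashing, so the factorizations and structures are transported directly. The rest is bookkeeping of base changes.
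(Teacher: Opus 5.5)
Your proposal is correct and follows essentially the paper's argument. The paper applies \cref{thm:lim_topology} and \cref{thm:continuity_K} to the tower $(V_r)$ over $\Sp$ with the module $L_n^f\C$, identifies $L_{F(n)}L_n^f\C\simeq L_{T(n)}\C$ via \cref{prop:M_n^mf}, and then concludes with \cref{cor:purity_T(n)} and \eqref{eq:continuity} exactly as you do. The only difference is that you first move the tower into the rigid base $L_n^f\Sp$; this requires a routine recheck of \cref{asp:V_r} for $(L_n^fV_r)_r$, but makes $\Nuc_V$ literally coincide with $\Nuc_{T(n)}$.
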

\begin{proof}
	Applying \cref{thm:lim_topology} to the tower of type $n$ generalized Moore spectra constructed in \cref{constr:V_r} and $L_n^f\C$, we obtain $L_{T(n)}\C\simeq\lim_rL_{T(n)}\Mod_{V_r}(\C)$ by \cref{prop:M_n^mf}. By the discussion in \cref{constr:V_r}, we also have $\Mod_{V_r}(L_{T(n)}\C)\simeq L_{T(n)}\Mod_{V_r}(\C)\simeq\Mod_{L_{T(n)}V_r}\C$. If $\C$ is rigid, the equivalences 
	\[
		\Nuc_{T(n)}(\C)\simeq\lim_r^{\dbl}L_{T(n)}\Mod_{V_r}(\C)\simeq\lim^{\dbl}_r\Mod_{V_r}(L_{T(n)}\C)\simeq\lim^{\dbl}_r\Mod_{L_{T(n)}V_r}(\C)
	\]
	follow from \cref{thm:continuity_K}. 

	Moreover, if $\C$ is $L_n^f$-local rigid, then \cref{cor:purity_T(n)} gives equivalences
	\[
		K_{T(n+1)}^{\cont}(\C)\simeq K_{T(n+1)}^{\cont}(\Nuc_{T(n)}\C)\simeq L_{T(n+1)}\left(\lim_rK^{\cont}(\Mod_{L_{T(n)}V_r}(\C))\right).
	\]
\end{proof}
In particular, taking $\C=L_n^f\Sp$ or $\C=\Nuc(L_{T(n)}\bbS)$ gives the equivalence
\[
	K_{T(n+1)}(L_{T(n)}\bbS)\simeq L_{T(n+1)}\left(\lim_rK(L_{T(n)}V_r)\right).
\]
\begin{example}\label{exm:LZ}
	Let $n\geq1$. Then \cref{thm:nuc_lim_topology} recovers \cite[Theorem 2.2.1]{LZ25} by taking $\C=L_n\Mod(R)$ with $R\in\CAlg(L_{K(n)}\Sp)$. Indeed, by \cref{prop:L_n}, we have $L_{F(n)}L_n\Mod(R)\simeq L_{K(n)}\Mod(R)$.
\end{example}
\begin{remark}
	Let $n\geq1$ and $r\geq1$. The generalized Moore spectrum $V_r$ provides a counterexample to \cref{thm:Hahn} in the $\bbE_r$-sense. Indeed, $V_r$ is an $\bbE_{r+1}$-ring with $L_{T(n)}V_r\not\simeq0$ and $L_{T(n-1)}V_r\simeq0$. It also gives a counterexample to \cref{prop:End(1_Nuc_T(n))} in the $\bbE_r$-sense. Indeed, $L_{T(n)}\Mod(V_r)\simeq\Mod(L_{T(n)}V_r)$ is a nonzero rigid $T(n)$-local $\bbE_r$-monoidal stable category.
\end{remark}
\begin{remark}\label{rmk:K(Nuc_V(C))}
	Ben-Moshe studies another form of chromatic continuity \cite{BM26quotient}. Let $\C\in\CAlg^{\rig}(\PrL_{\st})$ and $V\in\C^{\dbl}$, and suppose that $C_V\C$ is $L_{n-1}^f$-local. By \cref{thm:purity_dualizable}, $L_V\C\hookrightarrow\C$ induces an equivalence on $K^{\cont}_{T(n+1)}$. Since $\C\to\Nuc_V(\C)$ is a map of rigid envelopes of $L_V\C$, it induces an equivalence on $K^{\cont}_{T(n+1)}$ by \cref{thm:pullback_of_map_of_rigid_envelope,thm:nuc_purity}. If $\C\in\CAlg^{\rig}(\PrL_{\st})^{\cg}$, then $\C\to\Mod_{L_V\mathbbm{1}_{\C}}(\C)\to\Ind((L_V\C)^{\dbl})$ are maps of rigid envelopes of $L_V\C$ by \cref{lem:1_rig}, thereby inducing an equivalence on $K^{\cont}_{T(n+1)}$. In the symmetric monoidal setting, these observations recover \cite[Theorem 5.2.5, Theorem 5.2.9]{BM26quotient}, and in particular, \cite[Theorem C]{BM26quotient}.
\end{remark}

\subsection{Chromatic redshift for nuclear solid and gaseous modules}\label{sec:3.5}

Finally, we prove that both nuclear solid and nuclear gaseous module categories satisfy chromatic redshift. The key observation is the following ``downward closed" property of chromatic redshift.
\begin{lemma}\label{lem:redshift}
	Let $\C\to\D\in\CAlg(\Pr^{\dbl}_{\st})$. Suppose that $\operatorname{ht}(\C)=\operatorname{ht}(\D)\geq0$. For example, this applies when $\C\hookrightarrow\D$ is fully faithful or $\C\simeq\D^{\rig}\to\D$ is the canonical rigidification map. If $\D$ satisfies chromatic redshift, then so does $\C$.
\end{lemma}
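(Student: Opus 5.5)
Write $n:=\operatorname{ht}(\C)=\operatorname{ht}(\D)$. Since the paper has not formally spelled it out, we take \emph{chromatic redshift} for $\C$ to mean $\operatorname{ht}(K^{\cont}(\C))=\operatorname{ht}(\C)+1$. By \cref{thm:redshift} the upper bound $\operatorname{ht}(K^{\cont}(\C))\leq n+1$ holds automatically, so the whole task is the lower bound: show $L_{T(n+1)}K^{\cont}(\C)\not\simeq0$. The plan is to transport nonvanishing from $\D$ to $\C$ via the ring map on $K$-theory together with Hahn's theorem.

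\textbf{Step 1: a ring map on $K$-theory.} Since $\C\to\D$ is a map in $\CAlg(\Pr^{\dbl}_{\st})$ and $K^{\cont}$ is lax symmetric monoidal on dualizable categories, we get an $\bbE_\infty$-ring map $K^{\cont}(\C)\to K^{\cont}(\D)$, and hence an $\bbE_\infty$-map after $T(n+1)$-localization.

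\textbf{Step 2: transfer nonvanishing.} By hypothesis $\operatorname{ht}(K^{\cont}(\D))=n+1$, so $L_{T(n+1)}K^{\cont}(\D)\not\simeq0$. The localized map $L_{T(n+1)}K^{\cont}(\C)\to L_{T(n+1)}K^{\cont}(\D)$ is a map of $\bbE_\infty$-rings, and its target is nonzero. If the source vanished, the target would be an algebra over the zero ring and hence zero. So $L_{T(n+1)}K^{\cont}(\C)\not\simeq0$.

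\textbf{Step 3: conclude.} By \cref{thm:Hahn}, the set of heights at which $K^{\cont}(\C)$ is $T(m)$-locally nonzero is downward closed, so Step 2 gives $\operatorname{ht}(K^{\cont}(\C))\geq n+1$. Combined with the upper bound from \cref{thm:redshift}, this yields equality. The hypothesis $n\geq0$ only serves to make the definition of redshift meaningful.

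\textbf{The two examples.} These reduce to checking $\operatorname{ht}(\C)=\operatorname{ht}(\D)$, which I expect to be the only point requiring care.
\begin{itemize}
\item If $\C\hookrightarrow\D$ is fully faithful and symmetric monoidal, the units correspond, so $\End(\mathbbm{1}_{\C})\simeq\End(\mathbbm{1}_{\D})$ and the heights agree.
\item For the rigidification map $\D^{\rig}\to\D$, \cref{lem:1_rig} gives $\End(\mathbbm{1}_{\D^{\rig}})\simeq\End(\mathbbm{1}_{\D})$, so again the heights agree.
\end{itemize}
Strictly, the fully faithful claim needs the functor to preserve the unit, which holds because it is a symmetric monoidal map.
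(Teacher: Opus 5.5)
Your proof is correct and follows the paper's argument. The $\bbE_\infty$-ring map $K^{\cont}(\C)\to K^{\cont}(\D)$ gives $\operatorname{ht}(K^{\cont}(\C))\geq\operatorname{ht}(K^{\cont}(\D))=\operatorname{ht}(\C)+1$, and the redshift upper bound gives equality. You additionally spell out two points the paper leaves implicit: the monotonicity of height along ring maps, and the height agreement in the two examples (via $\End(\mathbbm{1}_{\C})\simeq\End(\mathbbm{1}_{\D})$ and the rigidification lemma).

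One small simplification: Step 3 does not need Hahn's theorem. Since $\operatorname{ht}$ is defined as a maximum, $L_{T(n+1)}K^{\cont}(\C)\not\simeq0$ already gives $\operatorname{ht}(K^{\cont}(\C))\geq n+1$.
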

\begin{proof}
	Since the $\Sp$-algebra map $\C\to\D$ induces an $\bbE_\infty$-ring map $K^{\cont}(\C)\to K^{\cont}(\D)$, $\operatorname{ht}(K^{\cont}(\D))\leq\operatorname{ht}(K^{\cont}(\C))$. By \cref{thm:redshift}, we have $\operatorname{ht}(K^{\cont}(\C))\leq\operatorname{ht}(\C)+1$. If $\operatorname{ht}(K^{\cont}(\D))=\operatorname{ht}(\D)+1$, then $\operatorname{ht}(K^{\cont}(\C))\geq\operatorname{ht}(K^{\cont}(\D))=\operatorname{ht}(\D)+1=\operatorname{ht}(\C)+1$. Therefore, $\operatorname{ht}(K^{\cont}(\C))=\operatorname{ht}(\C)+1$.
\end{proof}

We first apply this lemma to nuclear solid modules. In \cite{CMM21}, Clausen--Mathew--Morrow generalize Gabber's rigidity theorem \cite{Gab92}. As a consequence, they prove the following continuity theorems for connective algebraic $K$-theory. For discrete commutative rings, such continuity holds under some finiteness assumptions. Recall that we say a discrete commutative $\bbF_p$-algebra $R$ is \emph{$F$-finite} if the absolute Frobenius map $\Frob:R\to R$ is finite. 
\begin{theorem}[{\cite[Theorem F]{CMM21}}]
	Let $R$ be a Noetherian discrete commutative ring and $I\subseteq R$ an ideal. Suppose that $\pi_0(R^{\wedge}_I/p)$ is $F$-finite. Then the canonical maps induce an equivalence
	\[
		(\tau_{\geq0}K(R^{\wedge}_I))^{\wedge}_p\xrightarrow{\simeq}(\lim_n\tau_{\geq0}K(\pi_0(R/I^n)))^{\wedge}_p.
	\]
\end{theorem}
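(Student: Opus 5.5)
The plan follows the strategy of \cite{CMM21}. We split $K$-theory into a part that is rigid for henselian pairs and a part governed by topological cyclic homology. Then we prove continuity separately for each part. Throughout we work with $\tau_{\geq0}K$ and with coefficients mod $p$. Since all the spectra involved are connective (or bounded below after the reductions), it suffices to prove the equivalence after $-\otimes\bbS/p$, and then pass to $p$-completion.

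\textbf{Step 1: the rigid part.} Write $K^{\mathrm{inv}}:=\fib(\tau_{\geq0}K\to\mathrm{TC})$ via the cyclotomic trace. The main theorem of \cite{CMM21}, generalizing Gabber's rigidity, says that for a henselian pair $(A,J)$ the map $K^{\mathrm{inv}}(A)/p\to K^{\mathrm{inv}}(A/J)/p$ is an equivalence. Since $R$ is Noetherian, $R^{\wedge}_I$ is $I$-adically complete, so $(R^{\wedge}_I, IR^{\wedge}_I)$ and every $(R/I^{n+1}, I^n/I^{n+1})$ are henselian. Applying rigidity to these pairs, the tower $(K^{\mathrm{inv}}(R/I^n)/p)_n$ is constant, and its value agrees with $K^{\mathrm{inv}}(R^{\wedge}_I)/p$. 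In particular the map $K^{\mathrm{inv}}(R^{\wedge}_I)/p\to\lim_nK^{\mathrm{inv}}(R/I^n)/p$ is an equivalence. This reduces the theorem to the analogous continuity statement for $\mathrm{TC}/p$.

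\textbf{Step 2: continuity of $\mathrm{THH}/p$.} This is the step where $F$-finiteness enters, and I expect it to be the main obstacle. The key input is a theorem of Gabber and Kunz: a Noetherian $F$-finite $\bbF_p$-algebra is excellent and has cotangent complex $L_{(R^{\wedge}_I/p)/\bbF_p}$ that is almost perfect, i.e. finitely generated in each degree. Using the filtration on $\mathrm{THH}(-;\bbF_p)$ with graded pieces built from wedge powers of the cotangent complex, each homotopy group of $\mathrm{THH}(R^{\wedge}_I;\bbF_p)$ becomes a finitely generated $R^{\wedge}_I$-module. Such modules are $I$-adically complete, which lets me identify $\mathrm{THH}(R^{\wedge}_I)/p$ with $\lim_n\mathrm{THH}(R/I^n)/p$ degreewise. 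To make this rigorous I would:
\begin{enumerate}
\item reduce to the $\bbF_p$-algebras $R/(p,I^n)$ using that $\mathrm{THH}(-)/p$ is computed from $\mathrm{THH}(-;\bbF_p)$-type pieces;
\item prove a pro-HKR statement, namely that the tower $(\mathrm{THH}(R/I^n;\bbF_p))_n$ is pro-isomorphic in each degree to the $I$-adic completion of $\mathrm{THH}(R^{\wedge}_I;\bbF_p)$;
\item deduce the pro-isomorphism from the Artin--Rees lemma applied to the finitely generated homotopy modules.
\end{enumerate}

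\textbf{Step 3: passage to $\mathrm{TC}$.} Write $\mathrm{TC}/p$ as the fiber of $\mathrm{can}-\varphi\colon\mathrm{TC}^-/p\to\mathrm{TP}/p$. The functors $(-)^{hS^1}$ and $(-)^{tS^1}$ commute with limits of uniformly bounded-below towers, after $p$-completion for the Tate construction. The homotopy fixed points do so as limits. For the Tate construction, I would use that the homotopy-orbit part $(-)_{hS^1}$ commutes with limits of uniformly bounded-below towers degreewise, by finiteness of each skeleton. So Step 2 gives continuity of $\mathrm{TC}^-/p$ and $\mathrm{TP}/p$, hence of $\mathrm{TC}/p$.

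\textbf{Step 4: conclusion.} Combining Steps 1 and 3 in the fiber sequence $K^{\mathrm{inv}}\to\tau_{\geq0}K\to\mathrm{TC}$ gives $\tau_{\geq0}K(R^{\wedge}_I)/p\simeq\lim_n\tau_{\geq0}K(R/I^n)/p$. One point needs care here: the trace lands in $\mathrm{TC}$, which is not connective. I would check that the $\lim^1$ terms in negative degrees vanish, by Mittag-Leffler using the finiteness from Step 2. Finally, the maps $R/I^n\to\pi_0(R/I^n)$ are equivalences because $R$ is discrete, and a $p$-completion recovers the displayed equivalence.
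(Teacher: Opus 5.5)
The paper does not prove this statement itself; it simply quotes \cite[Theorem F]{CMM21}. Your outline is essentially the argument behind that theorem: rigidity of $K^{\mathrm{inv}}/p$ for the henselian pairs $(R^{\wedge}_I,IR^{\wedge}_I)$ and $(R/I^{n+1},I^n/I^{n+1})$ reduces everything to continuity of $\mathrm{TC}/p$, which for Noetherian rings that are $F$-finite mod $p$ is the Dundas--Morrow finite generation and continuity theorem that your Steps 2--3 sketch (your Nikolaus--Scholze passage from $\mathrm{THH}/p$ to $\mathrm{TC}/p$ is a fine substitute for their $\mathrm{TR}$-based one).

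Two minor remarks. First, the $\lim^1$ check in Step 4 is unnecessary, because comparing the fiber sequence for $R^{\wedge}_I$ with the limit of the fiber sequences $K^{\mathrm{inv}}/p\to\tau_{\geq0}K/p\to\mathrm{TC}/p$ already gives an equivalence on the outer terms. Second, the pro-HKR claim in Step 2 needs more than Artin--Rees. Comparing $\mathrm{THH}(A/I^s)$ with $\mathrm{THH}(A)\otimes_AA/I^s$ goes through the cotangent complexes $L_{(A/I^s)/A}$, and their pro-vanishing for Noetherian $A$ (due to Andr\'e) is the genuine input; Artin--Rees only controls the higher $\mathrm{Tor}$ terms against $A/I^s$.
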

In the special case $I=(p)$, the Noetherian and $F$-finiteness assumptions can be removed, and the result can be extended to connective $\bbE_1$-$\bbZ$-algebras.
\begin{theorem}[{\cite[Theorem 5.21]{CMM21}}]
	Let $R$ be a connective $\bbE_1$-$\bbZ$-algebra. Suppose that $\pi_0(R)$ is commutative and henselian along $(p)$. Then the canonical maps induce an equivalence
	\[
		(\tau_{\geq0}K(R))^{\wedge}_p\xrightarrow{\simeq}(\lim_n\tau_{\geq0}K(R/p^n))^{\wedge}_p.
	\]
\end{theorem}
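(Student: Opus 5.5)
The plan is to deduce the continuity of $K$-theory from two inputs. The first is the Clausen--Mathew--Morrow rigidity theorem, which compares relative $K$-theory with relative $\mathrm{TC}$ along henselian pairs. The second is the continuity of $\mathrm{TC}$ along the $p$-adic tower. Write $\bar R:=\pi_0(R)/p$. The pairs we use are the following.
\begin{enumerate}
	\item The map $R\to\bar R$. Here $R$ is connective and $\pi_0(R)$ is henselian along $(p)$ by hypothesis.
	\item The maps $R/p^n:=R\otimes_{\bbZ}\bbZ/p^n\to\bar R$. Each $R/p^n$ is a connective $\bbE_1$-$\bbZ$-algebra, and $\pi_0(R/p^n)\simeq\pi_0(R)/p^n$ is a nilpotent, hence henselian, thickening of $\bar R$.
\end{enumerate}
The rigidity theorem of \cite{CMM21} then gives natural equivalences
\[
	\fib\bigl(\tau_{\geq0}K(A)\to\tau_{\geq0}K(\bar R)\bigr)^{\wedge}_p\simeq\fib\bigl(\mathrm{TC}(A)\to\mathrm{TC}(\bar R)\bigr)^{\wedge}_p\qquad(A=R,\ R/p^n),
\]
compatible in $n$. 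The right-hand side is computed with the connective cover of $\mathrm{TC}$, or the difference is handled by a uniform bound. The term $K(\bar R)$ is constant in $n$, so it suffices to show two things: that $\mathrm{TC}(R)^{\wedge}_p\to\lim_n\mathrm{TC}(R/p^n)^{\wedge}_p$ is an equivalence, and that $\lim_n$ commutes with $\tau_{\geq0}$ up to the constant term. Then the fiber sequences assemble into the desired equivalence after $p$-completion, and $p$-completion commutes with the limit.

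For $\mathrm{TC}$, I would use the Nikolaus--Scholze formula $\mathrm{TC}(-)^{\wedge}_p\simeq\fib\bigl(\mathrm{can}-\varphi:\mathrm{TC}^-(-)\to\mathrm{TP}(-)\bigr)^{\wedge}_p$. Both $(-)^{hS^1}$ and $(-)^{tS^1}$ commute with limits of towers of cyclotomic spectra that are uniformly bounded below, which holds here since all $\THH(R/p^n)$ are connective. So the question reduces to showing that $\THH(R)^{\wedge}_p\to\lim_n\THH(R/p^n)^{\wedge}_p$ is an equivalence, or at least a pro-equivalence modulo $p$ in each degree. Using $\THH(R/p^n)\simeq\THH(R)\otimes_{\THH(\bbZ)}\THH(\bbZ/p^n)$, this reduces further to the statement that the tower $\{\THH(\bbZ/p^n)/p\}_n$ is pro-isomorphic, degreewise, to the constant tower on $\THH(\bbZ)/p\simeq\THH(\bbZ_p)/p$. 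The pro-isomorphism is then preserved by $\THH(R)\otimes_{\THH(\bbZ)}-$, because tensoring with a connective module preserves degreewise pro-isomorphisms of uniformly bounded-below towers.

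The main obstacle is this pro-isomorphism for $\THH(\bbZ/p^n)$. I would first try to get it from Bökstedt-type computations: $\pi_*\THH(\bbZ/p^n;\bbF_p)$ is an explicit divided-power/polynomial algebra, and the transition maps kill the extra classes (those coming from $\mathrm{Tor}^{\bbZ}(\bbZ/p^n,\bbZ/p^n)$ in degree one) after passing to $n+1\mapsto n$. Alternatively, I would use the general fact that $\{\bbZ/p^n\}$ is pro-isomorphic to $\bbZ_p$ as a pro-$\bbE_\infty$-ring modulo $p$ in the sense of pro-Tor-unitality, and then apply the pro-continuity of $\THH$ for such towers. Finally, I would check the connective-cover bookkeeping: $\lim_n\tau_{\geq0}$ differs from $\tau_{\geq0}\lim_n$ only by a $\lim^1$ term in degree $-1$, which is harmless once we compare fibers over the constant $K(\bar R)$ term.
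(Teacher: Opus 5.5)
The paper gives no proof here and only cites \cite[Theorem 5.21]{CMM21}. Your argument is correct and is essentially the standard proof from that source: rigidity of $\fib(K\to\mathrm{TC})$ for the henselian pairs $R\to\pi_0(R)/p$ and $R/p^n\to\pi_0(R)/p$, combined with $p$-adic continuity of $\mathrm{TC}$ deduced from a degreewise pro-isomorphism for $\THH(-)/p$. The closing connective-cover bookkeeping is unnecessary, since the rigidity theorem is already stated with connective $K$ and full $\mathrm{TC}$, and the target is $(\lim_n\tau_{\geq0}K(R/p^n))^{\wedge}_p$ itself, so no comparison with $\tau_{\geq0}\lim_n$ ever arises.
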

Combining the preceding continuity theorems with our continuity and redshift theorems yields the following results.
\begin{corollary}[Redshift for nuclear solid modules]\label{cor:redshift_nuc_solid}
	\begin{enumerate}
		\item Let $R$ be a Noetherian discrete commutative ring and $I\subseteq R$ an ideal. Suppose that $\pi_0(R^{\wedge}_I/p)$ is $F$-finite and $R^{\wedge}_I[1/p]$ is nonzero. Then both $\Nuc(R^{\wedge}_I)$ and $\Nuc(R^{\wedge,\solid}_I)$ satisfy chromatic redshift.
		\item Let $R$ be a connective $\bbE_\infty$-$\bbZ$-algebra. Suppose that $R^{\wedge}_p[1/p]$ is nonzero. Then both $\Nuc(R^{\wedge}_p)$ and $\Nuc(R^{\wedge,\solid}_p)$ satisfy chromatic redshift.
	\end{enumerate}
\end{corollary}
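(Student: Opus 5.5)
The plan is to reduce to the larger category $\Nuc(R^{\wedge}_I)$ (resp. $\Nuc(R^{\wedge}_p)$), prove redshift there, and then descend to the solid version with \cref{lem:redshift}. By \cref{exm:R_I^solid} there is a fully faithful $\Sp$-algebra map $\Nuc(R^{\wedge,\solid}_I)\hookrightarrow\Nuc(R^{\wedge}_I)$, so the solid case follows once the two heights agree. For the heights, \cref{lem:1_rig} together with the rigid envelopes $\Mod(R^{\wedge}_I)\hookrightarrow\Nuc(R^{\wedge,\solid}_I)\hookrightarrow\Nuc(R^{\wedge}_I)$ gives
\[
	\End(\mathbbm{1}_{\Nuc(R^{\wedge,\solid}_I)})\simeq\End(\mathbbm{1}_{\Nuc(R^{\wedge}_I)})\simeq R^{\wedge}_I.
\]
Since $R^{\wedge}_I$ is a discrete (resp. connective $\bbZ$-)algebra with $R^{\wedge}_I[1/p]\neq0$, it has height exactly $0$ at $p$: indeed $L_{T(1)}$ kills $H\bbZ$-modules. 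Hence both categories have height $0$, and it remains to show $\operatorname{ht}(K^{\cont}(\Nuc(R^{\wedge}_I)))=1$, i.e. $L_{T(1)}K^{\cont}(\Nuc(R^{\wedge}_I))\neq0$. The upper bound is \cref{thm:redshift}.

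For the lower bound, I would use continuity. \cref{exm:Nuc(R_I)} (for a Noetherian ring, with the $\bbE_\infty$-quotients $R/(v_0^{k_0},\dots)$) and \cref{thm:continuity_K} give
\[
	K^{\cont}(\Nuc(R^{\wedge}_I))\simeq\lim_rK(R/(v_0^{k_{0,r}},\dots,v_n^{k_{n,r}})).
\]
In case (1) this tower is pro-equivalent to $(K(R/I^m))_m$, since both are cofinal systems of derived quotients, and for Noetherian $R$ they are pro-isomorphic on homotopy (Artin--Rees). In case (2) one takes $I=(p)$, so the terms are $K(R/p^k)$ with $R/p^k$ connective $\bbE_\infty$. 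The task is then to compare this limit, after $p$-completion and $T(1)$-localization, with $K(R^{\wedge}_I)$. Clausen--Mathew--Morrow (\cite[Theorem F]{CMM21}, resp. \cite[Theorem 5.21]{CMM21} applied to $R^{\wedge}_p$, whose $\pi_0$ is $p$-henselian) identify $(\tau_{\geq0}K(R^{\wedge}_I))^{\wedge}_p$ with $(\lim_m\tau_{\geq0}K(R/I^m))^{\wedge}_p$. Here the main obstacle appears: passing from connective covers to nonconnective $K$-theory, and from the discrete $\pi_0(R/I^m)$ to the derived quotients.

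For the derived-versus-$\pi_0$ issue, I would use that $R/I^m$ is a nilpotent (connective, $\pi_0$-surjective) extension of its $\pi_0$. Connective $K$-theory of such extensions differs by $TC$-terms (Dundas--Goodwillie--McCarthy), and those differences are pro-zero in the limit, or at least $T(1)$-acyclic. For the connective-versus-nonconnective issue, $T(1)$-localization only sees the connective cover up to bounded-below data: by \cref{thm:purity_dualizable}, $L_{T(1)}$ of negative $K$-groups vanishes, since $L_{T(1)}K\simeq L_{T(1)}\tau_{\geq0}K$ for bounded-below fibers. Also, $L_{T(1)}$ of a $p$-adically completed spectrum agrees with $L_{T(1)}$ of the original one. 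This yields
\[
	L_{T(1)}K^{\cont}(\Nuc(R^{\wedge}_I))\simeq L_{T(1)}K(R^{\wedge}_I).
\]
The final step is that $L_{T(1)}K(R^{\wedge}_I)\neq0$. This holds by the redshift theorem for $\bbE_\infty$-rings applied to $R^{\wedge}_I$, whose $p$-localization has height $0$ because $R^{\wedge}_I[1/p]\neq0$. Therefore $\Nuc(R^{\wedge}_I)$ satisfies redshift, and \cref{lem:redshift} transfers redshift to $\Nuc(R^{\wedge,\solid}_I)$. The comparison of the limit with $K(R^{\wedge}_I)$ after $T(1)$-localization is the step I expect to require the most care.
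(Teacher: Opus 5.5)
Your argument follows the paper's proof up to the last step. You reduce to $\Nuc(R^{\wedge}_I)$ via \cref{lem:redshift} and identify both unit endomorphism rings with $R^{\wedge}_I$. You then write $K^{\cont}(\Nuc(R^{\wedge}_I))$ as a limit via \cref{exm:Nuc(R_I)} and compare with Clausen--Mathew--Morrow after $T(1)$-localization, obtaining $K^{\cont}_{T(1)}(\Nuc(R^{\wedge}_I))\simeq L_{T(1)}K(R^{\wedge}_I)$.

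\textbf{The gap.} The final nonvanishing fails in case (1). You apply the redshift theorem for $\bbE_\infty$-rings, claiming that $(R^{\wedge}_I)_{(p)}$ has height $0$ because $R^{\wedge}_I[1/p]\neq0$. That implication is false.
\begin{enumerate}
\item In case (1), $R^{\wedge}_I$ need not be $p$-local.
\item $R^{\wedge}_I[1/p]\neq0$ only says that $p$ is not nilpotent, whereas $\operatorname{ht}((R^{\wedge}_I)_{(p)})\geq0$ means $R^{\wedge}_I\otimes\bbQ\not\simeq0$.
\end{enumerate}
For example, take $R=\bbF_\ell[\![t]\!]$ and $I=(t)$ with $\ell\neq p$. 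Every hypothesis holds: $\pi_0(R^{\wedge}_I/p)=0$ is trivially $F$-finite, and $R^{\wedge}_I[1/p]=R\neq0$. But $(R^{\wedge}_I)_{(p)}\simeq0$, so the $\bbE_\infty$ redshift theorem gives nothing.

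\textbf{The fix.} The missing input, which the paper uses, is Suslin's theorem.
\begin{enumerate}
\item Since $R^{\wedge}_I[1/p]\neq0$, there is a ring map $R^{\wedge}_I[1/p]\to k$ to an algebraically closed field, necessarily of characteristic $\neq p$.
\item Suslin's theorem gives $K_{T(1)}(k)\not\simeq0$.
\item $L_{T(1)}K(k)$ is a module over $L_{T(1)}K(R^{\wedge}_I)$ via the $\bbE_\infty$-map $K(R^{\wedge}_I)\to K(k)$, so $L_{T(1)}K(R^{\wedge}_I)\not\simeq0$.
\end{enumerate}
In case (2) your argument is fine as written. There $R^{\wedge}_p$ is $p$-complete, hence $p$-local, so $R^{\wedge}_p\otimes\bbQ\simeq R^{\wedge}_p[1/p]\neq0$.

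\textbf{Two smaller corrections.}
\begin{enumerate}
\item The irrelevance of negative $K$-groups does not come from \cref{thm:purity_dualizable}. It comes from the fact that $T(n)$-localization kills bounded-above spectra, applied to $\lim_m\tau_{<0}K(\pi_0(R/I^m))$, which is again bounded above.
\item In the derived-versus-discrete comparison, levelwise $T(1)$-acyclicity of the fibers of $K(R/(v_0^{k_0},\dots,v_n^{k_n}))\to K(\pi_0(R/(v_0^{k_0},\dots,v_n^{k_n})))$ would not be enough. $L_{T(1)}$ does not commute with inverse limits; for instance $\lim_m\tau_{\leq m}\mathrm{ku}\simeq\mathrm{ku}$, although each $\tau_{\leq m}\mathrm{ku}$ is $T(1)$-acyclic. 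You need a pro-equivalence of the $\bbE_1$-ring towers. Your Artin--Rees argument provides this, and the paper takes it from Efimov's Lemma 5.26, so drop the Dundas--Goodwillie--McCarthy fallback.
\end{enumerate}
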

\begin{proof}
	We first prove (1). Since $R^{\wedge}_I$ is discrete, we have $L_{T(n)}R^{\wedge}_I\simeq0$ for every $n\geq1$. By \cref{exm:R_I^solid}, the fully faithful $\Sp$-algebra map $\Nuc(R^{\wedge,\solid}_I)\hookrightarrow\Nuc(R^{\wedge}_I)$ induces equivalences
	\[
		\End(\mathbbm{1}_{\Nuc(R^{\wedge,\solid}_I)})\simeq\End(\mathbbm{1}_{\Nuc(R^{\wedge}_I)})\simeq R^{\wedge}_I.
	\]
	Since $R^{\wedge}_I[1/p]$ is nonzero, it follows that $\operatorname{ht}(\Nuc(R^{\wedge}_I))=\operatorname{ht}(\Nuc(R^{\wedge,\solid}_I))=\operatorname{ht}(R^{\wedge}_I)=0$. Then by \cref{lem:redshift}, it suffices to show that $\operatorname{ht}(K^{\cont}(\Nuc(R^{\wedge}_I)))=1$. Since $R$ is Noetherian, \cite[Lemma 5.26]{Efi25limit} implies that the $\bbE_1$-ring tower $(R/I^n)_n$ is pro-equivalent to $(\pi_0(R/I^n))_n$. Recall that by \cite[Lemma 2.2]{LMMT24}, for every $n\geq1$, $\bbS/p$-acyclic spectra and bounded above spectra are $T(n)$-acyclic. In other words, we have $L_{T(n)}\simeq L_{T(n)}\circ(-)^{\wedge}_p$ and $L_{T(n)}\simeq L_{T(n)}\circ\tau_{\geq0}$. Therefore, combining the first continuity theorem above with \cref{exm:Nuc(R_I)}, we obtain
	\begin{align*}
		K^{\cont}_{T(1)}(\Nuc(R^{\wedge}_I))\simeq&\;L_{T(1)}\lim_nK(\pi_0(R/I^n))\simeq L_{T(1)}\tau_{\geq0}(\lim_nK(\pi_0(R/I^n)))\\
		\simeq&\;L_{T(1)}\tau_{\geq0}(\lim_n\tau_{\geq0}K(\pi_0(R/I^n)))\simeq L_{T(1)}\lim_n\tau_{\geq0}K(\pi_0(R/I^n))\\
		\simeq&\;L_{T(1)}(\lim_n\tau_{\geq0}K(\pi_0(R/I^n)))^{\wedge}_p\simeq L_{T(1)}(\tau_{\geq0}K(R^{\wedge}_I))^{\wedge}_p\\
		\simeq&\;L_{T(1)}K(R^{\wedge}_I).
	\end{align*}
	If $R^{\wedge}_I\otimes\bbQ\not\simeq0$, then $\operatorname{ht}(K(R^{\wedge}_I))=\operatorname{ht}(R^{\wedge}_I)+1=1$ by redshift of $\bbE_\infty$-rings. For general $R$, choose a map from the discrete commutative ring $R^{\wedge}_I[1/p]$ to an algebraically closed field $k$. Then $\operatorname{char}(k)\neq p$, and hence $K_{T(1)}(k)\not\simeq0$ by Suslin's theorem (see \cite[proof of Proposition 3.5]{BCM20}). It follows that $\operatorname{ht}(K^{\cont}(\Nuc(R^{\wedge}_I)))=\operatorname{ht}(K(R^{\wedge}_I))=\operatorname{ht}(K(k))=1$. Thus, both nuclear module categories satisfy chromatic redshift.
	
	For (2), note that $R^{\wedge}_p$ is a connective $\bbE_\infty$-$\bbZ$-algebra and that $\pi_0(R^{\wedge}_p)$ is henselian along $(p)$. The second continuity theorem above applies. Since $\operatorname{ht}(\bbZ)=0$ and $R^{\wedge}_p[1/p]$ is nonzero, we have $\operatorname{ht}(R^{\wedge}_p)=0$. Moreover, since $R^{\wedge}_p$ is $p$-local, the redshift theorem for $\bbE_\infty$-rings applies. Therefore, the same argument as in (1) shows that $\operatorname{ht}(K^{\cont}(\Nuc(R^{\wedge}_p)))=\operatorname{ht}(K^{\cont}(\Nuc(R^{\wedge,\solid}_p)))=1$. Thus, both nuclear module categories satisfy chromatic redshift.
\end{proof}
Note that for $\C\in\CAlg(\Pr^{\dbl}_{\st})$ with $\operatorname{ht}(\C)=0$, $\C$ satisfies chromatic redshift if and only if $\C[1/p]$ satisfies chromatic redshift. Indeed, we have $K^{\cont}_{T(1)}(\C)\simeq K^{\cont}_{T(1)}(L_{T(0)\oplus T(1)}\C)\simeq K^{\cont}_{T(1)}(\C[1/p])$.
\begin{corollary}
	Let $R$ be as in \cref{cor:redshift_nuc_solid}. In case (1), both $\Nuc(R^{\wedge}_I[1/p])$ and $\Nuc(R^{\wedge}_I[1/p]^{\solid})$ satisfy chromatic redshift. In case (2), both $\Nuc(R^{\wedge}_p[1/p])$ and $\Nuc(R^{\wedge}_p[1/p]^{\solid})$ satisfy chromatic redshift.
\end{corollary}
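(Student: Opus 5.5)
The plan is to reduce to the preceding \cref{cor:redshift_nuc_solid}, using the remark just before the statement. That remark says a height-$0$ category $\C$ satisfies redshift if and only if $\C[1/p]$ does, because
\[
K^{\cont}_{T(1)}(\C)\simeq K^{\cont}_{T(1)}(\C[1/p]).
\]
So the main task is to identify $\Nuc(R^{\wedge}_I[1/p])$ and $\Nuc(R^{\wedge}_I[1/p]^{\solid})$, possibly only up to a map with controlled height, with $\Nuc(R^{\wedge}_I)[1/p]$ and $\Nuc(R^{\wedge,\solid}_I)[1/p]$. These are the $T(0)$-localizations $\Sp[1/p]\otimes(-)$ of the categories already known to satisfy redshift. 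I read the notation $\Nuc(R^{\wedge}_I[1/p])$ as the generic fiber $\Nuc(L_IR^{\wedge}_I)$-type construction for the ideal $(p)$, that is, $\Sp[1/p]\otimes\Nuc(R^{\wedge}_I)$. I would state this reading as a convention, or deduce it from the definition of $\Nuc(L_IS^{\wedge}_I)$ when $I$ contains $p$. The solid case is handled the same way with $\Nuc(R^{\wedge,\solid}_I)$.

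Step one: check the heights. Since $\Sp[1/p]$ is smashing and $\mathbbm{1}$ of a nuclear category need not be compact, I would not apply \cref{lem:End(1_L)} directly. Instead I would pass to rigidifications: $\Sp[1/p]$ is idempotent, so $\Sp[1/p]\otimes\D$ stays rigid for rigid $\D$, and its unit endomorphisms are $\End(\mathbbm{1}_{\D})[1/p]$. This holds because $\Hom(\mathbbm{1},-)$ commutes with the colimit defining $X[1/p]$ whenever $\mathbbm{1}$ is $\omega_1$-compact, or directly because $\mathbbm{1}_{\Nuc}$ is the basic nuclear object $\mathbbm{1}$ in a compactly generated rigid envelope. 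The result is $R^{\wedge}_I[1/p]$, which is nonzero by hypothesis and rational, so it has height $0$. Step two: apply \cref{thm:purity_dualizable} with $n=1$. It gives $K^{\cont}_{T(1)}(\D)\simeq K^{\cont}_{T(1)}(L_1^f\D)$, and since $\D$ has height $0$ the $T(1)$-part of $\D$ vanishes, so $L_1^f\D$ collapses to $\D[1/p]$. Hence $K^{\cont}_{T(1)}(\D[1/p])\simeq K^{\cont}_{T(1)}(\D)\neq0$ by \cref{cor:redshift_nuc_solid}. Together with the upper bound of \cref{thm:redshift}, this gives $\operatorname{ht}(K^{\cont}(\D[1/p]))=1=\operatorname{ht}(\D[1/p])+1$.

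Case (2) is identical with $I=(p)$, so the main obstacle is really bookkeeping. One point is the identification of $\Nuc(R^{\wedge}_I[1/p]^{\solid})$ with $\Sp[1/p]\otimes\Nuc(R^{\wedge,\solid}_I)$; if that is the definition, there is nothing to do. The other is the vanishing of $L_{T(1)}\D$, which I would justify via \cref{prop:ht(C)}(1): height $0$ forces $L_{T(1)}\D\simeq0$.
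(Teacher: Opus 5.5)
Your argument is correct and is essentially the paper's: the corollary follows from the observation that $K^{\cont}_{T(1)}(\C)\simeq K^{\cont}_{T(1)}(L_{T(0)\oplus T(1)}\C)\simeq K^{\cont}_{T(1)}(\C[1/p])$ for height-$0$ $\C$, applied to $\Nuc(R^{\wedge}_I)$ and $\Nuc(R^{\wedge,\solid}_I)$ from \cref{cor:redshift_nuc_solid}, and the paper reads $\Nuc(R^{\wedge}_I[1/p])$ the same way you do. Two side remarks in your Step one should be corrected, though neither affects the conclusion: first, these categories are rigid, so their units are compact and \cref{lem:End(1_L)} applies directly, whereas $\omega_1$-compactness alone would not suffice because $X[1/p]$ is a sequential colimit; second, in case (1) $R^{\wedge}_I[1/p]$ need not be rational, but it is a nonzero $\bbS[1/p]$-algebra receiving a ring map from the height-$0$ ring $R^{\wedge}_I$, so it still has height $0$.
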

In particular,
\[
	\Nuc(\bbZ^{\wedge}_p),\quad\Nuc(\bbZ^{\wedge,\solid}_p),\quad\Nuc(\bbQ^{\wedge}_p),\quad\Nuc(\bbQ^{\wedge,\solid}_p)
\]
all satisfy chromatic redshift.

We now turn to nuclear gaseous modules. We begin by recalling a classical comparison between algebraic and topological $K$-theory, which will later reappear in the construction of nuclear gaseous modules.
\begin{definition}
	Let $\H:=\ell^2(\bbN)$ denote the standard separable $\infty$-dimensional Hilbert space. Let $B(\H)$ denote the ring of bounded operators on $\H$, and $C(\H)$ denote the ideal of compact operators on $\H$.
\end{definition}
By \cite[Theorem 4]{Wod94}, a two-sided ideal $I\subseteq B(\H)$ is $H$-unital over $\bbZ$ if and only if $I^2=I$. Then $C(\H)$ is $H$-unital over $\bbZ$, and hence $H$-unital by \cref{prop:fib_conn_Hunital} as it is connective. By \cite[Corollary 10.4]{SW92}, any nonunital $C^*$-algebra is $H$-unital over $\bbZ$, and hence $H$-unital.

\begin{theorem}[{\cite[Theorem 10.9]{SW92}, \cite[Theorem 2]{Wod94}}]
	Let $A$ be a nonunital $C^*$-algebra. Then the canonical maps induce equivalences
	\[
		K^{\cont}(A\widetilde{\otimes}C(\H))\xrightarrow{\simeq}K^{\mathrm{top}}(A\widetilde{\otimes}C(\H))\xleftarrow{\simeq}K^{\mathrm{top}}(A),
	\]
	where $\widetilde{\otimes}$ denotes the spatial tensor product. In particular, taking $A=\bbC$ we obtain an equivalence
	\[
		K^{\cont}(C(\H))\simeq\KU.
	\]
\end{theorem}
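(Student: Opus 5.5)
The plan is to reduce the statement to the classical Karoubi conjecture as proved by Suslin--Wodzicki, via the $H$-unital description of continuous $K$-theory. Set $I:=A\widetilde{\otimes}C(\H)$. By the remarks preceding the statement, $I$ is a connective $H$-unital ring. So $K^{\cont}(I)=K^{\cont}(\Mod_H(I))$ is defined and equals $\fib(K(I^+)\to K(\bbS))$, by the defining short exact sequence $\Mod_H(I)\hookrightarrow\Mod(I^+)\to\Sp$ and localization for $K$.

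First, I would compare this with the classical excisive $K$-theory of the nonunital ring $I$, which uses the $\bbZ$-unitalization $\bbZ\oplus I$. The square with rows $I^+\to\bbS$ and $\bbZ\oplus I\to\bbZ$ is a pullback of $\bbE_1$-rings whose horizontal fibers are both $I$. Since $I$ is $H$-unital, \cref{cor:fib_Hunital} and \cref{prop:Tam_pullback_homoepi} give $\Mod_H(I)\simeq\Mod(\bbZ\oplus I,I)$. Therefore
\[
K^{\cont}(I)\simeq\fib\bigl(K(\bbZ\oplus I)\to K(\bbZ)\bigr),
\]
which is the nonconnective version of the Suslin--Wodzicki $K$-theory of $I$. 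This step also shows that $K^{\cont}(I)$ satisfies excision, since continuous $K$-theory is localizing.

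Second, I would invoke the Suslin--Wodzicki theorem (the Karoubi conjecture, \cite[Theorem 10.9]{SW92}, \cite{Wod94}). It states that for a stable $C^*$-algebra $I=A\widetilde{\otimes}C(\H)$ the comparison map from algebraic to topological $K$-theory is an isomorphism on $\pi_*$ for $*\geq0$ (in fact for $*\geq1$, together with the degree-$0$ identification). The map $K^{\cont}(I)\to K^{\mathrm{top}}(I)$ is induced by the natural transformation from algebraic $K$-theory of Banach algebras to topological $K$-theory, applied to the unitalization and then passed to fibers.

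Third, I would extend the isomorphism to negative degrees. Negative algebraic $K$-groups are computed by Bass delooping, $K_{-n}(I)$ being a summand of $K_0(I[t_1^{\pm1},\dots,t_n^{\pm1}])$, and $K^{\mathrm{top}}$ has the analogous description via $C(T^n)\widetilde{\otimes}I$. I would then use that $C(T^n)\widetilde{\otimes}A\widetilde{\otimes}C(\H)$ is again a stable $C^*$-algebra, and apply the second step to it. In the language of the paper, this amounts to comparing with $\Mod_H$ of $C(T^n)\widetilde{\otimes}I$ and using that $K^{\cont}$ and $K^{\mathrm{top}}$ both satisfy the fundamental theorem, so that both are determined by their connective parts on the class of stable $C^*$-algebras.

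Finally, the second map $K^{\mathrm{top}}(A)\to K^{\mathrm{top}}(A\widetilde{\otimes}C(\H))$, induced by a rank-one projection, is an equivalence by $C^*$-stability of topological $K$-theory. For $A=\bbC$ we get $K^{\mathrm{top}}(\bbC)\simeq\KU$ by Bott periodicity, which gives $K^{\cont}(C(\H))\simeq\KU$.

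The main obstacle is the second step, which is the full strength of the Karoubi conjecture: the Suslin--Wodzicki excision theorem together with homotopy invariance of algebraic $K$-theory for stable $C^*$-algebras. I would not reprove this, only cite it; the paper-specific work is the first step's identification of $K^{\cont}$ of $H$-unital rings with excisive $K$-theory. The negative-degree bookkeeping in the third step is the other point needing care.
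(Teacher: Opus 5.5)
Your Steps 1, 2 and 4 are sound, and they make explicit what the paper leaves implicit. The paper gives no argument of its own: it quotes Suslin--Wodzicki and Wodzicki right after noting that $C^*$-algebras are $H$-unital. Your identification $\Mod_H(I)\simeq\Mod(\bbZ\oplus I,I)$ is exactly what turns $K^{\cont}(I)$ into the excisive $K$-theory those references treat. For the comparison map you should unitalize over $\bbC$ rather than $\bbZ$, since only $\bbC\oplus I$ is a Banach algebra. The same pullback argument gives $K^{\cont}(I)\simeq\fib(K(\bbC\oplus I)\to K(\bbC))$.

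The gap is Step 3, which does not work as written.
\begin{itemize}
\item Bass delooping computes $K_{-n}(I)$ from $K_0$ of the \emph{algebraic} Laurent ring $I[t_1^{\pm1},\dots,t_n^{\pm1}]$. This ring is not a $C^*$-algebra, so Step 2 says nothing about it.
\item Applying Step 2 to $C(T^n)\widetilde{\otimes}I$ in degree $0$ is vacuous, because algebraic and topological $K_0$ agree for every $C^*$-algebra, stable or not. The case $I=\bbC$ shows the mechanism cannot work. We have $K_0(C(T^2))\cong K_0^{\mathrm{top}}(C(T^2))\cong\bbZ^2$, yet $K_{-2}(\bbC)=0$ while $K^{\mathrm{top}}_{-2}(\bbC)\cong\bbZ$. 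Indeed, the image of $K_0(\bbC[t_1^{\pm1},t_2^{\pm1}])\cong\bbZ$ in $K_0(C(T^2))$ is only the rank summand and misses the Bott class.
\item For any homotopy-invariant theory exact on extensions, $K(C(S^1)\widetilde{\otimes}B)\simeq K(B)\oplus\Omega K(B)$. So on $\pi_0$ one sees $K_1(B)$, not $K_{-1}(B)$. Your ``analogous description'' of $K^{\mathrm{top}}_{-n}$ therefore already uses Bott periodicity.
\end{itemize}
So the claim that $K^{\cont}$ satisfies a fundamental theorem with respect to $C(T^n)\widetilde{\otimes}-$ on stable $C^*$-algebras is not bookkeeping. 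It amounts to homotopy invariance plus Bott periodicity of $K^{\cont}$ on stable $C^*$-algebras, and this is exactly where stability has to enter.

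There are two ways to repair this.
\begin{itemize}
\item Quote the comparison theorem in a form valid for all $n\in\bbZ$, which is how the paper uses it.
\item Prove the negative range directly, in three steps.
\begin{itemize}
\item $B\mapsto K^{\cont}(B\widetilde{\otimes}C(\H))$ sends extensions of $C^*$-algebras to fiber sequences. Every $C^*$-algebra is $H$-unital, so your Step 1 pullback argument applies to any ideal.
\item This functor is $C^*$-stable, hence homotopy invariant by Higson's theorem.
\item The Toeplitz extension then gives a natural Bott equivalence $K^{\cont}(B\widetilde{\otimes}C(\H))\simeq\Omega^2K^{\cont}(B\widetilde{\otimes}C(\H))$, compatible with the one for $K^{\mathrm{top}}$. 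Hence the isomorphism in degrees $0$ and $1$ propagates to all degrees.
\end{itemize}
\end{itemize}
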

Thus $\Mod_H(C(\H))$ realizes $\KU$ as the continuous $K$-theory of a dualizable stable category, namely, the $H$-module category over the $H$-unital ring $C(\H)$. To restore the $\bbE_\infty$-ring structure, we introduce a more categorical construction of $\Mod_H(C(\H))$.
\begin{construction}[{\cite[2/3 00:38:05]{Cla23Efimov}}]
	Let $\mathrm{Hilb}_{\mathrm{sep}}$ denote the additive category of separable Hilbert spaces with bounded operators. Then the $\Ind$-completion of the bounded derived category of $\mathrm{Hilb}_{\mathrm{sep}}$ is $\P_{\Sigma}(\mathrm{Hilb}_{\mathrm{sep}};\Sp)\simeq\Mod(B(\H))$, and $\Mod_H(C(\H))$ is the localizing stable subcategory generated by $\colim_{\bbN}\H_n$ such that $\H_n\to\H_{n+1}$ are all compact operators in $\mathrm{Hilb}_{\mathrm{sep}}$. Then the Hilbert space tensor product induces a symmetric monoidal structure on $\Mod(B(\H))$, and hence on $\Mod_H(C(\H))$ since the tensor product of compact operators remains compact.
\end{construction}
\begin{proposition}[{\cite[2/3 00:41:58]{Cla23Efimov}}]
	We have $\Mod_H(C(\H))\in\CAlg(\Pr^{\dbl}_{\st})$ and $K^{\cont}(C(\H))\simeq\KU$ is an $\bbE_\infty$-ring equivalence.
\end{proposition}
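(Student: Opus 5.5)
We have to show two things. First, $\Mod_H(C(\H))$ is a dualizable stable category carrying a symmetric monoidal structure whose tensor product is an internal left adjoint in each variable, so that it lies in $\CAlg(\Pr^{\dbl}_{\st})$. Second, the equivalence $K^{\cont}(C(\H))\simeq\KU$ of the preceding theorem is multiplicative.

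\emph{Step 1: dualizability and the monoidal structure.} Since $C(\H)$ is connective and $H$-unital, $\Mod_H(C(\H))\simeq\fib(\Mod(B(\H))\to\Mod(B(\H)/C(\H)))$. By \cref{prop:fib_conn_Hunital} and \cref{cor:fib_Hunital}, this fiber is dualizable, and its inclusion into $\Mod(B(\H))$ is an internal left adjoint.

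For the symmetric monoidal structure, we use the Hilbert tensor product on $\mathrm{Hilb}_{\mathrm{sep}}$. It is additive in each variable, so it extends through $\P_{\Sigma}(\mathrm{Hilb}_{\mathrm{sep}};\Sp)$ to a presentably symmetric monoidal structure on $\Mod(B(\H))$. The generators $\colim_{\bbN}\H_n$ of the subcategory have compact transition operators, and the Hilbert tensor product of compact operators is compact. Hence these generators are closed under tensor products, and $\Mod_H(C(\H))$ is a localizing ideal-like subcategory closed under $\otimes$.

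It remains to check that $\otimes$ on $\Mod_H(C(\H))$ preserves compact maps in each variable, i.e.\ that it lies in $\Pr^{\dbl}_{\st}$. We will use the criterion recalled in \cref{rmk:compass}: dualizable categories are generated by compactly exhaustible objects. A sequential colimit along compact operators is compactly exhaustible, because a compact operator becomes a compact map in $\Mod_H(C(\H))$. Tensoring two such sequences yields a sequence of the same type. This gives the required compatibility of $\otimes$ with compact maps. The monoidal unit poses a point to address: $\H$ itself is not compact. We will therefore treat $\Mod_H(C(\H))$ as possibly nonunital, or take as unit the object $\colim$ along a sequence of compact operators. The latter is the approach we expect to be the main obstacle, in line with Clausen's sketch.

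\emph{Step 2: multiplicativity.} Continuous $K$-theory is lax symmetric monoidal on $\Pr^{\dbl}_{\st}$, so $K^{\cont}(C(\H))$ acquires an $\bbE_\infty$-ring structure. The comparison map $K^{\cont}\to K^{\mathrm{top}}$ of Suslin--Wodzicki is built from the norm topology and is compatible with tensor products of $C^*$-algebras. We will upgrade it to an $\bbE_\infty$-map by realizing both sides as localizing invariants of the same symmetric monoidal input.

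An equivalence of homotopy commutative ring spectra with target $\KU$ is then automatically an $\bbE_\infty$-equivalence, by the uniqueness of $\bbE_\infty$-structures on $\KU$ compatible with its homotopy ring. This reduces Step 2 to checking that the induced map on $\pi_*$ is a ring isomorphism. Concretely, it suffices to see that the Bott element and the unit are respected. The unit is respected because a rank-one projection corresponds to the unit class on both sides. For the ring structure, we compare the external products via the Hilbert tensor product.
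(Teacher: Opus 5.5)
\textbf{Step 1.} The paper does not prove this proposition itself. The preceding Construction only records that the Hilbert tensor product on $\Mod(B(\H))$ restricts to $\Mod_H(C(\H))$, because compact $\otimes$ compact is compact. The $\bbE_\infty$-identification is quoted from Clausen.

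Your Step~1 follows that construction, but it misidentifies the unit.
\begin{itemize}
\item The unit of the Hilbert tensor product is the one-dimensional space $\bbC$, not $\H$. Since $\id_{\bbC}$ has finite rank, $\bbC=\colim(\bbC\xrightarrow{\id}\bbC\to\cdots)$ is one of your generators. Equivalently, the corresponding projective $B(\H)$-module vanishes after base change to the Calkin algebra, because a rank-one projection lies in $C(\H)$.
\item This object is compact in $\Mod(B(\H))$ and lies in the colimit-closed subcategory $\Mod_H(C(\H))$, so it is compact there. Hence the unit map $\Sp\to\Mod_H(C(\H))$ is strongly continuous, and there is no obstacle.
\item Your fallback of working nonunitally would contradict membership in $\CAlg(\Pr^{\dbl}_{\st})$. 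Your Step~2 in fact uses the rank-one projection as the unit, so the proposal is inconsistent on this point.
\item $\Mod_H(C(\H))$ is a unital monoidal subcategory, not an ideal: $\H\simeq\H\otimes\bbC$ is not in it.
\item The sentence ``a compact operator becomes a compact map in $\Mod_H(C(\H))$'' needs restating, since $\H_n\notin\Mod_H(C(\H))$. A compact $T\colon\H_n\to\H_{n+1}$ factors through the counit $C(\H)\otimes_{B(\H)}\H_{n+1}\to\H_{n+1}$. So the transition maps in $\Mod_H(C(\H))$ factor through the compact object $\H_n$ of $\Mod(B(\H))$. Such maps are compact because the inclusion preserves filtered colimits.
\end{itemize}

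\textbf{Step 2 has a genuine gap.} Beyond the known equivalence of spectra, the whole content of the claim is compatibility of products, and you do not supply it.
\begin{itemize}
\item ``Realizing both sides as localizing invariants of the same symmetric monoidal input'' has no meaning for $K^{\mathrm{top}}$.
\item ``It suffices that the unit and Bott element are respected'' is false. A group isomorphism $\pi_*K^{\cont}(C(\H))\to\bbZ[\beta^{\pm1}]$ with $1\mapsto 1$ and $b\mapsto\beta$ says nothing about $b^2$ unless multiplicativity is already known. Once it is known, the check is redundant.
\end{itemize}

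What is actually required is the following.
\begin{itemize}
\item The product on $K^{\cont}(C(\H))$ comes from the lax monoidal structure of $K^{\cont}$ applied to the categorical Hilbert tensor product. As the paper stresses, it does not come from the ring structure on $C(\H)$.
\item The comparison of Suslin--Wodzicki and Wodzicki concerns $K$-theory of the nonunital ring $C(\H)$. So you must first identify the categorical multiplication with the map on $K$-theory of $H$-unital rings induced by $C(\H)\otimes C(\H)\to C(\H)$, coming from $\H\otimes\H\cong\H$. This uses $\Mod_H(C(\H))^{\otimes 2}\simeq\Mod_H(C(\H)\otimes C(\H))$ and excision.
\item You must then show that the algebraic-to-topological comparison respects external products.
\end{itemize}

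Your final reduction also needs two inputs you do not state:
\begin{itemize}
\item Homotopy multiplications on $\KU$ are detected on $\pi_*$. This can be deduced from freeness of $\KU_0\KU$, the universal coefficient theorem, and rationalization.
\item The Baker--Richter uniqueness of $E_\infty$-structures on $\KU$.
\end{itemize}
Even then, you obtain some $\bbE_\infty$-equivalence, not that your comparison map itself is $\bbE_\infty$.

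For the paper's application, only Step~1 and the underlying equivalence of spectra are used, since chromatic height depends only on the underlying spectrum.
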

One should keep in mind that the $\bbE_\infty$-ring structure on $K^{\cont}(C(\H))$ does not arise directly from the $H$-unital ring structure on $C(\H)$. Nevertheless, the chromatic height of an $\bbE_\infty$-ring depends only on its underlying spectrum.
\begin{corollary}
	Both $\Mod_H(C(\H))$ and $\Mod_H(C(\H))^{\rig}$ satisfy chromatic redshift.
\end{corollary}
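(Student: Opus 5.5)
The plan is to compute the heights of $\Mod_H(C(\H))$ and $K^{\cont}(C(\H))\simeq\KU$, and then transfer redshift to $\Mod_H(C(\H))^{\rig}$ using \cref{lem:redshift}. Write $\C:=\Mod_H(C(\H))\in\CAlg(\Pr^{\dbl}_{\st})$.

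\emph{Height of $\C$.} I expect $\End(\mathbbm{1}_{\C})$ to be a $\bbC$-algebra, in fact a rational $\bbE_\infty$-ring, since $\C$ is a localizing subcategory of $\Mod(B(\H))$ and $B(\H)$ is a $\bbC$-algebra. More precisely, the whole construction is $\Mod(\bbC)$-linear, because Hilbert spaces are $\bbC$-vector spaces and the Hilbert tensor product is $\bbC$-bilinear. So $\C$ should be a $\Mod(\bbC)$-algebra, and hence $\End(\mathbbm{1}_{\C})$ is an $\bbE_\infty$-$H\bbC$-algebra. Any such ring has height $\leq0$, because $L_{T(n)}$ kills rational spectra for $n\geq1$. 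It remains to see that $\End(\mathbbm{1}_{\C})\not\simeq0$. Here I would use that $\KU\simeq K^{\cont}(\C)$ is nonzero: if $\End(\mathbbm{1}_{\C})\simeq0$ then $\C\simeq0$, which would force $K^{\cont}(\C)\simeq0$. Since $T(0)$-localization is rationalization on rational rings, we get $\operatorname{ht}(\C)=0$.

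\emph{Height of $K^{\cont}(\C)$ and redshift for $\C$.} By the preceding proposition, $K^{\cont}(\C)\simeq\KU$ as $\bbE_\infty$-rings. Its height is $1$, since $L_{T(1)}\KU\simeq\KU^{\wedge}_p[\ldots]\neq0$ and $L_{T(2)}\KU\simeq0$, and in any case height depends only on the underlying spectrum. Hence $\operatorname{ht}(K^{\cont}(\C))=1=\operatorname{ht}(\C)+1$, so $\C$ satisfies redshift.

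\emph{Rigidification.} The functor $\C^{\rig}\to\C$ is the canonical rigidification map. By \cref{lem:1_rig}, it induces $\End(\mathbbm{1}_{\C^{\rig}})\simeq\End(\mathbbm{1}_{\C})$, so $\operatorname{ht}(\C^{\rig})=\operatorname{ht}(\C)=0$. \Cref{lem:redshift} applied to $\C^{\rig}\to\C$ then gives redshift for $\C^{\rig}$. As a consistency check, \cref{thm:redshift} gives $0\leq\operatorname{ht}(K^{\cont}(\C^{\rig}))\leq1$, and the ring map $K^{\cont}(\C^{\rig})\to\KU$ forces the height to be $\geq1$.

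The main obstacle is the first step: justifying that the symmetric monoidal structure on $\C$ from the Clausen construction is $\bbC$-linear, so that $\End(\mathbbm{1}_{\C})$ is rational. My first attempt is to note that $\mathbbm{1}_{\C}$ lies in $\Mod(B(\H))$ and that its endomorphisms are computed there as modules over a $\bbC$-algebra, which makes them $H\bbC$-modules and in particular rational. For the height bound, rationality of the underlying spectrum suffices.
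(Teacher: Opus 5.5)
Your proposal is correct and follows essentially the same route as the paper. The paper's proof applies \cref{lem:redshift} to the rigidification map to reduce to $\Mod_H(C(\H))$, observes that this category is $T(0)$-local and hence of height $0$, and concludes from $\operatorname{ht}(\KU)=1$. Your additional steps only spell out what the paper leaves implicit: rationality of $\End(\mathbbm{1})$ computed inside $\Mod(B(\H))$, nonvanishing from $\KU\not\simeq0$, and \cref{lem:1_rig} for the height of the rigidification.
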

\begin{proof}
	Applying \cref{lem:redshift} to $\Mod_H(C(\H))^{\rig}\to\Mod_H(C(\H))$, it suffices to show that $\Mod_H(C(\H))$ satisfies chromatic redshift. Since $\Mod_H(C(\H))$ is $T(0)$-local, we have $\operatorname{ht}(\Mod_H(C(\H)))=0$. It follows from $\operatorname{ht}(\KU)=1$ that $\Mod_H(C(\H))$ satisfies chromatic redshift.
\end{proof}
\begin{remark}[{\cite[3/3 01:02:26]{Cla23Efimov}, \cite[Lecture 2 00:34:19, Lecture 4 00:20:36]{Cla26realanalytic}}]
	Let $S_1$ denote the ideal of \emph{trace-class} operators on $\H$, namely, compact operators whose singular values $\lambda_1\geq\lambda_2\geq\cdots$ are summable: $\sum_{n=1}^{\infty}\lambda_n<\infty$. These are precisely the trace-class endomorphisms of $\H$ in $\Mod(B(\H))$.
	
	Let $S_{0^+}$ denote the ideal of compact operators whose singular values $\lambda_1\geq\lambda_2\geq\cdots$ exhibit \emph{rapid decay}, namely, for every $p>0$, $\sum_{n=1}^\infty\lambda_n^p<\infty$. These are precisely the very trace-class endomorphisms of $\H$ in $\Mod(B(\H))$. Then $\Mod(B(\H))^{\rig}$ is the localizing stable subcategory generated by $\colim_{\bbN}\H_n$ such that $\H_n\to\H_{n+1}$ are all compact operators in $\mathrm{Hilb}_{\mathrm{sep}}$ that exhibit rapid decay. $S_{0^+}$ is $H$-unital over $\bbZ$, and hence $H$-unital since it is connective. Moreover, Clausen--Scholze identify $\Mod_H(C(\H))^{\rig}\simeq\Mod(B(\H))^{\rig}$ with the category of $H$-modules over $S_{0^+}$
	\[
		\Mod_H(S_{0^+}):=\Mod(S_{0^+}^+,S_{0^+})\simeq\Mod(B(\H),S_{0^+}).
	\]
\end{remark}

We now work in the language of (light) condensed mathematics developed by Clausen--Scholze \cite{CS24analyticstack}, which provides a framework for recasting analytic geometry in algebraic terms. One important application is the reproof of classical results in complex analytic geometry \cite{CS26complex}. For a complex manifold $M$, the nuclear gaseous module category $\Nuc(M^{\gas})$ provides a reasonable categorical object for conjecturing that its continuous $K$-theory coincides with Deligne $K$-theory, a statement referred to as the \emph{modified Hodge conjecture}. Following \cite{Cla24deligne}, we review this construction.

\begin{construction}
	Let $\Mod(\bbC^{\gas})$ denote the derived category of gaseous $\bbC$-modules. Let $M$ be a complex manifold. By \cite[Theorem 7.3.4.9]{HTT}, we have an equivalence between the sheaf category and the $\K$-sheaf category
	\[
		\Shv(M;\Mod(\bbC^{\gas}))\simeq\Shv_{\K}(M;\Mod(\bbC^{\gas})).
	\]
	By \cite[Corollary 3.9]{Cla24deligne}, we get a structure sheaf $\O\in\Shv(M;\CAlg(\Mod(\bbC^{\gas})))$ whose restriction to closed polydisks coincides with the usual holomorphic function sheaf $\O^{\mathrm{hol}}$.
\end{construction}
\begin{definition}
	Let $M$ be a complex manifold. We define the category of \emph{gaseous quasi-coherent sheaves} on $M$ as
	\[
		\QCoh(M^{\gas}):=\Mod_{\O}(\Shv(M;\Mod(\bbC^{\gas}))).
	\]
\end{definition}
Thus $\QCoh(M^{\gas})$ is a $\QCoh(*^{\gas})\simeq\Mod(\bbC^{\gas})$-algebra.

However, the $\Sp$-algebra $\Mod(\bbC^{\gas})$ is not rigid. By \cite[p. 9]{Cla24deligne}, every trace-class map in $\Mod(\bbC^{\gas})$ factors as a composite of two trace-class maps. We define the category of \emph{nuclear gaseous $\bbC$-modules} as
\[
	\Nuc(\bbC^{\gas}):=\Nuc(\Mod(\bbC^{\gas}))\simeq\Mod(\bbC^{\gas})^{\rig},
\]
where the equivalence comes from \cref{thm:Nuc=rig}. Since $\mathbbm{1}_{\Mod(\bbC^{\gas})}\in\Mod(\bbC^{\gas})^{\omega}$, $\Nuc(\bbC^{\gas})\subseteq\Mod(\bbC^{\gas})$ is the localizing subcategory generated by basic nuclear objects.

Furthermore, $\Nuc(\bbC^{\gas})$ admits an explicit description closely related to $\Mod_H(C(\H))$.
\begin{construction}[Clausen--Scholze, {\cite[00:18:35]{Efi25analytic}}]
	Let $J\subseteq B(\H)$ denote the ideal of compact operators whose singular values $\lambda_1\geq\lambda_2\geq\cdots$ exhibit \emph{quasi-exponential decay}, namely, there exists some $C,\varepsilon>0$ such that $\lambda_n\leq C\cdot 2^{-n^{\varepsilon}}$ for all $n\geq1$. Then $J$ is also $H$-unital over $\bbZ$, and hence $H$-unital since it is connective. Moreover, Clausen--Scholze identify $\Nuc(\bbC^{\gas})$ with the category of $H$-modules over $J$
	\[
		\Mod_H(J):=\Mod(J^+,J)\simeq\Mod(B(\H),J).
	\]
\end{construction}
\begin{corollary}
	$\Nuc(\bbC^{\gas})$ satisfies chromatic redshift.
\end{corollary}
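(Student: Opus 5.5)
Since $\Nuc(\bbC^{\gas})\simeq\Mod_H(J)$ sits inside a picture closely parallel to $\Mod_H(C(\H))$, I would deduce redshift for it from the corollary just proved for $\Mod_H(C(\H))$ and $\Mod_H(C(\H))^{\rig}$, using the ``downward closed'' \cref{lem:redshift}. Concretely, there are inclusions of ideals
\[
	J\subseteq S_{0^+}\subseteq C(\H)\subseteq B(\H).
\]
These are nested because quasi-exponential decay implies rapid decay. They give fully faithful internal left adjoint embeddings
\[
	\Mod_H(J)\simeq\Mod(B(\H),J)\hookrightarrow\Mod(B(\H),S_{0^+})\simeq\Mod_H(C(\H))^{\rig}.
\]
Indeed, each of these is the localizing subcategory of $\Mod(B(\H))$ generated by sequential colimits of Hilbert spaces along operators of the relevant decay class, and a smaller decay class generates a smaller subcategory. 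These embeddings are compatible with the symmetric monoidal structure induced by the Hilbert tensor product: singular values of $A\otimes B$ are products, and quasi-exponential decay is preserved under tensor products. So $\Nuc(\bbC^{\gas})\hookrightarrow\Mod_H(C(\H))^{\rig}$ should be a fully faithful map in $\CAlg(\Pr^{\dbl}_{\st})$. A cleaner justification comes from the universal property of rigidification. $\Nuc(\bbC^{\gas})$ is rigid, and it maps to $\Mod_H(C(\H))$ via the functor realizing the Clausen--Scholze identification. This map therefore factors through $\Mod_H(C(\H))^{\rig}$, and fully faithfulness is checked on the generators $\colim_{\bbN}\H_n$.

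Next I would compare heights. Both categories are $\bbC$-linear and hence $T(0)$-local, so their heights are $\leq 0$. They are also nonzero: the unit of $\Nuc(\bbC^{\gas})$ has endomorphism ring receiving a ring map from $\bbC$ (or simply from $\bbQ$), so it is a nonzero rational ring. Hence $\operatorname{ht}(\Nuc(\bbC^{\gas}))=\operatorname{ht}(\Mod_H(C(\H))^{\rig})=0$. By the preceding corollary, $\Mod_H(C(\H))^{\rig}$ satisfies chromatic redshift. \cref{lem:redshift}, applied to the fully faithful map $\Nuc(\bbC^{\gas})\hookrightarrow\Mod_H(C(\H))^{\rig}$, then gives redshift for $\Nuc(\bbC^{\gas})$.

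An alternative route avoids the embedding and uses the lower bound $\operatorname{ht}(K^{\cont})\geq\operatorname{ht}$ from \cref{thm:redshift} together with some map from $K^{\cont}(\Nuc(\bbC^{\gas}))$ to a height-$1$ ring such as $\KU$. Nonvanishing of $K^{\cont}_{T(1)}$ is, however, easiest to get from the $\bbE_\infty$-map $K^{\cont}(\Nuc(\bbC^{\gas}))\to K^{\cont}(\Mod_H(C(\H)))\simeq\KU$. This map exists as soon as we have a symmetric monoidal internal left adjoint $\Nuc(\bbC^{\gas})\to\Mod_H(C(\H))$. Combined with the upper bound $\operatorname{ht}\leq 1$, it yields $\operatorname{ht}(K^{\cont}(\Nuc(\bbC^{\gas})))=1$.

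The main obstacle is making the symmetric monoidal comparison functor $\Nuc(\bbC^{\gas})\to\Mod_H(C(\H))$ (or into its rigidification) rigorous. The identification $\Nuc(\bbC^{\gas})\simeq\Mod_H(J)$ is only recalled from Clausen--Scholze, and we need it to be symmetric monoidal for the Hilbert tensor product. I would first try to argue purely inside $\Mod(B(\H))$, where everything is generated by $\colim_{\bbN}\H_n$ and the tensor structure is explicit. There, closure of the quasi-exponential ideal under tensor products and its containment in the compact ideal give the required algebra map directly.
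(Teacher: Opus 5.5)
Your proposal is correct and follows essentially the paper's argument: a fully faithful symmetric monoidal internal left adjoint from $\Nuc(\bbC^{\gas})\simeq\Mod_H(J)$ into a category already known to satisfy redshift, combined with the downward-closedness of \cref{lem:redshift}. The paper maps directly into $\Mod_H(C(\H))$ instead of into its rigidification (it notes the factorization through $\Mod_H(C(\H))^{\rig}$ only afterwards), and, like you, it takes the symmetric monoidal compatibility of the Clausen--Scholze identification as given.
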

\begin{proof}
	By the above construction, the canonical map $\Nuc(\bbC^{\gas})\to\Mod_H(C(\H))$ is a fully faithful $\Sp$-algebra map. Then the result follows from \cref{lem:redshift}.
\end{proof}
In particular, we have a fully faithful map of rigid $\Sp$-algebras $\Nuc(\bbC^{\gas})\hookrightarrow\Mod_H(C(\H))^{\rig}$.
\begin{definition}
	Let $M$ be a complex manifold. We define the category of \emph{nuclear gaseous modules} on $M$ as 
	\[
		\Nuc(M^{\gas})\subseteq\QCoh(M^{\gas})
	\]
	the full subcategory spanned by those sheaves $\F$ such that $\F(K)\in\Nuc(\bbC^{\gas})$ for all compact $K$.
\end{definition}
Then $\Nuc(M^{\gas})$ is rigid whenever $M$ is compact. Indeed, we have
\begin{align*}
	\Nuc(M^{\gas})\simeq&\;\Mod_{\O}(\Shv_{\K}(M;\Nuc(\bbC^{\gas})))\simeq\Mod_{\O}(\Shv(M;\Nuc(\bbC^{\gas})))\\
	\simeq&\;\Mod_{\O}(\Shv(M;\Sp)\otimes\Nuc(\bbC^{\gas})).
\end{align*}
By \cite[Example 4.43]{Ram26locallyrigid} or \cite[Example 4.2.9]{KNP24}, $\Shv(M;\Sp)$ is rigid. Hence $\Shv(M;\Sp)\otimes\Nuc(\bbC^{\gas})$ is rigid, and so is $\Nuc(M^{\gas})$.
\begin{example}
	Let $X$ be a proper smooth variety over $\bbC$. We define the category of \emph{nuclear gaseous modules} on $X$ as
	\[
		\Nuc(X^{\gas}):=\Nuc(\bbC^{\gas})\otimes_{\bbC}\QCoh(X)\simeq\Nuc((X^{\an})^{\gas}),
	\]
	where the equivalence comes from GAGA \cite[proof of Theorem 4.10]{CS26complex,Cla24deligne}.
\end{example}
The rigidity of $\Nuc(M^{\gas})$ gives rise to the trace map 
\[
	\mathrm{tr}:K^{\cont}(\Nuc(M^{\gas}))\to\HH(\Nuc(M^{\gas})/\Nuc(\bbC^{\gas}))^{hS^1},
\]
which in turn induces the canonical map to Deligne cohomology. For our purposes, the only result we shall use is the following identification of the homotopification of continuous $K$-theory with $\KU$-coefficient sheaf cohomology.
\begin{definition}
	Let $\F\in\Shv(\mathrm{Man}_{\bbC};\Sp)$ be a sheaf on the site of complex manifolds. We view it as a $\K$-sheaf $\F\in\Shv_{\K}(\mathrm{Man}_{\bbC};\Sp)$, and define its \emph{homotopification} as
	\[
		\F^h:=\colim_{[n]\in\Delta^{\op}}\F(-\times\Delta^n)
	\]
	equipped with the canonical map $\F\to\F^h$.
\end{definition}
\begin{theorem}[{\cite[Theorem 4.10]{Cla24deligne}}]
	We have $K^{\cont}(\Nuc(-^{\gas}))^h\simeq\mathrm{R}\Gamma(-;\KU)$.
\end{theorem}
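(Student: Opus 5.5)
The plan is to split the claim into a formal part and a computational part. The formal part reduces the statement to identifying a single stalk. The computational part identifies that stalk with $\KU$ using the comparison with $\Mod_H(C(\H))$ recorded above.

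\emph{Step 1: sheaf property.} Write $\F:=K^{\cont}(\Nuc(-^{\gas}))$ on $\mathrm{Man}_{\bbC}$. First I check that $\F$ is a $\K$-sheaf. Nuclear gaseous modules are defined compact-locally, and $\Nuc(M^{\gas})\simeq\Mod_{\O}(\Shv(M;\Sp)\otimes\Nuc(\bbC^{\gas}))$. So for a cover of a compact $K$ by two compacts $K_1,K_2$ I expect a pullback square
\[
\Nuc(K^{\gas})\simeq\Nuc(K_1^{\gas})\times^{\dbl}_{\Nuc((K_1\cap K_2)^{\gas})}\Nuc(K_2^{\gas}).
\]
The restriction functors should be internal localizations, by the rigidity of $\Shv(K;\Sp)$. \cref{lem:pullback_dbl} then says the pullback is dualizable, and $K^{\cont}$ sends it to a pullback of spectra by the fiber-sequence argument used in the fracture-square lemma of \cref{sec:3.1}. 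Filtered colimits over shrinking compacts are handled by finitarity of $K^{\cont}$. The homotopification $\F^h$ is again a sheaf, and it is $\Delta^\bullet$-invariant, hence $[0,1]$-homotopy invariant.

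\emph{Step 2: reduction to a stalk.} A homotopy-invariant sheaf of spectra on manifolds is locally constant, since every point has a contractible neighbourhood basis. So $\F^h\simeq\mathrm{R}\Gamma(-;E)$ with $E:=\F^h(*)=\colim_{\Delta^{\op}}K^{\cont}(\Nuc((\Delta^n)^{\gas}))$. It therefore remains to produce a natural equivalence $E\simeq\KU$, compatible with restriction maps so that it globalizes.

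\emph{Step 3: the map to $\KU$.} Tensoring with the fully faithful map $\Nuc(\bbC^{\gas})\hookrightarrow\Mod_H(C(\H))$ gives, for each $M$, a map from $\F(M)$ to the continuous $K$-theory of $\Shv(M;\Sp)\otimes_{\Nuc(\bbC^{\gas})}$-modules over $C(M)\widetilde{\otimes}C(\H)$. By Suslin--Wodzicki excision this target is $K^{\mathrm{top}}(C(M))=\KU^{-*}(M)$, which is already homotopy invariant and equals $\mathrm{R}\Gamma(M;\KU)$ for compact $M$. So we get a map $\F^h\to\mathrm{R}\Gamma(-;\KU)$, and at the point it is $E\to K^{\cont}(C(\H))\simeq\KU$.

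\emph{Step 4 (main obstacle): the stalk map is an equivalence.} The plan is to show that the fiber of $\F\to\mathrm{R}\Gamma(-;\KU)$ becomes zero after homotopification.

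First, I would try to show rationally that this fiber is detected by the Dennis trace into $\HH(-/\Nuc(\bbC^{\gas}))$ and negative cyclic homology, via a Goodwillie-type relative argument along the contraction $\Delta^n\to *$. The relevant terms are holomorphic forms. Their homotopification vanishes by the holomorphic Poincaré lemma on polydisks, leaving only the locally constant part $\bbC$, which matches $\KU\otimes\bbQ$ in even degrees.

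Second, for the torsion part I would use Gabber--Suslin rigidity. Both sides agree after $\bbS/p$ on the point, since $\Nuc(\bbC^{\gas})$ and $\Mod_H(C(\H))$ have the same $p$-adic $K$-theory as $\bbC$. The $p$-adic $K$-theory of $\bbC$ is $\mathrm{ku}^\wedge_p$, which periodizes under homotopification.

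The point I am least sure of is the rational step: controlling $K^{\cont}$ of the nuclear gaseous disk algebras by cyclic homology requires a nilinvariance/excision statement in this analytic setting. That is where I expect to rely on Clausen's computations.
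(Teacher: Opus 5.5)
The paper does not prove this statement. It is quoted from Clausen \cite{Cla24deligne}, and the paper only uses the resulting $\bbE_\infty$-map $K^{\cont}(\Nuc(M^{\gas}))\to\mathrm{R}\Gamma(M;\KU)$ in the redshift corollary for nuclear gaseous modules. Your argument therefore has to stand on its own, and it has a genuine gap.

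Steps 1--2 are a reasonable formal reduction. Since $\F^h$ is a $\Delta$-homotopy-invariant $\K$-sheaf, you get $\F^h\simeq\mathrm{R}\Gamma(-;E)$ with $E=|K^{\cont}(\Nuc((\Delta^\bullet)^{\gas}))|$. Any map of spectra $E\to\KU$ then globalizes automatically, so the global claims in Step 3 are not even needed. But the identification $E\simeq\KU$ \emph{is} the theorem, and you defer it to ``Clausen's computations''.

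Even the comparison map is not actually constructed. Base change along $\Nuc(\bbC^{\gas})\hookrightarrow\Mod_H(C(\H))$ yields $\O$-modules in $\Shv(M;\Mod_H(C(\H)))$. Nothing in that operation turns holomorphic functions into continuous ones. So identifying the result with modules over $C(M)\widetilde{\otimes}C(\H)$, and hence its $K$-theory with $K^{\mathrm{top}}(C(M))$, is an unsupported assertion.

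The tools you propose for Step 4 also fail as stated.

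\emph{Rational step.} Goodwillie's theorem is about nilpotent extensions. Neither $\O(\Delta^n)\to\bbC$ nor the contraction $\Delta^n\to *$ provides one. What you would need is a Karoubi-conjecture or Corti\~nas--Thom type comparison: for these stabilized analytic categories, the fiber of $K^{\cont}\to(K^{\cont})^h$ is governed by cyclic homology. That is exactly the missing analytic input. Even granting it, trace methods only see $\bbC$-linear data. Periodic cyclic homology contributes $\bbC$ in even degrees, which cannot be ``matched'' with $\pi_{2k}(\KU\otimes\bbQ)=\bbQ$. The $\bbQ$-form inside $\KU\otimes\bbC$ is invisible to traces and must come from a separate argument.

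\emph{Torsion step.} This is false as stated. The paper records $K^{\cont}(C(\H))\simeq\KU$, so $K^{\cont}(\Mod_H(C(\H)))/p$ has $\pi_{-2}=\bbZ/p$. By Suslin, $K(\bbC)/p\simeq\mathrm{ku}/p$, which has $\pi_{-2}=0$. Homotopification also cannot ``periodize'' here: if the simplicial spectrum were levelwise $\mathrm{ku}^{\wedge}_p$ by rigidity, its realization would still be $\mathrm{ku}^{\wedge}_p$. The Bott periodicity in the answer is already built into continuous $K$-theory through the Calkin construction, as in $K^{\cont}(C(\H))\simeq\KU$. What you would actually need mod $p$ is something like a levelwise equivalence $K^{\cont}(\Nuc((\Delta^n)^{\gas}))/p\simeq\KU/p$, compatible with the simplicial structure. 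That is again an analytic theorem the proposal does not provide.
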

Thus, for every complex manifold $M$, we have a map of $\bbE_\infty$-rings $K^{\cont}(\Nuc(M^{\gas}))\to\mathrm{R}\Gamma(M;\KU)$.
\begin{corollary}[Redshift for nuclear gaseous modules]\label{cor:redshift_nuc_gas}
	Let $M$ be a nonempty compact complex manifold. Then $\Nuc(M^{\gas})$ satisfies chromatic redshift.
\end{corollary}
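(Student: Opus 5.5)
To prove that $\Nuc(M^{\gas})$ satisfies chromatic redshift, I will show that $\operatorname{ht}(\Nuc(M^{\gas}))=0$ and $\operatorname{ht}(K^{\cont}(\Nuc(M^{\gas})))=1$. The upper bound $\leq1$ comes for free from \cref{thm:redshift}, since $\Nuc(M^{\gas})$ is rigid for compact $M$. So it suffices to show two things: the unit endomorphism ring has height exactly $0$, and $K^{\cont}_{T(1)}(\Nuc(M^{\gas}))\not\simeq0$.

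\textbf{Height of the category.} The base $\Nuc(\bbC^{\gas})$ is a $\bbC$-linear category. Hence $\End(\mathbbm{1}_{\Nuc(M^{\gas})})$ is an $\bbE_\infty$-algebra over $\End(\mathbbm{1}_{\Nuc(\bbC^{\gas})})$, which is a $\bbC$-algebra, hence rational. Therefore $L_{T(n)}$ of it vanishes for $n\geq1$, and the height is at most $0$.

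For nonvanishing, I would use the global sections $\End(\mathbbm{1})\simeq\mathrm{R}\Gamma(M;\O)$, computed in gaseous modules. Since $M$ is nonempty, evaluating at a point gives an $\bbE_\infty$-map $\mathrm{R}\Gamma(M;\O)\to\bbC$ (or to $\End(\mathbbm{1}_{\Nuc(\bbC^{\gas})})$, which is nonzero because that category is nonzero). A nonzero rational $\bbE_\infty$-ring receiving a map has $L_{T(0)}\neq0$, so the height is $0$.

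Alternatively, a point $*\to M$ induces a symmetric monoidal functor $\Nuc(M^{\gas})\to\Nuc(\bbC^{\gas})$. This also shows the unit ring is nonzero, since it maps to a nonzero ring.

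\textbf{Height of $K$-theory.} Here I would use the $\bbE_\infty$-ring map recorded after \cite[Theorem 4.10]{Cla24deligne}:
\[
K^{\cont}(\Nuc(M^{\gas}))\to\mathrm{R}\Gamma(M;\KU).
\]
Since $M$ is nonempty, choosing a point gives a further $\bbE_\infty$-map $\mathrm{R}\Gamma(M;\KU)\to\KU$. Thus $K^{\cont}(\Nuc(M^{\gas}))$ admits an $\bbE_\infty$-map to $\KU$. Heights can only decrease along $\bbE_\infty$-maps, as already used in \cref{lem:redshift}: if $L_{T(1)}$ of the source vanished, so would $L_{T(1)}$ of the target. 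Since $L_{T(1)}\KU\not\simeq0$, we get $\operatorname{ht}(K^{\cont}(\Nuc(M^{\gas})))\geq1$, and combined with the upper bound it equals $1$.

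\textbf{Main obstacle.} The delicate step is justifying that the comparison map $K^{\cont}(\Nuc(-^{\gas}))\to K^{\cont}(\Nuc(-^{\gas}))^h\simeq\mathrm{R}\Gamma(-;\KU)$ is a map of $\bbE_\infty$-rings. Homotopification is a geometric realization, hence sifted, so it preserves $\bbE_\infty$-structures and the canonical map is multiplicative; the text asserts this. A second, minor point is making the point-evaluation arguments precise in the sheaf language. One could instead apply the redshift argument via the fully faithful comparisons of $\Nuc(\bbC^{\gas})$ with $\Mod_H(C(\H))$ restricted to a point.
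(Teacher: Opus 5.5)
Your proof is correct and follows the paper's: height $0$ from $\bbC$-linearity, the upper bound from \cref{thm:redshift}, and the lower bound from the $\bbE_\infty$-map $K^{\cont}(\Nuc(M^{\gas}))\to\mathrm{R}\Gamma(M;\KU)$ combined with evaluation at a point. The differences are cosmetic. You compose directly to $\KU$, whereas the paper first shows $\operatorname{ht}(\mathrm{R}\Gamma(M;\KU))=1$. You also spell out the nonvanishing of the unit endomorphism ring, which the paper leaves implicit.
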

\begin{proof}
	Since $\Nuc(M^{\gas})$ is $\bbC$-linear, $\operatorname{ht}(\Nuc(M^{\gas}))=0$. By \cref{thm:redshift}, $\operatorname{ht}(K^{\cont}(\Nuc(M^{\gas})))\leq1$. Since $\mathrm{R}\Gamma(M;\KU)$ is a $\KU$-module, $\operatorname{ht}(\mathrm{R}\Gamma(M;\KU))\leq1$. Since evaluation at any point gives an $\bbE_\infty$-ring map $\mathrm{R}\Gamma(M;\KU)\to\KU$, $\operatorname{ht}(\mathrm{R}\Gamma(M;\KU))=1$. The $\bbE_\infty$-ring map $K^{\cont}(\Nuc(M^{\gas}))\to\mathrm{R}\Gamma(M;\KU)$ then implies that $\operatorname{ht}(K^{\cont}(\Nuc(M^{\gas})))\geq\operatorname{ht}(\mathrm{R}\Gamma(M;\KU))$. Therefore, $\operatorname{ht}(K^{\cont}(\Nuc(M^{\gas})))=1$.
\end{proof}

\renewcommand{\bibfont}{\small}
\setlength{\bibitemsep}{0pt}
\setlength{\bibparsep}{0pt}
\printbibliography[heading=bibintoc]

\end{document}